%% file: main.tex
\documentclass[a4paper, 10pt, reqno]{amsart}
\usepackage[T1]{fontenc}
\usepackage{lmodern}
\usepackage[utf8]{inputenc}

\usepackage{amsmath,amssymb,graphicx,mathtools,amsthm, bm} 
\usepackage{mathrsfs}
\usepackage[foot]{amsaddr}
\usepackage[a4paper,left=2.6cm,right=2.6cm,top=3cm,bottom=3.5cm]{geometry}
\usepackage{enumerate}
\usepackage[colorlinks, linkcolor=red, urlcolor=blue, citecolor=blue]{hyperref}
\hypersetup{breaklinks=true}
\usepackage{url}
\usepackage{caption}
\usepackage{subcaption}
\usepackage{comment}

\input{bibliography_setup}
\bibliography{refs}

\DeclareMathOperator{\Tr}{Tr}
\DeclareMathOperator{\tr}{tr}

\DeclareMathOperator{\supp}{supp}
\DeclareMathOperator{\sym}{Sym}

\newcommand{\R}{\mathbb{R}} 
\newcommand{\C}{\mathbb{C}} 
\newcommand{\N}{\mathbb{N}} 
\newcommand{\E}{\mathbb{E}}

\theoremstyle{plain}
\newtheorem{thm}{Theorem}[section]
\newtheorem{cor}[thm]{Corollary}
\newtheorem{lem}[thm]{Lemma}
\newtheorem{prop}[thm]{Proposition}

\theoremstyle{definition}
\newtheorem{defn}[thm]{Definition}

\newtheorem{rmk}[thm]{Remark}
\newtheorem{hyp}{Assumption}

\numberwithin{equation}{section}

\makeatletter
\renewcommand\subsection{\@startsection{subsection}{2}%
  \z@{-.5\linespacing\@plus-.7\linespacing}{.5\linespacing}%
  {\normalfont\scshape}}
\renewcommand\subsubsection{\@startsection{subsubsection}{3}%
  \z@{.5\linespacing\@plus.7\linespacing}{-.5em}%
  {\normalfont\scshape}}
\makeatother

\makeatletter
\@namedef{subjclassname@2020}{%
  \textup{2020} Mathematics Subject Classification}
\makeatother

\usepackage[format=plain,
            labelfont=sc]{caption}
\usepackage[toc,page]{appendix} 
\usepackage{scalerel,stackengine}
\stackMath%
\newcommand\reallywidehat[1]{%
\savestack{\tmpbox}{\stretchto{%
  \scaleto{%
    \scalerel*[\widthof{\ensuremath{#1}}]{\kern.1pt\mathchar"0362\kern.1pt}%
    {\rule{0ex}{\textheight}}
  }{\textheight}%
}{2.4ex}}%
\stackon[-6.9pt]{#1}{\tmpbox}%
}

\makeatletter
\newsavebox\myboxA
\newsavebox\myboxB
\newlength\mylenA
\newcommand*\xoverline[2][0.75]{%
    \sbox{\myboxA}{$\m@th#2$}%
    \setbox\myboxB\null
    \ht\myboxB=\ht\myboxA%
    \dp\myboxB=\dp\myboxA%
    \wd\myboxB=#1\wd\myboxA
    \sbox\myboxB{$\m@th\overline{\copy\myboxB}$}
    \setlength\mylenA{\the\wd\myboxA}
    \addtolength\mylenA{-\the\wd\myboxB}%
    \ifdim\wd\myboxB<\wd\myboxA%
       \rlap{\hskip 0.5\mylenA\usebox\myboxB}{\usebox\myboxA}%
    \else
        \hskip -0.5\mylenA\rlap{\usebox\myboxA}{\hskip 0.5\mylenA\usebox\myboxB}%
    \fi}
\makeatother    

\usepackage{fancyhdr}

\newcommand\shortitle{Spectral phase transitions in Gaussian multi-index models}
\newcommand\name{Florent Krzakala, Pierre Mergny, and Vanessa Piccolo}

\begin{document}
\title{Spectral phase transitions in Gaussian multi-index models}
\author{Florent Krzakala\(^1\)}
\author{Pierre Mergny\(^{1,2}\)}
\address{\(^1\)Information, Learning and Physics Laboratory. Ecole Polytechnique Fédérale de Lausanne (EPFL), Lausanne, Switzerland.}
\address{\(^2\)Département d’Informatique, École Normale Supérieure (ENS), Paris, France.}
\email{florent.krzakala@epfl.ch}
\email{pierre.mergny@epfl.ch}
\author{Vanessa Piccolo\(^1\)}
\email{vanessa.piccolo@epfl.ch}
\subjclass[2020]{60B20, 62H12, 68Q87} 
\keywords{Gaussian multi-index models, matrix-valued spectral methods, matrix-weighted covariance matrices, BBP transitions, weak subspace recovery, approximate message passing, matrix-valued self-consistent equations}
\date{\today}

\begin{abstract}
Recovering a low-dimensional latent subspace from nonlinear observations of Gaussian covariates in high dimensions is a fundamental problem in feature learning. Here, we consider Gaussian multi-index models in which the covariates $\bm{x}_i \stackrel{\mathrm{i.i.d.}}{\sim} \mathcal{N}(0,\bm{I}_d)$ and the responses $\bm{y}_i$ depend on $\bm{x}_i$ only through its projection onto an unknown $r$-dimensional subspace. Earlier work based on approximate message passing (AMP) identified a sharp threshold for weak recovery~\cite{troianifundamental}, raising the question of whether it can be attained, without side information, by a spectral method. We answer this affirmatively and develop a general random matrix theory for matrix-valued spectral estimators of the form
$$
\bm{D}_n=\frac{1}{n}\sum_{i=1}^n\bm{T}(\bm{y}_i)\otimes\bm{x}_i\bm{x}_i^\top,
$$
where $\bm{T}$ is an arbitrary bounded symmetric matrix-valued preprocessing map of fixed dimension. As $n,d \to \infty$ with $n/d\to\alpha$, we prove that the empirical spectral measure of $\bm{D}_n$ converges almost surely to a deterministic compactly supported distribution characterized by a matrix-valued self-consistent equation. We then establish a spectral phase transition for the largest eigenvalue: below threshold it sticks to the bulk edge, while above threshold an outlier emerges. We characterize the outlier location through a finite-dimensional deterministic equation and show that the associated spectral estimator achieves weak recovery of the latent subspace. Finally, we prove that the AMP-derived preprocessing of~\cite{defilippis2025} is optimal among all bounded matrix-valued preprocessing maps of any fixed dimension. Its transition coincides with the AMP weak-recovery threshold, proving the general spectral conjecture of~\cite{defilippis2025}.
\end{abstract}

\maketitle	
{
\hypersetup{linkcolor=black}
\tableofcontents
}
\section{Introduction} \label{section:introduction}

Recovering low-dimensional structure from high-dimensional random data is a recurring problem in probability, statistics, machine learning, and mathematical physics. A particularly rich instance is provided by Gaussian single- and multi-index models, in which the observations depend on a high-dimensional covariate only through a finite number of latent linear projections. Concretely, given Gaussian covariates $\bm{x}_i\sim\mathcal{N}(\bm{0}_d,\bm{I}_d)$, the conditional distribution of the response $\bm{y}_i \in \R^q$ depends on $\bm{x}_i$ through $\bm{W}_\ast^\top\bm{x}_i$, where the columns of $\bm{W}_\ast\in \R^{d\times r}$ span an unknown $r$-dimensional subspace. This broad class includes generalized linear models, phase retrieval, polynomial ridge functions, committee machines, and finite-width two-layer teacher networks, and has become a canonical setting for studying how learning algorithms recover task-relevant features rather than merely fit a fixed linear representation~\cite{aubin2019,damian2022,mousaviHosseini2023,collinswoodfin2024,bietti2025,dandi2024,simsek2025,zhangWang2025, defilippisHierarchical2026}. For Gaussian covariates, rotational invariance makes the latent subspace $\mathcal{S}_\ast=\operatorname{span}(\bm{W}_\ast)$ the natural inferential object.

We study the \emph{weak recovery} of this latent subspace in the proportional high-dimensional regime, where $n,d\to\infty$ with $n/d\to\alpha\in(0,\infty)$ while the intrinsic dimension $r$ and $q$ remain fixed. In this regime, the competition between the number of samples \(n\) and the ambient dimension \(d\) gives rise to nontrivial statistical and computational phase transitions. Recent analyses based on approximate message passing (AMP) have identified a sharp threshold for weak recovery~\cite{troianifundamental}. This raises a natural question: can the same threshold be attained, without side information or an informative initialization, by a spectral method?

The spectral operators considered in this work arise naturally from two complementary viewpoints. The first comes from AMP. Linearizing an AMP iteration around its uninformative fixed point leads to a block matrix of the form
\[
\bm{D}_n=\frac{1}{n}\sum_{i=1}^n \bm{T}(\bm{y}_i)\otimes \bm{x}_i\bm{x}_i^\top \in \R^{pd \times pd},
\]
where $\bm{T} \colon \mathbb{R}^q \to \R^{p \times p}$ is a symmetric matrix-valued preprocessing map and $p\geq1$ is a fixed working dimension, which need not coincide with the intrinsic dimension $r$. This spectral method was derived by Defilippis et al.~\cite{defilippis2025} and its spectral transition was predicted from state evolution. The same operator also has a direct optimization interpretation. For $\bm{W} \in \R^{d\times p}$, consider the label-dependent quadratic empirical objective
\[
\mathcal{L}_n(\bm{W}) =\frac{1}{2n}\sum_{i=1}^n (\bm{W}^{\top}\bm{x}_i)^{\top}\bm{T}(\bm{y}_i) (\bm{W}^{\top}\bm{x}_i),
\]
which may be nonconvex when the matrix weights \(\bm{T}(\bm{y}_i)\) have negative eigenvalues. Under column-wise vectorization, its Hessian is constant and given exactly by
\[
\nabla^2_{\mathrm{vec}(\bm{W})}\mathcal{L}_n(\bm{W})=\bm{D}_n.
\]
The spectral transition of $\bm{D}_n$ can therefore also be viewed as the emergence of informative curvature in a class of label-dependent quadratic objectives. Closely related connections between spectral geometry, planted signal directions, and gradient-based dynamics have recently been studied in high-dimensional neural-network and classification models~\cite{benarous2025,benarous2026,annesi2026,bocchi2026}. These two viewpoints---the linear instability of an uninformative AMP fixed point and the appearance of informative spectral directions in an empirical objective---provide complementary motivations for understanding the spectrum and eigenvectors of $\bm{D}_n$.

The spectral mechanism we analyze is an eigenvalue-separation phenomenon: whether the largest eigenvalue of $\bm{D}_n$ sticks to the limiting bulk edge or separates from it, and whether the resulting spectral estimator develops a nonvanishing overlap with the planted subspace. This mechanism is analogous to the celebrated Baik--Ben Arous--P\'ech\'e (BBP) transition for spiked random matrices~\cite{baik2005}. The goal of this work is to develop such a transition theory for the matrix-valued operator $\bm{D}_n$ and to determine whether the resulting spectral method can attain the AMP weak-recovery threshold.

\medskip

To place this question in context, we briefly review the most directly related spectral results; a broader discussion of related work is deferred to Subsection~\ref{subsection:related_work}. The single-index case $r=1$ provides the natural benchmark. In this setting, spectral estimators take the form of scalar-weighted sample covariance matrices, whose bulk edge, outlying eigenvalues, and eigenvector overlap admit exact high-dimensional characterizations~\cite{lu2020,mondelli2019}. Moreover, optimizing the scalar preprocessing function yields a spectral method that attains the weak-recovery threshold without requiring an informative initialization~\cite{mondelli2019}.

For $r>1$, the situation is substantially more delicate. Troiani et al.~\cite{troianifundamental} characterized the sharp weak-recovery threshold for first-order methods through an AMP analysis, using an arbitrarily small informed initialization to probe the stability of the uninformative fixed point. This raised the question of whether the same threshold could be attained spectrally, without such an initialization. Kova{\v{c}}evi{\'c}, Zhang, and Mondelli~\cite{kovavcevic2025} obtained precise asymptotics for scalar-preprocessed covariance estimators in general multi-index models and optimized the scalar preprocessing function. They further showed that the resulting optimal threshold coincides with the AMP threshold when the relevant conditional second-moment matrices are simultaneously diagonalizable. Defilippis et al.~\cite{defilippis2025} introduced the matrix-valued spectral estimator considered here by linearizing AMP and predicted its transition through state evolution. They established the corresponding spectral characterization under the same common-eigenbasis assumption, where the matrix-valued problem decomposes into single-index-type blocks, and conjectured its validity in the general case. Outside this simultaneously diagonalizable setting, the matrix weights need not commute, the scalar decomposition is unavailable, and a rigorous characterization of the transition remained open.

\medskip

This gap reflects a genuine random-matrix difficulty: in the general case, the matrix-valued problem no longer decomposes into scalar blocks. The matrices $\bm{T}(\bm{y})$ need not commute or admit a common eigenbasis, and the blocks of $\bm{D}_n$ are built from the same Gaussian covariates and are therefore strongly dependent. At the same time, the matrix weights are statistically coupled to the signal coordinates of the covariates, so that the planted structure enters through this dependence rather than as an independent additive spike. The resulting ensemble is therefore neither a classical spiked covariance model nor a direct extension of the scalar weighted-covariance setting. Establishing a BBP-type transition requires simultaneously characterizing the limiting bulk spectrum and its edge, ruling out spurious extreme eigenvalues, identifying signal-induced outliers, and controlling the associated spectral estimator.

In this work, we develop such a theory for arbitrary bounded symmetric matrix-valued preprocessing maps of any fixed working dimension. First, we prove that the empirical spectral distribution converges almost surely to a deterministic compactly supported distribution, whose Stieltjes transform is given by the normalized trace of the solution to a matrix Dyson equation, and establish upper-edge confinement of the noise spectrum. Second, we derive a finite-dimensional deterministic equation governing possible outliers and establish a sharp phase transition for the largest eigenvalue: below the transition, it sticks to the bulk edge, whereas above it, an outlier separates from the bulk and the associated spectral estimator achieves weak recovery of the latent subspace. Finally, we optimize the spectral transition over all bounded matrix-valued preprocessing maps and all fixed working dimensions. The AMP-derived preprocessing introduced in~\cite{defilippis2025} attains the optimal spectral threshold, thereby proving the general spectral conjecture formulated there.

\medskip

The proof requires overcoming two main random-matrix difficulties. The first is to characterize the noise spectrum and control it all the way to its upper edge. We derive a deterministic matrix-valued limit for the partial trace of the resolvent through a self-consistent equation. Unlike the standard matrix Dyson equation with a linear self-energy operator~\cite{ajanki2019,alt2020}, the equation arising here involves the nonlinear self-energy map 
\[
\bm{M} \mapsto \E_{\bm{y}} \left[ \bm{T} (\bm{y}) \left( \bm{I}_p + \frac{1}{\alpha} \bm{M} \bm{T}(\bm{y}) \right)^{-1} \right],
\]
so the existing MDE theory does not apply directly. We establish existence and uniqueness of the relevant matrix-valued Stieltjes transform using fixed-point and positivity arguments in the spirit of~\cite{helton2007}, and derive the limiting empirical spectral distribution using resolvent identities and leave-one-out estimates.

Identifying the limiting distribution is not sufficient for the outlier analysis: one must additionally prove \emph{spectral confinement}, ruling out a vanishing fraction of noise eigenvalues above the limiting bulk edge. After rotating into signal and noise coordinates and conditioning on the responses, the noise block becomes a matrix-weighted Gaussian covariance ensemble with random, noncommuting coefficients. Since the preprocessing matrices are not assumed to be positive semidefinite, this ensemble is in general a signed covariance model rather than a standard sample-covariance matrix. We introduce a self-adjoint linearization and apply the spectral comparison theorem of Bandeira, Cipolloni, Schr\"{o}der, and van Handel~\cite{bandeira2026}, conditionally on the responses, to compare its spectrum with that of an associated free Gaussian operator. To control the upper edge of the free model, we adapt a Fock-space argument of Lehner~\cite{lehner1999} to the present matrix-weighted setting and evaluate the resulting finite-dimensional variational bound at the solution of the limiting MDE. This yields the upper-edge confinement required for the subsequent outlier analysis. In the positive-semidefinite case, related variational edge formulas follow from the recent work by Parmaksiz and van Handel~\cite{parmaksiz2025}; our sign-indefinite setting instead requires a more general self-adjoint treatment. The same analysis yields an exact variational characterization of the limiting upper spectral edge, independently obtained by Montanari and Saeed~\cite{montanarisaeed2026}.

The second difficulty is to determine when the finite-dimensional signal creates an eigenvalue above the confined noise spectrum and to relate this separation to recovery. A Schur complement reduction shows that eigenvalues outside the bulk are governed by the zeros of a random matrix-valued function of dimension $pr$. We prove that this function converges uniformly, away from the bulk, to a finite-dimensional deterministic limit. The largest eigenvalue of the limiting matrix is strictly decreasing above the bulk edge, so its zero crossing yields the sharp transition and determines the asymptotic location of the largest eigenvalue. In the supercritical regime, the corresponding block eigenvector equation shows that separation from the bulk forces a nonvanishing component in the signal subspace, and hence weak recovery. In this way, control of the infinite-dimensional noise spectrum reduces the recovery transition to a finite-dimensional deterministic problem.

\subsection{Notation}
We first fix our notation. For a positive integer \(n\), we write \([n] \coloneqq \{1,\ldots,n\}\). We denote by \(\C_+ \coloneqq \{z\in\C \colon \Im z>0\}\) the complex upper half-plane. Let
\[
\sym_n(\R) \coloneqq \{\bm{X}\in\R^{n\times n}\colon \bm{X}=\bm{X}^{\top}\}
\]
denote the space of real symmetric \(n\times n\) matrices. We write \(\sym_n^+(\R)\) and \(\sym_n^{++}(\R)\) for the spaces of real positive semidefinite and positive definite \(n\times n\) matrices. Let
\[
\operatorname{Herm}_n(\C) \coloneqq \{\bm{X}\in\C^{n\times n}\colon \bm{X}=\bm{X}^\ast \}
\]
denote the space of complex Hermitian \(n\times n\) matrices. For \(\bm{X} \in \operatorname{Herm}_n(\C)\), we denote its spectrum, that is, the set of its eigenvalues, by \(\mathrm{sp} (\bm{X})\). We denote its eigenvalues, counted with multiplicity and arranged in nonincreasing order, by
\[
\lambda_1(\bm{X})\geq\cdots\geq\lambda_n(\bm{X}),
\]
and its empirical spectral measure by
\[
\hat{\mu}_{\bm{X}} \coloneqq \frac{1}{n}\sum_{i=1}^n\delta_{\lambda_i(\bm{X})}.
\]
We write \(\Tr\) for the unnormalized matrix trace and
\[
\tr_n (\bm{X}) \coloneqq \frac{1}{n}\Tr(\bm{X})
\]
for the normalized trace of an \(n\times n\) matrix \(\bm{X}\). 

For matrices \(\bm{X}=(X_{\mu\nu})_{\mu,\nu \in [k]}\in\C^{k \times k}\) and \(\bm{Y}\in\C^{n\times n}\), we denote their Kronecker product by 
\[ 
\bm{X} \otimes\bm{Y} \coloneqq \left (X_{\mu\nu} \bm{Y} \right)_{\mu,\nu\in[k]} \in\C^{kn \times kn}. 
\] 
Thus, \(\bm{X}\otimes\bm{Y}\) is viewed as a \(k\times k\) block matrix whose blocks have size \(n\times n\). More generally, let \(\bm{X} = (\bm{X}_{\mu\nu})_{\mu,\nu\in[k]} \in\C^{kn\times kn}\), \(\bm{X}_{\mu\nu}\in\C^{n\times n}\), be viewed as a \(k\times k\) block matrix. We define its normalized partial trace over the \(n\)-dimensional factor by
\begin{equation} \label{eq:partial_trace}
\tr_n^{(k)}(\bm{X}) \coloneqq (\bm{I}_k\otimes\tr_n)(\bm{X}) = \left( \frac{1}{n}\Tr(\bm{X}_{\mu \nu}) \right)_{\mu,\nu\in[k]} \in\C^{k\times k}.
\end{equation}

For \(\bm{X},\bm{Y} \in \operatorname{Herm}_n(\C)\), we use the Loewner order and write \(\bm{X}\preceq\bm{Y}\) if \(\bm{Y} - \bm{X}\) is positive semidefinite, and \(\bm{X}\prec\bm{Y}\) if \(\bm{Y}-\bm{X}\) is positive definite. For \(\bm{X} \in \C^{n \times n}\), we define its imaginary part by
\[
\Im \bm{X} \coloneqq \frac{\bm{X} - \bm{X}^\ast}{2i}.
\]

\subsection{Model} \label{subsection_model}

We observe \(n\) independent samples \(\{(\bm{x}_i, \bm{y}_i)\}_{i=1}^n\), where 
\[
\bm{x}_i  \stackrel{\mathrm{i.i.d.}}{\sim}\mathcal{N}(\bm{0}_d, \bm{I}_d),
\]
and, conditionally on \(\bm{x}_i\), the responses \(\bm{y}_i \in \R^q\) follow the multi-index model 
\begin{equation} \label{eq:multi-index-model}
\bm{y}_i \mid \bm{x}_i \sim P_\ast \left ( \cdot \mid \bm{W}_\ast^\top \bm{x}_i \right ).
\end{equation}
Here, \(\bm{W}_\ast \coloneqq [\bm{w}_{\ast,1},\ldots,\bm{w}_{\ast,r}] \in \mathbb{R}^{d\times r}\) has orthonormal columns and \(P_\ast (\, \cdot \mid \bm{s})\), \(\bm{s} \in \R^r\), is a conditional probability distribution on \(\R^q\). Thus, the response depends on the covariate only through the \(r\)-dimensional projection \(\bm{W}_\ast^\top\bm{x}_i\). 

The matrix \(\bm{W}_\ast\) defines an \(r\)-dimensional signal subspace 
\[
\mathcal{S}_\ast \coloneqq \mathrm{span}(\bm{W}_\ast)\subset \R^d. 
\]
The parametrization is invariant under orthogonal changes of basis within \(\mathcal{S}_\ast\): for any orthogonal matrix \(\bm{O} \in \R^{r \times r}\), replacing \(\bm{W}_\ast\) by \(\bm{W}_\ast \bm{O}\) and \(P_\ast (\, \cdot \mid \bm{s})\) by \(P_\ast (\, \cdot \mid \bm{O} \bm{s})\) leaves the conditional distribution of the response unchanged. Our inferential target is therefore the subspace \(\mathcal{S}_\ast\), rather than a particular orthonormal basis.

We consider estimators \(\widehat{\bm{W}} \in \R^{d \times p}\), where \(p\geq1\) is a fixed working dimension, and seek a nonvanishing asymptotic alignment of \(\widehat{\bm{W}}\) with \(\mathcal{S}_\ast\). We work throughout in the proportional high-dimensional regime. 
\begin{hyp} \label{hyp:prop_limit} 
As \(n,d\to\infty\),
\[
\frac{n}{d} \to \alpha \in (0,\infty), 
\]
while \(r\), \(q\), and \(p\) remain fixed.
\end{hyp}

Motivated by the linearization of an AMP iteration, we consider the following class of matrix-valued spectral estimators. The AMP derivation is deferred to Appendix~\ref{appendix:AMP_matrix}.

\begin{defn} \label{def:spectral_matrix}
Let \(\bm{T} \colon \R^q  \to  \sym_p (\R)\) be a measurable preprocessing function. We define the spectral matrix by
\begin{equation*}
\bm{D}_n \coloneqq  \frac{1}{n} \sum_{i=1}^n \bm{T}(\bm{y}_i) \otimes \bm{x}_i \bm{x}_i^\top \in \R^{pd \times pd}.
\end{equation*}
\end{defn}
The results below apply to any fixed \(p\) and to any preprocessing map satisfying the stated assumptions. In the AMP derivation of Appendix~\ref{appendix:AMP_matrix}, we instead use the true dimension \(r\) and the true response channel \(P_\ast\). This leads to the optimal preprocessing map introduced later in Definition~\ref{defn:optimal_preprocessing_map}.

\medskip

Our spectral estimator is constructed from a nonzero eigenvector \(\bm{v}_1 \in \R^{dp}\) associated with the largest eigenvalue of \(\bm{D}_n\). Define
\begin{equation} \label{eq:estimator}
\widehat{\bm{W}} \coloneqq \sqrt{d} \frac{\operatorname{mat}_{d \times p} (\bm{v}_1)}{\|\bm{v}_1\|_2} \in \R^{d \times p},
\end{equation}
where \(\operatorname{mat}_{d \times p}\) denotes the inverse of column-wise vectorization. Then \(\|\widehat{\bm{W}}\|_{\mathrm{F}}^2 = d\). Since the column space of \(\widehat{\bm{W}}\) has dimension at most \(p\), choosing \(p<r\) limits the number of signal directions it can represent, whereas \(p>r\) allows for additional uninformative directions. To quantify the alignment of \(\widehat{\bm{W}}\) with the signal subspace, let \(\bm{\Pi}_\ast \coloneqq \bm{W}_\ast\bm{W}_\ast^\top\) denote the orthogonal projector onto \(\mathcal{S}_\ast \). We define the normalized overlap by
\begin{equation} \label{eq:normalized_overlap} 
\operatorname{Ov}(\widehat{\bm{W}},\mathcal{S}_\ast) \coloneqq \frac{ \|\bm{\Pi}_\ast\widehat{\bm{W}}\|_{\mathrm{F}}^2 }{ \|\widehat{\bm{W}}\|_{\mathrm{F}}^2 } = \frac{ \|\bm{W}_\ast^\top\widehat{\bm{W}}\|_{\mathrm{F}}^2 }{ d }. 
\end{equation} 
Since \(\bm{\Pi}_\ast\) is an orthogonal projector, this overlap is invariant under an orthogonal change of basis \(\bm{W}_\ast\mapsto\bm{W}_\ast\bm{O}\). We formalize the notion of weak recovery as follows.
\begin{defn}
The spectral estimator \(\widehat{\bm{W}}\) achieves \emph{weak recovery} of the signal subspace \(\mathcal{S}_\ast\) if there exists a constant \(\epsilon>0\) such that
\[
\mathbb{P} \left ( \operatorname{Ov}(\widehat{\bm{W}},\mathcal{S}_\ast) \ge \epsilon \right) \to 1,
\]
as \(n,d\to \infty\) under Assumption~\ref{hyp:prop_limit}.
\end{defn}

\subsection{Main results} \label{subsection_main_results}

We now state our main results. We first characterize the deterministic limiting bulk spectrum of the spectral matrix \(\bm{D}_n\). We then establish a BBP-type transition for its leading eigenvalue and prove weak recovery of the signal subspace. Finally, we optimize the transition threshold over all fixed working dimensions and admissible preprocessing maps. 

Because our spectral construction permits a general matrix-valued preprocessing map \(\bm{T}\), we begin with the following regularity assumption.

\begin{hyp} \label{hyp:preprocessing}
The map \(\bm{T} \colon\R^q \to \sym_p(\R)\) is measurable and nontrivial, in the sense that 
\[
\mathbb{P} \left( \bm{T}(\bm{y})\neq\bm{0}_{p\times p} \right)>0.
\]
Moreover, there exists a finite constant \(C_T >0\) such that
\[
\|\bm{T}(\bm{y})\|_{\mathrm{op}} \leq C_T
\]
almost surely, where
\[ 
\bm{s}\sim\mathcal{N}(\bm{0}_r,\bm{I}_r), \qquad \bm{y}\mid\bm{s} \sim P_\ast(\,\cdot\mid\bm{s}). 
\]
\end{hyp}

Our first result characterizes the limiting bulk spectrum of the spectral matrix \(\bm{D}_n\) of Definition~\ref{def:spectral_matrix}. 

\begin{thm} \label{thm:bulk}
Suppose that Assumptions~\ref{hyp:prop_limit} and~\ref{hyp:preprocessing} hold. Then there exists a unique compactly supported probability measure \(\mu_\alpha\) on \(\R\) such that the empirical spectral measure of \(\bm{D}_n\), 
\[
\hat{\mu}_{\bm{D}_n} \coloneqq \frac{1}{pd} \sum_{i=1}^{pd} \delta_{\lambda_i(\bm{D}_n)},
\]
converges weakly to \(\mu_\alpha\) almost surely. Let \(\lambda_+ (\alpha) \coloneqq \sup \supp (\mu_\alpha)\) denote the upper edge of \(\mu_\alpha\). Then, for every fixed \(\varepsilon > 0\),
\[
\mathbb{P} \left( \# \left\{ i\in[pd] \colon \lambda_i (\bm{D}_n)> \lambda_+ (\alpha) +\varepsilon \right\} \leq pr \right) \to 1.
\]
Equivalently,
\[
\mathbb{P} \left( \lambda_{pr+1}(\bm{D}_n) \leq \lambda_+ (\alpha) +\varepsilon \right) \to 1.
\]
\end{thm}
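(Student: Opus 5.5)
We split the proof into the bulk statement and the edge statement. The common first step is to rotate coordinates. Choose an orthogonal $\bm{O}$ whose first $r$ rows span $\mathcal{S}_\ast$, and write $\bm{O}\bm{x}_i=(\bm{s}_i,\bm{z}_i)$, where $\bm{s}_i\in\R^r$ and $\bm{z}_i\in\R^{d-r}$ are independent standard Gaussians. The labels $\bm{y}_i$ depend only on $\bm{s}_i$, so $\bm{z}_i$ is independent of $(\bm{s}_i,\bm{y}_i)$. After conjugating by $\bm{I}_p\otimes\bm{O}$, $\bm{D}_n$ becomes a $2\times 2$ block matrix. Its noise block is
\[
\bm{N}_n=\frac1n\sum_{i}\bm{T}(\bm{y}_i)\otimes \bm{z}_i\bm{z}_i^\top\in\R^{p(d-r)\times p(d-r)},
\]
and the signal and cross blocks carry the remaining structure. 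The matrix $\bm{D}_n$ differs from $\bm{0}_{pr}\oplus\bm{N}_n$ by a perturbation of rank at most $2pr$. By rank interlacing, $\hat\mu_{\bm{D}_n}$ and $\hat\mu_{\bm{N}_n}$ have the same weak limit. More precisely, the signal block is supported on $pr$ coordinates, so Cauchy interlacing for the principal submatrix $\bm{N}_n$ gives $\lambda_{pr+1}(\bm{D}_n)\le\lambda_1(\bm{N}_n)$. Both claims therefore reduce to the conditionally Gaussian matrix $\bm{N}_n$, with the weights $\bm{T}_i:=\bm{T}(\bm{y}_i)$ i.i.d., bounded by $C_T$, and independent of the $\bm{z}_i$.

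For the bulk, we condition on $(\bm{y}_i)$ and study the resolvent $\bm{G}(z)=(\bm{N}_n-z)^{-1}$ through its partial trace $\bm{M}_n(z)=\tr^{(p)}_{d}(\bm{G}(z))\in\C^{p\times p}$. Write $\bm{N}_n=\frac1n\bm{Z}^\top\bm{\Theta}\bm{Z}$ in block form. The Sherman–Morrison–Woodbury formula removes one sample at a time, since each sample is a rank-$p$ update $\frac1n(\bm{I}_p\otimes\bm{z}_i)\bm{T}_i(\bm{I}_p\otimes\bm{z}_i)^\top$. Concentration of quadratic forms $\bm{z}_i^\top\bm{A}\bm{z}_i$ around $\Tr\bm{A}$ (Hanson–Wright) then yields the approximate fixed-point equation
\[
\bm{M}_n(z)\approx\Big(-z\bm{I}_p+\E_{\bm{y}}\big[\bm{T}(\bm{y})(\bm{I}_p+\tfrac1\alpha\bm{M}_n(z)\bm{T}(\bm{y}))^{-1}\big]\Big)^{-1},
\]
with error $O(d^{-1/2})$ for fixed $z\in\C_+$. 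The label average comes from the law of large numbers over $i$. Bounded differences in the independent samples $(\bm{x}_i,\bm{y}_i)$ give almost sure concentration of $\tr_{pd}\bm{G}$. We prove uniqueness of the solution with $\Im\bm{M}\succ0$ by a Helton–Far–Speicher style argument: the map sends the Siegel-type domain $\{\Im \bm M\succ 0\}$ strictly into itself for $\Im z$ large, so it is a strict contraction in the Carathéodory/Earle–Hamilton metric there. Analytic continuation then extends uniqueness to all of $\C_+$. A stability estimate upgrades the approximate solution to convergence. Compact support follows from $\|\bm{N}_n\|_{\rm op}\le C_T\|\frac1n\bm{Z}^\top\bm{Z}\|_{\rm op}$, which is bounded almost surely. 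The Stieltjes transform $\tr_p\bm{M}(z)$ then determines $\mu_\alpha$.

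For the edge we need $\lambda_1(\bm{N}_n)\le\lambda_+(\alpha)+\varepsilon$ with high probability, and this is the main obstacle: weak convergence alone does not exclude $o(d)$ outliers. Conditionally on $\bm{y}$, $\bm{N}_n$ is a quadratic polynomial in the Gaussian matrix $\bm{Z}$. We linearize it as the self-adjoint matrix
\[
\bm{L}=\begin{pmatrix}\bm{0}&\bm{Z}^\top\\ \bm{Z}&-\bm{\Theta}^{-1}\end{pmatrix},
\]
using a regularization or signed splitting to handle the fact that the $\bm{T}_i$ may be singular or indefinite. By the Schur complement, $\bm{L}$ detects the upper spectrum of $\bm{N}_n$. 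We then apply the Bandeira–Cipolloni–Schröder–van Handel comparison theorem to the Gaussian model $\bm{L}$ conditionally on $\bm{y}$. This shows that its spectrum lies within $o(1)$ of the spectrum of the free counterpart, whose coefficients are the empirical label matrices. For the free operator we bound the upper edge by a Lehner-type variational formula: $\sup\mathrm{sp}\le\inf_{\bm{B}\succ0}\lambda_{\max}(\cdot)$, a finite-dimensional expression in $\bm{B}$ and $\E[\bm{T}(\bm{I}+\bm{B}\bm{T})^{-1}]$. We obtain this formula through a Fock-space realization of the free semicircular and Poisson-type variables. Evaluating it at $\bm{B}=\frac1\alpha\bm{M}(x)$ for real $x\downarrow\lambda_+$, where $\bm{M}$ is real and positive definite, makes the bound equal to $x$ by the self-consistent equation. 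This gives edge $\le\lambda_+(\alpha)$. Finally, we replace the empirical label average by its expectation using the law of large numbers and continuity of the bound in the weights.
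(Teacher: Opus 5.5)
Your overall architecture is the paper's. You use the same rotation and rank/Cauchy-interlacing reductions to the noise block. For the bulk you use Woodbury, Hanson--Wright, concentration and an Earle--Hamilton uniqueness argument; this part is fine, and your compact-support argument via $\|\bm{N}_n\|_{\mathrm{op}}\le C_T\|\tfrac1n\bm{Z}^\top\bm{Z}\|_{\mathrm{op}}$ is simpler than the paper's continuation argument. For the edge you use a signed linearization, the Bandeira--Cipolloni--Schr\"oder--van Handel comparison conditional on the labels, and a Lehner-type Fock-space bound evaluated at the Dyson solution.

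The gap is in the last step, where you say that evaluating the variational bound at the Dyson solution ``makes the bound equal to $x$ by the self-consistent equation.'' Three problems sit there.

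\emph{Missing domain constraint.} The completion-of-squares identity behind the Fock-space bound is $\mathcal{G}_n(\bm{Z})\otimes I-\widehat{\bm{P}}^{\mathrm{free}}_n=\bm{Z}^{-1}\otimes p_\Omega+\sum_i\bm{Y}_{i,n}\bm{Y}_{i,n}^\ast$. It requires $\bm{K}_{i,n}=\bm{Z}^{-1}-\gamma_n\bm{T}(\bm{y}_i)\succ\bm{0}$ for every $i$, in order to form $\bm{K}_{i,n}^{\pm1/2}$; $\bm{Z}\succ\bm{0}$ alone is not enough. Without this constraint the infimum is wrong: for $p=1$, $T\equiv1$, the objective $z^{-1}+(1-\gamma_nz)^{-1}$ tends to $-\infty$ as $z\downarrow\gamma_n^{-1}$.

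\emph{Sign.} For $x>\lambda_+(\alpha)$ the continued transform satisfies $\bm{M}_\alpha(x)\prec\bm{0}$. So the test point is $\bm{Z}=-\bm{M}_\alpha(x)\succ\bm{0}$, and the denominators are $\bm{I}_p-\alpha^{-1}\bm{Z}\bm{T}(\bm{y})$.

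\emph{Two facts you never establish.} Once the constraint is in place, evaluating at the Dyson solution needs both of the following.
\begin{enumerate}
\item[(i)] The MDE must hold at the real point $x$. This needs $\bm{I}_p+\alpha^{-1}\bm{M}_\alpha(x)\bm{T}(\bm{y})$ to be invertible with $\operatorname*{ess\,sup}_{\bm{y}}$ of the inverse finite, so that one can let $\eta\downarrow0$ in the equation at $x+i\eta$. For sign-indefinite, noncommuting $\bm{T}(\bm{y})$, the Herglotz property alone does not prevent the continued branch from reaching $\{\det(\bm{I}_p+\alpha^{-1}\bm{M}\bm{T}(\bm{y}))=0\}$ for a positive-measure set of $\bm{y}$.
\item[(ii)] The test point must be \emph{uniformly} admissible: $\bm{Z}^{-1}-\alpha^{-1}\bm{T}(\bm{y})\succeq c\bm{I}_p$ a.s., with $c>0$ independent of $\bm{y}$. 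This margin puts $\bm{Z}$ in the finite-$n$ domain for all $n$ sampled labels at once despite $\gamma_n\neq\alpha^{-1}$. It also keeps the summands bounded in your law-of-large-numbers step.
\end{enumerate}

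The paper proves (i) in Lemma~\ref{lem:uniform_denominator_bound}. It writes $\bm{T}(\bm{y})-\bm{T}(\bm{y})(\bm{I}_p+\alpha^{-1}\bm{M}_\alpha(z)\bm{T}(\bm{y}))^{-1}$ as a matrix Herglotz function with representing measure $\bm{\Theta}_{\bm{y}}$. By the MDE, its $\bm{y}$-average is real-analytic across $x$, so $\E_{\bm{y}}\bm{\Theta}_{\bm{y}}$, and hence a.e.\ $\bm{\Theta}_{\bm{y}}$, has no mass near $x$.

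The paper proves (ii) in Lemma~\ref{lem:free_upper_edge} by deforming $t$ from a large $E_0$, where admissibility is trivial, down to $E$. Invertibility along the path forbids any eigenvalue of $-\bm{M}_\alpha(t)^{-1}-\alpha^{-1}\bm{T}(\bm{y})$ from crossing zero, and the uniform inverse bound gives the margin.

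This regularity and admissibility of the physical branch is what makes the Lehner bound sharp at $\lambda_+(\alpha)$, and your plan has to supply it.

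A smaller point: ``$\bm{L}$ detects the upper spectrum'' has to be made quantitative. You need the family $\bm{L}(E)$ with $-E\bm{I}$ in the upper-left block, a lower bound on $\operatorname{dist}(0,\operatorname{sp}\bm{L}^{\mathrm{free}}(E))$ in terms of $\operatorname{dist}(E,\operatorname{sp}\bm{P}^{\mathrm{free}}_n)$ using bounded $\|\bm{X}^{\mathrm{free}}_n\|_{\mathrm{op}}$, and a finite net in $E$.
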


The limiting measure \(\mu_\alpha\) is characterized through a self-consistent equation. More precisely, for \(z \in \C_+\), Proposition~\ref{prop:bulk} characterizes a deterministic matrix-valued function \(\bm{M}_\alpha(z)\in\C^{p\times p}\) as the unique solution of a Dyson-type matrix-valued self-consistent equation with the appropriate Stieltjes-transform properties. Its normalized trace \(m_\alpha (z) \coloneqq \frac{1}{p}\Tr \bm{M}_\alpha (z)\) is the Stieltjes transform of \(\mu_\alpha\). We refer to Proposition~\ref{prop:bulk} for the precise characterization. 

We next study the largest eigenvalue of \(\bm{D}_n\) and the associated spectral phase transition. To this end, we first introduce the deterministic quantities entering the transition criterion. Define 
\begin{equation} \label{eq:cond_second_moment}
\bm{C}(\bm{y}) \coloneqq \E [\bm{s} \bm{s}^\top \mid \bm{y}],
\end{equation}
where \(\bm{s}\sim\mathcal{N}(\bm{0}_r,\bm{I}_r)\) and \( \bm{y}\mid\bm{s} \sim P_\ast(\,\cdot \mid\bm{s})\). In particular, 
\[
\E_{\bm{y}} [\bm{C}(\bm{y})] = \E [\bm{s} \bm{s}^\top ] =\bm{I}_r.
\]
By Lemma~\ref{lem:continuation_off_support}, \(\bm{M}_\alpha\) admits an analytic continuation to \(\C \setminus \supp (\mu_\alpha)\). For \(x \in \R \setminus \supp (\mu_\alpha)\), we use \(\bm{M}_\alpha(x)\) to denote this continuation, which is Hermitian. Moreover, in Section~\ref{section:MDE}, we show that
\[
\bm{I}_p+\frac{1}{\alpha} \bm{M}_\alpha(x) \bm{T} (\bm{y})
\]
is invertible almost surely for every \(x>\lambda_+(\alpha)\). We may therefore define
\begin{equation} \label{eq:Phi_alpha} 
\Phi_\alpha(x) \coloneqq \lambda_1 \left( \E_{\bm{y}} \left[ \bm{T}(\bm{y}) \left( \bm{I}_p+\frac{1}{\alpha} \bm{M}_\alpha(x) \bm{T} (\bm{y}) \right)^{-1} \otimes \bm{C}(\bm{y}) \right] \right), \qquad x>\lambda_+(\alpha).
\end{equation}
We also define
\begin{equation} \label{eq:h_alpha} 
h_\alpha(x) \coloneqq \Phi_\alpha (x) - x.
\end{equation} 
Finally, we define the edge gap of the phase transition by 
\begin{equation} \label{eq:Delta_alpha} 
\Delta_{\bm{T}}(\alpha) \coloneqq  \sup_{x>\lambda_+(\alpha)} h_\alpha(x) \in \R\cup\{+\infty\}. 
\end{equation} 
The sign of \(\Delta_{\bm{T}}(\alpha)\) determines whether the largest eigenvalue remains at the upper bulk edge or separates from it.

\begin{thm} \label{thm:BBP_transition} 
Suppose that Assumptions~\ref{hyp:prop_limit} and~\ref{hyp:preprocessing} hold. Then \(h_\alpha \colon (\lambda_+(\alpha),\infty) \to \R\) is continuous and strictly decreasing, and satisfies \(\lim_{x \to\infty} h_\alpha(x)=-\infty\). Consequently,
\[
\Delta_{\bm{T}}(\alpha) = \lim_{x \downarrow\lambda_+(\alpha)} h_\alpha(x). 
\]
Furthermore, the largest eigenvalue \(\lambda_1(\bm{D}_n)\) of \(\bm{D}_n\) undergoes the following phase transition.
\begin{enumerate} 
\item If \(\Delta_{\bm{T}}(\alpha)\leq 0\), then the equation \(h_\alpha(x)=0\) has no solution in \((\lambda_+(\alpha),\infty)\) and no eigenvalue separates above the upper bulk edge:
\[
\lambda_1(\bm{D}_n) \stackrel{\mathbb{P}}{\to} \lambda_+(\alpha).
\]
\item If \(\Delta_{\bm{T}}(\alpha)>0\), then there exists a unique solution \(\theta_\alpha>\lambda_+(\alpha)\) such that \(h_\alpha(\theta_\alpha)=0\). Moreover, an outlier emerges at the point \(\theta_\alpha\):
\[
\lambda_1(\bm{D}_n) \stackrel{\mathbb{P}}{\to} \theta_\alpha .
\]
\end{enumerate}
\end{thm}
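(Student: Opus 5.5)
The plan is to separate the deterministic part (monotonicity of \(h_\alpha\)) from the random part (locating outliers via a Schur complement). We rotate coordinates so that \(\bm{W}_\ast\) spans the first \(r\) coordinates. Write \(\bm{x}_i=(\bm{s}_i,\bm{z}_i)\) with \(\bm{s}_i=\bm{W}_\ast^\top\bm{x}_i\in\R^r\) and \(\bm{z}_i\in\R^{d-r}\) independent of \((\bm{s}_i,\bm{y}_i)\). After permuting, \(\bm{D}_n\) becomes a \(2\times 2\) block matrix. Its signal block \(\bm{A}_n=\frac1n\sum_i\bm{T}(\bm{y}_i)\otimes\bm{s}_i\bm{s}_i^\top\) has size \(pr\). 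Its cross block is \(\bm{B}_n=\frac1n\sum_i(\bm{T}(\bm{y}_i)\otimes\bm{s}_i)(\bm{I}_p\otimes\bm{z}_i^\top)\). Its noise block \(\bm{N}_n=\frac1n\sum_i\bm{T}(\bm{y}_i)\otimes\bm{z}_i\bm{z}_i^\top\) is, conditionally on the \((\bm{s}_i,\bm{y}_i)\), a matrix-weighted Gaussian covariance. Interlacing together with Theorem~\ref{thm:bulk} (whose proof, via the Bandeira--Cipolloni--Schr\"oder--van Handel comparison, in fact gives \(\lambda_1(\bm{N}_n)\le\lambda_+(\alpha)+\varepsilon\) w.h.p.) shows the following. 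For \(x>\lambda_+(\alpha)+\varepsilon\), \(x\) is an eigenvalue of \(\bm{D}_n\) iff
\[
\det\bigl(x\bm{I}_{pr}-\bm{A}_n-\bm{B}_n(x-\bm{N}_n)^{-1}\bm{B}_n^\top\bigr)=0 .
\]
Denote the matrix inside by \(x-\bm{K}_n(x)\). The first main random step is to show that \(\bm{K}_n(x)\to\bm{K}(x)\) uniformly on compacts of \((\lambda_+(\alpha),\infty)\), where
\[
\bm{K}(x)=\E_{\bm{y}}\Bigl[\bm{T}(\bm{y})\bigl(\bm{I}_p+\tfrac1\alpha\bm{M}_\alpha(x)\bm{T}(\bm{y})\bigr)^{-1}\otimes\bm{C}(\bm{y})\bigr].
\]
To prove this, expand \(\bm{B}_n(x-\bm{N}_n)^{-1}\bm{B}_n^\top\) as a double sum over \(i,j\). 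The off-diagonal terms vanish by concentration of Gaussian quadratic forms. The diagonal terms are handled by Sherman--Morrison on the leave-one-out resolvent \(\bm{G}^{(i)}\) of \(\bm{N}_n\), with \(\bm{z}_i^\top\bm{G}^{(i)}\bm{z}_i\approx d\,\tr^{(p)}_d\bm{G}\to -d\,\bm{M}_\alpha(x)\) (sign as in Proposition~\ref{prop:bulk}). This produces
\[
\bm{A}_n-\frac1n\sum_i\bm{T}_i\bigl(\bm{I}+\tfrac1\alpha\bm{M}\bm{T}_i\bigr)^{-1}\tfrac1\alpha\bm{M}\bm{T}_i\otimes\bm{s}_i\bm{s}_i^\top
=\frac1n\sum_i\bm{T}_i\bigl(\bm{I}+\tfrac1\alpha\bm{M}\bm{T}_i\bigr)^{-1}\otimes\bm{s}_i\bm{s}_i^\top ,
\]
and the LLN then gives the limit, after conditioning on \(\bm{y}\), which replaces \(\bm{s}\bm{s}^\top\) by \(\bm{C}(\bm{y})\). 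Uniformity in \(x\) follows from equicontinuity, since the derivatives are bounded by \((x-\lambda_1(\bm{N}_n))^{-2}\).

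For the deterministic part, we show \(x\mapsto\bm{K}(x)\) is nonincreasing in Loewner order on \((\lambda_+,\infty)\). Since \(\bm{M}_\alpha\) is the Stieltjes transform of a matrix-valued measure supported below \(\lambda_+\), \(\bm{M}_\alpha'(x)\) is negative definite there, in the convention \(\bm{M}=\int(\lambda-x)^{-1}d\bm{\mu}\). Differentiating, with \(\bm{R}_y=(\bm{I}+\frac1\alpha\bm{M}\bm{T})^{-1}\), gives
\[
\frac{d}{dx}\bm{T}\bm{R}_y=-\tfrac1\alpha\,\bm{T}\bm{R}_y\,\bm{M}'\,\bm{R}_y^{\top}\bm{T}\succeq 0 .
\]
Here one uses symmetry of \(\bm{T}\bm{R}_y=(\bm{T}^{-1}+\bm{M}/\alpha)^{-1}\) formally, and the sign must be checked against the convention. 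If the sign instead gives \(\bm{K}\) nondecreasing, then \(h_\alpha=\lambda_1(\bm{K})-x\) is strictly decreasing only when the slope of \(\lambda_1(\bm{K})\) is less than 1. For this we would use \(\bm{M}_\alpha(x)\to0\) and the MDE identity relating \(\bm{M}'\) to the self-energy, which bounds the derivative strictly by 1. In either case \(\lambda_1(\bm{K})\) is continuous, and \(\bm{K}(x)\to\E[\bm{T}\otimes\bm{C}]\) is bounded, so \(h_\alpha\to-\infty\). Strict monotonicity then gives \(\Delta_{\bm{T}}=\lim_{x\downarrow\lambda_+}h_\alpha\) and uniqueness of the zero \(\theta_\alpha\).

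With these two steps the phase transition follows. Since \(\lambda_1(x\bm{I}-\bm{K}_n(x))\) is monotone and converges uniformly to \(x-\lambda_{\min}\)-type functions, we use the eigenvalue counting function on the \(pr\)-dimensional problem. The largest zero of \(\det(x-\bm{K}_n(x))\) beyond \(\lambda_++\varepsilon\) is the largest \(x\) with \(\lambda_1(\bm{K}_n(x))=x\), and it converges to \(\theta_\alpha\) when \(\Delta_{\bm{T}}>0\). When \(\Delta_{\bm{T}}\le0\) there is no zero beyond \(\lambda_++\varepsilon\), since \(h_\alpha<0\) there. In that case we need the lower bound \(\lambda_1(\bm{D}_n)\ge\lambda_+-o(1)\), which follows from weak convergence to \(\mu_\alpha\) with \(\lambda_+=\sup\supp\mu_\alpha\). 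The main obstacle is the uniform convergence of \(\bm{K}_n\) up to \(\lambda_++\varepsilon\). This requires the noise-block confinement, with the resolvent bounded w.h.p., and careful handling of the noncommuting, sign-indefinite \(\bm{T}_i\) in the leave-one-out step, where \(\bm{I}+\frac1\alpha\bm{M}\bm{T}_i\) must be shown uniformly invertible for \(x\ge\lambda_++\varepsilon\), as asserted in Section~\ref{section:MDE}.
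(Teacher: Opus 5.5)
Your architecture (Schur complement onto the $pr\times pr$ block, uniform convergence of $\bm K_n$ to $\bm K=\bm H_\alpha+x\bm I_{pr}$ above the bulk, zero crossing of a monotone limit) matches the paper's. However, the deterministic monotonicity step, which carries the first claim of the theorem, is wrong as written.

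With $\bm M_\alpha(x)=\int(\lambda-x)^{-1}\bm\Omega_\alpha(\mathrm d\lambda)$ one has $\bm M_\alpha'(x)=\int(\lambda-x)^{-2}\bm\Omega_\alpha(\mathrm d\lambda)\succ\bm 0$ for $x>\lambda_+(\alpha)$, not $\prec\bm 0$. Your own formula then gives $\frac{d}{dx}(\bm T\bm R_y)=-\frac1\alpha(\bm T\bm R_y)\bm M_\alpha'(x)(\bm T\bm R_y)^\top\preceq\bm 0$. The symmetry of $\bm T\bm R_y$ follows from Lemma~\ref{lem:push_through} without inverting $\bm T$. Hence $\bm K'(x)\preceq\bm 0$ because $\bm C(\bm y)\succeq\bm 0$, and so $h_\alpha(x_2)\le h_\alpha(x_1)-(x_2-x_1)$.

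With the sign you wrote, you land in the ``nondecreasing'' branch. There, the appeal to an MDE identity bounding the slope by $1$ is not an argument, so strict monotonicity is not actually proved. After the correction, and with differentiation under $\E_{\bm y}$ justified by Lemma~\ref{lem:uniform_denominator_bound}, your computation is a legitimate alternative to the paper's.

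The paper never differentiates the limit. It uses the exact finite-$n$ bound $\bm H_n'(z)=-\bm I_{pr}-\bm Q_n^\top(\bm P_n-z\bm I)^{-2}\bm Q_n\preceq-\bm I_{pr}$, and passes $\bm H_n(z_2)\preceq\bm H_n(z_1)-(z_2-z_1)\bm I_{pr}$ to the limit via Proposition~\ref{prop:outlier_equation}.

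You need that finite-$n$ inequality anyway; in your notation it reads $\bm K_n'(x)=-\bm B_n(x-\bm N_n)^{-2}\bm B_n^\top\preceq\bm 0$. It is what makes $\lambda_{\min}(x\bm I-\bm K_n(x))=x-\lambda_1(\bm K_n(x))$ increasing. (You wrote $\lambda_1(x\bm I-\bm K_n(x))$, which is the wrong eigenvalue.) This monotonicity is what excludes eigenvalues of $\bm D_n$ beyond the compact window on which uniform convergence holds. Once it is stated, the limiting monotonicity comes for free, which is why the paper's route is shorter.

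Your sketch of $\bm K_n\to\bm K$ also runs the leave-one-out expansion directly at real $x$. This requires all resolvents $(x-\bm N_n^{(i)})^{-1}$ to be bounded simultaneously, and confinement of $\bm N_n$ does not give that. The removed term $\frac1n\bm T(\bm y_i)\otimes\bm z_i\bm z_i^\top$ is sign-indefinite with norm of order $C_T/\alpha$. Deterministic perturbation bounds therefore only yield $\lambda_1(\bm N_n^{(i)})\le\lambda_1(\bm N_n)+C_T\|\bm z_i\|_2^2/n$. A union bound over $i$ would need a quantitative confinement rate, which Proposition~\ref{prop:upper_edge} does not provide. Similarly, Proposition~\ref{prop:bulk} gives $\bm M_n\to\bm M_\alpha$ only on $\C_+$, not at real $x$.

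The paper sidesteps both points. It does the leave-one-out computation only for $\Im z>\eta_0$, where every resolvent is bounded by $1/\Im z$. It then continues to $\{\Re z>\lambda_+(\alpha)+2\delta\}$ with Vitali's theorem, which only needs local boundedness of $\bm Q_n^\top\bm G_n(z)\bm Q_n$ on the confinement event for the full matrix $\bm P_n$.
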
 

Eigenvalue separation also has an estimation consequence: whenever the leading eigenvalue separates from the bulk, the corresponding spectral estimator has a nonvanishing asymptotic overlap with the signal subspace.

\begin{thm} \label{thm:weak_recovery}
Suppose that Assumptions~\ref{hyp:prop_limit} and~\ref{hyp:preprocessing} hold and that \(\Delta_{\bm{T}}(\alpha)>0\). Let \(\bm{v}_1\in\R^{pd} \setminus \{\bm{0}\}\) be any eigenvector associated with \(\lambda_1(\bm{D}_n)\), and let \(\widehat{\bm{W}}\) be the corresponding spectral estimator defined in~\eqref{eq:estimator}. Then there exists a constant \(\epsilon>0\) such that
\[
\mathbb{P}\left( \operatorname{Ov}(\widehat{\bm{W}},\mathcal{S}_\ast) \geq \epsilon\right)\to1.
\]
In particular, any leading eigenvector of \(\bm{D}_n\) yields, through~\eqref{eq:estimator}, a spectral estimator achieving weak recovery of the signal subspace \(\mathcal{S}_\ast\) throughout the supercritical regime.
\end{thm}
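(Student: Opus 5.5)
We prove Theorem~\ref{thm:weak_recovery} from the block eigenvector equation obtained by rotating into signal and noise coordinates, combined with Theorems~\ref{thm:bulk} and~\ref{thm:BBP_transition}.

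\textbf{Signal--noise decomposition.} By rotational invariance we may take $\bm{W}_\ast=[\bm{I}_r;\bm{0}]$, and write $\bm{x}_i=(\bm{s}_i,\bm{g}_i)$ with $\bm{s}_i\in\R^r$ and $\bm{g}_i\in\R^{d-r}$ independent of $(\bm{s}_i,\bm{y}_i)$. After a permutation, $\bm{D}_n$ becomes a $2\times2$ block matrix with signal block $\bm{A}=\frac1n\sum_i\bm{T}(\bm{y}_i)\otimes\bm{s}_i\bm{s}_i^\top$ of size $pr$, noise block $\bm{N}=\frac1n\sum_i\bm{T}(\bm{y}_i)\otimes\bm{g}_i\bm{g}_i^\top$, and off-diagonal block $\bm{B}=\frac1n\sum_i\bm{T}(\bm{y}_i)\otimes\bm{g}_i\bm{s}_i^\top$. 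Writing $\bm{v}_1=(\bm{u},\bm{w})$, the overlap equals $\|\bm{u}\|^2/\|\bm{v}_1\|^2$ up to the reshaping in~\eqref{eq:estimator}; more precisely, $\|\bm{W}_\ast^\top\widehat{\bm{W}}\|_{\mathrm F}^2/d$ is the squared norm of the signal coordinates of the normalized $\bm{v}_1$. It therefore suffices to show $\|\bm{u}\|^2\ge\epsilon\|\bm{v}_1\|^2$ with high probability.

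\textbf{Eigenvector equation.} Set $\lambda=\lambda_1(\bm{D}_n)$. By Theorem~\ref{thm:BBP_transition}(2), $\lambda\to\theta_\alpha>\lambda_+(\alpha)$ in probability. I would use the confinement $\lambda_1(\bm{N})\le\lambda_+(\alpha)+\delta$ with high probability for $\delta=(\theta_\alpha-\lambda_+)/3$. This is the noise-block statement already used for Theorem~\ref{thm:bulk}, via the BCSvH comparison applied conditionally on $\bm{y}$. Because of that confinement, $\lambda\bm{I}-\bm{N}$ is invertible, and the second block row gives
\[
\bm{w}=(\lambda\bm{I}-\bm{N})^{-1}\bm{B}\bm{u}.
\]
Consequently $\bm{u}\neq0$, since otherwise $\bm{w}=0$ and $\bm{v}_1=0$. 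Moreover,
\[
\|\bm{w}\|^2=\bm{u}^\top\bm{B}^\top(\lambda-\bm{N})^{-2}\bm{B}\bm{u}\le \|\bm{B}^\top(\lambda-\bm{N})^{-2}\bm{B}\|_{\mathrm{op}}\|\bm{u}\|^2.
\]

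\textbf{Bounding the $pr\times pr$ matrix.} It remains to show that $\bm{B}^\top(\lambda-\bm{N})^{-2}\bm{B}$ is $O(1)$ in operator norm with high probability. A crude bound suffices. First, $\|(\lambda-\bm{N})^{-2}\|_{\mathrm{op}}\le\delta^{-2}$ on the confinement event. Second, $\bm{B}$ has operator norm $O(1)$, because $\|\bm{B}\|_{\mathrm{op}}\le C_T\,\|\frac1{\sqrt n}\bm{G}\|_{\mathrm{op}}\,\|\frac1{\sqrt n}\bm{S}\|_{\mathrm{op}}$ after factorization. Concretely, write $\bm{B}=\frac1n\sum_i(\bm{T}(\bm{y}_i)\otimes\bm{g}_i)(\bm{I}_p\otimes\bm{s}_i^\top)$; Cauchy--Schwarz on the sum bounds the norm by $\|\frac1n\sum_i \bm{T}_i^2\otimes\bm{g}_i\bm{g}_i^\top\|^{1/2}\,\|\frac1n\sum_i\bm{I}_p\otimes\bm{s}_i\bm{s}_i^\top\|^{1/2}$. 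Both factors are $O(1)$ by standard sample-covariance bounds, with $\|\bm{T}\|\le C_T$ and $n/d\to\alpha$. Hence $\|\bm{w}\|^2\le K\|\bm{u}\|^2$ with $K=O(1)$ with high probability, which gives $\mathrm{Ov}\ge 1/(1+K)=:\epsilon$.

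\textbf{Main obstacle.} The only delicate input is the confinement of the top of $\bm{N}$ strictly below $\theta_\alpha$ uniformly, rather than merely below $\lambda_+ + \varepsilon$ up to $pr$ exceptions as in Theorem~\ref{thm:bulk}. Since $\bm{N}$ contains no signal and is, conditionally on $\bm{y}$, a Gaussian matrix-weighted covariance, the comparison-plus-Lehner argument should give $\lambda_1(\bm{N})\le\lambda_+(\alpha)+o(1)$ with no exceptions. If that were unavailable, the fallback is Cauchy interlacing: $\lambda_1(\bm{N})\le\lambda_1(\bm{D}_n)$ does not help, but the $pr$-exception statement combined with the Schur-complement analysis of Theorem~\ref{thm:BBP_transition} does. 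In the worst case one would instead work with the resolvent of $\bm{N}$ restricted to its confined part, using the finite-rank correction.
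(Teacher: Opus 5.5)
Your argument is correct and is essentially the paper's proof. The paper also solves the lower block of the eigenvector equation as $\bm{b}_n=(\lambda_1(\bm{D}_n)\bm{I}-\bm{P}_n)^{-1}\bm{Q}_n\bm{a}_n$ on the gap event and bounds the off-diagonal block by $O(1)$; it does so via $\|\bm{Q}_n\|_{\mathrm{op}}\le\|\bm{D}_n\|_{\mathrm{op}}\le C_T\|\bm{S}_n\|_{\mathrm{op}}$ rather than your factorization.

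The exception-free confinement you flag as the main obstacle is exactly Proposition~\ref{prop:upper_edge}, namely $\lambda_1(\bm{P}_n)\to\lambda_+(\alpha)$ in probability. So the fallback you sketch is not needed.
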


Theorem~\ref{thm:BBP_transition} gives a sharp characterization of the largest-eigenvalue transition at every fixed sampling ratio \(\alpha\). We next study how this transition varies with \(\alpha\). For a fixed preprocessing map \(\bm{T} \colon \R^q \to \sym_p (\R)\), define
\begin{equation} \label{eq:crit_thresholds}
\begin{split}
\alpha_{\mathrm{c},\min}(\bm{T}) & \coloneqq \inf \left\{ \alpha>0 \colon  \Delta_{\bm{T}}(\alpha)>0 \right\}, \\
\alpha_{\mathrm{c},\max}(\bm{T}) &\coloneqq \inf\left\{ \bar{\alpha}>0 \colon \Delta_{\bm{T}}(\alpha)>0 \text{ for every }\alpha>\bar{\alpha} \right\},
\end{split}
\end{equation}
with the convention \(\inf \emptyset=+\infty\). By construction, 
\[
\alpha_{\mathrm{c},\min}(\bm{T}) \leq \alpha_{\mathrm{c},\max}(\bm{T}). 
\]
The lower threshold is the infimum of the sampling ratios at which the supercritical regime can occur, whereas the upper threshold is the infimum of the cutoff values beyond which the spectral method is supercritical for every larger sampling ratio. The two thresholds need not coincide unless additional monotonicity properties of \(\alpha \mapsto \Delta_{\bm{T}}(\alpha)\) are established. 

To ensure that the supercritical regime is eventually reached, we impose the following sufficient condition.

\begin{hyp} \label{hyp:population_separation}
The leading population eigenvalue in the signal component is strictly larger than the leading population eigenvalue in the noise component:
\[
\lambda_1 \left( \E_{\bm{y}} \left[\bm{T} (\bm{y})\otimes\bm{C}(\bm{y}) \right] \right) > \lambda_1 \left( \E_{\bm{y}}[\bm{T}(\bm{y})] \right).
\]
\end{hyp}

\begin{rmk}
Assumption~\ref{hyp:population_separation} has a natural population-level interpretation. Using the orthogonal decomposition
\[
\R^p \otimes\R^d = (\R^p\otimes\mathcal{S}_\ast) \oplus (\R^p\otimes\mathcal{S}_\ast^\perp),
\]
Gaussian rotational invariance gives
\[
\E[\bm{D}_n] = (\bm{I}_p \otimes \bm{W}_\ast) \E_{\bm{y}} \left[\bm{T}(\bm{y})\otimes\bm{C}(\bm{y}) \right] (\bm{I}_p\otimes\bm{W}_\ast)^\top + \E_{\bm{y}}[\bm{T}(\bm{y})]\otimes(\bm{I}_d-\bm{\Pi}_\ast).
\]
Thus, the population matrix is block diagonal, with signal block
\[
\E_{\bm{y}} \left[\bm{T}(\bm{y}) \otimes \bm{C}(\bm{y})\right]
\]
and noise block
\[
\E_{\bm{y}} [\bm{T}(\bm{y})]\otimes\bm{I}_{d-r}.
\]
Assumption~\ref{hyp:population_separation} therefore ensures that the leading population eigenspace lies in \(\R^p\otimes\mathcal{S}_\ast\). This should only be viewed as population-level intuition in the proportional regime, where operator-norm fluctuations cannot in general be neglected. The exact fixed-\(\alpha\) criterion for eigenvalue separation is instead \(\Delta_{\bm{T}}(\alpha)>0\); Assumption~\ref{hyp:population_separation} is used only as a sufficient condition ensuring that the supercritical regime is eventually reached as \(\alpha\to\infty\). In the scalar rank-one case \(p=r=1\), it reduces to
\[
\E[T(\bm{y}) s^2]>\E[T(\bm{y})],
\]
the population-separation condition of Lu and Li~\cite{lu2020}.
\end{rmk}

We are now ready to state the spectral phase-transition result as a function of the sampling ratio \(\alpha\).

\begin{prop} \label{prop:crit_thresholds}
Suppose that Assumptions~\ref{hyp:prop_limit},~\ref{hyp:preprocessing}, and~\ref{hyp:population_separation} hold. Then
\[
0 < \alpha_{\mathrm{c},\min}(\bm{T}) \leq \alpha_{\mathrm{c},\max}(\bm{T}) <\infty.
\]
More precisely, there exist constants \(0<\alpha_-<\alpha_+<\infty\) such that \(\Delta_{\bm{T}}(\alpha)\leq0\) for every \(0<\alpha<\alpha_-\) and \(\Delta_{\bm{T}}(\alpha)>0\) for every \(\alpha>\alpha_+\). Thus, the following phase-transition statements hold.
\begin{enumerate}
\item If \(0<\alpha< \alpha_{\mathrm{c},\min}(\bm{T})\), then
\[
\lambda_1(\bm{D}_n) \stackrel{\mathbb{P}}{\to} \lambda_+(\alpha).
\]
\item If \(\alpha> \alpha_{\mathrm{c},\max}(\bm{T})\), then there exists a unique \(\theta_\alpha>\lambda_+(\alpha)\) satisfying \(h_\alpha(\theta_\alpha)=0\) and   
\[
\lambda_1(\bm{D}_n) \stackrel{\mathbb{P}}{\to} \theta_\alpha .
\]
\end{enumerate}
\end{prop}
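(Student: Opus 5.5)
Given Theorem~\ref{thm:BBP_transition}, the two phase-transition statements (1) and (2) follow directly from the definitions~\eqref{eq:crit_thresholds}. If \(\alpha<\alpha_{\mathrm{c},\min}(\bm{T})\), then \(\Delta_{\bm{T}}(\alpha)\le 0\) by definition of the infimum, so case (1) of Theorem~\ref{thm:BBP_transition} applies. If \(\alpha>\alpha_{\mathrm{c},\max}(\bm{T})\), then by definition of the infimum there is \(\bar\alpha<\alpha\) with \(\Delta_{\bm{T}}>0\) on \((\bar\alpha,\infty)\), so case (2) applies. The real content is therefore the existence of \(0<\alpha_-<\alpha_+<\infty\). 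These give \(\alpha_{\mathrm{c},\min}\ge\alpha_->0\) and \(\alpha_{\mathrm{c},\max}\le\alpha_+<\infty\). I would treat the two asymptotic regimes \(\alpha\to0\) and \(\alpha\to\infty\) separately, working entirely with the deterministic objects \(\bm{M}_\alpha\), \(\lambda_+(\alpha)\) and \(h_\alpha\). Since \(h_\alpha\) is decreasing (Theorem~\ref{thm:BBP_transition}), it suffices to evaluate \(h_\alpha\) near the edge, or to bound it uniformly above the edge.

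\textbf{Large \(\alpha\).} As \(\alpha\to\infty\), \(\bm{D}_n\) concentrates around its population \(\E[\bm{D}_n]\). At the level of the self-consistent equation, \(\bm{M}_\alpha(z)\to(\E[\bm{T}]-z)^{-1}\) in a suitable sense (up to the sign convention of Proposition~\ref{prop:bulk}). Consequently \(\lambda_+(\alpha)\to\lambda_1(\E_{\bm{y}}[\bm{T}(\bm{y})])=:\ell_0\). I would prove this edge bound from the variational edge characterization in Section~\ref{section:MDE}, or more crudely from a Stieltjes-inversion bound: \(\supp\mu_\alpha\subset[\lambda_p(\E\bm{T})-c\alpha^{-1/2},\,\ell_0+c\alpha^{-1/2}]\). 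For fixed \(x>\ell_0+\delta\), we have \(\|\bm{M}_\alpha(x)\|_{\mathrm{op}}\) bounded, so \((\bm{I}_p+\alpha^{-1}\bm{M}_\alpha\bm{T})^{-1}=\bm{I}_p+O(\alpha^{-1})\) uniformly in \(\bm{y}\) because \(\|\bm{T}\|\le C_T\). Hence \(\Phi_\alpha(x)\to\ell_1:=\lambda_1(\E[\bm{T}\otimes\bm{C}])\). By Assumption~\ref{hyp:population_separation}, \(\ell_1>\ell_0\). Choose \(x=(\ell_0+\ell_1)/2\). For large \(\alpha\) we have \(x>\lambda_+(\alpha)\) and \(h_\alpha(x)\to(\ell_1-\ell_0)/2>0\), so \(\Delta_{\bm{T}}(\alpha)\ge h_\alpha(x)>0\) for \(\alpha>\alpha_+\).

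\textbf{Small \(\alpha\).} As \(\alpha\to0\), the matrix is \(\frac1n\sum_i\bm{T}_i\otimes\bm{x}_i\bm{x}_i^\top\), a sum of \(n\ll d\) nearly orthogonal rank-\(p\) pieces each of norm about \(\|\bm{T}_i\|/\alpha\). Its spectrum is a point mass at \(0\) of weight \(1-O(\alpha)\) plus eigenvalues near \(\alpha^{-1}\lambda(\bm{T}_i)\). Thus \(\lambda_+(\alpha)\approx\alpha^{-1}\operatorname{ess\,sup}\lambda_1(\bm{T}(\bm{y}))=:\alpha^{-1}\tau\), with \(\tau>0\) whenever some \(\bm{T}\) has a positive eigenvalue. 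The case \(\bm{T}\preceq0\) a.s. is handled separately: then \(\ell_1\le\) something nonpositive and \(h_\alpha\le-x\) trivially suffices.

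Meanwhile \(\Phi_\alpha(x)\) for \(x>\lambda_+\) should stay \(O(1)\)-comparable or at most \(\alpha^{-1}\) times a quantity that I would show is strictly smaller than \(\tau\). The cleaner route is to write \(\bm{T}(\bm{I}+\alpha^{-1}\bm{M}\bm{T})^{-1}=\alpha(\bm{M}+\alpha\bm{T}^{-1})^{-1}\)-type identities and use that \(\Phi_\alpha\) is a weighted average with weights \(\bm{C}(\bm{y})\) of total mass \(\bm{I}_r\). Given that the rescaled quantity \(\alpha\Phi_\alpha(x)\) stays bounded by \(\alpha\cdot O(1)\) while \(\alpha x\ge\tau>0\), we get \(h_\alpha(x)<0\) on the whole range \(x>\lambda_+(\alpha)\) for small \(\alpha\).

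\textbf{Main obstacle.} I expect the small-\(\alpha\) step to be the main obstacle. It requires quantitative control of \(\bm{M}_\alpha(x)\) near the edge \(\lambda_+(\alpha)\), where \(\bm{I}_p+\alpha^{-1}\bm{M}_\alpha\bm{T}\) approaches singularity. I would first try to use the monotonicity already proven (which gives \(\Delta=\lim_{x\downarrow\lambda_+}h_\alpha\)) together with the positivity structure \(\Im\bm{M}\succeq0\) of the MDE. The aim is a bound \(\Phi_\alpha(x)\le C\) (or \(\le c\,\lambda_+(\alpha)\) with \(c<1\)) uniformly in \(x>\lambda_+(\alpha)\), obtained by controlling the near-singular factor via the MDE identity evaluated at the edge.
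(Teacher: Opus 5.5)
Your reduction of (1)--(2) to the existence of $\alpha_\pm$, and your large-$\alpha$ step, are sound and essentially the paper's Lemma~\ref{lem:large_alpha_supercritical}. That argument bounds $\limsup_{\alpha\to\infty}\lambda_+(\alpha)\le\lambda_1(\E_{\bm{y}}[\bm{T}(\bm{y})])$. It then evaluates $h_\alpha$ at a fixed $x$ between the noise and signal population eigenvalues, where $\|\bm{M}_\alpha(x)\|_{\mathrm{op}}\le(x-\lambda_+(\alpha))^{-1}$ stays bounded. The variational formula you want is Proposition~\ref{prop:limiting_variational_edge} in Appendix~\ref{appendix:limiting_variational_edge}, not Section~\ref{section:MDE}. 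Testing it with $\bm{Z}=t\bm{I}_p$ and letting $\alpha\to\infty$, then $t\to\infty$, does give the edge bound. The paper instead gets it from a uniform contraction and analytic continuation of the fixed-point map on $\{\Re z>\lambda_1(\E_{\bm{y}}[\bm{T}(\bm{y})])+\varepsilon\}$. The $\alpha^{-1/2}$ support bound you mention is neither needed nor proved.

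The small-$\alpha$ step has a genuine gap, and its central quantitative claim is false. You need $\Phi_\alpha(x)<x$ uniformly in $x>\lambda_+(\alpha)$. You propose to get this from $\alpha\lambda_+(\alpha)\gtrsim\tau$ together with $\Phi_\alpha(x)=O(1)$, and neither is proved. The second fails even under all hypotheses of the proposition. Take $p=r=1$ and $T(\bm{y})=\mathbf{1}\{C(\bm{y})>1\}$, which satisfies Assumptions~\ref{hyp:preprocessing}--\ref{hyp:population_separation} whenever $C$ is nonconstant. Then $\bm{P}_n$ is a Wishart matrix built from $qn$ samples, with $q=\mathbb{P}(C(\bm{y})>1)$, and
\[
\lambda_+(\alpha)=\frac{(1+\sqrt{q\alpha})^2}{\alpha},\qquad \Phi_\alpha(x)\to\E_{\bm{y}}\bigl[C(\bm{y})\mathbf{1}\{C(\bm{y})>1\}\bigr]\bigl(1+(q\alpha)^{-1/2}\bigr)\quad\text{as } x\downarrow\lambda_+(\alpha).
\]
This $\alpha^{-1/2}$ blow-up comes exactly from the near-singular factor $(\bm{I}_p+\alpha^{-1}\bm{M}_\alpha\bm{T})^{-1}$ that you flag. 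Comparing the sizes of $\Phi_\alpha$ and $x$ separately gives no handle on it. The identity $\alpha(\bm{M}+\alpha\bm{T}^{-1})^{-1}$ also presumes $\bm{T}$ invertible.

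The missing idea is to cancel $x$ against the MDE rather than estimate it. Set $\bm{Z}=-\bm{M}_\alpha(x)\succ0$ and $\bm{A}(\bm{y})=\bm{T}(\bm{y})(\bm{I}_p-\alpha^{-1}\bm{Z}\bm{T}(\bm{y}))^{-1}$. The MDE at $x$ reads $x\bm{I}_p=\bm{Z}^{-1}+\E_{\bm{y}}[\bm{A}]$, and since $\E_{\bm{y}}[\bm{C}]=\bm{I}_r$,
\[
(\bm{Z}^{1/2}\otimes\bm{I}_r)\,\bm{H}_\alpha(x)\,(\bm{Z}^{1/2}\otimes\bm{I}_r)=\alpha\E_{\bm{y}}\bigl[\bm{B}\otimes(\bm{C}-\bm{I}_r)\bigr]-\bm{I}_{pr},\qquad \bm{B}\coloneqq\alpha^{-1}\bm{Z}^{1/2}\bm{A}\bm{Z}^{1/2}=\bm{B}^\top .
\]
Cauchy--Schwarz bounds the first term by $\alpha\kappa(\E_{\bm{y}}\|\bm{B}\|_{\mathrm{F}}^2)^{1/2}$, where $\kappa^2=\E_{\bm{y}}\|\bm{C}-\bm{I}_r\|_{\mathrm{F}}^2\le r(r+1)$.

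The near-edge size of $\bm{B}$ is then controlled by stability, not by locating the edge. The map $\bm{H}\mapsto\alpha\E_{\bm{y}}[\bm{B}\bm{H}\bm{B}]$ is similar to $\mathcal{K}_x$, so its spectral radius is $<1$ by Corollary~\ref{cor:physical_stability_right}. Being Frobenius self-adjoint, its norm is also $<1$. Testing it on $\bm{I}_p$ gives $\alpha\E_{\bm{y}}\|\bm{B}\|_{\mathrm{F}}^2<p$ for every $x>\lambda_+(\alpha)$.

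Hence $\bm{H}_\alpha(x)\prec0$ uniformly in $x>\lambda_+(\alpha)$ as soon as $\alpha p\kappa^2<1$. This requires no information on $\lambda_+(\alpha)$ and no case split on the sign of $\bm{T}$; it is the paper's Lemma~\ref{lem:small_alpha_subcritical}.
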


Proposition~\ref{prop:crit_thresholds} does not characterize the entire interval \( [\alpha_{\mathrm{c},\min}(\bm{T}), \alpha_{\mathrm{c},\max}(\bm{T})]\). Without an additional monotonicity result for \(\alpha \mapsto \Delta_{\bm{T}} (\alpha)\), the supercritical set need not be an interval: the method may, in principle, alternate between subcritical and supercritical behavior inside this range. If the two thresholds coincide, however, they define a single sharp sampling transition.

We finally turn to the optimal choice of the preprocessing map. Our goal is to identify the smallest spectral threshold (in terms of the sample complexity $\alpha$) achievable over all matrix-valued preprocessing maps $\bm{T}$. At the spectral level, 
the optimal preprocessing rule is determined by the second-order fluctuations of the conditional second moment \(\bm{C}(\bm{y})\). We impose the following assumption.

\begin{hyp} \label{hyp:second_order_channel}
Let \(\bm{s} \sim\mathcal{N}(\bm{0}_r,\bm{I}_r)\) and \(\bm{y}\mid\bm{s} \sim P_\ast(\,\cdot\mid\bm{s})\). Recall \(\bm{C}(\bm{y}) \coloneqq \E [ \bm{s}\bm{s}^\top \mid\bm{y}]\) from~\eqref{eq:cond_second_moment}. We assume that the conditional second moment is nontrivial: 
\[ 
\mathbb{P}\left( \bm{C}(\bm{y})\neq\bm{I}_r \right)>0,
\]
and that there exists a constant \(c>0\) such that
\[
\bm{C}(\bm{y}) \succeq c\bm{I}_r \quad\text{a.s.}
\]
\end{hyp}

To describe the optimal preprocessing map, consider the linear operator \(\mathcal{A} \colon \sym_r (\R) \to \sym_r(\R) \) given by
\begin{equation} \label{eq:operator_A}
\mathcal{A} [\bm{H}] \coloneqq \E_{\bm{y}} \left[ (\bm{C} (\bm{y})-\bm{I}_r) \bm{H} (\bm{C} (\bm{y}) -\bm{I}_r) \right].
\end{equation}
We equip \(\sym_r(\R)\) with the Frobenius norm and denote the induced operator norm of \(\mathcal{A}\) by
\[
\|\mathcal{A}\|_{\mathrm{op}} \coloneqq \sup_{\substack{\bm{H} \in\sym_r(\R)\\ \|\bm{H}\|_{\mathrm{F}}=1}} \|\mathcal{A}[\bm{H}]\|_{\mathrm{F}}.
\]
The operator \(\mathcal{A}\) is self-adjoint with respect to the Frobenius inner product and preserves the cone of positive semidefinite matrices. Hence, by the Perron--Frobenius theorem for positive linear maps, its operator norm is an eigenvalue admitting a nonzero positive semidefinite eigenmatrix. We therefore define
\[
\mathscr{P}_\ast \coloneqq \left \{ \bm{H} \succeq \bm{0}_r \colon \Tr (\bm{H})=1, \enspace \mathcal{A}[\bm{H}] = \|\mathcal{A}\|_ {\mathrm{op}}\bm{H} \right\}.
\]
The set \(\mathscr{P}_\ast\) is nonempty and convex, and thus its relative interior \(\operatorname{relint}(\mathscr{P}_\ast)\) is nonempty. 

\begin{defn}
\label{defn:optimal_preprocessing_map}
Choose any \(\bm{H}_\ast \in \operatorname{relint}(\mathscr{P}_\ast)\) and set \(p_\ast \coloneqq \operatorname{rank}(\bm{H}_\ast).\) Let \(\bm{U}_\ast \in \R^{r\times p_\ast}\) have orthonormal columns spanning \(\operatorname{range} (\bm{H}_\ast)\).  Define the restricted conditional second moment
\[
\bm{C}_\ast (\bm{y}) \coloneqq \bm{U}_\ast^\top \bm{C} (\bm{y}) \bm{U}_\ast ,
\]
and the preprocessing map
\begin{equation} \label{eq:optimal_preprocessing}
\bm{T}_\ast (\bm{y}) \coloneqq \bm{I}_{p_\ast} - \bm{C}_\ast (\bm{y})^{-1} .
\end{equation}
We call \(\bm{T}_\ast\) the \emph{optimal preprocessing map}.
\end{defn}
The lower bound in Assumption~\ref{hyp:second_order_channel} implies \(\bm{C}_\ast (\bm{y}) \succeq c \bm{I}_{p_\ast}\) almost surely, and therefore \(\bm{T}_\ast\) is well defined and uniformly bounded. Moreover, \(\bm{T}_\ast\) is nontrivial and hence defines an admissible preprocessing map.

\begin{rmk}
The construction above has a simple interpretation. The matrix \(\bm{H}_\ast\) selects the part of the signal space in which the strongest second-order information is present, while the columns of \(\bm{U}_\ast\) provide an orthonormal basis for this subspace. The matrix \(\bm{C}_\ast(\bm{y})\) is therefore just the conditional second moment \(\bm{C}(\bm{y})\) restricted to the selected subspace. The particular choice of the orthonormal basis \(\bm{U}_\ast\) only changes the representation by an orthogonal change of coordinates. In particular, if \(\operatorname{rank}(\bm{H}_\ast)=r\), then the selected subspace is all of \(\R^r\), and we may take \(\bm{U}_\ast=\bm{I}_r\). In this case,
\[
\bm{T}_\ast (\bm{y}) = \bm{I}_r-\bm{C}(\bm{y})^{-1}.
\]
\end{rmk}

The structural properties of this Perron reduction that are used in the optimality argument are collected in Lemma~\ref{lem:perron_reduction_properties}. In Appendix~\ref{appendix:AMP_matrix}, we provide a heuristic derivation of this preprocessing map from Bayesian and TAP considerations. We now turn to the rigorous characterization of its optimal spectral threshold.

\begin{thm} \label{thm:opt}
Suppose that Assumptions~\ref{hyp:prop_limit} and~\ref{hyp:second_order_channel} hold. Then the lower and upper spectral thresholds of the preprocessing map \(\bm{T}_\ast\) coincide:
\[
\alpha_{\mathrm{c},\min}(\bm{T}_\ast) = \alpha_{\mathrm{c},\max}(\bm{T}_\ast) \eqqcolon \alpha_{\mathrm{c}}^\ast.
\]
Moreover, \(0<\alpha_{\mathrm{c}}^\ast<\infty\) and 
\begin{align}
\label{eq:def_alpha_cast}
\frac{1}{\alpha_{\mathrm c}^\ast}= \left\| \E_{\bm{y}}\left[ \left(\bm{C}(\bm{y})-\bm{I}_r \right) \otimes \left(\bm{C}(\bm{y})-\bm{I}_r \right) \right] \right\|_{\mathrm{op}}.
\end{align}
Furthermore, for every fixed \(p\geq1\) and every measurable preprocessing map \(\bm{T}\colon \R^q\to\sym_p(\R) \) satisfying Assumption~\ref{hyp:preprocessing}, it holds that
\[
\alpha_{\mathrm{c},\min}(\bm{T}) \geq \alpha_{\mathrm c}^\ast .
\]
\end{thm}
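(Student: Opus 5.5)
The argument will rest on Theorem~\ref{thm:BBP_transition}, which reduces the transition to the sign of \(\Delta_{\bm{T}}(\alpha)=\lim_{x\downarrow\lambda_+(\alpha)}h_\alpha(x)\), with \(h_\alpha\) strictly decreasing. Write \(\bm{K}\coloneqq\E_{\bm{y}}[(\bm{C}-\bm{I}_r)\otimes(\bm{C}-\bm{I}_r)]\). Its norm equals \(\|\mathcal{A}\|_{\mathrm{op}}\), because \(\bm{K}\) is \(\mathcal{A}\) acting on \(\mathrm{vec}(\bm{H})\), restricted to symmetric \(\bm{H}\); the Perron eigenvector is symmetric PSD, which is the source of \(\mathscr{P}_\ast\).

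\textbf{Step 1: a universal lower bound.} Fix \(\bm{T}\) and \(\alpha\) with \(\Delta_{\bm{T}}(\alpha)>0\). Then there is \(\theta>\lambda_+(\alpha)\) with \(h_\alpha(\theta)=0\). Set \(\bm{M}=\bm{M}_\alpha(\theta)\), which is Hermitian (real symmetric), and \(\bm{G}(\bm{y})=\bm{T}(\bm{I}+\alpha^{-1}\bm{M}\bm{T})^{-1}\).

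\emph{Finding a negative definite \(\bm{M}\).} I would use the continuation of the self-consistent equation of Proposition~\ref{prop:bulk}, presumably of the form \(\bm{M}(x)=-(x\bm{I}_p-\E[\bm{G}])^{-1}\) or a close variant, to rewrite \(\theta\bm{I}-\E[\bm{G}]=-\bm{M}^{-1}\succ 0\). For \(x>\lambda_+\) one has \(\bm{M}\prec0\).

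\emph{Writing the outlier condition.} The equation \(h_\alpha(\theta)=0\) says that \(\lambda_1(\E[\bm{G}\otimes\bm{C}])=\theta\). Subtracting \(\E[\bm{G}]\otimes\bm{I}_r\) gives
\[
\lambda_1\bigl(\E[\bm{G}\otimes(\bm{C}-\bm{I})]+\E[\bm{G}]\otimes\bm{I}\bigr)=\theta .
\]
Since \(\E[\bm{G}]\preceq\theta\bm{I}+\bm{M}^{-1}\), it follows that \(\E[\bm{G}\otimes(\bm{C}-\bm{I})]-\bm{M}^{-1}\otimes\bm{I}\) has an eigenvalue \(\ge0\).

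\emph{Cauchy--Schwarz.} Take a unit vector \(\bm{v}\) attaining this eigenvalue and let \(\bm{N}=-\bm{M}\succ0\). This gives \(\langle\bm{v},\E[\bm{G}\otimes(\bm{C}-\bm{I})]\bm{v}\rangle\ge\langle\bm{v},(\bm{N}^{-1}\otimes\bm{I})\bm{v}\rangle\). Rewrite \(\bm{G}=\bm{N}^{-1/2}\bm{B}(\bm{I}-\alpha^{-1}\bm{B})^{-1}\bm{N}^{-1/2}\) with \(\bm{B}=\bm{N}^{1/2}\bm{T}\bm{N}^{1/2}\), and the self-consistent equation gives a normalization of \(\E[\bm{G}\bm{N}\bm{G}]\)-type terms. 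Applying Cauchy--Schwarz in \(L^2(\bm{y})\) then yields
\[
1\le\frac{1}{\alpha}\,\lambda_1(\bm{K})\cdot(\text{a factor}\le1),
\]
so \(\alpha\ge1/\|\bm{K}\|_{\mathrm{op}}=\alpha_{\mathrm c}^\ast\). The factor \(\le1\) has to come from the MDE identity for the bulk part, in the same way as the scalar argument of Mondelli--Montanari, where \(\E[G^2]\) is controlled by \(\alpha\) times the edge condition. The edge condition itself comes from differentiating the MDE and using \(\bm{M}'(x)\succ0\) above the edge.

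\textbf{Main obstacle.} This matrix Cauchy--Schwarz step is where I expect the real difficulty, because \(\bm{T}(\bm{y})\) need not commute with \(\bm{N}\) or with \(\bm{C}\). My first attempt would be the operator Cauchy--Schwarz inequality \(\E[\bm{X}^\ast\bm{Y}]\E[\bm{Y}^\ast\bm{Y}]^{-1}\E[\bm{Y}^\ast\bm{X}]\preceq\E[\bm{X}^\ast\bm{X}]\). I would apply it with \(\bm{X}=\bm{I}\otimes(\bm{C}-\bm{I})\) and \(\bm{Y}=\) the \(\bm{G}\)-weighted factor, after the \(\bm{N}^{1/2}\) conjugation. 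Since \(\Delta_{\bm{T}}(\alpha)>0\) forces \(\alpha\ge\alpha_{\mathrm c}^\ast\), this gives \(\alpha_{\mathrm{c},\min}(\bm{T})\ge\alpha_{\mathrm c}^\ast\).

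\textbf{Step 2: \(\bm{T}_\ast\) attains the bound, with coinciding thresholds.} For \(\bm{T}_\ast=\bm{I}-\bm{C}_\ast^{-1}\), the conditional-expectation structure (Nishimori-type identities such as \(\E[\bm{T}_\ast\bm{C}_\ast]=\E[\bm{C}_\ast]-\bm{I}=0\)) should make the equality case of Cauchy--Schwarz explicit. I would use Lemma~\ref{lem:perron_reduction_properties} to reduce to the Perron subspace, on which the relevant restriction of \(\mathcal{A}\) has a positive definite Perron eigenvector with eigenvalue \(\|\mathcal{A}\|_{\mathrm{op}}\). I would then compute \(\lim_{x\downarrow\lambda_+}h_\alpha(x)\) explicitly, hoping to find \(\bm{M}_\alpha(\lambda_+)\) proportional to \(\bm{I}_{p_\ast}\) by symmetry at the edge. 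The expected outcome is that \(\Delta_{\bm{T}_\ast}(\alpha)\) has the sign of \(\alpha\|\mathcal{A}\|_{\mathrm{op}}-1\). That monotone sign in \(\alpha\) gives \(\alpha_{\mathrm{c},\min}=\alpha_{\mathrm{c},\max}=\alpha_{\mathrm c}^\ast\). Finiteness and positivity follow because Assumption~\ref{hyp:second_order_channel} gives \(0<\|\bm{K}\|_{\mathrm{op}}<\infty\): \(\bm{C}\) is bounded since \(\bm{C}\succeq c\bm{I}\) and \(\E\bm{C}=\bm{I}\) together imply \(\bm{C}\preceq(1+\dots)\) in expectation, and a truncation argument may be needed if \(\bm{C}\) is unbounded.
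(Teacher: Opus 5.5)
\paragraph{Step 1: the decoupling step is missing.} Your reduction is the right one. After conjugating by \(\bm{N}^{1/2}\otimes\bm{I}_r\), the condition \(h_\alpha(\theta)=0\) becomes \(\lambda_1(\E[\bm{A}\otimes(\bm{C}-\bm{I}_r)])=1/\alpha\) with \(\bm{A}=\alpha^{-1}\bm{B}(\bm{I}_p-\alpha^{-1}\bm{B})^{-1}\). Two sign slips: it is \(\E[\bm{G}\otimes(\bm{C}-\bm{I}_r)]+\bm{M}^{-1}\otimes\bm{I}_r\) whose top eigenvalue vanishes, and your last display should read \(1\le\alpha\lambda_1(\bm{K})\cdot(\cdots)\).

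The step you flag as the main obstacle is left open, and the operator Cauchy--Schwarz you propose with \(\bm{X}=\bm{I}_p\otimes(\bm{C}-\bm{I}_r)\) cannot close it. It produces \(\E[\bm{A}^2]\otimes\bm{I}_r\) and \(\bm{I}_p\otimes\E[(\bm{C}-\bm{I}_r)^2]\), hence at best \(\alpha>1/\|\E[(\bm{C}-\bm{I}_r)^2]\|_{\mathrm{op}}\). Since \(\|\E[(\bm{C}-\bm{I}_r)^2]\|_{\mathrm{op}}=\|\mathcal{A}[\bm{I}_r]\|_{\mathrm{op}}\ge\|\mathcal{A}\|_{\mathrm{op}}\), with strict inequality in general for noncommuting \(\bm{C}(\bm{y})\), this is weaker than \(\alpha>\alpha_{\mathrm c}^\ast\). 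Even that weaker bound would need \(\|\E[\bm{A}^2]\|_{\mathrm{op}}<1/\alpha\), which stability does not give: it controls only the spectral radius of \(\bm{H}\mapsto\E[\bm{A}\bm{H}\bm{A}]\).

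Two ingredients are missing.
\begin{enumerate}
\item The tensor Cauchy--Schwarz inequality \(\|\E[\bm{A}\otimes\bm{B}]\|_{\mathrm{op}}^2\le\|\E[\bm{A}\otimes\bm{A}]\|_{\mathrm{op}}\|\E[\bm{B}\otimes\bm{B}]\|_{\mathrm{op}}\) (Lemma~\ref{lem:tensorCS}). It follows by Schmidt-decomposing the test vector \(\bm{x}=\sum_k s_k\bm{u}_k\otimes\bm{v}_k\) and applying scalar Cauchy--Schwarz with the weights \(s_is_j\ge0\).
\item The bound \(\|\E[\bm{A}\otimes\bm{A}]\|_{\mathrm{op}}<1/\alpha\). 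Your remark about \(\bm{M}'\succ\bm{0}\) is the right source, but it must be turned into a statement about this tensor square. Differentiating the MDE gives \((\operatorname{Id}-\mathcal{K}_\theta)[\bm{M}_\alpha'(\theta)]=\bm{M}_\alpha(\theta)^2\succ\bm{0}\), and positivity of \(\mathcal{K}_\theta\) then yields \(\rho(\mathcal{K}_\theta)<1\). Moreover \(\mathcal{K}_\theta\) is similar to \(\alpha\,\E[\bm{A}\,\cdot\,\bm{A}]\), which is Frobenius self-adjoint, so its norm equals its spectral radius.
\end{enumerate}
Together these give \(1/\alpha^2\le\|\E[\bm{A}\otimes\bm{A}]\|_{\mathrm{op}}\|\bm{K}\|_{\mathrm{op}}<\|\bm{K}\|_{\mathrm{op}}/\alpha\).

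\paragraph{Step 2: no working mechanism.} There is no symmetry forcing \(\bm{M}_\alpha(\lambda_+(\alpha))\propto\bm{I}_{p_\ast}\). This already fails when the \(\bm{C}_\ast(\bm{y})\) are diagonal with non-identically distributed entries, where the MDE decouples into different scalar equations. The edge value of \(h_\alpha\) is also not computable in closed form. Besides, the ``only if'' half of your sign claim is Step 1, so what must be proved is \(\Delta_{\bm{T}_\ast}(\alpha)>0\) for every \(\alpha>\alpha_{\mathrm c}^\ast\).

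The paper never looks at the edge; it works at \(x=1/\alpha\).
\begin{enumerate}
\item With \(\bm{L}=-\alpha^{-1}\bm{M}^\ast_\alpha\), the matrix \(\bm{L}=\bm{I}_{p_\ast}\) always solves the real MDE at \(1/\alpha\), with linearization \(\alpha\mathcal{A}_\ast\) of spectral radius \(\alpha/\alpha_{\mathrm c}^\ast\). For \(\alpha>\alpha_{\mathrm c}^\ast\) this solution is unstable, hence not physical.
\item The real difficulty is showing that \(1/\alpha\) still lies strictly above the bulk. This is done by monotone iteration of \(\widetilde{\bm{\Psi}}_{\alpha,1/\alpha}\) from \(\bm{0}\), trapped below the strict supersolution \(\bm{I}_{p_\ast}-\varepsilon\bm{D}_\ast\) built from the Perron eigenmatrix.
\item The limit \(\bm{L}\prec\bm{I}_{p_\ast}\) satisfies \(\rho(\widetilde{\mathcal{K}}_{\bm{L}})<1\). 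Lemma~\ref{lem:physical_sol} then identifies it as the physical solution and gives \(1/\alpha\notin\supp(\mu_\alpha)\). The same argument for \(\lambda>1/\alpha\) then gives \(\lambda_+(\alpha)<1/\alpha\).
\item Only now does your Nishimori identity \(\E[\bm{T}_\ast\bm{C}_\ast]=\bm{0}\) enter. Combined with the MDE at \(1/\alpha\), it gives \(\E[\bm{Q}_\ast(\bm{I}_{p_\ast}-\bm{L})\bm{C}_\ast]=\alpha^{-1}(\bm{I}_{p_\ast}-\bm{L})\). Via \(\bm{C}\bm{U}_\ast=\bm{U}_\ast\bm{C}_\ast\), this produces the explicit kernel vector \(\operatorname{vec}(\bm{U}_\ast(\bm{I}_{p_\ast}-\bm{L}))\) of \(\bm{H}_\alpha(1/\alpha)\). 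Hence \(h_\alpha(1/\alpha)\ge0\), and strict monotonicity gives \(\Delta_{\bm{T}_\ast}(\alpha)>0\).
\end{enumerate}

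\paragraph{Finiteness and positivity of \(\alpha_{\mathrm c}^\ast\).} This needs no boundedness or truncation of \(\bm{C}\); in any case \(\bm{C}\succeq c\bm{I}_r\) and \(\E\bm{C}=\bm{I}_r\) do not bound it. Conditional Jensen gives \(\E\|\bm{C}\|_{\mathrm F}^2\le\E\|\bm{s}\|_2^4<\infty\), so \(\alpha_{\mathrm c}^\ast>0\). And \(\langle\bm{I}_r,\mathcal{A}[\bm{I}_r]\rangle_{\mathrm F}=\E\|\bm{C}-\bm{I}_r\|_{\mathrm F}^2>0\), so \(\alpha_{\mathrm c}^\ast<\infty\).
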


Theorem~\ref{thm:opt} identifies the smallest spectral threshold achievable by any admissible finite-dimensional matrix-valued preprocessing rule. The preprocessing map \(\bm{T}_\ast\), whose working dimension satisfies \(p_\ast\leq r\), attains this threshold, and no choice of a larger working dimension can improve upon \(\alpha_{\mathrm{c}}^\ast\). In particular, the optimal threshold depends only on the strength of the second-order fluctuations of \(\bm{C}(\bm{y})\), as measured by the operator appearing in~\eqref{eq:def_alpha_cast}. Theorem~\ref{thm:opt} proves Conjecture~3.10 of~\cite{defilippis2025}.

\subsection{Related work} \label{subsection:related_work}

\textbf{Gradient-based feature learning in multi-index models.}
Multi-index models provide a natural teacher setting for finite-width two-layer neural networks: learning the target requires identifying a low-dimensional collection of features hidden in a high-dimensional covariate space. The committee machine already exhibits statistical-to-computational gaps in the proportional Gaussian regime~\cite{aubin2019}. A growing line of work studies how gradient-based algorithms recover this latent structure, including representation and subspace learning by gradient descent, gradient flow, or SGD~\cite{damian2022,mousaviHosseini2023,bietti2025,simsek2025}, high-dimensional SGD dynamics and sequential learning phenomena~\cite{abbe2023,collinswoodfin2024}, and the gains obtained by reusing batches or individual samples across gradient steps~\cite{dandi2024,arnaboldi2024}. Recent results further investigate hierarchical and increasingly general multi-index targets, achieving near-linear sample complexity for subspace recovery in broad settings~\cite{zhangWang2025,defilippisHierarchical2026}. Together, these works motivate weak subspace recovery as a natural first stage of feature learning. We refer to~\cite{brunahsu2025} for a broader survey of algorithms for multi-index models.

\textbf{Single-index spectral methods.}
The single-index setting provides the classical benchmark for spectral weak recovery in nonlinear Gaussian models. For generalized linear models with Gaussian covariates, spectral estimators take the form of scalar-weighted sample covariance matrices. Lu and Li~\cite{lu2020} obtained a precise high-dimensional characterization of their spectral phase transition, while Mondelli and Montanari~\cite{mondelli2019} identified the fundamental weak-recovery threshold and a preprocessing function attaining it, with phase retrieval as a canonical example. The design of optimal spectral preprocessing was further developed from optimization, message-passing, and statistical-physics perspectives~\cite{luo2019,ma2021,maillard2022}, while the joint behavior of linear and spectral estimators was characterized in~\cite{mondelli2022b}. Spectral initialization for subsequent AMP iterations was rigorously analyzed in~\cite{mondelli2022a}. More recent work has extended this spectral theory beyond i.i.d.\ Gaussian designs to correlated Gaussian and, subsequently, general orthogonally invariant designs~\cite{zhang2025,zhang2026orthogonal}. The scalar structure reduces both the limiting spectral law and the outlier equation to scalar self-consistent equations.

\textbf{Multi-index weak recovery and spectral methods.}
Estimating the index subspace in multi-index models has a long history in sufficient dimension reduction, including sliced inverse regression and related inverse-moment methods~\cite{li1991,cook2000}; see~\cite{brunahsu2025} for a recent survey. In the proportional high-dimensional regime, Troiani et al.~\cite{troianifundamental} characterized the instability threshold of the uninformative AMP fixed point and the resulting computational threshold for weak recovery with infinitesimal side information. Related precise spectral asymptotics for problems with multiple latent signals were obtained for mixed generalized linear models with independent signal components~\cite{zhang2026}. Kova{\v{c}}evi{\'c}, Zhang, and Mondelli~\cite{kovavcevic2025} studied scalar-preprocessed spectral estimators for general multi-index models, deriving precise eigenvalue and eigenvector asymptotics and optimizing the scalar preprocessing function. When the relevant conditional second-moment matrices are simultaneously diagonalizable, they showed that the resulting optimal spectral threshold coincides with the AMP threshold. In parallel, Defilippis et al.~\cite{defilippis2025} derived a genuinely matrix-valued spectral estimator by linearizing AMP and predicted its transition through state evolution. They established the corresponding spectral characterization under the same common-eigenbasis assumption, where the matrix-valued problem decomposes into scalar single-index-type blocks, and conjectured its validity in the general noncommutative setting. Beyond this second-order proportional regime, Damian, Lee, and Bruna~\cite{damian2025} developed higher-order spectral estimators and characterized sharp sample-complexity exponents for efficient subspace recovery in general Gaussian multi-index models. Our results remove the simultaneous-diagonalizability assumption, analyze the full matrix-valued operator, and prove the conjectured transition in general.

\textbf{AMP- and TAP-inspired spectral methods.}
A related line of work derives spectral estimators by linearizing message-passing or TAP equations around an uninformative fixed point. For block-structured spiked Wigner models, an optimal spectral estimator arising from linearized AMP was rigorously analyzed in~\cite{mergny24a}, where its sharp BBP transition was established. The matrix-valued estimator of Defilippis et al.~\cite{defilippis2025}, discussed above, arises from the same AMP-linearization principle. More recently, two concurrent works developed related constructions for correlated multi-view inference. Yang, Sen, and Lu~\cite{yang2026} derive a spectral estimator for multi-view spiked Wigner models by linearizing AMP and establish its sharp weak-recovery transition, while Du, Hu, and Lepsveridze~\cite{du2026} develop TAP-inspired optimal spectral algorithms for correlated two-view models, including high-dimensional canonical correlation analysis and correlated spiked Wigner and Wishart models. These works provide further examples in which linearized message-passing or statistical-physics equations lead to spectral operators attaining sharp inference thresholds, although the resulting random-matrix structures differ from the matrix-weighted covariance ensemble considered here.

\textbf{Spectral geometry and gradient-based dynamics.}
Spectral transitions in loss Hessians have long been connected to the ability of gradient dynamics to escape uninformative regions of high-dimensional nonconvex landscapes. In spiked matrix--tensor models and phase retrieval, BBP-type transitions in the Hessian identify signal-bearing unstable directions and predict the success of gradient flow~\cite{saraomannelli2019,saraomannelli2020}. More recently, BBP transitions in the Hessian at initialization have been studied in overparameterized teacher--student neural networks, revealing how overparameterization can modify the emergence of informative curvature~\cite{annesi2026}.

More broadly, high-dimensional SGD dynamics in planted inference models have been characterized through effective low-dimensional limits, including critical scaling regimes in which stochastic corrections modify the deterministic dynamics~\cite{benarous2024}. Subsequent work has investigated how spectral geometry itself evolves along these dynamics. In phase retrieval, the time-dependent Hessian exhibits finite windows of informative negative curvature~\cite{bonnaire2025}, while in~\cite{benarous2025,benarous2026} the authors establish connections between high-dimensional SGD dynamics, emerging Hessian outliers, and alignment with low-dimensional signal eigenspaces. Related work on quadratic two-layer networks connects one-pass SGD escape dynamics with the geometry of the population-loss Hessian~\cite{bocchi2026}, while solvable random-matrix models of gradient flow exhibit transient BBP outliers driven by covariance anisotropy~\cite{coeurdoux2026}.

For multi-index learning, related spectral mechanisms have recently appeared in analyses of early feature discovery. Defilippis et al.~\cite{defilippisHierarchical2026} interpret a scalar spectral estimator as the Hessian of a modified objective and as the small-learning-rate limit of first-layer gradient descent. Zhang et al.~\cite{zhangWang2025} show that, near initialization, layer-wise gradient descent can implement a power-iteration-like mechanism on a local Hessian and recover the hidden subspace for generic multi-index targets. Complementary proportional-regime analyses study signal-bearing Hessian outliers along two-layer-network training~\cite{montanariWang2026}, while iterative spectral mechanisms have also been proposed as models of hierarchical feature construction across layers~\cite{dandi2026}. These works analyze spectral operators tied to specific models or learning dynamics, whereas the present paper isolates a fixed matrix-valued operator and develops its noncommutative random-matrix theory.

\textbf{Random matrix context.}
Scalar-weighted covariance matrices are closely related to classical sample-covariance models~\cite{silverstein1995,bai2010}, while matrix Dyson equations provide a broad framework for correlated random matrices~\cite{ajanki2019,alt2020}. Sharp control of spectral edges in covariance- and Gram-type models is often obtained through local laws and eigenvalue rigidity; see, for instance,~\cite{alt2017}. Such results provide substantially finer spectral information than is needed here. In our setting, moreover, a direct local-law approach would have to accommodate noncommuting matrix weights, correlated covariance blocks, and the nonstandard nonlinear self-consistent equation described above. Instead, we only require confinement of the noise spectrum at its upper edge. We obtain this through the spectral comparison approach of~\cite{bandeira2026}: after conditioning on the responses and introducing a self-adjoint linearization, the noise ensemble is compared with an associated free Gaussian operator, whose upper edge is controlled through a Lehner-type variational argument~\cite{lehner1999}. Thus, rather than establishing a full local law for the matrix-weighted ensemble, we prove the weaker spectral confinement sufficient for the subsequent finite-dimensional outlier analysis.

\subsection{Overview} 
The remainder of the paper is organized as follows. Section~\ref{section:outline_proofs} gives a detailed outline of the proofs of our main results. In Section~\ref{section:MDE}, we introduce and analyze the deterministic self-consistent equation governing the limiting bulk spectrum. Sections~\ref{section:bulk}--\ref{section:optimal_preprocessing} contain the proofs of our main results: Section~\ref{section:bulk} establishes the bulk spectral results, Section~\ref{section:phase_transition} analyzes the spectral phase transition and weak recovery, and Section~\ref{section:optimal_preprocessing} studies the AMP-derived optimal preprocessing map. Appendix~\ref{appendix:AMP_matrix} provides the AMP heuristics and motivation underlying the optimal spectral construction, Appendix~\ref{appendix:limiting_variational_edge} derives the exact variational characterization of the limiting upper spectral edge, and Appendix~\ref{appendix:auxiliary} collects auxiliary results used throughout the proofs.
\\

\textbf{Acknowledgments.} We thank Johannes Alt, Afonso Bandeira, Yatin Dandi, Bruno Loureiro, and Subhabrata Sen for helpful discussions about this problem. We acknowledge funding from the Swiss National Science Foundation grants OperaGOST (grant number 200021 200390), DSGIANGO (grant number 225837), and from the Simons
Collaboration on the Physics of Learning and Neural Computation via the Simons Foundation grant (\# 1257412).
\section{Outline of proofs}  \label{section:outline_proofs}

In this section, we outline the proofs of the main results stated in Subsection~\ref{subsection_main_results}. The goal is to explain the main ideas and the structure of the arguments before presenting the technical estimates. It therefore also provides a roadmap for the proofs and indicates where the main technical arguments are established.

\subsection{Signal--noise decomposition} \label{subsection:preliminaries}

We begin by decomposing the spectral matrix \(\bm{D}_n\) from Definition~\ref{def:spectral_matrix} into a noise block and a finite-rank signal perturbation.

By rotational invariance of the Gaussian design, after an orthogonal change of coordinates we may assume without loss of generality that 
\[ 
\bm{W}_\ast = \begin{pmatrix} \bm{I}_r \\ \bm{0}_{(d-r)\times r} \end{pmatrix}. 
\]
We therefore write \(\bm{x}_i = (\bm{s}_i^\top, \bm{u}_i^\top)^\top\), where \(\bm{s}_i \sim \mathcal{N}(\bm{0}_r, \bm{I}_r)\) and \(\bm{u}_i \sim  \mathcal{N}(\bm{0}_{d-r}, \bm{I}_{d-r})\) are independent. The pairs \((\bm{s}_i,\bm{u}_i)\) are also independent across samples. In these coordinates, the response model becomes
\[
\bm{y}_i \mid \bm{s}_i \sim P_\ast (\, \cdot \mid \bm{s}_i),
\]
so that \(\bm{u}_i\) is independent of \((\bm{s}_i,\bm{y}_i)\), and hence of \(\bm{T}(\bm{y}_i)\). 

Recall that the spectral matrix is given by
\[
\bm{D}_n = \frac{1}{n} \sum_{i=1}^n \bm{T}(\bm{y}_i) \otimes \bm{x}_i \bm{x}_i^\top \in \R^{pd \times pd}.
\]
After permuting the coordinates to group the signal and noise components, \(\bm{D}_n\) takes the block form
\begin{equation} \label{eq:D_n_prime}
\bm{D}'_n =
\begin{pmatrix}
\bm{A}_n & \bm{Q}_n^\top \\
\bm{Q}_n & \bm{P}_n
\end{pmatrix},
\end{equation}
where
\begin{align}
\bm{A}_n & =  \frac{1}{n} \sum_{i=1}^n \bm{T}(\bm{y}_i) \otimes \bm{s}_i \bm{s}_i^\top \in \R^{pr \times pr}, \label{eq:A_n} \\
\bm{Q}_n &= \frac{1}{n} \sum_{i=1}^n \bm{T}(\bm{y}_i) \otimes \bm{u}_i \bm{s}_i^\top \in \R^{p(d-r) \times pr}, \label{eq:Q_n}\\
\bm{P}_n &= \frac{1}{n} \sum_{i=1}^n \bm{T}(\bm{y}_i) \otimes \bm{u}_i \bm{u}_i^\top \in \R^{p(d-r) \times p(d-r)}. \label{eq:P_n}
\end{align}
The matrix \(\bm{P}_n\) represents the noise component. Its limiting spectral distribution determines the bulk spectrum of \(\bm{D}_n\), whereas the remaining blocks constitute a finite-rank perturbation that may generate outlying eigenvalues.

Since the rotation and permutation of coordinates do not change eigenvalues, \(\bm{D}_n\) and \(\bm{D}_n'\) share the same spectrum. Thus, for every \( z \notin \operatorname{sp} (\bm{P}_n) \), the Schur complement formula gives
\begin{equation} \label{eq:schur_det_lemma}
\begin{split}
\det (\bm{D}_n -z \bm{I}_{pd}) & = \det(\bm{P}_n - z \bm{I}_{p(d-r)}) \det \left ( \bm{A}_n -  z \bm{I}_{pr} - \bm{Q}_n^\top (\bm{P}_n - z \bm{I}_{p(d-r)})^{-1} \bm{Q}_n \right ).
\end{split}
\end{equation}
Therefore, if \( z \notin \operatorname{sp} (\bm{P}_n) \), then \(z\) is an eigenvalue of \(\bm{D}_n\) if and only if  
\begin{equation} \label{eq:finite_schur_outlier}
 \det \left ( \bm{A}_n - z \bm{I}_{pr}  - \bm{Q}_n^\top (\bm{P}_n - z \bm{I}_{p(d-r)})^{-1} \bm{Q}_n \right )=0.
\end{equation}
In particular, on any interval disjoint from \(\operatorname{sp}(\bm{P}_n)\), the eigenvalues of \(\bm{D}_n\) are characterized by the zeros of the \(pr\)-dimensional Schur complement determinant.

The proof therefore separates into two tasks. We first characterize the noise spectrum of \(\bm{P}_n\) and establish confinement at its upper edge. We then analyze the finite-dimensional Schur complement above that edge to locate signal-induced outliers and determine the spectral transition.

\subsection{Bulk spectrum}

We begin by studying the noise component \(\bm{P}_n\), which determines the limiting bulk spectrum of \(\bm{D}_n\). We denote by \(\hat{\mu}_{\bm{P}_n}\) the empirical spectral measure of \(\bm{P}_n\):
\begin{equation*}
\hat{\mu}_{\bm{P}_n}\coloneqq \frac{1}{p(d-r)} \sum_{i=1}^{p(d-r)} \delta_{\lambda_i(\bm{P}_n)}.
\end{equation*}
For \(z\in\C_+\), define the resolvent by
\begin{equation*}
\bm{G}_n (z) \coloneqq (\bm{P}_n - z\bm{I}_{p(d-r)})^{-1},
\end{equation*}
and its Stieltjes transform by
\begin{equation*}
m_n (z) \coloneqq \int_\R \frac{1}{\lambda - z} \mathrm{d} \hat{\mu}_{\bm{P}_n}(\lambda) = \frac{1}{p(d-r)}\Tr\bm{G}_n(z).
\end{equation*}

We view \(\bm{G}_n(z)\) as a \(p\times p\) block matrix with blocks \(\bm{G}_{\mu\nu}(z)\in\C^{(d-r)\times(d-r)}\). We define its normalized partial trace by
\[
\bm{M}_n(z) \coloneqq \tr_{d-r}^{(p)} \left ( \bm{G}_n(z)\right ) \in \C^{p \times p},
\] 
where \(\tr_{d-r}^{(p)}\) is defined in~\eqref{eq:partial_trace}. By construction,
\begin{equation*}
m_n (z)=\frac{1}{p}\Tr\bm{M}_n(z).
\end{equation*}
Thus, \(\bm{M}_n(z)\) is a matrix-valued Stieltjes transform that retains the block structure of the resolvent.

Our first main result identifies the limiting matrix-valued Stieltjes transform through a self-consistent equation and establishes the almost-sure weak convergence of the empirical spectral measure in the proportional high-dimensional limit.

\begin{prop} \label{prop:bulk}
Suppose that Assumptions~\ref{hyp:prop_limit} and \ref{hyp:preprocessing} hold. Then, almost surely, \(\bm{M}_n\) converges locally uniformly on \(\C_+\) to a deterministic analytic function \(\bm{M}_\alpha \colon \C_+ \to \C^{p \times p}\). The limit belongs to the \emph{normalized matrix-valued Herglotz class}, i.e., it satisfies 
\[
\Im \bm{M}_\alpha (z) \succeq 0, \enspace z \in \C_+, \quad \mathrm{and} \quad \lim_{\eta\to\infty} i\eta\,\bm{M}_\alpha (i\eta) = -\bm{I}_p,
\]
and is the unique solution in the normalized matrix-valued Herglotz class of the self-consistent equation
\[ 
\bm{M}_\alpha (z)^{-1} = - z \bm{I}_p + \E_{\bm{y}}\left [  \bm{T} (\bm{y}) \left ( \bm{I}_p + \frac{1}{\alpha}  \bm{M}_\alpha (z) \bm{T} (\bm{y})\right)^{-1}\right ].
\]
In particular, for every fixed \(z \in \C_+\), 
\begin{equation*}
\lVert \bm{M}_n (z) - \bm{M}_\alpha (z) \rVert_{\mathrm{op}} \to 0, \quad \textnormal{a.s.}
\end{equation*}
Consequently, \(\hat{\mu}_{\bm{P}_n}\) converges weakly almost surely to a deterministic compactly supported probability measure \(\mu_\alpha\) on \(\R\), whose Stieltjes transform is \(m_{\mu_\alpha}(z) = \frac{1}{p} \Tr \,\bm{M}_\alpha(z)\).
\end{prop}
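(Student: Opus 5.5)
Our plan has three steps: a deterministic analysis of the self-consistent equation, an approximate version of that equation satisfied by \(\bm{M}_n\), and concentration plus stability. Write \(\bm{T}_i \coloneqq \bm{T}(\bm{y}_i)\) and \(N \coloneqq d-r\). Since the \(\bm{u}_i\) are independent of the \(\bm{T}_i\), we condition on \((\bm{y}_i)_i\) throughout. The \(\bm{T}_i\) are then i.i.d.\ bounded weights and \(\bm{P}_n=\frac1n\sum_i \bm{T}_i\otimes \bm{u}_i\bm{u}_i^\top\) is a sum of independent rank-\(p\) terms.

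\textbf{Step 1: approximate self-consistent equation.} Start from \(\bm{P}_n\bm{G}_n - z\bm{G}_n = \bm{I}\) and set \(\bm{v}_i \coloneqq \bm{I}_p\otimes \bm{u}_i\), a \(pN\times p\) matrix. Then \(\bm{T}_i\otimes \bm{u}_i\bm{u}_i^\top = \bm{v}_i \bm{T}_i \bm{v}_i^\top\). A Sherman--Morrison--Woodbury rank-\(p\) update gives
\[
\bm{G}_n\bm{v}_i = \bm{G}^{(i)}\bm{v}_i\Bigl(\bm{I}_p+\tfrac1n \bm{T}_i\bm{v}_i^\top \bm{G}^{(i)}\bm{v}_i\Bigr)^{-1}.
\]
We apply \(\tr_N^{(p)}\) to \(\frac1n\sum_i \bm{v}_i\bm{T}_i\bm{v}_i^\top \bm{G}_n - z\bm{G}_n=\bm{I}\). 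Gaussian quadratic-form concentration (Hanson--Wright, with \(\|\bm{G}^{(i)}\|\le 1/\Im z\)) gives \(\frac1N\bm{v}_i^\top \bm{G}^{(i)}\bm{v}_i = \bm{M}^{(i)}_n + o(1)\) uniformly in \(i\). Interlacing for a rank-\(p\) perturbation gives \(\|\bm{M}^{(i)}_n-\bm{M}_n\|=O(1/n)\). Together these yield
\[
-z\bm{M}_n + \frac1n\sum_i \Bigl(\bm{I}_p + \tfrac{1}{\alpha}\bm{T}_i \bm{M}_n\Bigr)^{-1}\bm{T}_i\bm{M}_n \approx \bm{I}_p .
\]
This rearranges to \(\bm{M}_n^{-1}\approx -z+\frac1n\sum_i \bm{T}_i(\bm{I}+\frac1\alpha \bm{M}_n\bm{T}_i)^{-1}\). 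The inverse \((\bm{I}+\frac1\alpha\bm{T}\bm{M})^{-1}\) must be bounded, and we would get this from \(\Im \bm{M}\succ 0\) via the standard bound \(\|(\bm{I}+\bm{T}\bm{M})^{-1}\|\le C(z)\), obtained by writing it as \(\bm{M}^{-1}(\bm{M}^{-1}+\bm{T})^{-1}\) with \(\Im(-\bm{M}^{-1})\succ0\). The law of large numbers then replaces \(\frac1n\sum_i\) by \(\E_{\bm{y}}\). McDiarmid/martingale concentration of \(\bm{M}_n\) around its mean with \(O(1/n)\) increments, followed by Borel--Cantelli, upgrades the convergence to almost sure.

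\textbf{Step 2: deterministic equation.} This is the main obstacle. We would prove existence and uniqueness within the normalized Herglotz class following Helton--Rashidi Far--Speicher. Define \(\mathcal{F}_z(\bm{M}) \coloneqq (-z+\E[\bm{T}(\bm{I}+\frac1\alpha\bm{M}\bm{T})^{-1}])^{-1}\) on \(\{\Im \bm{M}\succ0\}\). The aim is to show that \(\mathcal{F}_z\) maps a suitable bounded region \(\{\Im\bm{M}\succeq\epsilon,\ \|\bm{M}\|\le R\}\) into itself and is a strict contraction for the Carathéodory/Earle--Hamilton metric. The key identity is
\[
\Im\bigl(\bm{T}(\bm{I}+\tfrac1\alpha\bm{M}\bm{T})^{-1}\bigr) = -\tfrac1\alpha \bm{K}^*\bigl(\Im \bm{M}\bigr)\bm{K}, \qquad \bm{K}=\bm{T}(\bm{I}+\tfrac1\alpha\bm{M}\bm{T})^{-1},
\]
which shows that the self-energy maps the upper half-plane into the lower half-plane. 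Consequently \(-z+\E[\cdot]\) has imaginary part \(\preceq -\Im z\), and so \(\mathcal{F}_z\) lands strictly inside the domain. The Earle--Hamilton theorem then gives a unique fixed point for \(\Im z\) large, and analytic continuation together with identity-theorem arguments extends this to all of \(\C_+\), with the Herglotz property and \(i\eta\bm{M}(i\eta)\to-\bm{I}_p\). Since every subsequential limit of \(\bm{M}_n\) is Herglotz (normalization follows from \(\|\bm{P}_n\|=O(1)\) a.s.\ and boundedness) and solves the equation, uniqueness forces convergence to \(\bm{M}_\alpha\). Locally uniform convergence follows from Vitali/Montel, using the equicontinuity bound \(\|\bm{M}_n\|\le1/\Im z\).

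\textbf{Step 3: conclusion.} From \(m_n\to m_{\mu_\alpha}\) on \(\C_+\) a.s., the normalization shows that \(\mu_\alpha\) is a probability measure. The bound \(\|\bm{P}_n\|\le C_T\|\frac1n\sum_i\bm{u}_i\bm{u}_i^\top\|\le C_T(1+\alpha^{-1/2})^2+o(1)\) a.s.\ gives tightness and compact support. The Stieltjes continuity theorem then yields weak convergence a.s.
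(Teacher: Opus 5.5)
\textbf{The gap is in Step~2.} Steps~1 and~3 work, but the deterministic part does not establish a solution on all of $\C_+$ as written.

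First, from $\Im(-z+\mathcal{S}(\bm{M}))\preceq-(\Im z)\bm{I}_p$ you only get $\|\mathcal{F}_z(\bm{M})\|_{\mathrm{op}}\le 1/\Im z$. A uniform lower bound on $\Im\mathcal{F}_z(\bm{M})$ needs an upper bound on $\|\mathcal{S}(\bm{M})\|_{\mathrm{op}}$. On $\{\Im\bm{M}\succeq\epsilon\}$ the natural one is $\alpha/\epsilon$, which gives $\Im\mathcal{F}_z(\bm{M})\succeq \Im z\,(|z|+\alpha/\epsilon)^{-2}\bm{I}_p$. By AM--GM this never exceeds $\epsilon\bm{I}_p$ once $\alpha\ge 1/4$. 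So your invariance claim is justified only for large $\Im z$, where a plain Banach contraction on $\{\|\bm{X}\|_{\mathrm{op}}\le 2/\Im z\}$ already suffices.

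More seriously, ``analytic continuation together with identity-theorem arguments extends this to all of $\C_+$'' is not an argument. Continuing an implicitly defined branch off a half-plane is exactly what must be proved, since the branch could run into singular points of the linearization.

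The paper avoids this by proving pointwise existence and uniqueness at every $z\in\C_+$ (Lemma~\ref{lem:pointwise_existence_uniqueness}). It applies Earle--Hamilton to the second iterate $\bm{\Psi}_{\alpha,z}^{\circ2}$. Points $\bm{Y}$ in the image of $\bm{\Psi}_{\alpha,z}$ satisfy $\Im\bm{Y}\succeq(\Im z)\bm{Y}^\ast\bm{Y}$. Together with $\|\bm{T}\|_{\mathrm{op}}\le C_T$, this gives $\|\bm{\mathcal{Q}}_\alpha(\bm{Y},\bm{y})\|_{\mathrm{op}}\le K_z$ (Lemma~\ref{lem:Psi_second_iterate}), and hence $\Im\bm{\Psi}_{\alpha,z}^{\circ2}\succeq \Im z\,(|z|+K_z)^{-2}\bm{I}_p$ uniformly.

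You can also close the gap with ingredients you already have:
\begin{itemize}
\item Uniqueness: any normalized Herglotz solution satisfies $\|\bm{M}(z)\|_{\mathrm{op}}\le 1/\Im z$. So two of them coincide for large $\Im z$ by the contraction, and everywhere by the identity theorem.
\item Existence: by your Step~1, every Montel subsequential limit of $\bm{M}_n$ is a normalized Herglotz solution on $\C_+$. So all limits coincide and define $\bm{M}_\alpha$.
\end{itemize}
Reordered this way, your proof of the proposition is complete and shorter than the paper's. However, it yields only uniqueness among global Herglotz solutions. It does not give the pointwise uniqueness in $\mathbb{H}_p$ that the paper needs later, in Corollary~\ref{cor:MDE_recognition} and in the converse part of Lemma~\ref{lem:physical_sol}.

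\textbf{The remaining steps are a legitimate variant of the paper's argument.}
\begin{itemize}
\item You bound the inverses at every fixed $z\in\C_+$ via $\|\bm{P}_n\|_{\mathrm{op}}=O(1)$. This gives $\Im\bm{M}_n\succeq \Im z\,(\|\bm{P}_n\|_{\mathrm{op}}+|z|)^{-2}\bm{I}_p$, and the analogue for the quadratic forms. The paper instead works only on $\{\Im z>\eta_0\}$ with Neumann series.
\item You get normalization and compact support from the same operator-norm bound; the paper gets them from the MDE.
\item Since $\bm{M}_n$ depends on all the $\bm{y}_i$, replacing $\frac1n\sum_i$ by $\E_{\bm{y}}$ requires a law of large numbers that is uniform over $\bm{X}$ in a compact subset of $\{\Im\bm{X}\succ0\}$. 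A finite net plus the Lipschitz bound does this, as in the proof of Lemma~\ref{lem:free_MDE_convergence}. This replaces the paper's leave-one-out/McDiarmid step (Lemma~\ref{lem:self_avg}).
\item Your McDiarmid step on $\bm{M}_n$ is then unnecessary, because Hanson--Wright with a union bound already gives almost-sure control.
\item The correct identity is $\Im\bm{K}=-\frac1\alpha\bm{K}(\Im\bm{M})\bm{K}^\ast$, with the factors in the other order. Only the sign is used, so this is harmless.
\end{itemize}
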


The proof of Proposition~\ref{prop:bulk} is provided in Section~\ref{section:bulk}.

\begin{rmk}
When \(p=1\), the self-consistent equation reduces to 
\[
\frac{1}{m_\alpha (z)} = - z + \E \left[ \frac{T(\bm{y})}{1 + \alpha^{-1} T(\bm{y}) m_\alpha (z)}\right],
\]
which is the usual self-consistent equation for a weighted sample covariance matrix. If \(T(\bm{y})\geq0\) almost surely, this is the generalized Marchenko--Pastur equation satisfied by the weighted sample covariance matrix \(\frac{1}{n} \bm{Z}^{1/2} \bm{U}^\top\bm{U} \bm{Z}^{1/2}\) with \(\bm{Z} = \mathrm{diag}\left(T(\bm{y}_1),\ldots,T(\bm{y}_n)\right)\) (see e.g.,~\cite{silverstein1995}). If \(T\) is sign-indefinite, writing
\[
\bm{Z} = \bm{Z}^+-\bm{Z}^-, \quad \bm{Z}^\pm\succeq 0,
\]
gives
\[
\bm{P}_n = \frac{1}{n}\bm{U} \bm{Z}^+\bm{U}^\top - \frac{1}{n} \bm{U} \bm{Z}^-\bm{U}^\top.
\]
Thus \(\bm{P}_n\) can be viewed as a difference of two positive weighted sample
covariance matrices. In this setting, the same scalar self-consistent equation remains
valid~\cite{mondelli2019}.
\end{rmk}

Recall that \(\lambda_+ (\alpha)= \sup \supp(\mu_\alpha)\) denotes the upper edge of the limiting bulk spectrum. Since \(\lambda_+(\alpha)\in\supp(\mu_\alpha)\), Proposition~\ref{prop:bulk}
implies that, for every \(\varepsilon>0\), 
\[
\mathbb{P} \left (\lambda_1 (\bm{P}_n) > \lambda_+ (\alpha) -\varepsilon\right) \to  1.
\]
The next result complements this lower bound with confinement above the upper edge, and hence identifies the asymptotic location of the largest eigenvalue of \(\bm{P}_n\).

\begin{prop} \label{prop:upper_edge}
Suppose that Assumptions~\ref{hyp:prop_limit} and \ref{hyp:preprocessing} hold. Then
\[
\lambda_1(\bm{P}_n) \stackrel{\mathbb{P}}{\to} \lambda_+(\alpha).
\]
\end{prop}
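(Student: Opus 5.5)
The lower bound is already in hand. Proposition~\ref{prop:bulk} gives weak convergence of \(\hat\mu_{\bm{P}_n}\) to \(\mu_\alpha\), and \(\lambda_+(\alpha)\in\supp(\mu_\alpha)\). Hence \(\mathbb{P}(\lambda_1(\bm{P}_n)>\lambda_+(\alpha)-\varepsilon)\to1\). What remains is upper-edge confinement: \(\mathbb{P}(\lambda_1(\bm{P}_n)>\lambda_+(\alpha)+\varepsilon)\to0\) for every \(\varepsilon>0\).

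Following the roadmap in the introduction, I would first condition on the responses \(\bm{y}_1,\dots,\bm{y}_n\). The \(\bm{u}_i\) are independent of them, so \(\bm{P}_n=\frac1n\sum_i \bm{T}_i\otimes\bm{u}_i\bm{u}_i^\top\) is a quadratic form in a Gaussian matrix with fixed, noncommuting, sign-indefinite matrix coefficients \(\bm{T}_i=\bm{T}(\bm{y}_i)\). To reduce to a Gaussian model that is linear in the noise, introduce the self-adjoint linearization
\[
\bm{L}_n=\begin{pmatrix}\bm{0} & \frac{1}{\sqrt n}\sum_i (\bm{e}_i^\top\otimes \bm{B}_i)\otimes\bm{u}_i\\ \ast & -\bm{J}\end{pmatrix}.
\]
Here \(\bm{T}_i=\bm{B}_i\bm{J}_i\bm{B}_i^\top\) is a signed factorization, with \(\bm{J}_i\) a signature matrix and \(\bm{B}_i\) bounded by \(\sqrt{C_T}\). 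The signature block \(\bm{J}\) is chosen so that \(\lambda\notin\operatorname{sp}(\bm{P}_n)\) iff a suitable matrix \(\bm{\Lambda}_\lambda-\bm{L}_n\) is invertible, by a Schur complement. The linearization is an affine Gaussian matrix \(\bm{X}=\bm{A}_0+\sum \bm{A}_{ij}g_{ij}\) with deterministic (conditionally) coefficients. I would then invoke the spectral comparison theorem of Bandeira--Cipolloni--Schröder--van Handel. It shows that, with high probability, \(\operatorname{sp}(\bm{X})\) lies in an \(o(1)\)-neighbourhood of the spectrum of the free model \(\bm{X}_{\mathrm{free}}=\bm{A}_0\otimes\mathbf 1+\sum\bm{A}_{ij}\otimes s_{ij}\), where the \(s_{ij}\) are free semicirculars. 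The error parameters \(\sigma_*\), \(v\) are \(O(d^{-1/2}\mathrm{polylog})\) because \(p\) is fixed and the \(\bm{T}_i\) are bounded. Transferring back through the linearization gives \(\lambda_1(\bm{P}_n)\le \lambda_1(\bm{P}_{\mathrm{free}})+o(1)\) with high probability, conditionally on \(\bm{y}\). Here \(\bm{P}_{\mathrm{free}}\) is the free analogue of \(\bm{P}_n\): a matrix-weighted free compound-Poisson-type operator with weights \(\bm{T}_1,\dots,\bm{T}_n\).

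Next, the free edge has to be bounded by the deterministic edge \(\lambda_+(\alpha)\). I would adapt Lehner's Fock-space variational bound. For a free operator of the form \(\sum_i \bm{T}_i\otimes c_ic_i^*\)-type (equivalently \(\bm{A}_0\otimes1+\sum\bm{A}_k\otimes s_k\)), Lehner gives
\[
\lambda_{\max}\le \inf_{\bm{Z}\succ0}\lambda_{\max}\Bigl(\bm{Z}^{-1}+\bm{A}_0+\sum_k\bm{A}_k\bm{Z}\bm{A}_k\Bigr).
\]
After undoing the linearization, this becomes a finite-dimensional bound on the \(p\times p\) level. For \(\bm{P}_{\mathrm{free}}\) with \(n/(d-r)\approx\alpha\) it should read
\[
\lambda_1(\bm{P}_{\mathrm{free}})\le\inf_{\bm{M}}\lambda_1\Bigl(-\bm{M}^{-1}+\tfrac1n\textstyle\sum_i\bm{T}_i(\bm{I}_p+\alpha^{-1}\bm{M}\bm{T}_i)^{-1}\Bigr),
\]
with the infimum over negative-definite \(\bm{M}\) satisfying invertibility constraints. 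I would evaluate it at \(\bm{M}=\bm{M}_\alpha(x)\) for \(x\) slightly above \(\lambda_+(\alpha)\); this matrix is real symmetric and negative definite by the continuation lemma and Section~\ref{section:MDE}. The self-consistent equation of Proposition~\ref{prop:bulk} turns the empirical expression into \(x\bm{I}_p\) plus \(\frac1n\sum_i f(\bm{y}_i)-\E f(\bm{y})\). This fluctuation tends to \(0\) almost surely by the law of large numbers, since \(f\) is bounded for \(x>\lambda_+(\alpha)\). The ratio mismatch between \(n/(d-r)\) and \(\alpha\) is handled by continuity in \(\alpha\). Letting \(x\downarrow\lambda_+(\alpha)\) yields \(\limsup\lambda_1(\bm{P}_{\mathrm{free}})\le\lambda_+(\alpha)\) almost surely. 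Combined with the comparison step and integrated over \(\bm{y}\), this gives the claim.

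I expect the main obstacle to be the Lehner step in the sign-indefinite, noncommutative setting. The first difficulty is setting up a linearization whose free counterpart has an upper edge expressible through a \(p\times p\) variational problem. The second is checking that \(\bm{M}_\alpha(x)\) is admissible for \(x>\lambda_+(\alpha)\): specifically that \(\bm{I}_p+\alpha^{-1}\bm{M}_\alpha(x)\bm{T}(\bm{y})\) stays uniformly invertible with the correct inertia, so that the Schur-complement positivity needed in Lehner's argument holds. I would establish this uniform invertibility first, by a continuity argument from \(x=+\infty\), where \(\bm{M}_\alpha(x)\approx -x^{-1}\bm{I}_p\).
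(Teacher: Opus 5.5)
Your outer architecture matches the paper's: the lower bound from Proposition~\ref{prop:bulk}, conditioning on the responses, a signed self-adjoint linearization fed into the Bandeira--Cipolloni--Schr\"oder--van Handel comparison, and finally a \(p\times p\) variational bound evaluated at \(\bm{Z}=-\bm{M}_\alpha(x)\) together with the law of large numbers. The bound you display is exactly the paper's Lemma~\ref{lem:finite_variational_formula}, once \(\alpha^{-1}\) is replaced by \(\gamma_n=(d-r)/n\), which is what the finite-\(n\) free model actually carries.

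\textbf{The gap.} The mechanism you give for that bound does not produce it.
\begin{itemize}
\item The free operator \(\bm{P}_n^{\mathrm{free}}=\bm{X}_n^{\mathrm{free}}(\bm{J}_n\otimes 1_{\mathcal{A}})(\bm{X}_n^{\mathrm{free}})^\ast\) is not an \(M_p(\C)\)-valued semicircular element \(\bm{A}_0\otimes 1+\sum_k\bm{A}_k\otimes s_k\). It is quadratic in the semicirculars, and all of its \(M_p(\C)\)-valued free cumulants are nonzero: \(\kappa_m(\bm{B},\ldots,\bm{B})=\frac{\gamma_n^{m-1}}{n}\sum_i\bm{T}_i(\bm{B}\bm{T}_i)^{m-1}\).
\item The resolvent term \(\bm{T}_i(\bm{I}_p-\gamma_n\bm{Z}\bm{T}_i)^{-1}\) in your bound is exactly the resummation of these higher cumulants. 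Lehner's semicircular inequality with \(\bm{Z}^{-1}+\bm{A}_0+\sum_k\bm{A}_k\bm{Z}\bm{A}_k\) only sees the covariance, so it cannot produce that term.
\item Nor can you apply that inequality to the linearization and then ``undo'' it. It would only bound the \emph{top} of \(\operatorname{sp}(\bm{L}_n^{\mathrm{free}}(E))\), via a variational problem of size \(p(d-r+n)\).
\item The question you actually need is whether \(0\) lies in that spectrum. The linearization \(\bm{L}_n(E)\) is congruent to \(\operatorname{diag}(\bm{P}_n-E\bm{I},\,-\bm{J}_n)\), and likewise in the free model. So for \(E\) above the spectrum it has exactly as many positive eigenvalues as \(\bm{J}_n\) has negative ones. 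That number is of order \(n\) whenever \(\bm{T}\) is sign-indefinite.
\item Hence \(0\) is interior to the spectral range, and no upper-edge bound can certify invertibility there. In the positive semidefinite case \(\bm{J}_n=\bm{I}\) this does become an edge question; the sign-indefinite case is exactly where your route breaks.
\end{itemize}

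\textbf{What fills it.} The missing idea is a Fock-space model of the compound-Poisson structure itself.
\begin{itemize}
\item The paper computes the scalar cumulants \(\kappa_m^\varphi(\bm{W}_{i_1},\ldots,\bm{W}_{i_m})=(d-r)^{m-1}\mathbf{1}_{\{i_1=\cdots=i_m\}}\) of \(\bm{W}_i=\sum_{a,b}\bm{e}_a\bm{e}_b^\top\otimes s_{ia}s_{ib}\), and from these deduces the \(M_p(\C)\)-valued cumulants above.
\item It reproduces them on the full Fock space over \(\C^n\) with \(\widehat{\bm{P}}_n^{\mathrm{free}}=\bm{A}_{0,n}\otimes I+\sum_i\bm{C}_{i,n}\otimes(l_i+l_i^\ast)+\sum_i\bm{D}_{i,n}\otimes l_il_i^\ast\), where \(\bm{C}_{i,n}=\sqrt{\gamma_n/n}\,\bm{T}_i\) and \(\bm{D}_{i,n}=\gamma_n\bm{T}_i\). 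The gauge terms \(l_il_i^\ast\) generate the higher cumulants.
\item Set \(\bm{K}_i=\bm{Z}^{-1}-\gamma_n\bm{T}_i\succ0\) and \(\bm{Y}_i=\bm{C}_{i,n}\bm{K}_i^{-1/2}\otimes I-\bm{K}_i^{1/2}\otimes l_i\). The relation \(\sum_il_il_i^\ast=I-p_\Omega\) then gives \(\mathcal{G}_n(\bm{Z})\otimes I-\widehat{\bm{P}}_n^{\mathrm{free}}=\bm{Z}^{-1}\otimes p_\Omega+\sum_i\bm{Y}_i\bm{Y}_i^\ast\succeq0\).
\item The vacuum state is not faithful, so one transfers back using faithfulness of \(\tr_{p(d-r)}\otimes\tau\) on the original side: \(\lambda_{+,n}^{\mathrm{free}}=\sup\supp\mu_n^{\mathrm{free}}\le\sup\operatorname{sp}(\widehat{\bm{P}}_n^{\mathrm{free}})\).
\end{itemize}

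\textbf{On admissibility.} The constraint \(\bm{K}_i\succ0\) is your ``correct inertia''. Set \(\bm{Z}_x=-\bm{M}_\alpha(x)\). Continuity from \(x=+\infty\) only carries positive definiteness of \(\bm{Z}_x^{-1}-\alpha^{-1}\bm{T}(\bm{y})\) down to \(x=E\) once you know this matrix stays nonsingular along \([E,E_0]\), uniformly in \(\bm{y}\). That nonsingularity is Lemma~\ref{lem:uniform_denominator_bound}, proved by a Herglotz/no-mass argument; continuity alone does not give it. The resulting uniform margin is also what absorbs \(\gamma_n\neq\alpha^{-1}\) for the fixed test matrix \(\bm{Z}_E\), without any need to vary \(\alpha\).
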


Proposition~\ref{prop:upper_edge} is completed in Section~\ref{section:upper_edge}, where we establish the matching upper bound. Furthermore, in Appendix~\ref{appendix:limiting_variational_edge} we provide an exact variational characterization of the limiting spectral edge \(\lambda_+(\alpha)\) using, in part, results established in Section~\ref{section:upper_edge}.

We readily obtain the following corollary about the smallest eigenvalue of \(\bm{P}_n\).

\begin{cor}
Let \(\lambda_-(\alpha)\coloneqq\inf\supp(\mu_\alpha)\). Then, 
\[
\lambda_{p(d-r)} (\bm{P}_n) \stackrel{\mathbb{P}}{\to} \lambda_- (\alpha).
\]
\end{cor}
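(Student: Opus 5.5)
The idea is to apply Proposition~\ref{prop:upper_edge} to the preprocessing map $-\bm{T}$ instead of $\bm{T}$. Replacing $\bm{T}$ by $-\bm{T}$ keeps Assumption~\ref{hyp:preprocessing} intact: the map is still measurable, nontrivial, and bounded by the same constant $C_T$. The associated noise block is
\[
\frac{1}{n}\sum_{i=1}^n (-\bm{T}(\bm{y}_i))\otimes \bm{u}_i\bm{u}_i^\top = -\bm{P}_n,
\]
and therefore
\[
\lambda_{p(d-r)}(\bm{P}_n) = -\lambda_1(-\bm{P}_n).
\]
Proposition~\ref{prop:upper_edge}, applied to $-\bm{T}$, shows that $\lambda_1(-\bm{P}_n)$ converges in probability to the upper edge of the limiting measure $\tilde\mu_\alpha$ attached to $-\bm{T}$.

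It remains to identify $\tilde\mu_\alpha$ as the reflection of $\mu_\alpha$ under $\lambda\mapsto-\lambda$. There are two ways to do this.

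\emph{Empirical route.} The empirical spectral measure of $-\bm{P}_n$ is the pushforward of $\hat\mu_{\bm{P}_n}$ under $\lambda\mapsto-\lambda$. Proposition~\ref{prop:bulk} gives $\hat\mu_{\bm{P}_n}\to\mu_\alpha$ weakly almost surely, and negation is continuous, so the empirical measure of $-\bm{P}_n$ converges to the reflection of $\mu_\alpha$. Proposition~\ref{prop:bulk} applied to $-\bm{T}$ also gives convergence to $\tilde\mu_\alpha$. Weak limits are unique, so $\tilde\mu_\alpha$ equals the reflection of $\mu_\alpha$.

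\emph{Equation route.} Alternatively, set $\tilde{\bm{M}}(z)\coloneqq-\overline{\bm{M}_\alpha(-\bar z)}$. This function is still in the normalized matrix-valued Herglotz class, and it solves the self-consistent equation with $\bm{T}$ replaced by $-\bm{T}$. By the uniqueness part of Proposition~\ref{prop:bulk}, it is the transform of $\tilde\mu_\alpha$.

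Either way, the upper edge of $\tilde\mu_\alpha$ is
\[
\sup\supp(\tilde\mu_\alpha) = -\inf\supp(\mu_\alpha) = -\lambda_-(\alpha).
\]
Combining this with the first step gives $\lambda_{p(d-r)}(\bm{P}_n)\stackrel{\mathbb{P}}{\to}\lambda_-(\alpha)$.

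There is no real obstacle here. The only point to check is that every hypothesis used in Proposition~\ref{prop:upper_edge} is sign-symmetric. In particular, the confinement argument must not rely on positivity of $\bm{T}$. The paper explicitly treats the sign-indefinite case, so this holds.
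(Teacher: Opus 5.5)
Your proposal is correct and follows the paper's own argument: apply Proposition~\ref{prop:upper_edge} to $-\bm{T}$, which still satisfies Assumption~\ref{hyp:preprocessing}, use $\bm{P}_n[-\bm{T}]=-\bm{P}_n[\bm{T}]$, and identify the limiting law for $-\bm{T}$ as the reflection of $\mu_\alpha$. The paper only asserts this reflection step, and both of your routes justify it correctly; in the equation route, $-\overline{\bm{M}_\alpha(-\bar z)}$ does remain in the normalized Herglotz class and solves the MDE with $-\bm{T}$.
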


\begin{proof}
Apply Proposition~\ref{prop:upper_edge} with the preprocessing map \(\bm{T}\) replaced by \(-\bm{T}\), which also satisfies Assumption~\ref{hyp:preprocessing}. Indeed, \(\bm{P}_n[-\bm{T}]=-\bm{P}_n[\bm{T}]\), and the limiting law associated with \(-\bm{T}\) is the reflection of \(\mu_\alpha\), so its upper edge is \(-\lambda_-(\alpha)\).
\end{proof}

The first assertion of Theorem~\ref{thm:bulk} follows from Proposition~\ref{prop:bulk} and the fact that \(\bm{D}_n'\) differs from the block embedding of \(\bm{P}_n\) by a matrix of rank at most \(2pr\). The second assertion follows from Cauchy's interlacing theorem together with Proposition~\ref{prop:upper_edge}. The details are given in Sections~\ref{section:bulk} and~\ref{section:upper_edge}, respectively.

\subsection{Outlier equation}

We now reduce the study of the outlying eigenvalues of \(\bm{D}_n\) to a finite-dimensional equation. Fix \(\varepsilon > 0\). By Proposition~\ref{prop:upper_edge}, with probability tending to one,
\[
\mathrm{sp} (\bm{P}_n) \cap (\lambda_+(\alpha)+\varepsilon,\infty) = \emptyset.
\]
On this event, for every \(x > \lambda_+(\alpha)+\varepsilon\), the Schur complement identity~\eqref{eq:schur_det_lemma} gives
\[
x \in \mathrm{sp}(\bm{D}_n) \quad\Longleftrightarrow\quad  \det \left ( \bm{H}_n(x)  \right ) =0,
\]
where 
\begin{equation} \label{eq:H_n}
\bm{H}_n(x) \coloneqq \bm{A}_n - \bm{Q}_n^\top (\bm{P}_n - x \bm{I}_{p(d-r)})^{-1} \bm{Q}_n - x \bm{I}_{pr}.
\end{equation}
Thus, away from the spectrum of \(\bm{P}_n\), the eigenvalues of \(\bm{D}_n\) are characterized by the points at which the \(pr \times pr\) matrix \(\bm{H}_n(x)\) is singular. 

We next identify the deterministic limit of \(\bm{H}_n(x)\). By Lemma~\ref{lem:continuation_off_support}, proved in Section~\ref{section:MDE}, \(\bm{M}_\alpha\) admits an analytic continuation to \(\C \setminus \supp(\mu_\alpha)\). For \(x \in \R \setminus \supp (\mu_\alpha)\), we use \(\bm{M}_\alpha (x)\) to denote this continuation. For \(x>\lambda_+(\alpha)\), define
\begin{equation} \label{eq:H_alpha}
\bm{H}_\alpha (x) \coloneqq \E_{\bm{y}} \left [ \bm{T}(\bm{y}) \left ( \bm{I}_p + \frac{1}{\alpha} \bm{M}_\alpha(x) \bm{T}(\bm{y}) \right)^{-1} \otimes \bm{C}(\bm{y}) \right ] - x \bm{I}_{pr},
\end{equation}
where we recall that \(\bm{C}(\bm{y}) = \E [\bm{s} \bm{s}^\top \mid \bm{y}]\) and \(\bm{M}_\alpha\) is the limiting matrix-valued Stieltjes transform from Proposition~\ref{prop:bulk}.

\begin{prop} \label{prop:outlier_equation} 
Suppose that Assumptions~\ref{hyp:prop_limit} and~\ref{hyp:preprocessing} hold. Let \( K\subset (\lambda_+(\alpha),\infty)\) be a compact interval, and define
\[
\mathcal{E}_n(K) \coloneqq \left \{ K \cap \mathrm{sp}(\bm{P}_n) = \emptyset \right \}.
\]
Then, \(\mathbb{P} \left ( \mathcal{E}_n(K)\right) \to 1\). On the event \(\mathcal{E}_n(K)\), the matrix-valued function \(\bm{H}_n(x)\) is well-defined for every \(x \in K\). Moreover, for every \(\varepsilon>0\),
\[
\mathbb{P} \left ( \mathcal{E}_n(K) \cap \left \{ \sup_{x\in K} \left\| \bm{H}_n (x) - \bm{H}_\alpha (x) \right\|_{\mathrm{op}} > \varepsilon \right \} \right) \to 0.
\]
Thus, if \(\theta_n \in \mathrm{sp}(\bm{D}_n)\) is a sequence of random eigenvalues such that \(\theta_n \stackrel{\mathbb{P}}{\to} \theta > \lambda_+ (\alpha)\), then \(\det (\bm{H}_\alpha(\theta)) =0\). 
\end{prop}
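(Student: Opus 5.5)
The plan is to condition on the signal data $\{(\bm{s}_i,\bm{y}_i)\}_{i\le n}$, exploit the independence of the noise vectors $\bm{u}_i$, and compute the limit of the quadratic form $\bm{Q}_n^\top \bm{G}_n(x)\bm{Q}_n$ through a leave-one-out argument. The statement $\mathbb{P}(\mathcal{E}_n(K))\to1$ is immediate from Proposition~\ref{prop:upper_edge}, since $\min K>\lambda_+(\alpha)$. The final assertion then follows from uniform convergence. If $\theta_n\to\theta$ in probability, then $\theta_n\in K\coloneqq[\theta-\delta,\theta+\delta]$ with high probability, and $\det\bm{H}_n(\theta_n)=0$ on $\mathcal{E}_n(K)$ by the Schur identity~\eqref{eq:schur_det_lemma}. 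Continuity of $\bm{H}_\alpha$ on $K$ gives $\det\bm{H}_n(\theta_n)-\det\bm{H}_\alpha(\theta)\to0$ in probability, so $\det\bm{H}_\alpha(\theta)=0$.

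For the convergence itself, I first treat $\bm{A}_n$. By the strong law of large numbers, $\bm{A}_n\to\E[\bm{T}(\bm{y})\otimes\bm{s}\bm{s}^\top]=\E_{\bm{y}}[\bm{T}(\bm{y})\otimes\bm{C}(\bm{y})]$ almost surely; the expectation is finite because $\bm{T}$ is bounded. Next write $\bm{Q}_n=\frac1n\sum_i(\bm{T}_i\otimes\bm{u}_i)(\bm{I}_p\otimes\bm{s}_i^\top)$, where $\bm{T}_i\coloneqq\bm{T}(\bm{y}_i)$ and the factor $\bm{T}_i\otimes\bm{u}_i$ is $p(d-r)\times p$. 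Then
\[
\bm{Q}_n^\top\bm{G}_n\bm{Q}_n=\frac1{n^2}\sum_{i,j}(\bm{I}_p\otimes\bm{s}_i)(\bm{T}_i\otimes\bm{u}_i^\top)\bm{G}_n(\bm{T}_j\otimes\bm{u}_j)(\bm{I}_p\otimes\bm{s}_j^\top).
\]
The core quantity is the $p\times p$ matrix $\bm{B}_{ij}\coloneqq\frac1n(\bm{I}_p\otimes\bm{u}_i^\top)\bm{G}_n(\bm{I}_p\otimes\bm{u}_j)$. Let $\bm{G}^{(i)}$ be the resolvent with sample $i$ removed. Since $\bm{P}_n=\bm{P}^{(i)}+\frac1n\bm{T}_i\otimes\bm{u}_i\bm{u}_i^\top$, the Sherman--Morrison--Woodbury formula gives
\[
(\bm{I}_p\otimes\bm{u}_i^\top)\bm{G}_n=\Bigl(\bm{I}_p+\tfrac1n\bm{B}^{(i)}_{ii}\bm{T}_i\Bigr)^{-1}(\bm{I}_p\otimes\bm{u}_i^\top)\bm{G}^{(i)}.
\]
Gaussian concentration of quadratic forms (Hanson--Wright, conditional on $\bm{G}^{(i)}$) gives $\frac1n\bm{B}^{(i)}_{ii}\approx\frac{d}{n}\bm{M}_n\approx\frac1\alpha\bm{M}_\alpha(x)$. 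Collecting terms, the diagonal ($i=j$) contribution becomes
\[
\frac1n\sum_i\bm{T}_i\Bigl(\bm{I}_p+\tfrac1\alpha\bm{M}_\alpha\bm{T}_i\Bigr)^{-1}\otimes\bm{s}_i\bm{s}_i^\top\cdot\frac{\bm{M}_\alpha}{\alpha}\bm{T}_i,
\]
with the Kronecker ordering to be tracked carefully. Combining this with $\bm{A}_n$ through the identity
\[
\bm{T}-\bm{T}\Bigl(\bm{I}+\tfrac1\alpha\bm{M}\bm{T}\Bigr)^{-1}\tfrac1\alpha\bm{M}\bm{T}=\bm{T}\Bigl(\bm{I}+\tfrac1\alpha\bm{M}\bm{T}\Bigr)^{-1}
\]
and then applying the law of large numbers produces exactly $\E_{\bm{y}}[\bm{T}(\bm{I}_p+\alpha^{-1}\bm{M}_\alpha\bm{T})^{-1}\otimes\bm{C}]$.

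The off-diagonal terms $i\neq j$ must be shown negligible. Decoupling both indices via $\bm{G}^{(ij)}$ turns each term into a bilinear form in the independent vectors $\bm{u}_i,\bm{u}_j$ with mean zero. A second-moment computation, conditional on $(\bm{s},\bm{y})$, bounds the double sum by $O(n^{-1/2})$, using $\|\bm{G}\|\le1/\mathrm{dist}(x,\mathrm{sp})$ together with independence of the $\bm{s}_i$. The main obstacle is that, for real $x$, the resolvent bounds hold only on $\mathcal{E}_n(K)$, which is not independent of the $\bm{u}_i$, and that we need $\bm{M}_n(x)\to\bm{M}_\alpha(x)$ at real $x$, whereas Proposition~\ref{prop:bulk} gives it only on $\mathbb{C}_+$.

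I would handle both issues by working first at $z=x+i\eta$ with $\eta>0$ fixed, proving pointwise convergence $\bm{H}_n(z)\to\bm{H}_\alpha(z)$ there. On $\mathcal{E}_n(K')$, with $K'$ a slight enlargement of $K$, the functions $\bm{H}_n$ are analytic and uniformly bounded on a complex neighborhood of $K$. Vitali/Montel then transfers the convergence down to the real interval $K$ uniformly, and uniform equicontinuity upgrades it to the $\sup_{x\in K}$. The rank-one Woodbury denominators are controlled on $\mathbb{C}_+$ by the Herglotz property of $\bm{M}_\alpha$, and at real $x$ by the invertibility established in Section~\ref{section:MDE}.
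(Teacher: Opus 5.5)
Your proposal is correct in outline and is essentially the paper's proof. It uses the same split of $\bm{Q}_n^\top\bm{G}_n\bm{Q}_n$ into diagonal and off-diagonal parts, handled at complex $z$ by leave-one-out Woodbury plus Hanson--Wright and by leave-two-out plus a second-moment bound, and then transferred to $K$ by a subsequence/Vitali argument on the high-probability event where $\bm{H}_n$ is analytic and bounded near $K$. Two small departures are harmless: you replace the leave-one-out forms directly by $\alpha^{-1}\bm{M}_\alpha(z)$ before the law of large numbers, which is a valid shortcut around the paper's McDiarmid step, and you use $\det$ rather than the smallest singular value at the end.

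The one step to state more carefully is the off-diagonal one. After leave-two-out, the $(i,j)$ summand is not itself a bilinear form. It is $\frac1n(\bm{I}_p\otimes\bm{u}_i^\top)\bm{G}_n^{(ij)}(\bm{I}_p\otimes\bm{u}_j)$ sandwiched between order-one Woodbury factors that themselves depend on $\bm{u}_i$ and $\bm{u}_j$. Decoupling therefore only bounds each summand's second moment by $O(1/n)$. The cross terms between summands with $\{i,j\}\neq\{k,\ell\}$ must be shown to vanish for the exact summands, as the paper does, via the invariance of the joint law under $\bm{u}_k\mapsto-\bm{u}_k$. This sign flip leaves $\bm{G}_n$ unchanged and makes the $(i,j)$ summand odd in $\bm{u}_i$ and in $\bm{u}_j$. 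A term-by-term bound on the cost of replacing exact summands by decoupled bilinear forms is not $o(1)$ after summing over the $n^2$ pairs.
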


In particular, Proposition~\ref{prop:outlier_equation} shows that every possible limit of an outlying eigenvalue belongs to the deterministic set
\[
\left\{ x >\lambda_+(\alpha) \colon \det (\bm{H}_\alpha(x) ) =0 \right\}.
\]
The proposition does not yet show that every element of this set produces a sample outlier. For the largest eigenvalue, this conclusion follows from the monotonicity argument developed in the next subsection. Proposition~\ref{prop:outlier_equation} is proved in Section~\ref{section:phase_transition}.

\subsection{Spectral phase transition}

Proposition~\ref{prop:outlier_equation} identifies the possible limiting locations of eigenvalues outside the bulk, but does not by itself determine whether such an outlier actually occurs. For the largest eigenvalue, this question can be reduced to the behavior of the largest eigenvalue of the deterministic Schur complement. Recall from~\eqref{eq:h_alpha} that
\[
h_\alpha(x)= \lambda_1 \left ( \bm{H}_\alpha (x) \right) = \Phi_\alpha(x)-x, \qquad x > \lambda_+(\alpha).
\]
The function \(h_\alpha\) is continuous and strictly decreasing on \((\lambda_+(\alpha),\infty)\), and it tends to \(-\infty\) as \(x \to \infty\). Thus, whether the leading branch of the deterministic outlier equation crosses zero is determined entirely by its behavior at the upper bulk edge:
\[
\Delta_{\bm{T}}(\alpha) = \lim_{x \downarrow \lambda_+(\alpha)} h_\alpha(x).
\]
This gives a simple interpretation of the phase transition. If \(\Delta_{\bm{T}}(\alpha)\leq0\), then the right limit of \(h_\alpha\) at the edge is nonpositive, and strict monotonicity implies \(h_\alpha(x) < 0\) for every \(x > \lambda_+(\alpha)\). It therefore has no zero above the bulk, and no leading eigenvalue separates:
\[
\lambda_1(\bm{D}_n)\stackrel{\mathbb{P}}{\to} \lambda_+(\alpha).
\]
If instead \(\Delta_{\bm{T}}(\alpha)>0\), then \(h_\alpha\) is positive near the edge and negative for sufficiently large \(x\). It consequently crosses zero at a unique point \(\theta_\alpha>\lambda_+(\alpha)\), which becomes the asymptotic location of the separated eigenvalue:
\[
\lambda_1(\bm{D}_n)\stackrel{\mathbb{P}}{\to} \theta_\alpha.
\]

Viewing this criterion as a function of the sampling ratio \(\alpha\) leads naturally to the study of the transition as \(\alpha\) varies. The quantities \(\alpha_{\mathrm{c},\min}(\bm{T})\) and \(\alpha_{\mathrm{c},\max}(\bm{T})\) describe, respectively, the smallest sampling ratios at which the supercritical regime can occur and the point beyond which it persists for every larger sampling ratio. Under the population-separation condition of Assumption~\ref{hyp:population_separation}, the method is subcritical for all sufficiently small \(\alpha\) and supercritical for all sufficiently large \(\alpha\). 

The rigorous argument requires transferring this deterministic zero-crossing picture to the finite-\(n\) spectrum. This involves the uniform approximation of the Schur complement away from the bulk, additional control near the upper edge, and stability of the zero crossing. These arguments are carried out in Section~\ref{section:phase_transition}.

\section{The matrix Dyson equation} \label{section:MDE}

The purpose of this section is to introduce and analyze the deterministic self-consistent equation governing the noise component \(\bm{P}_n\) defined in~\eqref{eq:P_n}. More precisely, for \(z \in \C_+\), we seek a matrix-valued function \(\bm{M} \colon \C_+ \to \C^{p \times p}\) satisfying 
\begin{equation} \label{eq:MDE}
\bm{M} (z)^{-1} = - z \bm{I}_p + \E_{\bm{y}}\left [  \bm{T} (\bm{y}) \left ( \bm{I}_p + \frac{1}{\alpha}  \bm{M}(z) \bm{T} (\bm{y}) \right)^{-1}\right ].
\end{equation}
Throughout this section, \(\bm{y}\) has the marginal distribution induced by 
\[
\bm{s} \sim \mathcal{N}(\bm{0}_r, \bm{I}_r), \quad \bm{y} \mid \bm{s} \sim P_\ast (\, \cdot \mid \bm{s}),
\] 
and \(\bm{T}(\bm{y})\) is assumed to be symmetric and to satisfy \(\|\bm{T}(\bm{y})\|_{\mathrm{op}} \le C_T\) almost surely. 

Equation~\eqref{eq:MDE} has the structure of a \emph{matrix Dyson equation (MDE)}. Matrix Dyson equations and related operator-valued self-consistent equations play a central role in the description of deterministic spectral limits of random matrices; see, for example,~\cite{helton2007,ajanki2019,alt2020,erdos2019}. In the standard matrix Dyson equation, the self-energy operator is linear and positivity preserving. In contrast, the effective self-energy map associated with~\eqref{eq:MDE},
\[
\bm{M} \mapsto  \E_{\bm{y}}\left [  \bm{T} (\bm{y}) \left ( \bm{I}_p + \frac{1}{\alpha}  \bm{M} \bm{T} (\bm{y}) \right)^{-1}\right ]
\]
is nonlinear. Consequently, the standard existence, uniqueness, and stability results for matrix Dyson equations with linear self-energy do not apply directly. Nevertheless, because~\eqref{eq:MDE} has an analogous resolvent and matrix-valued Stieltjes-transform structure, we refer to it throughout as the matrix Dyson equation. The arguments below instead exploit its specific nonlinear form.

This section is organized as follows. We first establish existence and uniqueness of a solution to~\eqref{eq:MDE} within an appropriate class of matrix-valued analytic functions. We then prove that the matrix-valued measure associated with this solution is compactly supported and extend the solution analytically to the complement of its support. Finally, we establish stability of the matrix Dyson equation away from the limiting spectrum.

\subsection{Existence and uniqueness}

We begin by specifying the class in which the matrix Dyson equation admits a unique solution.
\begin{defn}
We say that an analytic function \(\bm{M} \colon \C_+ \to \C^{p\times p}\) belongs to the \emph{normalized matrix-valued Herglotz class} if
\[
\Im \bm{M} (z) \succeq \bm{0}_{p \times p}, \qquad z \in \C_+,
\]
and
\[
\lim_{\eta \to \infty} i \eta \bm{M} (i\eta) = - \bm{I}_p.
\]
\end{defn}

\begin{prop} \label{prop:existence_uniqueness}
Fix \(\alpha>0\) and let \(\bm{T} \colon\R^q\to\sym_p(\R)\) satisfy Assumption~\ref{hyp:preprocessing}. Then there exists a unique analytic function 
\[
\bm{M}_\alpha \colon \C_+ \to \C^{p\times p}
\]
that satisfies the matrix Dyson equation~\eqref{eq:MDE} and belongs to the normalized matrix-valued Herglotz class. In fact, \(\Im\bm{M}_\alpha(z)\succ \bm{0}_{p \times p}\) for \(z \in \C_+\). Moreover, there exists a unique positive semidefinite matrix-valued Borel measure \(\bm{\Omega}_\alpha\) on \(\R\), with total mass \(\bm{\Omega}_\alpha (\R) = \bm{I}_p\), such that, for every \(z \in \C_+\),
\[
\bm{M}_\alpha (z) =  \int_\R \frac{\bm{\Omega}_\alpha(\mathrm{d} \lambda)}{\lambda - z}.
\]
Consequently,  
\[
\mu_\alpha\coloneqq \frac{1}{p} \Tr \bm{\Omega}_\alpha
\]
is a probability measure on \(\R\) and its Stieltjes transform satisfies
\[
m_{\mu_\alpha}(z) =  \int_{\R}\frac{\mu_\alpha(\mathrm{d}\lambda)}{\lambda-z} = \frac{1}{p} \Tr \bm{M}_\alpha(z), \qquad z\in\C_+.
\]
\end{prop}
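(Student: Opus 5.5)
The plan follows the Helton--Rashidi Far--Speicher strategy, adapted to the nonlinear self-energy
\[
\mathcal{F}(\bm{M}) \coloneqq \E_{\bm{y}}\left[\bm{T}(\bm{y})\left(\bm{I}_p+\tfrac{1}{\alpha}\bm{M}\bm{T}(\bm{y})\right)^{-1}\right].
\]

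\textbf{Structure of the self-energy.} First rewrite $\mathcal{F}$ in a symmetric form that exposes its positivity. Whenever the inverse exists,
\[
\bm{T}\left(\bm{I}_p+\tfrac{1}{\alpha}\bm{M}\bm{T}\right)^{-1} = \bm{T}-\tfrac{1}{\alpha}\bm{T}\bm{M}\bm{T}+\tfrac{1}{\alpha^2}\bm{T}\bm{M}\bm{T}\bm{M}\bm{T}-\cdots .
\]
This Neumann series converges for $\|\bm{M}\|_{\mathrm{op}}<\alpha/C_T$. Beyond that range, an alternative representation is available: if $\Im\bm{M}\succ0$ then $\bm{M}$ is invertible, and
\[
\bm{T}(\bm{I}+\alpha^{-1}\bm{M}\bm{T})^{-1} = \alpha\,\bm{T}\,(\alpha\bm{M}^{-1}+\bm{T})^{-1}\bm{M}^{-1}\cdot\ldots
\]
A cleaner route is to write $\mathcal{F}(\bm{M})=\alpha\bm{M}^{-1}-\alpha\bm{M}^{-1}(\bm{M}^{-1}+\alpha^{-1}\bm{T})^{-1}\bm{M}^{-1}\alpha^{-1}\cdot\alpha$, which is valid because $\Im(\bm{M}^{-1})\prec0$ and $\bm{T}$ is real symmetric. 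Indeed, $\bm{M}^{-1}+\alpha^{-1}\bm{T}$ then has negative definite imaginary part and is therefore invertible; this gives invertibility of $\bm{I}+\alpha^{-1}\bm{M}\bm{T}$ on $\{\Im\bm{M}\succ0\}$. From this identity I would deduce the key sign property
\[
\Im\mathcal{F}(\bm{M})\preceq 0 \qquad\text{whenever } \Im\bm{M}\succ0,
\]
via $\Im \bm{X}^{-1}=-\bm{X}^{-1}(\Im\bm{X})\bm{X}^{-\ast}$. Heuristically, $\mathcal{F}(\bm{M})$ is minus the $\bm{T}$-weighted Schur complement of a positive quantity, so $-\mathcal{F}$ maps the Siegel upper half-space into its closure. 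I also need the bound $\|\mathcal{F}(\bm{M})\|_{\mathrm{op}}\le C$, uniformly on sets where $\|\bm{M}\|_{\mathrm{op}}$ is bounded and $\Im\bm{M}\succeq\epsilon\bm{I}$.

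\textbf{Existence.} For fixed $z\in\C_+$, define the map
\[
\mathcal{G}_z(\bm{M})\coloneqq(-z\bm{I}_p+\mathcal{F}(\bm{M}))^{-1}
\]
on the domain $\mathcal{D}_z=\{\bm{M}:\Im\bm{M}\succeq\epsilon\bm{I},\ \|\bm{M}\|_{\mathrm{op}}\le 1/\Im z\}$, with $\epsilon=\epsilon(z)$ small. Since $\Im(-z+\mathcal{F}(\bm{M}))\preceq-\Im z\,\bm{I}$, the matrix inside the inverse is invertible, $\|\mathcal{G}_z(\bm{M})\|_{\mathrm{op}}\le 1/\Im z$, and $\Im\mathcal{G}_z(\bm{M})\succeq (\Im z)\,\mathcal{G}_z\mathcal{G}_z^\ast\succeq \Im z/(|z|+C)^2\,\bm{I}$. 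So $\mathcal{G}_z$ maps the bounded domain $\mathcal{D}_z$ strictly inside itself. The Earle--Hamilton fixed point theorem then gives a unique fixed point $\bm{M}_\alpha(z)$, and holomorphic dependence on $z$ follows, either from local uniform convergence of the iterates or from the implicit function theorem. Note that $\mathcal{G}_z$ is holomorphic in $\bm{M}$ as an expectation of holomorphic, uniformly bounded functions. The Herglotz normalization is then read off from the equation: as $\eta\to\infty$, $\|\bm{M}_\alpha(i\eta)\|\le1/\eta$, so $\mathcal{F}(\bm{M}_\alpha)\to\E\bm{T}$ stays bounded, and hence $i\eta\bm{M}_\alpha(i\eta)=i\eta(-i\eta+O(1))^{-1}\to-\bm{I}_p$. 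The lower bound on $\Im\mathcal{G}_z$ gives strict positivity $\Im\bm{M}_\alpha\succ0$.

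\textbf{Uniqueness in the Herglotz class.} Let $\bm{M}$ be any solution in the normalized Herglotz class. The obstacle is to show that $\bm{M}(z)\in\mathcal{D}_z$, so that Earle--Hamilton uniqueness applies. I would first argue that $\Im\bm{M}(z)\succ0$. If $\Im\bm{M}$ were merely semidefinite, invertibility of $\bm{I}+\alpha^{-1}\bm{M}\bm{T}$ needs care; I would handle it by inverting the equation directly, which gives $\bm{M}=(-z+\mathcal{F}(\bm{M}))^{-1}$ with the sign property extended by continuity to $\Im\bm{M}\succeq0$. From this representation, $\|\bm{M}\|\le1/\Im z$ and $\Im\bm{M}\succ0$ follow as above. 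Uniqueness for large $\Im z$ follows independently from a contraction estimate, since $\mathcal{G}_z$ has Lipschitz constant $O(1/\eta^2)$ there. The identity theorem then propagates uniqueness to all of $\C_+$. I expect this step, namely justifying invertibility and the sign of $\Im\mathcal{F}$ on the closed cone, to be the main technical obstacle.

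\textbf{Measure representation.} By the matrix-valued Nevanlinna--Herglotz representation, $\bm{M}_\alpha(z)=\bm{A}+\bm{B}z+\int\big(\tfrac{1}{\lambda-z}-\tfrac{\lambda}{1+\lambda^2}\big)\bm{\Omega}_\alpha(\mathrm{d}\lambda)$ with $\bm{B}\succeq0$. The normalization $i\eta\bm{M}_\alpha(i\eta)\to-\bm{I}_p$ forces $\bm{B}=0$, the cancellation of $\bm{A}$ against the compensating term, and $\bm{\Omega}_\alpha(\R)=\bm{I}_p$; this is the standard scalar argument applied to $\bm{v}^\ast\bm{M}_\alpha\bm{v}$ for each $\bm{v}$, followed by polarization. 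Uniqueness of $\bm{\Omega}_\alpha$ follows from Stieltjes inversion. Taking $\frac1p\Tr$ then yields that $\mu_\alpha$ is a probability measure with Stieltjes transform $\frac1p\Tr\bm{M}_\alpha$.
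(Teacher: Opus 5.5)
Your uniqueness argument (contraction for large $\Im z$, then the identity theorem) and the Nevanlinna--Herglotz step are fine. The point you flag as the main obstacle is also harmless. The real gap is in the existence step: $\mathcal{G}_z$ does not map $\mathcal{D}_z=\{\Im\bm{M}\succeq\epsilon\bm{I}_p,\ \|\bm{M}\|_{\mathrm{op}}\le 1/\Im z\}$ into itself. The constant $C$ in your bound $\|\mathcal{F}(\bm{M})\|_{\mathrm{op}}\le C$ on $\mathcal{D}_z$ necessarily depends on $\epsilon$. The available bound is $\|\bm{T}(\bm{I}_p+\alpha^{-1}\bm{M}\bm{T})^{-1}\|_{\mathrm{op}}\le \alpha/\lambda_{\min}(\Im\bm{M})\le\alpha/\epsilon$, and it is sharp. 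For example, take $p=1$, $\bm{T}\equiv t>0$ and $m=-\alpha/t+i\epsilon$, which lies in $\mathcal{D}_z$ once $\Im z$ and $\epsilon$ are small; then $\mathcal{F}(m)=-i\alpha/\epsilon$. With $C\asymp\alpha/\epsilon$, your lower bound $\Im z/(|z|+C)^2$ is only $O(\epsilon^2)$. By AM--GM, the requirement $\Im z\ge\epsilon(|z|+\alpha/\epsilon)^2$ would force $\Im z\ge 4\alpha|z|$, which is impossible for $\alpha>1/4$. This is not just a lossy estimate: in the example, $\Im\mathcal{G}_z(m)\le(\Im z+\alpha/\epsilon)^{-1}<\epsilon/\alpha$, so for $\alpha\ge1$ the image genuinely leaves $\mathcal{D}_z$. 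Hence Earle--Hamilton cannot be applied on $\mathcal{D}_z$. Your scheme works only when $\Im z$ is so large that a Neumann series bounds $\mathcal{F}$ on the ball of radius $1/\Im z$ independently of $\epsilon$. Existence for large $\Im z$ alone does not give a Herglotz solution on all of $\C_+$. This degeneracy of the imaginary part is exactly the obstruction the paper points out before Lemma~\ref{lem:Psi_second_iterate}.

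The missing ingredient is an $\epsilon$-free bound on the self-energy along the image of $\mathcal{G}_z$. You already observed that $\bm{Y}=\mathcal{G}_z(\bm{X})$ satisfies the stronger property $\Im\bm{Y}\succeq(\Im z)\,\bm{Y}^\ast\bm{Y}$. Set $\bm{v}=(\bm{I}_p+\alpha^{-1}\bm{Y}\bm{T})^{-1}\bm{x}$ and $\bm{u}=\bm{T}\bm{v}$. Then $\bm{x}=\bm{v}+\alpha^{-1}\bm{Y}\bm{u}$ with $\bm{u}^\ast\bm{v}\in\R$, so $\|\bm{u}\|\,\|\bm{x}\|\ge\Im(\bm{u}^\ast\bm{x})\ge(\Im z/\alpha)\|\bm{Y}\bm{u}\|^2$. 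Also $\|\bm{u}\|\le C_T(\|\bm{x}\|+\alpha^{-1}\|\bm{Y}\bm{u}\|)$. Combining these with Young's inequality gives $\|\bm{T}(\bm{I}_p+\alpha^{-1}\bm{Y}\bm{T})^{-1}\|_{\mathrm{op}}\le K_z\coloneqq 2C_T+C_T^2/(\alpha\Im z)$, however small $\Im\bm{Y}$ is. Consequently $\mathcal{G}_z\circ\mathcal{G}_z$ maps the bounded convex domain $\{\Im\bm{X}\succ0,\ \|\bm{X}\|_{\mathrm{op}}<b\}$, with $b>1/\Im z$, into the compact set $\{\|\bm{X}\|_{\mathrm{op}}\le 1/\Im z,\ \Im\bm{X}\succeq \Im z\,(|z|+K_z)^{-2}\bm{I}_p\}$. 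Earle--Hamilton therefore applies to the second iterate and gives a unique fixed point $\bm{M}$ there. Since $\mathcal{G}_z(\bm{M})$ is another fixed point of the second iterate in the same domain, $\mathcal{G}_z(\bm{M})=\bm{M}$. This is the paper's route (Lemmas~\ref{lem:Psi_second_iterate} and~\ref{lem:pointwise_existence_uniqueness}). Alternatively, you can keep a single iterate if you replace the constraint $\Im\bm{X}\succeq\epsilon\bm{I}_p$ by the convex constraint $\Im\bm{X}\succ c\,\bm{X}^\ast\bm{X}$ with $0<c<\Im z$. With this fix, the rest of your plan goes through: analyticity via the iterates, the normalization at $i\infty$, strict positivity of $\Im\bm{M}_\alpha$, and the measure representation.
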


The proof of Proposition~\ref{prop:existence_uniqueness} is based on a fixed-point formulation of the matrix Dyson equation. Denote by 
\[
\mathbb{H}_p \coloneqq \{ \bm{X} \in \C^{p \times p} \colon \Im \bm{X} \succ \bm{0}_{p \times p}\}
\]
the matrix upper half-plane. For \(\bm{X} \in \mathbb{H}_p\) and \(\bm{y} \in \R^q\), define
\begin{equation} \label{eq:mathcal_Q_alpha}
\bm{\mathcal{Q}}_\alpha (\bm{X},\bm{y}) \coloneqq \bm{T} (\bm{y}) \left ( \bm{I}_p + \frac{1}{\alpha}  \bm{X} \bm{T} (\bm{y}) \right)^{-1},
\end{equation}
and
\begin{equation} \label{eq:mathcal_S_alpha}
\bm{\mathcal{S}}_\alpha (\bm{X}) \coloneqq \E_{\bm{y}}\left [  \bm{\mathcal{Q}}_\alpha (\bm{X},\bm{y})\right ].
\end{equation}
For \(z\in\C_+\), define
\begin{equation} \label{eq:bm_Psi_alpha}
\bm{\Psi}_{\alpha,z} (\bm{X}) \coloneqq \left ( - z \bm{I}_p + \bm{\mathcal{S}}_\alpha [\bm{X}]  \right)^{-1}.
\end{equation}
Then, for each fixed \(z\in\C_+\), a matrix \(\bm{M} \in \mathbb{H}_p\) solves~\eqref{eq:MDE} if and only if 
\begin{equation} \label{eq:fixed_point_eq}
\bm{M}=\bm{\Psi}_{\alpha,z}(\bm{M}). 
\end{equation}

The next lemma establishes the basic analytic and positivity properties of \(\bm{\mathcal{S}}_\alpha\) needed for the fixed-point argument.
\begin{lem} \label{lem:S_holo}
Let \(\bm{X} \in \mathbb{H}_p\). Then, for almost every \(\bm{y}\), \(\bm{I}_p + \alpha^{-1} \bm{X} \bm{T}(\bm{y})\) is invertible. Moreover, 
\begin{equation} \label{eq:nonlinear_term_bound}
\| \bm{\mathcal{Q}}_\alpha (\bm{X},\bm{y})  \|_{\mathrm{op}} \le \frac{\alpha}{\lambda_p (\Im \bm{X})},
\end{equation}
and
\begin{equation} \label{eq:nonlinear_term_imaginary_part}
\Im \bm{\mathcal{Q}}_\alpha (\bm{X},\bm{y}) = - \frac{1}{\alpha} \bm{\mathcal{Q}}_\alpha (\bm{X},\bm{y})  (\Im \bm{X}) \bm{\mathcal{Q}}_\alpha (\bm{X},\bm{y}) ^\ast \preceq \bm{0}.
\end{equation}
Therefore, \(\bm{\mathcal{S}}_\alpha\) is well-defined and holomorphic on \(\mathbb{H}_p\) and satisfies \(\Im \bm{\mathcal{S}}_\alpha (\bm{X}) \preceq \bm{0}\).
\end{lem}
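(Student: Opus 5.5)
The proof rests on a single algebraic identity obtained from the symmetry of \(\bm{T}=\bm{T}(\bm{y})\). Fix \(\bm{y}\) and write \(\bm{X}=\bm{A}+i\bm{B}\), where \(\bm{A}=\Re\bm{X}\) is Hermitian and \(\bm{B}=\Im\bm{X}\succ\bm{0}\).

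\textbf{Invertibility.} Suppose \((\bm{I}_p+\alpha^{-1}\bm{X}\bm{T})\bm{v}=\bm{0}\) for some \(\bm{v}\neq\bm{0}\). Set \(\bm{w}\coloneqq\bm{T}\bm{v}\), so that \(\bm{v}=-\alpha^{-1}\bm{X}\bm{w}\).
\begin{itemize}
\item If \(\bm{w}=\bm{0}\), then \(\bm{v}=\bm{0}\), a contradiction.
\item Otherwise \(\bm{w}\neq\bm{0}\), and \(\bm{w}^\ast\bm{v}=\bm{v}^\ast\bm{T}\bm{v}\in\R\) because \(\bm{T}\) is real symmetric. On the other hand \(\bm{w}^\ast\bm{v}=-\alpha^{-1}\bm{w}^\ast\bm{X}\bm{w}\), whose imaginary part is \(-\alpha^{-1}\bm{w}^\ast\bm{B}\bm{w}<0\). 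This contradicts reality.
\end{itemize}
Hence \(\bm{I}_p+\alpha^{-1}\bm{X}\bm{T}(\bm{y})\) is invertible for every \(\bm{y}\), not merely almost every \(\bm{y}\).

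\textbf{Imaginary part.} By the push-through identity, \(\bm{\mathcal{Q}}\coloneqq\bm{\mathcal{Q}}_\alpha(\bm{X},\bm{y})=\bm{T}(\bm{I}_p+\alpha^{-1}\bm{X}\bm{T})^{-1}=(\bm{I}_p+\alpha^{-1}\bm{T}\bm{X})^{-1}\bm{T}\). Using the first form for \(\bm{\mathcal{Q}}\), the second form for \(\bm{\mathcal{Q}}^\ast\) (with \(\bm{T}^\ast=\bm{T}\)), and \(\bm{T}=\bm{T}^\ast\) again,
\[
\bm{\mathcal{Q}}-\bm{\mathcal{Q}}^\ast=\bm{\mathcal{Q}}^\ast\Big[(\bm{I}_p+\alpha^{-1}\bm{X}^\ast\bm{T})-(\bm{I}_p+\alpha^{-1}\bm{X}\bm{T})\Big](\bm{I}_p+\alpha^{-1}\bm{X}\bm{T})^{-1}.
\]
Expanding the bracket, this equals
\[
-\tfrac{1}{\alpha}\bm{\mathcal{Q}}^\ast(\bm{X}-\bm{X}^\ast)\bm{\mathcal{Q}} .
\]
Dividing by \(2i\) gives \(\Im\bm{\mathcal{Q}}=-\alpha^{-1}\bm{\mathcal{Q}}^\ast(\Im\bm{X})\bm{\mathcal{Q}}\). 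The same computation with the roles of the two factorizations exchanged yields the form \(-\alpha^{-1}\bm{\mathcal{Q}}(\Im\bm{X})\bm{\mathcal{Q}}^\ast\) stated in \eqref{eq:nonlinear_term_imaginary_part}. Either form is \(\preceq\bm{0}\), since it is a congruence of \(\Im\bm{X}\succ\bm{0}\).

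\textbf{Norm bound.} This is the step requiring the most care. Take a unit vector \(\bm{u}\) and use \(\bm{\mathcal{Q}}^\ast(\Im\bm{X})\bm{\mathcal{Q}}=-\alpha\,\Im\bm{\mathcal{Q}}\) to get
\[
\lambda_p(\Im\bm{X})\,\|\bm{\mathcal{Q}}\bm{u}\|^2\le\bm{u}^\ast\bm{\mathcal{Q}}^\ast(\Im\bm{X})\bm{\mathcal{Q}}\bm{u}=-\alpha\,\Im(\bm{u}^\ast\bm{\mathcal{Q}}\bm{u})\le\alpha|\bm{u}^\ast\bm{\mathcal{Q}}\bm{u}|\le\alpha\|\bm{\mathcal{Q}}\bm{u}\|.
\]
Dividing by \(\|\bm{\mathcal{Q}}\bm{u}\|\) gives \(\|\bm{\mathcal{Q}}\bm{u}\|\le\alpha/\lambda_p(\Im\bm{X})\), and taking the supremum over \(\bm{u}\) yields \eqref{eq:nonlinear_term_bound}. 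This bound is uniform in \(\bm{y}\) and does not require \(\|\bm{T}(\bm{y})\|_{\mathrm{op}}\le C_T\).

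\textbf{Conclusion for \(\bm{\mathcal{S}}_\alpha\).} The bound is uniform in \(\bm{y}\) and locally uniform in \(\bm{X}\), since \(\lambda_p(\Im\bm{X})\) is continuous and positive on the open set \(\mathbb{H}_p\). Also \(\bm{X}\mapsto\bm{\mathcal{Q}}_\alpha(\bm{X},\bm{y})\) is holomorphic, being a rational function of \(\bm{X}\) with nonvanishing denominator. Therefore \(\bm{\mathcal{S}}_\alpha=\E_{\bm{y}}[\bm{\mathcal{Q}}_\alpha]\) is well defined. It is holomorphic on \(\mathbb{H}_p\) by dominated convergence, via Morera's theorem or differentiation under the integral in each entry. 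Finally, \(\Im\bm{\mathcal{S}}_\alpha=\E_{\bm{y}}[\Im\bm{\mathcal{Q}}_\alpha]\preceq\bm{0}\), because the negative semidefinite cone is closed and convex.
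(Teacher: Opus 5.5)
Your proposal is correct and follows essentially the paper's argument: the same kernel-vector contradiction for invertibility, the same resolvent computation for $\Im \bm{\mathcal{Q}}_\alpha$ (your second factorization gives exactly the paper's form $-\alpha^{-1}\bm{\mathcal{Q}}(\Im\bm{X})\bm{\mathcal{Q}}^\ast$), and the same locally uniform bound feeding into holomorphy of the parameter integral. The only cosmetic difference is that you read off the norm bound from $\bm{\mathcal{Q}}^\ast(\Im\bm{X})\bm{\mathcal{Q}} = -\alpha\,\Im\bm{\mathcal{Q}}$, whereas the paper obtains the equivalent scalar identity $\Im(\bm{u}^\ast\bm{x}) = \alpha^{-1}\bm{u}^\ast(\Im\bm{X})\bm{u}$ directly from $\bm{x}=\bm{v}+\alpha^{-1}\bm{X}\bm{u}$.
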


\begin{proof}
Fix \(\bm{X} \in \mathbb{H}_p\). We first show that the matrix \(\bm{I}_p + \alpha^{-1} \bm{X} \bm{T} (\bm{y})\) is invertible almost surely. Suppose that
\begin{equation} \label{eq:invertibility}
(\bm{I}_p + \alpha^{-1} \bm{X} \bm{T} (\bm{y})) \bm{v}=0
\end{equation}
for some \(\bm{v} \in \C^p\), and set \(\bm{u}\coloneqq \bm{T} (\bm{y})\bm{v}\). Multiplication on the left by \(\bm{u}^\ast\) gives
\begin{equation} \label{eq:invertibility_bis}
\bm{u}^\ast \bm{v} + \alpha^{-1} \bm{u}^\ast \bm{X} \bm{u} = 0.
\end{equation}
Since \(\bm{T} (\bm{y})\) is Hermitian, \(\bm{u}^\ast \bm{v} = \bm{v}^\ast \bm{T} (\bm{y}) \bm{v} \in \R\). Taking imaginary parts on both sides in~\eqref{eq:invertibility_bis} therefore yields
\[
\bm{u}^\ast (\Im \bm{X}) \bm{u} = 0. 
\]
Since \(\Im \bm{X} \succ0\), we obtain that \(\bm{u}=0\), and~\eqref{eq:invertibility} then gives \(\bm{v}=0\). Thus
\[ 
\ker \left(\bm{I}_p + \alpha^{-1}\bm{X} \bm{T} (\bm{y})\right) = \{\bm{0}_p\}, 
\] 
and the matrix is invertible almost surely.

To prove~\eqref{eq:nonlinear_term_bound}, fix \(\bm{x}\in\C^p\) and set 
\[
\bm{v} \coloneqq \left( \bm{I}_p+\alpha^{-1}\bm{X}\bm{T}(\bm{y}) \right)^{-1}\bm{x} \quad \mathrm{and} \quad \bm{u} \coloneqq \bm{T}(\bm{y})\bm{v}. 
\]
Then \( \bm{x}=\bm{v}+\alpha^{-1}\bm{X}\bm{u} \) and \(\bm{u} = \bm{\mathcal{Q}}_\alpha (\bm{X},\bm{y}) \bm{x}\). Since \(\bm{u}^\ast \bm{v}\in\R\), 
\[ 
\Im \left ( \bm{u}^\ast \bm{x} \right) = \frac{1}{\alpha} \bm{u}^\ast (\Im \bm{X} )\bm{u}  \geq \frac{\lambda_p (\Im\bm{X})}{\alpha}\|\bm{u}\|^2,
\] 
where \(\lambda_p (\Im \bm{X})>0\) since \(\Im \bm{X}\) is positive definite. On the other hand, 
\[
\left|\Im(\bm{u}^\ast\bm{x})\right| \leq \|\bm{u}\| \|\bm{x}\|. 
\]
Therefore, 
\[
\|\bm{u}\| \leq \frac{\alpha}{\lambda_p (\Im \bm{X})}\|\bm{x}\|,
\]
which proves~\eqref{eq:nonlinear_term_bound}. Moreover, applying the resolvent identity we obtain
\[
\begin{split}
\bm{\mathcal{Q}}_\alpha (\bm{X},\bm{y})  - \bm{\mathcal{Q}}_\alpha (\bm{X},\bm{y})^\ast & =\bm{T} (\bm{y})  \left [ \left ( \bm{I}_p + \frac{1}{\alpha}  \bm{X} \bm{T} (\bm{y}) \right)^{-1} - \left ( \bm{I}_p + \frac{1}{\alpha}  \bm{X}^\ast \bm{T} (\bm{y}) \right)^{-1}\right ] \\
& =  - \frac{1}{\alpha}  \bm{\mathcal{Q}}_\alpha (\bm{X},\bm{y})  \left (\bm{X} - \bm{X}^\ast \right)  \bm{\mathcal{Q}}_\alpha (\bm{X},\bm{y})^\ast.
\end{split}
\]
Dividing by \(2i\) and using \(\Im \bm{X} \succ 0\) gives
\[
\Im \bm{\mathcal{Q}}_\alpha (\bm{X},\bm{y}) = - \frac{1}{\alpha}  \bm{\mathcal{Q}}_\alpha (\bm{X},\bm{y}) \left(\Im \bm{X} \right)  \bm{\mathcal{Q}}_\alpha (\bm{X},\bm{y})^\ast \preceq \bm{0},
\]
which proves~\eqref{eq:nonlinear_term_imaginary_part}.

For every compact set \(K \Subset \mathbb{H}_p\), there exists \(\delta_K>0\) such that \(\Im \bm{X} \succeq \delta_K \bm{I}_p\) for every \(\bm{X} \in K\). Hence~\eqref{eq:nonlinear_term_bound} is locally uniform in \(\bm{X}\), and the standard theorem on holomorphic parameter integrals implies that \(\bm{\mathcal{S}}_\alpha\) is holomorphic on \(\mathbb{H}_p\). Finally,
\[
\Im \bm{\mathcal{S}}_\alpha (\bm{X}) = \E_{\bm{y}} \left [ \Im \bm{\mathcal{Q}}_\alpha (\bm{X},\bm{y}) \right ] \preceq \bm{0}.
\]
\end{proof}

The next result shows that \(\bm{\Psi}_{\alpha,z}\) is a holomorphic self-map of \(\mathbb{H}_p\) and that its image is bounded in operator norm.

\begin{cor} \label{cor:holo}
For every \(z\in\C_+\), the map \(\bm{\Psi}_{\alpha,z}\colon\mathbb{H}_p\to\mathbb{H}_p\) is well-defined and holomorphic, and satisfies 
\[
\| \bm{\Psi}_{\alpha,z} (\bm{M}) \|_{\mathrm{op}} \le \frac{1}{\Im z}.
\]
\end{cor}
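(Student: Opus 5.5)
The proof reduces to the properties of \(\bm{\mathcal{S}}_\alpha\) established in Lemma~\ref{lem:S_holo}, combined with a standard lower bound on the imaginary part of a matrix. Fix \(z\in\C_+\) and \(\bm{M}\in\mathbb{H}_p\), and set
\[
\bm{B}\coloneqq -z\bm{I}_p+\bm{\mathcal{S}}_\alpha(\bm{M}).
\]
By Lemma~\ref{lem:S_holo}, \(\bm{\mathcal{S}}_\alpha(\bm{M})\) is well defined and \(\Im\bm{\mathcal{S}}_\alpha(\bm{M})\preceq\bm{0}\). It follows that
\[
\Im\bm{B} = -(\Im z)\bm{I}_p+\Im\bm{\mathcal{S}}_\alpha(\bm{M}) \preceq -(\Im z)\bm{I}_p \prec \bm{0}.
\]
That is, \(-\bm{B}\in\mathbb{H}_p\), with \(\Im(-\bm{B})\succeq(\Im z)\bm{I}_p\).

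The next step is to show that \(\bm{B}\) is invertible, and to bound its inverse, using the same elementary argument as in the proof of Lemma~\ref{lem:S_holo}. For \(\bm{v}\in\C^p\),
\[
\|\bm{B}\bm{v}\|\,\|\bm{v}\| \ge |\Im(\bm{v}^\ast\bm{B}\bm{v})| = \bm{v}^\ast(-\Im\bm{B})\bm{v} \ge (\Im z)\|\bm{v}\|^2 .
\]
Hence \(\|\bm{B}\bm{v}\|\ge(\Im z)\|\bm{v}\|\). This gives injectivity, and therefore invertibility, of \(\bm{B}\), together with the bound
\[
\|\bm{\Psi}_{\alpha,z}(\bm{M})\|_{\mathrm{op}}=\|\bm{B}^{-1}\|_{\mathrm{op}}\le \frac{1}{\Im z}.
\]

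To show that the image lies in \(\mathbb{H}_p\), I would use the identity
\[
\Im(\bm{B}^{-1}) = \frac{\bm{B}^{-1}-(\bm{B}^{-1})^\ast}{2i} = -\bm{B}^{-1}(\Im\bm{B})(\bm{B}^{-1})^\ast .
\]
Since \(-\Im\bm{B}\succeq(\Im z)\bm{I}_p\), this yields \(\Im(\bm{B}^{-1})\succeq(\Im z)\,\bm{B}^{-1}(\bm{B}^{-1})^\ast\), and the right-hand side is positive definite because \(\bm{B}^{-1}\) is invertible. Thus \(\bm{\Psi}_{\alpha,z}(\bm{M})\in\mathbb{H}_p\).

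Holomorphy follows by composition. The map \(\bm{M}\mapsto-z\bm{I}_p+\bm{\mathcal{S}}_\alpha(\bm{M})\) is holomorphic by Lemma~\ref{lem:S_holo}. Matrix inversion is holomorphic on the open set of invertible matrices, and we have just shown that the first map takes values in that set.

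There is no real obstacle here. The only point needing care is invertibility of \(\bm{B}\): it relies on the strict bound \(-\Im\bm{B}\succeq(\Im z)\bm{I}_p\), which comes from combining \(\Im z>0\) with the sign property \(\Im\bm{\mathcal{S}}_\alpha\preceq\bm{0}\) from Lemma~\ref{lem:S_holo}.
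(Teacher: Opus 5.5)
Your proposal is correct and follows essentially the same route as the paper: set \(-z\bm{I}_p+\bm{\mathcal{S}}_\alpha(\bm{M})\), use \(\Im\bm{\mathcal{S}}_\alpha\preceq\bm{0}\) from Lemma~\ref{lem:S_holo} to get \(\Im(\cdot)\preceq-(\Im z)\bm{I}_p\), then use the resolvent identity for \(\Im\) of the inverse, the estimate \(\|\bm{B}\bm{v}\|\,\|\bm{v}\|\ge|\Im\langle\bm{v},\bm{B}\bm{v}\rangle|\) for the norm bound, and composition for holomorphy. Your one small addition, deriving invertibility explicitly from the injectivity estimate, makes precise a step the paper only asserts.
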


\begin{proof}
Fix \(z \in \C_+\) and \(\bm{M} \in \mathbb{H}_p\), and set \(\bm{A} \coloneqq -z\bm{I}_p + \bm{\mathcal{S}}_\alpha(\bm{M})\). By Lemma~\ref{lem:S_holo}, and in particular by~\eqref{eq:nonlinear_term_imaginary_part},
\[
\Im \bm{A} = - (\Im z) \bm{I}_p + \Im \bm{\mathcal{S}}_\alpha(\bm{M}) \preceq - (\Im z) \bm{I}_p \prec \bm{0}.
\]
In particular \(\bm{A}\) is invertible. Since \(\bm{\mathcal{S}}_\alpha\) is holomorphic on \(\mathbb{H}_p\) and matrix inversion is holomorphic on the set of invertible matrices \(\operatorname{GL}_p(\C)\), the map 
\[
\bm{\Psi}_{\alpha,z}(\bm{M}) = \bm{A}^{-1} 
\]
is well-defined and holomorphic on \(\mathbb{H}_p\). Moreover,
\[
\Im \bm{A}^{-1}  = - \bm{A}^{-\ast}  ( \Im \bm{A}) \bm{A}^{-1} \succ \bm{0}.
\]
Thus \(\bm{\Psi}_{\alpha,z} (\bm{M}) \in \mathbb{H}_p\), so \(\bm{\Psi}_{\alpha,z}\) is a holomorphic self-map of \(\mathbb{H}_p\). Finally, for every \(\bm{v} \in\C^p\),
\[
\| \bm{A} \bm{v}\|_2 \|\bm{v}\|_2 \ge \left | \Im \langle \bm{v}, \bm{A} \bm{v} \rangle \right | \ge (\Im z) \|\bm{v}\|^2_2.
\]
Therefore, 
\[
\|\bm{A} \bm{v}\|_2 \ge ( \Im z) \|\bm{v}\|_2,
\]
and consequently
\[
\| \bm{\Psi}_{\alpha,z}(\bm{M} ) \|_{\mathrm{op}} = \left \| \bm{A}^{-1} \right\|_{\mathrm{op}} \le \frac{1}{\Im z}.
\]
\end{proof}

Corollary~\ref{cor:holo} shows that the image of \( \bm{\Psi}_{\alpha,z} \) is bounded in operator norm. This alone does not imply relative compactness in \(\mathbb{H}_p\), since its imaginary part may degenerate near the boundary. We therefore consider the second iterate
\[
\bm{\Psi}_{\alpha,z}^{\circ 2} \coloneqq \bm{\Psi}_{\alpha,z} \circ \bm{\Psi}_{\alpha,z}.
\]
The next lemma shows that its image is not only bounded, but also uniformly separated from the boundary of \(\mathbb{H}_p\).

\begin{lem} \label{lem:Psi_second_iterate}
Fix \(z \in \C_+\). Then, for every \(\bm{X}\in\mathbb{H}_p\),
\begin{equation} \label{eq:first_iterate_Q_bound}
\left\| \bm{\mathcal{Q}}_\alpha \left (\bm{\Psi}_{\alpha,z}(\bm{X}),\bm{y}\right) \right \|_{\mathrm{op}}
\leq K_z \coloneqq 2C_T+\frac{C_T^2}{\alpha (\Im z)}
\end{equation}
almost surely. Consequently,
\begin{equation} \label{eq:second_iterate_norm_bound}
\left \| \bm{\Psi}_{\alpha,z}^{\circ2}(\bm{X}) \right \|_{\mathrm{op}}
\leq \frac{1}{\Im z},
\end{equation}
and
\begin{equation} \label{eq:second_iterate_imaginary_bound}
\Im \bm{\Psi}_{\alpha,z}^{\circ 2}(\bm{X}) \succeq \frac{\Im z}{(|z|+K_z)^2} \bm{I}_p.
\end{equation}
\end{lem}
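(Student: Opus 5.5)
Set \(\bm{M}\coloneqq\bm{\Psi}_{\alpha,z}(\bm{X})\in\mathbb{H}_p\). By Corollary~\ref{cor:holo}, \(\|\bm{M}\|_{\mathrm{op}}\le 1/\Im z\). The bound \eqref{eq:second_iterate_norm_bound} is then immediate, since \(\bm{\Psi}_{\alpha,z}^{\circ2}(\bm{X})=\bm{\Psi}_{\alpha,z}(\bm{M})\) and Corollary~\ref{cor:holo} applies again. The substance is therefore \eqref{eq:first_iterate_Q_bound}. Here we cannot use \eqref{eq:nonlinear_term_bound}, because \(\lambda_p(\Im\bm{M})\) may be tiny. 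We need a bound that uses only \(\|\bm{M}\|_{\mathrm{op}}\le 1/\Im z\) together with the structure \(\bm{M}=\bm{A}^{-1}\), where \(\bm{A}=-z\bm{I}_p+\bm{\mathcal{S}}_\alpha(\bm{X})\) and \(\Im\bm{A}\preceq-(\Im z)\bm{I}_p\).

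For \eqref{eq:first_iterate_Q_bound} I would argue as in the proof of \eqref{eq:nonlinear_term_bound}, but with the roles reorganized. Write \(\bm{T}=\bm{T}(\bm{y})\). Then
\[
\bm{\mathcal{Q}}_\alpha(\bm{M},\bm{y})=\bm{T}(\bm{I}_p+\alpha^{-1}\bm{M}\bm{T})^{-1}=\bm{T}\bigl(\bm{A}+\alpha^{-1}\bm{T}\bigr)^{-1}\bm{A},
\]
using \(\bm{I}_p+\alpha^{-1}\bm{A}^{-1}\bm{T}=\bm{A}^{-1}(\bm{A}+\alpha^{-1}\bm{T})\). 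Since \(\bm{T}\) is real symmetric, \(\Im(\bm{A}+\alpha^{-1}\bm{T})=\Im\bm{A}\preceq-(\Im z)\bm{I}_p\). The same numerical-range argument as in Corollary~\ref{cor:holo} then gives \(\|(\bm{A}+\alpha^{-1}\bm{T})^{-1}\|_{\mathrm{op}}\le1/\Im z\), and this also proves invertibility. Next write \(\bm{A}=(\bm{A}+\alpha^{-1}\bm{T})-\alpha^{-1}\bm{T}\). This yields
\[
\bm{\mathcal{Q}}_\alpha(\bm{M},\bm{y})=\bm{T}-\alpha^{-1}\bm{T}(\bm{A}+\alpha^{-1}\bm{T})^{-1}\bm{T},
\]
so that \(\|\bm{\mathcal{Q}}_\alpha(\bm{M},\bm{y})\|_{\mathrm{op}}\le C_T+C_T^2/(\alpha\Im z)\le K_z\). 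The stated constant \(2C_T\) leaves slack, so any slightly cruder route also works. This identity is the key step, and I expect it to be the only real obstacle: one has to find the rewriting that avoids \(\lambda_p(\Im\bm{M})^{-1}\).

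For \eqref{eq:second_iterate_imaginary_bound}, set \(\bm{B}\coloneqq -z\bm{I}_p+\bm{\mathcal{S}}_\alpha(\bm{M})\), so that \(\bm{\Psi}_{\alpha,z}^{\circ2}(\bm{X})=\bm{B}^{-1}\). By \eqref{eq:first_iterate_Q_bound}, \(\|\bm{\mathcal{S}}_\alpha(\bm{M})\|_{\mathrm{op}}\le K_z\), hence \(\|\bm{B}\|_{\mathrm{op}}\le|z|+K_z\). By Lemma~\ref{lem:S_holo}, \(-\Im\bm{B}\succeq(\Im z)\bm{I}_p\). Then
\[
\Im\bm{B}^{-1}=-\bm{B}^{-1}(\Im\bm{B})\bm{B}^{-\ast}\succeq(\Im z)\,\bm{B}^{-1}\bm{B}^{-\ast}.
\]
Finally, \(\bm{B}^{-1}\bm{B}^{-\ast}=(\bm{B}^\ast\bm{B})^{-1}\succeq\|\bm{B}\|_{\mathrm{op}}^{-2}\bm{I}_p\), which gives the claimed bound \((\Im z)/(|z|+K_z)^2\).
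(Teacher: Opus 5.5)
Your proof is correct, and for the key bound \eqref{eq:first_iterate_Q_bound} it takes a genuinely different route from the paper.

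The paper works with \(\bm{Y}=\bm{\Psi}_{\alpha,z}(\bm{X})\) itself. It converts \(-\Im\bm{Y}^{-1}\succeq(\Im z)\bm{I}_p\) into \(\Im\bm{Y}\succeq(\Im z)\bm{Y}^\ast\bm{Y}\), and then reruns the vector argument behind \eqref{eq:nonlinear_term_bound}. With \(\bm{v}=(\bm{I}_p+\alpha^{-1}\bm{Y}\bm{T})^{-1}\bm{x}\) and \(\bm{u}=\bm{T}\bm{v}\), this gives \((\Im z)\|\bm{Y}\bm{u}\|_2^2\le\alpha\|\bm{u}\|_2\|\bm{x}\|_2\) and \(\|\bm{u}\|_2\le C_T(\|\bm{x}\|_2+\alpha^{-1}\|\bm{Y}\bm{u}\|_2)\). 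These two estimates are closed up with Young's inequality, and the factor \(2C_T\) in \(K_z\) comes from that absorption step.

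You instead pass to the inverse \(\bm{A}=\bm{M}^{-1}\) and use the exact resolvent identity \(\bm{\mathcal{Q}}_\alpha(\bm{M},\bm{y})=\bm{T}-\alpha^{-1}\bm{T}(\bm{M}^{-1}+\alpha^{-1}\bm{T})^{-1}\bm{T}\). You combine it with the observation that adding the Hermitian matrix \(\alpha^{-1}\bm{T}\) leaves \(\Im\bm{M}^{-1}\preceq-(\Im z)\bm{I}_p\) unchanged.

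Both arguments use the same input. Yours is shorter, needs no absorption step, and gives the sharper constant \(C_T+C_T^2/(\alpha\Im z)\). It also shows that \(\bm{\mathcal{Q}}_\alpha(\bm{M},\bm{y})\) is a bounded perturbation of \(\bm{T}(\bm{y})\). The paper uses the same structure later, in Lemma~\ref{lem:uniform_denominator_bound}, through the Herglotz function \(\bm{T}(\bm{y})-\bm{\mathcal{Q}}_{\bm{y}}(z)\). The paper's version has the minor advantage of staying within the quadratic-form technique already used for \eqref{eq:nonlinear_term_bound}.

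Your derivations of \eqref{eq:second_iterate_norm_bound} and \eqref{eq:second_iterate_imaginary_bound} are essentially the paper's.
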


\begin{proof}
We begin by proving~\eqref{eq:first_iterate_Q_bound}. This follows similar ideas as~\eqref{eq:nonlinear_term_bound} in Lemma~\ref{lem:S_holo}. For \(z \in \C_+\) and \(\bm{X} \in \mathbb{H}_p\), let \(\bm{Y} \coloneqq \bm{\Psi}_{\alpha,z} (\bm{X})\). From~\eqref{eq:bm_Psi_alpha} we have 
\[
\bm{Y}^{-1} = - z \bm{I}_p + \bm{\mathcal{S}}_\alpha (\bm{X}),
\]
and hence 
\[
- \Im \bm{Y}^{-1} = (\Im z) \bm{I}_p - \Im \bm{\mathcal{S}}_\alpha (\bm{X}) \succeq (\Im z) \bm{I}_p,
\]
which follows from Lemma~\ref{lem:S_holo}. Using \(\Im \bm{Y}= - \bm{Y}^\ast (\Im \bm{Y}^{-1}) \bm{Y}\), we obtain
\[
\Im \bm{Y} \succeq (\Im z) \bm{Y}^\ast\bm{Y}.
\]
Fix \(\bm{x}\in\C^p\), and set 
\[
\bm{v} = \left( \bm{I}_p+\alpha^{-1} \bm{Y} \bm{T}(\bm{y}) \right)^{-1}\bm{x} \quad \mathrm{and} \quad \bm{u} = \bm{T}(\bm{y})\bm{v} = \bm{\mathcal{Q}}_\alpha \left (\bm{Y},\bm{y}\right) \bm{x}. 
\]
Then \( \bm{x}=\bm{v}+\alpha^{-1}\bm{Y}\bm{u} \) and \(\bm{u}^\ast \bm{v} = \bm{v}^\ast \bm{T} (\bm{y}) \bm{v} \in \R\). Consequently,
\[ 
\Im \left ( \bm{u}^\ast \bm{x} \right) = \frac{1}{\alpha} \bm{u}^\ast (\Im \bm{Y} )\bm{u}  \geq \frac{\Im z}{\alpha}\|\bm{Yu}\|_2^2.
\] 
Therefore, 
\[
(\Im z) \| \bm{Y} \bm{u}\|_2^2 \le \alpha \|\bm{u}\|_2 \|\bm{x}\|_2.
\]
On the other hand, 
\[
\|\bm{u}\|_2 \le C_T \|\bm{v}\|_2 \le C_T \left (\|\bm{x}\|_2 + \alpha^{-1} \|\bm{Y}\bm{u}\|_2 \right) \le C_T \left ( \|\bm{x}\|_2 +  \sqrt{\frac{\|\bm{u}\|_2 \|\bm{x}\|_2}{\alpha (\Im z)}}\right ).
\]
Dividing by \(\|\bm{x}\|_2\), we get
\[
\frac{\|\bm{u}\|_2}{\|\bm{x}\|_2} \le C_T \left (1 + \sqrt{\frac{\|\bm{u}\|_2}{\alpha (\Im z)\|\bm{x}\|_2}}\right ).
\]
using Young's inequality \(ab \le (a^2 + b^2)/2\) with \(a = \sqrt{\|\bm{u}\|_2 / \|\bm{x}\|_2}\) and \(b = C_T / \sqrt{\alpha (\Im z)}\), we get 
\[
\frac{\|\bm{u}\|_2}{\|\bm{x}\|_2} \le C_T + \frac{1}{2}\frac{\|\bm{u}\|_2}{\|\bm{x}\|_2} + \frac{C_T^2}{2 \alpha (\Im z)} \le 2C_T + \frac{C_T^2}{\alpha (\Im z)} \eqqcolon K_z, 
\]
which proves~\eqref{eq:first_iterate_Q_bound}. 

It follows that
\[
\| - z \bm{I}_p + \bm{\mathcal{S}}_\alpha (\bm{Y}) \|_{\mathrm{op}} \le |z| + K_z ,
\]
and
\[
- \Im \left ( - z \bm{I}_p + \bm{\mathcal{S}}_\alpha (\bm{Y})\right ) \succeq (\Im z) \bm{I}_p.
\]
Since 
\[
\bm{\Psi}_{\alpha,z}^{\circ 2}(\bm{X}) = \bm{\Psi}_{\alpha,z} (\bm{Y}) = \left ( - z \bm{I}_p + \bm{\mathcal{S}}_\alpha (\bm{Y})\right )^{-1},
\]
it follows that
\[
\| \bm{\Psi}_{\alpha,z}^{\circ 2}(\bm{X}) \|_{\mathrm{op}} \le (\Im z)^{-1},
\]
and
\[
\Im \bm{\Psi}_{\alpha,z}^{\circ 2}(\bm{X}) \succeq \frac{\Im z}{(|z|+K_z)^2} \bm{I}_p,
\]
as desired.
\end{proof}

\begin{lem} \label{lem:pointwise_existence_uniqueness}
For every \(z \in \C_+\), the map \(\bm{\Psi}_{\alpha,z} \colon \mathbb{H}_p \to \mathbb{H}_p\) has a unique fixed point, denoted by \(\bm{M}_\alpha(z)\). It is complex symmetric \(\bm{M}_\alpha(z) = \bm{M}_\alpha(z)^\top\) and satisfies 
\[
\|\bm{M}_\alpha (z) \|_{\mathrm{op}} \le \frac{1}{\Im z}.
\]
Moreover, the function \(\bm{M}_\alpha \colon \C_+ \to \mathbb{H}_p\) is analytic and satisfies 
\[
\lim_{\eta \to \infty} i \eta \bm{M}_{\alpha} (i\eta) = - \bm{I}_p.
\]
\end{lem}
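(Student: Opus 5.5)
The plan is to apply the Earle--Hamilton fixed point theorem to the second iterate \(\bm{\Psi}_{\alpha,z}^{\circ 2}\) on a suitable bounded domain, and then pass back to \(\bm{\Psi}_{\alpha,z}\). We work on a bounded domain because \(\mathbb{H}_p\) itself is unbounded. Fix \(z\in\C_+\) and let \(\delta_z\coloneqq \Im z/(|z|+K_z)^2\) be the constant from~\eqref{eq:second_iterate_imaginary_bound}. Define
\[
\mathcal{D}_z\coloneqq\left\{\bm{X}\in\C^{p\times p}\colon \|\bm{X}\|_{\mathrm{op}}<2/\Im z,\ \Im\bm{X}\succ (\delta_z/2)\bm{I}_p\right\}.
\]
This set is a bounded, convex (hence connected) open subset of \(\mathbb{H}_p\). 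By Lemma~\ref{lem:Psi_second_iterate}, \(\bm{\Psi}_{\alpha,z}^{\circ2}\) maps all of \(\mathbb{H}_p\), and in particular \(\mathcal{D}_z\), into
\[
\left\{\|\bm{X}\|_{\mathrm{op}}\le 1/\Im z,\ \Im\bm{X}\succeq\delta_z\bm{I}_p\right\}.
\]
This target set lies at positive distance from \(\partial\mathcal{D}_z\). By Corollary~\ref{cor:holo}, \(\bm{\Psi}_{\alpha,z}^{\circ2}\) is holomorphic. The Earle--Hamilton theorem then yields a unique fixed point \(\bm{M}\in\mathcal{D}_z\) of \(\bm{\Psi}_{\alpha,z}^{\circ2}\). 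Moreover, for every \(\bm{X}_0\in\mathcal{D}_z\), the iterates \(\bm{\Psi}_{\alpha,z}^{\circ 2k}(\bm{X}_0)\) converge to \(\bm{M}\).

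Next I would pass from \(\bm{\Psi}^{\circ2}\) back to \(\bm{\Psi}\). Since \(\bm{M}=\bm{\Psi}^{\circ2}(\bm{M})\), we have
\[
\bm{\Psi}(\bm{M})=\bm{\Psi}^{\circ3}(\bm{M})=\bm{\Psi}^{\circ2}(\bm{\Psi}(\bm{M})).
\]
The right-hand side lies in the image of \(\bm{\Psi}^{\circ2}\), hence in \(\mathcal{D}_z\), and it is a fixed point of \(\bm{\Psi}^{\circ2}\). Uniqueness in \(\mathcal{D}_z\) therefore gives \(\bm{\Psi}(\bm{M})=\bm{M}\). For uniqueness in all of \(\mathbb{H}_p\): any fixed point \(\bm{X}\in\mathbb{H}_p\) of \(\bm{\Psi}\) satisfies \(\bm{X}=\bm{\Psi}^{\circ2}(\bm{X})\in\mathcal{D}_z\), so \(\bm{X}=\bm{M}\eqqcolon\bm{M}_\alpha(z)\). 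The bound \(\|\bm{M}_\alpha(z)\|_{\mathrm{op}}\le 1/\Im z\) follows directly from Corollary~\ref{cor:holo}.

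For complex symmetry I would use the push-through identity
\[
\bigl(\bm{I}_p+\alpha^{-1}\bm{T}\bm{X}\bigr)^{-1}\bm{T}=\bm{T}\bigl(\bm{I}_p+\alpha^{-1}\bm{X}\bm{T}\bigr)^{-1}
\]
together with \(\bm{T}=\bm{T}^\top\). These give \(\bm{\mathcal{Q}}_\alpha(\bm{X}^\top,\bm{y})^\top=\bm{\mathcal{Q}}_\alpha(\bm{X},\bm{y})\), and hence \(\bm{\Psi}_{\alpha,z}(\bm{X}^\top)=\bm{\Psi}_{\alpha,z}(\bm{X})^\top\). Since \(\Im(\bm{X}^\top)=(\Im\bm{X})^\top\succ0\), the matrix \(\bm{M}_\alpha(z)^\top\) is also a fixed point in \(\mathbb{H}_p\). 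Uniqueness then gives \(\bm{M}_\alpha(z)^\top=\bm{M}_\alpha(z)\).

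Analyticity in \(z\) is the step requiring most care. For a compact set \(\mathcal{K}\Subset\C_+\), the constants \(\Im z\), \(|z|\) and \(K_z\) are uniformly controlled. I would therefore choose a single domain \(\mathcal{D}_{\mathcal{K}}\), with a uniform radius and a uniform lower bound on the imaginary part, that works for all \(z\in\mathcal{K}\). Each iterate \(z\mapsto\bm{\Psi}_{\alpha,z}^{\circ2k}(\bm{X}_0)\) is holomorphic in \(z\), because \(\bm{\Psi}_{\alpha,z}\) is jointly holomorphic in \((z,\bm{X})\). Earle--Hamilton provides a contraction in the Carathéodory--Riffen--Finsler metric of \(\mathcal{D}_{\mathcal{K}}\) with a uniform constant, since the image stays at uniform distance from the boundary. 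Hence the iterates converge uniformly on \(\mathcal{K}\), and the limit \(\bm{M}_\alpha\) is analytic by Weierstrass's theorem. As an alternative I would use the implicit function theorem: Earle--Hamilton implies that \(D\bm{\Psi}^{\circ2}\) has spectral radius less than \(1\) at the fixed point, so \(\bm{I}-D\bm{\Psi}\) is invertible there.

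Finally, for the normalization I would use the fixed-point relation \(\bm{M}_\alpha=\bm{\Psi}_{\alpha,z}(\bm{M}_\alpha)\). Applying~\eqref{eq:first_iterate_Q_bound} with \(\bm{X}=\bm{M}_\alpha\) gives
\[
\|\bm{\mathcal{S}}_\alpha(\bm{M}_\alpha(i\eta))\|_{\mathrm{op}}\le 2C_T+\frac{C_T^2}{\alpha\eta},
\]
which is bounded as \(\eta\to\infty\). Consequently
\[
i\eta\,\bm{M}_\alpha(i\eta)=-\bigl(\bm{I}_p-(i\eta)^{-1}\bm{\mathcal{S}}_\alpha(\bm{M}_\alpha(i\eta))\bigr)^{-1}\to-\bm{I}_p .
\]
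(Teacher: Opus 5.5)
Your proposal is correct and follows the paper's proof essentially step by step: Earle--Hamilton applied to $\bm{\Psi}_{\alpha,z}^{\circ 2}$ on a bounded convex domain, transfer of the fixed point back to $\bm{\Psi}_{\alpha,z}$ via $\bm{\Psi}_{\alpha,z}^{\circ 2}(\bm{\Psi}_{\alpha,z}(\bm{M}))=\bm{\Psi}_{\alpha,z}(\bm{M})$, complex symmetry by push-through plus uniqueness, and the normalization from the $K_z$ bound of Lemma~\ref{lem:Psi_second_iterate}. The differences are minor. The paper uses the simpler domain $\{\bm{X}\in\mathbb{H}_p\colon \|\bm{X}\|_{\mathrm{op}}<b\}$ with $b>(\Im z)^{-1}$, so every fixed point of $\bm{\Psi}_{\alpha,z}$ lies in it by the norm bound alone. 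It also obtains analyticity from pointwise convergence of the iterates together with Vitali's theorem, which avoids your argument for a uniform contraction constant on compacts.
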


\begin{proof}
Fix \(z \in \C_+\) and choose \(b > (\Im z)^{-1}\). Define
\[
D_b \coloneqq \{\bm{X}\in\mathbb{H}_p \colon \|\bm{X}\|_{\mathrm{op}}<b\}.
\]
Then \(D_b\) is an open, bounded, and convex subset of \(\C^{p\times p}\), and hence a bounded domain. By Lemma~\ref{lem:Psi_second_iterate},
\[
\bm{\Psi}_{\alpha,z}^{\circ2}(D_b) \subseteq \left \{ \bm{X} \in \mathbb{H}_p \colon
\|\bm{X}\|_{\mathrm{op}}\leq (\Im z)^{-1}, \enspace \Im \bm{X} \succeq c_z\bm{I}_p \right \} .
\]
The set on the right is compact and lies a positive distance from both the norm boundary \(\{\|\bm{X}\|_{\mathrm{op}}  =b\}\) and the boundary of the matrix upper half-plane. Since \(b > (\Im z)^{-1}\), it is relatively compact in \(D_b\). The map \(\bm{\Psi}_{\alpha,z}^{\circ 2}\) is holomorphic, so the Earle--Hamilton fixed point theorem (see, e.g.,~\cite[Theorem 3.1]{helton2007}) implies that it has a unique fixed point \(\bm{M} \in D_b\). 

We claim that \(\bm{M}\) is in fact a fixed point of \(\bm{\Psi}_{\alpha,z}\). By Corollary~\ref{cor:holo},
\[
\bm{\Psi}_{\alpha,z}(\bm{M})\in\mathbb{H}_p,
\qquad
\|\bm{\Psi}_{\alpha,z}(\bm{M})\|_{\mathrm{op}} \leq\frac{1}{\Im z}<b,
\]
and hence \(\bm{\Psi}_{\alpha,z}(\bm{M})\in D_b\). Moreover,
\[
\bm{\Psi}_{\alpha,z}^{\circ2} \left ( \bm{\Psi}_{\alpha,z}(\bm{M}) \right ) = \bm{\Psi}_{\alpha,z}
\left (\bm{\Psi}_{\alpha,z}^{\circ2} (\bm{M} )\right) = \bm{\Psi}_{\alpha,z}(\bm{M}) .
\]
Thus \(\bm{\Psi}_{\alpha,z}(\bm{M})\) is also a fixed point of \(\bm{\Psi}_{\alpha,z}^{\circ2}\) in \(D_b\). By uniqueness,
\[
\bm{\Psi}_{\alpha,z}(\bm{M})=\bm{M}.
\]
Therefore \(\bm{M}\) is a fixed point of \(\bm{\Psi}_{\alpha,z}\). Conversely, every fixed point of \(\bm{\Psi}_{\alpha,z}\) is a fixed point of \(\bm{\Psi}_{\alpha,z}^{\circ2}\). Furthermore, every fixed point \(\bm{N}\in\mathbb{H}_p\) of \(\bm{\Psi}_{\alpha,z}\) satisfies
\[
\|\bm{N}\|_{\mathrm{op}} = \| \bm{\Psi}_{\alpha,z} (\bm{N}) \|_{\mathrm{op}} \le \frac{1}{\Im z} < b,
\]
and hence belongs to \(D_b\). The uniqueness of the fixed point of \(\bm{\Psi}_{\alpha,z}^{\circ2}\) in \(D_b\) therefore shows that \(\bm{M}\) is the unique fixed point of \(\bm{\Psi}_{\alpha,z}\) in all of
\(\mathbb{H}_p\). 

The bound \(\|\bm{M}_\alpha (z)\|_{\mathrm{op}} \le (\Im z)^{-1}\) follows from Corollary~\ref{cor:holo} and the fixed-point identity. We also record the symmetry of the fixed point. Since \(\bm{T}(\bm{y})\) is symmetric, Lemma~\ref{lem:push_through} gives
\[
\bm{\mathcal{S}}_\alpha(\bm{X})^\top = \bm{\mathcal{S}}_\alpha(\bm{X}^\top),
\]
and hence
\[
\bm{\Psi}_{\alpha,z}(\bm{X})^\top = \bm{\Psi}_{\alpha,z}(\bm{X}^\top).
\]
Moreover, \(\mathbb{H}_p\) is invariant under transposition, since \(\Im (\bm{X}^\top) = (\Im \bm{X})^\top \succ \bm{0}\). Therefore, if \(\bm{M}_\alpha(z)\) is a fixed point of \(\bm{\Psi}_{\alpha,z}\), then so is \(\bm{M}_\alpha(z)^\top\). By uniqueness of the fixed point in \(\mathbb{H}_p\),
\[
\bm{M}_\alpha(z)^\top = \bm{M}_\alpha(z).
\]

We next prove analyticity in \(z\). Fix \(\bm{M}_0 \in \mathbb{H}_p\), and define the sequence \((\bm{M}^{(k)} (z))_{k \in \N}\) iteratively by
\[
\bm{M}^{(0)} (z) \coloneqq \bm{M}_0, \quad \bm{M}^{(k+1)} (z) \coloneqq \bm{\Psi}_{\alpha,z}^{\circ 2} \left (  \bm{M}^{(k)} (z) \right).
\]
For every fixed \(z \in \C_+\), choose \(b > (\Im z)^{-1}\). Then \(\| \bm{M}^{(1)} (z)\|_{\mathrm{op}} \le (\Im z)^{-1} < b\), so \(\bm{M}^{(1)} (z) \in D_b\). The Earle--Hamilton convergence theorem therefore applies and gives 
\[
\bm{M}^{(k)} (z) \to \bm{M}_\alpha (z).
\]
Moreover, for \(k \ge 1\), the iterates are locally uniformly bounded on \(\C_+\), because \(\|\bm{M}^{(k)} (z)\|_{\mathrm{op}} \le (\Im z)^{-1} \). Since they converge pointwise for every \(z \in \C_+\), Vitali's theorem applied entrywise implies locally uniform convergence and hence analyticity of \(\bm{M}_\alpha\).

Finally, since \(\bm{M}_\alpha(i\eta)\) is a fixed point, it is itself in the image of \(\bm{\Psi}_{\alpha,i\eta}\). Hence~\eqref{eq:first_iterate_Q_bound} gives
\[
\left \| \bm{\mathcal{S}}_\alpha \left ( \bm{M}_\alpha (i\eta) \right ) \right \|_{\mathrm{op}} \le 2C_T + \frac{C_T^2}{\alpha \eta}.
\]
Using the fixed-point equation,
\[
i\eta \bm{M}_\alpha(i\eta) = -\left(\bm{I}_p -  \frac{\bm{\mathcal{S}}_\alpha \left ( \bm{M}_\alpha (i\eta) \right )}{i\eta} \right)^{-1}.
\]
The preceding bound implies
\[
\frac{\bm{\mathcal{S}}_\alpha \left ( \bm{M}_\alpha (i\eta) \right )}{i\eta} \to 0,
\]
and therefore \(i \eta \bm{M}_\alpha (i\eta) \to - \bm{I}_p\).
\end{proof}

We now prove Proposition~\ref{prop:existence_uniqueness}. 

\begin{proof}[Proof of Proposition~\ref{prop:existence_uniqueness}]
By Lemma~\ref{lem:pointwise_existence_uniqueness}, there exists an analytic solution \(\bm{M}_\alpha \colon \C_+ \to \mathbb{H}_p\) of~\eqref{eq:MDE} satisfying 
\[
i\eta\bm{M}_\alpha(i\eta)\to-\bm{I}_p.
\]
Hence \(\bm{M}_\alpha\) belongs to the normalized matrix-valued Herglotz class, with \(\Im \bm{M}_\alpha (z) \succ \bm{0}\).

To prove uniqueness in this class, let \(\widetilde{\bm{M}}_\alpha\) be another normalized Herglotz solution. By the matrix-valued Nevanlinna--Herglotz representation theorem~\cite[Theorem 5.4]{gesztesy} and the normalization at infinity, \(\widetilde{\bm{M}}_\alpha\) is the Stieltjes transform of a positive semidefinite matrix-valued measure of total mass \(\bm{I}_p\). Thus \(\Im \widetilde{\bm{M}}_\alpha (z) \succ \bm{0}\) for every \(z\in\C_+\), so \(\widetilde{\bm{M}}_\alpha (z)\in\mathbb{H}_p\). Pointwise uniqueness from Lemma~\ref{lem:pointwise_existence_uniqueness} therefore yields
\(\widetilde{\bm{M}}_\alpha (z) = \bm{M}_\alpha (z)\) for every \(z \in \C_+\).

Applying the same representation theorem to \(\bm{M}_\alpha\) gives a unique positive semidefinite matrix-valued measure \(\bm{\Omega}_\alpha\) with \(\bm{\Omega}_\alpha(\R)=\bm{I}_p\) such that
\[
\bm{M}_\alpha(z) = \int_\R \frac{\bm{\Omega}_\alpha (\mathrm{d} \lambda)}{\lambda-z}.
\]
Taking normalized traces gives the claimed probability measure \(\mu_\alpha=p^{-1} \Tr \bm{\Omega}_\alpha\) and its Stieltjes transform.
\end{proof}

From the proof of Proposition~\ref{prop:existence_uniqueness}, we also obtain the following pointwise recognition result.

\begin{cor} \label{cor:MDE_recognition}
Let \(\bm{M}\colon\C_+\to\C^{p \times p}\) be a normalized matrix-valued Herglotz function, and let \(U \subseteq \C_+\). Suppose that \(\bm{M}\) satisfies the matrix Dyson equation~\eqref{eq:MDE} for every \(z\in U\). Then 
\[
\bm{M} (z) = \bm{M}_\alpha(z), \quad z \in U.
\]
In particular, if \(U\) has an accumulation point in \(\C_+\), then \(\bm{M} = \bm{M}_\alpha\) on all of \(\C_+\), and hence \(\bm{M}\) satisfies~\eqref{eq:MDE} on all of \(\C_+\).
\end{cor}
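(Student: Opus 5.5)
The plan is to reduce Corollary~\ref{cor:MDE_recognition} to the pointwise uniqueness statement of Lemma~\ref{lem:pointwise_existence_uniqueness}, exactly as in the uniqueness part of the proof of Proposition~\ref{prop:existence_uniqueness}. The only point to verify is that \(\bm{M}(z)\) lies in the open matrix upper half-plane \(\mathbb{H}_p\) for every \(z \in U\). The Herglotz condition alone only gives \(\Im \bm{M}(z) \succeq \bm{0}\), whereas the fixed-point uniqueness was established within \(\mathbb{H}_p\).

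\emph{Step 1: strict positivity of the imaginary part.} By the matrix-valued Nevanlinna--Herglotz representation~\cite[Theorem 5.4]{gesztesy}, together with the normalization \(i\eta\bm{M}(i\eta)\to-\bm{I}_p\), we can write
\[
\bm{M}(z)=\int_\R \frac{\bm{\Omega}(\mathrm{d}\lambda)}{\lambda-z},
\]
where \(\bm{\Omega}\) is a positive semidefinite matrix measure of total mass \(\bm{I}_p\). The normalization rules out a linear term and a nonzero constant term. Then, for every \(\bm{v}\neq 0\),
\[
\bm{v}^\ast \Im\bm{M}(z)\bm{v}=\Im z\int_\R \frac{\bm{v}^\ast\bm{\Omega}(\mathrm{d}\lambda)\bm{v}}{|\lambda-z|^2}.
\]
The integrand is strictly positive, and the measure \(\bm{v}^\ast\bm{\Omega}\bm{v}\) has total mass \(\|\bm{v}\|^2>0\), so the integral is strictly positive. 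Hence \(\bm{M}(z)\in\mathbb{H}_p\) for all \(z\in\C_+\).

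\emph{Step 2: pointwise identification.} Fix \(z\in U\). Since \(\bm{M}(z)\in\mathbb{H}_p\) and~\eqref{eq:MDE} holds at \(z\), the matrix \(\bm{M}(z)\) is a fixed point of \(\bm{\Psi}_{\alpha,z}\) by the equivalence~\eqref{eq:fixed_point_eq}. The inverse in~\eqref{eq:MDE} is well defined by Lemma~\ref{lem:S_holo}. Lemma~\ref{lem:pointwise_existence_uniqueness} says that \(\bm{\Psi}_{\alpha,z}\) has exactly one fixed point in \(\mathbb{H}_p\), so \(\bm{M}(z)=\bm{M}_\alpha(z)\).

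\emph{Step 3: extension.} Both \(\bm{M}\) and \(\bm{M}_\alpha\) are analytic on the connected domain \(\C_+\), the latter by Lemma~\ref{lem:pointwise_existence_uniqueness}. They agree on \(U\), which has an accumulation point in \(\C_+\). The identity theorem, applied entrywise, therefore gives \(\bm{M}=\bm{M}_\alpha\) on \(\C_+\). Since \(\bm{M}_\alpha\) satisfies~\eqref{eq:MDE} everywhere, so does \(\bm{M}\).

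The only mildly delicate step is Step~1. It uses the normalization to exclude degenerate Herglotz functions whose imaginary part is singular, such as constant Hermitian matrices. Everything else is a direct citation of earlier results.
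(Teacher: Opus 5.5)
Your proposal is correct and follows essentially the same route as the paper: the Nevanlinna--Herglotz representation gives \(\Im \bm{M}(z)\succ\bm{0}\), so \(\bm{M}(z)\in\mathbb{H}_p\); the equation at \(z\in U\) makes \(\bm{M}(z)\) a fixed point of \(\bm{\Psi}_{\alpha,z}\), hence equal to \(\bm{M}_\alpha(z)\) by Lemma~\ref{lem:pointwise_existence_uniqueness}; the identity theorem then gives the extension. The only difference is that you spell out why the normalization forces a measure of total mass \(\bm{I}_p\) and hence strict positivity, which the paper obtains by citing~\cite[Theorem 5.4]{gesztesy}.
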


\begin{proof}
By the matrix-valued Nevanlinna--Herglotz representation theorem~\cite[Theorem 5.4]{gesztesy}, \(\Im \bm{M}(z)\succ \bm{0}\) for every \(z\in\C_+\). Thus \(\bm{M}(z) \in \mathbb{H}_p\) for every \(z\in\C_+\). Fix \(z \in U\). Since \(\bm{M}\) satisfies the matrix Dyson equation at \(z\), \(\bm{M}(z) = \bm{\Psi}_{\alpha,z}(\bm{M}(z)\). Hence \(\bm{M}(z)\) is a fixed point of \(\bm{\Psi}_{\alpha,z}\) in \(\mathbb{H}_p\). By Lemma~\ref{lem:pointwise_existence_uniqueness}, this fixed point is unique and is equal to \(\bm{M}_\alpha(z)\). Therefore, \(\bm{M} (z) = \bm{M}_\alpha(z)\) for every \(z \in U\). If \(U\) has an accumulation point in \(\C_+\), then the analytic matrix-valued functions \(\bm{M}\) and \(\bm{M}_\alpha\) agree on a set with an accumulation point in their domain. The identity theorem, applied entrywise, therefore yields \(\bm{M} = \bm{M}_\alpha\) on all of \(\C_+\). Since \(\bm{M}_\alpha\) satisfies~\eqref{eq:MDE} on \(\C_+\), the same is true of \(\bm{M}\).
\end{proof}

\subsection{Analytic continuation and compact support}

We next study the analytic structure of the solution outside the upper half-plane. We first construct an analytic continuation for sufficiently large spectral parameters. This implies compactness of the representing measure and subsequently yields an analytic continuation of \(\bm{M}_\alpha\) to the complement of its support.

\begin{lem} \label{lem:analytic_continuation}
There exist \(R>0\) and an analytic function 
\[
\widehat{\bm{M}}_\alpha \colon \{z\in\C \colon |z|>R\} \to \C^{p\times p}
\]
such that \( \widehat{\bm{M}}_\alpha(z)=\bm{M}_\alpha(z)\) for every \(z\in\C_+\cap\{z\in\C \colon |z|>R\}\). Moreover, for every \(z\in\C\) with \(|z|>R\), 
\[ 
\widehat{\bm{M}}_\alpha(z) = \bm{\Psi}_{\alpha,z} \left (\widehat{\bm{M}}_\alpha(z)\right)
\] 
and 
\[ 
\widehat{\bm{M}}_\alpha(\bar{z}) = \widehat{\bm{M}}_\alpha(z)^\ast . 
\] 
In particular, for every \(x\in\R\) with \(|x|>R\), \(\widehat{\bm{M}}_\alpha(x) = \widehat{\bm{M}}_\alpha(x)^\ast\).
\end{lem}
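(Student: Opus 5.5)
Since $\bm{M}_\alpha$ is analytic at large $|z|$ and solves the fixed-point equation there, the natural plan is to build the continuation by a contraction argument in the variable $w = 1/z$ near $w=0$. Write $\bm{M} = -w\bm{N}$, so that the fixed-point equation $\bm{M}^{-1} = -z\bm{I}_p + \bm{\mathcal{S}}_\alpha(\bm{M})$ becomes
\[
\bm{N} = \left(\bm{I}_p - w\,\bm{\mathcal{S}}_\alpha(-w\bm{N})\right)^{-1}, \qquad \bm{\mathcal{S}}_\alpha(\bm{X}) = \E_{\bm{y}}\left[\bm{T}(\bm{y})\left(\bm{I}_p+\tfrac{1}{\alpha}\bm{X}\bm{T}(\bm{y})\right)^{-1}\right].
\]
The key observation is that, for $\|\bm{X}\|_{\mathrm{op}} \le \alpha/(2C_T)$, the matrix $\bm{I}_p + \alpha^{-1}\bm{X}\bm{T}(\bm{y})$ is invertible by a Neumann series for every $\bm{y}$, with $\|\bm{\mathcal{Q}}_\alpha(\bm{X},\bm{y})\|_{\mathrm{op}} \le 2C_T$. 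This holds for all complex $\bm{X}$ in that ball, with no requirement $\Im\bm{X}\succ 0$. Hence $\bm{\mathcal{S}}_\alpha$ extends holomorphically to the ball $B \coloneqq \{\|\bm{X}\|_{\mathrm{op}} < \alpha/(2C_T)\}$ by dominated convergence. On this ball it is bounded by $2C_T$ and Lipschitz with constant $\lesssim C_T^2/\alpha$, via the resolvent identity $\bm{\mathcal{Q}}(\bm{X})-\bm{\mathcal{Q}}(\bm{X}') = -\alpha^{-1}\bm{\mathcal{Q}}(\bm{X})(\bm{X}-\bm{X}')\bm{\mathcal{Q}}(\bm{X}')$.

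Next I would define the map $F_w(\bm{N}) \coloneqq (\bm{I}_p - w\,\bm{\mathcal{S}}_\alpha(-w\bm{N}))^{-1}$ on the closed ball $\{\|\bm{N}\|_{\mathrm{op}} \le 2\}$. For $|w| < \rho$ with $\rho$ small (depending only on $C_T,\alpha$), we have $\|w\bm{N}\| \le 2\rho < \alpha/(2C_T)$ and $|w|\cdot 2C_T \le 1/4$. Then $F_w$ maps the ball into itself, since $\|F_w\|\le 4/3$, and it is a contraction, with Lipschitz constant $O(|w|^2 C_T^2/\alpha)$. Banach's fixed point theorem gives a unique fixed point $\bm{N}(w)$. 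Analyticity in $w$ follows either from the implicit function theorem or, as in Lemma~\ref{lem:pointwise_existence_uniqueness}, because the iterates are analytic in $w$ and converge uniformly. Setting $R \coloneqq 1/\rho$ and $\widehat{\bm{M}}_\alpha(z) \coloneqq -z^{-1}\bm{N}(1/z)$ produces an analytic function on $|z|>R$ solving $\widehat{\bm{M}}_\alpha = \bm{\Psi}_{\alpha,z}(\widehat{\bm{M}}_\alpha)$, where $\bm{\Psi}_{\alpha,z}$ is interpreted through the extended $\bm{\mathcal{S}}_\alpha$.

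The main obstacle is matching $\widehat{\bm{M}}_\alpha$ with $\bm{M}_\alpha$ on $\C_+\cap\{|z|>R\}$. Here I would use $\|\bm{M}_\alpha(z)\|_{\mathrm{op}} \le 1/\Im z$ together with $i\eta\bm{M}_\alpha(i\eta)\to-\bm{I}_p$. For $z=i\eta$ with $\eta$ large, $\bm{N}_0 \coloneqq -z\bm{M}_\alpha(z)$ satisfies $\|\bm{N}_0\|_{\mathrm{op}} \le 2$ and lies in $B$-compatible range. On this range the extended $\bm{\mathcal{S}}_\alpha$ agrees with the original one, being the same integrand. So $\bm{N}_0$ is a fixed point of $F_{1/z}$ in the ball, and uniqueness gives $\widehat{\bm{M}}_\alpha = \bm{M}_\alpha$ on a segment of the imaginary axis. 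The identity theorem on the connected set $\C_+\cap\{|z|>R\}$ then gives agreement everywhere, since both functions are analytic there.

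For the symmetry, $\bm{T}(\bm{y})$ is real symmetric, so $\bm{\mathcal{S}}_\alpha(\bm{X})^\ast = \bm{\mathcal{S}}_\alpha(\bm{X}^\ast)$. The last identity follows by taking adjoints and using Lemma~\ref{lem:push_through}, $(\bm{I}+\alpha^{-1}\bm{X}^\ast\bm{T})^{-1}$-type push-through. Consequently $\widehat{\bm{M}}_\alpha(z)^\ast$ is a fixed point of $F_{\bar w}$ in the same ball, and uniqueness yields $\widehat{\bm{M}}_\alpha(\bar z) = \widehat{\bm{M}}_\alpha(z)^\ast$. Taking $z = x$ real gives Hermitianity.
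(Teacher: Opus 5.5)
Your proposal is correct and follows essentially the paper's route: a Banach contraction for the fixed-point map on a ball of radius $2/|z|$ (which is exactly your ball $\|\bm{N}\|_{\mathrm{op}}\le 2$ after the rescaling $\bm{M}=-\bm{N}/z$), analyticity via Picard iterates, identification with $\bm{M}_\alpha$ through $\|\bm{M}_\alpha(z)\|_{\mathrm{op}}\le 1/\Im z$ together with uniqueness of the fixed point and the identity theorem, and conjugation symmetry again from uniqueness. The only difference is the change of variables $w=1/z$, a mild simplification that makes the ball independent of $z$, so analyticity follows from uniform convergence on the whole disc $|w|<\rho$ rather than from the paper's local-neighborhood argument.
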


\begin{proof} 
For \(z\in\C\) with \(|z|\) sufficiently large, consider the closed ball 
\[ 
\mathcal{B}_z \coloneqq \left\{ \bm{X}\in\C^{p\times p} \colon \|\bm{X}\|_{\mathrm{op}} \leq \frac{2}{|z|} \right\}. 
\] 
For every \(\bm{X}\in\mathcal{B}_z\), if \(|z|>4C_T/\alpha\), then 
\[ 
\left\| \frac{1}{\alpha}\bm{X}\bm{T}(\bm{y}) \right\|_{\mathrm{op}} \le \frac{2 C_T}{\alpha |z|} < \frac{1}{2}. 
\] 
It follows from the Neumann series that \(\bm{I}_p + \alpha^{-1} \bm{X}\bm{T}(\bm{y})\) is invertible almost surely and 
\[ 
\left\| \left( \bm{I}_p + \frac{1}{\alpha}\bm{X}\bm{T}(\bm{y}) \right)^{-1} \right\|_{\mathrm{op}} \leq 2. 
\] 
Therefore \(\|\bm{\mathcal{S}}_\alpha(\bm{X})\|_{\mathrm{op}} \leq 2C_T \). If in addition \(|z|>4C_T\), then \(-z\bm{I}_p+\bm{\mathcal{S}}_\alpha(\bm{X}) \) is invertible and 
\[ 
\left\| \left( -z\bm{I}_p+\bm{\mathcal{S}}_\alpha(\bm{X}) \right)^{-1} \right\|_{\mathrm{op}}\leq \frac{1}{ |z|-\|\bm{\mathcal{S}}_\alpha(\bm{X})\|_{\mathrm{op}} }\leq \frac{1}{|z|-2C_T} \leq \frac{2}{|z|}. 
\] 
Therefore, \(\bm{\Psi}_{\alpha,z}(\mathcal{B}_z) \subseteq \mathcal{B}_z\) for all sufficiently large \(|z|\). 

We next show that \(\bm{\Psi}_{\alpha,z}\) is a contraction on \(\mathcal{B}_z\). For \(\bm{X},\bm{Y}\in\mathcal{B}_z\), the resolvent identity gives 
\[ 
\bm{\mathcal{S}}_\alpha(\bm{X}) - \bm{\mathcal{S}}_\alpha(\bm{Y}) = -\frac{1}{\alpha} \E_{\bm{y}} \left [ \bm{T}(\bm{y}) \left( \bm{I}_p + \frac{1}{\alpha}\bm{X}\bm{T}(\bm{y}) \right)^{-1} (\bm{X}-\bm{Y}) \bm{T}(\bm{y}) \left( \bm{I}_p + \frac{1}{\alpha}\bm{Y}\bm{T}(\bm{y}) \right)^{-1} \right]. 
\]
It thus follows that 
\[ 
\| \bm{\mathcal{S}}_\alpha(\bm{X}) - \bm{\mathcal{S}}_\alpha(\bm{Y}) \|_{\mathrm{op}} \leq \frac{4C_T^2}{\alpha} \|\bm{X}-\bm{Y}\|_{\mathrm{op}}. 
\] 
Applying the resolvent identity once more, we obtain 
\[ 
\begin{aligned}  
\| \bm{\Psi}_{\alpha,z}(\bm{X}) - \bm{\Psi}_{\alpha,z}(\bm{Y}) \|_{\mathrm{op}} & \leq \left\| \left( -z\bm{I}_p+\bm{\mathcal{S}}_\alpha(\bm{X}) \right)^{-1} \right\|_{\mathrm{op}} \| \bm{\mathcal{S}}_\alpha(\bm{X}) - \bm{\mathcal{S}}_\alpha(\bm{Y}) \|_{\mathrm{op}} \left\| \left( -z\bm{I}_p+\bm{\mathcal{S}}_\alpha(\bm{Y}) \right)^{-1} \right\|_{\mathrm{op}} \\ 
&\leq \frac{4}{|z|^2} \| \bm{\mathcal{S}}_\alpha(\bm{X}) - \bm{\mathcal{S}}_\alpha(\bm{Y}) \|_{\mathrm{op}} \\
&\leq \frac{16C_T^2}{\alpha|z|^2} \|\bm{X}-\bm{Y}\|_{\mathrm{op}}. 
\end{aligned} 
\] 
We may therefore choose \(R>0\) sufficiently large so that, for every \(|z|>R\), the map \(\bm{\Psi}_{\alpha,z}\) is a strict contraction from \(\mathcal{B}_z\) into itself. By Banach's fixed-point theorem, for each \(|z|>R\), there exists a unique \(\widehat{\bm{M}}_\alpha(z)\in\mathcal{B}_z\) such that \(\widehat{\bm{M}}_\alpha(z) = \bm{\Psi}_{\alpha,z} \left (\widehat{\bm{M}}_\alpha(z)\right)\). 

We now prove that \(z\mapsto\widehat{\bm{M}}_\alpha(z)\) is analytic. Fix \(z_0\in\C\) with \(|z_0|>R\). After increasing \(R\), if necessary, there exist a neighborhood \(V\) of \(z_0\), a closed ball \(\mathcal{B}\subset\C^{p\times p}\), and a constant \(q<1\) such that, for every \(z\in V\), the map \(\bm{\Psi}_{\alpha,z}\) maps \(\mathcal{B}\) into itself and is \(q\)-Lipschitz on \(\mathcal{B}\). Starting from \(\bm{X}_0(z)=\bm{0}\), define 
\[ 
\bm{X}_{k+1}(z) \coloneqq \bm{\Psi}_{\alpha,z}\left (\bm{X}_k(z)\right). 
\] 
Each \(\bm{X}_k\) is analytic on \(V\), and the contraction estimate implies that \(\bm{X}_k\) converges locally uniformly on \(V\) to \(\widehat{\bm{M}}_\alpha\). Hence \(\widehat{\bm{M}}_\alpha\) is analytic on \(V\). Since \(z_0\) was arbitrary, \(z\mapsto\widehat{\bm{M}}_\alpha(z)\) is analytic on \(\{z\in\C \colon |z|>R\}\). 

We next identify \(\widehat{\bm{M}}_\alpha\) with \(\bm{M}_\alpha\) in the upper half-plane. Consider the nonempty open set 
\[ 
\mathcal{C}_R \coloneqq \left\{ z\in\C_+ \colon |z|>R, \quad \Im z>\frac{|z|}{2} \right\}. 
\] 
For \(z\in\mathcal{C}_R\), the Herglotz bound gives 
\[ 
\|\bm{M}_\alpha(z)\|_{\mathrm{op}} \leq \frac{1}{\Im z} < \frac{2}{|z|}. 
\] 
Therefore, \(\bm{M}_\alpha(z)\in\mathcal{B}_z\). Since \(\bm{M}_\alpha(z)\) satisfies the fixed-point equation and the fixed point in \(\mathcal{B}_z\) is unique, 
\[ 
\bm{M}_\alpha(z) = \widehat{\bm{M}}_\alpha(z), \quad z\in\mathcal{C}_R. 
\] 
Both functions are analytic on the connected domain \(\C_+\cap\{z\in\C \colon |z|>R\}\). Since they agree on the nonempty open subset \(\mathcal{C}_R\), the identity theorem yields 
\[ 
\bm{M}_\alpha(z) = \widehat{\bm{M}}_\alpha(z), \quad z\in \C_+\cap\{z\in\C\colon |z|>R\}. 
\] 
Thus \(\widehat{\bm{M}}_\alpha\) is an analytic continuation of \(\bm{M}_\alpha\). 

It remains to prove the conjugation symmetry. Lemma~\ref{lem:push_through} implies that \(\bm{\mathcal{S}}_\alpha(\bm{X})^\ast = \bm{\mathcal{S}}_\alpha(\bm{X}^\ast)\) and that \(\bm{\Psi}_{\alpha,z}(\bm{X})^\ast = \bm{\Psi}_{\alpha,\bar{z}}(\bm{X}^\ast)\). It follows that \(\widehat{\bm{M}}_\alpha(z)^\ast\) is a fixed point of \(\bm{\Psi}_{\alpha,\bar{z}}\). Moreover, 
\[ 
\|\widehat{\bm{M}}_\alpha(z)^\ast\|_{\mathrm{op}} = \|\widehat{\bm{M}}_\alpha(z)\|_{\mathrm{op}} \leq \frac{2}{|z|} = \frac{2}{|\bar{z}|}, 
\] 
so \(\widehat{\bm{M}}_\alpha(z)^\ast \in \mathcal{B}_{\bar{z}}\). By uniqueness of the fixed point in \(\mathcal{B}_{\bar{z}}\), \(\widehat{\bm{M}}_\alpha(z)^\ast = \widehat{\bm{M}}_\alpha(\bar{z})\). For real \(x\) with \(|x|>R\), this gives \(\widehat{\bm{M}}_\alpha(x) = \widehat{\bm{M}}_\alpha(x)^\ast\). 
\end{proof}

For notational simplicity, we henceforth use the same notation \(\bm{M}_\alpha\) for the analytic continuation constructed in
Lemma~\ref{lem:analytic_continuation}. We first state a standard consequence of the Stieltjes inversion formula that will allow us to identify intervals outside the support of a measure.

\begin{lem} \label{lem:analytic_continuation_support}
Let \(\mu\) be a finite positive Borel measure on \(\R\), and let
\[
m_\mu(z) \coloneqq \int_\R \frac{\mu(\mathrm{d}\lambda)}{\lambda-z}, \quad z\in\C_+,
\]
be its Stieltjes transform. Suppose that \(m_\mu\) admits an analytic continuation to a complex neighborhood of an open interval \(I \subseteq \R\), and that this continuation is real-valued on \(I\). Then
\[
I \cap \supp (\mu) = \emptyset.
\]
\end{lem}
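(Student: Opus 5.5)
The plan is to use the Stieltjes inversion formula: for a finite positive Borel measure \(\mu\) and any interval \([a,b]\),
\[
\frac{1}{2}\mu(\{a\})+\mu((a,b))+\frac{1}{2}\mu(\{b\}) = \lim_{\eta\downarrow 0}\frac{1}{\pi}\int_a^b \Im m_\mu(x+i\eta)\,\mathrm{d}x .
\]
Denote by \(\widehat{m}\) the analytic continuation of \(m_\mu\) to an open complex neighborhood \(U\) of \(I\), with \(\widehat{m}\) real-valued on \(I\). It suffices to show \(\mu([a,b])=0\) for every compact subinterval \([a,b]\subset I\), since \(I\) is a countable union of such intervals.

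Fix \([a,b]\subset I\). By compactness, there is \(\eta_0>0\) such that the closed rectangle \(R\coloneqq[a,b]\times[0,\eta_0]\) lies in \(U\). On \(R\cap\C_+\) the continuation \(\widehat{m}\) agrees with \(m_\mu\). This holds because \(U\cap\C_+\) is open, \(\widehat{m}=m_\mu\) on it by the definition of analytic continuation, and \(R\setminus\{\Im z=0\}\subset U\cap\C_+\).

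Since \(\widehat{m}\) is continuous on the compact set \(R\), it is uniformly continuous there. Hence \(\Im m_\mu(x+i\eta)=\Im\widehat{m}(x+i\eta)\to\Im\widehat{m}(x)=0\) uniformly in \(x\in[a,b]\) as \(\eta\downarrow0\), where the last equality uses that \(\widehat{m}\) is real on \(I\). The right-hand side of the inversion formula is therefore \(0\), so
\[
\tfrac12\mu(\{a\})+\mu((a,b))+\tfrac12\mu(\{b\})=0 .
\]
By positivity of \(\mu\), each term vanishes. Applying this to a slightly larger interval \([a',b']\subset I\) with \(a'<a\) and \(b'>b\) gives \(\mu([a,b])\le\mu((a',b'))=0\).

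It remains to pass from null intervals to the support. Every point of \(I\) has an open neighborhood in \(I\) of \(\mu\)-measure zero, so \(I\cap\supp(\mu)=\emptyset\) by the definition of support as the complement of the largest open null set.

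The only delicate step is the uniform convergence of \(\Im m_\mu(x+i\eta)\) on \([a,b]\), which requires using the continuation on a closed rectangle rather than just pointwise on \(I\). Compactness of \([a,b]\) inside the open set \(U\) handles this directly.
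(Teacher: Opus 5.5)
Your proof is correct and follows the same route as the paper: continuity of the continuation on a compact set containing \([a,b]\) makes \(\Im m_\mu(x+i\eta)\to 0\) uniformly on \([a,b]\), and the Stieltjes inversion formula then forces \(\mu\) to vanish there. The only cosmetic difference is that the paper chooses non-atom endpoints \(a,b\) to use the plain form of the inversion formula, whereas you use the form with half-weighted endpoint atoms. That form already gives \(\mu(\{a\})=\mu((a,b))=\mu(\{b\})=0\), hence \(\mu([a,b])=0\) directly, so your final enlargement to \([a',b']\) is unnecessary.
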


\begin{proof}
Fix \(x_0 \in I\). Choose \(a,b \in I\) such that \(a < x_0 < b\), \([a,b] \subset I\), and such that neither \(a\) nor \(b\) is an atom of \(\mu\). This is possible because a finite measure has at most countably many atoms. Since the analytic continuation of \(m_\mu\) is real-valued on \([a,b]\), continuity of the continuation implies
\[
\sup_{x\in[a,b]} \left |  \Im m_\mu(x+i\eta) \right| \to 0 ,
\]
as \(\eta \downarrow 0\). Hence
\[
\lim_{\eta\downarrow0} \int_a^b \Im m_\mu(x+i\eta) \mathrm{d}x = 0.
\]
By the Stieltjes inversion formula and the choice of \(a\) and \(b\),
\[
\mu((a,b)) = \lim_{\eta\downarrow0} \frac{1}{\pi} \int_a^b \Im m_\mu(x+i\eta) \mathrm{d}x = 0.
\]
Thus \(x_0\) has an open neighborhood of zero \(\mu\)-mass, and hence \(x_0\notin\supp(\mu)\). Since \(x_0\in I\) was arbitrary, we conclude that
\[
I\cap\supp(\mu)=\emptyset.
\]
\end{proof}

We now apply this criterion to the scalar measures obtained by testing the matrix-valued measure \(\bm{\Omega}_\alpha\) against vectors. This turns the analytic continuation obtained for large spectral parameters into compactness of the limiting support. Once compactness is known, the Stieltjes representation yields an analytic continuation to the full complement of the support.

\begin{lem} \label{lem:continuation_off_support} 
The matrix-valued measure \(\bm{\Omega}_\alpha\) is compactly supported, and
\[ 
\supp (\bm{\Omega}_\alpha) = \supp (\mu_\alpha) \subseteq [-R,R],
\] 
where \(R\) is the constant from Lemma~\ref{lem:analytic_continuation}. In particular, \(\mu_\alpha\) is a compactly supported probability measure. Moreover, the representation 
\[ 
\bm{M}_\alpha(z) = \int_\R \frac{\bm{\Omega}_\alpha(\mathrm{d}\lambda)}{\lambda-z} 
\] 
defines an analytic continuation of \(\bm{M}_\alpha\) to \(\C\setminus \supp (\mu_\alpha)\). For every \(x \in \R \setminus \supp(\mu_\alpha)\), the matrix \(\bm{M}_\alpha(x)\) is real symmetric.
\end{lem}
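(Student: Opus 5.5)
The plan is to test the matrix-valued measure \(\bm{\Omega}_\alpha\) against fixed vectors and apply Lemma~\ref{lem:analytic_continuation_support} to the resulting scalar measures. For \(\bm{v}\in\C^p\), set \(\mu_{\bm{v}}(\cdot)\coloneqq \bm{v}^\ast\bm{\Omega}_\alpha(\cdot)\bm{v}\). This is a finite positive Borel measure, and its Stieltjes transform is \(\bm{v}^\ast\bm{M}_\alpha(z)\bm{v}\) on \(\C_+\).

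By Lemma~\ref{lem:analytic_continuation}, this function extends analytically to \(\{|z|>R\}\) through \(\bm{v}^\ast\widehat{\bm{M}}_\alpha(z)\bm{v}\). On the real intervals \((R,\infty)\) and \((-\infty,-R)\) the extension is real-valued, because \(\widehat{\bm{M}}_\alpha(x)\) is Hermitian there. Since a complex neighbourhood of each such interval lies in \(\{|z|>R\}\), Lemma~\ref{lem:analytic_continuation_support} gives \(\supp(\mu_{\bm{v}})\subseteq[-R,R]\).

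We next pass from the test measures to \(\bm{\Omega}_\alpha\) itself. Any Borel set \(B\) disjoint from \([-R,R]\) satisfies \(\bm{v}^\ast\bm{\Omega}_\alpha(B)\bm{v}=0\) for all \(\bm{v}\). Since \(\bm{\Omega}_\alpha(B)\succeq0\), it follows that \(\bm{\Omega}_\alpha(B)=0\), and hence \(\supp(\bm{\Omega}_\alpha)\subseteq[-R,R]\).

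For the equality \(\supp(\bm{\Omega}_\alpha)=\supp(\mu_\alpha)\), we use \(\mu_\alpha=p^{-1}\Tr\bm{\Omega}_\alpha\) together with the equivalence, for a positive semidefinite matrix \(\bm{X}\), of \(\Tr\bm{X}=0\) and \(\bm{X}=0\). Thus an open set is \(\bm{\Omega}_\alpha\)-null exactly when it is \(\mu_\alpha\)-null. In particular, \(\mu_\alpha\) is a compactly supported probability measure.

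For the continuation, set \(S\coloneqq\supp(\mu_\alpha)\), which is compact, and define \(\bm{F}(z)\coloneqq\int_S (\lambda-z)^{-1}\bm{\Omega}_\alpha(\mathrm{d}\lambda)\) on \(\C\setminus S\). On a compact subset of \(\C\setminus S\), the integrand is bounded by \(1/\operatorname{dist}(z,S)\), and this bound is uniform. Entrywise, \(\bm{\Omega}_\alpha\) decomposes into finitely many finite complex measures by polarization. Holomorphic dependence on parameters (Morera/Fubini) therefore gives analyticity of \(\bm{F}\), and \(\bm{F}\) coincides with \(\bm{M}_\alpha\) on \(\C_+\) by Proposition~\ref{prop:existence_uniqueness}.

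We also check consistency with Lemma~\ref{lem:analytic_continuation}. The functions \(\bm{F}\) and \(\widehat{\bm{M}}_\alpha\) agree on \(\C_+\cap\{|z|>R\}\), and the set \(\{|z|>R\}\) is connected and contained in \(\C\setminus S\). The identity theorem therefore shows that they agree on all of \(\{|z|>R\}\), so the notation \(\bm{M}_\alpha\) is unambiguous.

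For real \(x\notin S\), the scalar \((\lambda-x)^{-1}\) is real on \(S\). The matrix \(\bm{M}_\alpha(x)\) is therefore Hermitian, since \(\bm{\Omega}_\alpha\) takes Hermitian values.

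It remains to show that \(\bm{M}_\alpha(x)\) is in fact real symmetric. Lemma~\ref{lem:pointwise_existence_uniqueness} gives \(\bm{M}_\alpha(z)^\top=\bm{M}_\alpha(z)\) on \(\C_+\). By analyticity and the identity theorem, this symmetry extends to each connected component of \(\C\setminus S\) that meets \(\C_+\). Every component contains a real point \(x\) together with a small disc around it, so every component meets \(\C_+\). Hence \(\bm{M}_\alpha(x)^\top=\bm{M}_\alpha(x)=\bm{M}_\alpha(x)^\ast\), which gives \(\bm{M}_\alpha(x)=\overline{\bm{M}_\alpha(x)}\), so the matrix is real symmetric.

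The main obstacle is this last step: the transfer of complex symmetry to the real axis requires the connectivity argument to be done carefully. The remaining steps are routine measure-theoretic bookkeeping.
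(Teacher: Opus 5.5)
Your proposal is correct and follows essentially the same route as the paper: you test $\bm{\Omega}_\alpha$ against vectors, apply Lemma~\ref{lem:analytic_continuation_support} using the Hermitian continuation from Lemma~\ref{lem:analytic_continuation}, identify the two supports via $\Tr\bm{X}=0\iff\bm{X}=0$ for $\bm{X}\succeq 0$, and combine the Hermitian Stieltjes integral with the complex symmetry from Lemma~\ref{lem:pointwise_existence_uniqueness}. The last step is easier than you suggest, because $\C\setminus\supp(\mu_\alpha)$ is connected (the removed set is a compact subset of $\R$), so no component-by-component argument is needed; this connectedness is exactly what the paper's one-line appeal to analytic continuation uses, and your consistency check with $\widehat{\bm{M}}_\alpha$ on $\{|z|>R\}$ is a small extra the paper leaves implicit.
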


\begin{proof} 
We first prove that \(\bm{\Omega}_\alpha\) is supported in \([-R,R]\). Let \(I\subseteq\R \setminus [-R,R]\) be a bounded open interval. By Lemma~\ref{lem:analytic_continuation}, the function \(\bm{M}_\alpha\) admits an analytic continuation to a complex neighborhood of \(I\), and this continuation is Hermitian on \(I\). Fix \(\bm{v}\in\C^p\), and define the finite positive scalar measure 
\[ 
\nu_{\bm{v}}(B) \coloneqq \bm{v}^\ast\bm{\Omega}_\alpha(B)\bm{v},
\] 
for every Borel \(B \subseteq \R\). Its Stieltjes transform is 
\[ 
f_{\bm{v}}(z) \coloneqq \bm{v}^\ast\bm{M}_\alpha(z)\bm{v} = \int_{\R} \frac{\nu_{\bm{v}}(\mathrm{d}\lambda)}{\lambda-z}. 
\] 
Since \(\bm{M}_\alpha\) extends analytically through \(I\), the function \(f_{\bm{v}}\) also admits an analytic continuation to a complex neighborhood of \(I\). Moreover, because \(\bm{M}_\alpha(x)\) is Hermitian for \(x \in I\), \(f_{\bm{v}} (x) = \bm{v}^\ast \bm{M}_\alpha(x) \bm{v} \in\R\). Lemma~\ref{lem:analytic_continuation_support} therefore gives
\[
I \cap \supp(\nu_{\bm{v}})=\emptyset,
\]
and in particular \(\nu_{\bm{v}}(I) = \bm{v}^\ast \bm{\Omega}_\alpha(I) \bm{v} = 0\). Since this holds for every \(\bm{v}\in\C^p\), and since \(\bm{\Omega}_\alpha(I) \succeq \bm{0}\), we conclude that
\[
\bm{\Omega}_\alpha (I) =\bm{0}.
\]
Every point of \(\R \setminus [-R,R]\) belongs to a bounded open interval of this form. Hence
\[
\supp (\bm{\Omega}_\alpha)\subseteq[-R,R],
\]
so \(\bm{\Omega}_\alpha\) is compactly supported.

We next show that the matrix-valued measure and its normalized trace have the same support. Recall that \(\mu_\alpha (B) =\frac{1}{p}\Tr\bm{\Omega}_\alpha(B)\) for every Borel set \(B\subseteq\R\). Since \(\bm{\Omega}_\alpha(B)\succeq\bm{0}\), we have
\[
\mu_\alpha(B)=0 \quad \Longleftrightarrow\quad \Tr \bm{\Omega}_\alpha(B)=0
\quad \Longleftrightarrow \quad \bm{\Omega}_\alpha(B)=\bm{0}.
\]
In particular, an open set has zero \(\mu_\alpha\)-mass if and only if the matrix-valued measure vanishes on that open set. It follows directly from the definition of the support that
\[ 
\supp (\bm{\Omega}_\alpha) = \supp (\mu_\alpha). 
\] 
Since \(\mu_\alpha\) is a probability measure by Proposition~\ref{prop:existence_uniqueness}, it is therefore a compactly supported probability measure with \(\supp(\mu_\alpha)\subseteq[-R,R]\).

It remains to establish the continuation to the complement of the support. Let \(K \Subset \C \setminus \supp(\mu_\alpha)\) be compact. Since \(\supp(\bm{\Omega}_\alpha)=\supp(\mu_\alpha)\), there exists \(\delta_K>0\) such that
\[
|\lambda-z|\geq\delta_K
\]
for every \(z\in K\) and every \(\lambda\in\supp(\bm{\Omega}_\alpha)\). Therefore, for \(z \in K\), we have
\[
\left | \frac1{\lambda-z} \right| \leq \frac{1}{\delta_K},
\]
and the integral
\[
\int_\R \frac{\bm{\Omega}_\alpha(\mathrm{d}\lambda)}{\lambda-z}
\]
converges locally uniformly on \(\C \setminus \supp(\mu_\alpha)\). Entrywise differentiation under the integral sign shows that it defines an analytic matrix-valued function on this domain. On \(\C_+\), this function agrees with the original Stieltjes representation of \(\bm{M}_\alpha\). It therefore provides the desired analytic continuation of \(\bm{M}_\alpha\) to \(\C \setminus \supp(\mu_\alpha)\).

Finally, let \(x \in \R \setminus \supp(\mu_\alpha)\). Then
\[
\bm{M}_\alpha(x) = \int_\R \frac{\bm{\Omega}_\alpha (\mathrm{d} \lambda)}{\lambda-x}.
\]
The kernel \((\lambda-x)^{-1}\) is real, while \(\bm{\Omega}_\alpha\) is a Hermitian matrix-valued measure. Hence
\[
\bm{M}_\alpha(x)^\ast = \int_\R \frac{\bm{\Omega}_\alpha(\mathrm{d}\lambda)^\ast}{\lambda-x} = \bm{M}_\alpha(x)
\]
Moreover, the symmetry \(\bm{M}_\alpha(z)^\top=\bm{M}_\alpha(z)\), established in Lemma~\ref{lem:pointwise_existence_uniqueness} on \(\C_+\), extends by analytic continuation to \(\C \setminus \supp(\mu_\alpha)\). Hence
\[
\bm{M}_\alpha(x)^\top=\bm{M}_\alpha(x).
\]
Thus, on the real axis outside the support, \(\bm{M}_\alpha(x)\) is both symmetric and Hermitian, and therefore real symmetric.
This completes the proof.
\end{proof}

The preceding lemma gives the analytic framework needed for the stability analysis. In particular, on every compact interval separated from \(\supp(\mu_\alpha)\), the solution \(\bm{M}_\alpha\) extends to a complex neighborhood and is real symmetric on the real axis.

\subsection{Stability outside the limiting support}

Proposition~\ref{prop:existence_uniqueness} establishes existence and uniqueness of the solution \(\bm{M}_\alpha\) of the matrix Dyson equation within the normalized matrix-valued Herglotz class on \(\C_+\). By Lemma~\ref{lem:continuation_off_support}, \(\bm{M}_\alpha\) admits an analytic continuation to \(\C \setminus \supp(\mu_\alpha)\), which is real symmetric on \(\R\setminus\supp(\mu_\alpha)\). We refer to \(\bm{M}_\alpha\), together with this continuation, as the \emph{physical branch}, since it is the branch selected by the normalized Herglotz condition in the upper half-plane. Moreover, for \( x \in \R \setminus\supp(\mu_\alpha)\), we call the real-symmetric matrix \(\bm{M}_\alpha(x)\) the \emph{physical solution} at \(x\).

On the real axis, the positivity of the imaginary part is lost, and the uniqueness characterization of Proposition~\ref{prop:existence_uniqueness} does not immediately extend beyond \(\C_+\). In particular, the real matrix Dyson equation may admit additional solutions that do not arise from the continuation of the Herglotz solution. The purpose of this subsection is to characterize the physical solution among regular algebraic solutions through the stability of the linearized fixed-point map. We also derive a quantitative consequence of this stability that will be used later in the analysis of the BBP transition.

\begin{defn}
Let \(x \in \R\). A matrix \(\bm{M}\in \sym_p(\R)\) is called an \emph{algebraic solution} of the real matrix Dyson equation at \(x\) if the inverses below are well defined almost surely, the expectation is finite, and
\begin{equation} \label{eq:realMDE}
\bm{M} ^{-1}  = - x \bm{I}_p + \E_{\bm{y}} \left [  \bm{T} (\bm{y}) \left ( \bm{I}_p + \frac{1}{\alpha}  \bm{M} \bm{T} (\bm{y}) \right)^{-1}\right ].
\end{equation}
An algebraic solution \(\bm{M}\) at \(x\) is called \emph{regular} if
\[
\operatorname*{ess \, sup}_{\bm{y}} \left \| \left ( \bm{I}_p + \frac{1}{\alpha} \bm{M} \bm{T}(\bm{y})\right )^{-1} \right \|_{\mathrm{op}} < \infty.
\]
\end{defn}

We restrict attention to the component to the right of the limiting spectrum. Recall that \(\lambda_+(\alpha)= \sup \supp(\mu_\alpha)\). Fix $\varepsilon>0$ and $L>\lambda_+(\alpha)+\varepsilon$, and set
\[
I_{\varepsilon,L} \coloneqq [\lambda_+(\alpha)+\varepsilon,L].
\]
Choose \(0 < \delta < \varepsilon/4\) and define the nested neighborhoods
\[
\mathcal{U}_{\varepsilon,L}  \coloneqq \{ z \in \C \colon \operatorname{dist} (z, I_{\varepsilon,L}) < \delta\},
\]
and
\[
\mathcal{U}_{\varepsilon,L}^+  \coloneqq \{ z \in \C \colon \operatorname{dist} (z, I_{\varepsilon,L}) < 2\delta\}.
\]
Since \(2\delta < \varepsilon/2\),
\[
\overline{\mathcal{U}_{\varepsilon,L}^+} \subset \C \setminus \supp (\mu_\alpha).
\]
Thus the physical branch \(\bm{M}_\alpha\) is analytic throughout \(\mathcal{U}_{\varepsilon,L}^+\).

We first establish uniform bounds on the physical branch and its inverse on the larger off-support neighborhood.

\begin{lem}\label{lem:bound_solution}
There exists \(C_{\varepsilon,L}<\infty\) such that
\[
\sup_{z\in\overline{\mathcal{U}_{\varepsilon,L}^+}} \left ( \|\bm{M}_\alpha(z) \|_{\mathrm{op}} + \|\bm{M}_\alpha(z)^{-1}\|_{\mathrm{op}} \right )\leq C_{\varepsilon,L}.
\]
\end{lem}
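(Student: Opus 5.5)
The bound on \(\|\bm{M}_\alpha(z)\|_{\mathrm{op}}\) follows directly from the Stieltjes representation in Lemma~\ref{lem:continuation_off_support}. The bound on the inverse I plan to obtain through the matrix Dyson equation, extended off \(\C_+\) by analytic continuation.

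\emph{Bound on \(\bm{M}_\alpha\).} Since \(\overline{\mathcal{U}^+_{\varepsilon,L}}\) is compact and lies in \(\C\setminus\supp(\mu_\alpha)\), there is \(\delta'>0\) (one can take \(\delta'=\varepsilon-2\delta>\varepsilon/2\)) with \(|\lambda-z|\ge\delta'\) for all \(\lambda\in\supp(\bm{\Omega}_\alpha)\) and all \(z\) in this set. For any unit vectors \(\bm{u},\bm{v}\), polarization and \(\bm{\Omega}_\alpha(\R)=\bm{I}_p\) give
\[
\bigl|\bm{u}^\ast\bm{M}_\alpha(z)\bm{v}\bigr| \le \frac{1}{\delta'}\sqrt{\bm{u}^\ast\bm{\Omega}_\alpha(\R)\bm{u}}\,\sqrt{\bm{v}^\ast\bm{\Omega}_\alpha(\R)\bm{v}} = \frac{1}{\delta'}.
\]
Alternatively, the entries of \(\bm{M}_\alpha\) are continuous on this compact set, which also yields a uniform bound.

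\emph{Invertibility.} A continuous function on a compact set has a bounded inverse norm as soon as the matrix is invertible at every point. So it suffices to show that \(\bm{M}_\alpha(z)\) is invertible for each \(z\in\overline{\mathcal{U}^+_{\varepsilon,L}}\), and I propose two arguments.

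The first uses the defining equation. On \(\C_+\), \(\bm{M}_\alpha(z)^{-1}=-z\bm{I}_p+\bm{\mathcal{S}}_\alpha(\bm{M}_\alpha(z))\). I would like to continue both sides analytically, but \(\bm{\mathcal{S}}_\alpha(\bm{M}_\alpha(z))\) need not be defined off \(\C_+\) unless \(\bm{I}_p+\alpha^{-1}\bm{M}_\alpha\bm{T}\) stays invertible. That invertibility is exactly what the paper promises to establish in this section, so I would avoid it.

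The second is cleaner: \(\det\bm{M}_\alpha(z)\) is analytic and I would show it does not vanish.
\begin{itemize}
\item For \(z\in\C_+\), \(\Im\bm{M}_\alpha(z)\succ0\), which already forces invertibility.
\item For \(z\) in the lower half-plane, the symmetry \(\bm{M}_\alpha(\bar z)=\bm{M}_\alpha(z)^\ast\) gives the same conclusion.
\item For real \(x>\lambda_+(\alpha)\), \(\bm{M}_\alpha(x)=\int\frac{\bm{\Omega}_\alpha(\mathrm{d}\lambda)}{\lambda-x}\) with \(\lambda-x<0\) on the support. Hence for any unit \(\bm{v}\),
\[
-\bm{v}^\ast\bm{M}_\alpha(x)\bm{v} \ge \frac{1}{x+R}\,\bm{v}^\ast\bm{\Omega}_\alpha(\R)\bm{v} = \frac{1}{x+R},
\]
using \(\supp\subseteq[-R,R]\). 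So \(\bm{M}_\alpha(x)\preceq -(x+R)^{-1}\bm{I}_p\) is negative definite, and on the real part of the neighborhood \(\|\bm{M}_\alpha(x)^{-1}\|_{\mathrm{op}}\le L+2\delta+R\).
\end{itemize}
Every point of \(\overline{\mathcal{U}^+_{\varepsilon,L}}\) falls in one of these three cases, so \(\bm{M}_\alpha(z)\) is invertible throughout.

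\emph{Uniformity.} The map \(z\mapsto\|\bm{M}_\alpha(z)^{-1}\|_{\mathrm{op}}\) is continuous on the compact set, hence bounded, which yields \(C_{\varepsilon,L}\). For an explicit constant one could use, for complex \(z=x+i\eta\), the bound \(\|\bm{A}\bm{v}\|\ge|\bm{v}^\ast\bm{A}\bm{v}|\) with \(\Re\,\bm{v}^\ast\bm{M}_\alpha\bm{v}=\int\frac{(\lambda-x)\,\nu_{\bm{v}}(\mathrm{d}\lambda)}{|\lambda-z|^2}\le -c\) for \(x>\lambda_+(\alpha)\), but compactness suffices.

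\emph{Main obstacle.} The only delicate point is showing that \(\bm{M}_\alpha\) does not become singular off the real axis inside the neighborhood. The case split handles this through the sign of \(\Im\bm{M}_\alpha\), with the real points covered by negative definiteness.
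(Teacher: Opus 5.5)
Your proof is correct, but for the inverse bound it takes a different route from the paper. The bound on $\|\bm{M}_\alpha(z)\|_{\mathrm{op}}$ is the same as the paper's: the Stieltjes representation combined with the Cauchy--Schwarz inequality for the positive matrix measure $\bm{\Omega}_\alpha$. What you call ``polarization'' is really this Cauchy--Schwarz step.

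For the inverse, the paper uses no case split. Writing $z=E+i\eta$, it takes the Hermitian part of the Stieltjes representation,
\[
-\Re\bm{M}_\alpha(z)=\int_\R\frac{E-\lambda}{(E-\lambda)^2+\eta^2}\,\bm{\Omega}_\alpha(\mathrm{d}\lambda)\succeq c_0\bm{I}_p .
\]
This uses $E-\lambda\ge\varepsilon/2$ and the boundedness of $(E-\lambda)^2+\eta^2$ on the closed neighborhood. Then $\|\bm{M}_\alpha(z)\bm{u}\|_2\|\bm{u}\|_2\ge|\bm{u}^\ast\bm{M}_\alpha(z)\bm{u}|\ge c_0\|\bm{u}\|_2^2$ gives $\|\bm{M}_\alpha(z)^{-1}\|_{\mathrm{op}}\le c_0^{-1}$ at every point simultaneously. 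This is exactly the ``explicit constant'' alternative you sketch at the end.

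Your main argument instead proves pointwise invertibility in three regimes:
\begin{itemize}
\item $\Im\bm{M}_\alpha\succ\bm{0}$ on $\C_+$;
\item the reflection $\bm{M}_\alpha(\bar z)=\bm{M}_\alpha(z)^\ast$ on $\C_-$;
\item negative definiteness $\bm{M}_\alpha(x)\preceq-(x+R)^{-1}\bm{I}_p$ on the real segment.
\end{itemize}
You then get uniformity from continuity of $z\mapsto\bm{M}_\alpha(z)^{-1}$ on the compact closure. All three cases and the compactness step are sound.

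The paper's version is a single estimate with an explicit constant depending only on $\varepsilon$, $\delta$, $L$ and $\lambda_-(\alpha)$. Yours is qualitative and needs the case split, but it relies only on sign information of the Stieltjes transform in each half-plane. Since $C_{\varepsilon,L}$ is only ever used as a finite bound later (for instance in Lemma~\ref{lem:uniform_denominator_bound}), nothing is lost with your approach.
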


\begin{proof}
By Lemma~\ref{lem:continuation_off_support}, the matrix-valued Stieltjes representation
\[
\bm{M}_\alpha(z) = \int_\R \frac{\bm{\Omega}_\alpha(\mathrm{d}\lambda)}{\lambda-z}
\]
holds for every \(z\in\C \setminus\supp(\mu_\alpha)\), and \(\supp(\bm{\Omega}_\alpha) = \supp(\mu_\alpha)\). Let
\[
d_{\varepsilon,L} \coloneqq \operatorname{dist} \left ( \overline{\mathcal{U}_{\varepsilon,L}^+}, \supp (\mu_\alpha) \right) >0.
\]
For unit vectors \(\bm{u},\bm{v} \in \C^p\), the matrix-valued Cauchy--Schwarz inequality and \(\bm{\Omega}_\alpha (\R) = \bm{I}_p\) give
\[
\begin{split}
\left |\bm{u}^\ast \bm{M}_\alpha(z) \bm{v} \right | & = \left | \int_\R \frac{1}{\lambda-z} \mathrm{d} (\bm{u}^\ast \bm{\Omega}_\alpha (\lambda) \bm{v} )\right | \\
& \le \frac{1}{d_{\varepsilon,L}} |\bm{u}^\ast \bm{\Omega}_\alpha \bm{v} |(\R) \\
& \le \frac{1}{d_{\varepsilon,L}}  \sqrt{\bm{u}^\ast \bm{\Omega}_\alpha (\R) \bm{u}} \sqrt{\bm{v}^\ast \bm{\Omega}_\alpha (\R) \bm{v}} \\
& =\frac{1}{d_{\varepsilon,L}} .
\end{split}
\]
Hence 
\[
\sup_{z\in\overline{\mathcal{U}_{\varepsilon,L}^+}} \|\bm{M}_\alpha (z)\|_{\mathrm{op}} \le \frac{1}{d_{\varepsilon,L}}.
\]

We next bound the inverse. Let \(z = E + i \eta \in\overline{\mathcal{U}_{\varepsilon,L}^+}\). Since \(\delta < \varepsilon/4\), we have
\[
E \ge \lambda_+(\alpha) + \varepsilon - 2\delta \ge \lambda_+(\alpha)+\frac{\varepsilon}{2}.
\]
Moreover,
\[
E \le L + 2\delta, \qquad |\eta|\leq 2\delta.
\]
Taking the Hermitian part of the Stieltjes representation gives
\[
-\Re \bm{M}_\alpha (z) = \int_\R \frac{E-\lambda}{(E-\lambda)^2+\eta^2} \bm{\Omega}_\alpha (\mathrm{d}\lambda).
\]
Write \(\lambda_-(\alpha)= \inf \supp(\mu_\alpha)\). For \(\lambda\in\supp(\bm{\Omega}_\alpha)\), we have 
\[
E - \lambda \ge \frac{\varepsilon}{2},
\]
while 
\[
(E-\lambda)^2+\eta^2 \le (L + 2\delta -\lambda_- (\alpha))^2 + (2\delta)^2.
\]
It follows that
\[
\frac{E-\lambda}{(E-\lambda)^2+\eta^2} \ge \frac{\varepsilon/2}{\left (L+2\delta-\lambda_-(\alpha)\right)^2+(2\delta)^2} \eqqcolon c_0 .
\]
Hence
\[
-\Re \bm{M}_\alpha (z) \succeq c_0  \bm{\Omega}_\alpha(\R) = c_0  \bm{I}_p.
\]
For every \(\bm{u} \in\C^p\), the Cauchy--Schwarz inequality therefore yields
\[
\|\bm{M}_\alpha (z)\bm{u}\|_2 \| \bm{u}\|_2 \ge \left| \bm{u}^\ast \bm{M}_\alpha(z)\bm{u} \right| \ge -\bm{u}^\ast \Re\bm{M}_\alpha (z) \bm{u} \ge c_0 \|\bm{u}\|_2^2.
\]
Thus, \(\|\bm{M}_\alpha (z)\bm{u}\|_2 \ge c_0 \|\bm{u}\|_2\), which shows that the smallest singular value of \(\bm{M}_\alpha (z)\) is at least \(c_0\). Hence
\[
\|\bm{M}_\alpha(z)^{-1}\|_{\mathrm{op}} \le \frac{1}{c_0}
\]
uniformly for \(z\in\overline{\mathcal{U}_{\varepsilon,L}^+}\). 
\end{proof}

We next establish uniform regularity of the nonlinear denominators on the smaller neighborhood.

\begin{lem} \label{lem:uniform_denominator_bound}
There exists \(C_{\varepsilon,L}<\infty\) such that
\[
\operatorname*{ess \, sup}_{\bm{y}} \sup_{z\in\overline{\mathcal{U}_{\varepsilon,L}}} \left \| \left ( \bm{I}_p + \frac{1}{\alpha} \bm{M}_\alpha (z) \bm{T}(\bm{y}) \right)^{-1}\right \|_{\mathrm{op}}\leq C_{\varepsilon,L}.
\]
\end{lem}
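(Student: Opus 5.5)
The approach is to first prove the bound on the real segment \(I' \coloneqq [\lambda_+(\alpha)+\varepsilon-\delta, L+\delta] = \overline{\mathcal{U}_{\varepsilon,L}}\cap\R\), and then extend it to complex \(z\) by a perturbation argument combined with the positivity estimate of Lemma~\ref{lem:S_holo}.

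\emph{Real axis.} For \(x>\lambda_+(\alpha)\), Lemma~\ref{lem:continuation_off_support} gives \(\bm{M}_\alpha(x)=-\bm{N}(x)\), where
\[
\bm{N}(x)=\int_\R \frac{\bm{\Omega}_\alpha(\mathrm{d}\lambda)}{x-\lambda}\succ\bm{0}.
\]
Moreover \(\bm{N}\) is decreasing in \(x\), because \(\bm{N}'(x)=-\int(x-\lambda)^{-2}\bm{\Omega}_\alpha(\mathrm{d}\lambda)\prec \bm{0}\), and \(\bm{N}(x)\to\bm{0}\) as \(x\to\infty\). By similarity,
\[
\bm{I}_p+\alpha^{-1}\bm{M}_\alpha\bm{T}=\bm{N}^{1/2}\left(\bm{I}_p-\bm{K}\right)\bm{N}^{-1/2}, \qquad \bm{K}(x,\bm{y}) \coloneqq \alpha^{-1}\bm{N}(x)^{1/2}\bm{T}(\bm{y})\bm{N}(x)^{1/2}.
\]
Since \(\|\bm{N}\|_{\mathrm{op}}\) and \(\|\bm{N}^{-1}\|_{\mathrm{op}}\) are bounded on the relevant set by Lemma~\ref{lem:bound_solution}, it suffices to prove
\[
f(x) \coloneqq \operatorname*{ess\,sup}_{\bm{y}}\lambda_1(\bm{K}(x,\bm{y}))\le 1-c \qquad \text{for all } x\in I',
\]
for some \(c>0\). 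The lower side is automatic: \(\bm{K}\succeq -\alpha^{-1}C_T\|\bm{N}\|_{\mathrm{op}}\bm{I}_p\) keeps \(\bm{I}_p-\bm{K}\) bounded away from singularity from below.

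The map \(x\mapsto f(x)\) is continuous, because \(\bm{N}\) is Lipschitz and \(\bm{T}\) is bounded. It is nonincreasing in \(x\), because \(\bm{N}^{1/2}\bm{T}\bm{N}^{1/2}\) has the same positive eigenvalues as \(\bm{T}_+^{1/2}\bm{N}\bm{T}_+^{1/2}\) restricted appropriately, which is monotone in \(\bm{N}\). Finally \(f(x)\to 0\) as \(x\to\infty\). Hence it suffices to rule out the existence of \(x_0>\lambda_+(\alpha)\) with \(f(x_0)=1\). Compactness and continuity then give a uniform gap on \(I'\).

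\emph{Key step (main obstacle).} Suppose \(f(x_0)=1\) for some \(x_0>\lambda_+(\alpha)\), and take the largest such \(x_0\). For \(x\) slightly below \(x_0\), monotonicity of \(\bm{N}\) together with a quantitative lower bound \(\tfrac{d}{dx}\lambda_1(\bm{K})\le -c\,\lambda_1(\bm{T})_+\) (which needs checking) shows the following. There is a set of \(\bm{y}\) of positive probability on which \(x\mapsto\lambda_1(\bm{K}(x,\bm{y}))\) crosses \(1\) at some point \(x(\bm{y})\in(x_0-\eta,x_0]\), with slope bounded below. On that set, \(\|\bm{\mathcal{Q}}_\alpha(\bm{M}_\alpha(x),\bm{y})\|_{\mathrm{op}}\gtrsim |x-x(\bm{y})|^{-1}\), so
\[
\int_{x_0-\eta}^{x_0+\eta}\E\|\bm{\mathcal{Q}}_\alpha\|^2\,\mathrm{d}x=\infty .
\]
To get a contradiction, I differentiate the real MDE~\eqref{eq:realMDE} along the physical branch, which is legitimate at a.e.\ \(x\) by analyticity of \(\bm{M}_\alpha\) and dominated convergence away from crossings. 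This gives
\[
\bm{M}_\alpha^{-1}\bm{M}_\alpha'\bm{M}_\alpha^{-1}=\bm{I}_p+\alpha^{-1}\E\left[\bm{\mathcal{Q}}_\alpha\bm{M}_\alpha'\bm{\mathcal{Q}}_\alpha\right].
\]
Here \(\bm{\mathcal{Q}}_\alpha\) is real symmetric by Lemma~\ref{lem:push_through}, and \(c\bm{I}_p\preceq\bm{M}_\alpha'\preceq C\bm{I}_p\) by the Stieltjes representation. Together with Lemma~\ref{lem:bound_solution}, this yields \(\E[\bm{\mathcal{Q}}_\alpha^2]\preceq C'\bm{I}_p\) uniformly in \(x\) in a neighborhood of \(x_0\) still separated from the support. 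This contradicts the divergent integral above.

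The delicate points I expect here are two. First, justifying the identity with finite expectations arbitrarily close to \(x_0\); I would do this via approximation \(z=x+i\eta'\), \(\eta'\downarrow0\), using the exact identity on \(\C_+\) and Fatou's lemma. Second, the transversality of the eigenvalue crossing.

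\emph{Complex neighborhood.} For \(z=E+i\eta\in\overline{\mathcal{U}_{\varepsilon,L}}\), I split into two regimes.

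If \(|\eta|\le\eta_0\), I write \(\bm{I}_p+\alpha^{-1}\bm{M}_\alpha(z)\bm{T}\) as the real-axis matrix at \(E\) plus a perturbation of norm at most \(\alpha^{-1}C_T\|\bm{M}_\alpha(z)-\bm{M}_\alpha(E)\|_{\mathrm{op}}\le C\eta_0\). The perturbation is controlled by the uniform Lipschitz bound coming from the Stieltjes representation. A Neumann series then gives the bound once \(\eta_0\) is small relative to the real-axis constant.

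If \(|\eta|\ge\eta_0\), I assume by conjugation symmetry (Lemma~\ref{lem:analytic_continuation}) that \(\eta>0\). Then
\[
\Im\bm{M}_\alpha(z)=\int\eta\,|\lambda-z|^{-2}\,\bm{\Omega}_\alpha(\mathrm{d}\lambda)\succeq \eta_0 C^{-1}\bm{I}_p,
\]
and~\eqref{eq:nonlinear_term_bound} bounds \(\bm{\mathcal{Q}}_\alpha\). The identity \((\bm{I}_p+\alpha^{-1}\bm{M}\bm{T})^{-1}=\bm{I}_p-\alpha^{-1}\bm{M}\bm{\mathcal{Q}}_\alpha\) then gives the claim in this regime as well.
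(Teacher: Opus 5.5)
Your reduction to the real axis (showing \(\operatorname*{ess\,sup}_{\bm{y}}\lambda_1(\bm{K}(x,\bm{y}))<1\)) and your two-regime extension to \(\overline{\mathcal{U}_{\varepsilon,L}}\) are sound. The gap is in the key step, which does not close as you propose it.

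\textbf{Why the proposed justification fails.} Fatou's lemma needs a nonnegative integrand. The differentiated equation \(\bm{M}_\alpha^{-1}\bm{M}_\alpha'\bm{M}_\alpha^{-1}=\bm{I}_p+\alpha^{-1}\E[\bm{\mathcal{Q}}_\alpha\bm{M}_\alpha'\bm{\mathcal{Q}}_\alpha]\) has no sign at \(z=x+i\eta'\). There \(\bm{\mathcal{Q}}_\alpha\) is only complex symmetric, not Hermitian, and \(\bm{M}_\alpha'(z)\) is complex, so \(\bm{\mathcal{Q}}_\alpha\bm{M}_\alpha'\bm{\mathcal{Q}}_\alpha\not\succeq 0\).

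Worse, this identity cannot by itself detect the crossings you want to exclude. Suppose the crossing points \(x(\bm{y})\) had a smooth density \(\rho\). The singular part \(\bm{a}(\bm{y})/(x(\bm{y})-x-i\eta')\) of \(\bm{\mathcal{Q}}_\alpha\) would then contribute terms like \(\int\rho(u)g(u)(u-x-i\eta')^{-2}\,\mathrm{d}u\). These stay bounded as \(\eta'\downarrow0\) by a finite-part cancellation, while \(\E\|\bm{\mathcal{Q}}_\alpha(\bm{M}_\alpha(x),\cdot)\|^2=\infty\).

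For the same reason the real-axis version is unavailable. Under your contradiction hypothesis, \(\E\|\bm{\mathcal{Q}}_\alpha(\bm{M}_\alpha(x),\cdot)\|\) may be infinite at every \(x\) near \(x_0\), since it behaves like \(\int\rho(u)|u-x|^{-1}\,\mathrm{d}u\). So there is no set of ``a.e.\ \(x\) away from crossings'' at which the real MDE can be differentiated.

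\textbf{The fix: use imaginary parts rather than derivatives.} Averaging \eqref{eq:nonlinear_term_imaginary_part} and taking the imaginary part of \eqref{eq:MDE} gives, on \(\C_+\),
\[
\bm{M}_\alpha^{-1}(\Im\bm{M}_\alpha)\bm{M}_\alpha^{-\ast}=(\Im z)\bm{I}_p+\frac{1}{\alpha}\E\left[\bm{\mathcal{Q}}_\alpha(\Im\bm{M}_\alpha)\bm{\mathcal{Q}}_\alpha^\ast\right].
\]
At \(z=x+i\eta'\), the matrix \(\eta'^{-1}\Im\bm{M}_\alpha=\int((\lambda-x)^2+\eta'^2)^{-1}\bm{\Omega}_\alpha(\mathrm{d}\lambda)\) lies between \(c\bm{I}_p\) and \(C\bm{I}_p\). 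Combined with Lemma~\ref{lem:bound_solution}, this gives \(\E[\bm{\mathcal{Q}}_\alpha\bm{\mathcal{Q}}_\alpha^\ast]\preceq C'\bm{I}_p\) uniformly in small \(\eta'\). Now the integrand is nonnegative, and Fatou yields \(\E\|\bm{\mathcal{Q}}_\alpha(\bm{M}_\alpha(x),\cdot)\|_{\mathrm{F}}^2\le pC'\) for every \(x\) near \(x_0\); it also shows the singular set is null. This contradicts your divergent \(x\)-integral.

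With this replacement your argument goes through. Transversality does hold: \(\lambda_1(\bm{K}(x,\bm{y}))\ge(1+c(x'-x))\lambda_1(\bm{K}(x',\bm{y}))\) for \(x<x'\) whenever the right side is positive.

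\textbf{Two smaller fixes.}
\begin{enumerate}
\item Argue the monotonicity of \(f\) via \(\lambda_1(\bm{K})=\alpha^{-1}\max_{\bm{w}}\bm{w}^\top\bm{T}\bm{w}/\bm{w}^\top\bm{N}^{-1}\bm{w}\). Your claim about the positive eigenvalues of \(\bm{T}_+^{1/2}\bm{N}\bm{T}_+^{1/2}\) is false for indefinite \(\bm{T}\).
\item Conjugation symmetry near \(I_{\varepsilon,L}\) comes from the Stieltjes representation in Lemma~\ref{lem:continuation_off_support}; Lemma~\ref{lem:analytic_continuation} only covers \(|z|>R\). You also need a transposition to swap \(\bm{M}_\alpha\bm{T}\) and \(\bm{T}\bm{M}_\alpha\).
\end{enumerate}

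\textbf{Comparison with the paper.} The paper exploits the same imaginary-part positivity, packaged as a Herglotz statement. For each \(\bm{y}\), \(\bm{T}(\bm{y})-\bm{\mathcal{Q}}_{\bm{y}}(z)\) is Herglotz with representing measure \(\bm{\Theta}_{\bm{y}}\). Its average \(\E[\bm{T}]-\bm{M}_\alpha^{-1}-z\bm{I}_p\) continues real-analytically through \(J_{\varepsilon,L}\), so \(\bm{\Theta}_{\bm{y}}(J_{\varepsilon,L})=\bm{0}\) almost surely. This bounds \(\bm{\mathcal{Q}}_{\bm{y}}\) on the whole complex neighborhood at once, with no eigenvalue-crossing or transversality analysis and no real/complex split.
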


\begin{proof}
For \(z \in \C_+\), set
\begin{equation} \label{eq:mathcal_Q_y}
\bm{\mathcal{Q}}_{\bm{y}} (z) \coloneqq \bm{\mathcal{Q}}_\alpha (\bm{M}_\alpha(z), \bm{y}) =\bm{T} (\bm{y}) \left ( \bm{I}_p + \frac{1}{\alpha} \bm{M}_\alpha(z) \bm{T} (\bm{y})\right)^{-1}.
\end{equation}
By Lemma~\ref{lem:S_holo}, 
\[
\Im \bm{\mathcal{Q}}_{\bm{y}} (z) =  - \frac{1}{\alpha}\bm{\mathcal{Q}}_{\bm{y}} (z) \left ( \Im \bm{M}_\alpha(z) \right ) \bm{\mathcal{Q}}_{\bm{y}} (z)^\ast \preceq\bm{0}.
\]
Then
\[
\bm{H}_{\bm{y}}(z) \coloneqq \bm{T} (\bm{y})- \bm{\mathcal{Q}}_{\bm{y}} (z) .
\]
is a matrix-valued Herglotz function. Using 
\[
\bm{I}_p - \left( \bm{I}_p + \frac{1}{\alpha} \bm{M}_\alpha(z) \bm{T} (\bm{y}) \right)^{-1} = \frac{1}{\alpha} \bm{M}_\alpha(z) \bm{T} (\bm{y})  \left( \bm{I}_p + \frac{1}{\alpha} \bm{M}_\alpha(z) \bm{T} (\bm{y})  \right)^{-1},
\]
we obtain 
\[
\bm{H}_{\bm{y}} (z) = \frac{1}{\alpha} \bm{T} (\bm{y}) \bm{M}_\alpha(z)  \bm{T} (\bm{y}) \left( \bm{I}_p +  \frac{1}{\alpha} \bm{M}_\alpha(z) \bm{T} (\bm{y}) \right)^{-1}.
\]
Since \(i\eta\bm{M}_\alpha(i\eta)\to -\bm{I}_p\) and
\[
\left( \bm{I}_p + \frac{1}{\alpha}\bm{M}_\alpha(i\eta) \bm{T} (\bm{y})  \right)^{-1} \to \bm{I}_p,
\]
as \(\eta \to \infty\), it follows that
\[
i\eta \bm{H}_{\bm{y}}(i\eta) \to - \frac{1}{\alpha }\bm{T} (\bm{y})^2.
\]
The matrix-valued Nevanlinna--Herglotz representation theorem~\cite[Theorem 5.4]{gesztesy} therefore gives a positive semidefinite matrix-valued measure \(\bm{\Theta}_{\bm{y}}\) on \(\R\) such that
\[
\bm{H}_{\bm{y}}(z) =  \int_\R \frac{\bm{\Theta}_{\bm{y}}(\mathrm{d}\lambda)}{\lambda-z}, \qquad \bm{\Theta}_{\bm{y}}(\R) = \frac{1}{\alpha }\bm{T} (\bm{y})^2.
\]
In particular,\(\|\bm{\Theta}_{\bm{y}}(\R)\|_{\mathrm{op}} \leq \alpha^{-1} C_T^2\) almost surely.

We now show that these measures have no mass near \(I_{\varepsilon,L}\). Averaging \(\bm{H}_{\bm{y}}\) and using the MDE~\eqref{eq:MDE} give
\[
\overline{\bm{H}}(z) \coloneqq \E_{\bm{y}} \left [\bm{H}_{\bm{y}} (z) \right] = \E_{\bm{y}} [\bm{T} (\bm{y})] - \E_{\bm{y}} [\bm{\mathcal{Q}}_{\bm{y}} (z) ] = \E_{\bm{y}} [\bm{T} (\bm{y})] - \bm{M}_\alpha(z)^{-1} - z\bm{I}_p.
\]
By Lemma~\ref{lem:bound_solution}, the right-hand side is analytic on \(\mathcal{U}_{\varepsilon,L}^{+}\). Let 
\[
J_{\varepsilon,L} \coloneqq \mathcal{U}_{\varepsilon,L}^+ \cap \R = \left ( \lambda_+(\alpha)+\varepsilon-2\delta, L+2\delta \right ).
\]
For \(x \in J_{\varepsilon,L}\), Lemma~\ref{lem:continuation_off_support} gives \(\bm{M}_\alpha (x) \in \sym_p(\R)\), and hence \(\overline{\bm{H}} (x)\) is real symmetric. Thus, for every \(\bm{v}\in\C^p\), the scalar Herglotz function \(z \mapsto \bm{v}^\ast \overline{\bm{H}} (z) \bm{v}\) admits a real-valued analytic continuation through \(J_{\varepsilon,L}\). By Lemma~\ref{lem:analytic_continuation_support}, the representing matrix-valued measure \(\overline{\bm{\Theta}} \coloneqq \E_{\bm{y}}[\bm{\Theta}_{\bm{y}}]\) has no mass on \(J_{\varepsilon,L}\), i.e., 
\[
\overline{\bm{\Theta}} (J_{\varepsilon,L}) = \bm{0}.
\]
Since \(\bm{\Theta}_{\bm{y}} (J_{\varepsilon,L}) \succeq \bm{0}\), 
\[
\E_{\bm{y}} \Tr \bm{\Theta}_{\bm{y}} (J_{\varepsilon,L}) =0
\]
implies that
\[
\bm{\Theta}_{\bm{y}} (J_{\varepsilon,L}) = \bm{0}
\]
for almost every \(\bm{y}\).

The smaller neighborhood \(\overline{\mathcal{U}_{\varepsilon,L}}\) lies at a positive distance from \(\R \setminus J_{\varepsilon,L}\). Set
\[
d_0 \coloneqq \operatorname{dist} \left( \overline{\mathcal{U}_{\varepsilon,L}}, \R\setminus J_{\varepsilon,L} \right) > 0.
\]
For almost every \(\bm{y}\), the Stieltjes representation of \(\bm{H}_{\bm{y}}\) therefore extends analytically to a neighborhood of \(\overline{\mathcal{U}_{\varepsilon,L}}\), and for \(z \in \overline{\mathcal{u}_{\varepsilon,L}}\),
\[
\|\bm{H}_{\bm{y}}(z)\|_{\mathrm{op}} \leq \frac{C_T^2}{\alpha d_0}.
\]
Indeed, for unit vectors \(\bm{u},\bm{v}\), the Cauchy--Schwarz inequality implies that
\[
\left |  \bm{u}^\ast \bm{H}_{\bm{y}}(z) \bm{v} \right| \leq \frac1{d_0} \left| \bm{u}^\ast \bm{\Theta}_{\bm{y}} \bm{v} \right| (\R) \leq \frac1{d_0} \sqrt{\bm{u}^\ast\bm{\Theta}_{\bm{y}}(\R)\bm{u}} \sqrt{\bm{v}^\ast\bm{\Theta}_{\bm{y}}(\R)\bm{v}} \leq \frac{C_T^2}{\alpha d_0}.
\]
Define 
\[
\widetilde{\bm{Q}}_{\bm{y}} (z) \coloneqq \bm{T} (\bm{y}) -\bm{H}_{\bm{y}}(z).
\]
Then \(\widetilde{\bm{Q}}_{\bm{y}} (z)\) is an analytic continuation of \(\bm{Q}_{\bm{y}} (z)\) to a neighborhood of \(\overline{\mathcal{U}_{\varepsilon,L}}\), and
\begin{equation} \label{eq:bound_tilde_Q}
\sup_{z\in\overline{\mathcal{U}_{\varepsilon,L}}} \|\widetilde{\bm{Q}}_{\bm{y}}(z) \|_{\mathrm{op}} \leq
C_T + \frac{C_T^2}{\alpha d_0}.
\end{equation}
For \(z\in\mathcal{U}_{\varepsilon,L}\), define
\[
\bm{A}_{\bm{y}} (z) \coloneqq\bm{I}_p + \frac{1}{\alpha}\bm{M}_\alpha(z)\bm{T} (\bm{y}),
\]
and
\[
\bm{B}_{\bm{y}} (z) \coloneqq \bm{I}_p - \frac{1}{\alpha} \bm{M}_\alpha(z) \bm{Q}_{\bm{y}} (z) .
\]
Both functions are analytic on \(\mathcal{U}_{\varepsilon,L}\). On \(\mathcal{U}_{\varepsilon,L}\cap\C_+\), the resolvent identity implies
\[
\bm{B}_{\bm{y}} (z)  = \bm{A}_{\bm{y}} (z)^{-1},
\]
and hence
\[
\bm{A}_{\bm{y}} (z) \bm{B}_{\bm{y}} (z)  = \bm{I}_p.
\]
By the identity theorem, this product identity holds throughout \(\mathcal{U}_{\varepsilon,L}\). Thus \(\bm{A}_{\bm{y}}(z)\) is invertible there and
\[
\bm{A}_{\bm{y}} (z)^{-1} = \bm{B}_{\bm{y}} (z) .
\]
Therefore
\[
\left \| \left( \bm{I}_p + \frac{1}{\alpha}  \bm{M}_\alpha(z)\bm{T} (\bm{y}) \right)^{-1} \right\|_{\mathrm{op}} \leq
1 + \frac{1}{\alpha} \|\bm{M}_\alpha(z)\|_{\mathrm{op}} \|\widetilde{\bm{Q}}_{\bm{y}}(z) \|_{\mathrm{op}} ,
\]
and the claimed uniform bound follows from~\eqref{eq:bound_tilde_Q} and Lemma~\ref{lem:bound_solution}.
\end{proof}

\begin{rmk} \label{rmk:physical_solution}
The preceding lemma shows that the physical solution is regular on the component to the right of the limiting support. Indeed, let $x > \lambda_+(\alpha)$.  Choose \(\varepsilon >0\) and \(L > x\) such that \(x \in I_{\varepsilon,L}\). By Lemma~\ref{lem:uniform_denominator_bound},
\[
\operatorname*{ess\,sup}_{\bm{y}} \left\lVert \left( \bm{I}_p + \frac{1}{\alpha} \bm{M}_\alpha (x) \bm{T} (\bm{y}) \right)^{-1} \right\rVert_{\mathrm{op}} <\infty.
\]
Moreover, letting $\eta\downarrow0$ in the matrix Dyson equation at $x+i\eta$, the uniform denominator bound and dominated convergence yield
\[
\bm{M}_\alpha(x)^{-1} = -x \bm{I}_p + \E_{\bm{y}} \left[ \bm{T}(\bm{y}) \left( \bm{I}_p + \frac{1}{\alpha} \bm{M}_\alpha(x)\bm{T} (\bm{y}) \right)^{-1} \right].
\]
Thus $\bm{M}_\alpha(x)$ is a regular algebraic solution for every $x>\lambda_+(\alpha)$.
\end{rmk}

Let now \(\bm{M} \in\sym_p(\R)\) be a regular algebraic solution of the real MDE at \(x\). Set
\[
\bm{Q}_{\bm{M}} (\bm{y}) \coloneqq \bm{T} (\bm{y}) \left( \bm{I}_p + \frac{1}{\alpha} \bm{M} \bm{T} (\bm{y}) \right)^{-1}.
\]
By Lemma~\ref{lem:push_through},
\[
\bm{Q}_{\bm{M}}(\bm{y}) = \left( \bm{I}_p+ \frac{1}{\alpha} \bm{T} (\bm{y} ) \bm{M} \right)^{-1} \bm{T} (\bm{y}),
\]
so $\bm{Q}_{\bm{M}} (\bm{y})$ is symmetric. The derivative of the fixed-point map at $\bm{M}$ is therefore the linear operator
\begin{equation} \label{eq:mathcal_K}
\mathcal{K}_{\bm{M}} [\bm{H}] \coloneqq \frac{1}{\alpha} \bm{M} \E_{\bm{y}} \left [ \bm{Q}_{\bm{M}} (\bm{y})  \bm{H} \bm{Q}_{\bm{M}} (\bm{y}) \right ] \bm{M}, \qquad \bm{H} \in \sym_p(\R).
\end{equation}

\begin{defn}\label{eq:def_K_M}
The \emph{stability operator} associated with the algebraic solution \(\bm{M}\) is
\[
\mathcal{L}_{\bm{M}} \coloneqq \operatorname{Id} - \mathcal{K}_{\bm{M}} .
\]
Equivalently,
\[
\mathcal{L}_{\bm{M}} [\bm{H}]\coloneqq \bm{H} - \frac{1}{\alpha} \bm{M} \E_{\bm{y}}
\left[ \bm{Q}_{\bm{M}} (\bm{y}) \bm{H} \bm{Q}_{\bm{M}}(\bm{y}) \right] \bm{M}.
\]
\end{defn}

The operator \(\mathcal{L}_{\bm{M}}\) is the linearization of the fixed-point equation at \(\bm{M}\). Indeed, 
\[
D \bm{\Psi}_{\alpha,x}(\bm{M}) = \mathcal{K}_{\bm{M}} ,
\]
and therefore
\[
\mathcal{L}_{\bm{M}} = \operatorname{Id} - D \bm{\Psi}_{\alpha,x} (\bm{M}).
\]
Whenever the physical solution \(\bm{M}_\alpha(x)\) is regular, we use the shorthand notation \(\bm{Q}_x (\bm{y}) \coloneqq \bm{Q}_{\bm{M}_\alpha(x)}(\bm{y})\), \(\mathcal{K}_x \coloneqq \mathcal{K}_{\bm{M}_\alpha(x)}\), and \(\mathcal{L}_x \coloneqq \mathcal{L}_{\bm{M}_\alpha (x)}\).

The next lemma characterizes stable regular algebraic solutions and, in particular, shows that stability forces the spectral parameter to lie outside the limiting support.

\begin{lem} \label{lem:physical_sol}
Let \(x \in \R\) and \(\bm{M} \in\sym_p(\R)\) be a regular algebraic solution of the real matrix Dyson equation at \(x\). Then the following are equivalent:
\[
\rho(\mathcal{K}_{\bm{M}})<1 \qquad \Longleftrightarrow x \notin \supp(\mu_\alpha) \quad\textnormal{and}\quad \bm{M}=\bm{M}_\alpha(x)
\]
\end{lem}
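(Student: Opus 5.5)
The plan is to exploit the fact that \(\mathcal{K}_{\bm{M}}\) is a positive map. Since \(\bm{M}\) and \(\bm{Q}_{\bm{M}}(\bm{y})\) are real symmetric, \(\bm{H}\mapsto \alpha^{-1}\bm{M}\,\E_{\bm{y}}[\bm{Q}_{\bm{M}}\bm{H}\bm{Q}_{\bm{M}}]\,\bm{M}\) maps \(\sym_p^+(\R)\) into itself. The same holds for its complex analogue \(\mathcal{K}_z[\bm{H}]\coloneqq\alpha^{-1}\bm{M}(z)\E_{\bm{y}}[\bm{\mathcal{Q}}_{\bm{y}}(z)\bm{H}\bm{\mathcal{Q}}_{\bm{y}}(z)^\ast]\bm{M}(z)^\ast\) acting on Hermitian matrices. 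The basic tool in both directions is the ``Ward-type'' identity obtained by taking imaginary parts in the MDE at \(z=x+i\eta\). From \(\Im\bm{M}=-\bm{M}(\Im\bm{M}^{-1})\bm{M}^\ast\), \(\bm{M}^{-1}=-z\bm{I}_p+\bm{\mathcal{S}}_\alpha(\bm{M})\), and~\eqref{eq:nonlinear_term_imaginary_part}, we get
\[
\Im\bm{M}(z)=\eta\,\bm{M}(z)\bm{M}(z)^\ast+\mathcal{K}_z\big[\Im\bm{M}(z)\big].
\]
We will also use that, for a positive map \(\mathcal{K}\), the existence of \(\bm{P}\succ\bm{0}\) with \(\mathcal{K}[\bm{P}]\preceq(1-c)\bm{P}\) for some \(c>0\) forces \(\rho(\mathcal{K})\le 1-c<1\). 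Indeed, iterating gives \(\mathcal{K}^n[\bm{P}]\preceq(1-c)^n\bm{P}\), and every Hermitian \(\bm{H}\) satisfies \(-C\bm{P}\preceq\bm{H}\preceq C\bm{P}\).

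\emph{Direction \((\Leftarrow)\).} Let \(x\notin\supp(\mu_\alpha)\) and \(\bm{M}=\bm{M}_\alpha(x)\). Regularity holds by Remark~\ref{rmk:physical_solution} when \(x>\lambda_+(\alpha)\); the same argument applies on any gap component. Divide the identity above by \(\eta\) and let \(\eta\downarrow0\). By the Stieltjes representation of Lemma~\ref{lem:continuation_off_support},
\[
\eta^{-1}\Im\bm{M}_\alpha(x+i\eta)\to\bm{P}\coloneqq\int_\R\frac{\bm{\Omega}_\alpha(\mathrm{d}\lambda)}{(\lambda-x)^2}\succ\bm{0},
\]
where positive definiteness comes from \(\bm{\Omega}_\alpha(\R)=\bm{I}_p\). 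Next, \(\bm{M}\bm{M}^\ast\to\bm{M}^2\succ\bm{0}\), since \(\bm{M}_\alpha(x)\) is invertible by Lemma~\ref{lem:bound_solution}. Finally, \(\mathcal{K}_{x+i\eta}\to\mathcal{K}_{\bm{M}}\) by the uniform denominator bound of Lemma~\ref{lem:uniform_denominator_bound} together with dominated convergence. This yields
\[
\bm{P}=\bm{M}^2+\mathcal{K}_{\bm{M}}[\bm{P}],
\]
so \(\mathcal{K}_{\bm{M}}[\bm{P}]\preceq(1-c)\bm{P}\) with \(c=\lambda_p(\bm{M}^2)/\lambda_1(\bm{P})>0\). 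The positivity argument then gives \(\rho(\mathcal{K}_{\bm{M}})<1\).

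\emph{Direction \((\Rightarrow)\).} Assume \(\rho(\mathcal{K}_{\bm{M}})<1\). Regularity plus a Neumann-series perturbation shows two things on a complex neighborhood of \((\bm{M},x)\) in \(\C^{p\times p}_{\mathrm{sym}}\times\C\): the inverses \((\bm{I}_p+\alpha^{-1}\bm{X}\bm{T}(\bm{y}))^{-1}\) stay uniformly bounded in \(\bm{y}\), and \(F(\bm{X},z)\coloneqq\bm{X}^{-1}+z\bm{I}_p-\bm{\mathcal{S}}_\alpha(\bm{X})\) is jointly holomorphic there. Its \(\bm{X}\)-derivative at \((\bm{M},x)\) equals \(-\bm{M}^{-1}\mathcal{L}_{\bm{M}}[\,\cdot\,]\bm{M}^{-1}\), which is invertible because \(\rho(\mathcal{K}_{\bm{M}})<1\). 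The holomorphic implicit function theorem then gives a unique analytic branch \(z\mapsto\widetilde{\bm{M}}(z)\) near \(x\) with \(\widetilde{\bm{M}}(x)=\bm{M}\). Uniqueness of the branch combined with the conjugation symmetry \(\bm{\Psi}_{\alpha,z}(\bm{X})^\ast=\bm{\Psi}_{\alpha,\bar z}(\bm{X}^\ast)\) gives \(\widetilde{\bm{M}}(\bar z)=\widetilde{\bm{M}}(z)^\ast\), so the branch is Hermitian on a real interval around \(x\). For \(z=x'+i\eta\) near \(x\) with \(\eta>0\), continuity gives \(\rho(\mathcal{K}_z)<1\) for the branch. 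Solving the Ward identity via a Neumann series,
\[
\Im\widetilde{\bm{M}}(z)=\eta\sum_{k\ge0}\mathcal{K}_z^k\big[\widetilde{\bm{M}}\widetilde{\bm{M}}^\ast\big]\succeq\eta\,\widetilde{\bm{M}}\widetilde{\bm{M}}^\ast\succ\bm{0}.
\]
Hence \(\widetilde{\bm{M}}(z)\in\mathbb{H}_p\), and pointwise uniqueness in Lemma~\ref{lem:pointwise_existence_uniqueness} forces \(\widetilde{\bm{M}}(z)=\bm{M}_\alpha(z)\) on a half-disc. Therefore \(\bm{M}_\alpha\) continues analytically through a real interval around \(x\) and is Hermitian there. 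Applying Lemma~\ref{lem:analytic_continuation_support} to each \(\bm{v}^\ast\bm{M}_\alpha\bm{v}\), as in the proof of Lemma~\ref{lem:continuation_off_support}, shows that \(x\notin\supp(\mu_\alpha)\). Continuity then gives \(\bm{M}=\widetilde{\bm{M}}(x)=\bm{M}_\alpha(x)\).

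The main obstacle will be the implicit-function step. We must verify that the expectation defining \(\bm{\mathcal{S}}_\alpha\) is jointly holomorphic, with uniformly bounded inverses, on a full complex neighborhood of a real regular point. This is where the ess-sup regularity assumption is essential. We must also ensure that the spectral radius of the complex positive maps \(\mathcal{K}_z\) depends continuously on \(z\), so that the Neumann-series positivity argument applies for small \(\eta>0\).
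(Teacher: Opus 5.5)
Your proposal is correct and shares the paper's skeleton. In \((\Leftarrow)\) you reach the same relation \((\operatorname{Id}-\mathcal{K}_{\bm{M}})[\bm{P}]=\bm{M}^2\), with \(\bm{P}=\bm{M}_\alpha'(x)=\int_\R\bm{\Omega}_\alpha(\mathrm{d}\lambda)/(\lambda-x)^2\succ\bm{0}\), and conclude with the same order-unit argument. You obtain it as \(\lim_{\eta\downarrow0}\eta^{-1}\Im\) of the Ward identity; the paper gets it by differentiating the fixed-point identity at \(x\). In \((\Rightarrow)\) you use the same ingredients as the paper: the holomorphic implicit-function theorem on complex symmetric matrices, conjugation symmetry, pointwise uniqueness in \(\mathbb{H}_p\), and Lemma~\ref{lem:analytic_continuation_support}.

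The real difference is how you show that the local branch enters \(\mathbb{H}_p\). The paper stays at the real point. It computes \(\widehat{\bm{M}}'(x)=\mathcal{L}_{\bm{M}}^{-1}[\bm{M}^2]=\sum_k\mathcal{K}_{\bm{M}}^k[\bm{M}^2]\succ\bm{0}\) and Taylor-expands to get \(\Im\widehat{\bm{M}}(x+i\eta)\succ\bm{0}\) for small \(\eta\). This uses only the hypothesis on \(\mathcal{K}_{\bm{M}}\) acting on \(\sym_p(\R)\).

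Your Neumann series on the Ward identity gives a stronger, quantitative bound, \(\Im\widetilde{\bm{M}}(z)\succeq\eta\,\widetilde{\bm{M}}\widetilde{\bm{M}}^\ast\), on a whole half-neighbourhood, and it handles both directions with one identity. The cost is that it needs \(\rho(\mathcal{K}_z)<1\) for your Hermitian-space operators. At \(z=x\), \(\operatorname{Herm}_p(\C)\) also contains the block \(i\bm{A}\) with \(\bm{A}\) real antisymmetric, which the hypothesis \(\rho(\mathcal{K}_{\bm{M}})<1\) on \(\sym_p(\R)\) does not directly cover.

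You can close this with your own positivity lemma:
\begin{itemize}
\item Set \(\bm{P}\coloneqq\sum_{k\ge0}\mathcal{K}_{\bm{M}}^k[\bm{I}_p]\succeq\bm{I}_p\). Then \(\mathcal{K}_x[\bm{P}]=\bm{P}-\bm{I}_p\preceq(1-\lambda_1(\bm{P})^{-1})\bm{P}\).
\item Since \(z\mapsto\mathcal{K}_z\) is continuous and \(\bm{P}\succ\bm{0}\), an inequality \(\mathcal{K}_z[\bm{P}]\preceq(1-c)\bm{P}\) with some \(c>0\) persists for \(z\) near \(x\).
\item This gives \(\rho(\mathcal{K}_z)<1\) on Hermitian matrices directly, so you never need continuity of the spectral radius.
\end{itemize}

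A smaller point concerns \((\Leftarrow)\): drop the appeals to Remark~\ref{rmk:physical_solution}, Lemma~\ref{lem:bound_solution}, and Lemma~\ref{lem:uniform_denominator_bound}. They are proved only to the right of \(\supp(\mu_\alpha)\). Your remark that the same argument works on every gap is also not justified. That argument needs \(\bm{M}_\alpha\) invertible near the interval, and inside an interior gap \(\int_\R\bm{\Omega}_\alpha(\mathrm{d}\lambda)/(\lambda-x)\) has contributions of both signs and may be singular.

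None of this is needed anyway. Regularity and invertibility of \(\bm{M}\) are hypotheses of the lemma. The uniform denominator bound for \(z\) near \(x\), which you need for dominated convergence, follows from regularity at \(x\), continuity of \(\bm{M}_\alpha\), and a Neumann-series perturbation.
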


\begin{proof}
Suppose first that \(x \notin \supp (\mu_\alpha)\) and \(\bm{M}= \bm{M}_\alpha(x)\). By Lemma~\ref{lem:continuation_off_support}, the physical branch is analytic in a neighborhood of \(x\). Moreover, since \( \bm{M} = \bm{M}_\alpha(x)\) is regular, boundedness of \(\bm{T}\) implies that \((z,\bm{X}) \mapsto \bm{\Psi}_{\alpha,z} (\bm{X})\) is analytic in a neighborhood of \((x,\bm{M})\). Hence the fixed-point identity extends through \(x\). The Stieltjes representation gives
\[
\bm{M}_\alpha'(x) = \int_\R \frac{\bm{\Omega}_\alpha(\mathrm{d}\lambda)}{(\lambda-x)^2} \succ \bm{0}.
\]
Differentiating the fixed-point equation \(\bm{M}_\alpha (z) =\bm{\Psi}_{\alpha,z} (\bm{M}_\alpha(z))\) at \(z=x\) gives
\[
\left ( \operatorname{Id} - \mathcal{K}_x \right ) [\bm{M}_\alpha'(x)] = \bm{M}_\alpha ^2(x).
\]
Therefore, 
\[
\mathcal{K}_x [\bm{M}_\alpha'(x)] = \bm{M}_\alpha'(x) - \bm{M}_\alpha^2(x) \prec \bm{M}_\alpha'(x).  
\]
The operator \(\mathcal{K}_x\) preserves the positive semidefinite cone. Indeed, since \(\bm{Q}_x (\bm{y})\) and \(\bm{M}_\alpha(x)\) are symmetric, for every \(\bm{H} \in \sym_p^+(\R)\),
\[
\begin{split}
\mathcal{K}_x [\bm{H}] & = \frac{1}{\alpha}  \E_{\bm{y}} \left [\bm{M}_\alpha (x) \bm{Q}_x (\bm{y})  \bm{H} \bm{Q}_x (\bm{y})\bm{M}_\alpha (x) \right ] \\
& = \frac{1}{\alpha}  \E_{\bm{y}} \left [ \left (  \bm{Q}_x (\bm{y}) \bm{M}_\alpha (x)\right)^\ast \bm{H} \left (  \bm{Q}_x (\bm{y}) \bm{M}_\alpha (x)\right) \right ] \succeq \bm{0}.
\end{split}
\]
Set \(\bm{L} \coloneqq\bm{M}_\alpha'(x) \in \sym_p^{++} (\R)\). Defining
\[
q_x \coloneqq \lambda_1 \left (  \bm{L}^{-1/2} \mathcal{K}_x [\bm{L}] \bm{L}^{-1/2} \right ),
\]
gives
\[
0 \le q_x < 1, \qquad \mathcal{K}_x [\bm{L}] \preceq q_x \bm{L}.
\]
Now, for \(\bm{H}\in \sym_p(\R)\), introduce  
\[ 
\|\bm{H}\|_{\bm{L}} \coloneqq \inf \left\{ c>0 \colon -c\bm{L}\preceq \bm{H}\preceq c\bm{L} \right\}. 
\] 
Since \(\bm{L} \succ \bm{0}\), this is a norm. In fact, we have
\[
\|\bm{H}\|_{\bm{L}} = \| \bm{L}^{-1/2} \bm{H}\bm{L}^{-1/2} \|_{\mathrm{op}}.
\]
Since the map \(\mathcal{K}_x\) preserves the positive semidefinite cone, we have 
\[ 
-c\bm{L}\preceq \bm{H}\preceq c\bm{L} \Longrightarrow -c\mathcal{K}_x [\bm{L}]  \preceq \mathcal{K}_x  [\bm{H}] \preceq c\mathcal{K}_x [\bm{L}] .
\] 
Using \(\mathcal{K}_x [\bm{L}] \preceq q_x \bm{L}\), we get 
\[ 
-q_x c\bm{L} \preceq \mathcal{K}_x [\bm{H}] \preceq q_x c\bm{L}. 
\] 
Therefore 
\[ 
\|\mathcal{K}_x [\bm{H}] \|_{\bm{L}} \leq q_x \|\bm{H}\|_{\bm{L}}. 
\] 
Thus the operator norm of \(\mathcal{K}_x\) with respect to this norm satisfies \(\| \mathcal{K}_x \|_{\bm{L} \to \bm{L}} \le q_x \). Since the spectral radius is bounded by every operator norm, it follows that
\[
\rho (\mathcal{K}_x) \le \| \mathcal{K}_x \|_{\bm{L} \to \bm{L}} \le q_x < 1.
\]

Conversely, suppose that \(\rho(\mathcal{K}_{\bm{M}})<1\). To apply the holomorphic implicit-function theorem, we complexify the real vector space \(\sym_p(\R)\). Its natural complexification is
\[
\sym_p(\C) \coloneqq \{\bm{X} \in \C^{p \times p} \colon \bm{X}^\top = \bm{X} \}.
\]
Let \(\mathcal{K}_{\bm{M}}^{\C} \colon\sym_p(\C)\to\sym_p(\C)\) denote the complexification of \(\mathcal{K}_{\bm{M}}\), namely
\[
\mathcal{K}_{\bm{M}}^\C [\bm{H}] = \frac{1}{\alpha} \bm{M} \E_{\bm{y}} \left [ \bm{Q}_{\bm{M}} (\bm{y})  \bm{H} \bm{Q}_{\bm{M}} (\bm{y}) \right ] \bm{M},
\]
and set 
\[
\mathcal{L}_{\bm{M}}^{\C} = \operatorname{Id}-\mathcal{K}_{\bm{M}}^{\C}. 
\]
With respect to any real basis of \(\sym_p(\R)\), \(\mathcal{K}_{\bm{M}}\) and its complexification \(\mathcal{K}_{\bm{M}}^{\C}\) are represented by the same real matrix. Hence they have the same characteristic polynomial and, in particular,
\[
\rho(\mathcal{K}_{\bm{M}}^{\C}) = \rho(\mathcal{K}_{\bm{M}})<1.
\]
It follows that \(\mathcal{L}_{\bm{M}}^{\C}\) is invertible. Consider the map
\[
\mathcal{G} (z,\bm{X}) \coloneqq \bm{X} - \bm{\Psi}_{\alpha,z} (\bm{X}).
\]
Since \(\bm{M}\) is an algebraic solution at \(x\), \(\mathcal{G} (x,\bm{M}) =\bm{0}_{p \times p}\). Moreover, regularity of \(\bm{M}\) and boundedness of \(\bm{T}\) imply that the matrices
\[
\bm{I}_p + \frac{1}{\alpha} \bm{X} \bm{T} (\bm{y})
\]
remain uniformly invertible for \(\bm{X}\) in a sufficiently small complex neighborhood of \(\bm{M}\). Indeed, this follows from the regularity bound at \(\bm{M}\) and a Neumann-series argument. Therefore \(\mathcal{G}\) is holomorphic in a neighborhood of \((x,\bm{M}) \in \C \times \sym_p(\C)\), and
\[
D_{\bm{X}} \mathcal{G} (x,\bm{M}) = \mathcal{L}_{\bm{M}}^\C.
\]
The holomorphic implicit-function theorem therefore yields a neighborhood \(U\) of \(x\) and a unique holomorphic map \(\widehat{\bm{M}} \colon U \to \sym_p(\C)\) such that
\[
\widehat{\bm{M}}(x) =\bm{M} \qquad  \mathrm{and} \qquad \mathcal{G} (z,\widehat{\bm{M}}(z)) =\bm{0}_{p \times p}, \quad z \in U.
\]
After shrinking \(U\), we may assume that it is invariant under complex conjugation.

We next compute the derivative of this local branch. Differentiating \(\mathcal{G} (x,\bm{M}) =\bm{0}_{p \times p}\) at \(z=x\) and applying the chain rule gives
\[
D_z \mathcal{G} (x,\bm{M})[1] + D_{\bm{X}} \mathcal{G} (x,\bm{M}) [\widehat{\bm{M}}'(x)] = \bm{0}_{p \times p}.
\]
Since
\[
D_z \bm{\Psi}_{\alpha,z}(\bm{X})[1] = \bm{\Psi}_{\alpha,z}(\bm{X})^2,
\]
and \(\bm{\Psi_}{\alpha,x}(\bm{M})=\bm{M}\) by~\eqref{eq:fixed_point_eq}, we have
\[
D_z \mathcal{G} (x,\bm{M})[1] = -\bm{M}^2.
\]
Consequently,
\[
\mathcal{L}_{\bm{M}}^{\C} [\widehat{\bm{M}}'(x)] = \bm{M}^2,
\]
and hence
\[
\widehat{\bm{M}}'(x) = (\mathcal{L}_{\bm{M}}^\C)^{-1} [\bm{M}^2].
\]
Since \(\bm{M}^2 \in \sym_p(\R)\) and \(\mathcal{L}_{\bm{M}}^\C\) is the complexification of \(\mathcal{L}_{\bm{M}}\), this is equivalently
\[
\widehat{\bm{M}}'(x) = \mathcal{L}_{\bm{M}}^{-1}[\bm{M}^2].
\]

Since \(\rho (\mathcal{K}_{\bm{M}})<1\), the Neumann series converges in operator norm and gives
\[
\mathcal{L}_{\bm{M}}^{-1} = \sum_{k=0}^\infty \mathcal{K}_{\bm{M}}^k.
\]
The operator \(\mathcal{K}_{\bm{M}}\) preserves the positive semidefinite cone. Moreover, \(\bm{M}\) is invertible, and therefore \(\bm{M}^2 \succ \bm{0}\). It follows that
\[
\widehat{\bm{M}}' (x) = \sum_{k=0}^\infty  \mathcal{K}_{\bm{M}}^k [\bm{M}^2] = \bm{M}^2 + \sum_{k=1}^\infty  \mathcal{K}_{\bm{M}}^k [\bm{M}^2] \succ \bm{0}.
\]
Taylor expansion at \(x\) therefore yields 
\[
\Im \widehat{\bm{M}} (x+i\eta) = \eta \widehat{\bm{M}}'(x) + O(\eta^2) \succ \bm{0},
\]
for every sufficiently small \(\eta>0\). Thus \(\widehat{\bm{M}} (x+i\eta) \in \mathbb{H}_p\). By construction, \(\widehat{\bm{M}} (x+i\eta)\) is a fixed point of \(\bm{\Psi}_{\alpha, x + i\eta}\). By pointwise uniqueness from Lemma~\ref{lem:pointwise_existence_uniqueness}, it follows that
\[
\widehat{\bm{M}} (x+i\eta) = \bm{M}_\alpha(x+i\eta),
\]
for every sufficiently small \(\eta>0\). By the identity theorem, \(\widehat{\bm{M}}(z)=\bm{M}_\alpha(z)\) for \(z \in U \cap \C_+\).

It remains to identify the behavior of the local branch on the real axis. Define \(\widetilde{\bm{M}} \coloneqq \overline{\widehat{\bm{M}}(\overline{z})}\) for \(z \in U\). The map \(z \mapsto \widetilde{\bm{M}}(z)\) is analytic on \(U\). Since \(\bm{T}(\bm{y})\) is real symmetric, 
\[
\overline{\bm\Psi_{\alpha,\overline z}(\bm{X})} =\bm\Psi_{\alpha,z}(\overline{\bm{X}}).
\]
It follows that
\[
\widetilde{\bm{M}}(z)=\bm\Psi_{\alpha,z}\bigl(\widetilde{\bm{M}}(z)\bigr), \qquad z \in U.
\]
Moreover, 
\[
\widetilde{\bm{M}}(x) = \overline{\widehat{\bm{M}}(x)} = \overline{\bm{M}} = \bm{M}.
\]
Thus \(\widetilde{\bm{M}}\) and \(\widehat{\bm{M}}\) are two local analytic solution branches passing through \((x,\bm{M})\). By the local uniqueness from the implicit-function theorem, 
\[
\widetilde{\bm{M}} (z) = \widehat{\bm{M}} (z), \quad z \in U.
\]
Therefore, 
\[
\widehat{\bm{M}}(z) = \overline{\widehat{\bm{M}}(\overline{z})}.
\]
In particular, \(\widehat{\bm{M}}(t)\in\sym_p(\R)\) for every \(t \in U \cap \R\).

Consequently, \(\widehat{\bm{M}}\) is a real-symmetric analytic continuation of the physical branch through the interval \(U\cap\R\). Lemma~\ref{lem:analytic_continuation_support} therefore implies that \(\bm\Omega_\alpha\) has no mass on a neighborhood of \(x\). Hence
\[
x\notin\supp(\mu_\alpha).
\]
Finally, evaluating the continuation at \(x\) gives
\[
\bm{M} = \widehat{\bm{M}} (x) = \bm{M}_\alpha (x).
\]
\end{proof}

\begin{cor} \label{cor:physical_stability_right}
For every \(x>\lambda_+(\alpha)\), the physical solution is regular and satisfies \(\rho(\mathcal{K}_x)<1\).
\end{cor}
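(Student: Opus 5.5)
This is essentially a direct assembly of results already proved. Fix \(x>\lambda_+(\alpha)\). Since \(\lambda_+(\alpha)=\sup\supp(\mu_\alpha)\), we have \(x\notin\supp(\mu_\alpha)\). By Lemma~\ref{lem:continuation_off_support}, the physical branch \(\bm{M}_\alpha\) is analytic near \(x\), and \(\bm{M}_\alpha(x)\in\sym_p(\R)\).

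The first step is regularity, which is exactly Remark~\ref{rmk:physical_solution}. Choose \(\varepsilon>0\) with \(\lambda_+(\alpha)+\varepsilon<x\) and any \(L>x\), so that \(x\in I_{\varepsilon,L}\). Lemma~\ref{lem:uniform_denominator_bound} then gives the essential-supremum bound on \((\bm{I}_p+\alpha^{-1}\bm{M}_\alpha(x)\bm{T}(\bm{y}))^{-1}\). Next let \(\eta\downarrow0\) in the MDE at \(x+i\eta\). The denominator bound is uniform on \(\overline{\mathcal{U}_{\varepsilon,L}}\), \(\bm{T}\) is bounded, and Lemma~\ref{lem:bound_solution} controls \(\bm{M}_\alpha^{-1}\). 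Dominated convergence therefore shows that \(\bm{M}_\alpha(x)\) satisfies the real MDE~\eqref{eq:realMDE}. Hence \(\bm{M}_\alpha(x)\) is a regular algebraic solution at \(x\).

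The second step is the spectral-radius bound. Apply the implication "\(\Leftarrow\)" of Lemma~\ref{lem:physical_sol} with \(\bm{M}=\bm{M}_\alpha(x)\), a regular algebraic solution at \(x\notin\supp(\mu_\alpha)\). This gives \(\rho(\mathcal{K}_x)<1\). The mechanism inside that lemma is as follows:

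\begin{enumerate}
\item The map \(\mathcal{K}_x\) preserves the positive semidefinite cone.
\item Set \(\bm{L}=\bm{M}_\alpha'(x)=\int(\lambda-x)^{-2}\bm{\Omega}_\alpha(\mathrm{d}\lambda)\). This is positive definite because \(\bm{\Omega}_\alpha(\R)=\bm{I}_p\) and \((\lambda-x)^{-2}\) is bounded below on the compact support.
\item Differentiating the fixed-point identity gives \(\mathcal{K}_x[\bm{L}]=\bm{L}-\bm{M}_\alpha(x)^2\). This is \(\prec\bm{L}\), since \(\bm{M}_\alpha(x)\) is invertible by Lemma~\ref{lem:bound_solution}.
\item Consequently \(\mathcal{K}_x\) is a strict contraction in the order-unit norm \(\|\cdot\|_{\bm{L}}\).
\end{enumerate}

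The only point needing care, and thus the main obstacle, is justifying the differentiation of the fixed-point identity at a real point. This requires that \((z,\bm{X})\mapsto\bm{\Psi}_{\alpha,z}(\bm{X})\) be jointly analytic near \((x,\bm{M}_\alpha(x))\). That analyticity follows from regularity via a Neumann-series perturbation of the denominators, using the bound \(\|\bm{T}\|_{\mathrm{op}}\le C_T\). The identity \(\bm{M}_\alpha(z)=\bm{\Psi}_{\alpha,z}(\bm{M}_\alpha(z))\) then extends from \(\C_+\) to a neighborhood of \(x\) by the identity theorem. Since this is exactly what the first step supplies, the corollary follows.
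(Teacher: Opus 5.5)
Your proposal is correct and follows the paper's proof. Regularity of $\bm{M}_\alpha(x)$ is Remark~\ref{rmk:physical_solution}, obtained from Lemma~\ref{lem:uniform_denominator_bound} and dominated convergence as $\eta\downarrow0$, and $\rho(\mathcal{K}_x)<1$ is then the ``$\Leftarrow$'' direction of Lemma~\ref{lem:physical_sol}, whose order-unit-norm contraction argument with $\bm{L}=\bm{M}_\alpha'(x)$ you have reproduced accurately.
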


\begin{proof}
By Remark~\ref{rmk:physical_solution}, the physical solution \(\bm{M}_\alpha(x)\) is a regular algebraic solution at \(x\). Applying Lemma~\ref{lem:physical_sol} with \(\bm{M} = \bm{M}_\alpha(x)\) yields \(\rho(\mathcal{K}_x)<1\).
\end{proof}


\section{Proof of the bulk law} \label{section:bulk}

The proof of Proposition~\ref{prop:bulk} has two steps. First, we derive an approximate self-consistent equation for 
\[
\bm{M}_n(z) =\tr_{d-r}^{(p)} \left ( \bm{G}_n(z) \right), \qquad z \in \C_+,
\]
where \(\bm{G}_n(z)\) is the resolvent of the random matrix \(\bm{P}_n\) defined in~\eqref{eq:P_n}. The approximation produces three errors: a Gaussian quadratic-form error, a self-averaging error, and a leave-one-out error. We show that all three vanish almost surely when \(\Im z\) is sufficiently large. Second, a normal-family argument identifies every subsequential limit with the normalized Herglotz solution \(\bm{M}_\alpha\) of the limiting matrix Dyson equation~\eqref{eq:MDE}. This yields the almost-sure convergence of the empirical spectral distribution.\\

Fix \(z \in \C_+\). We start from the defining identity of the resolvent, 
\[
z \bm{G}_n(z) + \bm{I}_{p(d-r)} = \bm{P}_n \bm{G}_n(z),
\]
and take the normalized partial trace~\eqref{eq:partial_trace} on both sides. This yields
\begin{equation} \label{eq:resolvent-id} 
\begin{split}
z \bm{M}_n (z) + \bm{I}_p & = \tr_{d-r}^{(p)} (\bm{P}_n \bm{G}_n(z)) \\
& = \frac{1}{n} \sum_{i=1}^n \tr_{d-r}^{(p)} \left ( \left( \bm{T}(\bm{y}_i) \otimes \bm{u}_i \bm{u}_i^\top\right) \bm{G}_n(z)\right) \\
& = \frac{1}{n} \sum_{i=1}^n  \bm{T} (\bm{y}_i) \frac{1}{d-r} (\bm{I}_p \otimes \bm{u}_i)^\top \bm{G}_n (z) (\bm{I}_p \otimes \bm{u}_i).
\end{split}
\end{equation}
For \(i \in [n]\), introduce the leave-one-out matrix
\begin{equation*}
\bm{P}_n^{(i)} \coloneqq \bm{P}_n - \frac{1}{n} (\bm{I}_p \otimes \bm{u}_i) \bm{T} (\bm{y}_i) (\bm{I}_p \otimes \bm{u}_i)^\top,
\end{equation*} 
its resolvent \(\bm{G}_n^{(i)} (z)\coloneqq (\bm{P}_n^{(i)} - z \bm{I}_{p(d-r)})^{-1}\) and the associated normalized partial trace \(\bm{M}_n^{(i)} (z)  \coloneqq \tr_{d-r}^{(p)} (\bm{G}_n^{(i)}(z))\). We also introduce the quadratic forms
\begin{equation} \label{eq:tilde_M}
\begin{split}
\bm{\widetilde{M}}_{n,i} (z) & \coloneqq  \frac{1}{d-r}(\bm{I}_p \otimes \bm{u}_i)^\top\bm{G}_n(z) (\bm{I}_p \otimes \bm{u}_i) ,\\
\bm{\widetilde{M}}^{(i)}_{n,i} (z) & \coloneqq  \frac{1}{d-r}(\bm{I}_p \otimes \bm{u}_i)^\top\bm{G}_n^{(i)}(z) (\bm{I}_p \otimes \bm{u}_i) .
\end{split}
\end{equation}
By construction, \(\bm{P}_n^{(i)} \) is independent of both \(\bm{y}_i\) and \(\bm{u}_i\). Before applying the Woodbury identity, note that the inverse appearing below is well defined for every \(z\in\C_+\). Indeed, both \(\bm{P}_n - z \bm{I}_{p(d-r)}\) and \(\bm{P}_n^{(i)}-z\bm{I}_{p(d-r)}\) are invertible because the two matrices are Hermitian. The matrix determinant lemma therefore gives
\[
\det \left(\bm{I} _p+\gamma_n \bm{T} (\bm{y}_i) \widetilde{\bm{M}}_{n,i}^{(i)} (z) \right) =
\frac{\det(\bm{P}_n- z \bm{I}_{p(d-r)})}{\det (\bm{P}_n^{(i)}- z\bm{I}_{p(d-r)})} \neq 0.
\]
Hence the Woodbury identity yields
\begin{equation} \label{eq:Woodbury}
\begin{split} 
\bm{G}_n(z) & = \left ( \bm{P}_n - z \bm{I}_{p (d-r)} \right)^{-1} \\
& = \left(\bm{P}^{(i)}_n + \frac{1}{n}(\bm{I}_p \otimes \bm{u}_i) \bm{T} (\bm{y}_i) (\bm{I}_p \otimes \bm{u}_i)^\top - z \bm{I}_{p(d-r)} \right )^{-1} \\
&= \bm{G}^{(i)}_n(z) - \frac{1}{n} \bm{G}^{(i)}_n(z)  (\bm{I}_p \otimes \bm{u}_i) \left ( \bm{I}_p + \gamma_n \bm{T} (\bm{y}_i) \bm{\widetilde{M}}^{(i)}_{n,i} (z) \right)^{-1} \bm{T} (\bm{y}_i) (\bm{I}_p \otimes \bm{u}_i)^\top\bm{G}_n^{(i)}(z) ,
\end{split}
\end{equation}
where we write \(\gamma_n \coloneqq (d-r)/n\). Combining~\eqref{eq:Woodbury} and~\eqref{eq:tilde_M}, we obtain
\begin{equation} \label{eq:Mtilde}
\begin{split}
\bm{\widetilde{M}}_{n,i} (z) & = \bm{\widetilde{M}}_{n,i}^{(i)} (z) - \gamma_n \bm{\widetilde{M}}_{n,i}^{(i)} (z) \left( \bm{I}_p + \gamma_n \bm{T}(\bm{y}_i)   \bm{\widetilde{M}}_{n,i}^{(i)} (z) \right)^{-1} \bm{T}(\bm{y}_i)  \bm{\widetilde{M}}_{n,i}^{(i)} (z) \\
&  = \bm{\widetilde{M}}_{n,i}^{(i)} (z) \left( \bm{I}_p + \gamma_n \bm{T} (\bm{y}_i)  \bm{\widetilde{M}}_{n,i}^{(i)} (z) \right)^{-1} .
\end{split}
\end{equation}
The second equality follows from \(\bm{I} - (\bm{I}+ \bm{A})^{-1} \bm{A} = (\bm{I}+ \bm{A})^{-1}\). Substituting this identity into \eqref{eq:resolvent-id}, we obtain
\begin{equation} \label{eq:expansion}
z \bm{M}_n (z) + \bm{I}_p  = \frac{1}{n} \sum_{i=1}^n \bm{T} (\bm{y}_i)   \bm{\widetilde{M}}_{n,i}^{(i)} (z) \left( \bm{I}_p + \gamma_n \bm{T} (\bm{y}_i)  \bm{\widetilde{M}}_{n,i}^{(i)} (z) \right)^{-1} .
\end{equation}
To streamline the notation, for \(\bm{T} \in \sym_p(\R)\), \(\gamma \ge0\), and \(\bm{X} \in \C^{p \times p}\) such that \(\bm{I}_p + \gamma \bm{T} \bm{X}\) is invertible, define 
\[
\Phi_{\bm{T},\gamma}(\bm{X}) \coloneqq \bm{T} \bm{X} (\bm{I}_p + \gamma \bm{T} \bm{X})^{-1} \in \C^{p \times p}.
\]
Then~\eqref{eq:expansion} becomes
\begin{equation}\label{eq:phi-expansion}
z \bm{M}_n(z) + \bm{I}_p = \frac{1}{n} \sum_{i=1}^n \Phi_{\bm{T} (\bm{y}_i),\gamma_n} \left (\widetilde{\bm{M}}_{n,i}^{(i)} (z) \right).
\end{equation}
We now rewrite~\eqref{eq:phi-expansion} as an approximate self-consistent equation by adding and subtracting suitable intermediate terms. Here and below, \(\E_{\bm{y}}\) denotes expectation only with respect to an independent random vector \(\bm{y}\) having the common distribution of the \(\bm{y}_i\)'s. We obtain
\begin{equation} \label{eq:approximate_MDE}
z \bm{M}_n(z) + \bm{I}_p = \E_{\bm{y}} \left [\Phi_{\bm{T}(\bm{y}), \gamma_n} \left ( \bm{M}_n (z)\right) \right] + \mathcal{E}_{1,n} (z) + \mathcal{E}_{2,n} (z)  + \mathcal{E}_{3,n} (z) ,
\end{equation}
where
\[
\begin{split}
\mathcal{E}_{1,n} (z) & \coloneqq \frac{1}{n} \sum_{i=1}^n \left ( \Phi_{\bm{T} (\bm{y}_i),\gamma_n} \left (\widetilde{\bm{M}}_{n,i}^{(i)} (z) \right) - \Phi_{\bm{T} (\bm{y}_i),\gamma_n} \left (\bm{M}_n^{(i)} (z) \right) \right), \\
\mathcal{E}_{2,n}(z) & \coloneqq \frac{1}{n} \sum_{i=1}^n \left ( \Phi_{\bm{T} (\bm{y}_i),\gamma_n} \left (\bm{M}_n^{(i)} (z) \right) - \E_{\bm{y}} \left [ \Phi_{\bm{T}(\bm{y}), \gamma_n} \left ( \bm{M}_n^{(i)} (z)\right) \right ] \right) ,\\
\mathcal{E}_{3,n} (z) & \coloneqq \frac{1}{n} \sum_{i=1}^n \left ( \E_{\bm{y}} \left [ \Phi_{\bm{T}(\bm{y}), \gamma_n} \left ( \bm{M}_n^{(i)} (z)\right) \right ] - \E_{\bm{y}} \left [ \Phi_{\bm{T}(\bm{y}), \gamma_n} \left ( \bm{M}_n (z)\right) \right ] \right) .
\end{split}
\]
The three error terms have different origins. The term \(\mathcal{E}_{1,n}\) measures the deviation of the leave-one-out quadratic forms from their normalized block traces, \(\mathcal{E}_{2,n}\) is a self-averaging error, and \(\mathcal{E}_{3,n}\) measures the effect of replacing \(\bm{M}_n^{(i)}\) by \(\bm{M}_n\). We control these terms in the following lemmas. 

\begin{lem} \label{lem:stability-phi}
Fix \(\eta_0>0\) sufficiently large. Then, almost surely, for all sufficiently large \(n\), all \(i\in[n]\), and all \(z\in\C_+\) with \(\Im z>\eta_0\),
\[
\left \| \Phi_{\bm{T} (\bm{y}_i),\gamma_n} (\widetilde{\bm{M}}_{n,i}^{(i)}(z)) - \Phi_{\bm{T} (\bm{y}_i),\gamma_n} (\bm{M}_n^{(i)} (z) ) \right\|_{\mathrm{op}} \le C_{\Phi} (\eta_0) \left \| \widetilde{\bm{M}}_{n,i}^{(i)} (z) - \bm{M}_n^{(i)} (z) \right\|_{\mathrm{op}},
\]
and similarly
\[
\left \| \E_{\bm{y}} \left [ \Phi_{\bm{T} (\bm{y}),\gamma_n} (\bm{M}_n^{(i)}(z)) \right] - \E_{\bm{y}} \left [ \Phi_{ \bm{T}(\bm{y}),\gamma_n} (\bm{M}_n (z) ) \right] \right\|_{\mathrm{op}} \le C_{\Phi} (\eta_0) \left \| \bm{M}_n^{(i)} (z) - \bm{M}_n(z) \right\|_{\mathrm{op}}.
\]
\end{lem}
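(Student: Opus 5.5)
The argument is a Lipschitz estimate for $\Phi_{\bm{T},\gamma}$ on small matrices. All four arguments of $\Phi$ in the statement lie in a small operator-norm ball once $\Im z$ is large. For $z\in\C_+$ we have $\|\bm{G}_n(z)\|_{\mathrm{op}},\|\bm{G}_n^{(i)}(z)\|_{\mathrm{op}}\le(\Im z)^{-1}$, since both $\bm{P}_n$ and $\bm{P}_n^{(i)}$ are Hermitian. Partial traces are contractions in operator norm, so
\[
\|\bm{M}_n(z)\|_{\mathrm{op}},\ \|\bm{M}_n^{(i)}(z)\|_{\mathrm{op}}\le \frac{1}{\eta_0}.
\]
For the quadratic form we get
\[
\|\widetilde{\bm{M}}^{(i)}_{n,i}(z)\|_{\mathrm{op}}\le \frac{\|\bm{u}_i\|_2^2}{(d-r)\,\eta_0}.
\]
This is the only place where "almost surely, for all large $n$, all $i\in[n]$" enters. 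We need
\[
\max_{i\le n}\frac{\|\bm{u}_i\|_2^2}{d-r}\le 2 \quad\text{eventually, a.s.}
\]
This follows from a chi-square (Laurent--Massart) tail bound: $\mathbb{P}(\|\bm{u}_i\|^2>2(d-r))\le e^{-c(d-r)}$. A union bound over $i\le n$ gives a summable sequence since $n\asymp d$, and Borel--Cantelli finishes. We also have $\gamma_n\to 1/\alpha$, so $\gamma_n\le 2/\alpha$ eventually.

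On this event every argument $\bm{X}$ satisfies $\|\bm{X}\|_{\mathrm{op}}\le 2/\eta_0$. Take $\eta_0>8C_T/\alpha$. Then $\|\gamma_n\bm{T}\bm{X}\|_{\mathrm{op}}\le \tfrac{2}{\alpha}C_T\tfrac{2}{\eta_0}<\tfrac12$, uniformly in $\bm{T}$ with $\|\bm{T}\|_{\mathrm{op}}\le C_T$. By the Neumann series, $\bm{I}_p+\gamma_n\bm{T}\bm{X}$ is invertible with inverse of norm at most $2$. For two such $\bm{X},\bm{Y}$, the second resolvent identity gives
\[
\Phi_{\bm{T},\gamma}(\bm{X})-\Phi_{\bm{T},\gamma}(\bm{Y})=\bm{T}(\bm{I}+\gamma\bm{X}\bm{T})^{-1}(\bm{X}-\bm{Y})(\bm{I}+\gamma\bm{T}\bm{Y})^{-1}.
\]
It is derived by rewriting $\bm{T}\bm{X}(\bm{I}+\gamma\bm{T}\bm{X})^{-1}=(\bm{I}+\gamma\bm{T}\bm{X})^{-1}\bm{T}\bm{X}$ and applying Lemma~\ref{lem:push_through}; equivalently, one writes $\Phi_{\bm{T},\gamma}(\bm{X})=\gamma^{-1}(\bm{I}-(\bm{I}+\gamma\bm{T}\bm{X})^{-1})$ when $\gamma>0$ and uses $A^{-1}-B^{-1}=A^{-1}(B-A)B^{-1}$. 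The factor $(\bm{I}+\gamma\bm{X}\bm{T})^{-1}$ is bounded by $2$ by the same Neumann argument. This gives
\[
\|\Phi_{\bm{T},\gamma}(\bm{X})-\Phi_{\bm{T},\gamma}(\bm{Y})\|_{\mathrm{op}}\le 4C_T\,\|\bm{X}-\bm{Y}\|_{\mathrm{op}},
\]
so we may set $C_\Phi(\eta_0)\coloneqq4C_T$.

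The first inequality then follows pointwise in $\bm{y}_i$. For the second, we apply the bound for each fixed $\bm{y}$ (almost surely $\|\bm{T}(\bm{y})\|_{\mathrm{op}}\le C_T$) and use $\|\E\,\cdot\|_{\mathrm{op}}\le\E\|\cdot\|_{\mathrm{op}}$. This second bound needs no control on $\bm{u}_i$, since only $\bm{M}_n$ and $\bm{M}_n^{(i)}$ appear.

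The only step that is not routine is making the norm bound on $\widetilde{\bm{M}}^{(i)}_{n,i}$ uniform over $i\le n$ almost surely. The concentration-plus-Borel--Cantelli argument above handles it because the chi-square tails are exponential in $d$.
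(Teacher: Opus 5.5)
Your proof is correct and follows the paper's argument. Both use the a priori bounds $\|\bm{M}_n^{(i)}(z)\|_{\mathrm{op}}\le(\Im z)^{-1}$ and $\|\widetilde{\bm{M}}^{(i)}_{n,i}(z)\|_{\mathrm{op}}\le 2(\Im z)^{-1}$ (eventually a.s., by chi-square concentration and Borel--Cantelli), a Neumann-series bound on the inverses, and the resolvent identity of Lemma~\ref{lem:Lipschitz}, applied pointwise in $\bm{y}$ for the second estimate. The differences are cosmetic: you take $\eta_0>8C_T/\alpha$ to get $C_\Phi=4C_T$ rather than the paper's $C_T(1-c(\eta_0))^{-2}$ with $c(\eta_0)=4C_T/(\alpha\eta_0)$, and you write the resolvent identity in an equivalent push-through form.
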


\begin{proof}
We first show that, almost surely for all sufficiently large \(n\), the matrices \(\gamma_n \bm{T} (\bm{y}_i) \widetilde{\bm{M}}_{n,i}^{(i)}(z)\) and \(\gamma_n \bm{T} (\bm{y}_i) \bm{M}_n^{(i)}(z)\) have operator norm strictly less than one, so that the corresponding inverses are well-defined and uniformly bounded. Since \(\bm{M}_n^{(i)}(z) = \tr_{d-r}^{(p)}(\bm{G}_n^{(i)}(z))\) we have 
\begin{equation}\label{eq:Mi-unif-Lip}
\|\bm{M}_n^{(i)}(z)\|_{\mathrm{op}} = \|\tr_{d-r}^{(p)}(\bm{G}_n^{(i)}(z))\|_{\mathrm{op}} \le \|\bm{G}_n^{(i)}(z)\|_{\mathrm{op}} \le (\Im z)^{-1},
\end{equation}
where the first inequality is the operator-norm contraction in Lemma~\ref{lem:partial_trace_bound}. According to~\eqref{eq:tilde_M} and since \(\|\bm{I}_p\otimes \bm{u}_i\|_{\mathrm{op}}=\|\bm{u}_i\|_2\), we obtain 
\begin{equation*}
\|\widetilde{\bm{M}}_{n,i}^{(i)}(z)\|_{\mathrm{op}} \le \frac{1}{d-r}\| \bm{u}_i\|^2\,\|\bm{G}_n^{(i)}(z)\|_{\mathrm{op}} \le \frac{1}{d-r} \|\bm{u}_i\|^2 (\Im z)^{-1}.
\end{equation*}
Define
\[
\kappa_n\coloneqq \max_{1\le i\le n} \frac{\|\bm{u}_i\|^2}{d-r}. 
\]
Since \(\bm{u}_i \sim \mathcal{N} (\bm{0}_{d-r},\bm{I}_{d-r})\) are i.i.d.\ and \(n /d \to \alpha\), chi-square concentration combined with Borel--Cantelli imply \(\kappa_n\to 1\) almost surely. In particular, \(\kappa_n\le 2\) almost surely for all sufficiently large \(n\). Hence, for all large \(n\),
\begin{equation}\label{eq:Mtilde-unif-Lip}
\max_{1\le i\le n}\|\widetilde{\bm{M}}_{n,i}^{(i)}(z)\|_{\mathrm{op}}\le \frac{2}{\Im z} \quad \mathrm{a.s.}
\end{equation}
From~\eqref{eq:Mi-unif-Lip} and~\eqref{eq:Mtilde-unif-Lip} and \(\|\bm{T} (\bm{y}_i)\|_{\mathrm{op}}\le C_T\) a.s.\ by Assumption~\ref{hyp:preprocessing}, for all large \(n\),
\[
\max_{1\le i\le n} \|\gamma_n \bm{T} (\bm{y}_i) \widetilde{\bm{M}}_{n,i}^{(i)}(z)\|_{\mathrm{op}} \le 2 \gamma_n C_T (\Im z)^{-1} , \qquad
\max_{1\le i\le n} \|\gamma_n \bm{T} (\bm{y}_i) \bm{M}_n^{(i)}(z)\|_{\mathrm{op}} \le \gamma_n C_T (\Im z)^{-1} \qquad \mathrm{a.s.}
\]
Since \(\gamma_n=(d-r)/n\to \alpha^{-1}\), we have \(\gamma_n\le 2/\alpha\) for sufficiently large \(n\). Choose \(\eta_0>0\) such that 
\[
c(\eta_0) \coloneqq\frac{4C_T}{\alpha \eta_0} < 1.
\]
Then for all sufficiently large \(n\) and all \(z\) with \(\Im z > \eta_0\), we have
\[
\max_{1\le i\le n} \|\gamma_n \bm{T} (\bm{y}_i) \widetilde{\bm{M}}_{n,i}^{(i)}(z)\|_{\mathrm{op}} \vee \max_{1\le i\le n} \|\gamma_n \bm{T} (\bm{y}_i) \bm{M}_n^{(i)}(z)\|_{\mathrm{op}} \le c(\eta_0) < 1 \quad \mathrm{a.s.}
\]
Hence, \(\bm{I}_p + \gamma_n \bm{T} (\bm{y}_i) \widetilde{\bm{M}}_{n,i}^{(i)}(z)\) and \(\bm{I}_p + \gamma_n \bm{T} (\bm{y}_i) \bm{M}_n^{(i)}(z)\) are invertible, and by the Neumann series, for all sufficiently large \(n\) and all \(z\) with \(\Im z > \eta_0\),
\begin{equation}\label{eq:bound-Lip}
\max_{1\le i\le n} \left\|(\bm{I}_p + \gamma_n \bm{T} (\bm{y}_i) \widetilde{\bm{M}}_{n,i}^{(i)}(z))^{-1}\right\|_{\mathrm{op}}
\vee
\max_{1\le i\le n}\left\|(\bm{I}_p + \gamma_n \bm{T} (\bm{y}_i) \bm{M}_n^{(i)}(z))^{-1} \right\|_{\mathrm{op}} \le \frac{1}{1-c(\eta_0)} \quad \mathrm{a.s.}
\end{equation}
By Lemma~\ref{lem:Lipschitz} and~\eqref{eq:bound-Lip}, almost surely, for all sufficiently large \(n\), all \(i \in [n]\), and all \(z\) with \(\Im z > \eta_0\), 
\begin{equation} \label{eq:Lipschitz-phi}
\left \|\Phi_{\bm{T} (\bm{y}_i),\gamma_n} \left (\widetilde{\bm{M}}_{n,i}^{(i)} (z)\right)-\Phi_{\bm{T} (\bm{y}_i),\gamma_n} \left(\bm{M}_n^{(i)}(z)\right) \right\|_{\mathrm{op}} \le C_{\Phi} (\eta_0) \left \|\widetilde{\bm{M}}_{n,i}^{(i)}(z)-\bm{M}_n^{(i)}(z) \right\|_{\mathrm{op}} \quad \mathrm{a.s.},
\end{equation}
where \(C_{\Phi} (\eta_0) = C_T (1-c(\eta_0))^{-2}\). 

It remains to prove the second estimate. Since  \(\|\bm{M}_n^{(i)}(z)\|_{\mathrm{op}}\le(\Im z)^{-1}\) and  \(\|\bm{M}_n(z)\|_{\mathrm{op}}\le(\Im z)^{-1}\), and \(\| \bm{T}(\bm{y})\|_{\mathrm{op}} \le C_T\) almost surely, for all sufficiently large \(n\) and almost every \(\bm{y}\),
\[
\left \| \gamma_n\bm{T} (\bm{y}) \bm{M}_n^{(i)}(z) \right\|_{\mathrm{op}} \vee \left\| \gamma_n\bm{T} (\bm{y}) \bm{M}_n(z) \right\|_{\mathrm{op}} \le \frac{2C_T}{\alpha\eta_0} \le c(\eta_0) < 1.
\]
Therefore,
\[
\left \| \left( \bm{I}_p+ \gamma_n \bm{T}(\bm{y})\bm{M}_n^{(i)}(z) \right)^{-1} \right \|_{\mathrm{op}} \vee \left \| \left( \bm{I}_p+ \gamma_n\bm{T}(\bm{y})\bm{M}_n(z) \right)^{-1} \right\|_{\mathrm{op}} \le \frac{1}{1-c(\eta_0)}
\]
almost surely with respect to \(\bm{y}\). Applying Lemma~\ref{lem:Lipschitz} pointwise in \(\bm{y}\), we obtain
\[
\left \| \Phi_{\bm{T}(\bm{y}),\gamma_n} \left( \bm{M}_n^{(i)}(z) \right) - \Phi_{\bm{T}(\bm{y}),\gamma_n} \left( \bm{M}_n(z) \right) \right \|_{\mathrm{op}} \le C_\Phi(\eta_0) \left| \bm{M}_n^{(i)}(z)-\bm{M}_n(z) \right|_{\mathrm{op}}
\]
for almost every \(\bm{y}\). Taking expectations and using \(\left \| \E_{\bm{y}}[\bm{A}(\bm{y})] \right\|_{\mathrm{op}} \le \E_{\bm{y}} \left[ \|\bm{A}(\bm{y})\|_{\mathrm{op}} \right]\), we obtain the desired result. This completes the proof.
\end{proof}

\begin{lem} \label{lem:quadratic_form} 
For every fixed \(z \in \C_+\),
\[
\varepsilon_n(z) \coloneqq \max_{1 \le i \leq n} \lVert \widetilde{\bm{M}}_{n,i}^{(i)} (z) - \bm{M}_n^{(i)}(z) \rVert_{\mathrm{op}}  \to 0, \quad \mathrm{a.s.}
\]
\end{lem}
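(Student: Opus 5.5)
We will prove the statement entrywise, using Gaussian concentration of quadratic forms conditionally on the leave-one-out matrix, and then a union bound over $i\in[n]$ combined with Borel--Cantelli. Fix $z\in\C_+$. For $\mu,\nu\in[p]$, the $(\mu,\nu)$ entry of $\widetilde{\bm{M}}_{n,i}^{(i)}(z)$ equals
\[
\frac{1}{d-r}\bm{u}_i^\top \bm{G}^{(i)}_{\mu\nu}(z)\bm{u}_i ,
\]
where $\bm{G}^{(i)}_{\mu\nu}(z)\in\C^{(d-r)\times(d-r)}$ denotes the $(\mu,\nu)$ block of $\bm{G}_n^{(i)}(z)$. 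The corresponding entry of $\bm{M}_n^{(i)}(z)$ is $\frac{1}{d-r}\Tr \bm{G}^{(i)}_{\mu\nu}(z)$. Since $\bm{P}_n^{(i)}$ is independent of $(\bm{u}_i,\bm{y}_i)$, we may condition on $\bm{G}_n^{(i)}(z)$. Conditionally, $\bm{u}_i\sim\mathcal{N}(\bm{0},\bm{I}_{d-r})$ is independent of the deterministic matrix $\bm{B}\coloneqq \bm{G}^{(i)}_{\mu\nu}(z)$, and $\|\bm{B}\|_{\mathrm{op}}\le\|\bm{G}_n^{(i)}(z)\|_{\mathrm{op}}\le(\Im z)^{-1}$. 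Because $p$ is fixed, the operator norm of a $p\times p$ matrix is bounded by $p$ times its maximal entry, so controlling entries suffices.

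The key estimate is a high-moment bound for centered Gaussian quadratic forms. I would apply the Hanson--Wright inequality to the real and imaginary parts of $\bm{B}$ separately, or alternatively the moment bound $\E|\bm{u}^\top\bm{B}\bm{u}-\Tr\bm{B}|^{2k}\le C_k(\Tr \bm{B}\bm{B}^\ast)^{k}$. Either route gives
\[
\mathbb{P}\left(\left|\tfrac{1}{d-r}\bm{u}_i^\top\bm{B}\bm{u}_i-\tfrac{1}{d-r}\Tr\bm{B}\right|>t \,\Big|\, \bm{G}_n^{(i)}\right)\le 2\exp\left(-c\,(d-r)\min\left(t^2(\Im z)^2,\, t\,\Im z\right)\right),
\]
where we used $\|\bm{B}\|_{\mathrm{F}}^2\le (d-r)(\Im z)^{-2}$. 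Integrating out the conditioning makes this an unconditional bound. A union bound over the $n$ indices $i$ and the $p^2$ entries then gives
\[
\mathbb{P}\bigl(\varepsilon_n(z)>p\,t\bigr)\le 2p^2 n\, e^{-c(d-r)\min(t^2(\Im z)^2,\, t\Im z)} .
\]
Since $n/d\to\alpha$, the right-hand side is summable in $n$ for every fixed $t>0$. Borel--Cantelli therefore gives $\limsup_n \varepsilon_n(z)\le pt$ almost surely. Letting $t\downarrow0$ along a countable sequence yields $\varepsilon_n(z)\to0$ almost surely.

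The only delicate point is bookkeeping rather than substance. The matrices $\bm{G}^{(i)}$ change with $n$ and are not independent across $i$. This does not matter: the tail bound holds for each pair $(i,n)$ separately, and the union bound plus Borel--Cantelli never requires independence across $i$ or across $n$. The Hanson--Wright application also needs care for non-Hermitian complex $\bm{B}$. I will handle this by splitting $\bm{B}=\Re\bm{B}+i\Im\bm{B}$ (entrywise real and imaginary parts). Each part is a real matrix with operator norm at most $(\Im z)^{-1}$, and Hanson--Wright applies to real, not necessarily symmetric, matrices, so the split costs only a constant factor.
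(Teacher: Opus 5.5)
Your proposal is correct and takes essentially the same route as the paper: conditional Hanson--Wright applied blockwise to $\bm{G}^{(i)}_{\mu\nu}(z)$ (with the real/imaginary split), a union bound over $i\in[n]$ and the $p^2$ entries, and Borel--Cantelli. The only difference is cosmetic: you fix the deviation level $t$ and then let $t\downarrow 0$ along a countable sequence, whereas the paper takes the Hanson--Wright parameter of order $\log n$, so a single Borel--Cantelli step gives a vanishing threshold of order $\sqrt{\log n/n}$. Note also that your alternative fixed-order moment bound gives only polynomial tails, not the exponential bound you display, though these still suffice for summability once the order is at least $3$.
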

\begin{proof}
Let \(\mathcal{F}_i \coloneqq \sigma(\{\bm{u}_j\}_{j \neq i}, \{\bm{y}_j\}_{j \neq i})\). By construction, \(\bm{G}_n^{(i)}(z)\) is \(\mathcal{F}_i\)-measurable, while \(\bm{u}_i \sim \mathcal{N}(\bm{0}, \bm{I}_{d-r})\) is independent of \(\mathcal{F}_i\). Conditionally on \(\mathcal{F}_i\), each block \(\bm{G}_{\mu \nu}^{(i)} (z)\) is therefore deterministic. We use the Hanson--Wright inequality; see, for example, \cite[Theorem~6.2.1]{vershyninbook}. Since the matrix \(\bm{G}_{\mu\nu}^{(i)}(z)\) is complex, we apply the real Hanson--Wright inequality separately to the symmetric parts of its real and imaginary parts. This only changes the universal constants and gives, for every \(t\ge0\),
\begin{equation} \label{eq:HW}
\mathbb{P} \left ( \left | \bm{u}_i^\top \bm{G}_{\mu \nu}^{(i)} (z) \bm{u}_i -  \Tr \bm{G}_{\mu \nu}^{(i)}(z)\right| \ge C \left( \lVert \bm{G}_{\mu \nu}^{(i)} (z)\rVert_{\mathrm{F}} \sqrt{t} + \lVert \bm{G}_{\mu \nu}^{(i)} (z)\rVert_{\mathrm{op}}t\right)  \Big | \mathcal{F}_i\right) \le 4 e^{-ct}.
\end{equation}
Indeed, symmetrization and taking real or imaginary parts do not increase either the Frobenius norm or the operator norm. Since \(\bm{P}_n^{(i)}\) is real symmetric, we have \(\|\bm{G}_n^{(i)}(z)\|_{\mathrm{op}} \le (\Im z)^{-1}\). Hence, for every \(\mu,\nu \in [p]\),
\[
\|\bm{G}_{\mu\nu}^{(i)}(z)\|_{\mathrm{op}} \le (\Im z)^{-1},
\qquad
\|\bm{G}_{\mu\nu}^{(i)}(z)\|_{\mathrm{F}} \le \sqrt{d-r}\,(\Im z)^{-1}.
\]
Using
\[
(\widetilde{\bm{M}}_{n,i}^{(i)}(z))_{\mu\nu} = \frac{1}{d-r} \bm{u}_i^\top \bm{G}_{\mu \nu}^{(i)} (z) \bm{u}_i \quad \textnormal{and} \quad (\bm{M}_n^{(i)}(z))_{\mu\nu}=\frac{1}{d-r} \Tr \bm{G}_{\mu \nu}^{(i)}(z),
\]
we deduce from~\eqref{eq:HW} that
\[
\mathbb{P} \left ( \left | (\widetilde{\bm{M}}_{n,i}^{(i)}(z))_{\mu\nu} - (\bm{M}_n^{(i)}(z))_{\mu\nu}\right| \ge \frac{C}{\Im z} \left( \sqrt{\frac{t}{d-r}} + \frac{t}{d-r} \right)  \Big | \mathcal{F}_i\right) \le 4 e^{-ct}.
\]
Taking expectations, then a union bound over \(i\in[n]\) and \((\mu,\nu)\in[p]^2\), and using \(\|\bm{A}\|_{\mathrm{op}}\le p\max_{\mu,\nu}|A_{\mu\nu}|\), we obtain
\[
\mathbb{P} \left ( \varepsilon_n(z) \ge \frac{Cp}{\Im z} \left( \sqrt{\frac{t}{d-r}} + \frac{t}{d-r} \right) \right) \le 4np^2e^{-ct}.
\]
Choose \(t=K\log n\) with \(cK>2\). Since \(d-r\asymp n\), the deterministic threshold tends to zero, while \(4np^2e^{-ct}=4p^2n^{1-cK}\) is summable. The conclusion follows from the Borel--Cantelli lemma.
\end{proof}

\begin{lem} \label{lem:leave-one-out-stability} 
For every fixed \(z \in \C_+\),
\[
\delta_n(z) \coloneqq \max_{1 \le i \leq n} \lVert \bm{M}_n^{(i)}(z) - \bm{M}_n (z)\rVert_{\mathrm{op}}  \to 0, \quad \mathrm{a.s.}
\]
\end{lem}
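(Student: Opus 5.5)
The plan is a deterministic finite-rank perturbation bound, after which the claim holds pointwise for every realization and every $i$ with an explicit $O(1/n)$ rate. By construction,
\[
\bm{P}_n-\bm{P}_n^{(i)}=\frac1n(\bm{I}_p\otimes\bm{u}_i)\bm{T}(\bm{y}_i)(\bm{I}_p\otimes\bm{u}_i)^\top
\]
has rank at most $p$. By the resolvent identity,
\[
\bm{G}_n(z)-\bm{G}_n^{(i)}(z)=-\bm{G}_n(z)\bigl(\bm{P}_n-\bm{P}_n^{(i)}\bigr)\bm{G}_n^{(i)}(z),
\]
so the difference $\bm{\Delta}_i(z)\coloneqq\bm{G}_n(z)-\bm{G}_n^{(i)}(z)$ has rank at most $p$. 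Its operator norm is at most $2/\Im z$, since both resolvents are bounded by $(\Im z)^{-1}$ because $\bm{P}_n$ and $\bm{P}_n^{(i)}$ are real symmetric.

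Next we pass to the partial trace. For $\mu,\nu\in[p]$, each block $(\bm{\Delta}_i)_{\mu\nu}=\bm{E}_\mu^\top\bm{\Delta}_i\bm{E}_\nu$ is a compression of $\bm{\Delta}_i$, where $\bm{E}_\mu$ is the isometric block embedding. Hence it also has rank at most $p$ and operator norm at most $2/\Im z$. Using $|\Tr\bm{B}|\le\operatorname{rank}(\bm{B})\|\bm{B}\|_{\mathrm{op}}$, this gives
\[
\bigl|(\bm{M}_n(z)-\bm{M}_n^{(i)}(z))_{\mu\nu}\bigr|=\frac{1}{d-r}\bigl|\Tr(\bm{\Delta}_i)_{\mu\nu}\bigr|\le\frac{2p}{(d-r)\Im z}.
\]
Bounding the operator norm by $p$ times the maximal entry yields
\[
\delta_n(z)\le\frac{2p^2}{(d-r)\,\Im z}
\]
deterministically and uniformly in $i$. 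Since $d-r\to\infty$ under Assumption~\ref{hyp:prop_limit}, $\delta_n(z)\to0$ surely, and a fortiori almost surely.

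No step is a real obstacle. The only point needing care is that complex (non-Hermitian) resolvent differences require the rank-times-norm trace bound rather than any positivity. Alternatively, one can avoid ranks: write $(\bm{\Delta}_i)_{\mu\nu}$ as a sum of at most $p$ rank-one terms $\bm{a}\bm{b}^\top$, with $\|\bm{a}\|\|\bm{b}\|$ controlled through the factorization of $\bm{T}(\bm{y}_i)$ and $\|\bm{u}_i\|^2/n$. This route would need the bound $\kappa_n\le2$ from Lemma~\ref{lem:stability-phi}, which is why the rank argument is preferable.
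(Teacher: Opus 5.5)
Your proof is correct, and its skeleton is the paper's: the resolvent identity, \(\operatorname{rank}\bigl(\bm{G}_n(z)-\bm{G}_n^{(i)}(z)\bigr)\le p\), and a rank-times-norm bound after taking the normalized partial trace. The paper invokes Lemma~\ref{lem:partial_trace_bound} for that last step, which would give you \(2p/((d-r)\Im z)\) directly instead of \(2p^2/((d-r)\Im z)\).

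The difference is in how \(\|\bm{G}_n(z)-\bm{G}_n^{(i)}(z)\|_{\mathrm{op}}\) is controlled. The paper bounds it by \(\|\bm{G}_n^{(i)}\|_{\mathrm{op}}\|\bm{P}_n-\bm{P}_n^{(i)}\|_{\mathrm{op}}\|\bm{G}_n\|_{\mathrm{op}}\le C_T\|\bm{u}_i\|_2^2/(n(\Im z)^2)\). It must then use \(\|\bm{T}\|_{\mathrm{op}}\le C_T\) and the almost-sure eventual boundedness of \(\max_i\|\bm{u}_i\|_2^2/(d-r)\), which comes from chi-square concentration and Borel--Cantelli. Since \(\|\bm{u}_i\|_2^2/n\approx\gamma_n\), that bound is still of order one, so it gains nothing over your bound \(2/\Im z\). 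In both arguments the factor \(1/n\) comes entirely from the rank and partial-trace step.

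Your version is therefore strictly simpler. It is deterministic, holds for every realization, and uses only that the \(\bm{T}(\bm{y}_i)\) are symmetric.
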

\begin{proof}
Fix \(i\in[n]\). By the resolvent identity,
\begin{equation} \label{eq:res_id}
\bm{G}_n^{(i)}(z) - \bm{G}_n(z)=\bm{G}_n^{(i)}(z)(\bm{P}_n-\bm{P}_n^{(i)})\bm{G}_n(z).
\end{equation}
Since \(\|\bm{G}_n(z)\|_{\mathrm{op}},\|\bm{G}_n^{(i)}(z)\|_{\mathrm{op}}\le(\Im z)^{-1}\) and
\[
\bm{P}_n-\bm{P}_n^{(i)}= \frac{1}{n} (\bm{I}_p\otimes\bm{u}_i)\bm{T}(\bm{y}_i)(\bm{I}_p\otimes\bm{u}_i)^\top,
\]
Assumption~\ref{hyp:preprocessing} gives
\begin{equation}\label{eq:norm_res_id}
\|\bm{G}_n^{(i)}(z)-\bm{G}_n(z)\|_{\mathrm{op}}
\le \frac{C_T}{n(\Im z)^2}\|\bm{u}_i\|_2^2
\qquad\mathrm{a.s.}
\end{equation}
Moreover,~\eqref{eq:res_id} implies
\[
\operatorname{rank}(\bm{G}_n^{(i)}(z)-\bm{G}_n(z))
\le \operatorname{rank}(\bm{P}_n-\bm{P}_n^{(i)})\le p.
\]
Applying Lemma~\ref{lem:partial_trace_bound} with \(m=d-r\), we obtain
\[
\|\bm{M}_n^{(i)}(z)-\bm{M}_n(z)\|_{\mathrm{op}}
\le \frac{p}{d-r}\|\bm{G}_n^{(i)}(z)-\bm{G}_n(z)\|_{\mathrm{op}}
\le \frac{pC_T}{(d-r)n(\Im z)^2}\|\bm{u}_i\|_2^2.
\]
Therefore,
\[
\delta_n(z) \le \frac{pC_T}{n(\Im z)^2} \max_{1\le i\le n}\frac{\|\bm{u}_i\|_2^2}{d-r}.
\]
As in the proof of Lemma~\ref{lem:stability-phi}, the maximum on the right is eventually bounded almost surely. Hence \(\delta_n(z)\to0\) almost surely.
\end{proof}

We next prove the self-averaging estimate using McDiarmid's bounded-differences inequality; see, for example,~\cite[Theorem 2.9.1]{vershyninbook}.

\begin{lem} \label{lem:self_avg}
Fix \(\eta_0>0\) as in Lemma~\ref{lem:stability-phi}. Then, for every fixed \(z \in \C_+\) with \(\Im z > \eta_0\),
\[
\left \| \frac{1}{n} \sum_{i=1}^n \Phi_{\bm{T} (\bm{y}_i), \gamma_n} (\bm{M}_n^{(i)} (z)) - \frac{1}{n} \sum_{i=1}^n  \E_{\bm{y}} \left [  \Phi_{\bm{T}(\bm{y}), \gamma_n} (\bm{M}_n^{(i)} (z))\right ] \right \|_{\mathrm{op}} \to 0 \quad \textnormal{a.s.}
\]
\end{lem}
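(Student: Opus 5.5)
We split the difference into two centered sums and handle each separately. Write
\[
\bm{Z}_i \coloneqq \Phi_{\bm{T}(\bm{y}_i),\gamma_n}(\bm{M}_n^{(i)}(z)) - \E_{\bm{y}}\left[\Phi_{\bm{T}(\bm{y}),\gamma_n}(\bm{M}_n^{(i)}(z))\right].
\]
The difficulty is that \(\bm{M}_n^{(i)}\) depends on all the other samples, so the summands are not independent. First we replace \(\bm{M}_n^{(i)}(z)\) by \(\bm{M}_n(z)\) in both terms. By Lemma~\ref{lem:stability-phi}, each replacement costs at most \(C_\Phi(\eta_0)\,\delta_n(z)\), and \(\delta_n(z)\to 0\) almost surely by Lemma~\ref{lem:leave-one-out-stability}. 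This holds pointwise in \(\bm{y}_i\) almost surely, because the Neumann bound used there is uniform in \(\bm{T}\) with \(\|\bm{T}\|_{\mathrm{op}}\le C_T\). It therefore suffices to show
\[
F_n \coloneqq \frac1n\sum_{i=1}^n \Phi_{\bm{T}(\bm{y}_i),\gamma_n}(\bm{M}_n(z)) - \E_{\bm{y}}\left[\Phi_{\bm{T}(\bm{y}),\gamma_n}(\bm{M}_n(z))\right] \to 0 \quad \text{a.s.}
\]

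\(F_n\) is still not a sum of independent terms, since \(\bm{M}_n\) depends on all the \(\bm{y}_i\). We decouple it with a finite net. The map \(\bm{X}\mapsto \Phi_{\bm{T},\gamma_n}(\bm{X})\) is \(C_\Phi(\eta_0)\)-Lipschitz on the ball \(\mathcal{B}=\{\|\bm{X}\|_{\mathrm{op}}\le \eta_0^{-1}\}\), uniformly in \(\bm{T}\) and in large \(n\), by Lemma~\ref{lem:Lipschitz} and the Neumann bound with \(c(\eta_0)<1\). Moreover, \(\bm{M}_n(z)\in\mathcal{B}\). Given \(\epsilon>0\), take a finite \(\epsilon\)-net \(\mathcal{N}_\epsilon\) of \(\mathcal{B}\subset\C^{p\times p}\); its size depends only on \(p,\eta_0,\epsilon\) and not on \(n\). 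Then
\[
\|F_n\|_{\mathrm{op}} \le 2C_\Phi\epsilon + \max_{\bm{X}\in\mathcal{N}_\epsilon}\left\|\frac1n\sum_{i=1}^n \Phi_{\bm{T}(\bm{y}_i),\gamma_n}(\bm{X}) - \E_{\bm{y}}\Phi_{\bm{T}(\bm{y}),\gamma_n}(\bm{X})\right\|_{\mathrm{op}}.
\]

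For a fixed deterministic \(\bm{X}\), the summands are i.i.d. and bounded entrywise by \(C_T/(1-c(\eta_0))\) times \(\|\bm{X}\|_{\mathrm{op}}\). McDiarmid's inequality, or Hoeffding applied entrywise to the real and imaginary parts, then gives a deviation probability of at most \(4p^2\exp(-cn t^2)\) for each entry. This is summable in \(n\), so Borel--Cantelli yields almost sure convergence for each net point. A union over the finite net and then over \(\epsilon=1/k\), \(k\in\N\), completes the proof.

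The main point to check is that the Lipschitz and invertibility bounds hold uniformly over the whole ball \(\mathcal{B}\) and for almost every \(\bm{y}\). This is exactly the Neumann estimate from the proof of Lemma~\ref{lem:stability-phi}, with \(\|\gamma_n\bm{T}\bm{X}\|_{\mathrm{op}}\le 2C_T/(\alpha\eta_0)<1\). Alternatively, one could apply McDiarmid's inequality directly to the full functional, viewed as a function of the independent pairs \((\bm{u}_j,\bm{y}_j)\). That route requires controlling how changing one sample moves \(\bm{M}_n^{(i)}\), which costs \(O(\|\bm{u}_j\|^2/(n(d-r)))\) and brings in unbounded Gaussian norms. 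The net argument avoids this.
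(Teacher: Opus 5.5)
Your proof is correct, but it handles the dependence between $\bm{M}_n^{(i)}(z)$ and the data differently from the paper.

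The paper keeps the leave-one-out arguments throughout. Conditionally on $(\bm{u}_1,\ldots,\bm{u}_n)$, each summand has mean zero because $\bm{M}_n^{(i)}(z)$ does not depend on $\bm{y}_i$. McDiarmid's inequality is then applied to each entry as a function of $(\bm{y}_1,\ldots,\bm{y}_n)$. The bounded differences of order $1/n$ come from the resolvent identity, the rank bound of Lemma~\ref{lem:partial_trace_bound}, and a Borel--Cantelli event controlling $\max_k\|\bm{u}_k\|_2^2/(d-r)$.

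You instead collapse all arguments to the common matrix $\bm{M}_n(z)$ using Lemma~\ref{lem:leave-one-out-stability}. The pointwise-in-$\bm{y}$ Lipschitz bound you need for this is not literally the statement of Lemma~\ref{lem:stability-phi}, but it follows from its proof, as you note. You then prove a uniform law of large numbers over the fixed compact ball $\{\|\bm{X}\|_{\mathrm{op}}\le\eta_0^{-1}\}\subset\C^{p\times p}$ via a finite net and Hoeffding's inequality. Because this bound is uniform in $\bm{X}$, the dependence of $\bm{M}_n(z)$ on the samples is harmless. You therefore need neither the conditional centering, nor the perturbation estimate for $\bm{M}_n^{(i)}$ under a change of $\bm{y}_k$, nor any Gaussian-norm control beyond what Lemma~\ref{lem:leave-one-out-stability} already uses. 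For the Borel--Cantelli step, apply it for each fixed $t=1/m$ and intersect, or take $t_n=K\sqrt{\log n/n}$ as the paper does.

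Your route is shorter and reuses an existing lemma; the same net-plus-law-of-large-numbers device is what the paper itself uses in Lemma~\ref{lem:free_MDE_convergence}. It even gives convergence uniformly over $\{\Im z>\eta_0\}$ on a single probability-one event. This is because both the net bound and the leave-one-out bound $\delta_n(z)\lesssim n^{-1}\eta_0^{-2}\max_i\|\bm{u}_i\|_2^2/(d-r)$ are uniform in such $z$.

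What your route relies on is that the argument of $\Phi$ lies in a fixed finite-dimensional compact set, so the net size grows with $p$. The paper's bounded-differences argument works directly with the leave-one-out structure and needs no such compactness.
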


\begin{proof}
Set
\[
S_n (z) \coloneqq \frac{1}{n} \sum_{i=1}^n \left ( \Phi_{\bm{T}_i, \gamma_n} (\bm{M}_n^{(i)} (z)) - \E_{\bm{y}} \left [  \Phi_{\bm{T}(\bm{y}), \gamma_n} (\bm{M}_n^{(i)} (z))\right ]\right),
\]
where \(\bm{T}_i=\bm{T}(\bm{y}_i)\). Since \(p\) is fixed and \(\| S_n (z) \|_{\mathrm{op}} \le p \max_{\mu,\nu \in [p]} |(S_n (z))_{\mu \nu}|\), it suffices to prove almost-sure convergence to zero for each fixed entry. Fix \(\mu,\nu\in[p]\), and define
\[
f_n(\bm{Y}) \coloneqq (S_n(z))_{\mu\nu}, 
\]
where \(\bm{Y} = [\bm{y}_1,\ldots,\bm{y}_n]\). We condition throughout on \((\bm{u}_1,\ldots,\bm{u}_n)\). For every \(i\), the matrix \(\bm{M}_n^{(i)}(z)\), is independent of \(\bm{y}_i\). Therefore,
\[
\E \left[ \Phi_{\bm{T}_i,\gamma_n}(\bm{M}_n^{(i)}(z)) \,\middle|\, \sigma(\{\bm{y}_j\}_{j\neq i},\{\bm{u}_j\}_{j\le n}) \right] = \E_{\bm{y}} \left[ \Phi_{\bm{T}(\bm{y}),\gamma_n}(\bm{M}_n^{(i)}(z)) \right].
\]
Taking the \((\mu,\nu)\)-entry, summing over \(i\), and using the tower property gives
\[
\E \left[f_n(\bm{Y}) \mid \bm{u}_1,\ldots,\bm{u}_n\right]=0.
\]

We next prove a bounded-difference estimate. Let
\[
\bm{Y}^{(k)} = [\bm{y}_1,\ldots,\bm{y}_{k-1},\bm{y}_k', \bm{y}_{k+1},\ldots,\bm{y}_n],
\]
where \(\bm{y}_k'\) is an independent copy of \(\bm{y}_k\), and set \(\bm{T}_k' = \bm{T} (\bm{y}_k')\). For \(i\in[n]\), define
\[
\bm{D}_i (\bm{Y}) \coloneqq \Phi_{\bm{T}_i,\gamma_n}(\bm{M}_n^{(i)}(z)) - \E_{\bm{y}} \left[ \Phi_{\bm{T} (\bm{y}), \gamma_n}(\bm{M}_n^{(i)}(z)) \right].
\]
Thus
\[
\begin{split}
\left | f_n(\bm{Y})-f_n(\bm{Y}^{(k)} )\right| & \le \frac{1}{n} \sum_{i=1}^n  \left | \left ( \bm{D}_i(\bm{Y}) - \bm{D}_i(\bm{Y}^{(k)}) \right)_{\mu \nu} \right| \\
& \le \frac{1}{n} \left | \left ( \bm{D}_k(\bm{Y}) - \bm{D}_k(\bm{Y}^{(k)}) \right)_{\mu \nu} \right| +  \frac{1}{n} \sum_{i \neq k} \left | \left ( \bm{D}_i(\bm{Y}) - \bm{D}_i(\bm{Y}^{(k)}) \right)_{\mu \nu} \right| \\
& \le \frac{1}{n} \left \|  \bm{D}_k(\bm{Y}) - \bm{D}_k(\bm{Y}^{(k)})\right \|_{\mathrm{op}} +  \frac{1}{n} \sum_{i \neq k} \left \|  \bm{D}_i(\bm{Y}) - \bm{D}_i(\bm{Y}^{(k)})  \right \|_{\mathrm{op}}.
\end{split}
\]
We first consider the term \(i=k\). Since \(\bm{M}_n^{(k)}(z)\) does not depend on \(\bm{y}_k\), we have \(\bm{M}_n^{(k)}(\bm{Y}; z) = \bm{M}_n^{(k)} (\bm{Y}^{(k)};z)\). The expectation terms in \(\bm{D}_k (\bm{Y})\) and \(\bm{D}_k (\bm{Y}^{(k)})\) therefore cancel, and hence
\[
\bm{D}_k (\bm{Y}) -\bm{D}_k (\bm{Y}^{(k)}) = \Phi_{\bm{T}_k,\gamma_n} \left (\bm{M}_n^{(k)}(z) \right) - \Phi_{\bm{T}_k',\gamma_n} \left (\bm{M}_n^{(k)}(z)\right).
\]
For all sufficiently large \(n\), \(\gamma_n\le\frac{2}{\alpha}\). Moreover, \( \|\bm{M}_n^{(k)}(z)\|_{\mathrm{op}} \le (\Im z)^{-1} \le \eta_0^{-1}\). Thus, for every matrix \(\bm{T}\) satisfying \(\|\bm{T}\|_{\mathrm{op}} \le C_T\),
\[
\|\gamma_n \bm{T} \bm{M}_n^{(k)}(z)\|_{\mathrm{op}} \le \frac{2C_T}{\alpha\eta_0} \eqqcolon c_0<1.
\]
It then follows that
\[
\left \| \left( \bm{I}_p+\gamma_n\bm{T} \bm{M}_n^{(k)}(z) \right)^{-1} \right\|_{\mathrm{op}} \le \frac{1}{1-c_0}.
\]
Consequently,
\[
\left \| \Phi_{\bm{T},\gamma_n} \left (\bm{M}_n^{(k)}(z) \right) \right \|_{\mathrm{op}} \le \frac{C_T}{\eta_0(1-c_0)}.
\]
Applying this estimate to both \(\bm{T}_k\) and \(\bm{T}_k'\), we obtain
\[
\left \| \bm{D}_k (\bm{Y}) - \bm{D}_k(\bm{Y}^{(k)}) \right \|_{\mathrm{op}} \le \frac{2C_T}{\eta_0(1-c_0)} \eqqcolon C_0.
\]
After multiplication by the prefactor \(1/n\), the contribution of the term \(i=k\) to \(\lvert f_n(\bm{Y}) - f_n(\bm{Y}^{(k)})\rvert\) is therefore bounded by \(C_0/n\).

Now consider \(i\neq k\). We write
\[
\bm{P}_n^{(i)} (\bm{Y}) \coloneqq \frac{1}{n} \sum_{j \neq i} \bm{T}_j \otimes \bm{u}_j \bm{u}_j^\top, \qquad \bm{P}_n^{(i)} (\bm{Y}^{(k)}) \coloneqq \frac{1}{n} \sum_{j \neq i,k} \bm{T}_j \otimes \bm{u}_j \bm{u}_j^\top + \frac{1}{n}  \bm{T}_k' \otimes \bm{u}_k \bm{u}_k^\top.
\]
Thus
\[
\bm{P}_n^{(i)} (\bm{Y})  - \bm{P}_n^{(i)} (\bm{Y}^{(k)}) = \frac{1}{n} (\bm{T}_k-\bm{T}_k')\otimes \bm{u}_k \bm{u}_k^\top .
\]
Since \(\bm{T}_k\) and \(\bm{T}_k'\) are \(p\times p\) matrices and \(\|\bm{T}_k\|,\|\bm{T}_k'\|\le C_T\), we have
\[
\operatorname{rank} \left (\bm{P}_n^{(i)} (\bm{Y})  - \bm{P}_n^{(i)} (\bm{Y}^{(k)}) \right) \le p, \qquad \left \| \bm{P}_n^{(i)} (\bm{Y})  - \bm{P}_n^{(i)} (\bm{Y}^{(k)}) \right \|_{\mathrm{op}} \le \frac{2C_T}{n}\|\bm{u}_k\|_2^2 .
\]
Let \(\bm{G}_n^{(i)}(\bm{Y};z)\) and \(\bm{G}_n^{(i)}(\bm{Y}^{(k)};z)\) denote the corresponding leave-one-out resolvents. By the resolvent identity,
\[
\bm{G}_n^{(i)}(\bm{Y};z) - \bm{G}_n^{(i)}(\bm{Y}^{(k)};z) = - \bm{G}_n^{(i)}(\bm{Y};z) \bm{A}_k \bm{G}_n^{(i)}(\bm{Y}^{(k)};z),
\]
where \(\bm{A}_k \coloneqq \bm{P}_n^{(i)} (\bm{Y})  - \bm{P}_n^{(i)} (\bm{Y}^{(k)})\). Since \(\Im z>\eta_0\),
\[
\|\bm{G}_n^{(i)}(\bm{Y};z)\|_{\mathrm{op}}, \, \|\bm{G}_n^{(i)}(\bm{Y}^{(k)};z)\|_{\mathrm{op}} \le \frac{1}{\Im z} \le \frac{1}{\eta_0}.
\]
Therefore, 
\[
\left \| \bm{G}_n^{(i)}(\bm{Y};z) - \bm{G}_n^{(i)}(\bm{Y}^{(k)};z) \right \|_{\mathrm{op}} \le \frac{2C_T}{n \eta_0^2} \|\bm{u}_k\|_2^2.
\]
Applying Lemma~\ref{lem:partial_trace_bound}, we find
\[
\begin{split}
\left\| \bm{M}_n^{(i)}(\bm{Y};z) - \bm{M}_n^{(i)}(\bm{Y}^{(k)};z) \right\|_{\mathrm{op}} & \le \frac{1}{d-r} \operatorname{rank} \left ( \bm{G}_n^{(i)}(\bm{Y};z) - \bm{G}_n^{(i)}(\bm{Y}^{(k)};z) \right) \left \| \bm{G}_n^{(i)}(\bm{Y};z) - \bm{G}_n^{(i)}(\bm{Y}^{(k)};z) \right \|_{\mathrm{op}} \\
& \le \frac{C}{(d-r) n \eta_0^2}\|\bm{u}_k\|_2^2.
\end{split}
\]
Define the event
\[
\Omega_n \coloneqq \left \{ \max_{1\le k\le n} \frac{\|\bm{u}_k\|_2^2}{d-r} \le C_u \right\},
\]
where \(C_u>0\) is chosen large enough. Since \(\bm{u}_k\sim\mathcal{N}(0,\bm{I}_{d-r})\) and \(d-r \asymp n\), standard \(\chi^2\) concentration gives
\[
\sum_{n=1}^\infty \mathbb{P} (\Omega_n^c)<\infty.
\]
Hence \(\Omega_n\) holds eventually almost surely by Borel--Cantelli. On \(\Omega_n\), for all \(i\neq k\),
\[
\left\| \bm{M}_n^{(i)}(\bm{Y};z) - \bm{M}_n^{(i)}(\bm{Y}^{(k)};z) \right\|_{\mathrm{op}} \le \frac{C}{n \eta_0^2}.
\]
Using the Lipschitz estimate for \(\Phi\) from Lemma~\ref{lem:Lipschitz}, we get, for \(i\neq k\),
\[
\begin{split}
\|\bm{D}_i(\bm{Y})-\bm{D}_i(\bm{Y}^{(k)})\|_{\mathrm{op}} & \le \left \| \Phi_{\bm{T}_i,\gamma_n}(\bm{M}_n^{(i)}(\bm{Y};z)) - \Phi_{\bm{T}_i,\gamma_n}(\bm{M}_n^{(i)}(\bm{Y}^{(k)};z)) \right\|_{\mathrm{op}} \\
& \quad+ \left \| \E_{\bm{y}} \left[ \Phi_{\bm{T}(\bm{y}),\gamma_n} (\bm{M}_n^{(i)}(\bm{Y};z))  \right ]- \E_{\bm{y}} \left [ \Phi_{\bm{T}(\bm{y}),\gamma_n} (\bm{M}_n^{(i)}(\bm{Y}^{(k)};z)) \right ] \right \|_{\mathrm{op}} \\
& \le 2C_\Phi(\eta_0) \left \| \bm{M}_n^{(i)}(\bm{Y};z) - \bm{M}_n^{(i)}(\bm{Y}^{(k)};z) \right\|_{\mathrm{op}} \\
& \le \frac{C}{n}.
\end{split}
\]
Thus, on \(\Omega_n\),
\[
|f_n(\bm{Y})-f_n(\bm{Y}^{(k)})|   \le \frac{1}{n} \left( C_\Phi(\eta_0) + \sum_{i\neq k}\frac{C_0}{n} \right)  \le \frac{C}{n}.
\]

All the preceding bounded-difference estimates are deterministic on \(\Omega_n\) and uniform over the possible values of the coordinates \(\bm{y}_1,\ldots,\bm{y}_n\) for which \(\|\bm{T}(\bm{y}_i)\|_{\mathrm{op}}\le C_T\). Thus, conditional on \((\bm{u}_1,\ldots,\bm{u}_n)\), on the event \(\Omega_n\), the function \(f_n\) satisfies the bounded-difference condition with constants \(c_k=C/n\) for every \(k \in [n]\). Since \(f_n\) is complex-valued, we apply McDiarmid's inequality (see e.g.~\cite[Theorem 2.9.1]{vershyninbook}) separately to its real and imaginary parts. Using \(\E[f_n(\bm{Y})\mid \bm{u}_1,\ldots,\bm{u}_n]=0\) and \(\sum_{k=1}^n c_k^2\le \frac{C}{n}\), we obtain, on \(\Omega_n\),
\[
\mathbb{P} \left( |f_n(\bm{Y})|\ge t \,\middle|\, \bm{u}_1,\ldots,\bm{u}_n \right)  \le 4\exp(-c n t^2).
\]
Since \(\Omega_n\) is measurable with respect to \((\bm{u}_1,\ldots,\bm{u}_n)\), integrating the conditional estimate gives
\[
\mathbb{P} \left( |f_n(\bm{Y})|\ge t, \Omega_n \right)  \le 4\exp(-c n t^2).
\]
Choose \(t_n=K\sqrt{\log n/n}\) where \(K>0\) is sufficiently large so that \(cK^2 >1\). Then
\[
\sum_{n=1}^\infty \mathbb{P} \left( |f_n(\bm{Y})|\ge t_n, \Omega_n \right) <\infty.
\]
Since \(\sum_{n=1}^\infty \mathbb{P}(\Omega_n^\mathrm{c})<\infty\), the Borel--Cantelli lemma yields
\[
f_n(\bm{Y})\to0 \qquad\text{a.s.}
\]
This holds for each of the finitely many entries\((\mu,\nu)\in[p]^2\). Intersecting the corresponding probability-one events, we conclude that \(\|S_n(z)\|_{\mathrm{op}}\) vanishes almost surely.
\end{proof}

We are now in a position to conclude the proof of Proposition~\ref{prop:bulk}.

\begin{proof}[Proof of Proposition~\ref{prop:bulk}]
Choose \(\eta_0>0\) as in Lemma~\ref{lem:stability-phi}. Define
\[
\mathcal{H}_{\eta_0} \coloneqq \{z \in\C_+ \colon \Im z>\eta_0\}.
\]
Let
\[
\Omega_T \coloneqq \bigcap_{n\geq1} \bigcap_{1\leq i\leq n} \left \{ \|\bm{T}(\bm{y}_i)\|_{\mathrm{op}} \leq C_T \right\}.
\]
By Assumption~\ref{hyp:preprocessing} and countable subadditivity, \(\mathbb{P}(\Omega_T)=1\). Let also
\[
\Omega_u \coloneqq \left \{ \max_{1\leq i\leq n} \frac{ \|\bm{u}_i\|_2^2}{d-r}\leq 2 \text{ for all sufficiently large }n \right \}.
\]
The chi-square concentration estimate and the Borel--Cantelli lemma imply \(\mathbb{P}(\Omega_u)=1\). The proof of Lemma~\ref{lem:stability-phi} shows that on
\(\Omega_T\cap\Omega_u\), its conclusions hold, for all sufficiently large \(n\), simultaneously for every \(i\in[n]\) and every \(z\in\mathcal{H}_{\eta_0}\). Let \(\mathcal{D} \subset \mathcal{H}_{\eta_0}\) be a countable dense subset. For every \(z\in\mathcal{D}\),let \(\Omega_z^{\mathrm{qf}}\), \(\Omega_z^{\mathrm{loo}}\), and \(\Omega_z^{\mathrm{sa}}\) denote probability-one events on which the conclusions of Lemmas~\ref{lem:quadratic_form},~\ref{lem:leave-one-out-stability}, and~\ref{lem:self_avg}, respectively, hold at \(z\). Define
\[ 
\Omega_0 \coloneqq \Omega_T\cap\Omega_u \cap \bigcap_{z\in\mathcal{D}} \left( \Omega_z^{\mathrm{qf}} \cap \Omega_z^{\mathrm{loo}} \cap \Omega_z^{\mathrm{sa}} \right). 
\]
Since \(\mathcal{D}\) is countable, \(\mathbb{P}(\Omega_0)=1\). In the remainder of the proof, we fix an outcome in \(\Omega_0\). All subsequent limits and \(o(1)\) terms are understood for this fixed outcome.

Fix \(z\in\mathcal{D}\). By Lemmas~\ref{lem:stability-phi} and~\ref{lem:quadratic_form},
\[
\| \mathcal{E}_{1,n} (z) \|_{\mathrm{op}} \le C_{\Phi} (\eta_0) \frac{1}{n} \sum_{i=1}^n \|\widetilde{\bm{M}}_{n,i}^{(i)} (z) - \bm{M}_n^{(i)} (z)\|_{\mathrm{op}} \le C_{\Phi} (\eta_0) \varepsilon_n (z) = o(1).
\]
Moreover, Lemma~\ref{lem:self_avg} gives
\[
\| \mathcal{E}_{2,n} (z) \|_{\mathrm{op}}\to 0.
\]
Finally, Lemmas~\ref{lem:stability-phi} and~\ref{lem:leave-one-out-stability} yield
\[
\left \| \mathcal{E}_{3,n} (z) \right\|_{\mathrm{op}} \le C_{\Phi} (\eta_0) \frac{1}{n} \sum_{i=1}^n \|\bm{M}_n^{(i)}(z)-\bm{M}_n(z)\|_{\mathrm{op}}  \le C_{\Phi} (\eta_0) \delta_n(z) = o(1).
\]
Consequently, combining these estimates in~\eqref{eq:approximate_MDE}, for every \(z\in\mathcal{D}\), we obtain
\begin{equation}\label{eq:phi-expansion2}
z \bm{M}_n(z) + \bm{I}_p = \E_{\bm{y}} \left[ \Phi_{\bm{T}(\bm{y}),\gamma_n}\left(\bm{M}_n(z)\right) \right] + o(1) .
\end{equation}

We next identify all subsequential limits. For every \(n\), the function \(\bm{M}_n(z) = \tr_{d-r}^{(p)} (\bm{G}_n (z))\) is analytic on \(\C_+\). Since \(\bm{P}_n\) is Hermitian, \(\|\bm{G}_n(z)\|_{\mathrm{op}} \le (\Im z )^{-1}\), and hence \(\|\bm{M}_n(z)\|_{\mathrm{op}} \le (\Im z)^{-1}\). Thus, the family \(\{\bm{M}_n(z)\}_n\) is locally uniformly bounded on \(\mathbb{C}_+\). By Montel's theorem, applied entrywise, every subsequence admits a further subsequence, still denoted by \(n_k\), and an analytic function \(\widetilde{\bm{M}} \colon \C_+ \to \C^{p \times p}\) such that \(\bm{M}_{n_k} (z) \to \widetilde{\bm{M}} (z)\) locally uniformly on \(\C_+\). In particular, \(\| \widetilde{\bm{M}} (z) \|_{\mathrm{op}} \le (\Im z)^{-1}\).

Since \(\gamma_{n_k} \to \alpha^{-1}\) by Assumption~\ref{hyp:prop_limit}, for all sufficiently large \(k\), \(\gamma_{n_k}\leq 2\alpha^{-1}\). Therefore, for \(z \in \mathcal{D}\),
\[
\| \gamma_{n_k} \bm{T} (\bm{y}) \bm{M}_{n_k} (z)\|_{\mathrm{op}} \le \frac{2C_T}{\alpha \Im z} \le \frac{2 C_T}{\alpha \eta_0} < 1, \quad \mathbb{P}_{\bm{y}}\text{-a.s.}
\]
for all sufficiently large \(k\). The corresponding inverses are therefore uniformly bounded. Passing to the limit in\eqref{eq:phi-expansion2}, using dominated convergence, gives
\begin{equation} \label{eq:limiting-phi-equation-dense} 
z \widetilde{\bm{M}}(z)+\bm{I}_p = \E_{\bm{y}} \left[ \Phi_{\bm{T}(\bm{y}),\alpha^{-1}} \left (\widetilde{\bm{M}}(z)\right) \right], \quad z\in\mathcal{D}. 
\end{equation}
Define, on \(\mathcal{H}_{\eta_0}\), the matrix-valued function 
\[ 
\bm{F}(z) \coloneqq z \widetilde{\bm{M}}(z)+\bm{I}_p - \E_{\bm{y}} \left[ \Phi_{\bm{T}(\bm{y}),\alpha^{-1}} \left(\widetilde{\bm{M}}(z)\right) \right]. 
\] 
The bound 
\[ 
\left\| \frac{1}{\alpha} \bm{T}(\bm{y})\widetilde{\bm{M}}(z) \right\|_{\mathrm{op}} \leq \frac{C_T}{\alpha\eta_0} <1 
\] 
shows that the expectation in the definition of \(\bm{F}\) is well-defined and analytic on \(\mathcal{H}_{\eta_0}\). Hence \(\bm{F}\) is analytic there. By~\eqref{eq:limiting-phi-equation-dense}, \( \bm{F}(z)=\bm{0}\) for every \(z\in\mathcal{D}\). Since \(\mathcal{D}\) is dense in \(\mathcal{H}_{\eta_0}\), the identity theorem, applied entrywise, implies \(\bm{F}(z)=\bm{0}\) for every \(z \in \mathcal{H}_{\eta_0}\). Thus 
\begin{equation} \label{eq:limiting-phi-equation} 
z\widetilde{\bm{M}}(z)+\bm{I}_p = \E_{\bm{y}} \left[ \Phi_{\bm{T}(\bm{y}),\alpha^{-1}} \left(\widetilde{\bm{M}}(z)\right) \right] ,
\end{equation} 
for every \(z\in\mathcal{H}_{\eta_0}\). Using \(\bm{X} \left( \bm{I}_p+\bm{T}\bm{X} \right)^{-1} = \left( \bm{I}_p+\bm{X}\bm{T} \right)^{-1} \bm{X}\),~\eqref{eq:limiting-phi-equation} becomes 
\[ 
\bm{I}_p = \left[ -z\bm{I}_p + \E_{\bm{y}} \left[ \bm{T}(\bm{y}) \left( \bm{I}_p + \frac{1}{\alpha} \widetilde{\bm{M}}(z)\bm{T}(\bm{y}) \right)^{-1} \right] \right] \widetilde{\bm{M}}(z). 
\] 
Therefore \(\widetilde{\bm{M}}(z)\) has a left inverse. Since it is a square matrix, it is invertible, and 
\[ 
\widetilde{\bm{M}}(z)^{-1} = -z\bm{I}_p + \E_{\bm{y}} \left[ \bm{T}(\bm{y}) \left( \bm{I}_p + \frac{1}{\alpha} \widetilde{\bm{M}}(z)\bm{T}(\bm{y}) \right)^{-1} \right]. 
\] 
Hence \(\widetilde{\bm{M}}\) satisfies the matrix Dyson equation~\eqref{eq:MDE} on \(\mathcal{H}_{\eta_0}\).

We next verify that \(\widetilde{\bm{M}}\) belongs to the normalized matrix-valued Herglotz class. Since 
\[ 
\Im\bm{G}_n(z) = (\Im z)\bm{G}_n(z)\bm{G}_n(z)^\ast \succeq0 
\] 
and the normalized partial trace preserves positive semidefiniteness, \(\Im\bm{M}_n(z)\succeq0\). Passing to the locally uniform limit gives \( \Im\widetilde{\bm{M}}(z)\succeq0\) for every \(z\in\C_+\). It remains to prove the normalization. For sufficiently large \(\eta\), 
\[ 
\left\| \frac{1}{\alpha} \bm{T}(\bm{y})\widetilde{\bm{M}}(i\eta) \right\|_{\mathrm{op}} \leq \frac{C_T}{\alpha\eta} \leq \frac{1}{2} \quad \mathbb{P}_{\bm{y}}\text{-a.s.}
\] 
Therefore, 
\[ 
\left\| \left( \bm{I}_p + \frac{1}{\alpha} \bm{T}(\bm{y})\widetilde{\bm{M}}(i\eta) \right)^{-1} \right\|_{\mathrm{op}} \leq2 \quad \mathbb{P}_{\bm{y}}\text{-a.s.} 
\] 
Using~\eqref{eq:limiting-phi-equation} and the inequality \(\|\E_{\bm{y}} [\bm{A} (\bm{y})]\|_{\mathrm{op}} \leq\E_{\bm{y}}[ \|\bm{A}(\bm{y})\|_{\mathrm{op}}]\), we obtain
\[ 
\left\| i\eta\widetilde{\bm{M}}(i\eta)+\bm{I}_p \right\|_{\mathrm{op}} \leq \E_{\bm{y}} \left[ \|\bm{T}(\bm{y})\|_{\mathrm{op}} \|\widetilde{\bm{M}}(i\eta)\|_{\mathrm{op}} \left\| \left( \bm{I}_p + \frac{1}{\alpha} \bm{T}(\bm{y})\widetilde{\bm{M}}(i\eta) \right)^{-1} \right\|_{\mathrm{op}} \right] \leq \frac{2C_T}{\eta}.
\] 
Hence, \(\lim_{\eta \to \infty} \| i\eta\widetilde{\bm{M}}(i\eta)+\bm{I}_p \|_{\mathrm{op}} = 0\) and therefore \(\lim_{\eta\to\infty} i\eta\widetilde{\bm{M}}(i\eta) = -\bm{I}_p\). Thus \(\widetilde{\bm{M}}\) belongs to the normalized matrix-valued Herglotz class. By Corollary~\ref{cor:MDE_recognition}, \(\widetilde{\bm{M}}\) coincides with the unique normalized Herglotz solution \(\bm{M}_\alpha\) of~\eqref{eq:MDE} on all of \(\C_+\). We have shown that every subsequence of \((\bm{M}_n)\) admits a further subsequence converging locally uniformly to the same deterministic function \(\bm{M}_\alpha\). Therefore, 
\[ 
\bm{M}_n \to \bm{M}_\alpha 
\] 
locally uniformly on \(\C_+\). Finally, Proposition~\ref{prop:existence_uniqueness} gives a probability measure \(\mu_\alpha\) whose Stieltjes transform is 
\[ 
m_{\mu_\alpha}(z) = \frac{1}{p}\Tr\bm{M}_\alpha(z). 
\] 
For every \(z\in\C_+\), 
\[ 
m_n(z) = \frac{1}{p}\Tr \bm{M}_n(z) \to m_{\mu_\alpha}(z). 
\] 
Since \(m_n\) is the Stieltjes transform of \(\hat{\mu}_{\bm{P}_n}\), the continuity theorem for Stieltjes transforms yields
\[ 
\hat{\mu}_{\bm{P}_n} \to \mu_\alpha \quad \text{a.s.} 
\] 
Moreover, Lemma~\ref{lem:continuation_off_support} shows that \(\mu_\alpha\) is compactly supported.
\end{proof}

We now prove the first part of Theorem~\ref{thm:bulk}. The proof of the second part is postponed to the end of the next section.

\begin{proof}[Proof of the first assertion of Theorem~\ref{thm:bulk}]
As shown in Subsection~\ref{subsection:preliminaries}, \(\bm{D}_n\) and \(\bm{D}_n'\) have the same eigenvalues. It therefore suffices to compare \(\bm{D}_n'\) with the block embedding of \(\bm{P}_n\). Define the \(pd\times pd\) matrix
\[
\bm{P}'_n \coloneqq
\begin{pmatrix}
\bm{0}_{pr\times pr} & \bm{0} \\
\bm{0} & \bm{P}_n
\end{pmatrix}.
\]
The eigenvalues of \(\bm{P}_n'\) consist of the eigenvalues of \(\bm{P}_n\), together with \(pr\) additional zero eigenvalues. Thus
\[
\hat{\mu}_{\bm{P}'_n} = \left( 1-\frac{r}{d} \right) \hat{\mu}_{\bm{P}_n} + \frac{r}{d}\delta_0.
\]
Since \(r\) remains fixed and \(\hat{\mu}_{\bm{P}_n}\) converges weakly almost surely to \(\mu_\alpha\) by Proposition~\ref{prop:bulk}, it follows that 
\[ 
\hat{\mu}_{\bm{P}'_n} \Rightarrow \mu_\alpha \quad \mathrm{a.s.} 
\]
Moreover, 
\[
\bm{D}_n' - \bm{P}'_n =
\begin{pmatrix}
\bm{A}_n & \bm{Q}_n^\top \\
\bm{Q}_n & \bm{0}
\end{pmatrix},
\]
and hence \(\mathrm{rank} (\bm{D}_n' - \bm{P}'_n) \leq 2pr\). Let \(d_{\mathrm{KS}}\) denote the Kolmogorov--Smirnov distance between probability measures. By the rank inequality for empirical spectral distributions (see e.g.~\cite[Theorem~A.43]{bai2010}),
\[
d_{\mathrm{KS}} \left( \hat{\mu}_{\bm{D}'_n}, \hat{\mu}_{\bm{P}'_n} \right) \leq \frac{1}{pd} \mathrm{rank} (\bm{D}_n' - \bm{P}'_n) \le \frac{2r}{d} ,
\]
which vanishes as \(d \to \infty\). Let \(d_{\mathrm{L}}\) denote the L\'evy distance. By e.g.~\cite[Theorem~B.18]{bai2010}, we have
\[
d_{\mathrm{L}}\left( \hat{\mu}_{\bm{D}'_n}, \hat{\mu}_{\bm{P}'_n} \right) \le d_{\mathrm{KS}} \left( \hat{\mu}_{\bm{D}'_n}, \hat{\mu}_{\bm{P}'_n} \right).
\]
The triangle inequality therefore gives 
\[ 
d_{\mathrm{L}} ( \hat{\mu}_{\bm{D}'_n}, \mu_\alpha) \leq d_{\mathrm{L}} ( \hat{\mu}_{\bm{D}'_n}, \hat{\mu}_{\bm{P}'_n}) + d_{\mathrm{L}} ( \hat{\mu}_{\bm{P}'_n}, \mu_\alpha) \le \frac{2r}{d} + d_{\mathrm{L}} ( \hat{\mu}_{\bm{P}'_n}, \mu_\alpha),
\] 
and since the L\'evy distance metrizes weak convergence on \(\R\), \(d_{\mathrm{L}} (\hat{\mu}_{\bm{P}'_n}, \mu_\alpha) \to 0\) almost surely. Hence,
\[ 
\hat{\mu}_{\bm{D}'_n} \Rightarrow \mu_\alpha \quad\text{a.s.} 
\] 
The same conclusion holds for \(\hat{\mu}_{\bm{D}_n}\) since \(\bm{D}_n\) and \(\bm{D}'_n\) have the same eigenvalues.
\end{proof}

\section{The upper spectral edge} \label{section:upper_edge}

The aim of this section is to prove Proposition~\ref{prop:upper_edge}, namely that \(\bm{P}_n\) has no eigenvalues asymptotically above the upper edge of its limiting bulk spectrum. We proceed in two steps. First, we compare \(\bm{P}_n\) with an associated free Gaussian operator by applying the spectral comparison results of Bandeira, Cipolloni, Schr\"{o}der, and van Handel~\cite{bandeira2026}, conditionally on the preprocessing variables. Second, we derive a finite-dimensional variational upper bound for the spectral edge of the free model using a Fock-space argument inspired by Lehner~\cite{lehner1999}, and evaluate this bound at a solution of the limiting matrix Dyson equation.

\subsection{Comparison with an associated free Gaussian model}

If the preprocessing map is positive semidefinite, then \(\bm{P}_n\) is a sample covariance matrix, and the quadratic spectral comparison theorem~\cite[Theorem~2.5]{bandeira2026} can be applied directly to the associated Gaussian data matrix. Under Assumption~\ref{hyp:preprocessing}, however, the matrix \(\bm{T}(\bm{y})\) may have both positive and negative eigenvalues. In that case, \(\bm{P}_n\) has a signed Gram matrix representation \(\bm{P}_n = \bm{X}_n \bm{J}_n \bm{X}_n^\top\), where \(\bm{J}_n\) is a self-adjoint involution. Since such an indefinite quadratic form is not directly covered by the quadratic comparison theorem, we pass to a self-adjoint linearization. The resulting matrix is affine in the Gaussian variables and therefore falls within the scope of~\cite[Theorem~2.2]{bandeira2026}.

For a symmetric matrix \(\bm{A} \in \sym_p(\R)\), define
\[
|\bm{A}| \coloneqq(\bm{A}^2)^{1/2}.
\]
We also define
\[
\operatorname{sgn} (\bm{A}) \coloneqq \mathbf{1}_{[0,\infty)} (\bm{A}) - \mathbf{1}_{(-\infty,0)} (\bm{A}).
\]
Thus, zero eigenvalues are assigned sign \(+1\). With this convention, \(\operatorname{sgn}(\bm{A})^\top=\operatorname{sgn}(\bm{A})\), \(\operatorname{sgn}(\bm{A})^2 = \bm{I}_p\), and \(|\bm{A}|^{1/2} \operatorname{sgn}(\bm{A}) |\bm{A}|^{1/2} = \bm{A}\). 

Define
\[
\bm{X}_n \coloneqq
\begin{bmatrix}
\widetilde{\bm{X}}_1 & \cdots & \widetilde{\bm{X}}_n
\end{bmatrix}
\in \R^{p(d-r)\times pn},
\]
where
\[
\widetilde{\bm{X}}_i \coloneqq \frac{1}{\sqrt{n}} \left( |\bm{T} (\bm{y}_i)|^{1/2} \otimes \bm{u}_i \right) \in\R^{p(d-r)\times p}.
\]
Moreover, let
\[
\bm{J}_n \coloneqq \operatorname{diag} \left( \mathrm{sgn} (\bm{T}(\bm{y}_1)),\ldots, \mathrm{sgn} (\bm{T}(\bm{y}_n)) \right) \in\R^{pn\times pn}.
\]
By construction, \(\bm{J}_n = \bm{J}_n^\top\) and \(\bm{J}_n^2 = \bm{I}_{pn}\). Therefore,
\begin{equation} \label{eq:signed_gram_representation}
\bm{P}_n=\bm{X}_n \bm{J}_n \bm{X}_n^\top .
\end{equation}

Write \( \bm{u}_i = \sum_{k=1}^{d-r} g_{ik} \bm{e}_k\) with \(g_{ik}\stackrel{\mathrm{i.i.d.}}{\sim}\mathcal{N}(0,1)\), and define
\[
\bm{A}_{ik} \coloneqq  \frac{1}{\sqrt{n}} \left(|\bm{T} (\bm{y}_i)|^{1/2}\otimes \bm{e}_k \right) \left( \bm{e}_i\otimes\bm{I}_p \right)^\top.
\]
Then
\[
\bm{X}_n = \sum_{i=1}^n\sum_{k=1}^{d-r} g_{ik}\bm{A}_{ik}.
\]
Let \(\mathcal{F}_n \coloneqq \sigma (\bm{y}_1,\ldots,\bm{y}_n)\). Conditionally on \(\mathcal{F}_n\), the coefficient matrices \((\bm{A}_{ik})_{i,k}\) and \(\bm{J}_n\) are deterministic, whereas the random variables \((g_{ik})_{i,k}\) remain independent standard Gaussians.

Let \((s_{ik})_{i,k}\) be a free semicircular family in a tracial \(C^\ast\)-probability space \((\mathcal{A},\tau)\), with \(\tau\) faithful, and define the associated free Gaussian matrix
\[
\bm{X}_n^{\mathrm{free}} \coloneqq \sum_{i=1}^n \sum_{k=1}^{d-r} \bm{A}_{ik} \otimes s_{ik}.
\]
Conditionally on \(\mathcal{F}_n\), define the associated free signed covariance operator by
\begin{equation} \label{eq:signed_free_model}
\bm{P}_n^{\mathrm{free}} \coloneqq \bm{X}_n^{\mathrm{free}} \left( \bm{J}_n \otimes 1_{\mathcal{A}} \right) \left( \bm{X}_n^{\mathrm{free}} \right)^\ast.
\end{equation}
Thus, conditionally on \(\mathcal{F}_n\), the operator \(\bm{P}_n^{\mathrm{free}}\) is a fixed element of \(M_{p(d-r)}(\mathcal{A}) \). Since \(\bm{J}_n\otimes1_{\mathcal{A}}\) is self-adjoint, \(\bm{P}_n^{\mathrm{free}}\) is self-adjoint, although it need not be positive. 

Our first objective is to compare the largest eigenvalue of \(\bm{P}_n\) with the upper spectral edge 
\[
\lambda_{+,n}^{\mathrm{free}} \coloneqq \sup \operatorname{sp}(\bm{P}_n^{\mathrm{free}})
\]
of the associated free model.

\begin{lem} \label{lem:signed_free_comparison}
There exists a deterministic sequence \(\varepsilon_n\downarrow0\) such that
\[
\mathbb{P} \left( \lambda_1(\bm{P}_n) > \lambda_{+,n}^{\mathrm{free}} + \varepsilon_n \right) \to 0.
\]
\end{lem}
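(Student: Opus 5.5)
We will work conditionally on \(\mathcal{F}_n\) and pass to a self-adjoint linearization that is affine in the Gaussian variables \((g_{ik})\). Concretely, consider the \((p(d-r)+pn)\)-dimensional Hermitian matrix
\[
\bm{L}_n \coloneqq \begin{pmatrix} \bm{0} & \bm{X}_n \\ \bm{X}_n^\top & -\bm{J}_n \end{pmatrix} = \begin{pmatrix} \bm{0} & \bm{0} \\ \bm{0} & -\bm{J}_n \end{pmatrix} + \sum_{i,k} g_{ik} \begin{pmatrix} \bm{0} & \bm{A}_{ik} \\ \bm{A}_{ik}^\top & \bm{0} \end{pmatrix},
\]
together with its free analogue \(\bm{L}_n^{\mathrm{free}}\), obtained by replacing \(g_{ik}\) with \(s_{ik}\). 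Since \(\bm{J}_n^2=\bm{I}\), the Schur complement with respect to the lower block shows that, for real \(x\), \(\bm{P}_n - x\) is invertible if and only if \(\bm{L}_n - x\,\bm{E}\) is invertible, where \(\bm{E}\) is the projection onto the upper block. The same holds in \(M(\mathcal{A})\) for the free model. To make this usable with a spectral comparison theorem, we rephrase it as follows. For \(x\) above \(\lambda_{+,n}^{\mathrm{free}}\), the operator \(\bm{P}^{\mathrm{free}}_n - x\) is invertible. This is equivalent to \(\bm{0}\) lying outside the spectrum of the affine self-adjoint pencil \(\bm{L}^{\mathrm{free}}_n - x\bm{E}\), which is again of the form \(\bm{A}_0(x) + \sum \bm{A}_{ik}'\otimes s_{ik}\).

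The plan then has three steps. First, we apply \cite[Theorem~2.2]{bandeira2026} conditionally on \(\mathcal{F}_n\) to the Gaussian model \(\bm{L}_n - x\bm{E}\) at finitely many (or a net of) points \(x\). This gives, with high probability, \(\operatorname{sp}(\bm{L}_n-x\bm{E}) \subseteq \operatorname{sp}(\bm{L}^{\mathrm{free}}_n-x\bm{E}) + [-\epsilon,\epsilon]\), with \(\epsilon\) controlled by the matrix parameters \(\sigma(\bm{X})\), \(\sigma_*(\bm{X})\), and \(v(\bm{X})\) raised to the appropriate fractional powers, times \(\log\) dimension factors. Second, we compute these parameters. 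We have \(\sigma_*\) and \(v\) of order \(n^{-1/2}\) up to \(C_T\), because each \(\bm{A}_{ik}\) has norm at most \(C_T^{1/2}n^{-1/2}\) and the covariance of the entries is spread over dimension \(\asymp n\). Meanwhile \(\sigma\) is \(O(1)\) uniformly on the almost-sure event \(\|\bm{T}(\bm{y}_i)\|\le C_T\). Hence the conditional bound is deterministic, of the form \(\epsilon_n=C(\log n)^{3/4}n^{-1/4}\to0\), and the conditional failure probability is at most \(e^{-t}\), uniformly in \(\mathcal{F}_n\). Third, we transfer spectral inclusion of the pencil back to \(\bm{P}_n\). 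We need a quantitative invertibility statement: if \(x\ge\lambda_{+,n}^{\mathrm{free}}+\varepsilon_n\), then \(\|(\bm{L}^{\mathrm{free}}_n-x\bm{E})^{-1}\|\) is bounded by a constant depending on \(\varepsilon_n\), \(C_T\), and an a priori bound on \(\|\bm{X}^{\mathrm{free}}_n\|\) (which is \(O(1)\) by Haagerup-type norm bounds). A gap of size \(\eta\) at \(0\) in the pencil's spectrum, together with a comparison error \(\epsilon_n\ll\eta\), then yields invertibility of \(\bm{L}_n-x\bm{E}\), hence of \(\bm{P}_n-x\). Finally, a monotonicity argument in \(x\) shows that there is no eigenvalue of \(\bm{P}_n\) above \(\lambda^{\mathrm{free}}_{+,n}+\varepsilon_n\). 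This uses continuity of eigenvalues in \(x\), the fact that \(\bm{P}_n - x \prec 0\) for \(x\) larger than \(\|\bm{P}_n\|\le C\) with high probability, and a union bound over a grid of \(O(\varepsilon_n^{-1})\) points combined with the Lipschitz dependence of \(\bm{L}_n-x\bm{E}\) on \(x\).

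The main obstacle is the third step. The naive Schur-complement relation gives only qualitative equivalence of invertibility, whereas the comparison theorem controls spectra only up to \(\epsilon_n\). We must therefore show that the free pencil's distance from \(0\) is bounded below by a function of \(x-\lambda_{+,n}^{\mathrm{free}}\) that does not degenerate with \(n\). We will attempt this through the identity
\[
(\bm{L}-x\bm{E})^{-1} = \begin{pmatrix} (\bm{P}-x)^{-1} & (\bm{P}-x)^{-1}\bm{X}\bm{J} \\ \bm{J}\bm{X}^\ast(\bm{P}-x)^{-1} & -\bm{J}+\bm{J}\bm{X}^\ast(\bm{P}-x)^{-1}\bm{X}\bm{J}\end{pmatrix},
\]
applied in \(M(\mathcal{A})\). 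Here \(\|(\bm{P}^{\mathrm{free}}_n-x)^{-1}\|\le\varepsilon_n^{-1}\) and \(\|\bm{X}^{\mathrm{free}}_n\|=O(1)\). This bounds the inverse by \(C\varepsilon_n^{-1}\), so we must choose \(\varepsilon_n\) decaying slowly enough, e.g. \(\varepsilon_n=\epsilon_n^{1/2}\), for \(\epsilon_n\ll\varepsilon_n^{-1}{}^{-1}\) to hold.
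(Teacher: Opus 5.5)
Your proposal is correct and is essentially the paper's proof. It uses the same linearization $\bm{L}_n(E)$ (your $\bm{L}_n - x\bm{E}$), the conditional application of \cite[Theorem~2.2]{bandeira2026} with $\sigma=O(1)$ and $\sigma_\ast, v = O(n^{-1/2})$, the same block-inverse lower bound on the spectral gap of the free linearization (Lemma~\ref{lem:signed_linearization_gap}), and a net in the spectral parameter based on $E\mapsto \bm{L}_n(E)$ being $1$-Lipschitz. The only difference is bookkeeping: the paper proves the bound for each fixed $\eta>0$ on a fixed finite net and then extracts $\varepsilon_n\downarrow 0$ by a diagonal argument, whereas your choice $\varepsilon_n=\delta_n^{1/2}$ (made monotone) with an $O(\varepsilon_n^{-1})$-point grid gives an explicit rate. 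The union bound over that grid is harmless, since each point fails with conditional probability at most $m^{-2}$ (note this is $e^{-t^2}$, not $e^{-t}$).
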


To prove Lemma~\ref{lem:signed_free_comparison}, we introduce self-adjoint linearizations of the classical and free signed covariance models. For \(E\in\R\), let
\begin{equation} \label{eq:signed_linearization}
\bm{L}_n(E) \coloneqq \begin{pmatrix}
-E\bm{I}_{p(d-r)}&\bm{X}_n\\
\bm{X}_n^\top & -\bm{J}_n
\end{pmatrix},
\end{equation}
and
\begin{equation} \label{eq:signed_free_linearization}
\bm{L}_n^{\mathrm{free}}(E) \coloneqq
\begin{pmatrix}
-E\bm{I}_{p(d-r)}\otimes1_{\mathcal{A}} & \bm{X}_n^{\mathrm{free}}\\
(\bm{X}_n^{\mathrm{free}})^\ast&-\bm{J}_n\otimes1_{\mathcal{A}} 
\end{pmatrix}.
\end{equation}
The following deterministic lemma provides the link between a signed quadratic form and its linearization. In addition to the usual Schur complement equivalence, it gives a quantitative lower bound on the spectral gap of the linearization.

\begin{lem}\label{lem:signed_linearization_gap}
Let \(\mathcal{H}\) and \(\mathcal{K}\) be Hilbert spaces, let \(X\colon\mathcal{K} \to \mathcal{H}\) be a bounded operator, and let \(J\colon\mathcal{K}\to\mathcal{K}\) be a self-adjoint unitary. Set \(P=XJX^\ast\) and
\[
L (E) \coloneqq \begin{pmatrix}
-EI_{\mathcal{H}} & X\\
X^\ast & -J
\end{pmatrix}.
\]
Then
\[
E \in\operatorname{sp}(P) \quad\Longleftrightarrow\quad 0\in\operatorname{sp}(L(E)).
\]
Moreover, for every \(E\notin\operatorname{sp}(P)\),
\[
\operatorname{dist} \left ( 0,\operatorname{sp}(L(E)) \right) \geq \frac{\min \left \{1,\operatorname{dist}(E,\operatorname{sp}(P))\right\}}{(1+\|X\|_{\mathrm{op}})^2}.
\]
\end{lem}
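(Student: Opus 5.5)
The plan is to work with the Schur complement of \(L(E)\) with respect to its lower-right block \(-J\), which is always invertible because \(J\) is a self-adjoint unitary with \(J^{-1}=J\). Block \(LDU\) factorization gives
\[
L(E)=\begin{pmatrix} I & -XJ\\ 0 & I\end{pmatrix}
\begin{pmatrix} -EI+XJX^\ast & 0\\ 0 & -J\end{pmatrix}
\begin{pmatrix} I & 0\\ -JX^\ast & I\end{pmatrix}.
\]
One checks this by multiplying out: the top-left entry is \(-E+P-XJX^\ast=-E\), and the off-diagonal entries are \(X\) and \(X^\ast\). The outer factors are invertible, with inverses obtained by flipping the sign of the off-diagonal block. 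Their norms, and those of their inverses, are at most \(1+\|X\|_{\mathrm{op}}\), since \(\|J\|=1\). Hence \(L(E)\) is invertible if and only if the middle block-diagonal operator is invertible. Because \(-J\) is always invertible, this happens if and only if \(P-EI\) is invertible, which is the claimed equivalence.

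For the quantitative bound, I take \(E\notin\operatorname{sp}(P)\) and invert the factorization:
\[
L(E)^{-1}=\begin{pmatrix} I & 0\\ JX^\ast & I\end{pmatrix}
\begin{pmatrix} (P-E)^{-1} & 0\\ 0 & -J\end{pmatrix}
\begin{pmatrix} I & XJ\\ 0 & I\end{pmatrix}.
\]
Since \(P\) is self-adjoint, \(\|(P-E)^{-1}\|=\operatorname{dist}(E,\operatorname{sp}(P))^{-1}\). The middle operator therefore has norm \(\max\{1,\operatorname{dist}(E,\operatorname{sp}(P))^{-1}\}=1/\min\{1,\operatorname{dist}(E,\operatorname{sp}(P))\}\). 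Submultiplicativity then gives
\[
\|L(E)^{-1}\|\le \frac{(1+\|X\|)^2}{\min\{1,\operatorname{dist}(E,\operatorname{sp}(P))\}}.
\]
Finally, \(L(E)\) is self-adjoint, because \(E\) is real and \(J\) is self-adjoint. For self-adjoint operators, \(\operatorname{dist}(0,\operatorname{sp}(L(E)))=\|L(E)^{-1}\|^{-1}\), which yields the stated gap bound.

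I do not expect a real obstacle. The only points needing care are, first, checking that the factorization holds in the operator (possibly \(C^\ast\)-algebra-valued) setting, which is purely algebraic. Second, invertibility of a product with invertible outer factors is equivalent to invertibility of the middle factor, and this also holds in infinite dimensions. Third, the norm/distance identity for the inverse of a self-adjoint operator follows from the spectral theorem.
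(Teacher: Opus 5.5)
Your proof is correct and follows essentially the same route as the paper. Both arguments use the Schur complement with respect to the invertible block $-J$ to get the spectral equivalence, then bound the norm of the factorization $L(E)^{-1}=\begin{pmatrix} I & 0\\ JX^\ast & I\end{pmatrix}\operatorname{diag}\bigl((P-E)^{-1},-J\bigr)\begin{pmatrix} I & XJ\\ 0 & I\end{pmatrix}$ using $\|J\|_{\mathrm{op}}=1$ and the self-adjoint identities $\|(P-E)^{-1}\|_{\mathrm{op}}^{-1}=\operatorname{dist}(E,\operatorname{sp}(P))$ and $\|L(E)^{-1}\|_{\mathrm{op}}^{-1}=\operatorname{dist}(0,\operatorname{sp}(L(E)))$. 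Your explicit $LDU$ factorization of $L(E)$ multiplies out correctly and is just a more explicit form of the paper's appeal to the Schur complement criterion.
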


Lemma~\ref{lem:signed_linearization_gap} applies both to the classical model, with \((X,J,P,L) = (\bm{X}_n,\bm{J}_n,\bm{P}_n,\bm{L}_n)\), and to the free model, with \((X,J,P,L) = (\bm{X}_n^{\mathrm{free}}, \bm{J}_n\otimes1_{\mathcal{A}}, \bm{P}_n^{\mathrm{free}},\bm{L}_n^{\mathrm{free}})\).

\begin{proof}
Since \(J^{-1}=J\), the lower-right block of \(L(E)\) is invertible. Its Schur complement in \(L(E)\) is
\[
-E I_{\mathcal{H}} - X (-J)^{-1} X^\ast = - E I_{\mathcal{H}} + X J X^\ast = P -  E I_{\mathcal{H}}.
\]
The Schur complement criterion gives
\[
L(E) \enspace \textnormal{is invertible} \quad\Longleftrightarrow \quad P -  E I_{\mathcal{H}}\enspace \textnormal{is invertible}.
\]
This proves the asserted spectral equivalence.

Now suppose that \(E \notin \operatorname{sp} (P)\) and write \(G(E) \coloneqq (P-EI_{\mathcal{H}})^{-1}\). Then
\[
L(E)^{-1} = \begin{pmatrix}
G(E)& G(E) XJ\\
J X^\ast G(E) & -J + J X^\ast G(E) X J
\end{pmatrix}
= \begin{pmatrix}
I& 0\\
J X^\ast & I 
\end{pmatrix}
\begin{pmatrix}
G(E) & 0\\
0 & -J 
\end{pmatrix}
\begin{pmatrix}
I& XJ\\
0 & I 
\end{pmatrix}.
\]
Thus,
\[
\left \| L(E)^{-1}\right \|_{\mathrm{op}} \le (1 + \|X\|_{\mathrm{op}})^2 \max \{ \|G(E)\|_{\mathrm{op}},1\},
\]
where we used that \(\|J\|_{\mathrm{op}}=1\). Since \(P\) and \(L(E)\) are self-adjoint, \(\|G(E)\|_{\mathrm{op}}^{-1} = \operatorname{dist} (E, \operatorname{sp} (P))\) and \(\|L(E)^{-1}\|_{\mathrm{op}}^{-1} = \operatorname{dist} (0, \operatorname{sp} (L(E)))\). Therefore,
\[
\operatorname{dist} (0, \operatorname{sp} (L(E))) \ge \frac{1}{(1 + \|X\|_{\mathrm{op}})^2 \max \{ \|G(E)\|_{\mathrm{op}},1\}} = \frac{\min \left \{1,\operatorname{dist}(E,\operatorname{sp}(P))\right\}}{(1+\|X\|_{\mathrm{op}})^2}.
\]
\end{proof}

Conditionally on \(\mathcal{F}_n\), the linearization \(\bm{L}_n(E)\) defined in~\eqref{eq:signed_linearization} is a self-adjoint Gaussian matrix. More precisely,
\[
\bm{L}_n(E) = 
\begin{pmatrix}
-E \bm{I}_{p(d-r)} & \bm{0} \\
\bm{0} &-\bm{J}_n
\end{pmatrix}
+
\sum_{i=1}^n \sum_{k=1}^{d-r}
g_{ik}
\begin{pmatrix}
\bm{0} &\bm{A}_{ik} \\
\bm{A}_{ik}^\top& \bm{0}
\end{pmatrix}.
\]
The associated free Gaussian matrix in the spectral comparison theorem is precisely
\(\bm{L}_n^{\mathrm{free}}(E)\). We may therefore apply the self-adjoint spectral comparison result of~\cite[Theorem 2.2]{bandeira2026}, conditionally on \(\mathcal{F}_n\).

We recall the form of the estimate that will be used. Let \(\bm{Z}\) be an \(m\times m\) self-adjoint Gaussian matrix, and let \(\bm{Z}^{\mathrm{free}}\) denote its associated free Gaussian model. Then, for every \(t\ge0\),
\[
\mathbb{P} \left(d_{\mathrm{H}} \left( \operatorname{sp} (\bm{Z}), \operatorname{sp} (\bm{Z}^{\mathrm{free}}) \right) >  C \tilde{v}(\bm{Z}) (\log m)^{3/4} + C \sigma_\ast (\bm{Z}) t\right) \le e^{-t^2},
\]
where \(C>0\) is a universal constant. Here, 
\[
d_{\mathrm{H}}(A,B) \coloneqq \inf \left \{  \varepsilon > 0 \colon A \subseteq B + [-\varepsilon, \varepsilon] \enspace \mathrm{and} \enspace B \subseteq A + [-\varepsilon, \varepsilon]  \right \}
\]
denotes the Hausdorff distance between subsets \(A,B\subseteq\R\), 
\[
\begin{split}
\sigma(\bm{Z})^2 & \coloneqq  \left \|\E \left [ \left ( \bm{Z} - \E \bm{Z} \right)^\ast \left ( \bm{Z} - \E \bm{Z} \right) \right]\right\|_{\mathrm{op}}
\vee \left\| \E \left [\left ( \bm{Z} - \E \bm{Z} \right) \left ( \bm{Z} - \E \bm{Z} \right)^\ast \right ]\right \|_{\mathrm{op}}, \\
v(\bm{Z})^2 &\coloneqq \left \|\operatorname{Cov}(\bm{Z})\right \|_{\mathrm{op}}, \\
\sigma_\ast (\bm{Z})^2 & \coloneqq \sup_{\|\bm{v}\| = \|\bm{w}\| =1} \E \left[ \left| \langle\bm{v},\left ( \bm{Z} - \E \bm{Z} \right)\bm{w} \rangle\right|^2 \right], \\
\widetilde{v} (\bm{Z})^2 & \coloneqq v(\bm{Z})\sigma(\bm{Z}).
\end{split}
\]
In our application, we use \(\bm{Z}=\bm{L}_n(E)\), \(\bm{Z}^{\mathrm{free}} = \bm{L}_n^{\mathrm{free}}(E)\), and \(m=p(d-r+n)\).

\begin{lem} \label{lem:signed_linearized_comparison}
For every fixed finite set \(\mathcal{E} \subset \R\), there exists a deterministic sequence \(\delta_n\downarrow0\) such that
\[
\mathbb{P} \left( \max_{E\in\mathcal{E}} d_{\mathrm{H}} \left( \operatorname{sp} (\bm{L}_n(E)), \operatorname{sp} (\bm{L}_n^{\mathrm{free}}(E)) \right) > \delta_n
\right) \to 0.
\]
\end{lem}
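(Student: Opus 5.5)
The plan is to apply the Bandeira--Cipolloni--Schr\"oder--van Handel estimate recalled above conditionally on \(\mathcal{F}_n\), separately for each \(E\in\mathcal{E}\), and then take a union bound over the finitely many \(E\). The whole task therefore reduces to bounding the three parameters \(\sigma(\bm{Z})\), \(v(\bm{Z})\), \(\sigma_\ast(\bm{Z})\) for \(\bm{Z}=\bm{L}_n(E)\). These depend only on the centered Gaussian part
\[
\bm{Z}-\E\bm{Z}=\sum_{i,k} g_{ik}\bm{B}_{ik}, \qquad \bm{B}_{ik}\coloneqq\begin{pmatrix}\bm{0}&\bm{A}_{ik}\\ \bm{A}_{ik}^\top&\bm{0}\end{pmatrix}.
\]
In particular, they are independent of \(E\) and of \(\bm{J}_n\), and they are deterministic given \(\mathcal{F}_n\). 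Since \(\|\bm{T}(\bm{y}_i)\|_{\mathrm{op}}\le C_T\) almost surely, we will have \(\||\bm{T}(\bm{y}_i)|^{1/2}\|_{\mathrm{op}}\le C_T^{1/2}\), and this gives bounds that are uniform in the conditioning.

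\textbf{Computing the parameters.} First, \(\sum_{i,k}\bm{B}_{ik}^2\) is block diagonal, with blocks \(\sum_{i,k}\bm{A}_{ik}\bm{A}_{ik}^\top\) and \(\sum_{i,k}\bm{A}_{ik}^\top\bm{A}_{ik}\).
\begin{itemize}
\item For the first block we get \(\sum_{i,k}\bm{A}_{ik}\bm{A}_{ik}^\top=\frac1n\sum_i|\bm{T}(\bm{y}_i)|\otimes\bm{I}_{d-r}\), whose norm is at most \(C_T\).
\item For the second we get \(\sum_{i,k}\bm{A}_{ik}^\top\bm{A}_{ik}=\frac{d-r}{n}\operatorname{diag}(|\bm{T}(\bm{y}_i)|)\), whose norm is at most \(C_T(d-r)/n\le 2C_T/\alpha\) eventually.
\end{itemize}
Hence \(\sigma(\bm{Z})=O(1)\).

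Next, \(\sigma_\ast(\bm{Z})^2=\sup_{\bm{v},\bm{w}}\sum_{i,k}|\langle\bm{v},\bm{B}_{ik}\bm{w}\rangle|^2\le 4\sup_{\|\bm{a}\|=\|\bm{b}\|=1}\sum_{i,k}|\bm{a}^\top\bm{A}_{ik}\bm{b}|^2\). Writing \(\bm{a}\) in blocks \(\bm{a}_{\mu}\in\R^{d-r}\) and \(\bm{b}\) in blocks \(\bm{b}_i\in\R^p\), we have
\[
\bm{a}^\top\bm{A}_{ik}\bm{b}=n^{-1/2}\sum_{\mu}(\bm{a}_\mu)_k\,(|\bm{T}_i|^{1/2}\bm{b}_i)_\mu .
\]
Cauchy--Schwarz over \(\mu\) then gives \(\sum_{i,k}|\cdot|^2\le n^{-1}C_T\|\bm{a}\|^2\sum_i\|\bm{b}_i\|^2=C_T/n\), so \(\sigma_\ast=O(n^{-1/2})\).

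Finally, \(v(\bm{Z})^2\) is the norm of the covariance of the entries, i.e. of the Gram matrix of the vectorized \(\bm{B}_{ik}\) viewed as an operator \(\ell^2\to\R^{m^2}\). It equals \(\sup_{\|c\|=1}\|\sum c_{ik}\bm{B}_{ik}\|_{\mathrm F}^2\). Since the \(\bm{A}_{ik}\) for distinct \((i,k)\) have disjoint supports (different row block index \(k\) within each \(\mu\), different column block \(i\)), this is at most \(2\max_{i,k}\|\bm{A}_{ik}\|_{\mathrm F}^2\le 2pC_T/n\). So \(v=O(n^{-1/2})\) and \(\tilde v=(v\sigma)^{1/2}=O(n^{-1/4})\).

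\textbf{Conclusion.} Plugging these bounds in with \(m=p(d-r+n)\asymp n\) and \(t=\sqrt{\log n}\), we obtain, conditionally on \(\mathcal{F}_n\) and on the almost sure event \(\|\bm{T}(\bm{y}_i)\|\le C_T\), that the Hausdorff distance exceeds
\[
\delta_n\coloneqq C n^{-1/4}(\log n)^{3/4}+C n^{-1/2}\sqrt{\log n}
\]
with probability at most \(1/n\). The constants depend only on \(C_T\), \(p\), \(\alpha\), so \(\delta_n\) is deterministic and \(\delta_n\downarrow0\). Taking expectation over \(\mathcal{F}_n\) and a union bound over \(E\in\mathcal{E}\) gives a failure probability of at most \(|\mathcal{E}|/n\to0\).

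The main point requiring care is \(v(\bm{Z})\): one must check that the disjoint-support structure of the \(\bm{A}_{ik}\) really yields the \(O(n^{-1/2})\) bound rather than \(O(1)\), and confirm that the comparison theorem's parameters are taken for the real self-adjoint Gaussian model as stated. The deterministic shift \(-E\oplus-\bm{J}_n\) is harmless, since it does not enter any of the parameters.
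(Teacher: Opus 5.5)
Your proposal is correct and follows the paper's proof essentially line by line. Both apply the Bandeira--Cipolloni--Schr\"oder--van Handel comparison theorem conditionally on \(\mathcal{F}_n\) and obtain the same bounds \(\sigma=O(1)\), \(\sigma_\ast=O(n^{-1/2})\) and \(v=O(n^{-1/2})\); the paper phrases your disjoint-support observation as Hilbert--Schmidt orthogonality of the \(\bm{A}_{ik}\). This gives \(\widetilde v=O(n^{-1/4})\), and both conclude with a union bound over \(\mathcal{E}\).

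The only cosmetic difference is that the paper takes \(t=\sqrt{2\log m}\) and replaces the resulting threshold by its decreasing envelope \(\sup_{k\ge n}a_k\). You need the same fix, because your explicit \(\delta_n\) is only eventually decreasing, and the bound on \(\sigma\) holds only once \(\gamma_n\le 2/\alpha\).
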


\begin{proof}
We apply Theorem~2.2 of~\cite{bandeira2026} to \(\bm{L}_n(E)\), conditionally on \(\mathcal{F}_n=\sigma(\bm{y}_1,\ldots,\bm{y}_n)\). All Gaussian expectations and Gaussian parameters below are therefore understood conditionally on \(\mathcal{F}_n\). In the following, set 
\[
\bm{B}_{ik} \coloneqq \begin{pmatrix}
\bm{0} &\bm{A}_{ik} \\
\bm{A}_{ik}^\top& \bm{0}
\end{pmatrix} ,
\]
so that
\[
\bm{L}_n(E) - \E [\bm{L}_n(E)] = \sum_{i=1}^n \sum_{k=1}^{d-r} g_{ik} \bm{B}_{ik}.
\]

We first estimate \(\sigma(\bm{L}_n(E))\). By definition,
\[
\sigma(\bm{L}_n(E))^2 = \left \| \sum_{i=1}^n \sum_{k=1}^{d-r} \bm{B}_{ik}^2 \right \|_{\mathrm{op}} = \left \| \sum_{i=1}^n \sum_{k=1}^{d-r} \bm{A}_{ik} \bm{A}_{ik} ^\top\right \|_{\mathrm{op}}  \vee \left \| \sum_{i=1}^n \sum_{k=1}^{d-r} \bm{A}_{ik}^\top \bm{A}_{ik} \right \|_{\mathrm{op}}  .
\]
A direct computation yields
\[
 \sum_{i=1}^n \sum_{k=1}^{d-r} \bm{A}_{ik} \bm{A}_{ik} ^\top = \left ( \frac{1}{n} \sum_{i=1}^n |\bm{T}(\bm{y}_i)|\right) \otimes \bm{I}_{d-r}
\]
and 
\[
 \sum_{i=1}^n \sum_{k=1}^{d-r} \bm{A}_{ik}^\top \bm{A}_{ik} = \operatorname{diag} \left ( \gamma_n |\bm{T}(\bm{y}_1)|, \ldots, \gamma_n |\bm{T}(\bm{y}_n)| \right),
\]
where \(\gamma_n = (d-r)/n\). By Assumption~\ref{hyp:preprocessing}, it follows that 
\[
\sigma(\bm{L}_n(E))^2 = \left \| \frac{1}{n} \sum_{i=1}^n |\bm{T}(\bm{y}_i)| \right \|_{\mathrm{op}} \vee \left ( \gamma_n \max_{1 \le i \le n} \| \bm{T}(\bm{y}_i)\|_{\mathrm{op}} \right )\le C_T (1 \vee \gamma_n). 
\]
Thus, there exists a deterministic constant \(K_\sigma < \infty\) such that, uniformly in \(E\), 
\begin{equation}\label{eq:signed_L_sigma}
\sigma(\bm{L}_n(E))\leq K_\sigma
\end{equation}
for all sufficiently large \(n\). We next estimate \(\sigma_\ast(\bm{L}_n(E))\). Let \(\bm{z} = (\bm{v},\bm{w})^\top\) and \(\bm{z}' = (\bm{v}',\bm{w}')^\top\) be unit vectors in \(\R^{p(d-r)} \oplus \R^{pn}\). Then 
\[
\left \langle\bm{z}, \bm{B}_{ik} \bm{z}' \right \rangle = \langle\bm{v},\bm{A}_{ik}\bm{w}' \rangle + \langle\bm{w},\bm{A}_{ik}^\top \bm{v}'\rangle.
\]
For arbitrary \(\bm{a} = (\bm{a}_1,\ldots, \bm{a}_{d-r}) \in \R^{p(d-r)}\) and \(\bm{b} = (\bm{b}_1,\ldots, \bm{b}_n) \in \R^{pn}\), one has
\[
\begin{split}
\sum_{i=1}^n \sum_{k=1}^{d-r} |\langle \bm{a}, \bm{A}_{ik} \bm{b} \rangle|^2 & = \frac{1}{n} \sum_{i=1}^n \sum_{k=1}^{d-r} \left | \langle \bm{a}_k, |\bm{T}(\bm{y}_i)|^{1/2} \bm{b}_i \rangle \right |^2 \\
& \le \frac{C_T}{n} \left ( \sum_{k=1}^{d-r} \|\bm{a}_k\|_2^2 \right ) \left ( \sum_{i=1}^n \|\bm{b}_i\|_2^2 \right ) \\
& = \frac{C_T}{n} \|\bm{a}\|_2^2 \|\bm{b}\|_2^2.
\end{split}
\]
Hence, using \(|x+y|^2 \le 2 |x|^2 + 2 |y|^2\), 
\[
\sum_{i=1}^n \sum_{k=1}^{d-r} \left | \left \langle\bm{z}, \bm{B}_{ik} \bm{z}' \right \rangle \right |^2 \le \frac{2C_T}{n} \left (  \|\bm{v}\|_2^2 \|\bm{w}'\|_2^2 + \|\bm{w}\|_2^2 \|\bm{v}'\|_2^2  \right ) \le \frac{2C_T}{n}.
\]
Therefore
\begin{equation}\label{eq:signed_L_sigma_star}
\sigma_\ast(\bm{L}_n(E)) \leq \sqrt{\frac{2C_T}{n}},
\end{equation}
uniformly in \(E\). It remains to estimate \(v(\bm{L}_n(E))\). The matrices \((\bm{A}_{ik})_{i,k}\) are pairwise orthogonal for the Hilbert--Schmidt inner product. Indeed, different values of \(i\) correspond to disjoint column blocks, while different values of \(k\) correspond to orthogonal row directions. Furthermore,
\[
\left \| \bm{B}_{ik} \right \|_{\mathrm{F}}^2 = 2 \| \bm{A}_{ik} \|_{\mathrm{F}}^2 = \frac{2}{n} \Tr \left |\bm{T}(\bm{y}_i)\right | \le \frac{2 p C_T}{n}.
\]
Thus
\begin{equation*}
v(\bm{L}_n(E))^2   = \left \| \sum_{i=1}^n \sum_{k=1} ^{d-r} \operatorname{vec}(\bm{B}_{ik}) \operatorname{vec}(\bm{B}_{ik})^\top \right \|_{\mathrm{op}} = \max_{i,k} \|\bm{B}_{ik}\|_{\mathrm{F}}^2\le \frac{2p C_T}{n}.
\end{equation*}
Combining this with~\eqref{eq:signed_L_sigma}, we obtain a deterministic constant \(K_v < \infty\) such that 
\begin{equation} \label{eq:v_tilde}
\widetilde{v} (\bm{L}_n(E)) \le K_v n^{-1/4}
\end{equation}
uniformly in \(E\), for all sufficiently large \(n\). 

Apply~\cite[Theorem 2.2]{bandeira2026} conditionally on \(\mathcal{F}_n\), with \(m = p(d-r)+pn\) and \(t_n = \sqrt{2 \log m}\). By~\eqref{eq:signed_L_sigma_star} and~\eqref{eq:v_tilde}, there are deterministic constants \(C_1,C_2<\infty\) such that, for every \(E\),
\[
\mathbb{P} \left ( d_{\mathrm{H}} \left ( \operatorname{sp} (\bm{L}_n(E)), \operatorname{sp} (\bm{L}_n^{\mathrm{free}}(E)) \right) > a_n  \, \Big | \, \mathcal{F}_n\right ) \le m^{-2},
\]
where 
\[
a_n = C_1 n^{-1/4} \left ( \log m \right)^{3/4} + C_2 n^{-1/2}\left ( \log m \right)^{1/2}.
\]
Under the proportional-growth Assumption~\ref{hyp:prop_limit}, \(m = O(n)\), and therefore \(a_n\to 0\). The estimates do not depend on \(E\), because \(E\) appears only in the deterministic mean of \(\bm{L}_n(E)\). A union bound gives
\[
\mathbb{P} \left ( \max_{E \in \mathcal{E}} d_{\mathrm{H}} \left ( \operatorname{sp} (\bm{L}_n(E)), \operatorname{sp} (\bm{L}_n^{\mathrm{free}}(E)) \right) > a_n  \, \Big | \, \mathcal{F}_n\right ) \le |\mathcal{E}| m^{-2}.
\]
Taking expectations yields the same unconditional bound. Finally, replacing \(a_n\) by its decreasing envelope \(\delta_n = \sup_{k \ge n} a_k\) gives a deterministic sequence \(\delta_n \downarrow 0\), and 
\[
\mathbb{P} \left ( \max_{E \in \mathcal{E}} d_{\mathrm{H}} \left ( \operatorname{sp} (\bm{L}_n(E)), \operatorname{sp} (\bm{L}_n^{\mathrm{free}}(E)) \right) > \delta_n \right ) \le |\mathcal{E}| m^{-2} \to 0.
\]
\end{proof}

Combining the quantitative gap estimate in Lemma~\ref{lem:signed_linearization_gap} with the spectral comparison in Lemma~\ref{lem:signed_linearized_comparison} yields the desired comparison of upper edges.

\begin{proof}[Proof of Lemma~\ref{lem:signed_free_comparison}]
We first show that, for every fixed \(\eta > 0\),
\begin{equation} \label{eq:fixed_eta_signed}
\mathbb{P} \left( \lambda_1(\bm{P}_n) > \lambda_{+,n}^{\mathrm{free}}+\eta \right) \to 0.
\end{equation}
We begin with two uniform boundedness observations. From the free Gaussian Khintchine inequality, we have \(\|\bm{X}_n^{\mathrm{free}}\|_{\mathrm{op}} \le 2 \sigma (\bm{X}_n)\) and since \(\sigma (\bm{X}_n) = \sigma (\bm{L}_n(E)) \le K_\sigma\) by~\eqref{eq:signed_L_sigma}, for all sufficiently large \(n\), there exists a deterministic constant \(K<\infty\) such that
\begin{equation}\label{eq:free_X_uniform_bound}
\|\bm{X}_n^{\mathrm{free}} \|_{\mathrm{op}} \le K
\end{equation}
for all sufficiently large \(n\), almost surely with respect to the conditioning variables. 

Moreover, Lemma~\ref{lem:signed_linearized_comparison} applied to the singleton \(\mathcal{E}=\{0\}\) gives
\[
d_{\mathrm{H}} \left ( \operatorname{sp} (\bm{L}_n(0)), \operatorname{sp} (\bm{L}_n^{\mathrm{free}}(0)) \right ) \stackrel{\mathbb{P}}{\to} 0.
\]
By~\eqref{eq:free_X_uniform_bound},
\[
\| \bm{L}_n^{\mathrm{free}} (0) \|_{\mathrm{op}} \le 1 + \|\bm{X}_n^{\mathrm{free}} \|_{\mathrm{op}} \le 1+K.
\]
This implies that \(\| \bm{L}_n (0) \|_{\mathrm{op}} = O_{\mathbb{P}}(1)\). Since \(\bm{X}_n\) is an off-diagonal compression of \(\bm{L}_n(0)\), 
\[
\| \bm{X}_n \|_{\mathrm{op}} \le \| \bm{L}_n (0) \|_{\mathrm{op}} = O_{\mathbb{P}}(1).
\]
Thus, for every \(\rho >0\), there exist \(R<\infty\) such that
\begin{equation}\label{eq:X_tight}
\limsup_{n\to\infty} \mathbb{P}( \|\bm{X}_n\|_{\mathrm{op}}>R) \leq\rho.
\end{equation}

On the event \(\{\|\bm{X}_n\|_{\mathrm{op}} \le R\}\), 
\[
\lambda_1(\bm{P}_n) \le \|\bm{P}_n\|_{\mathrm{op}} \le \|\bm{X}_n\|_{\mathrm{op}}^2 \|\bm{J}_n\|_{\mathrm{op}} = \|\bm{X}_n\|_{\mathrm{op}}^2 \le R^2.
\]
Define \(c_\eta \coloneqq \min \{1,\eta/2\}/(1+K)^2>0\). Choose \(0 < h < \min \{\eta/2, c_\eta/4\}\), and let \(\mathcal{E}_{\eta,R}\) be a deterministic finite \(h\)-net of \([-R^2-1,R^2+1]\). Thus every \(x \in [-R^2,R^2]\) is within distance \(h\) of some \(E \in \mathcal{E}_{\eta,R}\). By Lemma~\ref{lem:signed_linearized_comparison}, there is a deterministic sequence \(\delta_n \downarrow 0\) such that the event 
\[
\Omega_n \coloneqq \left \{ \max_{E \in \mathcal{E}_{\eta,R}} d_{\mathrm{H}} \left (\operatorname{sp} (\bm{L}_n(E)), \operatorname{sp} (\bm{L}_n^{\mathrm{free}}(E)) \right ) \le \delta_n\right \}
\]
satisfies \(\mathbb{P} (\Omega_n^{\mathrm{c}}) \to 0\). For all sufficiently large \(n\), we may assume that \(\delta_n < c_\eta /2\). We claim that, on \(\Omega_n \cap \{\|\bm{X}_n\|_{\mathrm{op}} \le R\}\), one has
\begin{equation} \label{eq:claim_upper_edge_free}
\lambda_1 (\bm{P}_n) \le \lambda_{+,n}^{\mathrm{free}} + \eta.
\end{equation}
Suppose by contradiction that~\eqref{eq:claim_upper_edge_free} does not hold. Since \(\lambda_1(\bm{P}_n) \le R^2\), we can choose \(E \in \mathcal{E}_{\eta,R}\) such that \(|E - \lambda_1(\bm{P}_n)| \le h\). Then
\[
E \ge \lambda_1 (\bm{P}_n) - h > \lambda_{+,n}^{\mathrm{free}} + \eta - h \ge  \lambda_{+,n}^{\mathrm{free}}  + \frac{\eta}{2}.
\]
Hence
\[
\operatorname{dist} \left (E, \operatorname{sp} (\bm{P}_n^{\mathrm{free}}) \right ) = E -\lambda_{+,n}^{\mathrm{free}} \ge \frac{\eta}{2}.
\]
Applying Lemma~\ref{lem:signed_linearization_gap} to the free model and using~\eqref{eq:free_X_uniform_bound}, we obtain
\[
\operatorname{dist} \left (0, \operatorname{sp} (\bm{L}_n^{\mathrm{free}}(E)) \right ) \ge c_\eta.
\]
On the event \(\Omega_n\), 
\[
\operatorname{dist} \left (0, \operatorname{sp} (\bm{L}_n(E)) \right )  \ge \operatorname{dist} \left (0, \operatorname{sp} (\bm{L}_n^{\mathrm{free}}(E)) \right ) - \delta_n \ge c_\eta - \delta_n > \frac{c_\eta}{2}.
\]
On the other hand, because \(\lambda_1 (\bm{P}_n) \in \operatorname{sp} (\bm{P}_n)\), the spectral equivalence of Lemma~\ref{lem:signed_linearization_gap} gives that \(0 \in \operatorname{sp} (\bm{L}_n(\lambda_1(\bm{P}_n)))\). Furthermore,
\[
\| \bm{L}_n(E) - \bm{L}_n (\lambda_1(\bm{P}_n)) \|_{\mathrm{op}} = |E - \lambda_1(\bm{P}_n)| \le h.
\]
Therefore,
\[
\operatorname{dist} \left (0, \operatorname{sp} (\bm{L}_n(E)) \right ) \le h < \frac{c_\eta}{4}.
\]
This is a contradiction and hence~\eqref{eq:claim_upper_edge_free} holds:
\[
\left \{ \lambda_1 (\bm{P}_n ) > \lambda_{+,n}^{\mathrm{free}} + \eta\right \} \subseteq \left \{\|\bm{X}_n \|_{\mathrm{op}} > R\right \} \cup \Omega_n^{\mathrm{c}},
\]
for all sufficiently large \(n\). Therefore, 
\[
\limsup_{n \to \infty} \mathbb{P} \left  ( \lambda_1(\bm{P}_n) > \lambda_{+,n}^{\mathrm{free}} + \eta \right ) \le \rho
\]
by~\eqref{eq:X_tight}. Since \(\rho>0\) was arbitrary, this proves~\eqref{eq:fixed_eta_signed}.

It remains to replace the fixed \(\eta\) by a deterministic sequence tending to zero. For every \(k \ge 1\),~\eqref{eq:fixed_eta_signed} with \(\eta = k^{-1}\) yields an integer \(N_k\) such that
\[
\mathbb{P} \left  ( \lambda_1(\bm{P}_n) > \lambda_{+,n}^{\mathrm{free}} + \frac{1}{k} \right ) \le \frac{1}{k},
\]
for every \(n \ge N_k\). We may choose \(N_k\) strictly increasing. Define \(\varepsilon_n = k^{-1}\) for \(N_k \le n < N_{k+1}\) and \(\varepsilon_n =1\) for \(n < N_1\). Then \(\varepsilon_n \downarrow0\), and if \(N_k \le n < N_{k+1}\),
\[
\mathbb{P} \left  ( \lambda_1(\bm{P}_n) > \lambda_{+,n}^{\mathrm{free}} + \varepsilon_n \right ) \le \frac{1}{k}.
\]
The right-hand side tends to zero as \(n\to\infty\), proving the result.
\end{proof}

\subsection{Variational upper bound for the free spectral edge}

We next control the upper spectral edge of the conditional free model. An exact finite-\(n\) characterization of \(\lambda_{+,n}^{\mathrm{free}}\) is not needed for Proposition~\ref{prop:upper_edge}; it is enough to obtain a tractable upper bound that converges to the deterministic bulk edge. When the preprocessing matrices are positive semidefinite, an exact variational formula can be obtained from~\cite[Theorem~1.2]{parmaksiz2025}. In our case, however, \(\bm{P}_n^{\mathrm{free}}\) need not be positive and is thus not of the form \(xx^\ast\). We therefore construct a Fock-space realization with the same \(M_p(\C)\)-valued distribution and use a completion-of-squares argument inspired by Lehner~\cite{lehner1999}. \\

Recall that \(\gamma_n = (d-r)/n\). Define the admissible set 
\[
\mathcal{D}_n \coloneqq \left \{ \bm{Z} \in \mathrm{Sym}_p^{++}(\R) \colon \bm{Z}^{-1} - \gamma_n \bm{T}(\bm{y}_i) \succ \bm{0}_{p \times p} \enspace \text{for every }1\leq i\leq n \right \}.
\]
For \(\bm{Z}\in\mathcal{D}_n\), set
\[
\mathcal{G}_n(\bm{Z}) \coloneqq \bm{Z}^{-1} + \frac{1}{n} \sum_{i=1}^n \bm{T}(\bm{y}_i) \left( \bm{I}_p - \gamma_n \bm{Z} \bm{T}(\bm{y}_i) \right)^{-1}.
\]
The positivity condition defining \(\mathcal{D}_n\) ensures that all of the inverses appearing in this expression are well defined. The following lemma bounds the upper spectral edge of the free model by a finite-dimensional optimization problem.

\begin{lem} \label{lem:finite_variational_formula} 
Almost surely, 
\[ 
\lambda_{+,n}^{\mathrm{free}} \le \inf_{\bm{Z} \in \mathcal{D}_n} \lambda_1 \left (\mathcal{G}_n(\bm{Z}) \right).
\]
\end{lem}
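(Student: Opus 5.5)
The plan is to prove the operator inequality
\[
\bm{P}_n^{\mathrm{free}} \preceq \lambda_1\left(\mathcal{G}_n(\bm{Z})\right)\, \bm{I}_{p(d-r)}\otimes 1_{\mathcal{A}} \qquad\text{for each fixed } \bm{Z}\in\mathcal{D}_n,
\]
conditionally on \(\mathcal{F}_n\), and then take the infimum over \(\bm{Z}\). Since the spectrum of \(\bm{P}_n^{\mathrm{free}}\) depends only on its \(M_{p(d-r)}(\C)\)-valued distribution, we are free to choose any faithful realization of the free semicircular family \((s_{ik})\).

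\textbf{Fock-space realization.} We take the full Fock space \(\mathcal{F}(\mathcal{K})\) over \(\mathcal{K}=\ell^2(\{(i,k)\})\), with vacuum \(\Omega\) and vacuum state \(\tau\). The vacuum state is not tracial, but the semicircular distribution, and hence the spectrum, is unchanged. We set \(s_{ik}=\ell_{ik}+\ell_{ik}^\ast\), where the creation operators satisfy the Cuntz--Toeplitz relations \(\ell_{ik}^\ast\ell_{jm}=\delta_{(i,k),(j,m)}\,1\).

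Write \(\bm{X}^{\mathrm{free}}_n = \bm{L} + \bm{L}^\ast_{\sharp}\), where
\[
\bm{L}=\sum_{i,k}\bm{A}_{ik}\otimes \ell_{ik}, \qquad \bm{L}^\ast_\sharp=\sum_{i,k}\bm{A}_{ik}\otimes\ell_{ik}^\ast.
\]
Here \(\bm{L}^\ast_\sharp\) is the annihilation part: it carries the matrix \(\bm{A}_{ik}\) on the left, not its adjoint.

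Expanding \(\bm{P}_n^{\mathrm{free}}=\bm{X}^{\mathrm{free}}_n(\bm{J}_n\otimes1)(\bm{X}^{\mathrm{free}}_n)^\ast\) produces four groups of terms.
\begin{enumerate}
\item The annihilation--creation term \(\bm{L}^\ast_\sharp(\bm{J}_n\otimes 1)(\bm{L}^\ast_\sharp)^\ast\). By the Cuntz relations it collapses to the deterministic block matrix
\[
\sum_{i,k}\bm{A}_{ik}\bm{J}_n\bm{A}_{ik}^\top = \Bigl(\frac1n\sum_i \bm{T}(\bm{y}_i)\Bigr)\otimes \bm{I}_{d-r}.
\]
\item The creation--annihilation term \(\bm{L}(\bm{J}_n\otimes1)\bm{L}^\ast\).
\item Two cross terms, which are mutually adjoint.
\end{enumerate}
To organize these terms I will group the operators sample by sample. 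For each \(i\), the block \(\sum_k \bm{A}_{ik}\otimes\ell_{ik}\) acts through \(|\bm{T}(\bm{y}_i)|^{1/2}\). I will use the identity \(|\bm{T}|^{1/2}\operatorname{sgn}(\bm{T})|\bm{T}|^{1/2}=\bm{T}\) to rewrite everything in terms of \(\bm{T}_i\coloneqq\bm{T}(\bm{y}_i)\).

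\textbf{Completion of squares.} Fix \(\bm{Z}\in\mathcal{D}_n\) and set \(\bm{W}_i\coloneqq \bm{Z}^{-1}-\gamma_n\bm{T}_i\succ0\). I then seek an operator \(\bm{R}\), an affine combination of the creation and annihilation parts with coefficients built from \(\bm{Z}\) and \((\bm{I}_p-\gamma_n\bm{Z}\bm{T}_i)^{-1}\), such that
\[
\mathcal{G}_n(\bm{Z})\otimes \bm{I}_{d-r}\otimes 1 - \bm{P}_n^{\mathrm{free}} = \bm{R}^\ast\,\bigl(\text{block-diag of } \bm{W}_i^{-1}\text{-type weights}\bigr)\,\bm{R} + (\text{PSD remainder}).
\]
The algebraic model for this is the scalar Lehner inequality \(a - s b s \succeq 0\) for semicirculars, which is proved by writing the difference as \((c + \ell^\ast)^\ast w (c+\ell^\ast)\) and matching coefficients using \(\ell^\ast\ell=1\). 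In our setting, matching coefficients should produce exactly the two summands of \(\mathcal{G}_n(\bm{Z})\).
\begin{enumerate}
\item The \(\bm{Z}^{-1}\) term arises from the part of the square that does not involve \(\bm{T}_i\).
\item Each term \(\bm{T}_i(\bm{I}_p-\gamma_n\bm{Z}\bm{T}_i)^{-1}\) arises from a geometric series in \(\gamma_n\bm{Z}\bm{T}_i\). The factor \(\gamma_n\) enters because \(\sum_k \ell_{ik}^\ast(\cdot)\ell_{ik}\) sums over \(d-r\) indices while each \(\bm{A}_{ik}\) carries the normalization \(1/\sqrt n\).
\end{enumerate}
Once the identity holds, the left-hand side is positive semidefinite. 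Since \(\mathcal{G}_n(\bm{Z})\preceq\lambda_1(\mathcal{G}_n(\bm{Z}))\,\bm{I}_p\), the claimed operator inequality follows.

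\textbf{Main obstacle.} I expect the hardest step to be finding the correct square when \(\bm{J}_n\) is indefinite. In the positive semidefinite case, \(\bm{P}^{\mathrm{free}}_n=\bm{X}\bm{X}^\ast\) and the classical argument applies directly. For signed weights, the negative part must be absorbed using the positivity \(\bm{W}_i\succ 0\) from the definition of \(\mathcal{D}_n\); this is precisely where the constraint defining \(\mathcal{D}_n\) is used.

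My first attempt will be a per-sample reduction. Since the index sets \(\{(i,k)\}_k\) are disjoint across \(i\), the creation operators for different samples have orthogonal ranges. This should let me verify a single-\(i\) matrix inequality on the subspace spanned by vacuum and one-particle vectors, and then sum over \(i\). If the exact identity proves elusive, I will fall back on an inequality obtained by Schur complement: positivity of the \(2\times2\) operator block
\[
\begin{pmatrix}\mathcal{G}_n(\bm{Z})\otimes 1 & \bm{X}^{\mathrm{free}}_n\\ (\bm{X}^{\mathrm{free}}_n)^\ast & \bm{J}_n^{-1}\otimes 1 \end{pmatrix}
\]
on the relevant subspace should give the same conclusion.

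Finally, the bound holds simultaneously for all \(\bm{Z}\in\mathcal{D}_n\) on the probability-one event where \(\|\bm{T}(\bm{y}_i)\|_{\mathrm{op}}\le C_T\) for all \(i\). Taking the infimum over \(\bm{Z}\) therefore yields the almost-sure statement.
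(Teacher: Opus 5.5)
Your plan is viable and genuinely different from the paper's, but it has a gap: the key identity is never produced. The paper does not work with $\bm{P}_n^{\mathrm{free}}$ directly. It computes $M_p(\C)$-valued free cumulants to show that $\bm{P}_n^{\mathrm{free}}$ has the same $M_p(\C)$-valued distribution as a smaller operator $\widehat{\bm{P}}_n^{\mathrm{free}}$ on the Fock space over $\C^n$. It then uses faithfulness of $\varphi_n$ to get $\lambda_{+,n}^{\mathrm{free}}\le\sup\operatorname{sp}(\widehat{\bm{P}}_n^{\mathrm{free}})$, and completes the square there. Realizing $s_{ik}=\ell_{ik}+\ell_{ik}^\ast$ directly would bypass both the cumulant matching and the support-versus-spectrum step; this is legitimate, since the vacuum is faithful on the $C^\ast$-algebra generated by the $s_{ik}$.

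However, you stop exactly where the content of the lemma lies. The square is only sought (``matching coefficients should produce\dots''), never exhibited, and neither fallback closes the gap.
\begin{itemize}
\item Checking a single-sample inequality on vacuum and one-particle vectors does not give an operator inequality. The two-creation terms $\ell_{ia}\ell_{ib}$ leave that subspace and act on every particle level.
\item Summing per-sample bounds would spend $\bm{Z}^{-1}$ once per sample, whereas $\mathcal{G}_n(\bm{Z})$ contains it only once.
\item The $2\times 2$ block with lower-right entry $\bm{J}_n^{-1}\otimes 1=\bm{J}_n\otimes 1$ cannot be positive semidefinite once some $\bm{T}(\bm{y}_i)$ has a negative eigenvalue. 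When $\bm{J}_n\succeq 0$, its positivity is equivalent to $\mathcal{G}_n(\bm{Z})\otimes 1\succeq\bm{P}_n^{\mathrm{free}}$, so that route is circular.
\end{itemize}

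The missing idea is to exploit your (correct) observation that different samples have orthogonal creation ranges, through projections on the whole space; this is how the single $\bm{Z}^{-1}$ gets shared. On the Fock space $\mathscr{F}$ over $\ell^2(\{(i,k)\})$, let $\bm{\ell}_i\colon\xi\mapsto\sum_k\bm{e}_k\otimes\ell_{ik}\xi$ map $\mathscr{F}$ into $\C^{d-r}\otimes\mathscr{F}$, and let $\bm{\ell}_i^{\top}\colon\bm{e}_k\otimes\xi\mapsto\ell_{ik}\xi$ map back. Then $\bm{\ell}_i^\ast\bm{\ell}_j=(d-r)\delta_{ij}$ and $(\bm{\ell}_i^{\top})^\ast\bm{\ell}_i^{\top}=\bm{I}$. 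Hence $\bm{\Pi}_i\coloneqq(d-r)^{-1}\bm{\ell}_i\bm{\ell}_i^\ast$ are mutually orthogonal projections, and expanding $s_{ia}s_{ib}$ gives
\[
\bm{P}_n^{\mathrm{free}}=\bm{A}_{0,n}\otimes\bm{I}+\frac{1}{n}\sum_{i=1}^n\bm{T}(\bm{y}_i)\otimes\bigl(\bm{\ell}_i\bm{\ell}_i^{\top}+(\bm{\ell}_i\bm{\ell}_i^{\top})^\ast\bigr)+\gamma_n\sum_{i=1}^n\bm{T}(\bm{y}_i)\otimes\bm{\Pi}_i,
\]
where $\bm{A}_{0,n}=\frac1n\sum_i\bm{T}(\bm{y}_i)$ and $\bm{I}$ is the identity of $\C^{d-r}\otimes\mathscr{F}$. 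Set $\bm{K}_{i,n}=\bm{Z}^{-1}-\gamma_n\bm{T}(\bm{y}_i)\succ\bm{0}$, $\bm{C}_{i,n}=\sqrt{\gamma_n/n}\,\bm{T}(\bm{y}_i)$, and $\bm{R}_i\coloneqq\bm{K}_{i,n}^{-1/2}\bm{C}_{i,n}\otimes\bm{\ell}_i^{\top}-(d-r)^{-1/2}\bm{K}_{i,n}^{1/2}\otimes\bm{\ell}_i^\ast$. A direct expansion then gives
\[
\mathcal{G}_n(\bm{Z})\otimes\bm{I}-\bm{P}_n^{\mathrm{free}}=\bm{Z}^{-1}\otimes\Bigl(\bm{I}-\sum_{i=1}^n\bm{\Pi}_i\Bigr)+\sum_{i=1}^n\bm{R}_i^\ast\bm{R}_i\succeq 0 .
\]
The condition $\bm{Z}\in\mathcal{D}_n$ is used precisely to absorb the signed gauge term, via $\bm{Z}^{-1}\otimes\bm{\Pi}_i-\gamma_n\bm{T}(\bm{y}_i)\otimes\bm{\Pi}_i=\bm{K}_{i,n}\otimes\bm{\Pi}_i$. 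Note that the coefficients act on $\bm{\ell}_i^{\top}$ and $\bm{\ell}_i^\ast$ directly, rather than through your $\bm{L}$ and $\bm{L}_\sharp^\ast$, which carry $|\bm{T}(\bm{y}_i)|^{1/2}$; this avoids complications when $\bm{T}(\bm{y}_i)$ is singular. With this identity, the analogue of the paper's $\bm{Z}^{-1}\otimes p_\Omega+\sum_i\bm{Y}_{i,n}\bm{Y}_{i,n}^\ast$, your argument would be complete.
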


\begin{proof}
Fix \(n\) and condition throughout on \(\mathcal{F}_n = \sigma (\bm{y}_1,\ldots, \bm{y}_n)\), so that the matrices \(\bm{T}(\bm{y}_1), \ldots, \bm{T}(\bm{y}_n)\) are deterministic. 

We first construct a Fock-space operator that will be shown to have the same \(M_p(\C)\)-valued distribution as \(\bm{P}_n^{\mathrm{free}}\). We use the standard full Fock-space realization of a free semicircular family; see, for example,~\cite[Chapter 7]{nica2006}. Let 
\[
\mathscr{F} (\C^n) \coloneqq \bigoplus_{k=0}^\infty (\C^n)^{\otimes k} = \C \Omega \oplus \bigoplus_{k=1}^\infty (\C^n)^{\otimes k} 
\]
be the full Fock space over \(\C^n\), where \(\Omega\) is the vacuum vector. We denote by
\[
\omega_\Omega (X) \coloneqq \langle \Omega, X \Omega \rangle , \quad X \in \mathcal{B} (\mathscr{F} (\C^n))
\]
the vacuum expectation state. Let \(\bm{f}_1,\ldots,\bm{f}_n\) be the standard orthonormal basis of \(\C^n\). For \(i\in[n]\), let \(l_i\) be the left creation operator associated with \(\bm{f}_i\), defined by 
\[
l_i \Omega = \bm{f}_i
\]
and 
\[
l_i (\bm{a}_1 \otimes \cdots \otimes \bm{a}_k) \coloneqq \bm{f}_i \otimes \bm{a}_1 \otimes \cdots \otimes \bm{a}_k,
\]
for all \(\bm{a}_1, \ldots, \bm{a}_k \in \C^n\) and \(k \ge 1\). The creation operators satisfy
\begin{equation} \label{eq:vacuum_identity}
l_i^\ast l_j = \delta_{ij}\bm{I}_{\mathscr{F} (\C^n)}, \quad \sum_{i=1}^n l_i l_i^\ast = \bm{I}_{\mathscr{F} (\C^n)} - p_\Omega,
\end{equation}
where \(p_\Omega\) denotes the orthogonal projection onto the vacuum space \(\C \Omega\). Define
\begin{equation}\label{eq:fock_free_model}
\widehat{\bm{P}}_n^{\mathrm{free}} \coloneqq \bm{A}_{0,n} \otimes \bm{I}_{\mathscr{F} (\C^n)} + \sum_{i=1}^n \bm{C}_{i,n} \otimes (l_i + l_i^\ast) + \sum_{i=1}^n \bm{D}_{i,n} \otimes l_i l_i^\ast,
\end{equation}
where
\[
\bm{A}_{0,n} \coloneqq \frac{1}{n} \sum_{i=1}^n \bm{T}(\bm{y}_i), \quad \bm{C}_{i,n} \coloneqq \sqrt{\frac{\gamma_n}{n}} \bm{T} (\bm{y}_i), \quad \bm{D}_{i,n} \coloneqq \gamma_n \bm{T}(\bm{y}_i).
\]
Thus, \(\widehat{\bm{P}}_n^{\mathrm{free}} \in M_p(\C)\otimes \mathcal{B} (\mathscr{F} (\C^n))\) is a bounded self-adjoint operator.

We now claim that \(\bm{P}_n^{\mathrm{free}}\) and \(\widehat{\bm{P}}_n^{\mathrm{free}}\) have the same \(M_p(\C)\)-valued distribution. We begin by computing the \(M_p(\C)\)-valued free cumulants of \(\bm{P}_n^{\mathrm{free}}\). Define the \(M_p(\C)\)-valued expectation
\[
\E_n \colon M_{p(d-r)} (\mathcal{A}) \cong M_p(\C) \otimes M_{d-r} (\C) \otimes \mathcal{A} \to M_p (\C)
\]
by
\[
\E_n \coloneqq \operatorname{id}_{M_p} \otimes \tr_{d-r} \otimes \tau.
\]
Let \(\kappa_m^{(n)}\) denote the \(m\)th \(M_p(\C)\)-valued free cumulant map of \(\bm{P}_n^{\mathrm{free}}\) with respect to \(\E_n\), that is,
\[
\kappa_1^{(n)} \coloneqq \E_n [\bm{P}_n^{\mathrm{free}}],
\]
and, for \(m \ge 2\), 
\[
\kappa_m^{(n)} (\bm{B}_1,\ldots, \bm{B}_{m-1}) \coloneqq \kappa_m^{\E_n} \left ( \bm{P}_n^{\mathrm{free}} \bm{B}_1,\bm{P}_n^{\mathrm{free}} \bm{B}_2, \ldots, \bm{P}_n^{\mathrm{free}} \bm{B}_{m-1},\bm{P}_n^{\mathrm{free}} \right),
\]
for \(\bm{B}_1, \ldots, \bm{B}_{m-1} \in M_p(\C)\). Under the canonical identification above, the operator \(\bm{P}_n^{\mathrm{free}}\) defined in~\eqref{eq:signed_free_model} can be written as
\begin{equation} \label{eq:P_free_W_decomposition}
\bm{P}_n^{\mathrm{free}} = \frac{1}{n} \sum_{i=1}^n\bm{T}(\bm{y}_i)\otimes\bm{W}_i,
\end{equation}
where
\[
\bm{W}_i \coloneqq \sum_{1\le a,b\le d-r} \bm{e}_a \bm{e}_b^\top \otimes s_{ia} s_{ib} \in
M_{d-r}(\C) \otimes\mathcal{A}.
\]
Set \(\varphi \coloneqq \tr_{d-r} \otimes \tau\). We first compute the scalar free cumulants of the family \((\bm{W}_i)_{i=1}^n\) with respect to \(\varphi\). Clearly,
\[
\kappa_1^\varphi (\bm{W}_i) = \varphi (\bm{W}_i)=1.
\]
For \(m\geq2\), the free-cumulant formula with product as arguments (see e.g.~\cite[Chapter 2, Theorem 13]{mingo2017}) expresses \(\kappa_m^\varphi (\bm{W}_{i_1},\ldots,\bm{W}_{i_m})\) as a sum over noncrossing pairings of the \(2m\) semicircular factors that connect the blocks of the interval partition
\[
\rho_m = \left \{ \{1,2\},\{3,4\},\ldots,\{2m-1,2m\} \right \}.
\]
Since \((s_{ia})_{i,a}\) is a free semicircular family, the only contributing connected pairing is
\[
\pi_m = \left \{ \{2,3\},\{4,5\},\ldots, \{2m-2,2m-1\},\{1,2m\} \right \}.
\]
This pairing forces \(i_1=\cdots=i_m\). Moreover, the matrix-trace constraints leave \(m\) free index sums, while the normalized trace contributes a factor \((d-r)^{-1}\), yielding the factor \((d-r)^{m-1}\). For instance, the case \(m=2\) makes the index contraction explicit. We have
\[
\varphi (\bm{W}_i \bm{W}_j) =\frac{1}{d-r} \sum_{a,b=1}^{d-r} \tau (s_{ia} s_{ib} s_{jb} s_{ja}) = 1 + (d-r) \mathbf{1}_{\{i=j\}},
\]
where the first term corresponds to the pairing \(\{\{1,2\},\{3,4\}\}\), while the second corresponds to the connected pairing \(\{\{1,4\},\{2,3\}\}\). Since \(\varphi (\bm{W}_i) = \varphi (\bm{W}_j) =1\), it follows that
\[
\kappa_2^\varphi (\bm{W}_i,\bm{W}_j) = (d-r) \mathbf{1}_{\{i=j\}}.
\]
For general \(m \ge 2\), the same contraction pattern associated with \(\pi_m\) forces \(i_1 = \cdots =i_m\) and leaves \(m-1\) free matrix indices, yielding
\[
\kappa_m^\varphi (\bm{W}_{i_1},\ldots,\bm{W}_{i_m}) = (d-r)^{m-1} \mathbf{1}_{\{i_1=\cdots=i_m\}}, \qquad m \ge 2.
\]
By the relation between scalar-valued and matrix-valued cumulants (see e.g.~\cite[Chapter 9, Proposition 13]{mingo2017}), together with multilinearity, we obtain
\[
\kappa_1^{(n)} =\E_n [\bm{P}_n^{\mathrm{free}}] = \frac{1}{n} \sum_{i=1}^n \bm{T} ( \bm{y}_i),
\]
and, for every \(m \ge 2\),
\[
\begin{split}
\kappa_m^{(n)} (\bm{B}_1,\ldots,\bm{B}_{m-1}) 
& = \frac{1}{n^m} \sum_{1 \le i_1,\ldots, i_m \le n}  \bm{T}(\bm{y}_{i_1}) \bm{B}_1 \bm{T}(\bm{y}_{i_2}) \cdots  \bm{B}_{m-1} \bm{T}(\bm{y}_{i_m}) \kappa_m^\varphi (\bm{W}_{i_1}, \ldots, \bm{W}_{i_m}) \\
& = \frac{\gamma_n^{m-1}}{n} \sum_{i=1}^n \bm{T}(\bm{y}_i) \bm{B}_1 \bm{T}(\bm{y}_i)\cdots \bm{B}_{m-1} \bm{T}(\bm{y}_i).
\end{split}
\]
We next compute the \(M_p(\C)\)-valued cumulants of \(\widehat{\bm{P}}_n^{\mathrm{free}}\). Introduce the \(M_p(\C)\)-valued expectation
\[
\widehat{\E}_n \colon M_p (\C) \otimes \mathcal{B} (\mathscr{F} (\C^n)) \to Mp(\C), \qquad \widehat{\E}_n \coloneqq \operatorname{id}_{M_p} \otimes \omega_\Omega,
\]
and define the cumulant maps  \(\widehat{\kappa}_m^{(n)}\) analogously to \(\kappa_m^{(n)}\). Let \(\bm{Q}_i \coloneqq \bm{f}_i \bm{f}_i^\ast\) be the orthogonal projection onto \(\C \bm{f}_i\). The corresponding gauge operator associated is 
\[
\Lambda (\bm{Q}_i) = l_i l_i^\ast.
\]
Since \(\omega_\Omega(l_i) = \omega_\Omega (l_i^\ast) = \omega_\Omega (\Lambda (\bm{Q}_i) )=0\), we have
\[
\widehat{\kappa}_1^{(n)} = \bm{A}_{0,n} = \frac{1}{n} \sum_{i=1}^n\bm{T} (\bm{y}_i) .
\]
For \(m \ge 3\), the Fock-space cumulant formula~\cite[Proposition~13.5]{nica2006}, gives
\[
\begin{split}
\widehat{\kappa}_m^{(n)} (\bm{B}_1,\ldots, \bm{B}_{m-1})  & = \sum_{1 \le i_1, \ldots, i_m \le n} \bm{C}_{i_1,n}  \bm{B}_1 \bm{D}_{i_2,n} \bm{B}_2 \cdots \bm{D}_{i_{m-1},n} \bm{B}_{m-1} \bm{C}_{i_m,n} \langle \bm{f}_{i_1}, \bm{Q}_{i_2} \cdots \bm{Q}_{i_{m-1}} \bm{f}_{i_m}\rangle \\
&  =\sum_{i=1}^n \bm{C}_{i,n}  \bm{B}_1 \bm{D}_{i,n} \bm{B}_2 \cdots \bm{D}_{i,n} \bm{B}_{m-1} \bm{C}_{i,n}\\
& = \frac{\gamma_n^{m-1}}{n} \sum_{i=1}^n \bm{T}(\bm{y}_i) \bm{B}_1 \bm{T}(\bm{y}_i)  \cdots \bm{B}_{m-1} \bm{T} (\bm{y}_i),
\end{split}
\]
because the projections \((\bm{Q}_i)_{i=1}^n\) are mutually orthogonal and \(\langle \bm{f}_{i_1}, \bm{Q}_{i_2} \cdots \bm{Q}_{i_{m-1}} \bm{f}_{i_m}\rangle = \mathbf{1}_{\{i_1 = \cdots = i_m\}}\). For \(m=2\), the gauge factors are absent, so therefore
\[
\widehat{\kappa}_2^{(n)} (\bm{B}_1) = \sum_{1 \le i,j \le n} \bm{C}_{i,n} \bm{B}_1 \bm{C}_{j,n} \langle \bm{f}_i,\bm{f}_j\rangle = \sum_{i=1}^n  \bm{C}_{i,n} \bm{B}_1 \bm{C}_{i,n} = = \frac{\gamma_n}{n} \sum_{i=1}^n \bm{T}(\bm{y}_i) \bm{B}_1 \bm{T}(\bm{y}_i)  .
\]
Thus, for every \(m \ge 1\),
\[
\widehat{\kappa}_m^{(n)} = \kappa_m^{(n)}.
\]
By the operator-valued moment-cumulant formula, equality of these cumulant maps for every \(m \ge 1\) implies that the two operators have the same \(M_p(\C)\)-valued distribution. 

Define the scalar states \(\varphi_n \coloneqq \tr_p \circ \E_n\) and \(\widehat{\varphi}_n \coloneqq \tr_p \circ \widehat{\E}_n\). Equality of the \(M_p(\C)\)-valued distributions implies that \(\bm{P}_n^{\mathrm{free}}\) and \(\widehat{\bm{P}}_n^{\mathrm{free}}\) have the same scalar moments with respect to \(\varphi_n\) and \(\widehat{\varphi}_n\), respectively. Since both operators are bounded and self-adjoint, their scalar distributions coincide. We denote the common distribution by \(\mu_n^{\mathrm{free}}\).

The state \(\varphi_n\) is faithful because the normalized matrix traces and \(\tau\) are faithful. Consequently, the support of the spectral distribution of \(\bm{P}_n^{\mathrm{free}}\) coincides with its spectrum:
\begin{equation}\label{eq:support_equals_free_spectrum}
\supp (\mu_n^{\mathrm{free}}) = \operatorname{sp}(\bm{P}_n^{\mathrm{free}}).
\end{equation}
Indeed, the inclusion
\[
\supp(\mu_n^{\mathrm{free}}) \subseteq \operatorname{sp}(\bm{P}_n^{\mathrm{free}})
\]
holds for the spectral distribution of any self-adjoint element. For the reverse inclusion, let \(U \subseteq \R\) be open and suppose that \(U \cap \operatorname{sp}(\bm{P}_n^{\mathrm{free}}) \neq \emptyset\). By the continuous functional calculus, there exists a nonzero function \(f \in C\left(  \operatorname{sp}(\bm{P}_n^{\mathrm{free}}) \right)\) such that \(0\le f\le1\) and \(\supp (f) \subseteq U\). Then
\[
f(\bm{P}_n^{\mathrm{free}})\succeq 0, \qquad f(\bm{P}_n^{\mathrm{free}})\neq 0.
\]
Since \(\varphi_n\) is faithful, we have
\[
0 < \varphi_n \left[ f(\bm{P}_n^{\mathrm{free}}) \right] = \int_\R  f(\lambda) \mu_n^{\mathrm{free}}(\mathrm{d} \lambda) \le \mu_n^{\mathrm{free}} (U).
\]
Thus every open set intersecting \(\operatorname{sp}(\bm{P}_n^{\mathrm{free}})\) has positive \(\mu_n^{\mathrm{free}}\)-measure, which proves~\eqref{eq:support_equals_free_spectrum}. On the Fock-space side, the vacuum state \(\omega_\Omega\), and hence \(\widehat{\varphi}_n\), need not be faithful. We therefore only have
\[
\supp (\mu_n^\mathrm{free}) \subseteq \operatorname{sp} ( \widehat{\bm{P}}_n^{\mathrm{free}}).
\]
In particular, 
\begin{equation} \label{eq:lambda_free_UB}
\lambda_{+,n}^{\mathrm{free}} = \sup \operatorname{sp} (\bm{P}_n^{\mathrm{free}}) = \sup \operatorname{supp} (\mu_n^\mathrm{free})   \le \sup \operatorname{sp} ( \widehat{\bm{P}}_n^{\mathrm{free}}).
\end{equation}

The last step is to provide an upper bound for \(\sup \operatorname{sp} ( \widehat{\bm{P}}_n^{\mathrm{free}})\). For every \(\bm{Z} \in \mathcal{D}_n\) and \(i \in [n]\), define
\[
\bm{K}_{i,n} \coloneqq \bm{Z}^{-1} - \gamma_n \bm{T} (\bm{y}_i) \succ \bm{0}_p.
\]
We then write
\[
\mathcal{G}_n(\bm{Z}) = \bm{A}_{0,n} + \bm{Z}^{-1} + \sum_{i=1}^n \bm{C}_{i,n} \bm{K}_{i,n}^{-1} \bm{C}_{i,n},
\]
where we used that
\[
\bm{T}(\bm{y}_i) \left (\bm{I}_p - \gamma_n \bm{Z} \bm{T}(\bm{y}_i) \right )^{-1} = \bm{T} (\bm{y}_i) + \gamma_n \bm{T} (\bm{y}_i) \bm{K}_{i,n}^{-1} \bm{T} (\bm{y}_i) .
\]
In particular, this representation shows that \(\mathcal{G}_n(\bm{Z})\) is self-adjoint. 
Define
\[
\bm{Y}_{i,n}  \coloneqq \bm{C}_{i,n} \bm{K}_{i,n}^{-1/2} \otimes I_{\mathscr{F}(\C^n)} - \bm{K}_{i,n}^{1/2} \otimes l_i.
\]
Expanding the product \(\bm{Y}_{i,n} \bm{Y}_{i,n}^\ast\) gives
\[
\bm{Y}_{i,n} \bm{Y}_{i,n}^\ast =  \bm{C}_{i,n} \bm{K}_{i,n}^{-1}\bm{C}_{i,n}  \otimes I_{\mathscr{F}(\C^n)}  - \bm{C}_{i,n} \otimes (l_i + l_i^\ast) + \bm{K}_{i,n} \otimes l_i l_i^\ast.
\]
Since \(\bm{K}_{i,n} = \bm{Z}^{-1} - \bm{D}_{i,n}\) and \(p_\Omega + \sum_{i=1}^n l_i l_i^\ast= I_{\mathscr{F}(\C^n)} \) by~\eqref{eq:vacuum_identity}, we obtain the identity
\[
\mathcal{G}_n(\bm{Z}) \otimes I_{\mathscr{F}(\C^n)}  - \widehat{\bm{P}}_n^{\mathrm{free}} = \bm{Z}^{-1} \otimes p_\Omega + \sum_{i=1}^n \bm{Y}_{i,n} \bm{Y}_{i,n}^\ast.
\]
The right-hand side of the above identity is positive semidefinite. Therefore
\[
\widehat{\bm{P}}_n^{\mathrm{free}} \preceq \mathcal{G}_n(\bm{Z}) \otimes I_{\mathscr{F}(\C^n)} ,
\]
and hence
\[
\sup \operatorname{sp} ( \widehat{\bm{P}}_n^{\mathrm{free}}) \le \lambda_1 (\mathcal{G}_n (\bm{Z})).
\]
Combining this inequality with~\eqref{eq:lambda_free_UB} and taking the infimum over \(\bm{Z} \in\mathcal{D}_n\), we obtain \(\lambda_{+,n}^{\mathrm{free}} \le \inf_{\bm{Z} \in \mathcal{D}_n} \lambda_1 (\mathcal{G}_n (\bm{Z}))\). Since the argument holds for almost every realization of \((\bm{y}_1,\ldots,\bm{y}_n)\), the claim follows.
\end{proof}

Thus, to control \(\lambda_{+,n}^{\mathrm{free}}\), it suffices to evaluate the finite-\(n\) variational bound at a suitable deterministic test matrix derived from the limiting Dyson equation.

\begin{lem} \label{lem:free_upper_edge}
For every \(\varepsilon > 0\),
\[
\mathbb{P} \left ( \lambda_{+,n}^{\mathrm{free}} \ge \lambda_+(\alpha) + \varepsilon \right ) \to 0.
\]
\end{lem}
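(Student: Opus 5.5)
The plan is to evaluate the variational bound of Lemma~\ref{lem:finite_variational_formula} at a deterministic test matrix built from the physical solution $\bm{M}_\alpha(x)$ at a point $x$ slightly above the edge. Given $\varepsilon>0$, fix $x\in(\lambda_+(\alpha),\lambda_+(\alpha)+\varepsilon/2)$. By Lemma~\ref{lem:continuation_off_support} and the proof of Lemma~\ref{lem:bound_solution}, $-\bm{M}_\alpha(x)\succ \bm{0}$, since $-\bm{M}_\alpha(x)=\int \bm{\Omega}_\alpha(\mathrm{d}\lambda)/(x-\lambda)\succ\bm{0}$. This suggests the test matrix
\[
\bm{Z}_x\coloneqq -\alpha\,\bm{M}_\alpha(x)\cdot\alpha^{-1}\gamma_n^{-1}\alpha^{-1}\,\alpha = -\bm{M}_\alpha(x)\,/\,(\alpha\gamma_n).
\]
In the limit $\gamma_n\to\alpha^{-1}$ this is simply $\bm{Z}_x\approx -\bm{M}_\alpha(x)$, up to the normalization dictated by matching $\bm{I}_p-\gamma_n\bm{Z}\bm{T}$ with $\bm{I}_p+\alpha^{-1}\bm{M}_\alpha(x)\bm{T}$. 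With $\bm{Z}=-\bm{M}_\alpha(x)$ and $\gamma_n$ replaced by $1/\alpha$, the MDE of Remark~\ref{rmk:physical_solution} gives the exact identity
\[
\bm{Z}^{-1}+\E_{\bm{y}}\big[\bm{T}(\bm{y})(\bm{I}_p-\alpha^{-1}\bm{Z}\bm{T}(\bm{y}))^{-1}\big] = -\bm{M}_\alpha(x)^{-1}+\big(\bm{M}_\alpha(x)^{-1}+x\bm{I}_p\big)=x\bm{I}_p.
\]
So the population version of $\mathcal{G}_n$ at this point equals $x\bm{I}_p$, and its $\lambda_1$ is $x<\lambda_+(\alpha)+\varepsilon/2$.

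The steps, in order, are as follows.

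\begin{enumerate}
\item Verify admissibility, namely that $\bm{Z}^{-1}-\gamma_n\bm{T}(\bm{y}_i)\succ\bm{0}$ for all $i$ with high probability. At the population level this requires $-\bm{M}_\alpha(x)^{-1}-\alpha^{-1}\bm{T}(\bm{y})\succeq c\bm{I}_p$ for almost every $\bm{y}$, with a uniform margin $c>0$. Then, since $\gamma_n\to\alpha^{-1}$ deterministically and $\|\bm{T}\|_{\mathrm{op}}\le C_T$, the finite-$n$ condition holds for all large $n$ almost surely, with a margin of $c/2$.
\item Control the fluctuations. Write
\[
\mathcal{G}_n(\bm{Z})=\bm{Z}^{-1}+\frac1n\sum_i \bm{T}(\bm{y}_i)\big(\bm{I}_p-\gamma_n\bm{Z}\bm{T}(\bm{y}_i)\big)^{-1}.
\]
On the admissible set the summands are uniformly bounded, because the inverse equals $\bm{Z}^{-1/2}\big(\bm{I}-\gamma_n\bm{Z}^{1/2}\bm{T}\bm{Z}^{1/2}\big)^{-1}\bm{Z}^{-1/2}$ up to conjugation, with a controlled norm. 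The law of large numbers for the i.i.d.\ bounded $p\times p$ summands, together with Lipschitz continuity in $\gamma_n$, then gives $\mathcal{G}_n(\bm{Z})\to x\bm{I}_p$ almost surely.
\item Conclude from Lemma~\ref{lem:finite_variational_formula} that $\lambda_{+,n}^{\mathrm{free}}\le\lambda_1(\mathcal{G}_n(\bm{Z}))\le x+\varepsilon/2<\lambda_+(\alpha)+\varepsilon$ with probability tending to one.
\end{enumerate}

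The main obstacle is step (1): the positivity $-\bm{M}_\alpha(x)^{-1}\succ\alpha^{-1}\bm{T}(\bm{y})$ uniformly in $\bm{y}$. Lemma~\ref{lem:uniform_denominator_bound} only gives invertibility of $\bm{I}_p+\alpha^{-1}\bm{M}_\alpha(x)\bm{T}(\bm{y})$, not the sign. I would argue by continuity in $x$. For $x\to\infty$ we have $\bm{M}_\alpha(x)\approx -x^{-1}\bm{I}_p$, so $-\bm{M}_\alpha(x)^{-1}\approx x\bm{I}_p\succ\alpha^{-1}\bm{T}(\bm{y})$. As $x$ decreases to any point above $\lambda_+(\alpha)$, the matrix
\[
\bm{N}_{\bm{y}}(x)\coloneqq -\bm{M}_\alpha(x)^{-1}-\alpha^{-1}\bm{T}(\bm{y})=-\bm{M}_\alpha(x)^{-1}\Big(\bm{I}_p+\alpha^{-1}\bm{M}_\alpha(x)\bm{T}(\bm{y})\Big)
\]
is a product of an invertible matrix and a matrix that is invertible with uniformly bounded inverse, by Lemma~\ref{lem:uniform_denominator_bound}. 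Hence $\det\bm{N}_{\bm{y}}(x)\neq0$ along $(\lambda_+(\alpha),\infty)$. Since $\bm{N}_{\bm{y}}(x)$ is real symmetric and depends continuously on $x$, no eigenvalue can cross zero, so it stays positive definite. For a uniform margin I would use the quantitative bounds $\|\bm{N}_{\bm{y}}^{-1}\|_{\mathrm{op}}\le\|(\bm{I}+\alpha^{-1}\bm{M}\bm{T})^{-1}\|_{\mathrm{op}}\|\bm{M}_\alpha(x)\|_{\mathrm{op}}\le C_{\varepsilon,L}^2$ from Lemmas~\ref{lem:bound_solution}--\ref{lem:uniform_denominator_bound}. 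These give $\bm{N}_{\bm{y}}(x)\succeq C_{\varepsilon,L}^{-2}\bm{I}_p$ for almost every $\bm{y}$, which is the required margin for step (1).
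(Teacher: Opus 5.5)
Your proposal is correct and is essentially the paper's proof. It uses the same test matrix $-\bm{M}_\alpha(x)$ at a point just above the edge, the same eigenvalue-continuity argument from large $x$ (via Lemma~\ref{lem:uniform_denominator_bound}) to get $-\bm{M}_\alpha(x)^{-1}-\alpha^{-1}\bm{T}(\bm{y})\succeq c\bm{I}_p$ with a uniform margin, the real-axis Dyson equation giving $\mathcal{G}_\alpha(\bm{Z})=x\bm{I}_p$, and the law of large numbers combined with Lipschitz dependence on $\gamma_n$. One cosmetic slip: the chain of factors in your displayed definition of $\bm{Z}_x$ simplifies to $-\gamma_n^{-1}\bm{M}_\alpha(x)$, which is off by a factor of $\alpha$; the right choice is the $-\bm{M}_\alpha(x)$ you actually use (or equally $-\bm{M}_\alpha(x)/(\alpha\gamma_n)$, which makes the inner inverse match exactly).
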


\begin{proof}
Fix \(\varepsilon > 0\) and set \(E \coloneqq \lambda_+(\alpha)+\frac{\varepsilon}{2}\). Define \(\bm{Z}_E \coloneqq - \bm{M}_\alpha (E)\). The proof proceeds in three steps. We first show that \(\bm{Z}_E\) is positive definite and satisfies a uniform strict admissibility condition. We then evaluate the limiting matrix Dyson equation at \(E\). Finally, we use \(\bm{Z}_E\) as a test matrix in the finite-\(n\) variational formula of Lemma~\ref{lem:finite_variational_formula} and pass to the limit.

We first show that \(\bm{Z}_E \succ \bm{0}_p\). By Lemma~\ref{lem:continuation_off_support}, \(\bm{M}_\alpha\) admits an analytic continuation to \(\C \setminus \supp(\mu_\alpha)\), is self-adjoint on \(\R \setminus \supp(\mu_\alpha)\), and has the representation
\[
\bm{M}_{\alpha}(z) = \int_\R \frac{1}{x-z} \bm{\Omega}_{\alpha}(\mathrm{d}x),
\]
where \(\bm{\Omega}_{\alpha}\) is a positive semidefinite matrix-valued measure satisfying \(\bm{\Omega}_{\alpha}(\R)=\bm{I}_p\) and \( \supp(\bm{\Omega}_\alpha) = \supp(\mu_\alpha)\). Hence, for every nonzero \(\bm{v}\in\C^p\),
\[
\bm{v}^\ast \bm{M}_\alpha(E) \bm{v} = \int_\R \frac{\bm{v}^\ast \bm{\Omega}_\alpha (\mathrm{d}x) \bm{v}}{x-E} <0.
\]
Indeed, the scalar measure \(\bm{v}^\ast \bm{\Omega}_\alpha (\cdot) \bm{v}\) is nonzero, since its total mass is \(\|\bm{v}\|_2^2\) and \(x-E<0\) throughout its support. Therefore, \(\bm{M}_{\alpha}(E)\prec\bm{0}_p\) and so \(\bm{Z}_E \succ \bm{0}_p\). Moreover, 
\begin{equation} \label{eq:ZE_bounds}
\bm{0}_p \prec \bm{Z}_E \preceq \frac{1}{E-\lambda_+(\alpha)} \bm{I}_p.
\end{equation}

We next prove that \(\bm{Z}_E\) is uniformly strictly admissible. This is the key point that allows the same deterministic test matrix to be used in the finite-\(n\) variational formula for all sufficiently
large \(n\). For \(t \ge E\), define 
\[
\bm{Z}_t \coloneqq - \bm{M}_\alpha (t),
\]
and
\[
\bm{H}_t (\bm{y}) \coloneqq \bm{Z}_t^{-1} - \frac{1}{\alpha} \bm{T}(\bm{y}).
\]
By the same argument used for \(\bm{Z}_E\), we have \(\bm{Z}_t\succ\bm{0}_p\) for every \(t>\lambda_+(\alpha)\). In particular, \(\bm{H}_t(\bm{y})\) is self-adjoint. Choose \(E_0>E\) sufficiently large that 
\[
E_0 - \lambda_+(\alpha) > \frac{C_T}{\alpha},
\]
where \(C_T\) is the constant from Assumption~\ref{hyp:preprocessing}. By~\eqref{eq:ZE_bounds},
\[
\bm{H}_{E_0} (\bm{y}) \succeq \left (E_0 - \lambda_+(\alpha) \right) \bm{I}_p - \frac{1}{\alpha} \bm{T}(\bm{y}) \succeq \left (E_0-\lambda_+(\alpha)-\frac{C_T}{\alpha} \right) \bm{I}_p \succ \bm{0}_p
\]
almost surely. We then apply Lemma~\ref{lem:uniform_denominator_bound} on a compact off-support neighborhood containing \([E,E_0]\). Outside a single null set, the matrices
\[
\bm{I}_p + \frac{1}{\alpha} \bm{M}_\alpha(t) \bm{T} (\bm{y}) = \bm{I}_p - \frac{1}{\alpha} \bm{Z}_t \bm{T} (\bm{y}) 
\]
are invertible for every \(t \in [E,E_0]\). Since
\[
\bm{H}_t (\bm{y}) = \bm{Z}_t^{-1} \left ( \bm{I}_p - \frac{1}{\alpha} \bm{Z}_t \bm{T}(\bm{y}) \right ),
\]
the invertibility of \(\bm{I}_p-\alpha^{-1}\bm{Z}_t\bm{T}(\bm{y})\) implies that
\(\bm{H}_t(\bm{y})\) is invertible. For fixed \(\bm{y}\) outside the null set, the map \(t \mapsto\bm{H}_t(\bm{y})\) is continuous and self-adjoint. Since \(\bm{H}_{E_0} (\bm{y}) \succ \bm{0}_p\), if \(\bm{H}_E (\bm{y})\) were not positive definite, then some eigenvalue would have to vanish at an intermediate point \(t \in [E,E_0]\), contradicting the invertibility of \(\bm{H}_t(\bm{y})\). Therefore
\[
\bm{H}_E (\bm{y}) \succ \bm{0}_p \quad \text{a.s.}
\]
The positivity is in fact uniform. Indeed,
\[
\bm{H}_E (\bm{y})^{-1} = \left ( \bm{I}_p - \frac{1}{\alpha} \bm{Z}_E \bm{T} (\bm{y}) \right )^{-1} \bm{Z}_E.
\]
Lemma~\ref{lem:uniform_denominator_bound} gives
\[
\operatorname*{ess \,sup}_{\bm{y}} \| \bm{H}_E (\bm{y})^{-1} \|_{\mathrm{op}} <\infty.
\]
Set \(c_E \coloneqq \left ( \operatorname*{ess \,sup}_{\bm{y}} \| \bm{H}_E (\bm{y})^{-1} \|_{\mathrm{op}}\right)^{-1} >0\). Then
\begin{equation}\label{eq:strict_admissibility_E}
\bm{H}_E(\bm{y})=\bm{Z}_E^{-1} - \frac{1}{\alpha} \bm{T} (\bm{y}) \succeq c_E \bm{I}_p \quad \mathrm{a.s.}
\end{equation}

We now evaluate the matrix Dyson equation~\eqref{eq:MDE} at \(E\). On \(\C_+\),
\[
\bm{M}_\alpha(z)^{-1} = -z \bm{I}_p + \E_{\bm{y}} \left[ \bm{T}(\bm{y}) \left( \bm{I}_p+
\frac{1}{\alpha} \bm{M}_\alpha(z) \bm{T}(\bm{y}) \right)^{-1} \right].
\]
Lemma~\ref{lem:uniform_denominator_bound}, together with \(\|\bm{T}(\bm{y})\|_{\mathrm{op}}\le C_T\) from Assumption~\ref{hyp:preprocessing}, provides an integrable uniform bound for the Dyson-equation integrand in a complex neighborhood of \(E\). Since \(\bm{M}_\alpha(z)\to\bm{M}_\alpha(E)\) as \(z\to E\) from
\(\C_+\), dominated convergence yields
\begin{equation}\label{eq:Galpha_at_ZE}
\bm{Z}_E^{-1} + \E_{\bm{y}} \left[ \bm{T} (\bm{y}) \left( \bm{I}_p - \frac{1}{\alpha} \bm{Z}_E \bm{T} (\bm{y}) \right)^{-1} \right] = E\bm{I}_p.
\end{equation}

We next use \(\bm{Z}_E\) as a test matrix in the finite-\(n\) variational formula of Lemma~\ref{lem:finite_variational_formula}. By~\eqref{eq:strict_admissibility_E},
\[
\bm{Z}_E^{-1} - \gamma_n \bm{T}(\bm{y}_i) = \bm{H}_E (\bm{y}_i) - \left (\gamma_n - \frac{1}{\alpha} \right) \bm{T} (\bm{y}_i) \succeq \left (c_E - C_T |\gamma_n - \alpha^{-1}| \right ) \bm{I}_p. 
\]
Since \(\gamma_n\to\alpha^{-1}\), it follows that, almost surely, for all sufficiently large \(n\),
\[
\bm{Z}_E^{-1} - \gamma_n \bm{T}(\bm{y}_i)  \succeq \frac{c_E}{2}\bm{I}_p
\]
simultaneously for every \(1\le i\le n\). Hence \(\bm{Z}_E \in \mathcal{D}_n\) eventually almost surely. Lemma~\ref{lem:finite_variational_formula} then gives
\begin{equation}\label{eq:finite_var_formula2}
\lambda_{+,n}^{\mathrm{free}} \le \lambda_1 \left (\mathcal{G}_n (\bm{Z}_E) \right)
\end{equation}
for all sufficiently large \(n\), almost surely.

It remains to show that \(\mathcal{G}_n(\bm{Z}_E) \to E \bm{I}_p\) almost surely. For \(\gamma>0\), define
\[
\bm{R}_\gamma (\bm{y}) \coloneqq \left ( \bm{I}_p  - \gamma \bm{Z}_E \bm{T} (\bm{y})\right)^{-1}.
\]
For \(\gamma\) sufficiently close to \(\alpha^{-1}\),
\[
\bm{Z}_E^{-1} - \gamma\bm{T} (\bm{y}) \succeq \frac{c_E}{2} \bm{I}_p
\]
almost surely. Since
\[
\bm{R}_\gamma (\bm{y}) =  \left( \bm{Z}_E^{-1} - \gamma \bm{T} (\bm{y}) \right)^{-1} \bm{Z}_E^{-1},
\]
the family \(\bm{R}_\gamma (\bm{y})\) is uniformly bounded in operator norm. The resolvent identity gives
\[
\bm{R}_\gamma (\bm{y}) - \bm{R}_{\alpha^{-1}} ( \bm{y}) = \left ( \gamma-\alpha^{-1} \right) \bm{R}_\gamma (\bm{y}) \bm{Z}_E \bm{T}(\bm{y}) \bm{R}_{\alpha^{-1}} (\bm{y}).
\]
Let \(\bm{F}_\gamma (\bm{y}) \coloneqq \bm{T} (\bm{y}) \bm{R}_\gamma (\bm{y})\). Then, for some deterministic \(C_E<\infty\),
\[
\operatorname*{ess \, sup}_{\bm{y}} \left \| \bm{F}_{\gamma_n} (\bm{y}) - \bm{F}_{\alpha^{-1}} (\bm{y})
\right \|_{\mathrm{op}} \le C_E \left| \gamma_n-\alpha^{-1} \right| \to 0.
\]
Consequently,
\[
\begin{split} 
\left \| \frac{1}{n} \sum_{i=1}^n \bm{F}_{\gamma_n} (\bm{y}_i)  - \E_{\bm{y}} \left [ \bm{F}_{\alpha^{-1}} (\bm{y})  \right ] \right \|_{\mathrm{op}} & \le \operatorname*{ess \, sup}_{\bm{y}}  \left \| \bm{F}_{\gamma_n}(\bm{y}) - \bm{F}_{\alpha^{-1}}(\bm{y}) \right \|_{\mathrm{op}} \\
& \quad + \left \| \frac{1}{n} \sum_{i=1}^n  \bm{F}_{\alpha^{-1}} (\bm{y}_i)  - \E_{\bm{y}} \left [ \bm{F}_{\alpha^{-1}} (\bm{y})  \right ] \right \|_{\mathrm{op}}  .
\end{split}
\]
The first term tends to zero. The second tends to zero almost surely by the strong law of large numbers applied entrywise (here we use that \(p\) is fixed and that \(\bm{F}_{\alpha^{-1}}(\bm{y})\) is uniformly bounded). It follows from~\eqref{eq:Galpha_at_ZE} that
\[
\mathcal{G}_n(\bm{Z}_E) \to  E \bm{I}_p \quad \mathrm{a.s.}
\]
Combining this convergence with~\eqref{eq:finite_var_formula2}, we find that, almost surely, for all sufficiently large \(n\),
\[
\lambda_{+,n}^{\mathrm{free}} <  E+\frac{\varepsilon}{2} = \lambda_+(\alpha)+\varepsilon.
\]
Thus, \(\mathbf{1}_{\{ \lambda_{+,n}^{\mathrm{free}} \ge \lambda_+(\alpha)+\varepsilon\}} \to 0\) almost surely. Since these indicators are bounded by \(1\), taking the expectations and applying the dominated convergence theorem completes the proof.
\end{proof}

\begin{rmk}[Exact limiting variational formula]
The preceding proof uses the variational bound only at the particular test matrices \(\bm{Z}_E=-\bm{M}_\alpha(E)\) for \(E>\lambda_+(\alpha)\). In Appendix~\ref{appendix:limiting_variational_edge}, we show that the bound is
in fact exact at the limiting level. A related variational characterization of limiting spectral edges was obtained recently by Montanari and Saeed~\cite{montanarisaeed2026}.
\end{rmk}

Finally, we combine the comparison with the associated free model from
Lemma~\ref{lem:signed_free_comparison} and the preceding control of its upper spectral edge.

\begin{proof}[Proof of Proposition~\ref{prop:upper_edge}]
The lower bound follows from Proposition~\ref{prop:bulk}, so it remains only to prove the matching upper bound. Fix \(\varepsilon>0\), and let \((\varepsilon_n)_{n \ge 1}\) be the deterministic sequence provided by Lemma~\ref{lem:signed_free_comparison}. Since \(\varepsilon_n \to 0\), for all sufficiently large \(n\),
\[
\varepsilon_n<\frac{\varepsilon}{2}.
\]
For such \(n\), we have
\[
\left \{ \lambda_1 (\bm{P}_n) \ge \lambda_+ (\alpha) + \varepsilon \right \} \subseteq \left \{ \lambda_1 (\bm{P}_n) > \lambda_{+,n}^{\mathrm{free}} + \varepsilon_n \right \} \cup \left \{   \lambda_{+,n}^{\mathrm{free}} \ge \lambda_+ (\alpha) + \frac{\varepsilon}{2} \right \}.
\]
Taking probabilities and applying the union bound, the first term tends to zero by Lemma~\ref{lem:signed_free_comparison}, while the second tends to zero by Lemma~\ref{lem:free_upper_edge}. This proves the proposition.
\end{proof}

We conclude this section by completing the proof of Theorem~\ref{thm:bulk}.

\begin{proof}[Proof of the second assertion of Theorem~\ref{thm:bulk}]
The first part of the theorem was proved in Section~\ref{section:bulk}. It thus remains to control the eigenvalues above the upper bulk edge. By the block decomposition~\eqref{eq:D_n_prime}, \(\bm{P}_n\) is a principal submatrix of \(\bm{D}_n'\), obtained by deleting \(pr\) rows and columns. Cauchy's interlacing theorem (see e.g.~\cite[Theorem 4.3.15]{horn13}) gives
\[ 
\lambda_i(\bm{D}'_n) \ge \lambda_i(\bm{P}_n) \ge \lambda_{i+pr}(\bm{D}_n'), \quad i\in[p(d-r)].
\]
Since \(\bm{D}_n\) and \(\bm{D}_n'\) have the same eigenvalues (see Subsection~\ref{subsection:preliminaries}), we have \(\lambda_i(\bm{P}_n) \ge \lambda_{i+pr}(\bm{D}_n)\). In particular,
\[
\lambda_{pr+1}(\bm{D}_n) \leq \lambda_1(\bm{P}_n).
\]
Combining this inequality with Proposition~\ref{prop:upper_edge}, we obtain, for every fixed \(\varepsilon>0\),
\[
\mathbb{P}\left( \lambda_{pr+1}(\bm{D}_n) \leq \lambda_+(\alpha)+\varepsilon \right) \to 1.
\]
Hence, with high probability, at most \(pr\) eigenvalues of \(\bm{D}_n\) lie above \(\lambda_+(\alpha)+\varepsilon\). This completes the proof.
\end{proof}

\section{Proofs of the spectral phase transition} \label{section:phase_transition}

This section establishes the spectral phase transition for the largest eigenvalue of \(\bm{D}_n\). The argument has three main steps. We first prove Proposition~\ref{prop:outlier_equation}, which identifies the deterministic finite-dimensional equation governing eigenvalues above the bulk. We then use the monotonicity of this equation to prove Theorem~\ref{thm:BBP_transition} and the weak-recovery statement. Finally, we prove Proposition~\ref{prop:crit_thresholds} by showing subcriticality for sufficiently small sampling ratios and supercriticality for sufficiently large ones.

\subsection{Proof of the outlier equation}

We begin by proving Proposition~\ref{prop:outlier_equation}. By the Schur complement identity~\eqref{eq:schur_det_lemma}, any eigenvalue of \(\bm{D}_n\) lying outside the spectrum of \(\bm{P}_n\) is characterized by the singularity of the matrix \(\bm{H}_n(z)\) defined in~\eqref{eq:H_n}: 
\[
\bm{H}_n(z) = \bm{A}_n - \bm{Q}_n^\top \left(\bm{P}_n - z \bm{I}_{p(d-r)} \right)^{-1} \bm{Q}_n - z \bm{I}_{pr}\in \R^{pr \times pr},
\]
where the block matrices \(\bm{A}_n, \bm{Q}_n\), and \(\bm{P}_n\) are defined in~\eqref{eq:A_n},~\eqref{eq:Q_n}, and~\eqref{eq:P_n}, respectively. Proposition~\ref{prop:upper_edge} ensures that, with probability tending to one, this characterization applies throughout any compact subset of \((\lambda_+(\alpha),\infty)\). We begin by computing the deterministic equivalents of \(\bm{A}_n\) and \( \bm{Q}_n^\top \left(\bm{P}_n - z \bm{I}_{p(d-r)} \right)^{-1} \bm{Q}_n\) in the proportional asymptotic regime \(n,d \to \infty\), \(n/d \to \alpha\). 

Throughout this subsection, \(\bm{y}\) has the marginal distribution induced by
\[
\bm{s} \sim \mathcal{N}(\bm{0}_r, \bm{I}_r), \quad \bm{y} \mid \bm{s} \sim P_\ast (\, \cdot \mid \bm{s}).
\]
We also recall the definition~\eqref{eq:cond_second_moment}, i.e., \(\bm{C}(\bm{y}) = \E \left [ \bm{s} \bm{s}^\top \mid \bm{y}\right] \in \sym_r^+(\R)\).

\begin{lem} \label{lem:A_n}
Suppose that Assumptions~\ref{hyp:prop_limit} and~\ref{hyp:preprocessing} hold. Then
\[
\lim_{n\to \infty} \bm{A}_n = \E_{\bm{y}} \left[ \bm{T}(\bm{y})\otimes \bm{C} (\bm{y}) \right] \quad \mathrm{a.s.}
\]
\end{lem}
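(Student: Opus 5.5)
This is a direct application of the strong law of large numbers. Recall that
\[
\bm{A}_n=\frac1n\sum_{i=1}^n \bm{T}(\bm{y}_i)\otimes\bm{s}_i\bm{s}_i^\top ,
\]
where the pairs \((\bm{s}_i,\bm{y}_i)\) are i.i.d.\ copies of \((\bm{s},\bm{y})\) with \(\bm{s}\sim\mathcal{N}(\bm{0}_r,\bm{I}_r)\) and \(\bm{y}\mid\bm{s}\sim P_\ast(\,\cdot\mid\bm{s})\). Since \(p\) and \(r\) are fixed, \(\bm{A}_n\) is a fixed-size \(pr\times pr\) matrix. It therefore suffices to prove almost-sure convergence entrywise and then use \(\|\bm{X}\|_{\mathrm{op}}\le pr\max|X_{ab}|\).

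Each entry of \(\bm{T}(\bm{y}_i)\otimes\bm{s}_i\bm{s}_i^\top\) has the form \(T_{\mu\nu}(\bm{y}_i)\,s_{i,a}s_{i,b}\). These random variables are i.i.d.\ in \(i\), and they are integrable because Assumption~\ref{hyp:preprocessing} gives
\[
|T_{\mu\nu}(\bm{y})\,s_as_b|\le C_T\|\bm{s}\|_2^2 \quad\text{a.s.},
\]
with \(\E\|\bm{s}\|_2^2=r<\infty\). Kolmogorov's strong law of large numbers then yields, almost surely,
\[
\frac1n\sum_{i=1}^n T_{\mu\nu}(\bm{y}_i)\,s_{i,a}s_{i,b}\;\longrightarrow\;\E\bigl[T_{\mu\nu}(\bm{y})\,s_as_b\bigr].
\]
Intersecting the finitely many probability-one events gives \(\bm{A}_n\to\E[\bm{T}(\bm{y})\otimes\bm{s}\bm{s}^\top]\) almost surely.

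To identify the limit, condition on \(\bm{y}\) using the tower property. Since \(\bm{T}(\bm{y})\) is \(\bm{y}\)-measurable and bounded, it can be pulled out of the inner conditional expectation:
\[
\E[\bm{T}(\bm{y})\otimes\bm{s}\bm{s}^\top]=\E_{\bm{y}}\bigl[\bm{T}(\bm{y})\otimes\E[\bm{s}\bm{s}^\top\mid\bm{y}]\bigr]=\E_{\bm{y}}[\bm{T}(\bm{y})\otimes\bm{C}(\bm{y})].
\]

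The only points needing care are the integrability bound above and the measurability step that justifies pulling \(\bm{T}(\bm{y})\) out of the conditional expectation. Note that Assumption~\ref{hyp:prop_limit} plays no role beyond indexing the sequence, because \(\bm{A}_n\) does not depend on \(d\).
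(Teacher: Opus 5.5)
Your proof is correct and follows the paper's argument essentially verbatim: the entrywise strong law of large numbers for the i.i.d.\ summands $\bm{T}(\bm{y}_i)\otimes\bm{s}_i\bm{s}_i^\top$, integrability from $\|\bm{T}(\bm{y})\|_{\mathrm{op}}\le C_T$ and $\E\|\bm{s}\|_2^2=r$, and identification of the limit via the tower property with $\bm{T}(\bm{y})$ pulled out as a $\sigma(\bm{y})$-measurable factor. Your final step, that fixed $p$ and $r$ make entrywise convergence equivalent to operator-norm convergence, is also exactly how the paper concludes.
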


\begin{proof}
Recall that \(\bm{A}_n = \frac{1}{n} \sum_{i=1}^{n} \bm{X}_i  \in \R^{pr \times pr}\), where \(\bm{X}_i \coloneqq \bm{T}(\bm{y}_i) \otimes \bm{s}_i\bm{s}_i^\top\). By definition, the pairs \((\bm{s}_i,\bm{y}_i)\) are i.i.d., hence so are \((\bm{X}_i)_{i\ge1}\). Furthermore, using \(\|\bm{A}\otimes \bm{B}\|_{\mathrm{op}} = \|\bm{A}\|_{\mathrm{op}} \|\bm{B}\|_{\mathrm{op}}\) and \(\|\bm{s}\bm{s}^\top\|_{\mathrm{op}}=\|\bm{s}\|_2^2\), we have from Assumption~\ref{hyp:preprocessing} that
\[
\| \bm{X}_i\|_{\mathrm{op}} \le \|\bm{T} (\bm{y}_i)\|_{\mathrm{op}} \|\bm{s}_i\bm{s}_i^\top\|_{\mathrm{op}} \le C_T \|\bm{s}_i\|_2^2, \quad \mathrm{a.s.}
\]
Since \(\bm{s}_i \stackrel{\mathrm{i.i.d.}}{\sim} \mathcal{N}(\bm{0}_r,\bm{I}_r)\), it follows that \(\|\bm{s}_i\|_2^2 \sim \chi_r^2\) and \(\E [ \|\bm{s}_i\|_2^2 ] =r<\infty\), hence \(\E [ \|\bm{X}_i\|_{\mathrm{op}}] \le C_T r<\infty\). In particular, each entry of \(\bm{X}_i\) is integrable. Therefore, by the strong law of large numbers applied entrywise,
\[
\bm{A}_n=\frac{1}{n} \sum_{i=1}^n \bm{X}_i \to \E[ \bm{X}_1 ] \quad \text{almost surely entrywise.}
\]
Because \(p\) and \(r\) are fixed, entrywise convergence implies convergence in any matrix norm, in particular
\[
\|\bm{A}_n-\E[\bm{X}_1]\|_{\mathrm{op}}\to 0 \quad \mathrm{a.s.}
\]
Finally, by the tower property of conditional expectation and the fact that \(\bm{T}(\bm{y})\) is measurable with respect to \(\sigma\)-algebra \(\sigma (\bm{y})\),
\[
\E \left [\bm{X}_1 \right] = \E \left [\bm{T}(\bm{y}) \otimes \bm{s}\bm{s}^\top \right] = \E_{\bm{y}} \left[ \bm{T}(\bm{y}) \otimes \E[\bm{s}\bm{s}^\top \mid \bm{y}] \right] = \E_{\bm{y}} \left[ \bm{T}(\bm{y})\otimes \bm{C} (\bm{y}) \right],
\]
which completes the proof.
\end{proof}

We next turn to the deterministic equivalent of the second term \( \bm{Q}_n^\top \bm{G}_n(z) \bm{Q}_n\), where \(\bm{G}_n(z) = \left(\bm{P}_n - z \bm{I}_{p(d-r)} \right)^{-1} \) denotes the resolvent of \(\bm{P}_n\). 

\begin{lem} \label{lem:Q_n} 
Suppose that Assumptions~\ref{hyp:prop_limit} and~\ref{hyp:preprocessing} hold. Let \(K\subset(\lambda_+ (\alpha),\infty)\) be compact and \(\mathcal{E}_n(K) \coloneqq \left \{ K \cap \mathrm{sp}(\bm{P}_n)=\emptyset \right \}\). Then, for every \(\varepsilon>0\),
\[ 
\mathbb{P} \left ( \mathcal{E}_n (K) \cap \left \{ \sup_{z\in K} \left\| \bm{Q}_n^\top \bm{G}_n(z) \bm{Q}_n - \bm{\Gamma}_\alpha (z)  \right\|_{\mathrm{op}} > \varepsilon \right \} \right) \to 0,
\] 
where
\begin{equation} \label{eq:Gamma_alpha}
\bm{\Gamma}_\alpha (z) \coloneqq \frac{1}{\alpha} \E_{\bm{y}} \left[ \left( \bm{T}(\bm{y}) \bm{M}_\alpha(z) \left( \bm{I}_p+\frac{1}{\alpha}\bm{T}(\bm{y})\bm{M}_\alpha(z) \right)^{-1} \bm{T}(\bm{y}) \right) \otimes \bm{C}(\bm{y}) \right].
\end{equation}
\end{lem}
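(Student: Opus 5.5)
We write $\bm{Q}_n = \frac1n\sum_i \bm{T}_i\otimes \bm{u}_i\bm{s}_i^\top$ with $\bm{T}_i=\bm{T}(\bm{y}_i)$, so that
\[
\bm{Q}_n^\top\bm{G}_n(z)\bm{Q}_n=\frac{1}{n^2}\sum_{i,j}(\bm{I}_p\otimes\bm{s}_i)\bigl(\bm{T}_i(\bm{I}_p\otimes\bm{u}_i)^\top\bm{G}_n(z)(\bm{I}_p\otimes\bm{u}_j)\bm{T}_j\bigr)(\bm{I}_p\otimes\bm{s}_j)^\top
\]
up to the Kronecker reordering. The plan is to first prove the statement pointwise for $z$ in a complex neighborhood $\mathcal{U}$ of $K$ on which, on $\mathcal{E}_n(K')$ for a slightly larger compact $K'$, the resolvent is bounded. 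We then upgrade to uniformity over $K$ by equicontinuity. On $\mathcal{E}_n(K')$ with $\operatorname{dist}(K,K'^c)>0$, the map $z\mapsto\bm{Q}_n^\top\bm{G}_n\bm{Q}_n$ is Lipschitz on $K$ with constant $O(\|\bm{Q}_n\|^2)$. Here $\|\bm{Q}_n\|_{\mathrm{op}}=O_{\mathbb P}(1)$, since $\bm{Q}_n$ is a product of an $O(1)$-norm Gaussian matrix $n^{-1/2}[\bm{u}_i]$ with a bounded block. The limit $\bm{\Gamma}_\alpha$ is continuous on $K$ by Lemma~\ref{lem:continuation_off_support} and Lemma~\ref{lem:uniform_denominator_bound}. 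A finite net therefore reduces the claim to fixed $z\in K$.

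For fixed $z$, the key step is the leave-one-out decomposition already used in Section~\ref{section:bulk}. By the Woodbury identity~\eqref{eq:Woodbury},
\[
\bm{G}_n(\bm{I}_p\otimes\bm{u}_i)=\bm{G}_n^{(i)}(\bm{I}_p\otimes\bm{u}_i)\bigl(\bm{I}_p+\gamma_n\bm{T}_i\widetilde{\bm{M}}^{(i)}_{n,i}\bigr)^{-1}.
\]
We split the double sum into diagonal ($i=j$) and off-diagonal terms.

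\textbf{Diagonal terms.} The diagonal terms give
\[
\frac{d-r}{n^2}\sum_i \bm{T}_i\widetilde{\bm{M}}_{n,i}\bm{T}_i\otimes\bm{s}_i\bm{s}_i^\top,\qquad
\widetilde{\bm{M}}_{n,i}=\widetilde{\bm{M}}^{(i)}_{n,i}\bigl(\bm{I}_p+\gamma_n\bm{T}_i\widetilde{\bm{M}}^{(i)}_{n,i}\bigr)^{-1}.
\]
Lemma~\ref{lem:quadratic_form} (Hanson--Wright) and Lemma~\ref{lem:leave-one-out-stability} both extend to real $z$ off the spectrum on the high-probability event, because only $\|\bm{G}_n^{(i)}\|_{\mathrm{op}}$ bounds were used. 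Together they let us replace $\widetilde{\bm{M}}^{(i)}_{n,i}$ by $\bm{M}_n(z)$ uniformly in $i$. Next, $\bm{M}_n(z)\to\bm{M}_\alpha(z)$ on $K$. This follows from Proposition~\ref{prop:bulk} combined with the confinement of Proposition~\ref{prop:upper_edge} and analyticity: the functions are bounded and analytic on $\mathcal{U}$, so convergence on $\C_+$ propagates by Vitali. Lemma~\ref{lem:uniform_denominator_bound} keeps the denominators $(\bm{I}_p+\alpha^{-1}\bm{T}\bm{M}_\alpha)^{-1}$ uniformly bounded, and therefore also those at $\bm{M}_n$ for large $n$. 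The remaining empirical average $\frac1n\sum_i F(\bm{y}_i)\otimes\bm{s}_i\bm{s}_i^\top$ converges by the strong law exactly as in Lemma~\ref{lem:A_n}. Conditioning on $\bm{y}$ produces $\bm{C}(\bm{y})$ and yields $\bm{\Gamma}_\alpha(z)$. This uses the push-through identity, noting $\bm{T}\bm{M}(\bm{I}+\alpha^{-1}\bm{T}\bm{M})^{-1}\bm{T}=\bm{T}(\bm{I}+\alpha^{-1}\bm{M}\bm{T})^{-1}\bm{M}\bm{T}$, and $\gamma_n\to\alpha^{-1}$.

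\textbf{Off-diagonal terms.} The off-diagonal terms are the expected main obstacle. They must vanish, and they do so because the $\bm{s}_i$ enter linearly and are independent of the $\bm{u}$'s and of the resolvent, except through $\bm{y}_i$. Our approach is to condition on $(\bm{u}_j,\bm{y}_j)_j$. Then $\bm{s}_i\mid\bm{y}_i$ are independent across $i$ with conditional mean $\bm{m}(\bm{y}_i)=\E[\bm{s}\mid\bm{y}_i]$, and we decompose $\bm{s}_i=\bm{m}(\bm{y}_i)+\bm{\xi}_i$.

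For the pure-fluctuation part, write the off-diagonal sum as $\frac1{n^2}\sum_{i\neq j}\bm{\xi}_i^\top\bm{B}_{ij}\bm{\xi}_j$-type blocks, with
\[
\bm{B}_{ij}=\bm{T}_i(\bm{I}_p\otimes\bm{u}_i)^\top\bm{G}_n(\bm{I}_p\otimes\bm{u}_j)\bm{T}_j.
\]
Its conditional second moment is bounded by $n^{-4}\sum_{i,j}\|\bm{B}_{ij}\|^2$. We control this via $\sum_{i,j}\|\bm{B}_{ij}\|_F^2\le \|\bm{G}_n\|^2\|\bm{U}\|^4$, which is $O(n^2)$, giving total variance $O(n^{-2})$. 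We need sub-Gaussian or second moments of $\bm{\xi}_i$, which hold since $\bm{s}$ is Gaussian.

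The mean part $\frac{1}{n^2}\sum_{i\neq j}\bm{m}_i\bm{B}_{ij}\bm{m}_j^\top$ is the subtle case, because $\bm{m}(\bm{y})$ may be nonzero, as in generalized linear models. Here we use that $\bm{u}_i$ are centered and independent of $\bm{y}$. Writing the sum as $\bm{v}^\top\bm{G}_n\bm{v}$-type forms with $\bm{v}=\frac1n\sum_i(\bm{T}_i\bm{m}_i)\otimes\bm{u}_i$, the vector $\bm{v}$ is a Gaussian vector of norm $O(1)$ whose direction is nearly "free" from $\bm{G}_n$. We must therefore show that the cross part is negligible, rather than only bounding it. The tool is again leave-one-out: expand $\bm{G}_n$ around $\bm{G}_n^{(ij)}$ (removing both samples), and use that $\E[\bm{u}_i^\top \bm{A}\bm{u}_j]=0$ for $\bm{A}$ independent of $\bm{u}_i,\bm{u}_j$ with second moment $O(\|\bm{A}\|_F^2)=O(n)$. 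After the $n^{-2}$ normalization and summing $n^2$ pairs, this gives an $O(n^{-1/2})$ contribution per pair-average. The correction terms from the rank-$2p$ Woodbury updates are handled with the bounded denominators from Lemma~\ref{lem:uniform_denominator_bound}, as in the diagonal case.

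Combining the diagonal limit with the vanishing off-diagonal part on $\mathcal{E}_n(K')$ gives pointwise convergence in probability. The net argument then yields the uniform statement.
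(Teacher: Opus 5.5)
There is a genuine gap in your claim that Lemmas~\ref{lem:quadratic_form} and~\ref{lem:leave-one-out-stability}, and later the leave-two-out expansion, ``extend to real $z$ off the spectrum on the high-probability event, because only $\|\bm{G}_n^{(i)}\|_{\mathrm{op}}$ bounds were used.'' In $\C_+$ those bounds come for free: $\|\bm{G}_n^{(i)}(z)\|_{\mathrm{op}}\le(\Im z)^{-1}$ holds for all $i$ simultaneously. At real $z\in K$, however, you need every leave-out matrix to be confined at once, i.e.\ $\max_{i\le n}\lambda_1(\bm{P}_n^{(i)})<\inf K$ (and likewise for the $\binom{n}{2}$ matrices $\bm{P}_n^{(ij)}$) with probability tending to one. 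The event $\mathcal{E}_n(K')$ does not give this. Because $\bm{T}(\bm{y}_i)$ may be indefinite, $\bm{P}_n^{(i)}\preceq\bm{P}_n$ fails, and Weyl's inequality only yields $\lambda_1(\bm{P}_n^{(i)})\le\lambda_1(\bm{P}_n)+C_T\|\bm{u}_i\|_2^2/n$, which is an $O(1)$ shift. Each fixed $\bm{P}_n^{(i)}$ does have top eigenvalue tending to $\lambda_+(\alpha)$. But Proposition~\ref{prop:upper_edge} comes without a rate, so you cannot union-bound over $n$ (let alone $n^2$) leave-out matrices. Without this, the following are all unjustified on $K$: the Woodbury representation of $\widetilde{\bm{M}}_{n,i}$, the conditional Hanson--Wright bound (which used $\|\bm{G}^{(i)}_{\mu\nu}\|_{\mathrm F}\le\sqrt{d-r}/\Im z$), and the bounded denominators.

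The missing idea, which is how the paper proceeds, has two parts. First, perform every leave-one-out and leave-two-out estimate only for $\Im z>\eta_0$, obtaining pointwise convergence of $\bm{\Gamma}_n^{\mathrm{diag}}(z)$ and $\bm{\Gamma}_n^{\mathrm{off}}(z)$ there. Second, use that these two functions involve only the full resolvent $\bm{G}_n$. On the single event $\{\lambda_1(\bm{P}_n)\le\lambda_+(\alpha)+\delta\}$ they are analytic and locally bounded on $\{\Re z>\lambda_+(\alpha)+2\delta\}$, so a subsequence--Vitali argument transfers the convergence to $K$. This also makes your Lipschitz-net step unnecessary.

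Run at $\Im z>\eta_0$, your diagonal argument is correct: replace $\widetilde{\bm{M}}^{(i)}_{n,i}$ successively by $\bm{M}_n^{(i)}$, $\bm{M}_n$ and $\bm{M}_\alpha(z)$ via the uniform Lipschitz bound and $\frac1n\sum_i\|\bm{s}_i\|_2^2=O(1)$, then apply the strong law to a deterministic function of $(\bm{y}_i,\bm{s}_i)$. It is in fact simpler than the paper's truncated bounded-differences treatment of $\bm{R}_{2,n}$. Keeping your real-axis route would instead require a confinement estimate with failure probability $o(n^{-2})$ for each leave-out matrix, which your sketch does not contain.

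Separately, your off-diagonal argument is complete only for the fluctuation part. There, conditionally on $(\bm{u}_j,\bm{y}_j)_j$, the $\bm{\xi}_i$ are independent and centered and $\bm{G}_n$ is fixed, so the computation is fine. One correction: $\sum_{i,j}\|\bm{B}_{ij}\|_{\mathrm F}^2$ is of order $\|\bm{U}\|_{\mathrm{op}}^2\|\bm{U}\|_{\mathrm F}^2\asymp n^3$, not $n^2$, so the variance is $O(n^{-1})$, which still suffices.

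For the mean part, your claimed $O(n^{-1/2})$ presupposes that summands indexed by distinct pairs are nearly uncorrelated. A leave-two-out expansion does not deliver this, since $\bm{G}_n^{(ij)}$ still depends on $\bm{u}_k,\bm{u}_\ell$. The paper gets exact orthogonality from sign-flip invariance. Write a fixed entry as $\frac1n\sum_{i\neq j}X_{ij}$ with $X_{ij}=\frac1n\bigl[\bm{T}_i(\bm{I}_p\otimes\bm{u}_i)^\top\bm{G}_n(\bm{I}_p\otimes\bm{u}_j)\bm{T}_j\bigr]_{\mu\nu}(\bm{s}_i)_a(\bm{s}_j)_b$. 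Replacing $\bm{u}_k$ by $\varepsilon_k\bm{u}_k$, $\varepsilon_k\in\{\pm1\}$, leaves the joint law and $\bm{P}_n$ (hence $\bm{G}_n$) unchanged, while multiplying $X_{ij}$ by $\varepsilon_i\varepsilon_j$. Hence $\E[X_{ij}\overline{X_{k\ell}}]=0$ unless $\{i,j\}=\{k,\ell\}$. This handles the mean and fluctuation parts together, with no need for the split $\bm{s}_i=\bm{m}(\bm{y}_i)+\bm{\xi}_i$. It reduces the variance to at most $2n^{-2}\sum_{i\neq j}\E|X_{ij}|^2=O(n^{-1})$.
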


The proof of Lemma~\ref{lem:Q_n} has two distinct components, which we isolate below. From~\eqref{eq:Q_n} and the symmetry of \(\bm{T} (\bm{y})\),
\[
\bm{Q}_n^\top \bm{G}_n(z) \bm{Q}_n  = \frac{1}{n^2} \sum_{1 \le i,j \le n}\bm{T} (\bm{y}_i) (\bm{I}_p \otimes \bm{u}_i^\top) \bm{G}_n(z) (\bm{I}_p \otimes \bm{u}_j) \bm{T}(\bm{y}_j)  \otimes \bm{s}_i \bm{s}_j^\top.
\]
We decompose \(\bm{Q}_n^\top \bm{G}_n(z) \bm{Q}_n\) in terms of diagonal and off-diagonal terms, i.e., \(\bm{Q}_n^\top \bm{G}_n(z) \bm{Q}_n = \bm{\Gamma}_n^{\mathrm{diag}} (z) + \bm{\Gamma}_n^{\mathrm{off}} (z)  \), where
\[
\bm{\Gamma}_n^{\mathrm{diag}} (z) \coloneqq \frac{1}{n^2} \sum_{i=1}^n
\bm{T} (\bm{y}_i) (\bm{I}_p\otimes \bm{u}_i^\top) \bm{G}_n(z) (\bm{I}_p \otimes \bm{u}_i) \bm{T} (\bm{y}_i) \otimes \bm{s}_i \bm{s}_i^\top,
\]
and
\begin{equation} \label{eq:off_diagonal_term}
\bm{\Gamma}_n^{\mathrm{off}} (z) \coloneqq \frac{1}{n^2} \sum_{i\neq j} \bm{T} (\bm{y}_i) (\bm{I}_p \otimes \bm{u}_i^\top) \bm{G}_n(z) (\bm{I}_p \otimes \bm{u}_j) \bm{T} (\bm{y}_j) \otimes \bm{s}_i \bm{s}_j^\top.
\end{equation}

The diagonal term has a non-trivial deterministic limit, whereas the off-diagonal term vanishes. We record these two facts separately.

\begin{lem} \label{lem:Q_n_diagonal}
Under the assumptions of Lemma~\ref{lem:Q_n}, for every compact \(K\subset(\lambda_+(\alpha),\infty)\) and every \(\varepsilon>0\),
\[
\mathbb{P} \left(\mathcal{E}_n(K)\cap\left\{\sup_{z\in K}\|\bm\Gamma_n^{\mathrm{diag}}(z)-\bm\Gamma_\alpha(z)\|_{\mathrm{op}}>\varepsilon\right\}\right) \to 0.
\]
\end{lem}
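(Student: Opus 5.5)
The plan is to reuse the leave-one-out machinery of Section~\ref{section:bulk}, now on the real interval $K$ outside the bulk rather than in $\C_+$. Each diagonal summand contains the quadratic form
\[
\widetilde{\bm{M}}_{n,i}(z)=\frac{1}{d-r}(\bm{I}_p\otimes\bm{u}_i)^\top\bm{G}_n(z)(\bm{I}_p\otimes\bm{u}_i),
\]
so that
\[
\bm\Gamma_n^{\mathrm{diag}}(z)=\frac{\gamma_n}{n}\sum_{i=1}^n \bm{T}_i\widetilde{\bm{M}}_{n,i}(z)\bm{T}_i\otimes\bm{s}_i\bm{s}_i^\top .
\]
The Woodbury identity~\eqref{eq:Mtilde} gives
\[
\widetilde{\bm{M}}_{n,i}=\widetilde{\bm{M}}^{(i)}_{n,i}\bigl(\bm{I}_p+\gamma_n\bm{T}_i\widetilde{\bm{M}}^{(i)}_{n,i}\bigr)^{-1}.
\]
Once $\widetilde{\bm{M}}^{(i)}_{n,i}(x)\approx\bm{M}_\alpha(x)$ uniformly in $i$ and $x\in K$, this reduces the $i$-th summand to $\alpha^{-1}\bm{T}_i\bm{M}_\alpha(\bm{I}_p+\alpha^{-1}\bm{T}_i\bm{M}_\alpha)^{-1}\bm{T}_i\otimes\bm{s}_i\bm{s}_i^\top$, up to errors. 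Here $(\bm{I}_p+\alpha^{-1}\bm{T}\bm{M}_\alpha)^{-1}$ is uniformly bounded by Lemma~\ref{lem:uniform_denominator_bound}, via the transpose/push-through identity. The summand is then a function of $(\bm{s}_i,\bm{y}_i)$ only. The strong law of large numbers, with the tower property as in Lemma~\ref{lem:A_n}, yields $\bm\Gamma_\alpha(x)$. Uniformity in $x\in K$ follows from a finite net together with Lipschitz continuity of $x\mapsto\bm{M}_\alpha(x)$ on $K$ (Lemma~\ref{lem:bound_solution}, analyticity) and of the empirical quantities in $x$.

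\textbf{Step 1 (resolvent control).} Fix $\varepsilon'>0$ with $K\subset(\lambda_+(\alpha)+2\varepsilon',\infty)$. Define the good event
\[
\mathcal{B}_n=\Bigl\{\lambda_1(\bm{P}_n)\le\lambda_++\varepsilon'/2\Bigr\}\cap\Bigl\{\max_i\lambda_1(\bm{P}_n^{(i)})\le \lambda_++\varepsilon'\Bigr\}\cap\{\max_i\|\bm{u}_i\|^2/(d-r)\le 2\}.
\]
The first event has probability $\to1$ by Proposition~\ref{prop:upper_edge}. For the leave-one-out matrices, use Cauchy interlacing/Weyl, since $\bm{P}_n^{(i)}$ differs by a rank-$p$ matrix. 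This is not enough for the top eigenvalue when $\bm{T}_i$ has negative part. Instead, apply Proposition~\ref{prop:upper_edge} (which only needs bounded $\bm{T}$) after noting that $\bm{P}_n^{(i)}=\bm{P}_n-n^{-1}(\bm{I}_p\otimes \bm{u}_i)\bm{T}_i(\bm{I}_p\otimes\bm{u}_i)^\top$. Its norm perturbation is $O(\|\bm{u}_i\|^2/n)=O(1)$, so this is the delicate point. I would instead use the comparison argument of Section~\ref{section:upper_edge} with a union bound over $i$. The Bandeira et al.\ bound has tail $m^{-2}$, but $n$ leave-one-out models need $o(1/n)$ failure probability. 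Choosing $t_n=\sqrt{3\log m}$ keeps $a_n\to0$ and gives failure probability $n\cdot m^{-3}\to0$. On $\mathcal{B}_n$, $\|\bm{G}_n^{(i)}(x)\|\le 1/\varepsilon'$ for $x\in K$.

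\textbf{Step 2 (quadratic form and partial-trace approximation).} Conditionally on $\mathcal{F}_i$ and on $\mathcal{B}_n$, Hanson--Wright as in Lemma~\ref{lem:quadratic_form} gives $\max_i\|\widetilde{\bm{M}}^{(i)}_{n,i}(x)-\bm{M}^{(i)}_n(x)\|\to0$ in probability for each net point $x$. The rank argument of Lemma~\ref{lem:leave-one-out-stability} gives $\|\bm{M}^{(i)}_n-\bm{M}_n\|=O(1/n)$. It remains to show $\bm{M}_n(x)\to\bm{M}_\alpha(x)$ for real $x\in K$. On $\mathcal{B}_n$ the functions $\bm{M}_n$ are analytic and uniformly bounded on a complex neighbourhood of $K$, since that neighbourhood is at distance $\ge\varepsilon'/2$ from the spectrum. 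They converge to $\bm{M}_\alpha$ on $\C_+$ by Proposition~\ref{prop:bulk}, so Vitali's theorem gives locally uniform convergence on that neighbourhood, in particular on $K$. Finally, the map $\bm{X}\mapsto\bm{X}(\bm{I}_p+\gamma_n\bm{T}\bm{X})^{-1}$ is Lipschitz near $\bm{M}_\alpha(x)$, because the denominator at $\bm{M}_\alpha(x)$ is uniformly invertible (Lemma~\ref{lem:uniform_denominator_bound}) and a Neumann perturbation preserves this. This transfers the approximation, and the $\bm{s}_i\bm{s}_i^\top$ factors are handled since $n^{-1}\sum_i\|\bm{s}_i\|^2$ is bounded a.s.

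The main obstacle is Step 1: guaranteeing that all $n$ leave-one-out resolvents are simultaneously bounded on $K$ for sign-indefinite $\bm{T}$, since interlacing alone fails. The fallback is the strengthened union-bound use of the free comparison sketched above.
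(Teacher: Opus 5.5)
Your argument is sound, but it is organized differently from the paper's, and the obstacle you single out never comes up there. The paper never evaluates a leave-one-out resolvent on the real axis:

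\begin{itemize}
\item It runs the decomposition into $\bm{R}_{1,n},\bm{R}_{2,n},\bm{R}_{3,n}$ only at fixed $z$ with $\Im z>\eta_0$. The tools are Hanson--Wright, leave-one-out stability, and a McDiarmid bound with the truncation $\|\bm{s}_i\|_2^2\le A\log n$. At such $z$ every $\bm{G}_n^{(i)}(z)$ is bounded by $(\Im z)^{-1}$ for free.
\item Using Proposition~\ref{prop:bulk}, it then gets $\bm\Gamma_n^{\mathrm{diag}}(z)\to\bm\Gamma_\alpha(z)$ almost surely there.
\item It reaches $K$ by Vitali and the identity theorem applied to $\bm\Gamma_n^{\mathrm{diag}}$ itself on $\{\Re z>\lambda_+(\alpha)+2\delta\}$. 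This is done along subsequences on which $\lambda_1(\bm{P}_n)\le\lambda_+(\alpha)+\delta$ holds eventually.
\end{itemize}
Because $\bm\Gamma_n^{\mathrm{diag}}$ is built from the full resolvent $\bm{G}_n$ only, analyticity and local boundedness there need nothing beyond Proposition~\ref{prop:upper_edge} as stated and $\max_i\|\bm{u}_i\|_2^2/(d-r)=O(1)$. You already use this device for $\bm{M}_n$ in Step~2. Applying it directly to $\bm\Gamma_n^{\mathrm{diag}}$ would make your Step~1 unnecessary.

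Your fallback for Step~1 does work:
\begin{itemize}
\item Each $\bm{P}_n^{(i)}$ has the same structure with the $i$-th column block of $\bm{X}_n$ removed, so its data matrix has norm at most $\|\bm{X}_n\|_{\mathrm{op}}$.
\item The variational bound of Lemma~\ref{lem:finite_variational_formula} transfers, because its leave-one-out analogue at $\bm{Z}_E$ differs from $\mathcal{G}_n(\bm{Z}_E)$ by $O(1/n)$ uniformly in $i$.
\item The union bound over the $n$ models already closes with the original $t_n=\sqrt{2\log m}$. Since $m\asymp n$, each failure probability is $O(n^{-2})$, so strengthening $t_n$ is not needed.
\end{itemize}
But this amounts to rerunning Section~\ref{section:upper_edge} $n$ times to control all leave-one-out resolvents on the real axis, which is more than the lemma needs.

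What your route genuinely buys is a lighter self-averaging step. You insert the deterministic $\bm{M}_\alpha(x)$, rather than the random $\bm{M}_n^{(i)}$, into the summands before averaging. That reduces the sum to i.i.d.\ functions of $(\bm{s}_i,\bm{y}_i)$. It replaces the paper's McDiarmid-plus-truncation control of $\bm{R}_{2,n}$ with the plain strong law. The same simplification could be used in the paper's own proof at $\Im z>\eta_0$, since Proposition~\ref{prop:bulk} is already available there.

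One small repair is needed in Step~2. You cannot apply Hanson--Wright conditionally on $\mathcal{F}_i$ and on $\mathcal{B}_n$ at the same time, because $\mathcal{B}_n$ depends on $\bm{u}_i$. Work instead on the $\mathcal{F}_i$-measurable event $\{\lambda_1(\bm{P}_n^{(i)})\le\lambda_+(\alpha)+\varepsilon'\}$, on which $\|\bm{G}_n^{(i)}(x)\|_{\mathrm{op}}\le1/\varepsilon'$. Then union-bound over $i$.
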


\begin{proof}
\emph{Step 1: Reduction to the bulk deterministic equivalent.} Recall the definition of \(\widetilde{\bm{M}}_{n,i}^{(i)}(z)\) introduced in~\eqref{eq:tilde_M}. Using~\eqref{eq:Mtilde}, we rewrite \(\bm{\Gamma}_n^{\mathrm{diag}} (z)\) as
\begin{equation} \label{eq:diag_term}
\bm{\Gamma}_n^{\mathrm{diag}} (z)   = \frac{\gamma_n}{n} \sum_{i=1}^n \Psi_{\bm{T} (\bm{y}_i),\gamma_n} (\widetilde{\bm{M}}_{n,i}^{(i)}(z)) \otimes \bm{s}_i \bm{s}_i^\top  ,
\end{equation}
where \(\gamma_n \coloneqq (d-r)/n \ge0\) and 
\[
\Psi_{\bm{T},\gamma} (\bm{X}) \coloneqq \bm{T} \bm{X} \left ( \bm{I}_p + \gamma \bm{T} \bm{X}\right)^{-1} \bm{T} \in \C^{p \times p}.
\]
As in the proof of Proposition~\ref{prop:bulk}, we decompose \(\bm{\Gamma}_n^{\mathrm{diag}} (z)\) as 
\begin{equation}\label{eq:Gamma_diag_decomposition}
\bm{\Gamma}_n^{\mathrm{diag}}(z) - \gamma_n \E_{\bm{y}} \left [ \Psi_{\bm{T} (\bm{y}),\gamma_n} ( \bm{M}_n(z)) \otimes \bm{C} (\bm{y})  \right ] = \gamma_n \left ( \bm{R}_{1,n}(z)+\bm{R}_{2,n}(z)+\bm{R}_{3,n}(z) \right),
\end{equation}
where
\begin{equation} \label{eq:error_Gamma_n}
\begin{split}
\bm{R}_{1,n}(z) & = \frac{1}{n} \sum_{i=1}^n \left [ \Psi_{\bm{T}(\bm{y}_i),\gamma_n} (\widetilde{\bm{M}}_{n,i}^{(i)}(z)) - \Psi_{\bm{T}(\bm{y}_i) ,\gamma_n} ( \bm{M}_n^{(i)}(z)) \right ] \otimes \bm{s}_i \bm{s}_i^\top ,\\
\bm{R}_{2,n}(z) & =\frac{1}{n} \sum_{i=1}^n  \Psi_{\bm{T}(\bm{y}_i) ,\gamma_n} ( \bm{M}_n^{(i)}(z))   \otimes \bm{s}_i \bm{s}_i^\top  -  \frac{1}{n} \sum_{i=1}^n  \E_{\bm{y}} \left [\Psi_{\bm{T} (\bm{y}) ,\gamma_n} ( \bm{M}_n^{(i)}(z))   \otimes \bm{C} (\bm{y}) \right ] ,\\
\bm{R}_{3,n}(z) & =\frac{1}{n} \sum_{i=1}^n  \E_{\bm{y}} \left [\Psi_{\bm{T} (\bm{y}) ,\gamma_n} ( \bm{M}_n^{(i)}(z))   \otimes \bm{C} (\bm{y}) \right ] -  \E_{\bm{y}} \left [ \Psi_{\bm{T} (\bm{y}),\gamma_n} ( \bm{M}_n(z)) \otimes \bm{C} (\bm{y}) \right ].
\end{split}
\end{equation}

\emph{Step 2: Control of the three error terms.} Choose \(\eta_0>0\) as in Lemma~\ref{lem:stability-phi}. The proof of that lemma applies without change to the maps \(\Psi_{\bm{T},\gamma}\): almost surely, for all sufficiently large \(n\), all \(i\in[n]\), and all \(z\in\C_+\) with \(\Im z>\eta_0\),
\[
\left \| \Psi_{\bm{T}(\bm{y}_i),\gamma_n}(\bm{X}) - \Psi_{\bm{T}(\bm{y}_i),\gamma_n}(\bm{Y}) \right \|_{\mathrm{op}} \leq C_\Psi(\eta_0) \|\bm{X} -\bm{Y} \|_{\mathrm{op}}
\]
for the matrices appearing above. Indeed,
\[
\Psi_{\bm{T},\gamma}(\bm{X}) - \Psi_{\bm{T},\gamma}(\bm{Y}) = \bm{T} \left( \bm{I}_p + \gamma \bm{X} \bm{T}\right)^{-1} (\bm{X} - \bm{Y}) \left(\bm{I}_p + \gamma \bm{T} \bm{Y} \right)^{-1} \bm{T},
\]
and the inverse factors are uniformly bounded by the estimates already obtained in Lemma~\ref{lem:stability-phi}. By Lemma~\ref{lem:quadratic_form}, 
\[
\varepsilon_n (z) = \max_{1 \le i \le n} \|\widetilde{\bm{M}}_{n,i}^{(i)}(z) - \bm{M}_n^{(i)}(z)\|_{\mathrm{op}}\to 0,
\]
almost surely for every fixed \(z\in\C_+\). Therefore,
\[
\| \bm{R}_{1,n}(z)\|_{\mathrm{op}} \leq C_\Psi (\eta_0) \varepsilon_n(z) \frac{1}{n} \sum_{i=1}^n \|\bm{s}_i\|_2^2 \to 0
\]
almost surely, since \(\frac{1}{n} \sum_{i=1}^n \|\bm{s}_i\|_2^2 \to \E [\|\bm{s}\|_2^2] = r\) almost surely. The same Lipschitz estimate gives
\[
\left \| \E_{\bm{y}} \left [  \left ( \Psi_{\bm{T}(\bm{y}),\gamma_n}(\bm{X}) -   \Psi_{\bm{T}(\bm{y}),\gamma_n}(\bm{Y}) \right ) \otimes \bm{C} (\bm{y})\right]\right \|_{\mathrm{op}} \leq r C_\Psi(\eta_0) \|\bm{X} -\bm{Y} \|_{\mathrm{op}} ,
\]
where we used
\[
\E_{\bm{y}} \left [ \| \bm{C}(\bm{y})\|_{\mathrm{op}} \right ] \leq \E_{\bm{y}} \left [\Tr \bm{C}(\bm{y}) \right]= \E \|\bm{s}\|_2^2 = r.
\]
Consequently, Lemma~\ref{lem:leave-one-out-stability} yields
\[
\| \bm{R}_{3,n}(z)\|_{\mathrm{op}} \leq r C_\Psi (\eta_0) \max_{1\leq i\leq n}
\|\bm{M}_n^{(i)}(z)-\bm{M}_n(z) \|_{\mathrm{op}}  \to 0,
\]
almost surely. It remains to control \(\bm{R}_{2,n}(z)\). We use the bounded-differences argument of Lemma~\ref{lem:self_avg}, with an additional truncation of the Gaussian factors. More precisely, set \(b_n=A\log(n)\), with \(A>0\) sufficiently large, and define
\[
\bm{C}_{b_n}(\bm{y}) \coloneqq \E \left[ \bm{s} \bm{s}^\top \mathbf{1}_{\{\|\bm{s}\|_2^2\leq b_n\}} \mid \bm{y} \right].
\]
Consider the truncated centered sum
\[
\bm{R}_{2,n}^{(b)}(z) \coloneqq \frac{1}{n} \sum_{i=1}^n \left \{ \Psi_{\bm{T} (\bm{y}_i),\gamma_n} \left (\bm{M}_n^{(i)}(z) \right) \otimes \bm{s}_i \bm{s}_i^\top \mathbf{1}_{\{\|\bm{s}_i\|_2^2\leq b_n\}} -  \E_{\bm{y}} \left[ \Psi_{\bm{T} (\bm{y}),\gamma_n} \left (\bm{M}_n^{(i)}(z)\right) \otimes \bm{C}_{b_n}(\bm{y}) \right] \right \}.
\]
Conditionally on \((\bm{u}_1,\ldots,\bm{u}_n)\), the pairs \((\bm{s}_i,\bm{y}_i)\) remain independent. Moreover, \(\bm{A}_n^{(i)}(z)\) is independent of \((\bm{s}_i,\bm{y}_i)\); conditioning further on all coordinates except the \(i\)-th therefore shows that the \(i\)-th summand has conditional mean zero. In particular,
\[
\E \left[\bm{R}_{2,n}^{(b)}(z)\mid \bm{u}_1,\ldots,\bm{u}_n\right]=\bm{0}.
\]
The bounded-differences argument of Lemma~\ref{lem:self_avg} now applies. On the high-probability event controlling the Gaussian vectors \(\bm{u}_i\), changing one coordinate \((\bm{s}_k,\bm{y}_k)\) changes each fixed entry of \(\bm{R}_{2,n}^{(b)}(z)\) by at most \(C b_n /n\). Therefore, for every fixed entry \((a,b)\),
\[
\mathbb{P} \left( \left| \left (\bm{R}_{2,n}^{(b)}(z) \right)_{ab} \right|>t \right) \leq
4 \exp\left( -\frac{cnt^2}{b_n^2} \right) + o(n^{-2}).
\]
Taking \(t_n=Kb_n\sqrt{\frac{\log(n)}{n}}\) with \(K\) sufficiently large, the resulting probabilities are summable and \(t_n\to0\). Hence
\[
\|\bm{R}_{2,n}^{(b)}(z) \|_{\mathrm{op}} \to 0, \quad \mathrm{a.s.}
\]
It remains to remove the truncation. Since \(\|\bm{s}_i\|_2^2\sim\chi_r^2\), choosing \(A\) sufficiently large gives
\[
\sum_{n=1}^\infty
\mathbb{P}\!\left(\max_{1\le i\le n}\|\bm{s}_i\|_2^2>b_n\right)<\infty.
\]
Thus, by Borel--Cantelli, almost surely
\(\max_{1\le i\le n}\|\bm{s}_i\|_2^2\le b_n\) for all sufficiently large \(n\). On this event, the empirical terms in \(\bm{R}_{2,n}\) and \(\bm{R}_{2,n}^{(b)}\) coincide. For the deterministic centering terms, the uniform bound on \(\Psi_{\bm{T},\gamma_n}\) gives
\[
\begin{split}
&\left \| \frac{1}{n}\sum_{i=1}^n \E_{\bm{y}} \left[ \Psi_{\bm{T}(\bm{y}),\gamma_n}(\bm{M}_n^{(i)}(z)) \otimes \bigl(\bm{C}(\bm{y})-\bm{C}_{b_n}(\bm{y})\bigr) \right] \right\|_{\mathrm{op}}\le
C_z \E \left[ \|\bm{s}\|_2^2 \mathbf{1}_{\{\|\bm{s}\|_2^2>b_n\}} \right],
\end{split}
\]
and the right-hand side vanishes. As a result,
\[
\|\bm{R}_{2,n}(z)-\bm{R}_{2,n}^{(b)}(z)\|_{\mathrm{op}}\to0
\qquad\text{a.s.},
\]
and hence \(\|\bm{R}_{2,n}(z)\|_{\mathrm{op}}\to0\) almost surely.

\emph{Step 3: Passage to the limiting MDE.} Combining the estimates for the three error terms in~\eqref{eq:Gamma_diag_decomposition}, we obtain, for every fixed
\(z\in\C_+\) with \(\Im z>\eta_0\),
\begin{equation}\label{eq:Gamma_diag_approximation}
\left \|\bm{\Gamma}_n^{\mathrm{diag}} (z)   - \gamma_n \E_{\bm{y}} \left [ \Psi_{\bm{T} (\bm{y}),\gamma_n} ( \bm{M}_n(z)) \otimes \bm{C} (\bm{y})  \right ] \right \|_{\mathrm{op}} \to 0,
\end{equation}
almost surely. By Proposition~\ref{prop:bulk}, \(\bm{M}_n(z)\to\bm{M}_\alpha(z)\) almost surely, and \(\gamma_n\to\alpha^{-1}\). Moreover, the uniform inverse estimates imply
\[
\left \| \Psi_{\bm{T}(\bm{y}),\gamma_n} (\bm{M}_n(z)) \otimes \bm{C}(\bm{y}) \right \|_{\mathrm{op}} \leq C_z \|\bm{C}(\bm{y})\|_{\mathrm{op}},
\]
where the right-hand side is integrable. Dominated convergence applied to~\eqref{eq:Gamma_diag_approximation} therefore gives
\begin{equation} \label{eq:Gamma_diag_pointwise}
\bm{\Gamma}_n^{\mathrm{diag}}(z) \to  \bm{\Gamma}_\alpha(z)
\quad\text{a.s.}
\end{equation}
for every fixed \(z\in\C_+\) with \(\Im z>\eta_0\), where \( \bm{\Gamma}_\alpha(z)\) is given in~\eqref{eq:Gamma_alpha}.

\emph{Step 4: Uniformity above the bulk.} We finally extend this convergence to compact subsets above the upper spectral edge. Let \(K\subset(\lambda_+(\alpha),\infty)\) be compact, and choose
\(\delta>0\) such that \(\inf K>\lambda_+(\alpha)+4\delta\). Define
\[
\mathcal{F}_n (\delta) \coloneqq \left \{ \lambda_1(\bm{P}_n) \leq \lambda_+(\alpha)+\delta \right\}
\]
and
\[
\Omega_\delta \coloneqq \left \{ z\in \C \colon \Re z>\lambda_+(\alpha)+2\delta \right \}.
\]
By Proposition~\ref{prop:upper_edge},
\[
\mathbb{P} \left (\mathcal{F}_n(\delta)^\mathrm{c} \right)\to 0.
\]
We claim that, for every \(\varepsilon>0\),
\[
\mathbb{P} \left( \mathcal{F}_n (\delta)\cap \left \{ \sup_{z\in K} \|\bm{\Gamma}_n^{\mathrm{diag}}(z) -\bm{\Gamma}_\alpha(z) \|_{\mathrm{op}} > \varepsilon \right \} \right) \to 0.
\]
To prove the claim, consider an arbitrary subsequence. It contains a further subsequence, still denoted by \((n_k)\), such that
\[
\sum_{k=1}^\infty \mathbb{P} \left (\mathcal{F}_{n_k} (\delta)^\mathrm{c} \right) < \infty.
\]
By the Borel--Cantelli lemma, almost surely \(\mathcal{F}_{n_k}(\delta)\) holds for all sufficiently large \(k\). On this event, \(\bm{\Gamma}_{n_k}^{\mathrm{diag}}\) is analytic on
\(\Omega_\delta\). Moreover, for every compact \(L\subset\Omega_\delta\),
\[
\begin{split}
\sup_{z\in L} \| \bm{\Gamma}_{n_k}^{\mathrm{diag}}(z) \|_{\mathrm{op}} & \leq C_L C_T^2 \frac{1}{n_k^2} \sum_{i=1}^{n_k} \|\bm{u}_i\|_2^2 \|\bm{s}_i\|_2^2\leq C_LC_T^2 \gamma_{n_k} \left( \max_{1\leq i\leq n_k} \frac{\|\bm{u}_i\|_2^2}{d-r} \right) \left( \frac{1}{n_k}\sum_{i=1}^{n_k} \|\bm{s}_i\|_2^2 \right),
\end{split}
\]
which is almost surely bounded. Thus, almost surely, the sequence
\(\{\bm{\Gamma}_{n_k}^{\mathrm{diag}}\}_{k\geq 1}\) is locally uniformly
bounded on \(\Omega_\delta\). Let \(\mathcal{D}\) be a countable subset of \(\Omega_\delta \cap \{z \in \C \colon \Im z>\eta_0\}\) having an accumulation point in \(\Omega_\delta\). For every \(z\in\mathcal{D}\), the pointwise convergence in~\eqref{eq:Gamma_diag_pointwise} holds on an event of probability one. Since \(\mathcal{D}\) is countable, there exists an event \(\mathcal{A}_{\mathcal{D}}\) of probability one such that, on \(\mathcal{A}_{\mathcal{D}}\),
\[
\bm{\Gamma}_{n_k}^{\mathrm{diag}}(z) \to  \bm{\Gamma}_\alpha(z)
\]
for every \( z\in\mathcal{D}\).

We next verify that \(\bm{\Gamma}_\alpha\) is analytic on \(\Omega_\delta\). By Lemma~\ref{lem:continuation_off_support}, the map \(z\mapsto\bm{M}_\alpha(z)\) admits an analytic continuation to \(\C \setminus \supp(\mu_\alpha)\). Since \(\Omega_\delta \subset
\C \setminus \supp(\mu_\alpha)\), the map \(\bm{M}_\alpha\) is analytic on \(\Omega_\delta\). Moreover, by Lemma~\ref{lem:uniform_denominator_bound}, for every compact set \(L\subset\Omega_\delta\), there exists a constant \(C_L<\infty\) such
that
\[
\sup_{z\in L} \left\| \left( \bm{I}_p+\frac{1}{\alpha} \bm{T}(\bm{y})\bm{M}_\alpha(z) \right)^{-1} \right\|_{\mathrm{op}} \leq C_L
\]
for almost every \(\bm{y}\). Consequently, for almost every \(\bm{y}\), the map
\[
z \mapsto \bm{T}(\bm{y})\bm{M}_\alpha(z) \left( \bm{I}_p+\frac{1}{\alpha} \bm{T}(\bm{y})\bm{M}_\alpha(z) \right)^{-1} \bm{T}(\bm{y}) \otimes\bm{C}(\bm{y})
\]
is analytic on \(\Omega_\delta\). Furthermore, for every compact set \(L\subset\Omega_\delta\), its operator norm is bounded uniformly over \(z\in L\) by an integrable multiple of \(\|\bm{C}(\bm{y})\|_{\mathrm{op}}\). Indeed,
\[
\sup_{z\in L} \left\| \bm{T}(\bm{y})\bm{M}_\alpha(z) \left( \bm{I}_p+\frac{1}{\alpha} \bm{T}(\bm{y})\bm{M}_\alpha(z) \right)^{-1} \bm{T}(\bm{y}) \otimes\bm{C}(\bm{y}) \right\|_{\mathrm{op}} \leq C_L' \|\bm{C}(\bm{y})\|_{\mathrm{op}} \le C_L' r,
\]
where \(C_L'<\infty\) is deterministic. Thus, the dominating function is integrable. It follows that \(\bm{\Gamma}_\alpha\) is analytic on \(\Omega_\delta\).

We now fix an outcome belonging to the probability-one event on which both the local uniform boundedness of \(\{\bm{\Gamma}_{n_k}^{\mathrm{diag}}\}_{k\geq 1}\) and the pointwise
convergence on \(\mathcal{D}\) hold. By Vitali's convergence theorem,
applied entrywise, the sequence \(\{\bm{\Gamma}_{n_k}^{\mathrm{diag}}\}_{k\geq 1}\) converges locally uniformly on \(\Omega_\delta\) to an analytic matrix-valued function \(\widetilde{\bm{\Gamma}}\). For every \(z\in\mathcal{D}\),
\[
\widetilde{\bm{\Gamma}}(z) = \bm{\Gamma}_\alpha(z).
\]
Since \(\Omega_\delta\) is connected and \(\mathcal{D}\) has an accumulation point in \(\Omega_\delta\), the identity theorem, applied entrywise, yields
\[
\widetilde{\bm{\Gamma}}(z) = \bm{\Gamma}_\alpha(z), \quad z\in\Omega_\delta.
\]
Therefore,
\[
\bm{\Gamma}_{n_k}^{\mathrm{diag}}(z) \to \bm{\Gamma}_\alpha(z)
\]
locally uniformly on \(\Omega_\delta\), almost surely. In particular, since \(K\subset\Omega_\delta\),
\[
\sup_{z\in K} \left\| \bm{\Gamma}_{n_k}^{\mathrm{diag}}(z) - \bm{\Gamma}_\alpha(z) \right\|_{\mathrm{op}} \to  0, \quad \mathrm{a.s.}
\]
Since every subsequence admits a further subsequence along which the preceding almost-sure convergence holds, the subsequence characterization of convergence in probability proves the claim.

Finally, \(\mathcal{F}_n (\delta) \subseteq \mathcal{E}_n(K)\), and hence
\[
\begin{split}
&\mathbb{P} \left( \mathcal{E}_n(K) \cap \left \{ \sup_{z\in K} \|\bm{\Gamma}_n^{\mathrm{diag}}(z) -\bm{\Gamma}_\alpha(z) \|_{\mathrm{op}} > \varepsilon \right \} \right) \\
&\leq  \mathbb{P} \left (\mathcal{F}_n(\delta)^\mathrm{c} \right)+ \mathbb{P}\left( \mathcal{F}_n(\delta)\cap \left \{ \sup_{z\in K} \|\bm{\Gamma}_n^{\mathrm{diag}}(z) -\bm{\Gamma}_\alpha(z) \|_{\mathrm{op}} > \varepsilon \right \} \right) \to 0.
\end{split}
\]
This proves the desired uniform deterministic equivalent for the diagonal term.
\end{proof}

Next we prove that the off-diagonal term~\eqref{eq:off_diagonal_term} is negligible. 
\begin{lem} \label{lem:off_diagonal}
Suppose that Assumptions~\ref{hyp:prop_limit} and~\ref{hyp:preprocessing} hold. Let
\(K\subset(\lambda_+(\alpha),\infty)\) be compact, and define \(\mathcal{E}_n(K) \coloneqq \{K\cap \mathrm{sp}(\bm{P}_n)=\emptyset\}\). Then, for every \(\varepsilon>0\),
\[
\mathbb{P} \left ( \mathcal{E}_n(K) \cap \left \{ \sup_{z \in K} \| \bm{\Gamma}_n^{\mathrm{off}} (z) \|_{\mathrm{op}} > \varepsilon \right \} \right) \to 0 .
\]
\end{lem}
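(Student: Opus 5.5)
The plan is to exploit the fact that the signal vectors \(\bm{s}_i\) enter \(\bm{\Gamma}_n^{\mathrm{off}}(z)\) only through the rank-one factors \(\bm{s}_i\bm{s}_j^\top\), while the resolvent depends only on \((\bm{u}_i,\bm{y}_i)\). Write \(\bm{\Gamma}_n^{\mathrm{off}}(z) = \frac{1}{n^2}\sum_{i\neq j}\bm{B}_{ij}(z)\otimes \bm{s}_i\bm{s}_j^\top\) with \(\bm{B}_{ij}(z) \coloneqq \bm{T}(\bm{y}_i)(\bm{I}_p\otimes\bm{u}_i^\top)\bm{G}_n(z)(\bm{I}_p\otimes\bm{u}_j)\bm{T}(\bm{y}_j)\). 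Since \(\bm{s}_i\) is not independent of \(\bm{y}_i\), I would first decompose \(\bm{s}_i\bm{s}_j^\top\) around its conditional structure. Set \(\bm{m}(\bm{y}) \coloneqq \E[\bm{s}\mid\bm{y}]\), and split \(\bm{s}_i = \bm{m}(\bm{y}_i) + \bm{\xi}_i\), where \(\E[\bm{\xi}_i\mid\bm{y}_i]=0\). Conditionally on \(\mathcal{G} \coloneqq \sigma(\bm{y}_1,\dots,\bm{y}_n,\bm{u}_1,\dots,\bm{u}_n)\), the \(\bm{\xi}_i\) are independent and centered, and the coefficients \(\bm{B}_{ij}\) are \(\mathcal{G}\)-measurable. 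Thus \(\bm{\Gamma}_n^{\mathrm{off}}\) splits into a pure \(\bm{m}\bm{m}^\top\) part, two mixed parts linear in \(\bm{\xi}\), and a part bilinear in \(\bm{\xi}_i,\bm{\xi}_j\) with \(i\neq j\).

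\textbf{Fixed-$z$ bound in mean square.} Work on \(\mathcal{F}_n(\delta)\) with \(z\) real in \(K\), or first for \(\Im z>\eta_0\) and then extend as in Step 4 of Lemma~\ref{lem:Q_n_diagonal}. For the parts containing \(\bm{\xi}\), compute the conditional second moment given \(\mathcal{G}\). For the bilinear part, orthogonality of \(\bm{\xi}_i\otimes\bm{\xi}_j\) across pairs gives
\[
\E\Big[\Big\|\tfrac{1}{n^2}\textstyle\sum_{i\neq j}\bm{B}_{ij}\otimes\bm{\xi}_i\bm{\xi}_j^\top\Big\|_{\mathrm F}^2\,\Big|\,\mathcal{G}\Big]\lesssim \tfrac{1}{n^4}\textstyle\sum_{i,j}\|\bm{B}_{ij}\|_{\mathrm F}^2 .
\]
Since \(\sum_{i,j}\|\bm{B}_{ij}\|^2 \le C\,\|\bm{U}^\top\bm{G}\bm{U}\|_{\mathrm F}^2 \le C\,n\,\|\bm{G}\|^2\|\bm{U}\|_{\mathrm{op}}^4\), and \(\|\bm{U}\|_{\mathrm{op}}^2=O(n)\) with high probability, this is \(O(n^{-1})\). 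The linear parts are handled identically: for fixed \(i\), the vector \(\sum_j \bm{B}_{ij}\bm{m}(\bm{y}_j)\) has norm controlled by \(\|\bm{U}^\top\bm{G}\bm{U}\|_{\mathrm{op}}\,\|(\bm{m}(\bm{y}_j))_j\|_2 = O(n)\cdot O(\sqrt n)\), which again gives \(O(n^{-1})\) after normalization.

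\textbf{Main obstacle: the \(\bm{m}\bm{m}^\top\) term.} The remaining term \(\frac{1}{n^2}\sum_{i\neq j}\bm{B}_{ij}\otimes\bm{m}(\bm{y}_i)\bm{m}(\bm{y}_j)^\top\) is not killed by the \(\bm{s}\)-randomness. Here I would instead use the Gaussian symmetry of the \(\bm{u}\)'s: flipping \(\bm{u}_i\mapsto-\bm{u}_i\) leaves \(\bm{P}_n\), and hence \(\bm{G}_n\), invariant, while \(\bm{B}_{ij}\) changes sign for \(i\neq j\). Consequently the \(\bm{B}_{ij}\), \(i\neq j\), are conditionally centered given \(\bm{y}\) and \(\bm{P}_n\). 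The cleanest implementation is to introduce independent Rademacher signs \(\epsilon_i\) and note that \((\epsilon_i\bm{u}_i)\) has the same law as \((\bm{u}_i)\). The term then equals in law \(\frac{1}{n^2}\sum_{i\neq j}\epsilon_i\epsilon_j\bm{B}_{ij}\otimes\bm{m}_i\bm{m}_j^\top\), whose conditional second moment is bounded exactly as in the bilinear case by \(O(n^{-1})\), using \(\E\|\bm{m}(\bm{y})\|^4<\infty\). The same Rademacher trick in fact also handles the \(\bm{\xi}\) terms, so I would apply it throughout.

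\textbf{Uniformity and conclusion.} These bounds give \(\|\bm{\Gamma}_n^{\mathrm{off}}(z)\|\to0\) in probability for each fixed \(z\), on \(\mathcal{F}_n(\delta)\). To pass to \(\sup_{z\in K}\), I would use the same Vitali/subsequence argument as in Step 4 of Lemma~\ref{lem:Q_n_diagonal}. The required local uniform boundedness on \(\Omega_\delta\) follows from \(\|\bm{\Gamma}_n^{\mathrm{off}}\| \le \|\bm{Q}_n\|^2\|\bm{G}_n\|+\|\bm{\Gamma}_n^{\mathrm{diag}}\|\), together with \(\|\bm{Q}_n\|=O(1)\) with high probability. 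Alternatively, analyticity gives a Lipschitz bound in \(z\), and a finite net then suffices.
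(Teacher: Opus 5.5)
Your proposal is correct. For the fixed-$z$ estimate it takes a different and more elementary route than the paper.

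\textbf{The paper's route.} The paper controls each off-diagonal bilinear form separately. A leave-two-out Woodbury representation bounds $\frac1n(\bm{I}_p\otimes\bm{u}_i^\top)\bm{G}_n(z)(\bm{I}_p\otimes\bm{u}_j)$ by a bilinear form built from $\bm{G}_n^{(ij)}(z)$. That resolvent is independent of $(\bm{u}_i,\bm{u}_j,\bm{s}_i,\bm{y}_i,\bm{s}_j,\bm{y}_j)$. Gaussian moment bounds then give $\E|X_{ij}(z)|^2\le C_z/n$ pair by pair. The sign invariance $\bm{u}_k\mapsto\varepsilon_k\bm{u}_k$ is used unconditionally, and only to kill the cross terms $\E[X_{ij}\overline{X_{k\ell}}]$ with $\{i,j\}\neq\{k,\ell\}$.

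\textbf{Your route.} You use the same invariance conditionally, through external Rademacher signs. Given $(\bm{u},\bm{s},\bm{y})$, you then face a second-order Rademacher chaos whose variance is at most $Cn^{-4}\sum_{i\neq j}\|\bm{B}_{ij}\|_{\mathrm{F}}^2\|\bm{s}_i\|_2^2\|\bm{s}_j\|_2^2$, with your $\bm{B}_{ij}$. The aggregate bound $\sum_{i,j}\|\bm{B}_{ij}\|_{\mathrm{F}}^2\le C_T^4\,pn\,\|\bm{G}_n(z)\|_{\mathrm{op}}^2\|\bm{U}\|_{\mathrm{op}}^4$, with $\bm{U}=[\bm{u}_1,\ldots,\bm{u}_n]$, is purely deterministic.

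This buys several things. You avoid the leave-two-out resolvent, Woodbury, and bilinear-form moment estimates. You never need to decouple $\bm{s}_i$ from the resolvent. The argument also works directly at real $z\in K$ on $\mathcal{F}_n(\delta)$. There, a finite net plus an $O_{\mathbb{P}}(1)$ Lipschitz bound on $K$ is a valid alternative to Vitali; the bound comes from $\bm{Q}_n^\top\bm{G}_n(z)^2\bm{Q}_n$ and the derivative of $\bm{\Gamma}_n^{\mathrm{diag}}$.

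\textbf{The cost.} The paper gets per-pair $L^2$ control and a clean $O(1/n)$ rate. You control only the aggregate, and you lose a logarithmic factor because the weights $\|\bm{s}_i\|_2^2$ are unbounded. Bounding them by $\max_i\|\bm{s}_i\|_2^2=O(\log n)$ with high probability gives $O((\log n)^2/n)$, not the $O(n^{-1})$ you state. This is harmless here.

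\textbf{Points to fix or drop.}
\begin{itemize}
\item The split $\bm{s}_i=\bm{m}(\bm{y}_i)+\bm{\xi}_i$ is unnecessary once you apply the Rademacher argument to $\bm{s}_i\bm{s}_j^\top$ directly.
\item Your row-by-row bound for the linear terms does not give $O(1/n)$. Summing the per-row bound $O(n^3)$ over $n$ rows against the prefactor $n^{-4}$ yields only $O(1)$. You need the operator norm of the whole block matrix $(\bm{B}_{ij})_{i\neq j}$, which gives $\sum_i\|\sum_{j\neq i}\bm{B}_{ij}\bm{m}(\bm{y}_j)\|_2^2=O(n^3)$.
\item The condition $\E\|\bm{m}(\bm{y})\|_2^4<\infty$ does not by itself control the random weighted sum, because your $\bm{B}_{ij}$ depend on all the $\bm{y}_k$. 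Use a bound on $\max_i\|\bm{m}(\bm{y}_i)\|_2^4$ instead; under that moment condition it is $o(n)$ almost surely.
\end{itemize}
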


\begin{proof}
\emph{Step 1: Leave-two-out representation.} For \(i \neq j\), define the leave-two-out matrix by 
\begin{equation*}
\bm{P}_n^{(ij)} \coloneqq \bm{P}_n - \frac{1}{n} \bm{T}_i\otimes  \bm{u}_i \bm{u}_i^\top - \frac{1}{n} \bm{T}_j \otimes  \bm{u}_j \bm{u}_j^\top, 
\end{equation*} 
and its resolvent by \(\bm{G}_n^{(ij)} (z)\coloneqq (\bm{P}_n^{(ij)} - z \bm{I}_{p(d-r)})^{-1}\). Furthermore, let
\[
\bm{U}_{ij} \coloneqq [\bm{I}_p \otimes \bm{u}_i \quad \bm{I}_p \otimes \bm{u}_j] \in \R^{p(d-r) \times 2p}, 
\]
and
\[
\bm{T}_{ij} \coloneqq \mathrm{diag} (\bm{T}_i , \bm{T}_j  ) \in \R^{2p \times 2p}. 
\]
Woodbury's identity yields 
\begin{equation} \label{eq:woodbury_ij}
\bm{G}_n (z) = \bm{G}_n^{(ij)} (z) - \frac{1}{n} \bm{G}_n^{(ij)} (z) \bm{U}_{ij} \left ( \bm{I}_{2p} + \bm{T}_{ij} \widetilde{\bm{M}}_{ij}(z) \right)^{-1} \bm{T}_{ij} \bm{U}_{ij}^\top \bm{G}_n^{(ij)} (z) ,
\end{equation}
where 
\[
\widetilde{\bm{M}}_{ij} (z) \coloneqq \frac{1}{n} \bm{U}_{ij}^\top \bm{G}_n^{(ij)}(z) \bm{U}_{ij} =
\begin{pmatrix}
\bm{A}_{ij}(z) & \bm{B}_{ij}(z)\\
\bm{C}_{ij}(z) & \bm{D}_{ij}(z)
\end{pmatrix}.
\]
Here,
\[
\bm{A}_{ij}(z)=\frac{1}{n}(\bm{I}_p\otimes \bm{u}_i^\top)\bm{G}_n^{(ij)}(z)(\bm{I}_p\otimes \bm{u}_i), \qquad \bm{B}_{ij}(z)=\frac{1}{n}(\bm{I}_p\otimes \bm{u}_i^\top)\bm{G}_n^{(ij)}(z)(\bm{I}_p\otimes \bm{u}_j),
\]
\[
\bm{C}_{ij}(z)=\frac{1}{n}(\bm{I}_p\otimes \bm{u}_j^\top)\bm{G}_n^{(ij)}(z)(\bm{I}_p\otimes \bm{u}_i), \qquad \bm{D}_{ij}(z)=\frac{1}{n}(\bm{I}_p\otimes \bm{u}_j^\top)\bm{G}_n^{(ij)}(z)(\bm{I}_p\otimes \bm{u}_j).
\]
From~\eqref{eq:woodbury_ij} and after some manipulations, we have
\begin{equation} \label{eq:woodbury_ij_2}
\frac{1}{n} \bm{U}_{ij}^\top \bm{G}_n(z) \bm{U}_{ij} = \widetilde{\bm{M}}_{ij} (z) \left ( \bm{I}_{2p} + \bm{T}_{ij} \widetilde{\bm{M}}_{ij} (z) \right)^{-1}.
\end{equation}
Define 
\[
\bm{F}_{ij} (z) \coloneqq \frac{1}{n} (\bm{I}_p \otimes \bm{u}_i^\top) \bm{G}_n(z) (\bm{I}_p \otimes \bm{u}_j).
\]
We note that \(\bm{F}_{ij} (z)\) is the \((1,2)\) block of the matrix \(\frac{1}{n} \bm{U}_{ij}^\top \bm{G}_n(z) \bm{U}_{ij}\). That is, from~\eqref{eq:woodbury_ij_2},
\[
\bm{F}_{ij} (z)  = \left [\widetilde{\bm{M}}_{ij} (z) \left ( \bm{I}_{2p} + \bm{T}_{ij} \widetilde{\bm{M}}_{ij} (z) \right)^{-1} \right ]_{12}.
\]
A block inversion gives 
\[
\bm{F}_{ij} (z) = (\bm{I}_p + \bm{A}_{ij} (z) \bm{T}_i)^{-1} \bm{B}_{ij}(z) \bm{S}_{ij}(z)^{-1},
\]
where
\[
\bm{S}_{ij} (z) \coloneqq \bm{I}_p + \bm{T}_j \bm{D}_{ij} (z) - \bm{T}_j \bm{C}_{ij} (z) (\bm{I}_p + \bm{T}_i \bm{A}_{ij} (z) )^{-1} \bm{T}_i \bm{B}_{ij} (z).
\]
Therefore, 
\begin{equation} \label{eq:norm_bound_F_ij}
\| \bm{F}_{ij} (z) \|_{\mathrm{op}} \le \| (\bm{I}_p + \bm{A}_{ij} (z) \bm{T}_i)^{-1}\|_{\mathrm{op}} \| \bm{B}_{ij}(z)\|_{\mathrm{op}} \|\bm{S}_{ij}(z)^{-1}\|_{\mathrm{op}}.
\end{equation}
The two inverse factors can be bounded directly from resolvent identities. Adding the \(i\)-th observation to \(\bm{P}_n^{(ij)}\) (or, equivalently, removing only the \(j\)-th observation to \(\bm{P}_n\)) yields
\[
\bm{G}_n^{(j)} (z) = \left ( \bm{P}_n^{(ij)} + \frac{1}{n} (\bm{I}_p \otimes \bm{u}_i) \bm{T}_i (\bm{I}_p \otimes \bm{u}_i^\top) - z \bm{I}_{p(d-r)} \right )^{-1}.
\]
Then Woodbury implies
\[
(\bm{I}_p + \bm{A}_{ij} (z) \bm{T}_i)^{-1} = \bm{I}_p - \frac{1}{n} (\bm{I}_p \otimes \bm{u}_i^\top) \bm{G}_n^{(j)} (z)  (\bm{I}_p \otimes \bm{u}_i) \bm{T}_i.
\]
Hence
\begin{equation} \label{eq:norm_bound_A_ij}
\| (\bm{I}_p + \bm{A}_{ij} (z) \bm{T}_i)^{-1} \|_{\mathrm{op}} \le 1 + \frac{C_T}{n} \|\bm{I}_p \otimes \bm{u}_i\|_{\mathrm{op}}^2 \|\bm{G}_n^{(j)} (z)\|_{\mathrm{op}} \le 1 + \frac{C_T \|\bm{u}_i\|_2^2}{n (\Im z)}.
\end{equation}
Moreover, since \(\bm{S}_{ij}(z)^{-1}\) is the \((2,2)\)-block of \((\bm{I}_{2p} + \bm{T}_{ij} \widetilde{\bm{M}}_{ij} (z))^{-1}\), we have
\[
\| \bm{S}_{ij}(z)^{-1} \|_{\mathrm{op}} \le \left \| (\bm{I}_{2p} + \bm{T}_{ij} \widetilde{\bm{M}}_{ij} (z))^{-1}\right\|_{\mathrm{op}}.
\]
Using Woodbury we have
\[
(\bm{I}_{2p} + \bm{T}_{ij} \widetilde{\bm{M}}_{ij} (z))^{-1} = \bm{I}_{2p} - \frac{1}{n} \bm{T}_{ij} \bm{U}_{ij}^\top \bm{G}_n(z) \bm{U}_{ij},
\]
so that
\begin{equation} \label{eq:norm_bound_S_ij_inverse}
\| \bm{S}_{ij}(z)^{-1} \|_{\mathrm{op}} \le 1 + \frac{C_T}{n} \|\bm{U}_{ij}\|_{\mathrm{op}}^2 \|\bm{G}_n(z)\|_{\mathrm{op}} \le 1 + \frac{C_T \left (\|\bm{u}_i\|_2^2 + \|\bm{u}_j\|_2^2 \right)}{n (\Im z)}.
\end{equation}
Combining~\eqref{eq:norm_bound_A_ij} and~\eqref{eq:norm_bound_S_ij_inverse} into~\eqref{eq:norm_bound_F_ij} and using the fact that \(\max_i \frac{\|\bm{u}_i\|_2^2}{n} = O(1)\) almost surely, for every fixed \(z \in \C_+\), there exists a deterministic constant \(C_z < \infty\) such that
\begin{equation}\label{eq:norm_bound_F_ij_2}
\|\bm{F}_{ij}(z)\|_{\mathrm{op}} \leq C_z\|\bm{B}_{ij}(z)\|_{\mathrm{op}},
\end{equation}
uniformly over \(i \neq j\), eventually almost surely. More explicitly, one can take 
\[
C_z = \left(1+\frac{C_T L}{\Im z} \right)\left(1+\frac{2C_T L}{\Im z} \right),
\]
on the almost-sure event \(\max_{1 \le i \le n} \frac{\|\bm{u}_i\|_2^2}{n} \le L\).

\emph{Step 2: Second moment.} Fix \(\mu,\nu\in[p]\) and \(a,b \in [r]\), and define
\[
X_{ij} (z) \coloneqq \left (\bm{e}_\mu^\top \bm{T}_i \bm{F}_{ij}(z) \bm{T}_j \bm{e}_\nu \right) (\bm{s}_i)_a(\bm{s}_j)_b.
\]
Then
\begin{equation} \label{eq:off_diag_entrywise}
\left ( \bm{\Gamma}_n^{\mathrm{off}} (z)  \right)_{(\mu,a),(\nu,b)} = \frac{1}{n} \sum_{i\neq j} X_{ij} (z).
\end{equation}
Let \(\mathcal{F}_{ij} \coloneqq \sigma \left ( (\bm{u}_k,\bm{s}_k,\bm{y}_k) \colon k\notin\{i,j\} \right)\). Then \(\bm{G}_n^{(ij)}(z)\) is \(\mathcal{F}_{ij}\)-measurable, while \(\bm{u}_i\) and \(\bm{u}_j\) are independent standard Gaussian vectors, independent of \(\mathcal{F}_{ij}\). Indeed, each entry of \(\bm{B}_{ij}(z)\) is of the form
\[
\frac{1}{n} \bm{u}_i^\top \bm{G}_{\mu\nu}^{(ij)}(z)\bm{u}_j,
\]
where, conditionally on \(\mathcal{F}_{ij}\), the two Gaussian vectors are independent and the matrix in the middle is deterministic. Standard Gaussian moment bounds for bilinear forms therefore give, for every fixed integer \( k \ge 1\),
\[
\E \left[ \|\bm{B}_{ij}(z)\|_{\mathrm{op}}^{2k} \mid \mathcal{F}_{ij} \right] \leq \frac{C_{k,z}}{n^k}.
\]
Here, we used that $p$ is fixed, \(\|\bm{G}_n^{(ij)}(z)\|_{\mathrm{F}} \le \sqrt{d-r} (\Im z)^{-1}\), and \(d-r\asymp n\). Moreover, the bounds~\eqref{eq:norm_bound_A_ij} and~\eqref{eq:norm_bound_S_ij_inverse} imply
\[
\|\bm{F}_{ij}(z)\|_{\mathrm{op}} \leq \left( 1+\frac{C_T\|\bm{u}_i\|_2^2}{n\Im z} \right) \left( 1 + \frac{C_T(\|\bm{u}_i\|_2^2+\|\bm{u}_j\|_2^2)}{n\Im z} \right) \|\bm{B}_{ij}(z)\|_{\mathrm{op}}.
\]
Since the normalized Gaussian norms \(\|\bm{u}_i\|_2^2/n\) have uniformly bounded moments of every fixed order, Hölder's inequality gives
\[
\E \left [ \|\bm{F}_{ij}(z) \|_{\mathrm{op}}^2\right ] \leq \frac{C_z}{n}.
\]
Finally, using the boundedness of \(\bm{T} (\bm{y}_i)\), the independence of the Gaussian vectors \(\bm{u}_i,\bm{u}_j\) from \((\bm{s}_i,\bm{y}_i,\bm{s}_j,\bm{y}_j)\), and the finiteness of all Gaussian moments, we obtain
\begin{equation} \label{eq:moments_X_ij}
\E [|X_{ij}(z)|^2] \leq \frac{C_z}{n}.
\end{equation}
We next use the exact sign symmetry of the model. For arbitrary \(\varepsilon_1,\ldots, \varepsilon_n \in \{-1,1\}\), replace \(\bm{u}_k\) by \(\varepsilon_k \bm{u}_k\). The matrix \(\bm{P}_n\), and hence \(\bm{G_n}(z)\), is unchanged because it depends on \(\bm{u}_k\) only through \(\bm{u}_k \bm{u}_k^\top\). We note that in this case \(\bm{F}_ij (z)\) is replaced by \(\varepsilon_i \varepsilon_j \bm{F}_{ij} (z)\) and similarly \(X_{ij}(z)\) by \(\varepsilon_i \varepsilon_j X_{ij}(z) \). Since the joint distribution is invariant under these sign changes, 
\[
\E [X_{ij}  \overline{X_{k\ell}}] =0,
\]
unless \(\{i,j\} = \{k,\ell\}\). Indeed, otherwise one of the indices appears exactly once, and flipping the sign of the corresponding Gaussian vector changes the sign of the product while preserving
its distribution. Hence using~\eqref{eq:moments_X_ij} and the Cauchy--Schwarz inequality,
\[
\E \left [ \left| \frac{1}{n} \sum_{i\neq j} X_{ij} (z) \right|^2 \right ]  =  \frac{1}{n^2} \sum_{i\neq j} \sum_{k\neq \ell}  \E \left [X_{ij} \overline{X_{k\ell}} \right ] \le \frac{C}{n^2} \sum_{i \neq j} \E [|X_{ij}|^2] \le \frac{C_z}{n}.
\]
Thus each entry of \(\bm{\Gamma}_n^{\mathrm{off}} (z)\) converges to zero in \(L^2\), and therefore in probability. Since \(p\) and \(r\) are fixed, there are only finitely many entries, and hence for every fixed \(z \in \C_+\), \(\| \bm{\Gamma}_n^{\mathrm{off}} (z)\|_{\mathrm{op}} \to 0\) in probability.

\emph{Step 3: Uniformity above the bulk.} It remains to upgrade the preceding pointwise convergence to uniform convergence on \(K\). As in the proof of Lemma~\ref{lem:Q_n}, choose \(\delta>0\) such that \(\inf K>\lambda_+(\alpha)+4\delta\), and define
\[
\mathcal{F}_n(\delta) \coloneqq \left\{ \lambda_1(\bm{P}_n) \leq \lambda_+(\alpha)+\delta \right\}, \quad
\Omega_\delta \coloneqq \left\{ z\in\C \colon \Re z>\lambda_+(\alpha)+2\delta \right\}.
\]
By Proposition~\ref{prop:upper_edge},
\[
\mathbb{P}\left(\mathcal{F}_n(\delta)^{\mathrm{c}}\right) \to 0.
\]
On the event \(\mathcal{F}_n(\delta)\), the function \(\bm{\Gamma}_n^{\mathrm{off}}\) is analytic on \(\Omega_\delta\). We also need local uniform boundedness. Since
\[
\bm{\Gamma}_n^{\mathrm{off}}(z)
=
\bm{Q}_n^\top\bm{G}_n(z)\bm{Q}_n-\bm{\Gamma}_n^{\mathrm{diag}}(z),
\]
it is enough to bound the first term. The matrix \(\bm{Q}_n\) is an off-diagonal block of \(\bm{D}_n'\), hence
\[
\|\bm{Q}_n\|_{\mathrm{op}}\le \|\bm{D}_n\|_{\mathrm{op}}.
\]
Since \(-C_T\bm{I}_p\preceq\bm{T}(\bm{y}_i)\preceq C_T\bm{I}_p\),
\[
-C_T(\bm{I}_p\otimes\bm{S}_n)
\preceq
\bm{D}_n
\preceq
C_T(\bm{I}_p\otimes\bm{S}_n),
\qquad
\bm{S}_n=\frac1n\sum_{i=1}^n\bm{x}_i\bm{x}_i^\top,
\]
and therefore
\[
\|\bm{D}_n\|_{\mathrm{op}}
\le C_T\|\bm{S}_n\|_{\mathrm{op}}.
\]
The Gaussian sample-covariance norm is almost surely bounded for all sufficiently large \(n\); see, e.g.,~\cite{yin1988}. Moreover, on \(\mathcal{F}_n(\delta)\), for every compact \(L\subset\Omega_\delta\),
\[
\sup_{z\in L}\|\bm{G}_n(z)\|_{\mathrm{op}}\le C_L.
\]
Together with the local bound already proved for \(\bm\Gamma_n^{\mathrm{diag}}\), this shows that
\(\{\bm\Gamma_n^{\mathrm{off}}\}_n\) is locally uniformly bounded on \(\Omega_\delta\) along subsequences on which \(\mathcal{F}_n(\delta)\) holds eventually. For every
fixed \(z\in\C_+\), we have already proved that
\[
\bm{\Gamma}_n^{\mathrm{off}}(z) \stackrel{\mathbb{P}}{\to}
\bm{0}.
\]
We may therefore repeat the subsequence--Vitali argument used for the diagonal term. More precisely, from any subsequence one may extract a further subsequence along which
\(\mathcal{F}_n(\delta)\) holds eventually almost surely and
\[
\bm{\Gamma}_n^{\mathrm{off}}(z) \to  \bm{0},
\]
almost surely for every \(z\) in a countable subset of \(\Omega_\delta\cap\C_+\) having an accumulation point. Vitali's theorem, applied entrywise, then yields
\[
\bm{\Gamma}_n^{\mathrm{off}}(z) \to  \bm{0},
\]
locally uniformly on \(\Omega_\delta\) along this further subsequence. Since \(K\subset\Omega_\delta\), it follows that
\[
\sup_{z\in K} \|\bm{\Gamma}_n^{\mathrm{off}}(z)\|_{\mathrm{op}} \to 0,
\]
almost surely along the further subsequence. By the subsequence characterization of convergence in probability,
\[
\mathbb{P} \left( \mathcal{F}_n(\delta) \cap \left \{ \sup_{z\in K} \|\bm{\Gamma}_n^{\mathrm{off}}(z)\|_{\mathrm{op}} >\varepsilon \right\} \right) \to 0.
\]
Finally, since \(\mathcal{F}_n(\delta)\subseteq \mathcal{E}_n(K)\),
\[
\begin{split}
&\mathbb{P}\left( \mathcal{E}_n(K) \cap \left\{ \sup_{z\in K} \|\bm{\Gamma}_n^{\mathrm{off}} (z)\|_{\mathrm{op}}
>\varepsilon \right\} \right)\\
&\leq \mathbb{P} \left(\mathcal{F}_n(\delta)^{\mathrm{c}}\right) + \mathbb{P}\left( \mathcal{F}_n(\delta) \cap \left \{ \sup_{z\in K} \|\bm{\Gamma}_n^{\mathrm{off}}(z)\|_{\mathrm{op}} >\varepsilon \right\} \right)
\to 0.
\end{split}
\]
\end{proof}

\begin{proof}[Proof of Lemma~\ref{lem:Q_n}]
Fix \(K\subset(\lambda_+(\alpha),\infty)\) as in the statement. By construction,
\[
\bm Q_n^\top\bm G_n(z)\bm Q_n
=
\bm\Gamma_n^{\mathrm{diag}}(z)
+
\bm\Gamma_n^{\mathrm{off}}(z).
\]
Therefore, for every \(\varepsilon>0\),
\[
\begin{split}
&\mathbb P\!\left(
\mathcal E_n(K)\cap
\left\{
\sup_{z\in K}
\|\bm Q_n^\top\bm G_n(z)\bm Q_n-\bm\Gamma_\alpha(z)\|_{\mathrm{op}}
>\varepsilon
\right\}
\right)\\
& \le
\mathbb{P}\left(
\mathcal E_n(K)\cap
\left\{
\sup_{z\in K}
\|\bm\Gamma_n^{\mathrm{diag}}(z)-\bm\Gamma_\alpha(z)\|_{\mathrm{op}}
>\frac{\varepsilon}{2}
\right\}
\right) +
\mathbb{P} \left(
\mathcal E_n(K)\cap
\left\{
\sup_{z\in K}
\|\bm\Gamma_n^{\mathrm{off}}(z)\|_{\mathrm{op}}
>\frac{\varepsilon}{2}
\right\}
\right),
\end{split}
\]
and both terms tend to zero by Lemmas~\ref{lem:Q_n_diagonal} and~\ref{lem:off_diagonal}.
\end{proof}

\begin{proof} [Proof of Proposition~\ref{prop:outlier_equation}]
Let \(K\subset(\lambda_+(\alpha),\infty)\) be compact, and set \(a_K \coloneqq \inf K>\lambda_+(\alpha)\). If \(K\cap \mathrm{sp} (\bm{P}_n)\neq\emptyset\), then necessarily \(\lambda_1(\bm{P}_n) \geq a_K\). Therefore, Proposition~\ref{prop:upper_edge} gives
\[
\mathbb{P} \left (\mathcal{E}_n (K)^\mathrm{c} \right) \leq \mathbb{P} \left (\lambda_1 (\bm{P}_n)\geq a_K \right) \to 0.
\]
In particular, with probability tending to one, \(\bm{H}_n(z)\) is well-defined for every \(z\in K\).

By Lemmas~\ref{lem:A_n} and~\ref{lem:Q_n}, for every \(\varepsilon>0\),
\begin{equation} \label{eq:Hn_intermediate_limit}
\mathbb{P} \left ( \mathcal{E}_n (K) \cap \left \{ \sup_{z\in K} \left \| \bm{H}_n(z) - \left( \E_{\bm{y}} \left [\bm{T}(\bm{y}) \otimes \bm{C}(\bm{y})  \right ] - \bm{\Gamma}_\alpha (z)- z\bm{I}_{pr} \right) \right \|_{\mathrm{op}} > \varepsilon \right\} \right )\to 0.
\end{equation}
We now identify the deterministic matrix appearing above. Some algebraic manipulations give
\[
\E_{\bm{y}} \left [\bm{T}(\bm{y}) \otimes \bm{C}(\bm{y})  \right ] - \bm{\Gamma}_\alpha (z) = \E_{\bm{y}} \left[
\bm{T} (\bm{y}) \left( \bm{I}_p+ \frac{1}{\alpha} \bm{M}_\alpha(z) \bm{T}(\bm{y}) \right)^{-1} \otimes \bm{C} (\bm{y}) \right].
\]
Hence,
\[
\E_{\bm{y}} \left [\bm{T}(\bm{y}) \otimes \bm{C}(\bm{y})  \right ] - \bm{\Gamma}_\alpha (z)  -z\bm{I}_{pr} = \bm{H}_\alpha(z).
\]
Combining this identity with~\eqref{eq:Hn_intermediate_limit} yields, for every \(\varepsilon>0\),
\[
\mathbb{P} \left ( \mathcal{E}_n (K) \cap \sup_{z\in K} \left \{ \left\| \bm{H}_n(z) - \bm{H}_\alpha(z) \right \|_{\mathrm{op}} > \varepsilon \right \} \right ) \to 0.
\]

We finally prove the assertion concerning the outlying eigenvalues. Suppose that \(\theta_n\in \mathrm{sp} (\bm{D}_n)\) such that \(\theta_n \stackrel{\mathbb{P}}{\to}\theta\) with \(\theta > \lambda_+(\alpha)\). Choose \(\delta>0\) such that
\[
K_\theta \coloneqq [\theta-\delta,\theta+\delta] \subset (\lambda_+(\alpha),\infty).
\]
Since \(\theta_n\to\theta\) in probability and \(\mathbb{P} (\mathcal{E}_n (K_\theta))\to 1\), 
\[
\mathbb{P} \left ( \{ \theta_n \in K_\theta \}\cap \mathcal{E}_n (K_\theta) \right ) \to 1.
\]
On the event \(\{ \theta_n \in K_\theta \}\cap \mathcal{E}_n (K_\theta)\), we have \(\theta_n\notin \mathrm{sp}(\bm{P}_n)\), and hence the Schur complement identity gives \(\det (\bm{H}_n(\theta_n) )=0\), or equivalently,
\[
s_{\min} (\bm{H}_n(\theta_n) )=0,
\]
where \(s_{\min}\) denotes the smallest singular value. Moreover, on \(\{ \theta_n \in K_\theta \}\cap \mathcal{E}_n (K_\theta)\),
\[
\left \| \bm{H}_n (\theta_n)-\bm{H}_\alpha(\theta) \right \|_{\mathrm{op}} \leq \sup_{z\in K_\theta} \left \|
\bm{H}_n(z) - \bm{H}_\alpha(z) \right \|_{\mathrm{op}} + \left \| \bm{H}_\alpha(\theta_n) -\bm{H}_\alpha(\theta) \right \|_{\mathrm{op}}.
\]
The first term converges to zero in probability on \(\mathcal{E}_n(K_\theta)\) by the uniform convergence established above. The second term converges to zero in probability because \(\theta_n\to\theta\) in probability and \(\bm{H}_\alpha\) is continuous on \(K_\theta\). Therefore, for every \(\varepsilon>0\),
\[
\mathbb{P} \left( \{ \theta_n \in K_\theta \}\cap \mathcal{E}_n (K_\theta) \cap \left \{ \|\bm{H}_n(\theta_n) - \bm{H}_\alpha (\theta) \|_{\mathrm{op}}> \varepsilon \right \}\right) \to 0.
\]
Finally, the smallest singular value is \(1\)-Lipschitz with respect to the operator norm. Hence, on \(\{ \theta_n \in K_\theta \}\cap \mathcal{E}_n (K_\theta)\),
\[
s_{\min} (\bm{H}_\alpha(\theta)) \leq s_{\min} (\bm{H}_n(\theta_n)) + \left \| \bm{H}_n(\theta_n)-\bm{H}_\alpha(\theta)
\right \|_{\mathrm{op}} = \left \| \bm{H}_n(\theta_n)-\bm{H}_\alpha(\theta) \right \|_{\mathrm{op}} .
\]
Since the left-hand side is deterministic, while \(\mathbb{P} (\{ \theta_n \in K_\theta \}\cap \mathcal{E}_n (K_\theta))\to1\) and the right-hand side converges to zero in probability on \(\{ \theta_n \in K_\theta \}\cap \mathcal{E}_n (K_\theta)\), we conclude that \(s_{\min} (\bm{H}_\alpha(\theta))=0\). Therefore, \(\bm{H}_\alpha(\theta)\) is singular, and hence \(\det (\bm{H}_\alpha(\theta))=0\).
\end{proof}

\subsection{Largest-eigenvalue phase transition and weak recovery}

We first prove the transition for the largest eigenvalue. 

\begin{proof}[Proof of Theorem~\ref{thm:BBP_transition}]
We begin by proving the continuity and strict monotonicity of \(h_\alpha \colon (\lambda_+(\alpha),\infty) \to \R\). For \(z>\lambda_1 (\bm{P}_n)\), differentiation of~\eqref{eq:H_n} gives
\[
\bm{H}_n'(z) = -\bm{I}_{pr} - \bm{Q}_n^\top \left( \bm{P}_n - z \bm{I}_{p(d-r)} \right)^{-2} \bm{Q}_n \preceq -\bm{I}_{pr}.
\]
Therefore, if \(\lambda_1 (\bm{P}_n) < z_1 < z_2\), then
\[
\bm{H}_n(z_2) - \bm{H}_n(z_1) = \int_{z_1}^{z_2} \bm{H}_n' (t) \, \mathrm{d}t \preceq -\int_{z_1}^{z_2} \bm{I}_{pr} \, \mathrm{d}t =- (z_2-z_1) \bm{I}_{pr},
\]
and hence
\begin{equation}\label{eq:Hn_strict_monotonicity}
\bm{H}_n(z_2) \preceq \bm{H}_n(z_1) - ( z_2 - z_1) \bm{I}_{pr}.
\end{equation}
Moreover \(z \mapsto \left( \bm{P}_n - z \bm{I}_{p(d-r)} \right)^{-1}\) is real analytic on every connected component of \(\R \setminus \mathrm{sp} (\bm{P}_n)\). Thus, \(\bm{H}_n\) is real analytic there
and, in particular, continuous and strictly decreasing in the Loewner order on \((\lambda_1(\bm{P}_n),\infty)\). We next pass the monotonicity inequality to the deterministic limit. Fix \(\lambda_+(\alpha)<z_1<z_2\) and let \(K=[z_1,z_2]\). We claim that
\begin{equation}\label{eq:Halpha_strict_monotonicity}
\bm{H}_\alpha(z_2) \preceq \bm{H}_\alpha(z_1) - (z_2-z_1)\bm{I}_{pr}.
\end{equation}
Suppose, by contradiction, that this inequality fails. Then there exist a unit vector \(\bm{v}\in\R^{pr}\) and \(c>0\) such that
\[
\bm{v}^\top \left ( \bm{H}_\alpha(z_2) - \bm{H}_\alpha(z_1) + (z_2-z_1)\bm{I}_{pr} \right) \bm{v} > c.
\]
By Proposition~\ref{prop:upper_edge}, \(\mathbb{P} \left( \lambda_1 (\bm{P}_n)<z_1\right) \to 1\). Furthermore, Proposition~\ref{prop:outlier_equation} implies that, with probability tending to one,
\[
\sup_{z\in K} \|\bm{H}_n(z)-\bm{H}_\alpha(z) \|_{\mathrm{op}} <\frac{c}{4}.
\]
On the intersection of these two events,
\[
\bm{v}^\top \left[\bm{H}_n(z_2) - \bm{H}_n(z_1) + (z_2-z_1)\bm{I}_{pr} \right] \bm{v} > \frac{c}{2},
\]
contradicting~\eqref{eq:Hn_strict_monotonicity}. This proves~\eqref{eq:Halpha_strict_monotonicity}.  The continuity of \(\bm{H}_\alpha\) follows from the same dominated-convergence argument used in the proof of Proposition~\ref{prop:outlier_equation}, together with the continuity of \(\bm{M}_\alpha\) from Lemma~\ref{lem:continuation_off_support} and the local uniform bounds on the inverse appearing in~\eqref{eq:H_alpha}. Thus, \(\bm{H}_\alpha(z)\) is continuous and strictly decreasing in the Loewner order on \((\lambda_+(\alpha),\infty)\). 

For \(z \in \R \setminus \mathrm{sp} (\bm{P}_n)\), define
\[
h_n (z) \coloneqq \lambda_1 \left(\bm{H}_n(z)\right).
\]
On \((\lambda_1(\bm{P}_n),\infty)\),~\eqref{eq:Hn_strict_monotonicity} gives \(h_n(z_2) \leq h_n(z_1)-(z_2-z_1)\), so \(h_n\) is continuous and strictly decreasing on this interval. Similarly, since \(h_\alpha(z) = \lambda_1 \left(\bm{H}_\alpha(z)\right)\),~\eqref{eq:Halpha_strict_monotonicity} yields \(h_\alpha(z_2) \leq h_\alpha(z_1)-(z_2-z_1)\), and hence \(h_\alpha\) is strictly decreasing. Its continuity follows from the continuity of \(\bm{H}_\alpha\) and of the largest-eigenvalue map. Moreover, for every compact interval \(K\subset(\lambda_+(\alpha),\infty)\), Weyl's inequality gives, on \(\mathcal{E}_n(K)\),
\[
\sup_{z \in K} |h_n(z) - h_\alpha(z)| \le \sup_{z \in K} \|\bm{H}_n(z) - \bm{H}_\alpha(z) \|_{\mathrm{op}}.
\]
Thus, Proposition~\ref{prop:outlier_equation} implies that, for every \(\varepsilon>0\),
\begin{equation}\label{eq:hn_uniform_convergence}
\mathbb{P} \left( \mathcal{E}_n(K) \cap \left \{ \sup_{z\in K} |h_n(z)-h_\alpha(z)| > \varepsilon\right\} \right) \le \mathbb{P} \left( \mathcal{E}_n(K) \cap \left \{ \sup_{z\in K} \|\bm{H}_n(z)- \bm{H}_\alpha(z)\|_{\mathrm{op}} > \varepsilon\right\} \right) \to 0.
\end{equation}
We next determine the behavior of \(h_\alpha\) at infinity. According to Lemma~\ref{lem:continuation_off_support},  the matrix-valued measure \(\bm{\Omega}_\alpha\) is supported in \([-R,R]\) and thus for \(x >R\),
\[
\bm{M}_\alpha(x) = \int_\R \frac{1}{\lambda-x} \bm{\Omega}_\alpha (\mathrm{d}\lambda).
\]
The normalization of the matrix-valued Stieltjes transform gives \(\bm{\Omega}_\alpha(\R)=\bm{I}_p\).
Thus,
\[
\bm{M}_\alpha(x)  + \frac{1}{x}\bm{I}_p = \int_{\R}  \frac{\lambda}{x(\lambda-x)}
\bm{\Omega}_\alpha(\mathrm{d}\lambda).
\]
Since \(\| \bm{M}_\alpha(x) + x^{-1} \bm{I}_p\|_{\mathrm{op}} \le \frac{R}{x(x-R)}\), we have
\[
\bm{M}_\alpha(x) = -\frac{1}{x} \bm{I}_p + o(x^{-1})
\quad \text{as } z\to\infty,
\]
and, in particular, \(\|\bm{M}_\alpha(x)\|_{\mathrm{op}} \to 0\). Since \(\bm{T}(\bm{y})\) is uniformly bounded, dominated convergence yields
\[
\Phi_\alpha (x) \to  \lambda_1\left( \E_{\bm{y}} \left[ \bm{T}(\bm{y})\otimes\bm{C}(\bm{y}) \right] \right).
\]
In particular,
\[
h_\alpha(x) =\Phi_\alpha (\bm{M}_\alpha(x) )-x \to -\infty.
\]
Since \(h_\alpha\) is strictly decreasing, it follows that
\[
\Delta_{\bm{T}}(\alpha) = \sup_{x>\lambda_+(\alpha)} h_\alpha(x) = \lim_{x\downarrow\lambda_+(\alpha)} h_\alpha(x).
\]
This proves the first part of the statement.

We now study the largest eigenvalue of \(\bm{D}_n\). We first suppose that \(\Delta_{\bm{T}}(\alpha)\leq0\). Since \(h_\alpha\) is strictly decreasing, it follows that \(h_\alpha(z)<0\) for every \(z>\lambda_+(\alpha)\). Indeed, this is immediate when \(\Delta_{\bm{T}}(\alpha)<0\), while if \(\Delta_{\bm{T}}(\alpha)=0\), strict monotonicity implies that \(h_\alpha(z)\) is strictly smaller than its limit at the left endpoint. Fix \(\varepsilon>0\) and set \(z_\varepsilon \coloneqq \lambda_+(\alpha)+\varepsilon\). Then \(h_\alpha(z_\varepsilon)<0\). By Proposition~\ref{prop:upper_edge},
\[
\mathbb{P} \left( \lambda_1 (\bm{P}_n) < z_\varepsilon \right) \to 1.
\]
Moreover, by~\eqref{eq:hn_uniform_convergence}, applied to the singleton \(K= \{z_\varepsilon \}\), \(h_n(z_\varepsilon)<0\) with probability tending to one. On this event, the monotonicity of \(h_n\) gives \(h_n(z) <0\) for every \(z\geq z_\varepsilon\). Thus \(\bm{H}_n(z)\) is negative definite for every \( z \geq z_\varepsilon\), and in particular it is never singular there. By the Schur complement identity, \(\bm{D}_n\) has no eigenvalue in \([z_\varepsilon,\infty)\). Therefore,
\[
\mathbb{P} \left( \lambda_1 (\bm{D}_n) \leq \lambda_+(\alpha)+\varepsilon \right) \to 1.
\]
On the other hand, since \(\bm{P}_n\) is a principal submatrix of \(\bm{D}_n\), Cauchy's interlacing theorem (see, e.g.,~\cite[Exercise~1.3.14]{tao2012}) gives
\[
\lambda_1 (\bm{D}_n) \geq \lambda_1(\bm{P}_n).
\]
Using \(\lambda_1(\bm{P}_n) \stackrel{\mathbb{P}}{\to} \lambda_+(\alpha)\) by Proposition~\ref{prop:upper_edge}, we obtain
\[ 
\lambda_1(\bm{D}_n) \stackrel{\mathbb{P}}{\to} \lambda_+(\alpha).
\]

We now suppose that \(\Delta_{\bm{T}}(\alpha)>0\). Since \(h_\alpha\) is continuous and strictly decreasing and tends to \(-\infty\), there exists a unique \(\theta_\alpha>\lambda_+(\alpha)\) such that \(h_\alpha(\theta_\alpha)=0\). Fix \(\varepsilon>0\) sufficiently small that
\[
0<\varepsilon<\theta_\alpha-\lambda_+(\alpha).
\]
Strict monotonicity gives \(h_\alpha(\theta_\alpha-\varepsilon)>0\) and \(h_\alpha(\theta_\alpha+\varepsilon)<0\). By Proposition~\ref{prop:upper_edge},
\[
\mathbb{P} \left( \lambda_1(\bm{P}_n) < \theta_\alpha-\varepsilon \right) \to 1.
\]
Furthermore, by~\eqref{eq:hn_uniform_convergence}, with probability
tending to one, \(h_n(\theta_\alpha-\varepsilon)>0\) and \(h_n(\theta_\alpha+\varepsilon)<0\). Since \(h_n\) is continuous and strictly decreasing on \([ \theta_\alpha  -\varepsilon, \theta_\alpha + \varepsilon]\), there exists a unique \(\theta_n \in (\theta_\alpha-\varepsilon, \theta_\alpha+\varepsilon) \) such that \(h_n(\theta_n)=0\). Equivalently,
\[
\lambda_1 \left(\bm{H}_n (\theta_n)\right)=0,
\]
so \(\bm{H}_n(\theta_n)\) is singular. By the Schur complement identity, \(\theta_n\in \mathrm{sp} (\bm{D}_n)\). Moreover, for every \(z>\theta_n\), we have \(h_n(z)<0\), so \(\bm{H}_n(z)\) is negative definite and hence nonsingular. Since \(\theta_n>\lambda_1(\bm{P}_n)\), no eigenvalue of \(\bm{P}_n\) can lie above \(\theta_n\). The Schur complement identity therefore implies that \(\bm{D}_n\) has no eigenvalue larger than \(\theta_n\). Consequently, \(\lambda_1(\bm{D}_n)=\theta_n\) on this event, and thus
\[
\mathbb{P} \left( \left| \lambda_1 (\bm{D}_n) - \theta_\alpha \right| <\varepsilon \right) \to 1.
\]
Since \(\varepsilon>0\) is arbitrary,
\[
\lambda_1 (\bm{D}_n) \stackrel{\mathbb{P}}{\to} \theta_\alpha.
\]
This completes the proof.
\end{proof}

We now prove Theorem~\ref{thm:weak_recovery} by combining Theorem~\ref{thm:BBP_transition} and Proposition~\ref{prop:upper_edge}.

\begin{proof}[Proof of Theorem~\ref{thm:weak_recovery}]
Because the estimator in~\eqref{eq:estimator} is invariant under nonzero rescaling of its defining eigenvector, let \(\bm{v}_1\) be a unit eigenvector associated with \(\lambda_1(\bm{D}_n)\). Let \(\bm{R}\) denote the orthogonal transformation obtained by applying the rotation and permutation introduced in Subsection~\ref{subsection:preliminaries}, so that
\[
\bm{D}_n'=\bm{R}\bm{D}_n\bm{R}^\top = \begin{pmatrix}
\bm{A}_n & \bm{Q}_n^\top\\
\bm{Q}_n & \bm{P}_n
\end{pmatrix}.
\]
Define the corresponding eigenvector of \(\bm{D}_n'\) by \(\bm{v}_1' \coloneqq \bm{R} \bm{v}_1\). Since \(\bm{R}\) is orthogonal, \(\bm{v}_1'\) is a unit eigenvector associated with \(\lambda_1(\bm{D}_n)\). Write
\[
\bm{v}_1' =
\begin{pmatrix}
\bm{a}_n\\
\bm{b}_n
\end{pmatrix},
\qquad
\bm{a}_n\in\R^{pr},
\quad
\bm{b}_n\in\R^{p(d-r)}.
\]
By construction, \(R\) maps the lifted signal subspace \(\R^p\otimes\mathcal{S}_\ast\) onto the first \(pr\) coordinate directions. Therefore, \(\bm{a}_n\) is precisely the projection of \(\bm{v}_1\) onto this subspace. That is,
\[
\|\bm{a}_n\|_2^2 = \| (\bm{I}_p \otimes \bm{\Pi}_\ast) \bm{v}_1 \|_2^2,
\]
where we recall that \(\bm{\Pi}_\ast = \bm{W}_\ast \bm{W}_\ast^\top\) is the orthogonal projector onto \(\mathcal{S}_\ast\). Let \(\bm{V}_1 \coloneqq\operatorname{mat}_{d\times p}(\bm{v}_1)\), so that \(\bm{v}_1=\operatorname{vec}(\bm{V}_1)\). Using \((\bm{I}_p \otimes \bm{\Pi}_\ast) \operatorname{vec} (\bm{V}_1) = \operatorname{vec}(\bm{\Pi}_\ast \bm{V}_1)\), we obtain
\[
\lVert \bm{a}_n \rVert_2^2 = \|\bm{\Pi}_\ast \bm{V}_1 \|_{\mathrm{F}}^2 = \|\bm{W}_\ast^\top \bm{V}_1\|_{\mathrm{F}}^2.
\]
Since \(\|\bm{v}_1\|_2 = \|\bm{V}_1\|_{\mathrm{F}}=1\) and \(\widehat{\bm{W}} = \sqrt{d} \bm{V}_1\), it follows that
\begin{equation} \label{eq:overlap_signal_coordinates} 
\lVert \bm{a}_n \rVert_2^2 = \frac{ \lVert \bm{W}_\ast^\top \widehat{\bm{W}} \rVert_{\mathrm{F}}^2 }{ \lVert \widehat{\bm{W}} \rVert_{\mathrm{F}}^2 } = \operatorname{Ov}(\widehat{\bm{W}},\mathcal{S}_\ast). 
\end{equation}

By Theorem~\ref{thm:BBP_transition} and Proposition~\ref{prop:upper_edge},
\[
\lambda_1(\bm{D}_n) \stackrel{\mathbb{P}}{\to}\theta_\alpha
\quad\text{and}\quad
\lambda_1(\bm{P}_n)\stackrel{\mathbb{P}}{\to}\lambda_+(\alpha),
\]
where \(\theta_\alpha>\lambda_+(\alpha)\). Set \(\delta \coloneqq (\theta_\alpha-\lambda_+(\alpha))/2 >0\). Then 
\begin{equation} \label{eq:weak_recovery_gap_event}
\mathbb{P} \left( \lambda_1 (\bm{D}_n) -\lambda_1(\bm{P}_n)\geq\delta \right)\to1.
\end{equation}
On this event, the lower block of the eigenvector equation \(\bm{D}_n'\bm{v}_1' = \lambda_1(\bm{D}_n) \bm{v}_1'\) yields
\[
\bm{b}_n = \left(\lambda_1 (\bm{D}_n)\bm{I}_{p(d-r)}-\bm{P}_n\right)^{-1} \bm{Q}_n\bm{a}_n.
\]
It follows from~\eqref{eq:weak_recovery_gap_event} that, with probability tending to one,
\begin{equation} \label{eq:norm_bound_Q}
\|\bm{b}_n\|_2 \leq \frac{\|\bm{Q}_n\|_{\mathrm{op}}}{\delta}\|\bm{a}_n\|_2.
\end{equation}
Since \(\bm{Q}_n\) is an off-diagonal compression of \(\bm{D}_n'\), we have
\[
\|\bm{Q}_n\|_{\mathrm{op}} \leq \|\bm{D}_n'\|_{\mathrm{op}} = \|\bm{D}_n\|_{\mathrm{op}}.
\]
Moreover, by Assumption~\ref{hyp:preprocessing}, 
\[
\|\bm{D}_n\|_{\mathrm{op}} \le C_T \|\bm{I}_p \otimes \bm{S}_n\|_{\mathrm{op}} = C_T \|\bm{S}_n\|_{\mathrm{op}},
\]
where \(\bm{S}_n \coloneqq \frac{1}{n} \sum_{i=1}^n \bm{x}_i\bm{x}_i^\top\). By the almost-sure convergence of the largest eigenvalue of a Gaussian sample covariance matrix (see, e.g.,~\cite{yin1988}),
\[
\|\bm{S}_n\|_{\mathrm{op}} \stackrel{\mathrm{a.s.}}{\to} \left( 1 + \alpha^{-1/2} \right)^2.
\]
Thus, for any fixed \(\eta>0\), setting \(C_Q \coloneqq C_T \left( \left ( 1 + \alpha^{-1/2} \right)^2 + \eta \right)\), we obtain
\begin{equation} \label{eq:weak_recovery_Q_bound}
\mathbb{P} \left( \|\bm{Q}_n\|_{\mathrm{op}}\leq C_Q \right) \to 1.
\end{equation}
Therefore, combining~\eqref{eq:weak_recovery_Q_bound} and~\eqref{eq:norm_bound_Q} we obtain with probability tending to one,
\[
\|\bm{b}_n\|_2 \leq \frac{C_Q}{\delta}\|\bm{a}_n\|_2.
\]
Since \(\|\bm{a}_n\|_2^2+\|\bm{b}_n\|_2^2=1\), we conclude that
\[
\|\bm{a}_n\|_2^2 \geq \left(1+\frac{C_Q^2}{\delta^2}\right)^{-1}
\]
with probability tending to one. The statement follows from~\eqref{eq:overlap_signal_coordinates} with \(\epsilon =\left(1+C_Q^2 / \delta^2\right)^{-1}\).
\end{proof}

\subsection{Critical sampling thresholds}
We finally prove Proposition~\ref{prop:crit_thresholds}. The argument has two independent aprts: small sampling ratios are uniformly subcritical, whereas large sampling ratios are uniformly supercritical under Assumption~\ref{hyp:population_separation}.

\begin{lem} \label{lem:small_alpha_subcritical}
Under Assumptions~\ref{hyp:prop_limit} and~\ref{hyp:preprocessing}, there exists \(\alpha_->0\) such that \(\Delta_{\bm{T}} (\alpha)\leq 0\) for every \(0<\alpha<\alpha_-\).
\end{lem}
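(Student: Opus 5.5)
The plan is to exploit the scale separation that appears as \(\alpha\to0\): the bulk edge \(\lambda_+(\alpha)\) is of order \(\alpha^{-1}\), while the signal term in \(\Phi_\alpha\) should remain of order one relative to it. By Theorem~\ref{thm:BBP_transition}, \(h_\alpha\) is strictly decreasing, so \(\Delta_{\bm{T}}(\alpha)\le0\) is equivalent to \(\Phi_\alpha(x)\le x\) for all \(x>\lambda_+(\alpha)\). It therefore suffices to compare \(\Phi_\alpha(x)\) with \(x\) uniformly on \((\lambda_+(\alpha),\infty)\). If \(\bm{T}(\bm{y})\) has no positive eigenvalue almost surely, I would handle this case separately by a direct argument.

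First, I would work in rescaled variables \(x=\xi/\alpha\) and \(\bm{Z}=-\bm{M}_\alpha(x)=\alpha\widetilde{\bm{Z}}\). In these variables the matrix Dyson equation at real \(x>\lambda_+(\alpha)\), in the form~\eqref{eq:Galpha_at_ZE}, reads
\[
\widetilde{\bm{Z}}^{-1}+\alpha\,\E_{\bm{y}}\!\left[\bm{T}(\bm{y})\bigl(\bm{I}_p-\widetilde{\bm{Z}}\bm{T}(\bm{y})\bigr)^{-1}\right]=\xi\bm{I}_p .
\]
The strict admissibility~\eqref{eq:strict_admissibility_E} becomes \(\widetilde{\bm{Z}}^{-1}-\bm{T}(\bm{y})\succ0\) almost surely. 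Together with \(\widetilde{\bm{Z}}\succ0\) and the equation above, this gives \(\xi\ge\lambda_1(\widetilde{\bm{Z}}^{-1})-O(\alpha\cdot\text{stuff})\). The intended conclusion of this step is that \(\alpha\lambda_+(\alpha)\) is bounded below by a positive constant for small \(\alpha\), essentially \(t_{\max}\coloneqq\operatorname*{ess\,sup}\lambda_1(\bm{T}(\bm{y}))\) when \(t_{\max}>0\). This is consistent with the finite-\(n\) picture: \(\bm{P}_n\) has rank at most \(pn\ll pd\), and its nonzero eigenvalues are of size \(\gamma_n\bm{T}(\bm{y}_i)\).

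Second, I would bound the signal term. Using the identity
\[
\bm{T}\bigl(\bm{I}-\alpha^{-1}\bm{Z}\bm{T}\bigr)^{-1}=\bm{T}+\alpha^{-1}\bm{T}\bm{H}^{-1}\bm{T},
\qquad \bm{H}=\bm{Z}^{-1}-\alpha^{-1}\bm{T},
\]
I would bound \(\E_{\bm{y}}[\bm{Q}_x(\bm{y})\otimes\bm{C}(\bm{y})]\) in terms of \(\E_{\bm{y}}[\bm{Q}_x(\bm{y})]\), which the Dyson equation ties to \(x\bm{I}_p-\bm{Z}^{-1}\preceq x\bm{I}_p\). I would also use \(\E[\bm{C}]=\bm{I}_r\), and split \(\bm{C}=\bm{I}_r+(\bm{C}-\bm{I}_r)\) with a truncation of the unbounded part of \(\bm{C}\). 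The aim is an estimate of the form \(\Phi_\alpha(x)\le C_T K+\alpha^{-1}(\text{something strictly below }\xi)\), which gives \(\Phi_\alpha(x)<x\) once \(\alpha\) is small.

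The main obstacle is the behaviour close to the edge. There the denominators \((\bm{I}_p-\widetilde{\bm{Z}}\bm{T})^{-1}\) may blow up, and the positive semidefinite term \(\alpha^{-1}\bm{T}\bm{H}^{-1}\bm{T}\) could make \(\Phi_\alpha\) of the same order \(\alpha^{-1}\) as \(x\). Crude norm bounds therefore do not suffice. What I would try first is to show that this term is exactly compensated by the same term in the Dyson equation when it is weighted by \(\bm{C}\) instead of by \(\bm{I}\). Failing that, I would argue by contradiction along a sequence \(\alpha_k\to0\) with \(\Delta_{\bm{T}}(\alpha_k)>0\). Passing to the rescaled limiting equation, in which the signal block becomes asymptotically negligible, I would use the finite-dimensional outlier characterization of Proposition~\ref{prop:outlier_equation} to derive a contradiction.
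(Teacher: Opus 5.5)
You have located the real difficulty, but the proposal does not resolve it, so it is not yet a proof. What is missing is a bound on the signal term that holds uniformly for all $x>\lambda_+(\alpha)$, including $x\downarrow\lambda_+(\alpha)$, where the denominators $(\bm{I}_p-\alpha^{-1}\bm{Z}\bm{T}(\bm{y}))^{-1}$ can be large.

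Subtracting the Dyson equation, which you list as your first attempt, is the right start, but the cancellation is not exact. With $\bm{Z}=-\bm{M}_\alpha(x)$ and $\bm{A}(\bm{y})=\bm{T}(\bm{y})(\bm{I}_p-\alpha^{-1}\bm{Z}\bm{T}(\bm{y}))^{-1}$, the equation $x\bm{I}_p=\bm{Z}^{-1}+\E_{\bm{y}}[\bm{A}(\bm{y})]$ only gives
\[
\E_{\bm{y}}[\bm{A}\otimes\bm{C}]-x\bm{I}_{pr}=\E_{\bm{y}}[\bm{A}\otimes(\bm{C}-\bm{I}_r)]-\bm{Z}^{-1}\otimes\bm{I}_r .
\]
The first term on the right still contains the possibly large piece $\alpha^{-1}\bm{T}\bm{H}^{-1}\bm{T}\otimes(\bm{C}-\bm{I}_r)$. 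Since $\bm{A}$ is sign-indefinite and $\bm{C}$ is random and unbounded, it cannot be dominated through $\E_{\bm{y}}[\bm{A}]$ alone.

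The idea that closes the argument has two parts.
\begin{itemize}
\item Conjugate by $\bm{Z}^{1/2}\otimes\bm{I}_r$. This turns the right-hand side into $\alpha\E_{\bm{y}}[\bm{B}\otimes(\bm{C}-\bm{I}_r)]-\bm{I}_{pr}$, with the symmetric matrix $\bm{B}=\bm{S}(\bm{I}_p-\bm{S})^{-1}$ and $\bm{S}=\alpha^{-1}\bm{Z}^{1/2}\bm{T}\bm{Z}^{1/2}$.
\item Use stability of the Dyson equation. The map $\bm{H}\mapsto\alpha\E_{\bm{y}}[\bm{B}\bm{H}\bm{B}]$ is conjugate to $\mathcal{K}_x$, so its spectral radius is below $1$ by Corollary~\ref{cor:physical_stability_right}. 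It is self-adjoint for the Frobenius product, so its norm is also below $1$. Testing it on $\bm{I}_p$ gives $\alpha\E_{\bm{y}}\|\bm{B}\|_{\mathrm{F}}^2<p$ for every $x>\lambda_+(\alpha)$.
\end{itemize}
Now apply Cauchy--Schwarz with $\kappa^2=\E_{\bm{y}}\|\bm{C}-\bm{I}_r\|_{\mathrm{F}}^2\le r(r+1)$, which is finite by conditional Jensen, so no truncation of $\bm{C}$ is needed. This yields $\|\alpha\E_{\bm{y}}[\bm{B}\otimes(\bm{C}-\bm{I}_r)]\|_{\mathrm{op}}<\kappa\sqrt{\alpha p}<1$ whenever $\alpha<1/(p(1+\kappa^2))$. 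Hence $h_\alpha<0$ on all of $(\lambda_+(\alpha),\infty)$. Nothing in your proposal supplies this $L^2$ control of the denominators near the edge.

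Your fallback by contradiction does not fill the gap either. Proposition~\ref{prop:outlier_equation} is an $n\to\infty$ statement at fixed $\alpha$, so it says nothing about the deterministic functions $h_{\alpha_k}$ as $\alpha_k\to0$. Moreover, the zeros $\theta_k$ may approach $\lambda_+(\alpha_k)$ at an uncontrolled rate. That is exactly the regime where your claim that the signal block becomes negligible in the rescaled limit is unsupported.

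Your first step is unnecessary, and it is not rigorous as written: the error term $O(\alpha\cdot\text{stuff})$ involves the same possibly unbounded denominators. The argument above never needs the scale of $\lambda_+(\alpha)$, and it needs no separate treatment of the case $\bm{T}\preceq0$.
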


\begin{proof}
Set 
\begin{equation} \label{eq:kappa}
\kappa^2 \coloneqq \E_{\bm{y}} \left [ \| \bm{C} (\bm{y}) - \bm{I}_r \|_{\mathrm{F}}^2\right ].
\end{equation}
This quantity is finite. Indeed, \(\E_{\bm{y}} [\bm{C} (\bm{y})]= \E[\bm{s} \bm{s}^\top ] = \bm{I}_r\), and conditional Jensen's inequality gives
\[
\E_{\bm{y}} \|\bm{C} (\bm{y})\|_{\mathrm{F}}^2 = \E_{\bm{y}} \| \E [\bm{s} \bm{s}^\top \mid \bm{y}] \|_{\mathrm{F}}^2 \le \E \| \bm{s} \bm{s}^\top\|_{\mathrm{F}}^2 = \E \|\bm{s}\|_2^4 = r(r+2).
\]
Thus, 
\[
\kappa^2 = \E_{\bm{y}} \|\bm{C} (\bm{y})\|_{\mathrm{F}}^2 - r \le r(r+1).
\]

Fix \(\alpha >0\) and \(x > \lambda_+ (\alpha)\). Since \(\bm{M}_\alpha\) is a normalized matrix-valued Stieltjes transform,  
\[
\bm{Z} \coloneqq - \bm{M}_\alpha(x) \succ \bm{0}_p. 
\] 
Define
\[
\bm{A} (\bm{y}) \coloneqq \bm{T} (\bm{y}) \left ( \bm{I}_p - \frac{1}{\alpha} \bm{Z} \bm{T}(\bm{y}) \right)^{-1}
\]
and
\[
\bm{B} (\bm{y}) \coloneqq \frac{1}{\alpha} \bm{Z}^{1/2}\bm{T} (\bm{y})\bm{Z}^{1/2} \left ( \bm{I}_p - \frac{1}{\alpha} \bm{Z}^{1/2}\bm{T} (\bm{y})\bm{Z}^{1/2} \right)^{-1}.
\]
The inverses above are well defined by the analysis from Section~\ref{section:MDE}. Moreover, \(\bm{B}(\bm{y})\) is symmetric, and Lemma~\ref{lem:push_through} yields
\begin{equation} \label{eq:identity_A_B}
\bm{Z}^{1/2} \bm{A}(\bm{y}) \bm{Z}^{1/2} = \alpha \bm{B}(\bm{y}).
\end{equation}

We claim that
\[
h_\alpha(x) = \lambda_1 \left ( \E_{\bm{y}} \left [ \bm{A}(\bm{y}) \otimes \bm{C}(\bm{y})\right ]\right) - x < 0.
\]
The matrix Dyson equation~\eqref{eq:MDE} at \(x\) reads
\[
- \bm{Z}^{-1} = -x \bm{I}_p + \E_{\bm{y}} [\bm{A}(\bm{y})],
\]
or equivalently,
\begin{equation} \label{eq:MDE_Z}
x \bm{I}_p = \bm{Z}^{-1} + \E_{\bm{y}} [\bm{A} (\bm{y})].
\end{equation}
Hence
\[
\E_{\bm{y}} \left [\bm{A}(\bm{y}) \otimes \bm{C} (\bm{y}) \right ] - x \bm{I}_{pr} = \E_{\bm{y}} \left [ \bm{A}(\bm{y}) \otimes \left (\bm{C}(\bm{y}) - \bm{I}_r \right) \right] - \bm{Z}^{-1} \otimes \bm{I}_r.
\]
Conjugation by \(\bm{Z}^{1/2} \otimes \bm{I}_r\) and using~\eqref{eq:identity_A_B}, we obtain
\begin{equation}\label{identity_A_B_C}
(\bm{Z}^{1/2} \otimes \bm{I}_r) \left ( \E_{\bm{y}} \left [\bm{A}(\bm{y}) \otimes \bm{C} (\bm{y}) \right ] - x \bm{I}_{pr}  \right) (\bm{Z}^{1/2} \otimes \bm{I}_r) = \alpha \E \left [ \bm{B}(\bm{y}) \otimes \left (\bm{C}(\bm{y}) - \bm{I}_r \right) \right] - \bm{I}_{pr} .
\end{equation}
It is therefore enough to prove
\begin{equation} \label{eq:bound_small_alpha2}
\left \| \alpha \E_{\bm{y}} \left [ \bm{B}(\bm{y}) \otimes \left (\bm{C}(\bm{y}) - \bm{I}_r \right) \right] \right\|_{\mathrm{op}} < 1 .
\end{equation}

By the Cauchy--Schwarz inequality, we have
\begin{equation} \label{eq:bound_B_C}
\begin{split}
\left \| \alpha \E_{\bm{y}} \left [ \bm{B}(\bm{y}) \otimes \left (\bm{C}(\bm{y}) - \bm{I}_r \right) \right] \right\|_{\mathrm{op}}  & \le \alpha \left ( \E_{\bm{y}} \| \bm{B} (\bm{y})\|_{\mathrm{F}}^2 \right)^{1/2} \left ( \E \|\bm{C}(\bm{y}) - \bm{I}_r   \|_{\mathrm{F}}^2 \right)^{1/2} \\
& = \alpha \kappa \left ( \E_{\bm{y}} \| \bm{B} (\bm{y})\|_{\mathrm{F}}^2 \right)^{1/2}.
\end{split}
\end{equation}
We thus need to control \(\alpha\left ( \E_{\bm{y}} \| \bm{B} (\bm{y})\|_{\mathrm{F}}^2 \right)^{1/2} \). For \(\bm{H} \in \sym_p(\R)\), define 
\[
\mathcal{F}_{\alpha,x} [\bm{H}] \coloneqq \alpha \E_{\bm{y}} \left [ \bm{B}(\bm{y}) \bm{H} \bm{B} (\bm{y})\right],
\]
and
\[
\mathcal{C}_{\bm{Z}} [\bm{H}]\coloneqq \bm{Z}^{1/2} \bm{H} \bm{Z}^{1/2}.
\]
Using~\eqref{eq:mathcal_K} and~\eqref{eq:identity_A_B}, we obtain
\[
\begin{split}
\mathcal{C}_{\bm{Z}}^{-1} \mathcal{K}_x \mathcal{C}_{\bm{Z}} [\bm{H}] &= \bm{Z}^{-1/2} \mathcal{K}_x \left [ \bm{Z}^{1/2}\bm{H} \bm{Z}^{1/2} \right ] \bm{Z}^{-1/2} \\
& = \frac{1}{\alpha} \E_{\bm{y}} \left [ \bm{Z}^{1/2}\bm{A}(\bm{y})\bm{Z}^{1/2} \bm{H} \bm{Z}^{1/2}\bm{A}(\bm{y})\bm{Z}^{1/2} \right ] \\
& = \alpha\E_{\bm{y}}\left[ \bm{B}(\bm{y})\bm{H}\bm{B}(\bm{y}) \right] \\
&= \mathcal{F}_{\alpha,x}[\bm{H}].
\end{split}
\]
Thus
\[
\mathcal{F}_{\alpha,x} = \mathcal{C}_{\bm{Z}}^{-1} \mathcal{K}_x \mathcal{C}_{\bm{Z}}. 
\]
In particular, \(\mathcal{F}_{\alpha,x}\) and \(\mathcal{K}_x\) are similar and hence have the same spectrum. Corollary~\ref{cor:physical_stability_right} gives \(\rho(\mathcal{K}_x)<1\) and so \(\rho(\mathcal{F}_{\alpha,x})<1\).

By symmetry of \(\bm{B}(\bm{y})\), \(\mathcal{F}_{\alpha,x}\) is self-adjoint with respect to the Frobenius inner product. Indeed, for every
\(\bm{H},\bm{K}\in\sym_p(\R)\),
\[
\begin{split}
\left\langle
\bm{H}, \mathcal{F}_{\alpha,x}[\bm{K}] \right\rangle_{\mathrm{F}} = \alpha \E_{\bm{y}}
\Tr \left( \bm{H} \bm{B} (\bm{y}) \bm{K} \bm{B} (\bm{y}) \right) = \alpha \E_{\bm{y}} \Tr \left(
\bm{B}(\bm{y}) \bm{H} \bm{B} (\bm{y}) \bm{K} \right) = \left \langle \mathcal{F}_{\alpha,x} [\bm{H}] ,\bm{K} \right \rangle_{\mathrm{F}}.
\end{split}
\]
It then follows that
\[
\|\mathcal{F}_{\alpha,x} \|_{\mathrm{F} \to\mathrm{F}} = \rho(\mathcal{F}_{\alpha,x}) <1.
\]
In particular, this implies that
\[
\|\mathcal{F}_{\alpha,x}[\bm{I}_p]\|_{\mathrm{F}} < \|\bm{I}_p\|_{\mathrm{F}} = \sqrt{p}.
\]
Moreover, \(\mathcal{F}_{\alpha,x} [\bm{I}_p] = \alpha \E_{\bm{y}} [\bm{B}(\bm{y})^2]\), yielding 
\[
\alpha \E_{\bm{y}} \|\bm{B}(\bm{y})\|_{\mathrm{F}}^2 = \Tr \mathcal{F}_{\alpha,z} [\bm{I}_p] \leq \|\bm{I}_p\|_{\mathrm{F}} \|\mathcal{F}_{\alpha,z} [\bm{I}_p]\|_{\mathrm{F}}< p.
\]
Substituting this bound into~\eqref{eq:bound_B_C} gives
\begin{equation} \label{eq:bound_B_C2}
\left \| \alpha \E_{\bm{y}} \left [ \bm{B}(\bm{y}) \otimes \left (\bm{C}(\bm{y}) - \bm{I}_r \right) \right] \right\|_{\mathrm{op}} <\sqrt{\alpha p} \kappa.
\end{equation}

Choose, for instance,
\[
\alpha_- \coloneqq \frac{1}{p(1+\kappa^2)} >0.
\]
Then, for every \(0 < \alpha < \alpha_-\), 
\[
\sqrt{\alpha p} \kappa < \frac{\kappa}{\sqrt{1+\kappa^2}}<1. 
\]
It follows from~\eqref{identity_A_B_C} and~\eqref{eq:bound_B_C2} that
\[
\E_{\bm{y}} \left [ \bm{A}(\bm{y}) \otimes \bm{C}(\bm{y})\right ] - x \bm{I}_{pr} \prec \bm{0}_{pr}.
\]
Therefore, 
\[
h_\alpha(x) = \lambda_1 \left ( \E_{\bm{y}} \left [ \bm{A}(\bm{y}) \otimes \bm{C}(\bm{y})\right ]\right) - x < 0,
\]
for every \(x > \lambda_+(\alpha)\). Taking the supremum over \(x > \lambda_+(\alpha)\) , we conclude that \(\Delta_{\bm{T}}(\alpha) \le 0\).
\end{proof}

We next prove eventual supercriticality. This is the only part of the argument that uses Assumption~\ref{hyp:population_separation}.

\begin{lem} \label{lem:large_alpha_supercritical}
Suppose that Assumptions~\ref{hyp:prop_limit},~\ref{hyp:preprocessing}, and~\ref{hyp:population_separation} hold. Then there exists \(\alpha_+\in (0,\infty)\) such that \(\Delta_{\bm{T}}(\alpha)>0\) for every \(\alpha>\alpha_+\).
\end{lem}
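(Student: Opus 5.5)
The plan is to show that, as \(\alpha\to\infty\), both the bulk edge \(\lambda_+(\alpha)\) and the function \(h_\alpha\) near the edge converge to their population counterparts. Assumption~\ref{hyp:population_separation} then forces \(h_\alpha\) to be positive just above the edge. Set
\[
\lambda_{\mathrm{n}}\coloneqq\lambda_1\left(\E_{\bm y}[\bm T(\bm y)]\right),\qquad \lambda_{\mathrm{s}}\coloneqq\lambda_1\left(\E_{\bm y}[\bm T(\bm y)\otimes\bm C(\bm y)]\right),\qquad g\coloneqq\lambda_{\mathrm s}-\lambda_{\mathrm n}>0 .
\]

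\textbf{Step 1: the edge.} Show \(\limsup_{\alpha\to\infty}\lambda_+(\alpha)\le\lambda_{\mathrm n}\). I would use the variational/test-matrix idea from Lemma~\ref{lem:free_upper_edge} at the limiting level. For \(E>\lambda_{\mathrm n}+\eta\) with fixed \(\eta>0\), construct a regular real solution of \eqref{eq:realMDE} near \(-(E\bm I_p-\E\bm T)^{-1}\) by a contraction argument as in Lemma~\ref{lem:analytic_continuation}. The perturbation \(\alpha^{-1}\bm M\bm T\) has norm \(O(1/(\alpha\eta))\), so \(\bm\Psi_{\alpha,E}\) is a contraction on a small ball around \(-(E-\E\bm T)^{-1}\) once \(\alpha\) is large, uniformly for \(E\) in any compact subset of \((\lambda_{\mathrm n}+\eta,\infty)\). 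The resulting solution \(\bm M\) satisfies \(\rho(\mathcal K_{\bm M})=O(1/(\alpha\eta^2))<1\), since \(\mathcal K_{\bm M}\) carries a factor \(\alpha^{-1}\). Lemma~\ref{lem:physical_sol} then shows \(E\notin\supp(\mu_\alpha)\) and \(\bm M=\bm M_\alpha(E)\). Repeating this for all \(E\) in \([\lambda_{\mathrm n}+\eta,L]\), and handling \(E>R\) by Lemma~\ref{lem:analytic_continuation}, gives \(\lambda_+(\alpha)\le\lambda_{\mathrm n}+\eta\) for all large \(\alpha\).

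\textbf{Step 2: \(h_\alpha\) at a fixed point.} Take \(x\coloneqq\lambda_{\mathrm n}+g/2\), which exceeds \(\lambda_+(\alpha)\) for large \(\alpha\) by Step 1 with \(\eta=g/4\). At this \(x\), Step 1 gives \(\|\bm M_\alpha(x)\|_{\mathrm{op}}\le C/g\) uniformly in large \(\alpha\). Hence
\[
\left\|\bm T(\bm y)\left(\bm I_p+\alpha^{-1}\bm M_\alpha(x)\bm T(\bm y)\right)^{-1}-\bm T(\bm y)\right\|_{\mathrm{op}}=O(1/\alpha)
\]
uniformly in \(\bm y\). Since \(\E_{\bm y}\|\bm C(\bm y)\|_{\mathrm{op}}\le r\), this yields \(\Phi_\alpha(x)\to\lambda_{\mathrm s}\), and therefore
\[
h_\alpha(x)\to\lambda_{\mathrm s}-\lambda_{\mathrm n}-\tfrac g2=\tfrac g2>0 .
\]
By Theorem~\ref{thm:BBP_transition}, \(\Delta_{\bm T}(\alpha)=\sup_{x'>\lambda_+(\alpha)}h_\alpha(x')\ge h_\alpha(x)>0\) for all \(\alpha\) beyond some \(\alpha_+\).

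The main obstacle is Step 1: showing, uniformly in large \(\alpha\), that the constructed fixed point is the physical branch, and in particular that no support of \(\mu_\alpha\) lies in \((\lambda_{\mathrm n}+\eta,\infty)\). Lemma~\ref{lem:physical_sol} combined with the explicit bound on the stability operator should settle this. The remaining care is checking regularity, that is, a uniform bound on \((\bm I_p+\alpha^{-1}\bm M\bm T)^{-1}\), which follows from a Neumann series since \(\alpha^{-1}\|\bm M\|_{\mathrm{op}}\|\bm T\|_{\mathrm{op}}\) is small.
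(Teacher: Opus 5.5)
Your proof is correct. Step~2 matches the paper's argument. The paper fixes $x_\ast\in(\rho_{\mathrm{noise}},\rho_{\mathrm{sig}})$, bounds $\|\bm M_\alpha(x_\ast)\|_{\mathrm{op}}$ uniformly in large $\alpha$, expands the denominator by a Neumann series, and uses $\E_{\bm y}\|\bm C(\bm y)\|_{\mathrm{op}}\le r$ to get $\Phi_\alpha(x_\ast)\to\rho_{\mathrm{sig}}$.

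The difference is in how Step~1 identifies the contraction fixed point with the physical branch.
\begin{itemize}
\item \textbf{Paper's route.} It contracts $\bm\Psi_{\alpha,z}$ on $\{\|\bm X\|_{\mathrm{op}}\le 2/\varepsilon\}$ uniformly over the complex half-plane $\{\Re z>\rho_{\mathrm{noise}}+\varepsilon\}$, so the fixed point is analytic there. It agrees with $\bm M_\alpha$ on $\{\Im z>\varepsilon/2\}$ by the Herglotz bound $\|\bm M_\alpha(z)\|_{\mathrm{op}}\le(\Im z)^{-1}$ and uniqueness in the ball. The identity theorem extends this agreement, and conjugation symmetry makes the fixed point Hermitian on the real axis. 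Lemma~\ref{lem:analytic_continuation_support} then shows the support misses $(\rho_{\mathrm{noise}}+\varepsilon,\infty)$.
\item \textbf{Your route.} You contract only at real $E$, inside $\sym_p(\R)$. This is legitimate because $\bm\Psi_{\alpha,E}$ preserves real symmetric matrices by Lemma~\ref{lem:push_through}. You then hand the continuation and identification to Lemma~\ref{lem:physical_sol}, at the cost of two easy checks: regularity via the Neumann series, and $\|\mathcal K_{\bm M}\|\lesssim C_T^2/(\alpha\eta^2)$.
\end{itemize}
The paper's route is self-contained and avoids the stability machinery. Yours is shorter because the implicit-function-theorem work is already packaged in Lemma~\ref{lem:physical_sol}. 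It also makes clear that large $\alpha$ matters only through the smallness of the stability operator.

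One small simplification: your contraction radius and constant do not depend on $E$ once $E\ge\lambda_{\mathrm n}+\eta$. So a single threshold in $\alpha$ works for all such $E$, and the split into $[\lambda_{\mathrm n}+\eta,L]$ and $E>R$ is unnecessary. The split is harmless anyway, since $R$ in Lemma~\ref{lem:analytic_continuation} can be chosen uniformly for $\alpha\ge1$.
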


\begin{proof}
Set 
\[
\rho_{\mathrm{noise}} \coloneqq \lambda_1 \left ( \E_{\bm{y}}[\bm{T}(\bm{y})] \right ) \quad \mathrm{and} \quad  
\rho_{\mathrm{sig}} \coloneqq \lambda_1\left( \E_{\bm{y}} \left[ \bm{T}(\bm{y})\otimes\bm{C}(\bm{y}) \right] \right). 
\]
By Assumption~\ref{hyp:population_separation}, \(\rho_{\mathrm{sig}}>\rho_{\mathrm{noise}}\). We first prove the upper-edge estimate:
\begin{equation} \label{eq:lim_supp_alpha}
\limsup_{\alpha \to +\infty} \lambda_+(\alpha) \le \rho_{\mathrm{noise}}.
\end{equation}
One could also prove the matching lower bound and thus show that \(\lambda_+(\alpha) \to  \rho_{\mathrm{noise}}\) as \(\alpha \to +\infty\). Since the lower bound is not needed here, it will be omitted. 

We proceed similarly as in Lemma~\ref{lem:analytic_continuation}. Fix \(\varepsilon>0\), and define
\[
\mathcal{U}_\varepsilon \coloneqq \left \{ z \in \C \colon \Re z > \rho_{\mathrm{noise}} + \varepsilon \right\}, \quad
\mathcal{B}_\varepsilon \coloneqq \left \{ \bm{X} \in \C^{p\times p} \colon \|\bm{X}\|_{\mathrm{op}} \leq \frac{2}{\varepsilon} \right\}.
\]
For \(z\in\mathcal{U}_\varepsilon\), recall the maps from~\eqref{eq:mathcal_S_alpha} and~\eqref{eq:bm_Psi_alpha}:
\[
\Psi_{\alpha,z} (\bm{X}) = \left( -z \bm{I}_p + \mathcal{S}_\alpha(\bm{X})
\right)^{-1},
\]
where
\[
\mathcal{S}_\alpha(\bm{X}) = \E_{\bm{y}} \left[ \bm{T} (\bm{y}) \left( \bm{I}_p + \frac{1}{\alpha} \bm{X} \bm{T} (\bm{y}) \right)^{-1} \right].
\]
We first show that, for all sufficiently large \(\alpha\), \(\Psi_{\alpha,z}\) is a strict contraction from \(\mathcal{B}_\varepsilon\) into itself, uniformly in \(z\in \mathcal{U}_\varepsilon\). Let \(C_T\) be the uniform bound on \(\|\bm{T}(\bm{y})\|_{\mathrm{op}}\) from Assumption~\ref{hyp:preprocessing}. If \(\bm{X} \in \mathcal{B}_\varepsilon\), then for sufficiently large \(\alpha\),
\[
\left \| \frac{1}{\alpha}\bm{X}\bm{T}(\bm{y}) \right\|_{\mathrm{op}} \leq \frac{2C_T}{\alpha\varepsilon} < \frac{1}{2},
\]
uniformly in \(\bm{y}\), and hence
\[
\left \| \left( \bm{I}_p + \frac{1}{\alpha} \bm{X} \bm{T}(\bm{y}) \right)^{-1} \right\|_{\mathrm{op}} \leq 2.
\]
We obtain
\[
\|\mathcal{S}_\alpha(\bm{X}) - \E_{\bm{y}} [\bm{T}(\bm{y})] \|_{\mathrm{op}} \leq \frac{1}{\alpha}\E_{\bm{y}} \left[ \|\bm{T}(\bm{y})\|_{\mathrm{op}} \left \| \left( \bm{I}_p + \frac{1}{\alpha} \bm{X} \bm{T}(\bm{y}) \right)^{-1} \right\|_{\mathrm{op}} \|\bm{X}\|_{\mathrm{op}} \|\bm{T}(\bm{y})\|_{\mathrm{op}} \right] \leq \frac{4C_T^2}{\alpha\varepsilon},
\]
where we used the identity \((\bm{I}_p + \bm{A})^{-1} - \bm{I}_p = -(\bm{I}_p +\bm{A} )^{-1} \bm{A}\). Since \(\bm{T}\) is symmetric and \(\Re z>\rho_{\mathrm{noise}}+\varepsilon\),
\[
\left \| \left (\E_{\bm{y}} [\bm{T}(\bm{y})]-z\bm{I}_p \right)^{-1} \right \|_{\mathrm{op}}
\leq \frac{1}{\varepsilon}.
\]
Consequently, if \(4C_T^2 / (\alpha\varepsilon) \leq \frac{\varepsilon}{2}\), the inverse perturbation bound gives
\[
\|\Psi_{\alpha,z}(\bm{X})\|_{\mathrm{op}} \leq \frac{2}{\varepsilon}.
\]
Thus \(\Psi_{\alpha,z}(\mathcal{B}_\varepsilon) \subseteq \mathcal{B}_\varepsilon\) uniformly in \(z\in\mathcal{U}_\varepsilon\). For \(\bm{X},\bm{Y} \in\mathcal{B}_\varepsilon\), the resolvent identity
gives
\[
\|\mathcal{S}_\alpha(\bm{X}) - \mathcal{S}_\alpha(\bm{Y})\|_{\mathrm{op}} \leq \frac{4C_T^2}{\alpha} \|\bm{X}-\bm{Y}\|_{\mathrm{op}}.
\]
Applying the resolvent identity once more yields
\[
\begin{split}
\|\Psi_{\alpha,z}(\bm{X})-\Psi_{\alpha,z}(\bm{Y})\|_{\mathrm{op}}
& \leq \|\Psi_{\alpha,z}(\bm{X})\|_{\mathrm{op}} \|\mathcal{S}_\alpha(\bm{X})-\mathcal{S}_\alpha(\bm{Y})\|_{\mathrm{op}} \|\Psi_{\alpha,z} (\bm{Y})\|_{\mathrm{op}}\\
&\leq \frac{16C_T^2}{\alpha\varepsilon^2} \|\bm{X}-\bm{Y}\|_{\mathrm{op}}.
\end{split}
\]
Hence, for every sufficiently large \(\alpha\), \(\Psi_{\alpha,z}\) is a strict contraction on \(\mathcal{B}_\varepsilon\), uniformly in \(z\in\mathcal{U}_\varepsilon\). By Banach's fixed-point theorem, for every \(z\in\mathcal{U}_\varepsilon\), there exists a unique \(\bm{G}_\alpha(z) \in \mathcal{B}_\varepsilon\) such that
\[
\bm{G}_\alpha(z) = \Psi_{\alpha,z}(\bm{G}_\alpha(z)),
\]
i.e., \(\bm{G}_\alpha\) solves the fixed-point equation associated with the matrix Dyson equation~\eqref{eq:MDE}. The Picard iterates
\[
\bm{X}_0 (z) = \bm{0}_p, \quad \bm{X}_{k+1}(z) = \Psi_{\alpha,z}(\bm{X}_k(z))
\]
are analytic in \(z\) and converge locally uniformly on \(\mathcal{U}_\varepsilon\). Therefore, \(z \mapsto \bm{G}_\alpha(z)\) is analytic on \(\mathcal{U}_\varepsilon\).

We now identify \(\bm{G}_\alpha\) with \(\bm{M}_\alpha\) in the upper half-plane. On the nonempty open set
\[
\mathcal{V}_\varepsilon \coloneqq \mathcal{U}_\varepsilon \cap \left \{ z \in \C_+ \colon \Im z>\frac{\varepsilon}{2} \right\},
\]
the Herglotz bound gives
\[
\|\bm{M}_\alpha(z)\|_{\mathrm{op}} \leq \frac{1}{\Im z} < \frac{2}{\varepsilon}.
\]
Thus \(\bm{M}_\alpha(z)\in\mathcal{B}_\varepsilon\). Since the matrix Dyson equation holds for \(z\in \C_+\), \(\bm{M}_\alpha(z) = \Psi_{\alpha,z}(\bm{M}_\alpha(z))\) for \(z \in \mathcal{V}_\varepsilon\). By uniqueness of the fixed point in \(\mathcal{B}_\varepsilon\),
\[
\bm{M}_\alpha(z) = \bm{G}_\alpha(z),
\quad z \in \mathcal{V}_\varepsilon.
\]
Both functions are analytic on the connected domain \(\mathcal{U}_\varepsilon \cap \C_+\). Therefore, by the identity theorem,
\[
\bm{M}_\alpha(z)=\bm{G}_\alpha(z),
\quad
z\in\mathcal{U}_\varepsilon \cap \C_+.
\]
Hence \(\bm{G}_\alpha\) is an analytic continuation of \(\bm{M}_\alpha\) to \( \mathcal{U}_\varepsilon \). Finally, since \(\bm{T}(\bm{y})\) is real symmetric, \(\mathcal{S}_\alpha(\bm{X})^\ast =
\mathcal{S}_\alpha(\bm{X}^\ast)\), and therefore \(\Psi_{\alpha,z}(\bm{X})^\ast = \Psi_{\alpha,\overline z} (\bm{X}^\ast)\). It follows from uniqueness of the fixed point that
\[
\bm{G}_\alpha(z)^\ast = \bm{G}_\alpha (\overline{z}).
\]
In particular, for every real \(x>\rho_{\mathrm{noise}}+\varepsilon\), \(\bm{G}_\alpha(x) = \bm{G}_\alpha(x)^\ast\). Thus
\[
g_\alpha(z) \coloneqq \frac{1}{p}\Tr\bm{G}_\alpha(z)
\]
is an analytic continuation of \(m_{\mu_\alpha}\) to \(\mathcal{U}_\varepsilon\), and it is real-valued on \((\rho_{\mathrm{noise}} + \varepsilon,\infty)\). By the converse part of
Lemma~\ref{lem:continuation_off_support},
\[
\supp(\mu_\alpha) \cap (\rho_{\mathrm{noise}}+\varepsilon,\infty) = \emptyset.
\]
Therefore,
\[
\lambda_+(\alpha) \leq \rho_{\mathrm{noise}}+\varepsilon
\]
for all sufficiently large \(\alpha\). Since \(\varepsilon\) is arbitrary, this proves~\eqref{eq:lim_supp_alpha}.

Choose now \(x_\ast\in(\rho_{\mathrm{noise}},\rho_{\mathrm{sig}})\). By~\eqref{eq:lim_supp_alpha}, for all sufficiently large \(\alpha\), \(x_\ast>\lambda_+(\alpha)\). Thus \(h_\alpha(x_\ast)\) is well-defined. We next prove that \(\bm{M}_\alpha(x_\ast)\) is uniformly bounded for large \(\alpha\). Choose \(\delta>0\) such that \(0<\delta<x_\ast-\rho_{\mathrm{noise}}\). Then \(x_\ast\in\mathcal{U}_\delta\). Applying the preceding contraction argument with \(\varepsilon=\delta\), we obtain, for all sufficiently large \(\alpha\), a fixed point \(\bm{G}_\alpha(x_\ast)\in\mathcal{B}_\delta\). Moreover, since \(x_\ast > \lambda_+(\alpha)\), the point \(x_\ast\) lies outside \(\supp(\mu_\alpha)\). Hence the analytic continuation constructed above agrees with the Stieltjes-transform continuation of \(\bm{M}_\alpha\) at \(x_\ast\). Therefore \(\bm{M}_\alpha(x_\ast) = \bm{G}_\alpha(x_\ast)\), and consequently 
\[ 
\sup_{\alpha\geq\alpha_0} \|\bm{M}_\alpha(x_\ast)\|_{\mathrm{op}} \leq \frac{2}{\delta} <\infty ,
\] 
for some sufficiently large \(\alpha_0\). 

Set \(K\coloneqq \sup_{\alpha\geq\alpha_0} \|\bm{M}_\alpha(x_\ast)\|_{\mathrm{op}}\). Then, for all sufficiently large \(\alpha\), 
\[ 
\left\| \frac{1}{\alpha} \bm{M}_\alpha(x_\ast)\bm{T}(\bm{y}) \right\|_{\mathrm{op}}  \le \frac{KC_T}{\alpha} < \frac{1}{2}, 
\] 
and hence
\[ 
\left\| \left( \bm{I}_p+ \frac{1}{\alpha} \bm{M}_\alpha(x_\ast)\bm{T}(\bm{y}) \right)^{-1} \right\|_{\mathrm{op}} \leq 2. 
\] 
Using again the identity \( (\bm{I}+\bm{A})^{-1}-\bm{I} = -(\bm{I}+\bm{A})^{-1}\bm{A}\), we get 
\[ 
\left\| \left( \bm{I}_p+ \frac{1}{\alpha} \bm{M}_\alpha(x_\ast)\bm{T}(\bm{y}) \right)^{-1} - \bm{I}_p \right\|_{\mathrm{op}} \leq \frac{2KC_T}{\alpha}. 
\] 
Therefore
\[
\left\| \E_{\bm{y}}\left[ \bm{T}(\bm{y}) \left( \bm{I}_p+ \frac{1}{\alpha} \bm{M}_\alpha(x_\ast)\bm{T}(\bm{y}) \right)^{-1} \otimes \bm{C}(\bm{y}) \right] - \E_{\bm{y}} \left[ \bm{T}(\bm{y})\otimes\bm{C}(\bm{y}) \right] \right\|_{\mathrm{op}} \le \frac{2KC_T^2}{\alpha} \E_{\bm{y}} \|\bm{C}(\bm{y})\|_{\mathrm{op}} .
\]
Since \(\bm{C}(\bm{y}) \succ \bm{0}_r\), 
\[
\E_{\bm{y}} \|\bm{C}(\bm{y})\|_{\mathrm{op}}  \le \E_{\bm{y}} \Tr \bm{C}(\bm{y}) = \E \|\bm{s}\|_2^2 = r.
\]
By continuity of the largest eigenvalue with respect to the operator norm,
\[ 
\begin{split}
\lim_{\alpha \to \infty} \Phi_\alpha(x_\ast) & = \lim_{\alpha \to \infty}\lambda_1 \left ( \E_{\bm{y}}\left[ \bm{T}(\bm{y}) \left( \bm{I}_p+ \frac{1}{\alpha} \bm{M}_\alpha(x_\ast)\bm{T}(\bm{y}) \right)^{-1} \otimes \bm{C}(\bm{y}) \right]  \right ) \\
& = \lambda_1 \left( \E_{\bm{y}}\left[ \bm{T}(\bm{y})\otimes\bm{C}(\bm{y}) \right] \right) = \rho_{\mathrm{sig}}.
\end{split}
\] 
Hence 
\[ 
\lim_{\alpha \to \infty} h_\alpha(x_\ast) = \lim_{\alpha \to \infty} \Phi_\alpha(x_\ast) - x_\ast =  \rho_{\mathrm{sig}}-x_\ast > 0. 
\] 
Therefore \(h_\alpha(x_\ast) > 0\) for all sufficiently large \(\alpha\). Since \( x_\ast >\lambda_+(\alpha)\), we obtain 
\[ 
\Delta_{\bm{T}} (\alpha) = \sup_{x>\lambda_+(\alpha)} h_\alpha(z) \geq h_\alpha (x_\ast) > 0 
\] 
for all sufficiently large \(\alpha\). This proves the existence of \(\alpha_+<\infty\) such that \(\Delta_{\bm{T}}(\alpha)>0\) for \(\alpha>\alpha_+\).
\end{proof}

Having Lemmas~\ref{lem:small_alpha_subcritical} and~\ref{lem:large_alpha_supercritical} at hand, Proposition~\ref{prop:crit_thresholds} follows straightforwardly.

\begin{proof}[Proof of Proposition~\ref{prop:crit_thresholds}]
Define
\[
\mathcal{S}_{\bm{T}} \coloneqq \{\alpha>0 \colon \Delta_{\bm{T}}(\alpha)>0\}.
\]
By Lemma~\ref{lem:small_alpha_subcritical}, there exists \(\alpha_->0\) such that \((0,\alpha_-) \cap \mathcal{S}_{\bm{T}} = \emptyset\). Hence
\[
\alpha_{\mathrm{c},\min}(\bm{T}) = \inf \mathcal{S}_{\bm{T}} \geq \alpha_->0.
\]
By Lemma~\ref{lem:large_alpha_supercritical}, there exists \(\alpha_+ \in (0,\infty)\) such that \((\alpha_+,\infty)\subseteq \mathcal{S}_{\bm{T}}\). Moreover,
\[
\alpha_{\mathrm{c},\max}(\bm{T}) = \inf \{ \bar{\alpha} > 0 \colon (\bar{\alpha},\infty) \subseteq \mathcal{S}_T\} \leq \alpha_+<\infty.
\]
Since, by definition, \(\alpha_{\mathrm{c},\min}(\bm{T}) \leq \alpha_{\mathrm{c},\max}(\bm{T})\), we obtain
\[
0 < \alpha_- \le \alpha_{\mathrm{c},\min}(\bm{T}) \leq \alpha_{\mathrm{c},\max}(\bm{T}) \le \alpha_+ <\infty.
\]

We now prove the two spectral conclusions. If \(0< \alpha <\alpha_{\mathrm{c},\min}(\bm{T})\), then \(\alpha \notin \mathcal{S}_{\bm{T}}\), so \(\Delta_{\bm{T}} (\alpha) \le 0\). Theorem~\ref{thm:BBP_transition} therefore gives 
\[
\lambda_1 (\bm{D}_n) \stackrel{\mathbb{P}}{\to} \lambda_+(\alpha).
\]
Now suppose that \(\alpha>\alpha_{\mathrm{c},\max}(\bm{T}) = \inf \{ \bar{\alpha} > 0 \colon (\bar{\alpha},\infty) \subseteq \mathcal{S}_T\} \). Since \(\{ \bar{\alpha} > 0 \colon (\bar{\alpha},\infty) \subseteq \mathcal{S}_T\} \neq \emptyset \), there exists \(\bar{\alpha} \in \{ \bar{\alpha} > 0 \colon (\bar{\alpha},\infty) \subseteq \mathcal{S}_T\} \) such that \(\bar{\alpha} < \alpha\). This implies that \(\alpha \in \mathcal{S}_T\), that is, \(\Delta_{\bm{T}}(\alpha) > 0\). Theorem~\ref{thm:BBP_transition} then yields a unique \(\theta_\alpha>\lambda_+(\alpha)\) satisfying \(h_\alpha(\theta_\alpha)=0\), and
\[
\lambda_1 (\bm{D}_n) \stackrel{\mathbb{P}}{\to} \theta_\alpha.
\]
\end{proof}
\section{Optimality of the preprocessing map} \label{section:optimal_preprocessing}

We prove Theorem~\ref{thm:opt} in two steps. First, we establish a universal lower bound on the spectral threshold: no admissible preprocessing map can produce a separating outlier below \(\alpha_{\mathrm{c}}^\ast\). Second, we show that the preprocessing map \(\bm{T}_\ast\) attains this bound.

The first step is the following universal lower bound.

\begin{lem}\label{lem:upperbound_samplecomplexity}
For every fixed \(p\geq1\) and every measurable preprocessing map \(\bm{T}\colon \R^q\to\sym_p(\R) \) satisfying Assumption~\ref{hyp:preprocessing}, 
\[
\alpha_{\mathrm{c},\min}(\bm{T}) \geq \alpha_{\mathrm{c}}^\ast ,
\]
where $\alpha_{\mathrm c}^\ast$ is defined in~\eqref{eq:def_alpha_cast}.
\end{lem}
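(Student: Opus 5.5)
The plan is to follow the proof of Lemma~\ref{lem:small_alpha_subcritical}, but with a sharp Cauchy--Schwarz step instead of the crude Frobenius bound. It suffices to show that if $0<\alpha<\alpha_{\mathrm c}^\ast$, then $h_\alpha(x)<0$ for every $x>\lambda_+(\alpha)$. Then $\Delta_{\bm T}(\alpha)\le 0$, so $\alpha\notin\{\alpha\colon\Delta_{\bm T}(\alpha)>0\}$ and $\alpha_{\mathrm c,\min}(\bm T)\ge\alpha_{\mathrm c}^\ast$.

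First, fix $x>\lambda_+(\alpha)$ and set $\bm Z=-\bm M_\alpha(x)\succ\bm 0$. Define $\bm A(\bm y)$ and the symmetric matrix $\bm B(\bm y)$ exactly as in that lemma. By~\eqref{eq:MDE_Z} and the conjugation identity~\eqref{identity_A_B_C}, with $\bm D(\bm y)\coloneqq\bm C(\bm y)-\bm I_r$,
\[
(\bm Z^{1/2}\otimes\bm I_r)\bigl(\bm H_\alpha(x)\bigr)(\bm Z^{1/2}\otimes\bm I_r)=\alpha\,\E_{\bm y}\bigl[\bm B(\bm y)\otimes\bm D(\bm y)\bigr]-\bm I_{pr}.
\]
Hence it is enough to prove
\[
\bigl\|\alpha\,\E_{\bm y}[\bm B\otimes\bm D]\bigr\|_{\mathrm{op}}<1 \qquad\text{whenever }\alpha<\alpha_{\mathrm c}^\ast .
\]

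Second, I would split this norm with a tensor Cauchy--Schwarz inequality of the form
\[
\bigl\|\E[\bm B\otimes\bm D]\bigr\|_{\mathrm{op}}\le \bigl\|\E[\bm B\otimes\bm B]\bigr\|_{\mathrm{op}}^{1/2}\,\bigl\|\E[\bm D\otimes\bm D]\bigr\|_{\mathrm{op}}^{1/2}.
\]
The two factors are then handled separately.
\begin{itemize}
\item The second factor equals $(\alpha_{\mathrm c}^\ast)^{-1/2}$ by~\eqref{eq:def_alpha_cast}.
\item For the first factor, $\E[\bm B\otimes\bm B]$ is the matrix of $\bm H\mapsto\E[\bm B\bm H\bm B]$, since $\operatorname{vec}(\bm B\bm H\bm B)=(\bm B\otimes\bm B)\operatorname{vec}\bm H$. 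This map is $\alpha^{-1}\mathcal F_{\alpha,x}$. The proof of Lemma~\ref{lem:small_alpha_subcritical} shows $\mathcal F_{\alpha,x}$ is Frobenius-self-adjoint and similar to $\mathcal K_x$. Corollary~\ref{cor:physical_stability_right} then gives $\|\mathcal F_{\alpha,x}\|<1$, so $\alpha\|\E[\bm B\otimes\bm B]\|<1$.
\end{itemize}
Combining the two factors,
\[
\alpha\,\|\E[\bm B\otimes\bm D]\|_{\mathrm{op}}<\sqrt{\alpha}\,(\alpha_{\mathrm c}^\ast)^{-1/2}\le 1 \qquad\text{for }\alpha\le\alpha_{\mathrm c}^\ast .
\]
This also covers the boundary value $\alpha=\alpha_{\mathrm c}^\ast$, because the inequality from the stability operator is strict.

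The main obstacle is justifying the displayed tensor Cauchy--Schwarz inequality with exactly these norms, so that the threshold comes out sharp. The naive bound $\E\langle\bm B\bm W,\bm W\bm D\rangle_{\mathrm F}$ only produces $\E[\bm B^2]$ and $\E[\bm D^2]$, which lose sharpness. What I would try first is the following.
\begin{itemize}
\item Write $\langle\bm w,(\bm B\otimes\bm D)\bm w\rangle=\langle\bm B,\bm W\bm D\bm W^\top\rangle_{\mathrm F}$ for a unit $\bm w=\operatorname{vec}\bm W$.
\item Regard $\bm y\mapsto\bm B(\bm y)$ and $\bm y\mapsto\bm D(\bm y)$ as vectors in $L^2(\bm y)\otimes\sym$.
\item Apply Cauchy--Schwarz in $L^2(\bm y;\sym_p\otimes\sym_r)$ to the rank-one tensors $\bm B\otimes\bm D$, tested against the reshuffled unit tensor $\bm W\otimes\bm W$.
\end{itemize}
Equivalently, the aim is the spectral-radius bound $\rho(\Phi_{12})\le\rho(\Phi_{11})^{1/2}\rho(\Phi_{22})^{1/2}$ for the mixed positive map $\Phi_{12}[\bm V]=\E[\bm B\bm V\bm D]$ on $p\times r$ matrices, with $\Phi_{11}[\bm H]=\E[\bm B\bm H\bm B]$ and $\Phi_{22}[\bm H]=\E[\bm D\bm H\bm D]=\mathcal A[\bm H]$. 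I would prove this by running a Kadison/Schwarz-type inequality for the completely positive map $\Phi_{12}$.

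A secondary point to check is that restricting $\mathcal F_{\alpha,x}$ and $\mathcal A$ to symmetric matrices loses nothing. Both maps commute with transposition, so I expect their norms on antisymmetric matrices to be dominated by their norms on symmetric ones, via positivity.
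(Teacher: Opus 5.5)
Your reduction is the paper's argument run in contrapositive form. Your $\bm B(\bm y)$ is exactly the paper's $\bm A^{(\bm T)}_\alpha(\theta,\bm y)=\bm L_\alpha(\theta)^{1/2}\bm{\mathcal Q}_\alpha(\theta,\bm y)\bm L_\alpha(\theta)^{1/2}$ with $\bm L_\alpha=\alpha^{-1}\bm Z$, and your stability step is Lemma~\ref{lem:bound_stability_theta}. The paper only differs in working at the zero $\theta$ of $h_\alpha$ rather than at every $x>\lambda_+(\alpha)$.

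The genuine gap is the step you yourself single out: the tensor Cauchy--Schwarz inequality, which is where all the sharpness lives. Your first sketch lacks the idea that makes it work. A Cauchy--Schwarz inequality in $L^2(\bm y;\sym_p\otimes\sym_r)$ against the reshuffled test tensor does not by itself produce the two quadratic forms $\langle\bm x_a,\E[\bm B\otimes\bm B]\bm x_a\rangle$ and $\langle\bm x_b,\E[\bm D\otimes\bm D]\bm x_b\rangle$ at \emph{unit} vectors; the test vector must be factored first. The paper (Lemma~\ref{lem:tensorCS}) does this with the Schmidt decomposition, i.e.\ the SVD of $\operatorname{mat}(\bm w)$. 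For a unit vector $\bm w=\sum_k s_k\,\bm u_k\otimes\bm v_k$ with orthonormal families $(\bm u_k)$, $(\bm v_k)$ and $s_k\ge0$, $\sum_k s_k^2=1$,
\[
\langle\bm w,\E[\bm B\otimes\bm D]\bm w\rangle=\E\sum_{i,j}s_is_j\,\langle\bm u_i,\bm B\bm u_j\rangle\,\langle\bm v_i,\bm D\bm v_j\rangle .
\]
Cauchy--Schwarz for the nonnegative weights $s_is_j$ (jointly with the law of $\bm y$) bounds the square of this by $\E\langle\bm x_a,(\bm B\otimes\bm B)\bm x_a\rangle\cdot\E\langle\bm x_b,(\bm D\otimes\bm D)\bm x_b\rangle$. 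Here $\bm x_a=\sum_k s_k\bm u_k\otimes\bm u_k$ and $\bm x_b=\sum_k s_k\bm v_k\otimes\bm v_k$ are unit vectors. Since $\E[\bm B\otimes\bm D]$ is symmetric, the supremum over $\bm w$ gives the operator-norm inequality you need.

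This also disposes of your secondary point. Note that $\bm x_a=\operatorname{vec}\bigl(\sum_k s_k\bm u_k\bm u_k^\top\bigr)$ vectorizes a symmetric $\bm S_a$ with $\|\bm S_a\|_{\mathrm F}=1$. Hence $\E\langle\bm x_a,(\bm B\otimes\bm B)\bm x_a\rangle=\alpha^{-1}\langle\bm S_a,\mathcal F_{\alpha,x}[\bm S_a]\rangle_{\mathrm F}\le\alpha^{-1}\|\mathcal F_{\alpha,x}\|$, with the norm taken on $\sym_p(\R)$ only, which is exactly what the stability operator controls. The paper's Lemma~\ref{lem:bound_stability_theta} uses this tacitly. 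For $\mathcal A$ nothing is needed, since~\eqref{eq:def_alpha_cast} is already the norm on all of $\R^{r^2}$.

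Your second route can also be completed, but not as phrased: $\Phi_{12}[\bm V]=\E[\bm B\bm V\bm D]$ acts on $p\times r$ matrices and is not a positive map. It is the off-diagonal corner of the completely positive map $\bm X\mapsto\E[\bm K\bm X\bm K]$ with $\bm K=\bm B\oplus\bm D$. Choose $\bm P,\bm Q\succ\bm 0$ with $\Phi_{11}[\bm P]\preceq c_1\bm P$ and $\Phi_{22}[\bm Q]\preceq c_2\bm Q$ for any $c_1>\rho(\Phi_{11})$, $c_2>\rho(\Phi_{22})$ (spectral radii on symmetric matrices), e.g.\ $\bm P=\sum_{k\ge0}c_1^{-k}\Phi_{11}^k[\bm I_p]$. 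Apply the map to $\begin{pmatrix}\bm P&\bm V\\ \bm V^\top&\bm Q\end{pmatrix}$, which is positive semidefinite exactly when $\|\bm P^{-1/2}\bm V\bm Q^{-1/2}\|_{\mathrm{op}}\le1$. This shows $\Phi_{12}$ multiplies that norm by at most $\sqrt{c_1c_2}$, hence $\rho(\Phi_{12})\le\sqrt{\rho(\Phi_{11})\rho(\Phi_{22})}$, again using $\Phi_{11},\Phi_{22}$ only on symmetric matrices.
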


The proof is given in Section~\ref{sec:proof_upperbound_samplecomplexity}. The second step shows that, for \(\bm{T}_\ast\), an outlier separates from the upper edge as soon as \(\alpha>\alpha_{\mathrm{c}}^\ast\). 

\begin{lem}\label{lem:optimality_Tast}
Suppose that Assumptions~\ref{hyp:prop_limit} and~\ref{hyp:second_order_channel} hold. Then the lower and upper spectral thresholds of the preprocessing map \(\bm{T}_\ast\) coincide:
\[
\alpha_{\mathrm{c},\min}(\bm{T}_\ast) = \alpha_{\mathrm{c},\max}(\bm{T}_\ast) \eqqcolon \alpha_{\mathrm{c}}^\ast.
\]
\end{lem}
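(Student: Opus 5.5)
The plan is to sandwich the two thresholds of $\bm{T}_\ast$ around $\alpha_{\mathrm c}^\ast$. Lemma~\ref{lem:upperbound_samplecomplexity}, applied to $\bm{T}_\ast$ (admissible: bounded and nontrivial), gives $\alpha_{\mathrm{c},\min}(\bm{T}_\ast)\ge\alpha_{\mathrm c}^\ast$. Since $\alpha_{\mathrm{c},\min}\le\alpha_{\mathrm{c},\max}$ always holds, it suffices to prove
\[
\Delta_{\bm{T}_\ast}(\alpha)>0\quad\text{for every }\alpha>\alpha_{\mathrm c}^\ast .
\]
This yields $\alpha_{\mathrm{c},\max}(\bm{T}_\ast)\le\alpha_{\mathrm c}^\ast$, and the chain closes.

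To prove supercriticality I would reuse the conjugation from the proof of Lemma~\ref{lem:small_alpha_subcritical}. For $x>\lambda_+(\alpha)$ set $\bm{Z}_x=-\bm{M}_\alpha(x)\succ0$, and let $\bm{B}_x(\bm{y})$ be as there. The identity \eqref{identity_A_B_C} shows that $h_\alpha(x)>0$ if and only if
\[
\lambda_1\Big(\alpha\,\E_{\bm{y}}\big[\bm{B}_x(\bm{y})\otimes(\bm{C}(\bm{y})-\bm{I}_r)\big]\Big)>1 .
\]
By Theorem~\ref{thm:BBP_transition}, $\Delta$ is the limit of $h_\alpha$ at the edge, so it is enough to exhibit one $x>\lambda_+(\alpha)$ and one unit test vector $\bm{w}\in\R^{p_\ast r}$ with
\[
\alpha\,\bm{w}^\top\E\big[\bm{B}_x\otimes(\bm{C}-\bm{I}_r)\big]\bm{w}>1 .
\]

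For $\bm{T}_\ast=\bm{I}-\bm{C}_\ast^{-1}$ the key structural input is Lemma~\ref{lem:perron_reduction_properties}. Its facts concerning $\bm{H}_\ast$, its range $\bm{U}_\ast$, and the eigen-relation $\mathcal{A}[\bm{H}_\ast]=\|\mathcal{A}\|_{\mathrm{op}}\bm{H}_\ast$ should let us rewrite $\E[\bm{B}\otimes(\bm{C}-\bm{I})]$, compressed to $\operatorname{range}(\bm{U}_\ast)$, as an operator built from $\bm{C}_\ast-\bm{I}$. Note that $\bm{T}_\ast=(\bm{C}_\ast-\bm{I})\bm{C}_\ast^{-1}$, so $\bm{B}_x$ is a rational function of $\bm{C}_\ast(\bm{y})$.

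I would take as test vector the vectorization of $\bm{U}_\ast^\top\bm{H}_\ast^{1/2}$, appropriately weighted by $\bm{Z}_x^{\pm1/2}$. Then I would compare the resulting quadratic form with
\[
\alpha\,\big\langle \bm{H}_\ast,\mathcal{A}[\bm{H}_\ast]\big\rangle/\|\bm{H}_\ast\|_{\mathrm F}^2=\alpha\|\mathcal{A}\|_{\mathrm{op}}=\alpha/\alpha_{\mathrm c}^\ast .
\]
I would also use that $\|\mathcal{A}\|_{\mathrm{op}}$ equals the norm in \eqref{eq:def_alpha_cast}, since a positive Frobenius-self-adjoint map attains its norm on symmetric PSD matrices. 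The hoped-for outcome is that the Rayleigh quotient is bounded below by $\alpha/\alpha_{\mathrm c}^\ast$ times a factor that tends to $1$ as $x\to\infty$, or at least as $x\downarrow\lambda_+(\alpha)$. This factor arises because $\bm{B}_x\approx\alpha^{-1}\bm{Z}_x^{1/2}\bm{T}_\ast\bm{Z}_x^{1/2}$ to first order, with correction terms of a definite sign. Such a lower bound would give $\Delta>0$ whenever $\alpha>\alpha_{\mathrm c}^\ast$.

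The main obstacle is controlling the nonlinear factor $(\bm{I}-\alpha^{-1}\bm{Z}^{1/2}\bm{T}_\ast\bm{Z}^{1/2})^{-1}$ inside $\bm{B}_x$ together with the noncommutativity of $\bm{Z}_x$ and $\bm{C}_\ast(\bm{y})$. My first attempt would be a variational route. Use the MDE \eqref{eq:MDE_Z} to eliminate $x$, and use the stability operator of Lemma~\ref{lem:physical_sol}, whose spectral radius tends to $1$ at the edge, to show that near $\lambda_+(\alpha)$ the relevant quadratic form dominates the linearized one. If that proves too delicate, I would fall back on the exact variational edge formula of Appendix~\ref{appendix:limiting_variational_edge}. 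That formula would let me evaluate $\Delta_{\bm{T}_\ast}(\alpha)$ directly at the optimizing $\bm{Z}$ and check its sign against $\alpha\|\mathcal{A}\|_{\mathrm{op}}-1$.
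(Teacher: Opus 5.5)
Your reduction is correct and is the paper's: Lemma~\ref{lem:upperbound_samplecomplexity} gives $\alpha_{\mathrm c,\min}(\bm{T}_\ast)\ge\alpha_{\mathrm c}^\ast$. So everything hinges on showing $\Delta_{\bm{T}_\ast}(\alpha)>0$ for $\alpha>\alpha_{\mathrm c}^\ast$. By \eqref{identity_A_B_C}, this is equivalent to $\lambda_1\bigl(\alpha\E_{\bm{y}}[\bm{B}_x\otimes(\bm{C}-\bm{I}_r)]\bigr)>1$ for some $x>\lambda_+(\alpha)$. That step is the whole content of the lemma, and the quantitative target you set for it is provably unattainable.

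Your $\bm{B}_x(\bm{y})$ is exactly the matrix $\bm{A}^{(\bm{T}_\ast)}_\alpha(x,\bm{y})$ of \eqref{eq:def_A}, because $\bm{L}_\alpha(x)=\alpha^{-1}\bm{Z}_x$. Combining Lemma~\ref{lem:bound_stability_theta} with Lemma~\ref{lem:tensorCS}, exactly as in the proof of Lemma~\ref{lem:upperbound_samplecomplexity}, gives for every $x>\lambda_+(\alpha)$
\[
\bigl\|\alpha\,\E_{\bm{y}}\bigl[\bm{B}_x(\bm{y})\otimes(\bm{C}(\bm{y})-\bm{I}_r)\bigr]\bigr\|_{\mathrm{op}}<\sqrt{\alpha/\alpha_{\mathrm c}^\ast}.
\]
When $\alpha>\alpha_{\mathrm c}^\ast$, the right-hand side is strictly smaller than $\alpha/\alpha_{\mathrm c}^\ast$. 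Hence no test vector can produce a Rayleigh quotient of the form $(\alpha/\alpha_{\mathrm c}^\ast)(1-o(1))$, near the edge or anywhere else.

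As $x\to\infty$ the matrix actually tends to $\bm{0}$, since $\bm{Z}_x\sim x^{-1}\bm{I}_{p_\ast}$. The first-order approximation $\bm{B}_x\approx\alpha^{-1}\bm{Z}_x^{1/2}\bm{T}_\ast\bm{Z}_x^{1/2}$ is valid only in that regime, where the quotient is $O(1/x)$.

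Any correct argument must therefore certify a top eigenvalue in the window $(1,\sqrt{\alpha/\alpha_{\mathrm c}^\ast})$. That requires exact knowledge of where $\lambda_+(\alpha)$ is and of $\bm{M}_\alpha$ at some explicit point above it, not a perturbative comparison with $\alpha\mathcal{A}$. Neither of your routes supplies this. The stability-operator route aims at the same impossible domination. The variational formula of Proposition~\ref{prop:limiting_variational_edge}, tested at $\bm{Z}=\alpha(\bm{I}_{p_\ast}-\varepsilon\bm{D}_\ast)$, can be used to show $\lambda_+(\alpha)<1/\alpha$, but it carries no information on the sign of $h_\alpha$.

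The missing idea is the distinguished point $x=1/\alpha$. In the variable $\bm{L}=-\alpha^{-1}\bm{M}^\ast_\alpha$, the matrix $\bm{I}_{p_\ast}$ solves the real MDE at $1/\alpha$ for every $\alpha$. Its linearization there is exactly $\alpha\mathcal{A}_\ast$, with spectral radius $\alpha/\alpha_{\mathrm c}^\ast$, so for $\alpha>\alpha_{\mathrm c}^\ast$ this trivial solution is unstable and the physical one is different. The paper constructs it in four steps.
\begin{enumerate}
\item Run monotone iteration of the order-preserving map $\widetilde{\bm{\Psi}}_{\alpha,1/\alpha}$ starting from $\bm{0}$.
\item Use the Perron eigenmatrix $\bm{D}_\ast$ of Lemma~\ref{lem:perron_reduction_properties} to build the strict supersolution $\bm{I}_{p_\ast}-\varepsilon\bm{D}_\ast$, so the limit satisfies $\bm{L}\prec\bm{I}_{p_\ast}$.
\item Prove $\rho(\widetilde{\mathcal{K}}_{\bm{L}})<1$ by an order-unit-norm argument. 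Lemma~\ref{lem:physical_sol} then identifies this fixed point with $\bm{L}(1/\alpha)$, and together with the analogous analysis for $\lambda>1/\alpha$ this yields $\lambda_+(\alpha)<1/\alpha$ (Lemma~\ref{lem:qualitative_behavior_L}).
\item Show via an exact identity that $\operatorname{vec}(\bm{U}_\ast(\bm{I}_{p_\ast}-\bm{L}))\neq\bm{0}$ lies in the kernel of $\bm{H}_\alpha(1/\alpha)$ (Lemma~\ref{lem:one_over_alpha_outlier}). The identity uses $\bm{T}_\ast\bm{C}_\ast=\bm{C}_\ast-\bm{I}_{p_\ast}$, $\E_{\bm{y}}[\bm{C}_\ast]=\bm{I}_{p_\ast}$, the MDE at $1/\alpha$, and the intertwining $\bm{C}_\ast\bm{U}_\ast^\top=\bm{U}_\ast^\top\bm{C}$.
\end{enumerate}
Hence $h_\alpha(1/\alpha)\ge0$, and strict monotonicity gives $h_\alpha>0$ on $(\lambda_+(\alpha),1/\alpha)$.

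So the right test vector is built from the physical solution through $\bm{I}_{p_\ast}-\bm{L}(1/\alpha)$, not from $\bm{H}_\ast^{1/2}$. It is an exact null vector, not the output of a Rayleigh-quotient estimate.
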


The proof is given in Section~\ref{sec:proof_optimality_Tast}.
\subsection{A lower bound on the spectral threshold}
\label{sec:proof_upperbound_samplecomplexity}

We now prove Lemma~\ref{lem:upperbound_samplecomplexity}. Let $\bm{T}$ satisfy Assumption~\ref{hyp:preprocessing}, and suppose that $\Delta_{\bm{T}}(\alpha) > 0$. By the monotonicity analysis in the proof of Theorem~\ref{thm:BBP_transition}, there exists a unique \(\theta>\lambda_+(\alpha)\) such that \(h_\alpha(\theta)=0\). Equivalently,
\begin{equation}\label{eq:outlier_equation}
\det \left (\bm{H}_\alpha (\theta) \right)=0,
\end{equation}
where \(\bm{H}_\alpha\) is defined in~\eqref{eq:H_alpha}. We show that this necessarily implies \(\alpha>\alpha_{\mathrm c}^\ast\).

We first rewrite~\eqref{eq:outlier_equation} in a form that separates the contribution of the preprocessing map from that of the observation channel and is suitable for applying the MDE stability bound. The outlier equation~\eqref{eq:outlier_equation} implies that there exists \(\bm{w} \neq \bm{0}_{pr}\) such that
\[
\bm{H}_{\alpha}(\theta) \bm{w} = \bm{0}_{pr}.
\]
Equivalently, using
\[
\bm{\mathcal{Q}}_\alpha (\theta, \bm{y}) \coloneqq \bm{\mathcal{Q}}_\alpha (\bm{M}_\alpha(\theta), \bm{y}) = \bm{T}(\bm{y}) \left( \bm{I}_p+\frac{1}{\alpha} \bm{M}_\alpha(\theta)\bm{T}(\bm{y}) \right)^{-1} ,
\]
we have
\[
\left( \E_{\bm{y}} \left[ \bm{\mathcal{Q}}_\alpha (\theta, \bm{y}) \otimes\bm{C}(\bm{y})\right] - \theta \bm{I}_{pr} \right) \bm{w} = \bm{0}_{pr}.
\]
On the other hand, evaluating the MDE~\eqref{eq:MDE} at \(z=\theta\) gives
\[
\bm{M}_\alpha(\theta)^{-1} = -\theta\bm{I}_p + \E_{\bm{y}} \left[  \bm{\mathcal{Q}}_\alpha (\theta, \bm{y}) \right].
\]
Subtracting gives directly
\[
\left ( \E \left [  \bm{\mathcal{Q}}_\alpha (\theta, \bm{y}) \otimes \left (\bm{C} (\bm{y}) -\bm{I}_r \right) \right ] + \bm{M}_\alpha(\theta)^{-1} \otimes \bm{I}_r \right) \bm{w} = \bm{0}_{pr}.
\]
Multiplying by \(\alpha^{-1} \bm{M}_\alpha(\theta) \otimes \bm{I}_r\) then yields
\begin{equation} \label{eigenvalue_relation}
\E \left[ \frac{1}{\alpha} \bm{M}_\alpha(\theta) \bm{\mathcal{Q}}_\alpha (\theta, \bm{y})  \otimes \left (\bm{C} (\bm{y}) - \bm{I}_r \right )  \right] \bm{w} = -\frac{1}{\alpha} \bm{w}.
\end{equation}
Now set \(\bm{L}_\alpha (\theta) \coloneqq - \alpha^{-1} \bm{M}_\alpha (\theta)\). Since $\theta>\lambda_+(\alpha)$, the Stieltjes-transform property of $\bm{M}_\alpha$ gives \(\bm{L}_\alpha (\theta) \succ \bm{0}_{p \times p}\). Define 
\begin{align} \label{eq:def_A}
\bm{A}^{(\bm{T})}_{\alpha} (\theta,\bm{y}) \coloneqq \bm{L}_\alpha (\theta)^{1/2} \bm{\mathcal{Q}}_\alpha (\theta, \bm{y})  \bm{L}_\alpha (\theta)^{1/2} .
\end{align}
First note that \(\bm{A}^{(\bm{T})}_{\alpha} (\theta,\bm{y})\) is symmetric by the push-through identity in Lemma~\ref{lem:push_through}. Second note that
\[
\frac{1}{\alpha} \bm{M}_\alpha (\theta) \bm{\mathcal{Q}}_\alpha (\theta, \bm{y}) = - \bm{L}_\alpha (\theta) \bm{\mathcal{Q}}_\alpha (\theta, \bm{y})  = -  \bm{L}_\alpha (\theta) ^{1/2} \bm{A}^{(\bm{T})}_{\alpha} (\theta,\bm{y})  \bm{L}_\alpha (\theta) ^{-1/2}.
\]
Thus the eigenvalue relation~\eqref{eigenvalue_relation} is similar to 
\[
\E_{\bm{y}} \left [ \bm{A}^{(\bm{T})}_{\alpha} (\theta,\bm{y})  \otimes (\bm{C}(\bm{y}) - \bm{I}_r)\right ] \widetilde{\bm{w}} = \frac{1}{\alpha} \widetilde{\bm{w}},
\]
for a nonzero vector \(\widetilde{\bm{w}}\). Therefore \(\alpha^{-1}\) is an eigenvalue of this symmetric matrix and hence
\begin{align}\label{eq:bound_outlier_eq}
\frac{1}{\alpha} \le \|  \E_{\bm{y}} [  \bm{A}^{(\bm{T})}_{\alpha} (\theta,\bm{y}) \otimes (\bm{C}(\bm{y}) - \bm{I}_r) ]\|_{\mathrm{op}} .
\end{align}

\begin{lem} \label{lem:bound_stability_theta}
Let $\theta>\lambda_{+}(\alpha)$. Then
\begin{align*}
\left \| \E_{\bm{y}} \left [ \bm{A}^{(\bm{T})}_{\alpha}(\theta,\bm{y}) \otimes \bm{A}^{(\bm{T})}_{\alpha}(\theta,\bm{y}) \right ] \right \|_{\mathrm{op}} < \frac{1}{\alpha}.
\end{align*}
\end{lem}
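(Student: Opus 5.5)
The plan is to recognize the Kronecker-square matrix as the matrix of a conjugated version of the stability operator \(\mathcal{K}_\theta\) from Definition~\ref{eq:def_K_M}, and then to use the stability bound \(\rho(\mathcal{K}_\theta)<1\) from Corollary~\ref{cor:physical_stability_right}. The argument parallels the step in the proof of Lemma~\ref{lem:small_alpha_subcritical}, where \(\mathcal{F}_{\alpha,x}\) was shown to be similar to \(\mathcal{K}_x\).

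First compare notations. With \(\bm{Z}=-\bm{M}_\alpha(\theta)=\alpha\bm{L}_\alpha(\theta)\) and \(\bm{A}(\bm{y})=\bm{\mathcal{Q}}_\alpha(\theta,\bm{y})\), the identity~\eqref{eq:identity_A_B} gives
\[
\bm{A}^{(\bm{T})}_\alpha(\theta,\bm{y})=\bm{L}_\alpha^{1/2}\bm{\mathcal{Q}}_\alpha\bm{L}_\alpha^{1/2}=\tfrac1\alpha\bm{Z}^{1/2}\bm{A}(\bm{y})\bm{Z}^{1/2}=\bm{B}(\bm{y}).
\]
Hence the operator \(\mathcal{F}_{\alpha,\theta}[\bm{H}]=\alpha\E_{\bm{y}}[\bm{B}\bm{H}\bm{B}]\) is similar to \(\mathcal{K}_\theta\). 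It is self-adjoint for the Frobenius inner product, and therefore satisfies \(\|\mathcal{F}_{\alpha,\theta}\|=\rho(\mathcal{F}_{\alpha,\theta})=\rho(\mathcal{K}_\theta)<1\). Note that this operator acts on all of \(\C^{p\times p}\), not only on \(\sym_p(\R)\). Self-adjointness on the full space follows from the same trace-cyclicity computation, using that \(\bm{B}(\bm{y})\) is symmetric. Its spectral radius there equals that of \(\mathcal{K}_\theta\) on the full matrix space.

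The main obstacle is this extension of \(\rho<1\) from the symmetric matrices to all matrices. The proof of Lemma~\ref{lem:physical_sol} bounds \(\rho(\mathcal{K}_x)\) through the order-unit norm \(\|\cdot\|_{\bm{L}}\), and that argument works verbatim on Hermitian matrices. Since \(\mathcal{K}\) is positivity preserving and commutes with the adjoint, Hermitian and anti-Hermitian parts decouple, which gives \(\rho<1\) on \(\C^{p\times p}\). Alternatively, one can stay on the real space: under the identification \(\bm{H}\mapsto\operatorname{vec}(\bm{H})\), the operator \(\bm{H}\mapsto\bm{B}\bm{H}\bm{B}\) is \(\bm{B}\otimes\bm{B}\), which is symmetric. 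Its top eigenvector can be taken to be the vectorization of a symmetric or antisymmetric matrix, because \(\bm{B}\otimes\bm{B}\) commutes with the swap. The antisymmetric case is handled by a Cauchy--Schwarz bound \(|\Tr(\bm{H}^\top\bm{B}\bm{H}\bm{B})|\le\Tr(\bm{H}^\top\bm{B}^2\bm{H})^{1/2}\Tr(\bm{H}\bm{B}^2\bm{H}^\top)^{1/2}\). This reduces it to the PSD quantities \(\E[\bm{B}^2]\), which are controlled by \(\mathcal{F}[\bm{I}]\) and hence by the symmetric-case bound.

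With \(\rho<1\) on the full space in hand, conclude as follows. The Kronecker product identity gives \(\E_{\bm{y}}[\bm{B}\otimes\bm{B}]\operatorname{vec}(\bm{H})=\operatorname{vec}(\E[\bm{B}\bm{H}\bm{B}])=\alpha^{-1}\operatorname{vec}(\mathcal{F}_{\alpha,\theta}[\bm{H}])\). The symmetric matrix \(\E[\bm{A}^{(\bm{T})}_\alpha\otimes\bm{A}^{(\bm{T})}_\alpha]\) is therefore \(\alpha^{-1}\) times the matrix of \(\mathcal{F}_{\alpha,\theta}\). Its operator norm equals \(\alpha^{-1}\rho(\mathcal{F}_{\alpha,\theta})<\alpha^{-1}\), which is the claimed strict inequality.
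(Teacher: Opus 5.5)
Your main argument is correct and is essentially the paper's proof. You identify $\bm{A}^{(\bm{T})}_\alpha(\theta,\bm{y})$ with $\bm{B}(\bm{y})$ and use the similarity $\mathcal{F}_{\alpha,\theta}=\mathcal{C}_{\bm{Z}}^{-1}\mathcal{K}_\theta\mathcal{C}_{\bm{Z}}$; the paper's conjugation $\bm{\Gamma}_\theta$ by $\bm{L}_\alpha(\theta)^{-1/2}$ is the same map up to a scalar. Then $\rho(\mathcal{K}_\theta)<1$ from Corollary~\ref{cor:physical_stability_right}, together with Frobenius self-adjointness, turns the spectral radius into a norm.

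You go beyond the paper in one respect, and correctly. $\mathcal{K}_\theta$, and the order-unit bound behind $\rho(\mathcal{K}_\theta)<1$ in Lemma~\ref{lem:physical_sol}, live on $\sym_p(\R)$. The operator norm of the $p^2\times p^2$ matrix $\E_{\bm{y}}[\bm{A}\otimes\bm{A}]$ also sees antisymmetric directions, a point the paper's proof passes over. Your Hermitian fix is sound:
\begin{itemize}
\item $\mathcal{K}_\theta[\bm{H}]=\alpha^{-1}\E_{\bm{y}}[(\bm{Q}_\theta(\bm{y})\bm{M}_\alpha(\theta))^\top\bm{H}\,(\bm{Q}_\theta(\bm{y})\bm{M}_\alpha(\theta))]$ is positive on complex Hermitian matrices.
\item $\mathcal{K}_\theta[\bm{M}_\alpha'(\theta)]=\bm{M}_\alpha'(\theta)-\bm{M}_\alpha(\theta)^2\prec\bm{M}_\alpha'(\theta)$ then gives $\rho<1$ on $\operatorname{Herm}_p$ by the same order-unit argument.
\item Since $\mathcal{K}_\theta$ commutes with $\bm{X}\mapsto\bm{X}^\ast$, this passes to $\C^{p\times p}=\operatorname{Herm}_p\oplus i\operatorname{Herm}_p$.
\end{itemize}

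Your alternative real-space route does not close, however. The Cauchy--Schwarz step bounds the antisymmetric quadratic form only by $\alpha\|\E_{\bm{y}}[\bm{B}^2]\|_{\mathrm{op}}=\|\mathcal{F}_{\alpha,\theta}[\bm{I}_p]\|_{\mathrm{op}}$. The symmetric-case bound $\|\mathcal{F}_{\alpha,\theta}\|<1$ on $\sym_p(\R)$ does not control this quantity. It only yields $\Tr\mathcal{F}_{\alpha,\theta}[\bm{I}_p]<p$, as in Lemma~\ref{lem:small_alpha_subcritical}, or $\|\mathcal{F}_{\alpha,\theta}[\bm{I}_p]\|_{\mathrm{op}}<\sqrt{p}$.

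The loss is real. Take $\bm{B}=\bm{e}_1\bm{e}_k^\top+\bm{e}_k\bm{e}_1^\top$ with $k$ uniform on $\{2,\ldots,p\}$. Then $\|\E[\bm{B}\otimes\bm{B}]\|_{\mathrm{op}}=(p-1)^{-1/2}$ while $\lambda_1(\E[\bm{B}^2])=1$, a loss of a factor $\sqrt{p-1}$. Nothing in your sketch uses the MDE structure to rule this out.

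A correct real-space variant uses positivity instead. For real antisymmetric $\bm{H}$, the matrix $i\bm{H}$ is Hermitian; split it as $\bm{K}_+-\bm{K}_-$ with $\bm{K}_\pm\succeq\bm{0}$ and $\bm{K}_+\bm{K}_-=\bm{0}$. Positivity of $\mathcal{F}_{\alpha,\theta}$ makes the cross terms nonnegative, which gives $|\langle\bm{H},\mathcal{F}_{\alpha,\theta}[\bm{H}]\rangle_{\mathrm{F}}|\le\langle|\bm{H}|,\mathcal{F}_{\alpha,\theta}[|\bm{H}|]\rangle_{\mathrm{F}}$. Here $|\bm{H}|=(\bm{H}^\top\bm{H})^{1/2}\in\sym_p(\R)$ has the same Frobenius norm as $\bm{H}$, so this reduces directly to the symmetric case. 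Either replace the Cauchy--Schwarz alternative with this, or simply drop it, since the Hermitian-extension argument already suffices.
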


\begin{proof}
Since $\theta> \lambda_+(\alpha)$, Corollary~\ref{cor:physical_stability_right} gives \(\rho (\mathcal{K}_\theta) < 1\).  Let 
\[
\bm{\Gamma}_\theta [\bm{H}] \coloneqq  \bm{L}_\alpha (\theta)^{-1/2} \bm{H} \bm{L}_\alpha (\theta)^{-1/2}.
\]
A direct computation gives
\[
\bm{\Gamma}_\theta \mathcal{K}_\theta \bm{\Gamma}_\theta^{-1} [\bm{H}] = \alpha \E_{\bm{y}} \left [ \bm{A}^{(\bm{T})}_{\alpha} (\theta,\bm{y}) \bm{H} \bm{A}^{(\bm{T})}_{\alpha} (\theta,\bm{y})\right ] \eqqcolon \alpha \mathcal{B}_\theta [\bm{H}] .
\]
Hence
\[
\rho \left  (\alpha \mathcal{B}_\theta\right ) = \rho \left (\mathcal{K}_\theta \right ) < 1.
\]
Moreover, \(\mathcal{B}_\theta\) is self-adjoint with respect to the Frobenius inner product becuse
\[
\langle  \bm{H}_1, \mathcal{B}_\theta [\bm{H}_2] \rangle_{\mathrm{F}} = \langle  \mathcal{B}_\theta [\bm{H}_1],  \bm{H}_2\rangle_{\mathrm{F}}.
\]
Therefore, we have 
\[
\|  \mathcal{B}_\theta\|_{\mathrm{op}} = \rho ( \mathcal{B}_\theta) < \frac{1}{\alpha}.
\]
\end{proof}

Next we use a variant of the Cauchy-Schwarz inequality for Kronecker product. 

\begin{lem} \label{lem:tensorCS}
Let $\bm{A} \in \sym_p(\mathbb{R})$ and $\bm{B} \in \sym_p(\mathbb{R})$ be random matrices with finite second moments. Then
\begin{align*}
\| \E [ \bm{A} \otimes \bm{B} ] \|^2_{\mathrm{op}}  \le \| \E [ \bm{A} \otimes \bm{A} ] \|_{\mathrm{op}}  \| \E [ \bm{B} \otimes \bm{B} ] \|_{\mathrm{op}}.
\end{align*}    
\end{lem}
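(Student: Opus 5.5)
The plan is to reduce Lemma~\ref{lem:tensorCS} to the ordinary Cauchy--Schwarz inequality for a positive semidefinite bilinear form on the space of matrices. I would identify $\R^{p}\otimes\R^{p}$ (or $\R^{p}\otimes\R^{q}$ if the dimensions of $\bm{A}$ and $\bm{B}$ differ; the argument is identical) with $\R^{p\times p}$ via vectorization. Then for $\bm{X},\bm{Y}\in\R^{p\times p}$ and symmetric $\bm{A},\bm{B}$ one has
\[
\operatorname{vec}(\bm{X})^\top (\bm{A}\otimes\bm{B})\operatorname{vec}(\bm{Y}) = \Tr\left(\bm{X}^\top \bm{B}\bm{Y}\bm{A}\right)
\]
(up to the ordering convention for Kronecker products, which I would fix once and check). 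Since $\|\E[\bm{A}\otimes\bm{B}]\|_{\mathrm{op}} = \sup_{\|\bm{X}\|_{\mathrm{F}}=\|\bm{Y}\|_{\mathrm{F}}=1} |\langle \bm{X}, \E[\bm{A}\otimes\bm{B}]\bm{Y}\rangle|$, it suffices to bound $|\E\Tr(\bm{X}^\top\bm{B}\bm{Y}\bm{A})|$ for fixed unit-Frobenius $\bm{X},\bm{Y}$.

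The key step is to split the trace symmetrically. Writing $\Tr(\bm{X}^\top\bm{B}\bm{Y}\bm{A}) = \langle \bm{B}\bm{X}, \bm{Y}\bm{A}\rangle_{\mathrm{F}}$ is not the right split, since it would produce $\E\Tr(\bm{X}^\top\bm{B}^2\bm{X})$, which is not a quadratic form of $\E[\bm{B}\otimes\bm{B}]$. Instead I would apply Cauchy--Schwarz to the positive semidefinite form on pairs. The matrix
\[
\E\begin{pmatrix}\bm{A}\otimes\bm{A} & \bm{A}\otimes\bm{B}\\ \bm{B}\otimes\bm{A} & \bm{B}\otimes\bm{B}\end{pmatrix}
\]
is not obviously PSD, because $\bm{A}\otimes\bm{A}$ need not be PSD when $\bm{A}$ is indefinite. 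So I would instead use the flattened second-moment structure. Viewing $\bm{A}$ as the vector $\bm{a}=\operatorname{vec}(\bm{A})$, the operator $\E[\bm{A}\otimes\bm{B}]$ acting between matrices is, after a partial transpose (realignment), the cross-covariance $\E[\bm{a}\bm{b}^\top]$. Concretely,
\[
\Tr(\bm{X}^\top\bm{B}\bm{Y}\bm{A})=\sum \bm{X}_{ij}\bm{B}_{ik}\bm{Y}_{kl}\bm{A}_{lj}.
\]
With the realignment $\mathcal{R}$ mapping $\bm{A}\otimes\bm{B}\mapsto \operatorname{vec}(\bm{A})\operatorname{vec}(\bm{B})^\top$, one has $\E[\operatorname{vec}\bm{A}\operatorname{vec}\bm{A}^\top]\succeq0$. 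The main obstacle is that realignment does not preserve the operator norm. I therefore expect the honest route to be the following: prove the inequality for the realigned objects, and then argue that in our application the relevant norm coincides.

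A cleaner alternative I would try first avoids realignment entirely. Take a unit top singular pair $\bm{x},\bm{y}\in\R^{p}\otimes\R^p$, which can be chosen as top eigenvectors after symmetrizing, since $\E[\bm{A}\otimes\bm{B}]$ need not be symmetric. Expand $\bm{x}=\sum_k \sigma_k \bm{u}_k\otimes\bm{v}_k$ by Schmidt decomposition, and similarly for $\bm{y}$. For rank-one tensors,
\[
\langle \bm{u}\otimes\bm{v}, (\bm{A}\otimes\bm{B})(\bm{u}'\otimes\bm{v}')\rangle = (\bm{u}^\top\bm{A}\bm{u}')(\bm{v}^\top\bm{B}\bm{v}'),
\]
and scalar Cauchy--Schwarz in $L^2(\mathbb{P})$ gives
\[
|\E[ab]|^2\le \E[a^2]\,\E[b^2].
\]
The remaining issue is to identify $\E[a^2]=\E[(\bm{u}^\top\bm{A}\bm{u}')^2]$ with $\langle \bm{u}\otimes\bm{u},\E[\bm{A}\otimes\bm{A}]\,\bm{u}'\otimes\bm{u}'\rangle$, which is at most $\|\E[\bm{A}\otimes\bm{A}]\|_{\mathrm{op}}$ as long as $\|\bm{u}\|=\|\bm{u}'\|=1$. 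For general non-product $\bm{x},\bm{y}$ I would handle the sum by the same device: define the random scalars $a=\langle \bm{x},(\bm{A}\otimes \bm{I})\,\cdot\rangle$-type bilinear quantities, i.e.\ write $\langle\bm{x},(\bm{A}\otimes\bm{B})\bm{y}\rangle=\Tr(\bm{X}^\top\bm{A}\bm{Y}\bm{B})=\langle \bm{A}, \bm{X}\bm{B}\bm{Y}^\top\rangle_{\mathrm F}$. Then apply Cauchy--Schwarz for the PSD form $(\bm{F},\bm{G})\mapsto\E[\langle\bm{A},\bm{F}\rangle\langle \bm{B},\bm{G}\rangle]$ on the product space.

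This last step, showing that the resulting diagonal terms are controlled by $\|\E[\bm{A}\otimes\bm{A}]\|_{\mathrm{op}}$ with unit-norm test vectors, is where I expect the real difficulty, and possibly the need for PSD or extra structure. In the intended application, $\E[\bm{A}\otimes\bm{A}]$ corresponds to the self-adjoint, cone-preserving map of Lemma~\ref{lem:bound_stability_theta}, and that structure might be needed here.
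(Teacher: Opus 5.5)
\textbf{There is a genuine gap.} Your proposal does not yet prove the lemma. The step you defer as ``the real difficulty'' is the entire content of the statement, and neither route you sketch closes it.

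\begin{itemize}
\item Realignment fails for the reason you give yourself.
\item The form $(\bm{F},\bm{G})\mapsto\E[\langle\bm{A},\bm{F}\rangle\langle\bm{B},\bm{G}\rangle]$ is a cross-covariance, not a positive semidefinite form. Cauchy--Schwarz for it is governed by $\E[\operatorname{vec}(\bm{A})\operatorname{vec}(\bm{A})^\top]$, which is the realigned object again. Its norm can exceed $\|\E[\bm{A}\otimes\bm{A}]\|_{\mathrm{op}}$ by a dimensional factor: for deterministic $\bm{A}=\bm{I}_p$ the two norms are $p$ and $1$.
\item $\langle\bm{x},(\bm{A}\otimes\bm{B})\bm{y}\rangle$ is not a single product of that type with fixed test matrices, since your $\bm{X}\bm{B}\bm{Y}^\top$ is random.
\end{itemize}

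Two further corrections. First, $\E[\bm{A}\otimes\bm{B}]$ is symmetric, because $(\bm{A}\otimes\bm{B})^\top=\bm{A}^\top\otimes\bm{B}^\top=\bm{A}\otimes\bm{B}$, so one unit test vector suffices. Second, no positivity or cone-preservation from the application is needed; the inequality holds exactly as stated.

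\textbf{The missing idea.} Apply Cauchy--Schwarz once, jointly over the randomness and the pairs of Schmidt indices. Then recognize each side as a quadratic form at a ``diagonally lifted'' unit vector.

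Write $\bm{x}=\sum_{k=1}^m s_k\bm{u}_k\otimes\bm{v}_k$ with orthonormal families $(\bm{u}_k)$, $(\bm{v}_k)$, weights $s_k\ge0$ and $\sum_k s_k^2=1$. Put $a_{ij}=\langle\bm{u}_i,\bm{A}\bm{u}_j\rangle$ and $b_{ij}=\langle\bm{v}_i,\bm{B}\bm{v}_j\rangle$, so that $\langle\bm{x},\E[\bm{A}\otimes\bm{B}]\bm{x}\rangle=\E\sum_{i,j}s_is_j\,a_{ij}b_{ij}$. Since the weights $s_is_j$ are nonnegative, Cauchy--Schwarz for the measure $\mathbb{P}\otimes(s_is_j)_{i,j}$ gives
\[
\Bigl|\E\sum_{i,j}s_is_j\,a_{ij}b_{ij}\Bigr|^2\le\Bigl(\E\sum_{i,j}s_is_j\,a_{ij}^2\Bigr)\Bigl(\E\sum_{i,j}s_is_j\,b_{ij}^2\Bigr).
\]
The key identity is
\[
\sum_{i,j}s_is_j\,a_{ij}^2=\bigl\langle\bm{x}_a,(\bm{A}\otimes\bm{A})\,\bm{x}_a\bigr\rangle,\qquad \bm{x}_a\coloneqq\sum_{k}s_k\,\bm{u}_k\otimes\bm{u}_k,
\]
where $\|\bm{x}_a\|_2=1$ by orthonormality of the $\bm{u}_k$. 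The same holds for $\bm{B}$ with $\bm{x}_b=\sum_k s_k\bm{v}_k\otimes\bm{v}_k$. Taking the supremum over $\bm{x}$ gives the claim; this is the paper's argument.

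Applying Cauchy--Schwarz term by term instead, as your rank-one computation suggests, does not work. Bounding each $|\E[a_{ij}b_{ij}]|$ separately and each $\E[a_{ij}^2]$ by $\|\E[\bm{A}\otimes\bm{A}]\|_{\mathrm{op}}$ loses a factor $(\sum_k s_k)^2$, which can be as large as the Schmidt rank $m$. The joint version also works for a singular pair $\bm{x}\neq\bm{y}$, using $\bm{x}_a$ and the analogous $\bm{y}_a$, so symmetry is convenient rather than essential.
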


\begin{proof}
By definition of the operator norm, we have
\begin{align*}
\| \E [ \bm{A} \otimes \bm{B} ] \|^2_{\mathrm{op}}  = \sup_{\| \bm{x} \|=1} |\langle\bm{x}, (\bm{A} \otimes \bm{B}) \bm{x}\rangle|.
\end{align*}
Fix $\bm{x} \in \mathbb{R}^{p r}$ with $\| \bm{x} \|_2 =1$, and write its Schmidt decomposition as 
\[
\bm{x} = \sum_{k=1}^m s_k \bm{u}_k \otimes \bm{v}_k,
\]
where \(m \le \min \{p,r\}\), $(\bm{u}_k)_k$ and $(\bm{v}_k)_k$ are orthonormal families, and $s_k \ge 0$ with $\sum_{k=1}^m s_k^2 =1$. This follows from the singular value decomposition of $\mathrm{mat}(\bm{x}) = \bm{U} \bm{S} \bm{V}^\top$ by identifying the $\bm{u}_k$ with the columns of $\bm{U}$, the $\bm{v}_k$ with the columns of $\bm{V}$ and $s_k$ are the singular values and since the norm condition writes $\| \bm{x} \|^2 = \| \mathrm{mat}(\bm{x})\|^2_F = \sum_{k=1}^{\min(p,r)} s_k^2$. Then, using the elementary identities
\begin{align*}
 (\bm{A} \otimes \bm{B})(\bm{u} \otimes \bm{v}) = (\bm{A} \bm{u}) \otimes (\bm{B} \bm{v}) 
    \quad 
    \mbox{and} 
    \quad 
    \langle (\bm{a} \otimes \bm{b}),(\bm{c} \otimes \bm{d}) \rangle = \langle \bm{a}, \bm{c} \rangle \langle  \bm{b}, \bm{d} \rangle,
\end{align*}
we have
\begin{align*}
\langle \bm{x}, \E \left [\bm{A} \otimes \bm{B} \right] \bm{x}\rangle = \E \left[ \sum_{i,j}^m s_i s_j \langle \bm{u}_i, \bm{A} \bm{u}_j \rangle \langle\bm{v}_i, \bm{B} \bm{v}_j\rangle \right].
\end{align*}
Applying the Cauchy--Schwarz inequality on the product of probability space with the finite index set, equipped with the nonnegative weights $s_i s_j\ge 0$, gives
\begin{align} \label{eq:First_CS}
\left | \left \langle \bm{x},  \E \left [\bm{A} \otimes \bm{B} \right] \bm{x} \right \rangle \right |^2 \le \E \left[ \sum_{i,j}^m s_i s_j \langle \bm{u}_i, \bm{A} \bm{u}_j \rangle^2  \right] \E\left[ \sum_{i,j}^m s_i s_j \langle \bm{v}_i, \bm{B} \bm{v}_j \rangle^2  \right].
\end{align}
Set $\bm{x}_a \coloneqq  \sum_{k=1}^m s_k \bm{u}_k \otimes \bm{u}_k \in \R^{p^2}$ and $\bm{x}_b \coloneqq  \sum_{k=1}^m s_k \bm{v}_k \otimes \bm{v}_k \in \R^{r^2} $. It holds that $\| \bm{x}_a \| = \| \bm{x}_b \| =1$. Moreover, from~\eqref{eq:First_CS} we have
\begin{align*}
\mathbb{E} \left[ \sum_{i,j}^m s_i s_j \langle \bm{u}_i, \bm{A} \bm{u}_j \rangle^2  \right] \mathbb{E}\left[ \sum_{i,j}^m s_i s_j \langle \bm{v}_i, \bm{B} \bm{v}_j \rangle^2  \right] 
    &= 
    \mathbb{E}[  \langle \bm{x}_a, (\bm{A}^{\otimes 2}) \bm{x}_a \rangle^2   ] \mathbb{E}[  \langle \bm{x}_b, \bm{B}^{\otimes 2} \bm{x}_b \rangle^2   ] , \\
    &\le
     \| \mathbb{E}[  \bm{A}^{\otimes 2}   ] \|_{\mathrm{op}} \| \mathbb{E}[  \bm{B}^{\otimes 2}   ] \|_{\mathrm{op}}.
\end{align*}
Since \( \E \left [\bm{A} \otimes \bm{B} \right] \) is symmetric, taking the supremum over unit vectors $\bm{x}$ proves the claim. 
\end{proof}

\begin{proof}[Proof of Lemma~\ref{lem:upperbound_samplecomplexity}]
Combining~\eqref{eq:bound_outlier_eq} with Lemma~\ref{lem:tensorCS}, applied to \(\bm{A} =\bm{A}_\alpha^{(\bm{T})}(\theta,\bm{y}) \) and \(\bm{B} = \bm{C} (\bm{y})-\bm{I}_r\), gives
\[
\frac{1}{\alpha^2} \leq \left \| \E_{\bm{y}} \left [ \bm{A}_\alpha^{(\bm{T})}(\theta,\bm{y})^{\otimes 2}\right ] \right \|_{\mathrm{op}}  \left \| \E_{\bm{y}} \left [ \left (\bm{C} (\bm{y}) - \bm{I}_r \right )^{\otimes 2}\right ] \right \|_{\mathrm{op}} .
\]
By Lemma~\ref{lem:bound_stability_theta}, 
\[
\left \| \E_{\bm{y}} \left [ \bm{A}_\alpha^{(\bm{T})}(\theta,\bm{y})^{\otimes 2}\right ] \right \|_{\mathrm{op}}  < \frac{1}{\alpha},
\]
whereas, by the definition of \(\alpha_{\mathrm{c}}^\ast\) given in~\eqref{eq:def_alpha_cast}, 
\[
\left \| \E_{\bm{y}} \left [ \left (\bm{C} (\bm{y}) - \bm{I}_r \right )^{\otimes 2}\right ] \right \|_{\mathrm{op}}  = \frac{1}{\alpha_{\mathrm{c}}^\ast}.
\]
Therefore, 
\[
\frac{1}{\alpha^2} < \frac{1}{\alpha \alpha_{\mathrm{c}}^\ast},
\]
and so \(\alpha > \alpha_{\mathrm{c}}^\ast\). Thus every \(\alpha\) such that \(\Delta_{\bm{T}} (\alpha)>0\) is at least \(\alpha_{\mathrm{c}}^\ast\). Taking the infimum over this set yields 
\[
\alpha_{\mathrm{c},\min}(\bm{T}) \geq \alpha_{\mathrm{c}}^\ast.
\]
\end{proof}

\subsection{The optimal preprocessing map and its MDE} \label{sec:proof_optimality_Tast}
We now prove Lemma~\ref{lem:optimality_Tast}. We first record the
properties of the Perron reduction introduced in
Definition~\ref{defn:optimal_preprocessing_map} that will be used
throughout the proof.

\begin{lem} \label{lem:perron_reduction_properties}
Let \(\bm{H}_\ast \in \operatorname{relint}(\mathscr{P}_\ast)\), let
\(\bm{U}_\ast \in \R^{r\times p_\ast}\) have orthonormal columns spanning
\(\operatorname{range}(\bm{H}_\ast)\), and write \(\bm{H}_\ast=\bm{U}_\ast\bm{D}_\ast\bm{U}_\ast^\top\). Then \(\bm{D}_\ast \in \sym^{++}_{p_\ast}(\R)\) and the following hold almost surely:
\[
\bm{C} (\bm{y}) \bm{U}_\ast = \bm{U}_\ast \bm{C}_\ast(\bm{y}),
\qquad
\bm{C}_\ast(\bm{y})\bm{U}_\ast^\top = \bm{U}_\ast^\top\bm{C}(\bm{y}).
\]
Moreover,
\[
\E_{\bm{y}} \left[
(\bm{C}_\ast(\bm{y})-\bm{I}_{p_\ast})
\bm{D}_\ast
(\bm{C}_\ast(\bm{y})-\bm{I}_{p_\ast})
\right]
= \frac1{\alpha_{\mathrm c}^\ast}\bm{D}_\ast,
\]
and
\[
\left \| \mathcal{A}_\ast \right \|_{\mathrm{op}} = \left \| \mathcal{A} \right \|_{\mathrm{op}}=\frac1{\alpha_{\mathrm c}^\ast},
\]
where \(\mathcal{A}\) is defined in~\eqref{eq:operator_A} and \(\mathcal{A}_\ast\) is with \(\bm{C}(\bm{y})\) replaced by \(\bm{C}_\ast(\bm{y})\).
\end{lem}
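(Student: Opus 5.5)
The plan is to derive everything from the Perron--Frobenius structure of \(\mathcal{A}\) restricted to the face of the PSD cone determined by \(\bm{H}_\ast\). First, \(\bm{D}_\ast\succ\bm{0}\) is immediate: \(\bm{U}_\ast\) spans \(\operatorname{range}(\bm{H}_\ast)\), so \(\bm{D}_\ast=\bm{U}_\ast^\top\bm{H}_\ast\bm{U}_\ast\) is the nonzero part of \(\bm{H}_\ast\). The norm identity \(\|\mathcal{A}\|_{\mathrm{op}}=1/\alpha_{\mathrm c}^\ast\) should follow because \(\E[(\bm{C}-\bm{I})\otimes(\bm{C}-\bm{I})]\) acting on \(\R^{r}\otimes\R^r\cong\R^{r\times r}\) is exactly \(\bm{H}\mapsto\E[(\bm{C}-\bm{I})\bm{H}(\bm{C}-\bm{I})]\) on all matrices. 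This map preserves both symmetric and antisymmetric matrices. Its norm on symmetric matrices is \(\|\mathcal{A}\|_{\mathrm{op}}\). On antisymmetric matrices it is dominated, via the same completely positive Cauchy--Schwarz bound \(|\Tr(\bm{K}^\top\Phi[\bm{K}])|\le\Tr(\Phi[\bm{P}]\bm{P})\)-type argument with \(\bm{P}=(\bm{K}^\top\bm{K})^{1/2}\), by the norm on PSD matrices. So the two norms agree.

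The central step is the invariance \(\bm{C}(\bm{y})\bm{U}_\ast=\bm{U}_\ast\bm{C}_\ast(\bm{y})\) a.s., i.e.\ \(\bm{C}(\bm{y})\) leaves \(V\coloneqq\operatorname{range}(\bm{H}_\ast)\) invariant. My approach is to exploit that \(\bm{H}_\ast\in\operatorname{relint}(\mathscr{P}_\ast)\) has maximal range among Perron eigenmatrices. Let \(\bm{\Pi}_V\) be the projector onto \(V\) and write \(\bm{E}(\bm{y})=\bm{C}(\bm{y})-\bm{I}_r\), \(\lambda=\|\mathcal{A}\|_{\mathrm{op}}\). Then \(\lambda\bm{H}_\ast=\E[\bm{E}\bm{H}_\ast\bm{E}]\). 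Each term \(\bm{E}\bm{H}_\ast\bm{E}\) is PSD, and their average has range \(V\). Hence a.s.\ \(\operatorname{range}(\bm{E}\bm{H}_\ast\bm{E})\subseteq V\), which gives \((\bm{I}-\bm{\Pi}_V)\bm{E}\bm{H}_\ast^{1/2}=0\). Since \(\bm{H}_\ast^{1/2}\) has range \(V\), this yields \((\bm{I}-\bm{\Pi}_V)\bm{E}\bm{\Pi}_V=0\), so \(\bm{E}\), and therefore \(\bm{C}\), maps \(V\) into \(V\). By symmetry \(\bm{C}\) also preserves \(V^\perp\), and both displayed identities follow by writing \(\bm{C}\bm{U}_\ast=\bm{U}_\ast(\bm{U}_\ast^\top\bm{C}\bm{U}_\ast)\) and transposing. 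Note that this step uses only that \(\bm{H}_\ast\) is a PSD Perron eigenmatrix. The relative-interior choice is not needed here; it will matter later in the paper.

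Next I would compress the eigen-equation. Multiplying \(\lambda\bm{H}_\ast=\E[\bm{E}\bm{H}_\ast\bm{E}]\) on the left by \(\bm{U}_\ast^\top\) and on the right by \(\bm{U}_\ast\), and using the invariance to write \(\bm{U}_\ast^\top\bm{E}\bm{U}_\ast\bm{D}_\ast\bm{U}_\ast^\top\bm{E}\bm{U}_\ast\), gives \(\E[(\bm{C}_\ast-\bm{I})\bm{D}_\ast(\bm{C}_\ast-\bm{I})]=\lambda\bm{D}_\ast=\alpha_{\mathrm c}^{\ast\,-1}\bm{D}_\ast\).

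For \(\|\mathcal{A}_\ast\|_{\mathrm{op}}\), I would argue as follows. The map \(\bm{G}\mapsto\bm{U}_\ast\bm{G}\bm{U}_\ast^\top\) is a Frobenius isometry that intertwines \(\mathcal{A}_\ast\) with the restriction of \(\mathcal{A}\) to \(\bm{\Pi}_V\sym_r\bm{\Pi}_V\); this uses the invariance. That gives \(\|\mathcal{A}_\ast\|\le\|\mathcal{A}\|\). Conversely, \(\bm{D}_\ast\) is an eigenmatrix with eigenvalue \(\lambda\), so \(\|\mathcal{A}_\ast\|\ge\lambda\). The main obstacle I expect is the invariance step, specifically making the a.s.\ range argument rigorous, which requires integrability of \(\bm{E}\). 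For that, boundedness of second moments, \(\E\|\bm{C}\|_{\mathrm F}^2\le r(r+2)\), suffices.
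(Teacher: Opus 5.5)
Your proof is correct and follows the same route as the paper. You get invariance of \(\operatorname{range}(\bm{H}_\ast)\) (equivalently of \(\ker\bm{H}_\ast\)) from the vanishing of a nonnegative integrand, compress the Perron equation to obtain \(\bm{D}_\ast\) as an eigenmatrix, and use the isometric embedding \(\bm{G}\mapsto\bm{U}_\ast\bm{G}\bm{U}_\ast^\top\) for \(\|\mathcal{A}_\ast\|_{\mathrm{op}}\).

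You also show that the operator norm of \(\E[(\bm{C}-\bm{I}_r)\otimes(\bm{C}-\bm{I}_r)]\) on all of \(\R^{r\times r}\) equals \(\|\mathcal{A}\|_{\mathrm{op}}\) on \(\sym_r(\R)\), by splitting into symmetric and antisymmetric parts and bounding the latter with the Schmidt-type Cauchy--Schwarz inequality for \(\bm{P}=(\bm{K}^\top\bm{K})^{1/2}\). That argument is valid, and it supplies a step the paper uses without justification when it identifies \(\|\mathcal{A}\|_{\mathrm{op}}\) with \(1/\alpha_{\mathrm c}^\ast\).
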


\begin{proof}
Since \(\operatorname{range}(\bm{H}_\ast)=\operatorname{range}(\bm{U}_\ast)\),
the matrix \(\bm{D}_\ast=\bm{U}_\ast^\top\bm{H}_\ast\bm{U}_\ast\) is positive definite. Set \(\bm{B}(\bm{y})\coloneqq\bm{C}(\bm{y})-\bm{I}_r\). We first show that \(\operatorname{range}(\bm{H}_\ast)\) is invariant under
\(\bm{B}(\bm{y})\) almost surely. Let \(\bm{v}\in\ker(\bm{H}_\ast)\). Since \(\bm{H}_\ast \in \mathscr{P}_\ast\), we have
\[
\mathcal{A}[\bm{H}_\ast] = \|\mathcal{A}\|_{\mathrm{op}} \bm{H}_\ast= \frac1{\alpha_{\mathrm c}^\ast} \bm{H}_\ast.
\]
Moreover,
\[
0 = \bm{v}^\top\mathcal{A}[\bm{H}_\ast]\bm{v} = \E \left[
\bm{v}^\top \bm{B}(\bm{y})\bm{H}_\ast\bm{B}(\bm{y}) \bm{v} \right].
\]
The integrand is nonnegative, hence
\[
\bm{H}_\ast^{1/2}\bm{B}(\bm{y})\bm{v}=\bm{0}
\]
almost surely. Thus \(\ker(\bm{H}_\ast)\) is invariant under \(\bm{B}(\bm{y})\).
Since \(\bm{B}(\bm{y})\) is symmetric, its orthogonal complement \(\operatorname{range} (\bm{H}_\ast)\) is invariant as well. Therefore
\[
\bm{C}(\bm{y})\bm{U}_\ast = \bm{U}_\ast\bm{C}_\ast(\bm{y}),
\]
and transposing yields
\[
\bm{C}_\ast(\bm{y})\bm{U}_\ast^\top = \bm{U}_\ast^\top\bm{C}(\bm{y}).
\]

Compressing the Perron equation
\[
\mathcal{A}[\bm{H}_\ast] =\frac1{\alpha_{\mathrm c}^\ast}\bm{H}_\ast
\]
to \(\operatorname{range}(\bm{H}_\ast)\) now gives
\[
\E \left[ (\bm{C}_\ast-\bm{I}_{p_\ast}) \bm{D}_\ast (\bm{C}_\ast-\bm{I}_{p_\ast}) \right] = \frac1{\alpha_{\mathrm c}^\ast}\bm{D}_\ast.
\]
Hence
\[
\|\mathcal{A}_\ast \|_{\mathrm{op}} \ge \frac1{\alpha_{\mathrm{c}}^\ast}.
\]
On the other hand, the isometric embedding
\[
\bm{H} \mapsto\bm{U}_\ast\bm{H}\bm{U}_\ast^\top
\]
intertwines \(\mathcal{A}_\ast\) with the restriction of \(\mathcal{A}\) to matrices supported on \(\operatorname{range}(\bm{H}_\ast)\). Therefore
\[
\|\mathcal{A}_\ast \|_{\mathrm{op}} \le \|\mathcal{A}\|_{\mathrm{op}}
 =\frac1{\alpha_{\mathrm c}^\ast},
\]
which proves equality.
\end{proof}

Let $\bm{M}^\ast_\alpha$ denote the physical branch associated with $\bm{T} = \bm{T}_\ast$. For fixed \(\alpha\), define
\begin{align*}
\bm{L} (z) \coloneqq  - \frac{1}{\alpha} \bm{M}^\ast_\alpha(z), \qquad z \in \C_+.
\end{align*}
This rescaling is convenient because the distinguished spectral parameter \(z=1/\alpha\) corresponds to the simple candidate fixed point \(\bm{L}=\bm{I}_{p_\ast}\). In terms of \(\bm{L}\), the matrix Dyson equation~\eqref{eq:MDE} becomes
\begin{align} \label{eq:MDE_for_L}
\bm{L} (z) = \widetilde{\bm{\Psi}}_{\alpha,z} (\bm{L} (z)  ),
\end{align}
where
\begin{align} \label{eq:def_Psi}
\widetilde{\bm{\Psi}}_{\alpha,z}(\bm{L})
\coloneqq 
\left(z 
\alpha  \bm{I}_{p_\ast}
-
\alpha \E_{\bm{y}} \left[
\bm{T}_\ast (\bm{y})
\bigl(\bm{I}_{p_\ast}-\bm{L} \bm{T}_\ast(\bm{y}) \bigr)^{-1}
\right]
\right)^{-1}.
\end{align}
For a real algebraic solution \(\bm{L}\) at \(\lambda\), let
\begin{align}
\label{eq:definition_Tilde_mathcalK}
    \widetilde{\mathcal{K}}_{\bm{L}} \colon \bm{H} \mapsto D \widetilde{\bm{\Psi}}_{\alpha,\lambda}(\bm{L})[\bm{H}].
\end{align}
If \(\bm{M} = -\alpha \bm{L}\), then $ \widetilde{\mathcal{K}}_{\bm{L}} = \mathcal{K}_{\bm{M}}$, where \(\mathcal{K}_{\bm{M}}\) is the stability operator defined in~\eqref{eq:mathcal_K}. Thus the stability characterization of
Lemma~\ref{lem:physical_sol} applies directly to the rescaled
fixed-point equation.

By Lemma~\ref{lem:upperbound_samplecomplexity}, $\alpha_{\mathrm{c},\min}(\bm{T}_\ast) \ge \alpha_{\mathrm{c}}^{\ast}$. It therefore remains to prove that a separating outlier. exists for every \(\alpha > \alpha_{\mathrm{c}}^\ast\). This implies $\alpha_{\mathrm{c},\max}(\bm{T}_\ast) \le \alpha_{\mathrm{c}}^{\ast}$, and together with $\alpha_{\mathrm{c},\min}(\bm{T}_\ast) \le \alpha_{\mathrm{c},\max}(\bm{T}_\ast)$, both thresholds equal \(\alpha_{\mathrm c}^\ast\).

We first analyze the real solutions of the rescaled MDE at and above \(1/\alpha\). This will show that, for \(\alpha>\alpha_{\mathrm c}^\ast\),
\[
\lambda_+(\alpha)<\frac1\alpha,
\]
after which it remains only to verify that \(1/\alpha\) satisfies the
limiting outlier equation. This bound on the upper edge could presumably also be obtained from the variational characterization presented in Appendix~\ref{appendix:limiting_variational_edge}. Instead, we follow an approach closely related to that of~\cite{mergny24a,yang2026}, although the tools used there are different from ours.

\begin{lem} \label{lem:qualitative_behavior_L}
The following statements hold.
\begin{enumerate}
\item For every \(\lambda>1/\alpha\),
\[
\lambda \notin \supp(\mu_\alpha)
\qquad\text{and}\qquad
\bm{0}_{p_\ast \times p_\ast}
\prec
\bm{L}(\lambda)
\prec
\bm{I}_{p_\ast}.
\]
\item At $\lambda = 1/\alpha$:
\begin{enumerate}
\item if $\alpha < \alpha_{\mathrm{c}}^\ast$, then $$\frac1\alpha\notin\supp(\mu_\alpha), \qquad \bm{L}(1/\alpha)=\bm{I}_{p_\ast};$$
\item if $\alpha > \alpha_{\mathrm{c}}^\ast$, then \[
    \frac1\alpha\notin\supp(\mu_\alpha),
    \qquad
    \bm{0}_{p_\ast \times p_\ast}  \prec  \bm{L}(1/\alpha)  \prec \bm{I}_{p_\ast},
    \]
    and
    \[
    \rho \left( \widetilde{\mathcal{K}}_{\bm{L}(1/\alpha)} \right)<1.
    \]
    \end{enumerate}
\end{enumerate}
\end{lem}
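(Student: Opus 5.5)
The plan rests on one algebraic identity. By Definition~\ref{defn:optimal_preprocessing_map},
\[
\bm{I}_{p_\ast}-\bm{T}_\ast=\bm{C}_\ast^{-1},
\]
so at $\bm{L}=\bm{I}_{p_\ast}$ one has $\bm{T}_\ast(\bm{I}_{p_\ast}-\bm{T}_\ast)^{-1}=\bm{C}_\ast-\bm{I}_{p_\ast}$. This has zero mean, since Lemma~\ref{lem:perron_reduction_properties} and $\E_{\bm{y}}[\bm{C}]=\bm{I}_r$ give $\E_{\bm{y}}[\bm{C}_\ast]=\bm{I}_{p_\ast}$. Hence, by~\eqref{eq:def_Psi},
\[
\widetilde{\bm{\Psi}}_{\alpha,\lambda}(\bm{I}_{p_\ast})=(\alpha\lambda)^{-1}\bm{I}_{p_\ast},
\]
so $\bm{I}_{p_\ast}$ is a fixed point exactly at $\lambda=1/\alpha$ and a strict supersolution for $\lambda>1/\alpha$. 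The derivative there is also explicit. With $\bm{M}=-\alpha\bm{I}$ and $\bm{Q}=\bm{C}_\ast-\bm{I}$, formula~\eqref{eq:mathcal_K} gives $\widetilde{\mathcal{K}}_{\bm{I}}=\alpha\mathcal{A}_\ast$. Since $\mathcal{A}_\ast$ is self-adjoint and cone-preserving, Lemma~\ref{lem:perron_reduction_properties} yields
\[
\rho(\widetilde{\mathcal{K}}_{\bm{I}})=\frac{\alpha}{\alpha_{\mathrm c}^\ast}.
\]
Case 2(a) should then follow directly from Lemma~\ref{lem:physical_sol}. For $\alpha<\alpha_{\mathrm c}^\ast$, $\bm{L}=\bm{I}$ is a stable algebraic solution at $1/\alpha$, so $1/\alpha\notin\supp(\mu_\alpha)$ and $\bm{L}(1/\alpha)=\bm{I}$. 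Here I must check regularity, i.e.\ that $(\bm{I}-\bm{T}_\ast)^{-1}=\bm{C}_\ast$ is essentially bounded. Assumption~\ref{hyp:second_order_channel} only gives a lower bound on $\bm{C}_\ast$, so this may require a truncation or a mild extension of Lemma~\ref{lem:physical_sol} to integrable denominators; I flag it as a technical point.

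For part 1, fix $\lambda>1/\alpha$ and work on the order interval $\{\bm{0}\preceq\bm{L}\preceq\bm{I}\}$. First, $\widetilde{\bm{\Psi}}_{\alpha,\lambda}$ is well defined and regular there. Because $\bm{T}_\ast\preceq\bm{I}$, we have $\bm{I}-\bm{L}^{1/2}\bm{T}_\ast\bm{L}^{1/2}\succeq\bm{I}-\bm{L}$, which is uniformly positive definite once $\bm{L}\prec\bm{I}$. Second, the map is Loewner-monotone, because its derivative $\bm{H}\mapsto\widetilde{\bm{\Psi}}\,\alpha\E[\bm{Q}\bm{H}\bm{Q}]\,\widetilde{\bm{\Psi}}$ preserves the positive cone. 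I then iterate from $\bm{L}_0=\bm{0}$. The iterates increase and are bounded above by the supersolution $\bm{I}$, so they converge to a minimal fixed point $\bm{L}_\lambda$ with $\bm{0}\prec\bm{L}_\lambda\preceq\widetilde{\bm{\Psi}}(\bm{I})=(\alpha\lambda)^{-1}\bm{I}\prec\bm{I}$.

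To invoke Lemma~\ref{lem:physical_sol}, I need $\rho(\widetilde{\mathcal{K}}_{\bm{L}_\lambda})<1$. I intend to get it from Loewner convexity of $\bm{L}\mapsto\widetilde{\bm{\Psi}}_{\alpha,\lambda}(\bm{L})$ on the order interval. Convexity gives
\[
\widetilde{\mathcal{K}}_{\bm{L}_\lambda}[\bm{I}-\bm{L}_\lambda]\preceq\widetilde{\bm{\Psi}}(\bm{I})-\widetilde{\bm{\Psi}}(\bm{L}_\lambda)=(\alpha\lambda)^{-1}\bm{I}-\bm{L}_\lambda\prec\bm{I}-\bm{L}_\lambda .
\]
The Perron/weighted-norm argument from the proof of Lemma~\ref{lem:physical_sol}, with $\bm{I}-\bm{L}_\lambda\succ0$ as the weight, then gives $\rho<1$. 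Lemma~\ref{lem:physical_sol} identifies $\bm{L}_\lambda=\bm{L}(\lambda)$ and gives $\lambda\notin\supp(\mu_\alpha)$.

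The convexity is the step I expect to be the main obstacle. Inversion is operator convex on positive matrices, and $\bm{L}\mapsto-\E[\bm{T}(\bm{I}-\bm{L}\bm{T})^{-1}]$ should be concave by the same mechanism, but the two-sided composition needs care. If it fails, my fallback is a continuation argument. By Lemma~\ref{lem:analytic_continuation}, the physical branch is near $\bm{0}$ for large $\lambda$. The branch $\lambda\mapsto\bm{L}(\lambda)$ is decreasing in $\lambda$, since $\bm{M}'\succ0$, and, while it stays in $\bm{0}\prec\bm{L}\prec\bm{I}$, it remains regular and stable. It can only lose stability where $\bm{1}$ becomes an eigenvalue of $\widetilde{\mathcal{K}}$, and I would rule this out using the strict supersolution $\bm{I}$.

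For case 2(b), $\bm{I}$ is now an unstable fixed point at $\lambda=1/\alpha$, and I construct a strict supersolution below it. Take $\bm{D}_\ast\succ0$ from Lemma~\ref{lem:perron_reduction_properties}, with $\alpha\mathcal{A}_\ast[\bm{D}_\ast]=(\alpha/\alpha_{\mathrm c}^\ast)\bm{D}_\ast$. A first-order expansion gives
\[
\widetilde{\bm{\Psi}}_{\alpha,1/\alpha}(\bm{I}-\varepsilon\bm{D}_\ast)=\bm{I}-\varepsilon\frac{\alpha}{\alpha_{\mathrm c}^\ast}\bm{D}_\ast+O(\varepsilon^2)\prec\bm{I}-\varepsilon\bm{D}_\ast
\]
for small $\varepsilon>0$, because $\alpha>\alpha_{\mathrm c}^\ast$. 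The $O(\varepsilon^2)$ control needs a uniform bound on second derivatives near $\bm{I}$, which again touches the integrability of $\bm{C}_\ast$. Replacing $\bm{I}$ by $\bm{I}-\varepsilon\bm{D}_\ast$ in the part-1 argument (monotone iteration from $\bm{0}$, then convexity or the Perron bound) yields a regular fixed point with $\bm{0}\prec\bm{L}\prec\bm{I}$ and $\rho(\widetilde{\mathcal{K}})<1$. Lemma~\ref{lem:physical_sol} then gives $1/\alpha\notin\supp(\mu_\alpha)$ and identifies this point with $\bm{L}(1/\alpha)$. As a consistency check, it should coincide with $\lim_{\lambda\downarrow1/\alpha}\bm{L}(\lambda)$ from part 1.
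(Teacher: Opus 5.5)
Your scaffolding agrees with the paper's. You use monotone iteration from $\bm{0}$ below the supersolution $\bm{I}$ (write $\bm{I}=\bm{I}_{p_\ast}$), the identity $\widetilde{\bm{\Psi}}_{\alpha,\lambda}(\bm{I})=(\alpha\lambda)^{-1}\bm{I}$, the formula $\widetilde{\mathcal{K}}_{\bm{I}}=\alpha\mathcal{A}_\ast$ for 2(a), the supersolution $\bm{I}-\varepsilon\bm{D}_\ast$ for 2(b), and then Lemma~\ref{lem:physical_sol}. However, the step that carries parts (1) and (2b), namely $\rho(\widetilde{\mathcal{K}}_{\bm{L}})<1$ at the minimal fixed point, rests on Loewner convexity of $\bm{L}\mapsto\widetilde{\bm{\Psi}}_{\alpha,\lambda}(\bm{L})$. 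That convexity is false.

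With $\bm{Q}=\bm{T}_\ast(\bm{I}-\bm{L}\bm{T}_\ast)^{-1}$, the second derivative of $\bm{L}\mapsto\bm{Q}$ is $2\bm{Q}\bm{H}\bm{Q}\bm{H}\bm{Q}=2(\bm{H}\bm{Q})^\top\bm{Q}(\bm{H}\bm{Q})$. This has a sign only where $\bm{Q}$ does. Since $\E[\bm{C}_\ast]=\bm{I}$, $\bm{T}_\ast$ and hence $\bm{Q}$ are negative with positive probability, so $\E[\bm{Q}]$ is neither convex nor concave, and $\widetilde{\bm{\Psi}}$ can be strictly concave.

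Concretely, take $r=p_\ast=1$ and write $\psi(l)=\widetilde{\bm{\Psi}}_{\alpha,\lambda}(l)$ and $u=(C-1)/(C-l(C-1))$. One computes $\psi''=2\alpha^2\psi^3\bigl[(\E u^2)^2+(\lambda-\E u)\,\E u^3\bigr]$. Take $C=0.1$ with probability $0.01$ and $C=0.999/0.99$ otherwise; this is realizable by a symmetric binary channel. With $\alpha=1$ and $\lambda=1.5$:
\begin{itemize}
\item the fixed point is $l_\lambda\approx0.665$;
\item $\psi$ is concave on $[l_\lambda,1]$;
\item $\psi'(l_\lambda)(1-l_\lambda)\approx2.5\cdot10^{-3}>\psi(1)-\psi(l_\lambda)\approx1.7\cdot10^{-3}$.
\end{itemize}
So your inequality $\widetilde{\mathcal{K}}_{\bm{L}_\lambda}[\bm{I}-\bm{L}_\lambda]\preceq\widetilde{\bm{\Psi}}(\bm{I})-\widetilde{\bm{\Psi}}(\bm{L}_\lambda)$ fails. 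The same objection applies in 2(b) with $\bm{I}-\varepsilon\bm{D}_\ast$ in place of $\bm{I}$.

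Your fallback does not repair this. A strict supersolution above the minimal fixed point of an order-preserving map does not exclude tangency ($\rho=1$) there. Also, the claim that the branch "remains stable while in $\bm{0}\prec\bm{L}\prec\bm{I}$" is, by Lemma~\ref{lem:physical_sol}, equivalent to what you are trying to prove, so the argument is circular.

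The missing ingredient is a stability certificate that uses the specific form $\bm{T}_\ast=\bm{I}-\bm{C}_\ast^{-1}$. The paper takes the weight $\bm{R}=\bm{L}(\bm{I}-\bm{L})\succ0$, not $\bm{I}-\bm{L}$. It combines three facts:
\begin{itemize}
\item the fixed-point equation $\bm{L}-\alpha\lambda\bm{L}^2=-\alpha\bm{L}\,\E[\bm{Q}]\,\bm{L}$;
\item the pointwise identity $-\bm{Q}-\bm{Q}\bm{L}(\bm{I}-\bm{L})\bm{Q}=\bm{Q}(\bm{I}-\bm{L})\bm{C}_\ast(\bm{I}-\bm{L})\bm{Q}+\bm{I}-\bm{C}_\ast$;
\item $\E[\bm{C}_\ast]=\bm{I}$.
\end{itemize}
These give
\[
\bm{R}-\widetilde{\mathcal{K}}_{\bm{L}}[\bm{R}]=(\alpha\lambda-1)\bm{L}^2+\alpha\,\bm{L}\,\E\bigl[\bm{Q}(\bm{I}-\bm{L})\bm{C}_\ast(\bm{I}-\bm{L})\bm{Q}\bigr]\,\bm{L},
\]
which is positive definite for $\lambda>1/\alpha$. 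The order-unit-norm argument then yields $\rho<1$.

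At $\lambda=1/\alpha$ the first term vanishes, and strictness needs the Perron eigenmatrix. A kernel vector $\bm{v}$ forces $(\bm{C}_\ast-\bm{I})\bm{L}\bm{v}=\bm{0}$ almost surely. Applying $\E[(\bm{C}_\ast-\bm{I})\bm{D}_\ast(\bm{C}_\ast-\bm{I})]=(\alpha_{\mathrm c}^\ast)^{-1}\bm{D}_\ast$ to $\bm{L}\bm{v}$ then gives $\bm{D}_\ast\bm{L}\bm{v}=\bm{0}$, hence $\bm{v}=\bm{0}$.

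The regularity issue you flag at $\bm{L}=\bm{I}$, i.e.\ essential boundedness of $\bm{C}_\ast$, is a fair point that the paper also passes over. It is not the gap here.
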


The proof is deferred to Section~\ref{sec:proof_of_behavior_L}. Part~(1) of Lemma~\ref{lem:qualitative_behavior_L} implies \((1/\alpha,\infty)\cap\supp(\mu_\alpha)=\emptyset,\) and therefore
\[
\lambda_+(\alpha)\le\frac1\alpha.
\]
If \(\alpha>\alpha_{\mathrm c}^\ast\), Part~(2b) additionally gives \(1/\alpha\notin\supp(\mu_\alpha)\). Since \(\lambda_+(\alpha)\in\supp(\mu_\alpha)\), equality is impossible, and therefore 
\[
\lambda_+(\alpha)<\frac1\alpha.
\]

Next we show that the point \(1/\alpha\) makes the limiting Schur-complement matrix singular. This is enough to prove supercriticality, although it does not by itself identify the top outlier with \(1/\alpha\).

\begin{lem} \label{lem:one_over_alpha_outlier}
    Let $\alpha > \alpha_{\mathrm{c}}^\ast$. Then
    \[
    \det \left ( \bm{H}_\alpha \left ( \frac{1}{\alpha} \right ) \right )=0.
    \]
    Therefore, \(\Delta_{\bm{T}_\ast} (\alpha) >0\).
\end{lem}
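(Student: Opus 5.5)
The approach is to exhibit an explicit nonzero vector in the kernel of $\bm{H}_\alpha(1/\alpha)$ for $\bm{T}=\bm{T}_\ast$ (working dimension $p_\ast$). Then strict monotonicity of $h_\alpha$ from Theorem~\ref{thm:BBP_transition} converts singularity into $\Delta_{\bm{T}_\ast}(\alpha)>0$.

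\textbf{Setup.} Set $\bm{L}\coloneqq\bm{L}(1/\alpha)=-\alpha^{-1}\bm{M}^\ast_\alpha(1/\alpha)$. By Lemma~\ref{lem:qualitative_behavior_L}(2b), $1/\alpha\notin\supp(\mu_\alpha)$, $\lambda_+(\alpha)<1/\alpha$, and $\bm{0}\prec\bm{L}\prec\bm{I}_{p_\ast}$. Write
\[
\bm{N}(\bm{y})\coloneqq\bm{C}_\ast(\bm{y})-\bm{I}_{p_\ast},\qquad \bm{Q}(\bm{y})\coloneqq\bm{T}_\ast(\bm{y})\bigl(\bm{I}-\bm{L}\bm{T}_\ast(\bm{y})\bigr)^{-1}.
\]

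\textbf{Simplifying $\bm{Q}$ and the MDE.} Since $\bm{T}_\ast=\bm{I}-\bm{C}_\ast^{-1}$, the push-through identity (Lemma~\ref{lem:push_through}) gives $\bm{Q}=(\bm{I}-\bm{T}_\ast\bm{L})^{-1}\bm{T}_\ast$. Factoring out $\bm{C}_\ast^{-1}$ yields
\[
\bm{Q}=\bigl(\bm{I}+\bm{N}(\bm{I}-\bm{L})\bigr)^{-1}\bm{N}=\bm{N}\bigl(\bm{I}+(\bm{I}-\bm{L})\bm{N}\bigr)^{-1}.
\]
The MDE~\eqref{eq:MDE} at $x=1/\alpha$, in the continued form of Remark~\ref{rmk:physical_solution}, reads $-\alpha^{-1}\bm{L}^{-1}=-\alpha^{-1}\bm{I}+\E[\bm{Q}]$. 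Hence
\[
\E[\bm{Q}]=\alpha^{-1}\bigl(\bm{I}-\bm{L}^{-1}\bigr).
\]

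\textbf{Reducing the kernel equation.} Identify $\R^{p_\ast}\otimes\R^r$ with $p_\ast\times r$ matrices, so that $\bm{A}\otimes\bm{B}$ acts as $\bm{X}\mapsto\bm{A}\bm{X}\bm{B}$ for symmetric $\bm{B}$. Using the MDE to eliminate $\E[\bm{Q}]\otimes\bm{I}_r$,
\[
\bm{H}_\alpha(1/\alpha)=\E[\bm{Q}\otimes(\bm{C}-\bm{I}_r)]-\alpha^{-1}\bm{L}^{-1}\otimes\bm{I}_r .
\]
Take the ansatz $\bm{X}=\bm{Y}\bm{U}_\ast^\top$. Lemma~\ref{lem:perron_reduction_properties} gives $\bm{U}_\ast^\top\bm{C}=\bm{C}_\ast\bm{U}_\ast^\top$, so the kernel equation reduces to the $p_\ast\times p_\ast$ problem
\[
\E[\bm{Q}\bm{Y}\bm{N}]=\alpha^{-1}\bm{L}^{-1}\bm{Y}.
\]

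\textbf{Solving with $\bm{Y}=\bm{I}-\bm{L}$.} The elementary identity $\bm{A}(\bm{I}+\bm{B}\bm{A})^{-1}\bm{B}\bm{A}=\bm{A}-\bm{A}(\bm{I}+\bm{B}\bm{A})^{-1}$, with $\bm{A}=\bm{N}$ and $\bm{B}=\bm{I}-\bm{L}$, gives $\bm{Q}(\bm{I}-\bm{L})\bm{N}=\bm{N}-\bm{Q}$. Next, $\E[\bm{C}_\ast]=\bm{U}_\ast^\top\E[\bm{C}]\bm{U}_\ast=\bm{I}$, so $\E[\bm{N}]=\bm{0}$. Therefore
\[
\E[\bm{Q}(\bm{I}-\bm{L})\bm{N}]=-\E[\bm{Q}]=\alpha^{-1}\bigl(\bm{L}^{-1}-\bm{I}\bigr)=\alpha^{-1}\bm{L}^{-1}(\bm{I}-\bm{L}),
\]
which is exactly the reduced equation. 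Since $\bm{L}\prec\bm{I}$, $\bm{Y}=\bm{I}-\bm{L}$ is invertible. As $\bm{U}_\ast$ has orthonormal columns, $\bm{X}=(\bm{I}-\bm{L})\bm{U}_\ast^\top\neq\bm{0}$ is a kernel vector, and $\det\bm{H}_\alpha(1/\alpha)=0$.

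\textbf{Conclusion.} $\bm{H}_\alpha(1/\alpha)$ is symmetric and has $0$ as an eigenvalue, so $h_\alpha(1/\alpha)=\lambda_1(\bm{H}_\alpha(1/\alpha))\ge0$. By Theorem~\ref{thm:BBP_transition}, $h_\alpha$ is strictly decreasing on $(\lambda_+(\alpha),\infty)$, and $1/\alpha>\lambda_+(\alpha)$. Hence
\[
\Delta_{\bm{T}_\ast}(\alpha)=\lim_{x\downarrow\lambda_+(\alpha)}h_\alpha(x)>h_\alpha(1/\alpha)\ge0 .
\]

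\textbf{Main obstacle.} The hard step was finding the right kernel vector; with $\bm{Y}=\bm{I}-\bm{L}$ it is pure algebra. The remaining care is bookkeeping: matching the paper's Kronecker/vectorization convention so that $\bm{A}\otimes\bm{B}$ really acts as $\bm{X}\mapsto\bm{A}\bm{X}\bm{B}$, and justifying that the MDE holds at the real point $1/\alpha$. The latter follows from Remark~\ref{rmk:physical_solution}, since $1/\alpha>\lambda_+(\alpha)$.
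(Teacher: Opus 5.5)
Your proof is correct and follows essentially the same route as the paper. Both use the kernel vector $(\bm{I}_{p_\ast}-\bm{L})\bm{U}_\ast^\top$, verified via the MDE at $1/\alpha$, $\E[\bm{C}_\ast]=\bm{I}_{p_\ast}$ and the intertwining relation of Lemma~\ref{lem:perron_reduction_properties}, and then the same strict-monotonicity argument for $h_\alpha$. The only difference is bookkeeping: you subtract the MDE first and use the identity $\bm{Q}(\bm{I}-\bm{L})\bm{N}=\bm{N}-\bm{Q}$, whereas the paper computes $\E[\bm{Q}_\ast(\bm{I}-\bm{L})\bm{C}_\ast]=-\E[\bm{Q}_\ast\bm{L}]$ directly and combines it with the MDE right-multiplied by $\bm{L}$.
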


\begin{proof}
Set \(\bm{L} \coloneqq\bm{L} (1/\alpha)\). By Lemma~\ref{lem:qualitative_behavior_L}, \(\bm{0}_{p_\ast \times p_\ast}\prec\bm{L}\prec\bm{I}_{p_\ast}\). To ease notations, we write $\bm{T}_\ast \equiv \bm{T}_\ast(\bm{y})$ and $\bm{C}_\ast \equiv \bm{C}_\ast(\bm{y})$. Moreover, set \(\bm{Q}_\ast \coloneqq \bm{T}_\ast \left ( \bm{I}_{p_\ast} - \bm{L} \bm{T}_\ast \right)^{-1}\). Evaluating the rescaled MDE at \(z=\alpha^{-1}\) and right-multiplying by \(\bm{L}\) gives
\begin{align} \label{eq:MDE_atoneoveralpha}
\frac{1}{\alpha} (\bm{I}_{p_\ast} -\bm{L}) =- \E[ \bm{Q}_\ast] \bm{L}  .
\end{align}
On the other hand, using $\bm{T}_\ast \bm{C}_\ast = \bm{C}_\ast - \bm{I}_{p_\ast}$ and $\E[ \bm{T}_\ast \bm{C}_\ast] = \E[ \bm{C}_\ast - \bm{I}_{p_\ast}] = \bm{0}_{p_\ast \times p_\ast}$, we obtain
\begin{equation} \label{eq:identity_operator_B}
\begin{split}
    \E[ \bm{Q}_\ast (\bm{I}_{p_\ast} - \bm{L}) \bm{C}_\ast ] 
    &=
    \E[ \bm{Q}_\ast (\bm{I}_{p_{\ast}} - \bm{L}) ( \bm{T}_\ast \bm{C}_\ast +\bm{I}_{p_\ast}) ] \\
    &= 
     \E[ \bm{Q}_\ast ( \bm{T}_\ast \bm{C}_\ast  + \bm{I}_{p_\ast}- \bm{L}\bm{T}_\ast \bm{C}_\ast - \bm{L})  ] \\
    &= 
    \E[ \bm{Q}_\ast( \bm{I}_{p_\ast}  - \bm{L} \bm{T}_\ast) \bm{C}_\ast   -  \bm{Q}_\ast \bm{L}  ] \\
    &=
    \E[ \bm{T}_\ast \bm{C}_\ast] - \E[\bm{Q}_\ast \bm{L}  ] \\
    &= - \E[\bm{Q}_\ast \bm{L}  ].
\end{split}
\end{equation}
Combining~\eqref{eq:MDE_atoneoveralpha} and~\eqref{eq:identity_operator_B} yields
\begin{align*}
\E[ \bm{Q}_\ast (\bm{I}_{p_\ast} - \bm{L}) \bm{C}_\ast ]  =     \frac{1}{\alpha} (\bm{I}_{p_\ast} -\bm{L}).
\end{align*}

Right-multiplying by $\bm{U}_\ast^\top$ and using the intertwining relation $\bm{C}_\ast \bm{U}_\ast^\top = \bm{U}_\ast^\top \bm{C}$, we get
\begin{align} \label{eq:eigenmatrix_of_H_matform}
\E[  \bm{Q}_\ast (\bm{I}_{p_{\ast}} - \bm{L})\bm{U}_\ast^\top \bm{C} ] -  \frac{1}{\alpha} (\bm{I}_{p_{\ast}} -\bm{L}) \bm{U}_\ast^\top = \bm{0}_{p_\ast \times r} .
\end{align}
Since \(\bm{I}_{p_\ast} - \bm{L} \succ \bm{0}_{p_\ast \times p_\ast}\) and \(\bm{U}_\ast\) has orthonormal columns, \((\bm{I}_{p_{\ast}} - \bm{L})\bm{U}_\ast^\top \neq \bm{0}_{p_\ast \times r}\). Transposing and applying the standard column-vectorization identity \(\operatorname{vec} (\bm{AYB}) =(\bm{B}^\top \otimes \bm{A}) \operatorname{vec}(\bm{Y})\), we get
\begin{align} \label{eq:eigenmatrix_of_H_vecform}
\left(\E[  \bm{Q}_\ast \otimes \bm{C} ] -  \frac{1}{\alpha} \bm{I}_{p_\ast \bm{r}} \right) \bm{w} = \bm{0},
\end{align}
where
\begin{align}
\bm{w} \coloneqq  \mathrm{vec}( \left ( (\bm{I}_{p_{\ast}} - \bm{L})\bm{U}_\ast^\top \right)^\top) \neq \bm{0}_{p_\ast r},
\end{align}
which follows by Part (2b) of Lemma~\ref{lem:qualitative_behavior_L}. By definition of $ \bm{H}_\alpha (\theta) $, 
\[
\bm{H}_\alpha (1/\alpha) = \E [\bm{Q}_\ast \otimes \bm{C}] - \frac{1}{\alpha} \bm{I}_{p_\ast r},
\]
hence $\det \bm{H}_\alpha (1/\alpha) = 0$.

It remains to translate this singularity into the supercriticality criterion. Since \(\lambda_+(\alpha) < 1/\alpha\), the function \(h_\alpha(x) = \lambda_1 (\bm{H}_\alpha (x))\) is well-defined at \(x = 1/\alpha\). The singularity of \(\bm{H}_\alpha (1/\alpha) \) implies that \(h_\alpha(1/\alpha) \ge 0\). Moreover, the strict Loewner monotonicty proved in Theorem~\ref{thm:BBP_transition} gives, for \(\lambda_+ (\alpha) < x < 1/\alpha\), 
\[
h_\alpha (x) \ge h_\alpha (1/\alpha) + \left ( \frac{1}{\alpha}-x\right )>0.
\]
Hence \(\Delta_{\bm{T}_\ast} (\alpha) = \sup_{x > \lambda_+(\alpha)} h_\alpha(x) >0\).
\end{proof} 

We can now conclude the proof of Lemma~\ref{lem:optimality_Tast}. 

\begin{proof}[Proof of Lemma~\ref{lem:optimality_Tast}]
By Lemma~\ref{lem:upperbound_samplecomplexity},
\[
\alpha_{\mathrm c,\min}(\bm{T}_\ast) \ge \alpha_{\mathrm c}^\ast.
\]
Conversely, Lemma~\ref{lem:one_over_alpha_outlier} shows that \(\Delta_{\bm{T}_\ast} (\alpha)>0\) for every \(\alpha > \alpha_{\mathrm{c}}^\ast\). Thus
\[
(\alpha_{\mathrm{c}}^\ast, \infty) \subseteq \{ \alpha > 0 \colon \Delta_{\bm{T}_\ast} (\alpha)>0 \},
\]
and therefore
\[
\alpha_{\mathrm c,\max}(\bm{T}_\ast) \le \alpha_{\mathrm c}^\ast.
\]
Since, by definition, \(\alpha_{\mathrm c,\min}(\bm{T}_\ast) \le \alpha_{\mathrm c,\max}(\bm{T}_\ast)\), we conclude that
\[
\alpha_{\mathrm c,\min}(\bm{T}_\ast) = \alpha_{\mathrm c,\max}(\bm{T}_\ast) =\alpha_{\mathrm c}^\ast.
\]
\end{proof}

\subsection{Fixed-point analysis of the rescaled MDE} \label{sec:proof_of_behavior_L}
Here we prove Lemma~\ref{lem:qualitative_behavior_L}. We start by establishing the properties of $\widetilde{\bm{\Psi}}_{\alpha,\lambda}$ needed afterwards. 

\begin{lem} \label{lem:properties_of_Phi}
Let $\lambda>\frac{1}{\alpha}$. Then, for every $\bm{0}_{p_\ast \times p_\ast}  \preceq \bm{L}_1 \preceq \bm{L}_2 \prec \bm{I}_{p_\ast}$, it holds that 
\[
\bm{0}_{p_\ast \times p_\ast} \prec \widetilde{\bm{\Psi}}_{\alpha,\lambda}(\bm{L}_1) \preceq  \widetilde{\bm{\Psi}}_{\alpha,\lambda}(\bm{L}_2) \prec \bm{I}_{p_{\ast}} .
\]
\end{lem}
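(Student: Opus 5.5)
The argument rests on one structural fact: $\bm{T}_\ast(\bm{y}) = \bm{I}_{p_\ast} - \bm{C}_\ast(\bm{y})^{-1}$ satisfies $\bm{T}_\ast(\bm{y}) \prec \bm{I}_{p_\ast}$ almost surely, because $\bm{C}_\ast \succeq c\bm{I}$. Hence, whenever $\bm{0}\preceq\bm{L}\prec\bm{I}_{p_\ast}$, the matrix $\bm{I}_{p_\ast}-\bm{L}^{1/2}\bm{T}_\ast\bm{L}^{1/2}$ is positive definite, uniformly in $\bm{y}$. Indeed,
\[
\bm{L}^{1/2}\bm{T}_\ast\bm{L}^{1/2} \preceq \bm{L}^{1/2}\bm{L}^{1/2} = \bm{L} \prec \bm{I}.
\]
To obtain a bound uniform in $\bm{y}$, I would use $\bm{T}_\ast \preceq (1-1/C)\bm{I}$, with $C$ a uniform upper bound on $\|\bm{C}_\ast\|$. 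If such an upper bound is not available, I would instead use the fact that $\bm{I}-\bm{L}$ is strictly positive and $\bm{T}_\ast\preceq \bm{I}$.

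I would first rewrite the inner term in a symmetric, monotone form. Write $\bm{Q}(\bm{L},\bm{y}) \coloneqq \bm{T}_\ast(\bm{I}-\bm{L}\bm{T}_\ast)^{-1}$, which is symmetric by Lemma~\ref{lem:push_through}. The natural tool is the derivative
\[
D_{\bm{L}}\bm{Q}[\bm{H}] = \bm{Q}\bm{H}\bm{Q},
\]
which follows from the resolvent identity. This derivative is positive for $\bm{H}\succeq 0$. Integrating along the segment $\bm{L}_1+t(\bm{L}_2-\bm{L}_1)$, which stays in the admissible region $\bm{0}\preceq\cdot\prec\bm{I}$, gives $\bm{Q}(\bm{L}_1,\bm{y})\preceq\bm{Q}(\bm{L}_2,\bm{y})$. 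Taking expectations gives $\E\bm{Q}(\bm{L}_1)\preceq\E\bm{Q}(\bm{L}_2)$. Therefore
\[
\bm{N}(\bm{L}) \coloneqq \lambda\alpha\bm{I} - \alpha\E\bm{Q}(\bm{L})
\]
is Loewner-decreasing in $\bm{L}$. Since matrix inversion is order-reversing on positive definite matrices, monotonicity of $\widetilde{\bm{\Psi}}_{\alpha,\lambda} = \bm{N}^{-1}$ follows once $\bm{N}(\bm{L}_2)\succ0$.

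The two bounds, positivity and $\prec\bm{I}$, reduce to showing $\bm{N}(\bm{L})\succ\bm{I}$ for every $\bm{0}\preceq\bm{L}\prec\bm{I}$. Positivity of $\bm{N}$ then follows immediately, and inverting $\bm{N}\succ\bm{I}$ yields $\widetilde{\bm{\Psi}} \prec \bm{I}$. The key identity comes from $\E\bm{T}_\ast = \bm{I}-\E\bm{C}_\ast^{-1}$ together with
\[
\bm{Q} = \bm{T}_\ast + \bm{T}_\ast\bm{L}\bm{Q}.
\]
The cleanest route is to bound $\bm{Q}$ pointwise by its value at $\bm{L}=\bm{I}$, as a monotone limit. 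At $\bm{L}=\bm{I}$,
\[
\bm{T}_\ast(\bm{I}-\bm{T}_\ast)^{-1} = (\bm{I}-\bm{C}_\ast^{-1})\bm{C}_\ast = \bm{C}_\ast - \bm{I}.
\]
Monotonicity then gives $\bm{Q}(\bm{L},\bm{y}) \preceq \bm{C}_\ast(\bm{y})-\bm{I}$ for $\bm{L}\preceq\bm{I}$. Because $\bm{C}_\ast-\bm{I}$ may be unbounded, I would justify this by monotone convergence along $t\bm{I}$, $t\uparrow 1$, or directly via the formula above. Taking expectations gives $\E\bm{Q}(\bm{L})\preceq\E\bm{C}_\ast-\bm{I}=\bm{0}$, where $\E\bm{C}_\ast = \bm{U}_\ast^\top\bm{I}_r\bm{U}_\ast = \bm{I}$. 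Hence
\[
\bm{N}(\bm{L})\succeq\lambda\alpha\bm{I}\succ\bm{I},
\]
since $\lambda>1/\alpha$. This is the step where $\lambda > 1/\alpha$ is used.

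The main obstacle is the strictness at the upper end together with the passage to $\bm{L}=\bm{I}$. The comparison $\bm{Q}(\bm{L})\preceq\bm{C}_\ast-\bm{I}$ must be made rigorous for $\bm{L}\prec\bm{I}$ without integrability issues; since $\bm{Q}(\bm{L})$ is bounded for $\bm{L}\prec\bm{I}$, only the bound is needed, not finiteness of the limit. I would argue pointwise in $\bm{y}$, using the explicit invertibility of $\bm{I}-t\bm{L}^{1/2}\bm{T}_\ast\bm{L}^{1/2}$ for $t\in[0,1]$. The remaining points, that the lower bound is $\succ0$ (which holds since $\bm{N}$ is bounded, so $\bm{N}^{-1}\succ 0$) and that $\bm{N}$ is bounded, are routine.
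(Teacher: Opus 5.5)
Your proof is correct. The monotonicity half is essentially the paper's argument in different packaging. The paper differentiates $\widetilde{\bm{\Psi}}_{\alpha,\lambda}$ directly and obtains $\alpha\,\widetilde{\bm{\Psi}}_{\alpha,\lambda}(\bm{L})\,\E[\bm{Q}\bm{H}\bm{Q}]\,\widetilde{\bm{\Psi}}_{\alpha,\lambda}(\bm{L})$, in your notation $\bm{Q}$. You instead show that $\bm{L}\mapsto\bm{Q}(\bm{L},\bm{y})$ is monotone and then invert. Both rest on $D_{\bm{L}}\bm{Q}[\bm{H}]=\bm{Q}\bm{H}\bm{Q}$.

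The bounds are where you genuinely differ. The paper introduces $\bm{K}(\bm{y})\coloneqq\bm{L}+(\bm{I}_{p_\ast}-\bm{L})^{1/2}\bm{C}_\ast(\bm{y})(\bm{I}_{p_\ast}-\bm{L})^{1/2}$ (its $\bm{Q}(\bm{y})$), which has mean $\bm{I}_{p_\ast}$. It writes $\bm{Q}(\bm{L},\bm{y})=(\bm{I}_{p_\ast}-\bm{L})^{-1/2}(\bm{I}_{p_\ast}-\bm{K}(\bm{y})^{-1})(\bm{I}_{p_\ast}-\bm{L})^{-1/2}$; the paper's display has exponent $+1/2$ on the outer factors, a typo that does not affect the sign. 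It then concludes $\E\bm{Q}(\bm{L},\bm{y})\preceq\bm{0}$ from operator Jensen, $\E[\bm{K}^{-1}]\succeq(\E\bm{K})^{-1}$.

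You instead push the same monotonicity to the endpoint $\bm{L}=\bm{I}_{p_\ast}$. There $\bm{I}_{p_\ast}-\bm{T}_\ast=\bm{C}_\ast^{-1}$ is still invertible pointwise, and the integrand becomes the mean-zero matrix $\bm{C}_\ast-\bm{I}_{p_\ast}$. In the paper's variables, your pointwise bound $\bm{Q}(\bm{L},\bm{y})\preceq\bm{C}_\ast(\bm{y})-\bm{I}_{p_\ast}$ is exactly $\bm{K}+\bm{K}^{-1}\succeq 2\bm{I}_{p_\ast}$, i.e.\ $(\bm{K}^{1/2}-\bm{K}^{-1/2})^2\succeq\bm{0}$. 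That is the tangent-line inequality for $\bm{X}\mapsto\bm{X}^{-1}$ at $\bm{I}_{p_\ast}$, so you are giving a supporting-hyperplane proof of the paper's Jensen step.

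What your route buys:
\begin{itemize}
\item Economy: one derivative computation gives both claims, with no operator convexity.
\item Reach: the sandwich $\bm{T}_\ast\preceq\bm{Q}(\bm{L},\bm{y})\preceq\bm{C}_\ast-\bm{I}_{p_\ast}$ gives integrability on the closed interval $\bm{0}\preceq\bm{L}\preceq\bm{I}_{p_\ast}$. The $\lambda=1/\alpha$ version, Lemma~\ref{lem:properties_of_Phi_atoneoveralpha}, then follows without the approximation step.
\end{itemize}
The paper's route, in turn, gives an explicit closed form for the integrand.

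Two small points. First, a uniform bound on $\|\bm{C}_\ast\|_{\mathrm{op}}$ is generally unavailable, but your fallback $\bm{I}_{p_\ast}-\bm{L}^{1/2}\bm{T}_\ast\bm{L}^{1/2}\succeq\bm{I}_{p_\ast}-\bm{L}$ is all you need. Second, taking expectations through the pointwise bound only uses $\E\|\bm{C}_\ast(\bm{y})\|_{\mathrm{op}}\le\E\Tr\bm{C}_\ast(\bm{y})=p_\ast$. If you take the limit along $t\bm{I}_{p_\ast}$, restrict to $t\ge\lambda_1(\bm{L})$ so that $\bm{L}\preceq t\bm{I}_{p_\ast}$.
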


\begin{proof}
Fix $\bm{0}_{p_\ast \times p_\ast} \preceq \bm{L} \prec \bm{I}_{p_\ast}$, and set
\begin{align}  \label{eq:def_Q_of_Landy}
\bm{Q}(\bm{y})  \coloneqq \bm{I}_{p_\ast} + ( \bm{I}_{p_\ast} - \bm{L})^{1/2} (\bm{C}_\ast(\bm{y}) -\bm{I}_{p_\ast})  ( \bm{I}_{p_\ast} - \bm{L})^{1/2}.
\end{align}
Equivalently,
\[
\bm{Q}(\bm{y})  =  \bm{L} +  ( \bm{I}_{p_\ast} - \bm{L})^{1/2} \bm{C}_\ast(\bm{y})  ( \bm{I}_{p_\ast} - \bm{L})^{1/2} \succ \bm{0}_{p_\ast \times p_\ast},
\]
since \(\bm{I}_{p_\ast} - \bm{L} \succ \bm{0}_{p_\ast \times p_\ast}\). Using \(\bm{T}_\ast = \bm{I}_{p_\ast} - \bm{C}_\ast^{-1}\), a direct computation gives
\begin{align*}
\bm{T}_\ast(\bm{y})(\bm{I}_{p_\ast} - \bm{L}\bm{T}_\ast(\bm{y}))^{-1} = ( \bm{I}_{p_\ast} - \bm{L})^{1/2} ( \bm{I}_{p_\ast}  - \bm{Q}(\bm{y})^{-1})  ( \bm{I}_{p_\ast} - \bm{L})^{1/2}.
\end{align*}
Moreover, 
\[
\E_{\bm{y}} [\bm{Q}(\bm{y})] = \bm{L} +  ( \bm{I}_{p_\ast} - \bm{L}) = \bm{I}_{p_\ast}.
\]
Since $\bm{L} \mapsto \bm{L}^{-1}$ is convex, Jensen's inequality yields 
\[
\E_{\bm{y}}[\bm{Q}(\bm{y})^{-1}] \succeq \E_{\bm{y}}[\bm{Q}(\bm{y})]^{-1} = \bm{I}_{p_\ast}.
\]
Thus, taking expectations, we obtain
\begin{align*}
\E_{\bm{y}}[\bm{T}_\ast(\bm{y})(\bm{I}_{p_\ast} - \bm{L}\bm{T}_\ast(\bm{y}))^{-1}] \preceq \bm{0}_{p_\ast \times p_\ast}.
\end{align*}
Hence, according to~\eqref{eq:def_Psi},
\begin{align}
\label{eq:last_equation_bound_Phi}
\bm{0}_{p_\ast \times p_\ast} \prec \widetilde{\bm{\Psi}}_{\alpha,\lambda}(\bm{L}) \preceq \frac{1}{\lambda \alpha} \bm{I}_{p_\ast} \prec \bm{I}_{p_\ast}.
\end{align}

It remains to prove monotonicity. For $\bm{H} \in \sym_{p_\ast}(\R)$, we have 
\begin{align} \label{eq:derivative_of_Phi_Lambda}
D\widetilde{\bm{\Psi}}_{\alpha,\lambda}(\bm{L})[\bm{H}]
=
\alpha \, \widetilde{\bm{\Psi}}_{\alpha,\lambda}(\bm{L})
\mathbb E\!\left[
\bm{T}_\ast(\bm{y})(\bm{I}_{p_\ast} - \bm{L}\bm{T}_\ast(\bm{y}))^{-1} \bm{H} \bm{T}_\ast(\bm{y})(\bm{I}_{p_\ast} - \bm{L}\bm{T}_\ast(\bm{y}))^{-1}
\right]
\widetilde{\bm{\Psi}}_{\alpha,\lambda}(\bm{L}).
\end{align}
Since \( \widetilde{\bm{\Psi}}_{\alpha,\lambda}(\bm{L}) \succ \bm{0}\), the derivative in~\eqref{eq:derivative_of_Phi_Lambda} is order-preserving. Integrating the derivative along the segment joining \(\bm{L}_1\) and \(\bm{L}_2\) proves
\[
\widetilde{\bm{\Psi}}_{\alpha,\lambda}(\bm{L}_1) \preceq \widetilde{\bm{\Psi}}_{\alpha,\lambda}(\bm{L}_2).
\]
\end{proof}

Let $(\bm{L}^{0}_{\lambda,n})_n$ be defined by the recursion $\bm{L}^{0}_{\lambda,{n+1}} = \widetilde{\bm{\Psi}}_{\alpha,\lambda}(\bm{L}^{0}_{\lambda,n})$ starting at $\bm{L}^{0}_{\lambda,0} = \bm{0}_{p_{\ast} \times p_{\ast} }$. Since $\widetilde{\bm{\Psi}}_{\alpha,\lambda}$ is order-preserving and bounded by Lemma~\ref{lem:properties_of_Phi}, as $n \to \infty$ the sequence converges to a limit 
\begin{align}
\label{eq:def_Lzerolambda}
    \bm{L}^{0}_\lambda \coloneqq  \lim_{n \to \infty} \bm{L}^{0}_{\lambda,n},
\end{align}
and satisfies
\begin{align}
\label{eq:property_fixed_point}
    \bm{0}_{p_{\ast} \times p_\ast} \prec \bm{L}^{0}_\lambda \prec \bm{I}_{p_\ast},
\end{align}
and
\begin{align}
\label{eq:MDE_for_Lzerolambda}
    \bm{L}^{0}_\lambda = \widetilde{\bm{\Psi}}_{\alpha,\lambda}(\bm{L}^{0}_\lambda).
\end{align}
We show that this fixed point corresponds to the analytical continuation of $\bm{L}$ at $\lambda$. 

\begin{lem} \label{lem:continuation_eq_fixedpoint}
Let $\bm{L}^{0}_\lambda$ be given by~\eqref{eq:def_Lzerolambda}. Then \(\lambda \notin \supp(\mu_\alpha)\) and $\bm{L}({\lambda}) = \bm{L}^{0}_\lambda$. 
\end{lem}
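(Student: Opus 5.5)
The plan is to show that $\bm{L}^{0}_\lambda$, viewed through $\bm{M}=-\alpha\bm{L}^{0}_\lambda$, is a \emph{stable} regular algebraic solution of the real MDE at $\lambda$, and then to invoke Lemma~\ref{lem:physical_sol}. That lemma gives both $\lambda\notin\supp(\mu_\alpha)$ and $\bm{M}=\bm{M}^\ast_\alpha(\lambda)$, which is exactly $\bm{L}(\lambda)=\bm{L}^0_\lambda$. By~\eqref{eq:MDE_for_Lzerolambda}, $\bm{M}$ solves the real MDE at $\lambda$, and $\bm{M}$ is real symmetric.

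\emph{Regularity.} We must bound $(\bm{I}_{p_\ast}-\bm{L}^0_\lambda\bm{T}_\ast(\bm{y}))^{-1}$ uniformly in $\bm{y}$. Write $\bm{T}_\ast=\bm{I}-\bm{C}_\ast^{-1}$ and $\bm{L}=\bm{L}^0_\lambda$. Then
\[
\bm{I}-\bm{L}\bm{T}_\ast=\bigl(\bm{I}-\bm{L}+\bm{L}\bm{C}_\ast^{-1}\bigr)=\bigl((\bm{I}-\bm{L})\bm{C}_\ast+\bm{L}\bigr)\bm{C}_\ast^{-1}.
\]
Conjugating by $(\bm{I}-\bm{L})^{1/2}$ relates the first factor to $\bm{Q}(\bm{y})$ from~\eqref{eq:def_Q_of_Landy}. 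Since $\bm{Q}(\bm{y})\succeq \bm{L}\succeq \lambda_{p_\ast}(\bm{L})\bm{I}\succ0$ uniformly, and $\bm{C}_\ast\succeq c\bm{I}$ and $\|\bm{C}_\ast\|$ is bounded (because $\bm{T}_\ast$ is bounded), the inverse is uniformly bounded.

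\emph{Stability.} This is the main obstacle. We need $\rho(\widetilde{\mathcal{K}}_{\bm{L}})<1$, and here I would use the monotone construction. The sequence $\bm{L}^0_{\lambda,n}$ increases from $\bm{0}$, so $\bm{L}^0_\lambda$ is the minimal fixed point in the order interval $[\bm{0},\bm{I})$. The first tool is the $\lambda$-derivative. Consider $\lambda'$ near $\lambda$; for $\lambda'>\lambda$ the map $\widetilde{\bm{\Psi}}_{\alpha,\lambda'}$ is pointwise smaller. I would compare with a Perron eigenmatrix. Because $\widetilde{\mathcal{K}}_{\bm{L}}=D\widetilde{\bm{\Psi}}_{\alpha,\lambda}(\bm{L})$ is positive (see~\eqref{eq:derivative_of_Phi_Lambda}), Krein--Rutman gives $\bm{H}\succeq0$, $\bm{H}\ne0$, with $\widetilde{\mathcal{K}}_{\bm{L}}[\bm{H}]=\rho\bm{H}$. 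Suppose $\rho\ge1$.

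The key observation is that $D_\lambda\widetilde{\bm{\Psi}}_{\alpha,\lambda}(\bm{L})=-\alpha\widetilde{\bm{\Psi}}^2=-\alpha\bm{L}^2\prec0$. Consider the strictly decreasing family $\lambda\mapsto$ fixed points; the aim is to contradict minimality. Take $\lambda_0>\lambda$ large, say $\lambda_0>$ the bound from Lemma~\ref{lem:analytic_continuation}. There the contraction argument gives $\rho<1$ and agreement with the physical branch. Now continue the minimal fixed point $\lambda'\mapsto\bm{L}^0_{\lambda'}$ downward from $\lambda_0$ to $\lambda$. This family is monotone: it is nondecreasing as $\lambda'$ decreases, since the iterates are pointwise ordered. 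By the implicit function theorem it is analytic wherever $\rho<1$.

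Let $\lambda_1=\inf\{\lambda'>1/\alpha\colon\rho<1 \text{ on } [\lambda',\lambda_0]\}$. On $(\lambda_1,\lambda_0]$ the minimal fixed point coincides with the physical branch, by Lemma~\ref{lem:physical_sol} and connectedness. If $\lambda_1\ge\lambda$ with $\rho(\lambda_1)=1$, I would argue as follows. Pair the derivative identity $\mathcal{L}[\bm{L}']=-\alpha\bm{L}^2$ (from differentiating the fixed-point equation) with the left Perron eigenvector, a positive functional $\phi$ with $\phi\circ\widetilde{\mathcal{K}}=\rho\phi$. This gives
\[
(1-\rho)\,\phi(\bm{L}')=-\alpha\,\phi(\bm{L}^2)<0,
\]
so $\phi(\bm{L}')\to-\infty$ as $\rho\uparrow1$, i.e.\ $\bm{L}'$ blows up. But $\bm{L}^0_{\lambda'}$ stays monotone and bounded in $[\bm{0},\bm{I}]$, and the Stieltjes representation gives $\bm{L}'=-\alpha^{-1}\bm{M}'\preceq0$ with norm controlled by $\|\bm{M}\|$ and the distance to the support. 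I expect this blow-up versus boundedness argument to be the delicate point. If it fails, I would fall back on a subhomogeneity argument: $\widetilde{\bm{\Psi}}$ is concave-like in the order, as in~\cite{mergny24a}, so a fixed point lying strictly inside the cone forces $\rho<1$. Once $\rho<1$ is established at $\lambda$, Lemma~\ref{lem:physical_sol} concludes.
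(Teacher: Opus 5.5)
Your reduction is the right one: show that $\bm{M}=-\alpha\bm{L}^0_\lambda$ is a regular algebraic solution with $\rho(\widetilde{\mathcal{K}}_{\bm{L}^0_\lambda})<1$, then invoke Lemma~\ref{lem:physical_sol}. The genuine gap is the stability step.

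Your continuation argument ends with $(1-\rho)\phi(\bm{L}')=-\alpha\phi(\bm{L}^2)$. This only says that $\bm{L}'$ blows up as $\rho\uparrow1$, which contradicts nothing: a bounded monotone family in $[\bm{0},\bm{I}]$ can have an unbounded derivative. The Stieltjes bound on $\bm{M}'$ controls $\bm{L}'$ only at positive distance from $\supp(\mu_\alpha)$, and that distance is exactly what you are trying to establish.

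Concretely, suppose $\lambda_1>\lambda_+(\alpha)$. Then the forward direction of Lemma~\ref{lem:physical_sol}, together with continuity of the minimal branch from above, already gives $\rho<1$ at $\lambda_1$. So the only case your argument must exclude is $\lambda_1=\lambda_+(\alpha)>1/\alpha$. At a spectral edge, $\rho\uparrow1$ with $\|\bm{M}'\|\to\infty$ and $\bm{M}$ bounded is precisely the generic square-root behaviour. The argument is therefore circular: it reduces the lemma to $\lambda_+(\alpha)\le1/\alpha$, which is its actual content.

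A symptom is that you never use $\lambda>1/\alpha$, $\E[\bm{C}_\ast]=\bm{I}$, or $\bm{T}_\ast=\bm{I}-\bm{C}_\ast^{-1}$, beyond the existence of the minimal fixed point. Since the real MDE can have non-physical real solutions inside the support, such soft reasoning cannot locate the edge. The subhomogeneity fallback is not substantiated either, since you prove no concavity of $\widetilde{\bm{\Psi}}_{\alpha,\lambda}$.

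The missing idea is an explicit strict sub-eigenmatrix for the positive operator $\widetilde{\mathcal{K}}=\widetilde{\mathcal{K}}_{\bm{L}}$, with $\bm{L}=\bm{L}^0_\lambda$. Take $\bm{R}=\bm{L}(\bm{I}-\bm{L})\succ\bm{0}$. Write the fixed-point equation as $\bm{L}-\alpha\lambda\bm{L}^2=-\alpha\bm{L}\E[\bm{Q}^0_\ast]\bm{L}$, where $\bm{Q}^0_\ast=\bm{T}_\ast(\bm{I}-\bm{L}\bm{T}_\ast)^{-1}$. Use the pointwise identity $-\bm{Q}^0_\ast-\bm{Q}^0_\ast\bm{L}(\bm{I}-\bm{L})\bm{Q}^0_\ast=\bm{Q}^0_\ast(\bm{I}-\bm{L})\bm{C}_\ast(\bm{I}-\bm{L})\bm{Q}^0_\ast+\bm{I}-\bm{C}_\ast$, which follows from $\bm{T}_\ast\bm{C}_\ast=\bm{C}_\ast-\bm{I}$, together with $\E[\bm{C}_\ast]=\bm{I}$. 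One gets
\[
\bm{R}-\widetilde{\mathcal{K}}[\bm{R}]=(\alpha\lambda-1)\bm{L}^2+\alpha\bm{L}\,\E\bigl[\bm{Q}^0_\ast(\bm{I}-\bm{L})\bm{C}_\ast(\bm{I}-\bm{L})\bm{Q}^0_\ast\bigr]\bm{L}\succ\bm{0},
\]
and this is exactly where $\lambda>1/\alpha$ enters. Hence $\widetilde{\mathcal{K}}[\bm{R}]\preceq q\bm{R}$ for some $q<1$. Since $\widetilde{\mathcal{K}}$ preserves the positive semidefinite cone, the order-unit-norm argument from the proof of Lemma~\ref{lem:physical_sol} gives $\rho(\widetilde{\mathcal{K}})\le q<1$, with no continuation in $\lambda$ needed.

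There is also a slip in your regularity step. Boundedness of $\bm{T}_\ast=\bm{I}-\bm{C}_\ast^{-1}$ does not bound $\bm{C}_\ast$, since Assumption~\ref{hyp:second_order_channel} only gives $\bm{C}_\ast\succeq c\bm{I}$. It is easily repaired. Use $(\bm{I}-\bm{L}\bm{T}_\ast)^{-1}=\bm{I}+\bm{L}\bm{Q}^0_\ast$ and $\bm{Q}^0_\ast=(\bm{I}-\bm{L})^{-1/2}(\bm{I}-\bm{Q}(\bm{y})^{-1})(\bm{I}-\bm{L})^{-1/2}$. The latter is bounded because $\bm{Q}(\bm{y})\succeq\bm{L}\succ\bm{0}$.
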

\begin{proof}
By Lemma~\ref{lem:physical_sol}, it is enough to prove that $\rho \left ( \widetilde{\mathcal{K}}_{\bm{L}^{0}_\lambda}\right)<1$ with $\widetilde{\mathcal{K}}$ given in~\eqref{eq:definition_Tilde_mathcalK}. For brevity, write
\begin{align*}
\bm{Q}_\ast^0 \coloneqq  \bm{T}_{\ast}(\bm{I}_{p_\ast}-\bm{L}^{0}_\lambda \bm{T}_{\ast})^{-1},
\end{align*}
and
\begin{align*}
\bm{R}^0 \coloneqq  \bm{L}^{0}_\lambda(\bm{I}_{p_\ast} - \bm{L}^{0}_\lambda) \succ \bm{0}_{p_{\ast} \times p_{\ast} }.
\end{align*}
Since $\bm{L}^{0}_\lambda$ is positive, we can rewrite the fixed point equation~\eqref{eq:MDE_for_Lzerolambda} as 
\begin{align} \label{eq:MDE_for_Lzerolambda_2}
\bm{L}^{0}_\lambda - \alpha \lambda (\bm{L}^{0}_\lambda)^2 = - \alpha \bm{L}^{0}_\lambda \E[\bm{Q}_\ast^0 ] \bm{L}^{0}_\lambda.
\end{align}
Using~\eqref{eq:derivative_of_Phi_Lambda}, we have
\begin{align*}
    \bm{R}^0 - \widetilde{\mathcal{K}}_{\bm{L}^{0}_\lambda} [\bm{R}^0] 
    &=
    \bm{L}^{0}_\lambda(\bm{I} - \bm{L}^{0}_\lambda)   -\alpha \bm{L}^{0}_\lambda \mathbb{E}[\bm{Q}_\ast^0 \bm{L}^{0}_\lambda(\bm{I}_{p_\ast} - \bm{L}^{0}_\lambda) \bm{Q}_\ast^0] \bm{L}^{0}_\lambda, 
\end{align*}
and using~\eqref{eq:MDE_for_Lzerolambda_2}, it further simplifies into
\begin{align*}
     \bm{R}^0 - \widetilde{\mathcal{K}}_{\bm{L}^{0}_\lambda} [\bm{R}^0]
    &=
    (\alpha \lambda -1) \bm{L}^{0}_\lambda + \alpha \bm{L}^{0}_\lambda \E [ \bm{Q}_\ast^0(\bm{I}_{p_\ast}- \bm{L}^{0}_\lambda) \bm{C}_{\ast} (\bm{I}_{p_\ast}- \bm{L}^{0}_\lambda) \bm{Q}_\ast^0 ] \bm{L}^{0}_\lambda.
\end{align*}
The first term is positive definite because \(\lambda > 1/\alpha\), whereas the second term is positive semidefinite. Thus, we have  
\begin{align*}
\bm{R}^0 - \widetilde{\mathcal{K}}_{\bm{L}^{0}_\lambda} [\bm{R}^0]  \succ \bm{0}_{p_\ast \times p_\ast }.
\end{align*}
Set
\[
q \coloneqq \lambda_1 \left ( (\bm{R}^0 )^{-1/2} \widetilde{\mathcal{K}}_{\bm{L}^{0}_\lambda} [\bm{R}^0] (\bm{R}^0 )^{-1/2} \right ) \in [0,1).
\]
Then \(\widetilde{\mathcal{K}}_{\bm{L}^{0}_\lambda} [\bm{R}^0] \preceq q \bm{R}^0\). Since \(\widetilde{\mathcal{K}}_{\bm{L}^{0}_\lambda}\) preserves the positive semidefinite cone, its operator norm in the order-unit norm 
\[
\| \bm{H}\|_{\bm{R}^0} \coloneqq \inf \left \{ c > 0 \colon - c \bm{R}^0 \preceq \bm{H} \preceq c \bm{R}^0 \right \}
\]
is at most \(q\), Therefore
\[
\rho \left (\widetilde{\mathcal{K}}_{\bm{L}^{0}_\lambda} \right ) \le q < 1.
\]
Lemma~\ref{lem:physical_sol} now identifies \(\bm{L}_\lambda^0\) with the physical real continuation and, in particular, gives \(\lambda \notin \supp (\mu_\alpha)\).
\end{proof}

Combining Lemma~\ref{lem:continuation_eq_fixedpoint} with~\eqref{eq:property_fixed_point} concludes the proof of Part (1).\\

At $z=1/\alpha$, the rescaled MDE is given by~\eqref{eq:MDE_atoneoveralpha} from which one can immediately check that $\bm{L}^{0} = \bm{I}_{p_\ast}$ is always a solution of the real MDE, that is $\bm{I}_{p_\ast} = \widetilde{\bm{\Psi}}_{\alpha,1/\alpha}(\bm{I}_{p_\ast})$ and thus one needs to understand if this special point corresponds to the proper analytical continuation of $\bm{L}(.)$. To do so, remark that  the derivative of $\widetilde{\bm{\Psi}}_{\alpha,\lambda}$ is given by~\eqref{eq:derivative_of_Phi_Lambda}, and since $\widetilde{\bm{\Psi}}_{\alpha,1/\alpha}(\bm{I}_{p_\ast}) = \bm{I}_{p_\ast}$ and $\bm{T}_\ast ( \bm{I}_{p_\ast} - \bm{T}_\ast)^{-1} = \bm{C}_\ast -  \bm{I}_{p_\ast} $, we get  
\begin{align}
\label{eq:D_Phi_1overalpha_Identity}
    D\widetilde{\bm{\Psi}}_{\alpha,1/\alpha}( \bm{I}_{p_\ast})[\bm{H}]
=
\alpha 
\mathbb E\!\left[
(\bm{C}_\ast(\bm{y}) -  \bm{I}_{p_\ast}) \bm{H} (\bm{C}_\ast(\bm{y}) -  \bm{I}_{p_\ast}) \right] = \alpha \mathcal{A}_\ast [\bm{H}].
\end{align}
and thus the linear operator $\widetilde{\mathcal{K}}_{\bm{I}_r} : \bm{H} \mapsto   D\widetilde{\bm{\Psi}}_{\alpha,1/\alpha}( \bm{I}_r)[\bm{H}]$ has operator norm
\begin{align}
    \| \widetilde{\mathcal{K}}_{\bm{I}_r}\|_{\mathrm{op}}
    &=
    \alpha \| \E [ (\bm{C}_\ast(\bm{y}) -  \bm{I}_{p_\ast})^{\otimes 2} ]\|_{\mathrm{op}} , \\
    &= \frac{\alpha}{\alpha_{\mathrm{c}}^\ast} < 1 \quad \mbox{for } \alpha < \alpha_{\mathrm{c}}^\ast.
\end{align}
Since \(\mathcal{A}_\alpha\) is self-adjoint and \(\| \mathcal{A}_\ast\|_{\mathrm{op}} = 1/\alpha_{\mathrm{c}}^\ast\) by Lemma~\ref{lem:perron_reduction_properties},
\[
\rho \left ( \widetilde{\mathcal{K}}_{\bm{I}_{p_\ast}}\right ) = \frac{\alpha}{\alpha_{\mathrm{c}}^\ast}.
\]
If $\alpha < \alpha_{\mathrm{c}}^\ast$, this spectral radius is strictly smaller than one. Lemma~\ref{lem:physical_sol} therefore identifies the stable algebraic solution with the physical continuation: $\bm{L}(1/\alpha) = \bm{I}_{p_\ast}$ and $1/\alpha \notin \supp (\mu_\alpha)$. This proves Part (2a).\\

We now turn to Part (2b).
\begin{lem} \label{lem:properties_of_Phi_atoneoveralpha}
For every $\bm{0}_{p_\ast \times p_\ast}  \preceq \bm{L}_1 \preceq \bm{L}_2 \preceq \bm{I}_{p_\ast}$, it holds that 
\[
\bm{0}_{p_\ast \times p_\ast} \prec \widetilde{\bm{\Psi}}_{\alpha,1/\alpha}(\bm{L}_1) \preceq  \widetilde{\bm{\Psi}}_{\alpha,1/\alpha}(\bm{L}_2) \preceq \bm{I}_{p_{\ast}}.
\]
\end{lem}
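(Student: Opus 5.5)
The plan is to repeat the proof of Lemma~\ref{lem:properties_of_Phi} with $\lambda=1/\alpha$, and handle the one new feature: the endpoint $\bm{L}=\bm{I}_{p_\ast}$, where the factor $(\bm{I}_{p_\ast}-\bm{L})^{1/2}$ degenerates.

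\textbf{Well-definedness.} Fix $\bm{0}\preceq\bm{L}\preceq\bm{I}_{p_\ast}$. Since $\bm{C}_\ast(\bm{y})\succeq c\bm{I}_{p_\ast}$ by Assumption~\ref{hyp:second_order_channel}, we have $\bm{T}_\ast=\bm{I}_{p_\ast}-\bm{C}_\ast^{-1}\preceq(1-c)\bm{I}_{p_\ast}$. This does not immediately give invertibility of $\bm{I}_{p_\ast}-\bm{L}\bm{T}_\ast$. Instead we use the identity
\[
\bm{I}_{p_\ast}-\bm{L}\bm{T}_\ast=\bigl(\bm{C}_\ast-\bm{L}(\bm{C}_\ast-\bm{I}_{p_\ast})\bigr)\bm{C}_\ast^{-1}=\bigl((\bm{I}_{p_\ast}-\bm{L})\bm{C}_\ast+\bm{L}\bigr)\bm{C}_\ast^{-1}.
\]
The matrix $(\bm{I}_{p_\ast}-\bm{L})\bm{C}_\ast+\bm{L}$ is invertible. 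Indeed, it is similar to a positive definite matrix when $\bm{L}\prec\bm{I}_{p_\ast}$. In general, note that $\bm{C}_\ast^{1/2}(\dots)\bm{C}_\ast^{-1/2}$ has the form $\bm{C}_\ast^{1/2}(\bm{I}-\bm{L})\bm{C}_\ast^{1/2}+\bm{C}_\ast^{1/2}\bm{L}\bm{C}_\ast^{-1/2}$. A cleaner route is to write
\[
\bm{T}_\ast(\bm{I}_{p_\ast}-\bm{L}\bm{T}_\ast)^{-1}=(\bm{C}_\ast-\bm{I}_{p_\ast})\bigl(\bm{L}+(\bm{I}_{p_\ast}-\bm{L})\bm{C}_\ast\bigr)^{-1}.
\]
Its symmetric form (by push-through) is
\[
(\bm{I}_{p_\ast}-\bm{L})^{1/2}(\bm{I}_{p_\ast}-\bm{Q}(\bm{y})^{-1})(\bm{I}_{p_\ast}-\bm{L})^{1/2},
\]
with $\bm{Q}(\bm{y})$ as in~\eqref{eq:def_Q_of_Landy}. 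Since $\bm{Q}(\bm{y})\succeq\bm{L}+c(\bm{I}_{p_\ast}-\bm{L})\succeq\min(1,c)\bm{I}_{p_\ast}$ uniformly, the expression is well defined and uniformly bounded for all $\bm{L}$ in the closed interval. It is also continuous in $\bm{L}$ up to $\bm{L}=\bm{I}_{p_\ast}$, where it equals $\bm{C}_\ast-\bm{I}_{p_\ast}$.

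\textbf{Upper bound.} As before, $\E[\bm{Q}(\bm{y})]=\bm{I}_{p_\ast}$ and operator Jensen give $\E[\bm{Q}^{-1}]\succeq\bm{I}_{p_\ast}$. Hence
\[
\bm{S}(\bm{L})\coloneqq\E[\bm{T}_\ast(\bm{I}-\bm{L}\bm{T}_\ast)^{-1}]\preceq\bm{0},
\]
and so $\bm{I}_{p_\ast}-\alpha\bm{S}(\bm{L})\succeq\bm{I}_{p_\ast}$. By~\eqref{eq:def_Psi}, $\widetilde{\bm{\Psi}}_{\alpha,1/\alpha}(\bm{L})=(\bm{I}_{p_\ast}-\alpha\bm{S}(\bm{L}))^{-1}$, which lies between $\bm{0}$ (strictly) and $\bm{I}_{p_\ast}$. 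The lower bound is strict because $\bm{S}$ is bounded. The upper bound is now only non-strict, which is exactly the change in the statement; at $\bm{L}=\bm{I}_{p_\ast}$ equality holds, since $\E[\bm{C}_\ast-\bm{I}]=\bm{0}$.

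\textbf{Monotonicity.} The derivative formula~\eqref{eq:derivative_of_Phi_Lambda} holds on the whole closed interval, by the uniform bounds above. It is congruence by $\widetilde{\bm{\Psi}}\succ\bm{0}$ of an average of $\bm{Q}_\ast\bm{H}\bm{Q}_\ast$, where $\bm{Q}_\ast$ is symmetric. It therefore maps $\bm{H}\succeq\bm{0}$ to something $\succeq\bm{0}$. Integrate along the segment $\bm{L}_1+t(\bm{L}_2-\bm{L}_1)$, which stays in the convex set $[\bm{0},\bm{I}_{p_\ast}]$. This yields $\widetilde{\bm{\Psi}}(\bm{L}_1)\preceq\widetilde{\bm{\Psi}}(\bm{L}_2)$.

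\textbf{Main obstacle.} The main obstacle is the endpoint: showing the inverses stay uniformly bounded when $\bm{I}_{p_\ast}-\bm{L}$ is singular. The uniform lower bound $\bm{Q}(\bm{y})\succeq\min(1,c)\bm{I}_{p_\ast}$, supplied by Assumption~\ref{hyp:second_order_channel}, is what resolves it. If needed, the endpoint case can instead be obtained by continuity from $\bm{L}_2\prec\bm{I}_{p_\ast}$.
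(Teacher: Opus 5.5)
There is a genuine gap, and it sits at the endpoint, which is the only new content of this lemma. The symmetric identity you rely on,
\[
\bm{T}_\ast(\bm{I}_{p_\ast}-\bm{L}\bm{T}_\ast)^{-1}=(\bm{I}_{p_\ast}-\bm{L})^{1/2}\bigl(\bm{I}_{p_\ast}-\bm{Q}(\bm{y})^{-1}\bigr)(\bm{I}_{p_\ast}-\bm{L})^{1/2},
\]
is false. It inherits an exponent misprint from the display in the proof of Lemma~\ref{lem:properties_of_Phi}. For $p_\ast=1$, $L=1/2$, $C_\ast=2$, the left side is $2/3$ and the right side is $1/6$. Your own remark also exposes it: at $\bm{L}=\bm{I}_{p_\ast}$ the right side vanishes, while the left side equals $\bm{C}_\ast-\bm{I}_{p_\ast}$.

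The correct identity has $(\bm{I}_{p_\ast}-\bm{L})^{-1/2}$ on both sides. It therefore only makes sense for $\bm{L}\prec\bm{I}_{p_\ast}$ and degenerates exactly where you need it, so the bound $\bm{Q}\succeq\min(1,c)\bm{I}_{p_\ast}$ gives no control at the boundary. No uniform bound can hold in general anyway. Assumption~\ref{hyp:second_order_channel} bounds $\bm{C}_\ast$ only from below, so $(1-c^{-1})\bm{I}_{p_\ast}\preceq\bm{T}_\ast\prec\bm{I}_{p_\ast}$ with in general no uniform gap below $\bm{I}_{p_\ast}$. (Your bound $\bm{T}_\ast\preceq(1-c)\bm{I}_{p_\ast}$ is incorrect.) At $\bm{L}=\bm{I}_{p_\ast}$ the integrand $\bm{C}_\ast(\bm{y})-\bm{I}_{p_\ast}$ is merely integrable.

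Consequently three of your steps are unsupported: well-definedness, the Jensen step at boundary points, and the use of \eqref{eq:derivative_of_Phi_Lambda} on the closed interval ``by the uniform bounds above''. Your fallback, continuity from $\bm{L}\prec\bm{I}_{p_\ast}$, is the paper's route and is fine in principle. However, dominated convergence needs an integrable majorant uniform in $\bm{L}$, which you have not produced.

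A short repair, which also makes Jensen unnecessary, is to argue pointwise in $\bm{y}$. Since $\bm{T}_\ast\prec\bm{I}_{p_\ast}$, one has $\bm{L}^{1/2}\bm{T}_\ast\bm{L}^{1/2}\prec\bm{I}_{p_\ast}$. By Sylvester's identity, $\bm{I}_{p_\ast}-\bm{L}\bm{T}_\ast$ is then invertible for every $\bm{0}\preceq\bm{L}\preceq\bm{I}_{p_\ast}$. The matrix $\bm{S}_{\bm{y}}(\bm{L})\coloneqq\bm{T}_\ast(\bm{I}_{p_\ast}-\bm{L}\bm{T}_\ast)^{-1}$ is symmetric by Lemma~\ref{lem:push_through}. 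Its derivative is $\bm{H}\mapsto\bm{S}_{\bm{y}}(\bm{L})\bm{H}\bm{S}_{\bm{y}}(\bm{L})$, which maps positive semidefinite matrices to positive semidefinite matrices. Integrating along segments for fixed $\bm{y}$ (no differentiation under $\E$ is needed) gives, for $\bm{0}\preceq\bm{L}_1\preceq\bm{L}_2\preceq\bm{I}_{p_\ast}$,
\[
\bm{T}_\ast(\bm{y})=\bm{S}_{\bm{y}}(\bm{0})\preceq\bm{S}_{\bm{y}}(\bm{L}_1)\preceq\bm{S}_{\bm{y}}(\bm{L}_2)\preceq\bm{S}_{\bm{y}}(\bm{I}_{p_\ast})=\bm{C}_\ast(\bm{y})-\bm{I}_{p_\ast}.
\]
The outer bounds provide the majorant $\|\bm{T}_\ast(\bm{y})\|_{\mathrm{op}}+\|\bm{C}_\ast(\bm{y})\|_{\mathrm{op}}+1$. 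This is integrable because $\E\|\bm{C}_\ast\|_{\mathrm{op}}\le\E\Tr\bm{C}=r$. Taking expectations gives $\E[\bm{S}(\bm{L}_1)]\preceq\E[\bm{S}(\bm{L}_2)]\preceq\E[\bm{C}_\ast]-\bm{I}_{p_\ast}=\bm{0}$. Finally, $\widetilde{\bm{\Psi}}_{\alpha,1/\alpha}(\bm{L})=\bigl(\bm{I}_{p_\ast}-\alpha\E[\bm{S}(\bm{L})]\bigr)^{-1}$ with $\bm{I}_{p_\ast}-\alpha\E[\bm{S}(\bm{L})]\succeq\bm{I}_{p_\ast}$. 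Order reversal of matrix inversion then yields all three inequalities of the lemma at once.
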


\begin{proof}
For \(\bm{L} \prec \bm{I}_{p_\ast}\), the proof of Lemma~\ref{lem:properties_of_Phi} applies verbatim with $\lambda=1/\alpha$ and gives 
\[
\bm{0}_{p_\ast \times p_\ast} \prec \widetilde{\bm{\Psi}}_{\alpha,1/\alpha}(\bm{L}) \preceq \bm{I}_{p_{\ast}},
\]
together with order preservation on the half-open interval \(\{\bm{0} \preceq \bm{L} \prec \bm{I}\}\). It remains only to include boundary points with eigenvalue \(1\). For every \(\bm{0} \preceq \bm{L} \preceq \bm{I}\), the matrix \(\bm{I}-\bm{L} \bm{T}_\ast\) is invertible. Hence \(\widetilde{\bm{\Psi}}_{\alpha, 1/\alpha}\) is continuous on the closed order interval \([\bm{0},\bm{I}]\). Approximating \(\bm{L}_j\) by \((1-\varepsilon) \bm{L}_j \prec \bm{I}\) and letting \(\varepsilon \downarrow 0\) extends both the order-preserving property and the upper bound closed interval. Finally, \(\widetilde{\bm{\Psi}}_{\alpha, 1/\alpha} (\bm{I}_{p_\ast}) = \bm{I}_{p_\ast}\), so the claimed bounds follow. 
\end{proof}

Note that Lemma~\ref{lem:properties_of_Phi_atoneoveralpha} differs from Lemma~\ref{lem:properties_of_Phi} only by the last bound not being strict. Consider once again the recursion 
$\bm{L}^{0}_{1/\alpha,{n+1}} = \widetilde{\bm{\Psi}}_{\alpha,1/\alpha}(\bm{L}^{0}_{1/\alpha,n})$ starting at $\bm{L}^{0}_{1/\alpha,0} = \bm{0}_{p_{\ast} \times p_{\ast} }$. Since $\widetilde{\bm{\Psi}}_{\alpha,1/\alpha}$ is order-preserving and bounded by Lemma~\ref{lem:properties_of_Phi_atoneoveralpha}, this sequence converges to $\bm{L}^{0}_{1/\alpha} \coloneqq  \lim_{n\to\infty}  \bm{L}^{0}_{1/\alpha,{n}}$  which is bounded:  $\bm{0}_{p_\ast \times p_\ast} \prec \bm{L}^{0}_{1/\alpha} \preceq \bm{I}_{p_\ast}$. We show that this last bound is strict when $\alpha>\alpha_{\mathrm{c}}^\ast$ by showing that it is dominated by a matrix strictly lower than the identity.

Recall that $\bm{H}_\ast = \bm{U}_\ast\bm{D}_\ast \bm{U}_\ast^\top$ and \(\bm{D}_\ast \succ \bm{0}_{p_\ast \times p_\ast}\), and by Lemma~\ref{lem:perron_reduction_properties},
\begin{align} \label{eq:Perron_Eigenmatrix_D}
    \mathbb{E}[ (\bm{C}_\ast - \bm{I}_{p_\ast}) \bm{D}_\ast (\bm{C}_\ast - \bm{I}_{p_\ast})]
    &=
    \frac{1}{\alpha_{\mathrm{c}}^\ast} \bm{D}_\ast  .
\end{align}
Fix \(\alpha > \alpha_{\mathrm{c}}^\ast\). For $\varepsilon >0$ small enough, we have $\bm{0}_{p_\ast \times p_\ast} \prec \bm{I}_{p_\ast}- \epsilon \bm{D}_\ast \prec \bm{I}_{p_\ast}$. Taylor expansion at the fixed point \(\bm{I}_{p_\ast}\), together with~\eqref{eq:D_Phi_1overalpha_Identity}, gives
\begin{align}
    \widetilde{\bm{\Psi}}_{\alpha,1/\alpha}(\bm{I}_{p_\ast}- \varepsilon \bm{D}_\ast) 
    &=
    \widetilde{\bm{\Psi}}_{\alpha,1/\alpha}(\bm{I}_{p_\ast})  - \varepsilon D\widetilde{\bm{\Psi}}_{\alpha,1/\alpha}(\bm{I}_{p_\ast})[\bm{D}_\ast] + \bm{R}_{\varepsilon}, \\
    &=
    \bm{I}_{p_\ast} - \varepsilon \frac{\alpha}{\alpha_{\mathrm{c}}^\ast} \bm{D}_\ast + \bm{R}_{\varepsilon}
\end{align}
with $\| \bm{R}_{\varepsilon} \|_{\mathrm{op}} = o(\varepsilon)$. Thus we have 
\begin{align}
    (\bm{I}_{p_\ast}- \varepsilon \bm{D}_\ast) - \widetilde{\bm{\Psi}}_{\alpha,1/\alpha}(\bm{I}_{p_\ast}- \varepsilon \bm{D}_\ast) = \varepsilon \left (\frac{\alpha}{\alpha_{\mathrm{c}}^\ast} -1 \right ) \bm{D}_\ast - \bm{R}_{\epsilon} \succ \bm{0}_{p_\ast \times p_\ast}
\end{align}
for all small enough $\varepsilon$. Hence 
\begin{align}
\label{eq:dominated_point_recursion_Phi}
    \widetilde{\bm{\Psi}}_{\alpha,1/\alpha}(\bm{I}_{p_\ast}- \varepsilon \bm{D}_\ast ) \prec (\bm{I}_{p_\ast}- \varepsilon \bm{D}_\ast) \prec \bm{I}_{p_\ast} .
\end{align}
Since  the map $ \widetilde{\bm{\Psi}}_{\alpha,1/\alpha}$ is continuous, order-preserving, and satisfies the bound~\eqref{eq:dominated_point_recursion_Phi}, induction gives
\[
\bm{L}_{1/\alpha,n}^0 \preceq \bm{I}_{p_\ast} - \varepsilon \bm{D}_\ast \prec \bm{I}_{p_\ast},
\]
for every \(n\). Passing to the limit, 
\begin{align*}
    \bm{L}^{0}_{1/\alpha} \prec \bm{I}_{p_\ast} - \varepsilon \bm{D}_\ast  \prec \bm{I}_{p_\ast}.
\end{align*}

Next, set
$\bm{L} \equiv \bm{L}^{0}_{1/\alpha}$ and  $\widetilde{\mathcal{K}} \equiv \widetilde{\mathcal{K}}_{\bm{L}}$ and also introduce  
\begin{align*}
    \bm{R} \coloneqq  \bm{L}(\bm{I} - \bm{L}) \succ \bm{0}_{p_{\ast} \times p_{\ast} },
\end{align*}
and
\begin{align*}
    \bm{Q} \coloneqq  \bm{T}_{\ast}(\bm{I}-\bm{L} \bm{T}_{\ast})^{-1}.
\end{align*}
At $z =1/\alpha$ since $\bm{L}$ is positive, we can rewrite the MDE in~\eqref{eq:MDE_for_L} as 
\begin{align*}
    \bm{R} = - \alpha \bm{L} \mathbb{E}[\bm{Q}] \bm{L},
\end{align*}
such that we have
\begin{align*}
    \bm{R} - \widetilde{\mathcal{K}}[\bm{R}] 
    &=
    - \alpha \bm{L} \mathbb{E}[\bm{Q}] \bm{L} -\alpha \bm{L} \mathbb{E}[\bm{Q}\bm{L}(\bm{I}_{p_\ast} - \bm{L}) \bm{Q}]\bm{L}, \\
    &= 
    \alpha \bm{L} (\mathbb{E}[ -\bm{Q}    - \bm{Q}\bm{L}(\bm{I}_{p_\ast} - \bm{L}) \bm{Q}  ] ) \bm{L}, \\
    &=
    \alpha \bm{L} (\mathbb{E}[ \bm{Q}(\bm{I}_{p_\ast}-\bm{L})\bm{C}_\ast(\bm{I}_{p_\ast}-  \bm{L})\bm{Q}] + \mathbb{E}[\bm{I}_{p_\ast} -\bm{C}_\ast] )\bm{L}, \\
    &=
    \alpha \bm{L} (\mathbb{E}[ \bm{Q}(\bm{I}_{p_\ast}-\bm{L})\bm{C}_\ast(\bm{I}_{p_\ast}-  \bm{L}) \bm{Q} ] )\bm{L},
\end{align*}
where in the last equality we have used $\mathbb{E} [ \bm{C}_\ast(\bm{y}) = \bm{I}_{p_\ast}]$. Thus we get 
\begin{align*}
    \bm{R} - \widetilde{\mathcal{K}}[\bm{R}]  \succeq \bm{0}_{p_{\ast} \times p_{\ast}  }.
\end{align*}
We claim that the latter bound is strict, that is, that $\bm{R} - \widetilde{\mathcal{K}}[\bm{R}]  \succ \bm{0}_{p_{\ast} \times p_{\ast}  }$. For any $\bm{v} \in \mathbb{R}^{p_\ast}$, we have 
\begin{align*}
    \langle \bm{v}, (\bm{R} - \widetilde{\mathcal{K}}[\bm{R}]) \bm{v} \rangle 
    &=
    \alpha \langle \bm{v},   \bm{L} (\mathbb{E}[ \bm{Q}(\bm{I}_{p_\ast}-\bm{L})\bm{C}_\ast(\bm{I}_{p_\ast}-  \bm{Q}) ] )\bm{L} \bm{v} \rangle , \\
     \langle \bm{v}, (\bm{R} - \widetilde{\mathcal{K}}[\bm{R}]) \bm{v} \rangle 
     &=
    \alpha \mathbb{E}[ \| \bm{C}_\ast^{1/2}(\bm{I}_{p_\ast}-\bm{L})\bm{Q} \bm{L} \bm{v} \|^2].
\end{align*}
Hence, the quadratic is null if and only if we have almost surely 
\begin{align}
\label{eq:condition_null_quadraticform}
    \bm{C}_\ast^{1/2}(\bm{I}_{p_\ast}-\bm{L})\bm{Q} \bm{L} \bm{v} = \bm{0}_{p_\ast}.
\end{align}
Since $\bm{Q} = \bm{T}_\ast(\bm{I}_{p_\ast} - \bm{L} \bm{T}_\ast)^{-1} = (\bm{I}_{p_\ast} -  \bm{T}_\ast \bm{L})^{-1} \bm{T}_\ast$ and the matrices $\bm{C}_\ast^{1/2}$, $(\bm{I}_{p_\ast}-\bm{L})$ and $(\bm{I}_{p_\ast} -  \bm{T}_\ast \bm{L})^{-1}$ are all invertible,~\eqref{eq:condition_null_quadraticform} is equivalent to the condition
\begin{align*}
    \bm{T}_\ast \bm{L} \bm{v} = \bm{0}_{p_\ast}
\end{align*}
almost surely. Or, using $\bm{T}_\ast = \bm{C}_\ast^{-1}( \bm{C}_\ast -\bm{I}_{p_\ast})$, to
\begin{align}
\label{eq:condition_null_quadraticform_2}
   ( \bm{C}_\ast -\bm{I}_{p_\ast})\bm{L}\bm{v} 
    &=
      \bm{0}_{p_\ast}
\end{align}
almost surely. Consider now~\eqref{eq:Perron_Eigenmatrix_D} and multiply it on the right by $\bm{L}\bm{v}$, we obtain 
\begin{align*}
    \frac{1}{\alpha_{\mathrm{c}}^\ast}\bm{D}_\ast \bm{L}\bm{v} 
    &=
    \mathbb{E}[ (\bm{C}_\ast - \bm{I}_{p_\ast}) \bm{D}_\ast (\bm{C}_\ast - \bm{I}_{p_\ast}) \bm{L}\bm{v}]=
    \bm{0}_{p_\ast},
\end{align*}
where we used~\eqref{eq:condition_null_quadraticform}. Since both $\bm{D}_\ast$ and $\bm{L}$ are positive definite, this implies that $\bm{v} = \bm{0}_{p_\ast}$. Hence $\bm{R} - \widetilde{\mathcal{K}}[\bm{R}]$ is strictly positive.

Finally, define
\[
q \coloneqq \lambda_1 \left ( \bm{R}^{-1/2} \widetilde{\mathcal{K}}[\bm{R}] \bm{R}^{-1/2}\right ) \in [0,1).
\]
Then \(\widetilde{\mathcal{K}}[\bm{R}] \preceq q \bm{R}\). Since \(\widetilde{\mathcal{K}}\) preserves the positive semidefinite cone, its operator norm in the order-unit norm induced by \(\bm{R}\) is at most \(q\), and therefore
\[
\rho (\widetilde{\mathcal{K}}) \le q < 1.
\]
Lemma~\ref{lem:physical_sol} now gives $\bm{L}(1/\alpha) = \bm{L}_{1/\alpha}^0 \prec \bm{I}_{p_\ast}$ and \(1/\alpha \notin \supp (\mu_\alpha)\). This proves Part (2b) and completes the proof of Lemma~\ref{lem:qualitative_behavior_L}.


\appendix
\section{Heuristic derivation of the optimal spectral method} \label{appendix:AMP_matrix}
In this section, we explain how the optimal preprocessing function appearing in ~\eqref{eq:optimal_preprocessing} can be motivated from a Bayesian information-theoretic perspective, and the so-called Thouless--Anderson--Palmer (TAP) approximation from disordered systems. This discussion is independent of the main results and should be read as additional intuition for why the preprocessing in~\eqref{eq:optimal_preprocessing} is natural. This argument, which appears in different forms in e.g.~\cite{saade2014,maillard2022,defilippis2025}, is written here in a way that can be easily adapted to models beyond the one considered in this paper.\\

We consider the same multi-index model as in the main text except that now we look at a Bayesian variant for the hidden part $\bm{W}_{\ast}$ where its columns $\bm{W}_\ast = [\bm{w}_{\ast,1},\dots,\bm{w}_{\ast,r}]$ are given by $\{ \bm{w}_{\ast,k} \}_{k=1}^r \overset{\mathrm{i.i.d.}}{\sim} \mathsf{N}(\bm{0}_d,\bm{I}_d/d)$. Let $\bm{s}_i = \bm{W}_\ast^\top \bm{x}_i$, we assume that we have the centering condition
\begin{align}
\label{eq:centering_condition}
    \E[ \bm{s}_i | \bm{y}_i ]
    &=
    \bm{0}_r \, ,
\end{align}
such that we are in the non-trivial setting for the inference of $\bm{W}_\ast$. 
For this Bayesian model, the Minimal Mean Squared Error (MMSE) estimator is defined by
\begin{align}
     \bm{\widehat{W}}_{\mathrm{MMSE}}
     \coloneqq 
     \mathrm{argmin}_{\bm{\widehat{W}} = \bm{\widehat{W}}(\bm{X},\bm{y})}
     \E
     \left[
     \left\|
     \bm{W}_{\ast}
     -
     \bm{\widehat{W}}
     \right\|_F^2
     \right],
\end{align}
and is given by the \emph{posterior mean}
\begin{align}
     \bm{\widehat{W}}_{\mathrm{MMSE}}
     =
     \E
     \left[
     \bm{W}_{\ast}
     \mid
     \bm{y},\bm{X}
     \right] 
     =
     \int \mathrm{d}\mathrm{P}(\bm{W} | \bm{y},\bm{X}) \bm{W}\, , 
\end{align}
where the \emph{posterior distribution} is given by Bayes-rule:  
\begin{align}
    \mathrm{d}\mathrm{P}(\bm{W} | \bm{y},\bm{X}) = \frac{1}{Z_{n,d}(\bm{X},\bm{y})} \mathrm{e}^{- \frac{d}{2} \|\bm{W} \|^2_F} \prod_{i=1}^n P(\bm{y}_i | \bm{W}^\top \bm{x}_i) \mathrm{d}\bm{W},
\end{align}
with the partition function
\begin{align}
    Z_{n,d}(\bm{X},\bm{y})\coloneqq  \int \mathrm{e}^{- \frac{d}{2} \|\bm{W} \|^2_F} \prod_{i=1}^n P(\bm{y}_i | \bm{W}^\top \bm{x}_i) \mathrm{d}\bm{W}\, . 
\end{align}
While this expression is semi-explicit, it is not computationally useful in the high-dimensional regime ($n,d\gg 1$) as it involves integration over a $rd$-dimensional posterior distribution, and for example cannot be accurately estimated by naive Monte Carlo sampling. 
\\
\\
Instead, the approach described below consists of a sequence of approximations which lead to two tractable spectral estimators that can be obtained as follows:
\begin{enumerate}
\item[(i)] Reinterpret the MMSE estimator as the maximizer of a suitable variational functional, namely the Gibbs free energy.
\item[(ii)] Approximate the Gibbs free energy by the Thouless--Anderson--Palmer (TAP) free energy.
\item[(iii)] Derive the approximate message passing (AMP) dynamics associated with the TAP free energy.
\item[(iv-a)] Expand the TAP free energy around the non-informative point. At leading order, the relevant estimator is given by the top eigenspace of the TAP Hessian, which yields our first (presumably) optimal spectral method.
\item[(iv-b)] or/and linearize the AMP dynamics obtained in step (iii) around the non-informative fixed point. The resulting linear iteration yields another spectral method, related to the one obtained in (iv-a). 
\end{enumerate}
Note that the first spectral estimator (iv-a) does not need the introduction of the AMP iteration. We now present each step in more detail.

\subsection{Variational formulation of the MMSE estimator and the Gibbs free energy}

We introduce an external field $\bm{H} \in \mathbb{R}^{d \times r}$ and define the \emph{tilted probability measure}
\begin{align}
\label{eq:tilted_posterior}
    \mathrm{d}\mathrm{P}_{\bm{H}}(\bm{W} | \bm{y},\bm{X}) 
    \coloneqq 
    \frac{1}{Z_{n,d}(\bm{H};\bm{X},\bm{y})} \mathrm{e}^{- \frac{d}{2} \|\bm{W} \|^2_F + \langle \bm{H}, \bm{W} \rangle} \prod_{i=1}^n P(\bm{y}_i | \bm{W}^\top \bm{x}_i) \mathrm{d}\bm{W},
\end{align}
with partition function
\begin{align}
\label{eq:tilted_partition_function}
    Z(\bm{H}) \equiv Z_{n,d}(\bm{H};\bm{X},\bm{y})
    \coloneqq 
    \int
    \mathrm{e}^{- \frac{d}{2} \|\bm{W} \|^2_F + \langle \bm{H}, \bm{W} \rangle} \prod_{i=1}^n P(\bm{y}_i | \bm{W}^\top \bm{x}_i) \mathrm{d}\bm{W}.
\end{align}
Let
\begin{align}
\label{eq:tilted_free_energy}
    F(\bm{H})
    \coloneqq 
    \log Z(\bm{H})
\end{align}
be the \emph{tilted free energy}. Then one may easily obtain the relation
\begin{align}
    \left(\nabla_{\bm{H}}F\right)(\bm{H})
    =
    \E_{\bm{H}}
    \left[
    \bm{W}
    \right],
\end{align}
where $\E_{\bm{H}}$ denotes the expectation with respect to the posterior of~\eqref{eq:tilted_posterior}. In particular,
\begin{align}
    \left(\nabla_{\bm{H}} F\right)(\bm{0}_{d \times r})
    =
    \bm{\widehat{W}}_{\mathrm{MMSE}}.
\end{align}
Next, the idea is to work with an imposed prior $\bm{V} = \E_{\bm{H}}[\bm{W}]$ by introducing the Legendre transform of $F$:
\begin{align}
    F_{\mathrm{Gibbs}}(\bm{V})
    \coloneqq 
    \sup_{\bm{H}}
    \left\{
    \left\langle
    \bm{H},
    \bm{V}
    \right\rangle
    -
    F(\bm{H})
    \right\}.
\end{align}
which is the so-called \emph{(convex) Gibbs free energy at fixed magnetization}. By Legendre duality we have 
\begin{align}
   \big( \nabla_{\bm{V}}  F_{\mathrm{Gibbs}}\big)(\bm{V})
    =
    \bm{H},
\end{align}
where $\bm{H}$ is the external field of the tilted measure \eqref{eq:tilted_posterior}. In particular, at zero external field, we have the relation
\begin{align}
\label{eq:exact_stationary_problem}
    \nabla_{\bm{V}} F_{\mathrm{Gibbs}}(\bm{V})
    =
    \bm{0}_{d \times r}
    \qquad
    \Longleftrightarrow
    \qquad
    \bm{V}
    =
    \bm{\widehat{W}}_{\mathrm{MMSE}}.
\end{align}
Hence \emph{the MMSE estimator is obtained as a stationary point of the Gibbs free energy} and since $F_{\mathrm{Gibbs}}$ is convex, we also have
\begin{align}
     \bm{\widehat{W}}_{\mathrm{MMSE}} = \mathrm{argmin}_{\bm{V}} \,  F_{\mathrm{Gibbs}}(\bm{V})
\end{align}
The difficulty is that $F_{\mathrm{Gibbs}}$ is intractable in practice and the TAP construction consists in replacing $F_{\mathrm{Gibbs}}$ by an explicit and tractable approximation that turns out to be exact in the high-dimensional regime. 

\subsection{Thouless-Anderson-Palmer (TAP) approximation of the free energy}

We now briefly explain the idea behind  the TAP approximation and refer the reader to
\cite{thouless1977,fischer1993,bolthausen2014,zdeborova2016} for more details. The TAP approximation can be understood as a \emph{corrected mean-field approximation of the Gibbs free energy} of the previous section. At the crudest level (“\emph{naive mean-field approximation}”) ignoring the structure of the output channel only leaves the Gaussian-prior cost for the free energy : $ F_{\mathrm{NMF}}(\bm{V}) \coloneqq   \frac{d}{2} \| \bm{V} \|_F^2$ and the role of the TAP approximation is to add to this term the effect of the observations through \emph{local output-channel contributions} and then subtract the 
\emph{self-interaction} created by this procedure. 
\\
\\
In other words, while the exact Gibbs free energy asks for the cost of imposing the  \emph{global constraint}  $\E_{\bm{H}}[\bm{W}] = \bm{V}$, the TAP approximation, instead, relaxes this constraint by the sum of costs enforcing fixed values for each induced \emph{local} means $\E_{\bm{H}}[\bm{s}_i] = \bm{V}^\top \bm{x}_i$; since each observation $\bm{y}_i$ only depends on $\bm{W}$ through $\bm{s}_i = \bm{W}^\top \bm{x}_i$. Given an observation $\bm{y}$, consider the tilted measure
\begin{align}
\label{eq:local_tilting}
    \mathrm{d}Q_{\bm{y},\bm{b}}(\bm{s}) \coloneqq  \frac{1}{Z_{\bm{y}}(\bm{b})} P(\bm{y}| \bm{s}) \mathrm{e}^{\langle \bm{b}, \bm{s}\rangle} \mathrm{d}\gamma(\bm{s}),\quad  Z_{\bm{y}}(\bm{b}) 
    \coloneqq 
    \int
    P(\bm{y}| \bm{s}) \mathrm{e}^{\langle \bm{b}, \bm{s}\rangle} \mathrm{d}\gamma(\bm{s})\, ,
\end{align}
where $\gamma(.)$ is the measure of a standard Gaussian random variable $\mathsf{N}(\bm{0}_r,\bm{I}_p)$ and we denote the local free energy and its Legendre transform by
\begin{align}
    \phi_{\bm{y}}(\bm{b}) \coloneqq  \log Z_{\bm{y}}(\bm{b}) 
     \qquad \mbox{and} \qquad
      \phi_{\bm{y}}^\ast(\bm{m}) \coloneqq   \sup_{\bm{b}} \left\{ \langle \bm{b} , \bm{m} \rangle - \phi(\bm{b}) \right\}.
\end{align}
As before, we have $\nabla_{\bm{b}}  \phi_{\bm{y}}(\bm{b}) = \E_{\bm{b}}[\bm{s}]$ where $\E_{\bm{b}}[\cdot]$ denotes expectation with respect to the law given in~\eqref{eq:local_tilting} and the Legendre transform $  \phi_{\bm{y}}^\ast(\bm{m})$ forces the posterior mean of $\bm{s}$ to be $\bm{m}$. 
\\
\\
To be consistent with the case where the output channel carries no information on $\bm{W}$, the TAP free energy should reduce to the naive mean-field cost $\frac{d}{2}
\|
\bm{V}
\|_F^2$. However, a direct Gaussian computation followed by the Legendre transform of a quadratic function gives:
\begin{align}
    \phi^\ast_{\bm{y}}(\bm{m}) = \frac{1}{2} \| \bm{m} \|_2^2 +C  , \qquad \mbox{if } P(\bm{y}| \bm{s}) = P(\bm{y}),
\end{align}
where $C$ is a constant independent of $\bm{m}$, such that the local Legendre term alone would create a spurious quadratic contribution, even though the observation contains no information about the signal. The TAP approximation therefore subtracts this quadratic contribution known as the \emph{Onsager correction term}. 
\\
\\
All in all, this leads to the following form of the TAP free energy:
\begin{align}
\label{eq:TAP_Free_Energy}
    F_{\mathrm{TAP}}(\bm{V}) \coloneqq   \frac{d}{2 n } \| \bm{V} \|_F^2 + \frac{1}{n}\sum_{i=1}^n \left(\phi_{\bm{y}_i}^\ast(\bm{V}^\top\bm{x}_i) - \frac{1}{2} \| \bm{V}^\top \bm{x}_i \|_2^2 \right),
 \end{align}
where the scaling factor $\frac{1}{n}$ has been introduced to make this cost of order $O(1)$ as $n,d\to \infty$. In other words, the TAP approximation of the MMSE estimator is obtained by replacing the exact stationary problem in~\eqref{eq:exact_stationary_problem} by the simpler one 
\begin{align}
\label{eq:TAP_stationary}
    \left(
    \nabla_{\bm{W}}
    F_{\mathrm{TAP}}
    \right)(\bm{W})
    =
    \bm{0}_{d \times r}.
\end{align}
While this approximation may seem strong at first sight, this TAP variational characterization has been shown to properly describe the limiting Gibbs free energy for a wide variety of related models \cite{subag2021,subag2023} and in particular see \cite{troianifundamental} for the Multi-index model studied in this paper.

\subsection{Approximate Message Passing (AMP) iterations}
Now that the problem has been reduced to the TAP variational formulation, the next step is to turn the stationary equation $\eqref{eq:TAP_stationary}$ into the construction of an effective algorithm designed to reach such stationary points. This task is done by (Bayes-Optimal\footnote{Although we present AMP here as an algorithm associated with the TAP free energy, AMP is more general and can also be constructed to minimize other objective functions.}) Approximate Message Passing (AMP). We do not attempt to review AMP algorithms in full generality here, and refer the reader to 
\cite{bolthausen2014,javanmard2013,feng2022,gerbelot2023} and explain here only the idea behind the construction of AMP at a high-level. 
\\
\\
AMP is a two-step iterative procedure : where first, given the current estimate of the weights $\bm{\widehat{W}}^{t}$ one estimates the corresponding local fields $\bm{\widehat{S}}^{t}$ and applies an output-channel denoising step. Second, these output messages $\bm{\widehat{S}}^{t}$ are propagated back to update the estimate of the new  weights $\bm{\widehat{W}}^{t+1}$. Both steps contain Onsager corrections, which as the correction term in the TAP Free energy,  remove the self-interaction generated by reusing the same data at each iteration to propagate through the iteration. In the case of the Multi-index model such iterations write
\begin{align}
\label{eq:AMP_iteration}
\begin{cases}
    \bm{\widehat{S}}^{t}
    &= 
    \bm{X}\bm{\widehat{W}}^t - \mathrm{g}^{t-1}_{\bm{Y}}(\bm{\widehat{S}}^{t-1}) ({\bm{V}^t})^\top, \\ 
    \bm{\widehat{W}}^{t+1}&= 
    \mathrm{f}^{t+1}(\bm{X}^\top \mathrm{g}^{t}_{\bm{Y}}(\bm{\widehat{S}}^{t}) - \bm{\widehat{W}}^{t}{\bm{A}^t}^\top),
\end{cases}
\end{align}
where  we refer the reader to \cite{troianifundamental,defilippis2025} for the explicit expressions of the \emph{denoising functions} $\mathrm{g}^{t}_{\bm{Y}}(\cdot) \equiv \mathrm{g}^{t}(\cdot , \{\bm{y}\}_{i=1}^n )$ and $\mathrm{f}^{t+1}(\cdot)$ as well as the \emph{Onsager Matrices} $\bm{V}_t$ and $\bm{A}_t$. 
\\
\\
The asymptotic performance of AMP (or more precisely of the corresponding overlap matrix between $\bm{\widehat{W}}^t$ and $\bm{W}_\ast$) at any fixed number of iterations is given by the so-called \emph{state evolution equations} which we do not describe here and refer again to \cite{troianifundamental} for its precise form.  An important point to note is that if  the centering condition \eqref{eq:centering_condition} holds, the non-informative point $\bm{0}_{d \times r}$ is always a fixed point of the AMP iterations. \emph{Escaping mediocrity}, that is the convergence in finite time to an informative fixed point  with positive overlap (provided that such fixed point exists) generally requires a \emph{warm start}, an initial estimator $\bm{\widehat{W}}^0$ having an (asymptotic) non-zero overlap with the planted signal $\bm{W}_\ast$.  Spectral methods are a natural candidate for this initial guess and can themselves be derived from linearizing the AMP iterations as described in the next section.

\subsection{Spectral methods from the TAP approach}

To construct a Bayes-inspired Spectral Method, the goal is to find a direction, if it exists, escaping from the null magnetization $\bm{W}=\bm{0}_{d \times r}$. There are two closely related ways to obtain such a spectral method from the objects defined in the previous sections  either (a) expand the TAP free energy itself, which amounts to relaxing the nonlinear variational problem near the informative point $\bm{V} = \bm{0}_{d \times r}$ or (b) linearize the AMP iterations, the algorithm that is designed to converge to the fixed points of this free energy, around their uninformative fixed point. 

\noindent \textbf{(a) TAP-Hessian :} To express this as an ordinary matrix eigenvalue problem, let
\begin{align}
    \bm{w}
    \coloneqq 
    \mathrm{vec}(\bm{W}),
    \qquad
    \mathcal{F}_{\mathrm{TAP}}(\bm{w})
    \coloneqq 
    F_{\mathrm{TAP}}
    \left(
    \mathrm{mat}(\bm{w})
    \right).
\end{align}
As we want to find the direction escaping from mediocrity, we parametrize $\bm{w}=\epsilon \bm{u}$, where $\epsilon$ is a small parameter and the direction $\bm{u}$ we are interested in is of fixed norm $\left\|\bm{u}\right\|=1$. Under mild regularity conditions on $  \mathcal{F}_{\mathrm{TAP}}$, we have by Taylor's theorem:
\begin{align}
    \mathcal{F}_{\mathrm{TAP}}(\epsilon \bm{u})
    =
    \mathcal{F}_{\mathrm{TAP}}(\bm{0}_{rd})
    +
    \epsilon
    \left\langle
    \left(
    \nabla
    \mathcal{F}_{\mathrm{TAP}}
    \right)(\bm{0}_{rd}),
    \bm{u}
    \right\rangle
    +
    \frac{\epsilon^2}{2}
    \left\langle
    \bm{u},
    \left(
    \nabla^2
    \mathcal{F}_{\mathrm{TAP}}
    \right)(\bm{0}_{rd})
    \bm{u}
    \right\rangle
    +
    o(\epsilon^2).
\end{align}
Since $\bm{0}_{rd}$ is a stationary point, the first-order term vanishes. Hence, at leading order,
\begin{align}
   \lim_{\epsilon \to 0^+}  \mathrm{argmin}_{\left\|\bm{u}\right\|=1}
    \mathcal{F}_{\mathrm{TAP}}(\epsilon \bm{u})
    =
    \mathrm{argmin}_{\left\|\bm{u}\right\|=1}
    \left\langle
    \bm{u},
    \left(
    \nabla^2
    \mathcal{F}_{\mathrm{TAP}}
    \right)(\bm{0}_{rd})
    \bm{u}
    \right\rangle.
\end{align}
By the Courant-Fischer variational principle, if we denote by $\bm{u}_\star$ the argmin of the previous equation, any leading maximizer converges to a top eigenvector of the (opposite of the) Hessian
\begin{align}
\bm{u}_\star = \bm{v}_1 \left\{ 
    \left(
    -\nabla^2
    \mathcal{F}_{\mathrm{TAP}}
    \right)(\bm{0}_{rd}) \right\}.
\end{align}
In the present model with the TAP energy given by~\eqref{eq:TAP_Free_Energy}, this Hessian is
\begin{align}
    \left(-
    \nabla^2
    \mathcal{F}_{\mathrm{TAP}}
    \right)(\bm{0}_{rd})
    &= \frac{1}{n}
    \sum_{i=1}^{n}
    \left(
    \bm{I}_{r}
    - 
    \E
    \left[
    \bm{s}
    \bm{s}^\top
    \mid
    \bm{y}_i
    \right]^{-1}
    \right)
    \otimes
    \bm{x}_i
    \bm{x}_i^\top
    -
    \frac{d}{n}\bm{I}_{rd} 
\end{align}
which is, up to a shift that does change the eigenvector, precisely the matrix $\bm{D}_n^{(\bm{T}_{\ast})}$ in the main text with the optimal preprocessing matrix-valued function $\bm{T}_{\ast}(\bm{y}) = \bm{I}_{r} - \bm{C}(\bm{y})^{-1}$ with $ \bm{C}(\bm{y}) \coloneqq  \E[\bm{s}\bm{s}^\top |\bm{y}]$. 

\noindent \textbf{(b) Linearizing AMP :} Similarly, consider the scaling $\bm{\widehat{W}} = \epsilon \bm{U}$ and $\bm{\widehat{S}} = \epsilon \bm{H}$ with $\bm{U}$ of fixed norm. Since $\bm{\widehat{W}} = \bm{0}_{d \times r}$ is a fixed point, note that we have $ \mathrm{g}^{t}_{\bm{Y}}(\bm{0}) =   \mathrm{f}^{t}(\bm{0}) = \bm{0}$. Under mild regularity, the AMP iteration in~\eqref{eq:AMP_iteration} simplifies to first order as:
\begin{align}
    \epsilon \bm{H}^t 
    &=
    \epsilon \bm{X}\bm{U}^t - \big( \epsilon (D\mathrm{g}_{\bm{Y}}^{t}(\bm{0}))[\bm{H}^{t-1}] + O(\epsilon^2) \big) \big( {\bm{V}^t(\bm{0})}^\top  + O(\epsilon) \big), \\
     \epsilon \bm{U}^{t+1} &= \epsilon D f^{t+1}(\bm{0})[\bm{X}^\top (D\mathrm{g}_{\bm{Y}}^{t}(\bm{0}))[\bm{H}^{t}]  - \ \bm{U}^t (\bm{A}^{t}(\bm{0}) )   ]  + O(\epsilon^2). 
 \end{align}
In the Bayes-optimal case with the centering condition~\eqref{eq:centering_condition} and Gaussian priors, one has from \cite{troianifundamental,defilippis2025}: 
\begin{align}
    \bm{A}^{t}(\bm{0}) = \bm{0}_{d \times r}, \bm{V}^{t}(\bm{0}) = \bm{I}_p, \quad D f^{t+1}(\bm{0}) = \frac{1}{d}\mathcal{I}{\mathrm{d}},  \left\{ (D\mathrm{g}_{\bm{Y}}^{t}(\bm{0}))[\bm{H}] \right\}_{i} = \bm{H}_i (\bm{C}(\bm{y}_i) - \bm{I}_p)\, , 
\end{align}
Assuming that this linearized system admits a nonzero equilibrium, the equilibrium equations reduce to the system:
\begin{align}
    \begin{cases}
        \bm{H} &= \bm{X}\bm{U} - \mathcal{G}[\bm{H}], \\ 
        \bm{U}  &= \frac{1}{d}\bm{X}^\top \mathcal{G}[\bm{H}], 
    \end{cases}
\end{align}
with the linear operator $\{ \mathcal{G}[\bm{H}]\}_i =  \bm{H}_i(\bm{C}(\bm{y}_i ) - \bm{I}_p)$. We can then substitute to solve for $ \bm{U}$ which writes in vectorized form $\bm{u} \coloneqq  \mathrm{vec}(\bm{U})$:
\begin{align}
    \bm{u} =  \left( \frac{1}{d}\sum_{i=1}^n (\bm{I}_p - \bm{C}(\bm{y})^{-1}) \otimes \bm{x}_i \bm{x}_i^\top \right) \bm{u}. 
\end{align}
This suggests taking the top eigenvector of $\frac{1}{d}\sum_{i=1}^n (\bm{I}_p - \bm{C}(\bm{y})^{-1}) \otimes \bm{x}_i \bm{x}_i^\top$ as the spectral estimator, which gives (up to a scaling factor $ \frac{n}{d} =\alpha$ that does not change the top eigenvector), precisely the desired pre-processed optimal function. 

\section{Exact variational characterization of the limiting spectral edge}
\label{appendix:limiting_variational_edge}

The finite-dimensional variational bound of Lemma~\ref{lem:finite_variational_formula} was used in the main text only to prove that the conditional free model has no spectrum asymptotically above \(\lambda_+(\alpha)\). In this appendix, we show that the corresponding
limiting variational problem in fact characterizes the upper spectral edge exactly.

The main additional ingredient is the convergence of the spectral distribution of the conditional free model to the deterministic limiting law. We first identify its matrix-valued Stieltjes transform as the solution of an empirical matrix Dyson equation. We then show that this empirical equation converges to the limiting matrix Dyson equation defining \(\bm{M}_\alpha\). Finally, we combine the resulting weak convergence with the finite-\(n\) variational bound.\\

\begin{prop}\label{prop:limiting_variational_edge}
The limiting upper spectral edge admits the variational characterization
\[
\lambda_+(\alpha) = \inf_{\bm{Z} \in\mathcal{D}_\alpha} \lambda_1 \left (\mathcal{G}_\alpha (\bm{Z}) \right),
\]
where
\[
\mathcal{D}_\alpha \coloneqq \left\{ \bm{Z} \in \sym_p^{++}(\R) \colon \operatorname*{ess \,inf}_{\bm{y}} \lambda_{\min} \left(\bm{Z}^{-1} - \alpha^{-1} \bm{T} (\bm{y}) \right)>0 \right \},
\]
and 
\[
\mathcal{G}_\alpha(\bm{Z}) =\bm{Z}^{-1} + \E_{\bm{y}} \left[ \bm{T} (\bm{y}) \left( \bm{I}_p-\alpha^{-1} \bm{Z} \bm{T} (\bm{y}) \right)^{-1} \right].
\]
\end{prop}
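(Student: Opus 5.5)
We prove the two inequalities separately. The inequality \(\inf_{\bm{Z}\in\mathcal{D}_\alpha}\lambda_1(\mathcal{G}_\alpha(\bm{Z}))\le\lambda_+(\alpha)\) is essentially contained in the proof of Lemma~\ref{lem:free_upper_edge}. For \(E>\lambda_+(\alpha)\), set \(\bm{Z}_E=-\bm{M}_\alpha(E)\). That proof shows \(\bm{Z}_E\succ\bm{0}\) and establishes the uniform strict admissibility~\eqref{eq:strict_admissibility_E}, so \(\bm{Z}_E\in\mathcal{D}_\alpha\). The evaluated Dyson equation~\eqref{eq:Galpha_at_ZE} says precisely that \(\mathcal{G}_\alpha(\bm{Z}_E)=E\bm{I}_p\). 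Hence the infimum is at most \(E\), and letting \(E\downarrow\lambda_+(\alpha)\) gives the bound.

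For the reverse inequality \(\lambda_+(\alpha)\le\lambda_1(\mathcal{G}_\alpha(\bm{Z}))\) for each fixed \(\bm{Z}\in\mathcal{D}_\alpha\), we pass through the conditional free model.

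\emph{Step 1 (admissibility at finite \(n\)).} Let \(c=\operatorname*{ess\,inf}_{\bm{y}}\lambda_{\min}(\bm{Z}^{-1}-\alpha^{-1}\bm{T}(\bm{y}))>0\). Since \(\gamma_n\to\alpha^{-1}\) and \(\|\bm{T}\|_{\mathrm{op}}\le C_T\), almost surely \(\bm{Z}^{-1}-\gamma_n\bm{T}(\bm{y}_i)\succeq\tfrac{c}{2}\bm{I}_p\) for all \(i\) and all large \(n\). Thus \(\bm{Z}\in\mathcal{D}_n\) eventually. The same law-of-large-numbers and resolvent argument as at the end of the proof of Lemma~\ref{lem:free_upper_edge} then gives \(\mathcal{G}_n(\bm{Z})\to\mathcal{G}_\alpha(\bm{Z})\) almost surely. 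Lemma~\ref{lem:finite_variational_formula} therefore yields \(\limsup_n\lambda_{+,n}^{\mathrm{free}}\le\lambda_1(\mathcal{G}_\alpha(\bm{Z}))\) almost surely.

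\emph{Step 2 (the free spectral law converges to \(\mu_\alpha\)).} It remains to show \(\lambda_+(\alpha)\le\liminf_n\lambda_{+,n}^{\mathrm{free}}\). This follows if \(\mu_n^{\mathrm{free}}\Rightarrow\mu_\alpha\) weakly almost surely. Indeed, by~\eqref{eq:support_equals_free_spectrum} the support of \(\mu_n^{\mathrm{free}}\) lies below \(\lambda_{+,n}^{\mathrm{free}}\), and weak convergence forces asymptotic mass near every point of \(\supp(\mu_\alpha)\), in particular near \(\lambda_+(\alpha)\).

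To get the weak convergence, use the \(M_p(\C)\)-valued cumulants computed in the proof of Lemma~\ref{lem:finite_variational_formula}. The operator \(\bm{P}_n^{\mathrm{free}}\) has the distribution of a free compound-Poisson-type element with \(R\)-transform
\[
\mathcal{R}_n(\bm{B})=\frac1n\sum_{i=1}^n\bm{T}(\bm{y}_i)\bigl(\bm{I}_p-\gamma_n\bm{B}\bm{T}(\bm{y}_i)\bigr)^{-1},
\]
obtained by summing the geometric series of the cumulants for small \(\bm{B}\). The subordination relation \(\bm{M}=(-z\bm{I}_p+\mathcal{R}_n(-\bm{M})\,)^{-1}\)-type then shows that the \(M_p(\C)\)-valued Cauchy transform \(\bm{M}_n^{\mathrm{free}}(z)=-\E_n[(z-\bm{P}_n^{\mathrm{free}})^{-1}]\) solves an empirical matrix Dyson equation. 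This equation is \eqref{eq:MDE} with \(\E_{\bm{y}}\) replaced by the empirical average and \(\alpha^{-1}\) replaced by \(\gamma_n\). This identity first holds for \(\Im z\) large, by convergence of power series, and extends by analyticity.

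Next, pass to the limit exactly as in the proof of Proposition~\ref{prop:bulk}. The functions \(\bm{M}_n^{\mathrm{free}}\) are normalized Herglotz and bounded by \((\Im z)^{-1}\), so Montel's theorem gives subsequential limits. On \(\{\Im z>\eta_0\}\) the denominators are uniformly invertible, so the strong law of large numbers (applied on a countable dense set) shows that every limit satisfies \eqref{eq:MDE} there. Corollary~\ref{cor:MDE_recognition} then identifies the limit as \(\bm{M}_\alpha\). Taking normalized traces, \(\mu_n^{\mathrm{free}}\Rightarrow\mu_\alpha\) almost surely.

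Combining the two steps gives \(\lambda_+(\alpha)\le\lambda_1(\mathcal{G}_\alpha(\bm{Z}))\) for every \(\bm{Z}\in\mathcal{D}_\alpha\), which proves the proposition.

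The main obstacle is Step 2: deriving the empirical Dyson equation rigorously from the cumulant formula. The delicate points are the operator-valued \(R\)-transform and subordination identity, and ensuring the series manipulation is valid at large \(\Im z\) before analytic continuation. As a fallback, one could instead compare \(\mu_n^{\mathrm{free}}\) with \(\hat\mu_{\bm{P}_n}\) directly. Lemma~\ref{lem:signed_linearized_comparison} gives spectral closeness of the linearizations, and one would transfer this to Stieltjes transforms at fixed \(z\in\C_+\) via Lemma~\ref{lem:signed_linearization_gap}. Proposition~\ref{prop:bulk} would then supply the limit.
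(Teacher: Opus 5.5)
Your proposal follows the paper's proof essentially step by step: the upper bound via $\bm{Z}_E=-\bm{M}_\alpha(E)\in\mathcal{D}_\alpha$ with $\mathcal{G}_\alpha(\bm{Z}_E)=E\bm{I}_p$, and the lower bound via $\bm{Z}\in\mathcal{D}_n$ eventually, Lemma~\ref{lem:finite_variational_formula}, the empirical Dyson equation for $\bm{M}_n^{\mathrm{free}}$ (obtained by summing the operator-valued $R$-transform at large $\Im z$ and continuing analytically), and $\mu_n^{\mathrm{free}}\Rightarrow\mu_\alpha$ combined with~\eqref{eq:support_equals_free_spectrum}. The only variation is the limit passage: you use Montel plus Corollary~\ref{cor:MDE_recognition}, whereas the paper uses a uniform contraction on $\{\Im z\geq\eta_0\}$. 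In either version, $\bm{M}_n^{\mathrm{free}}(z)$ depends on all the $\bm{y}_i$ and there is no leave-one-out structure, so the law of large numbers must be applied uniformly over the matrix argument in a compact ball. This means a countable dense set of matrices together with the uniform Lipschitz bound, as in the paper's estimate for $\Delta_n$, rather than a countable set of $z$'s as in Proposition~\ref{prop:bulk}.
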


A related variational characterization of the spectral edges of block-Wishart matrices of the type considered here was obtained recently by Montanari and Saeed~\cite{montanarisaeed2026}. Their formulation is expressed in terms of the matrix \(K\)-transform and includes an additional positivity condition on its derivative. After reflecting the spectrum and performing a change of variables, the objective function in their variational problem coincides with \(\mathcal{G}_\alpha\). Their approach, however, is different from the one used here.

\begin{rmk}
An analogous variational characterization holds for the lower spectral edge. Indeed, replacing the preprocessing map \(\bm{T}\) by \(-\bm{T}\) replaces the random matrix \(\bm{P}_n\) by \(-\bm{P}_n\) and reflects the limiting spectral distribution about the origin. Applying Proposition~\ref{prop:limiting_variational_edge} to \(-\bm{T}\) therefore yields a variational characterization of \(\lambda_-(\alpha)=\inf\supp(\mu_\alpha)\). 
\end{rmk}

Recall the free operator \(\bm{P}_n^{\mathrm{free}}\) defined in~\eqref{eq:signed_free_model}. Conditionally on \(\sigma (\bm{y}_1,\ldots, \bm{y}_n)\), it is a fixed element of \(M_{p(d-r)} (\mathcal{A})\), where \((\mathcal{A},\tau)\) is a tracial \(C^\ast\)-probability space \((\mathcal{A},\tau)\) with \(\tau\) faithful. Recall also the \(M_p(\C)\)-valued expectation \(\E_n =  \operatorname{id}_{M_p(\C)} \otimes \operatorname{tr}_{d-r} \otimes\tau\). For \(z \in \C_+\), define
\[
\bm{M}_n^{\mathrm{free}}(z) \coloneqq \E_n  \left[ \left(\bm{P}_n^{\mathrm{free}}
-z\bm{I}_{p(d-r)} \otimes 1_{\mathcal{A}} \right)^{-1} \right] .
\]

\begin{lem} \label{lem:exact_empirical_MDE_free}
For almost every realization of \(\sigma(\bm{y}_1,\ldots,\bm{y}_n)\), \(\bm{M}_n^{\mathrm{free}}\) is the unique solution in the normalized matrix-valued Herglotz class of 
\begin{equation} \label{eq:empirical_MDE}
\left ( \bm{M}_n^{\mathrm{free}} (z)  \right )^{-1}   = -z\bm{I}_p + \frac{1}{n} \sum_{i=1}^n \bm{T}(\bm{y}_i)\left(\bm{I}_p+\gamma_n \bm{M}_n^{\mathrm{free}}(z)\bm{T} (\bm{y}_i) \right)^{-1}.
\end{equation}
Moreover, \(m_n^{\mathrm{free}}(z) \coloneqq \frac{1}{p} \Tr \bm{M}_n^{\mathrm{free}}(z)\) is the Stieltjes transform of the spectral distribution \(\mu_n^{\mathrm{free}}\) of \(\bm{P}_n^{\mathrm{free}}\) with respect to the normalized trace \(\varphi_n = \operatorname{tr}_{p(d-r)} \otimes \tau\). 
\end{lem}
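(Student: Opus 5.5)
The natural route is operator-valued subordination for $M_p(\C)$-valued semicircular-type elements, using the free cumulants already computed in the proof of Lemma~\ref{lem:finite_variational_formula}. There it was shown that, conditionally on $\sigma(\bm{y}_1,\ldots,\bm{y}_n)$, the $M_p(\C)$-valued cumulants of $\bm{P}_n^{\mathrm{free}}$ with respect to $\E_n$ are
\[
\kappa_1^{(n)}=\frac1n\sum_{i}\bm{T}_i,\qquad \kappa_m^{(n)}(\bm{B}_1,\ldots,\bm{B}_{m-1})=\frac{\gamma_n^{m-1}}{n}\sum_i \bm{T}_i\bm{B}_1\bm{T}_i\cdots\bm{B}_{m-1}\bm{T}_i,
\]
with $\bm{T}_i=\bm{T}(\bm{y}_i)$. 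I plan to feed these into the operator-valued $R$-transform relation $\bm{M}(z)^{-1}=-z\bm{I}_p+\mathcal{R}(\bm{M}(z))$, where $\mathcal{R}(\bm{B})=\sum_{m\ge1}\kappa_m^{(n)}(\bm{B},\ldots,\bm{B})$ and $\bm{M}(z)=\E_n[(\bm{P}_n^{\mathrm{free}}-z)^{-1}]$, so that $G(b)=\bm{M}(z)$ at $b=z\bm{I}_p$. Summing the geometric series gives
\[
\mathcal{R}(\bm{B})=\frac1n\sum_i \bm{T}_i\sum_{k\ge0}\bigl(\gamma_n\bm{B}\bm{T}_i\bigr)^k=\frac1n\sum_i\bm{T}_i(\bm{I}_p-\gamma_n\bm{B}\bm{T}_i)^{-1}.
\]

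The sign convention needs care: with $G(b)=\E[(b-X)^{-1}]=-\bm{M}(z)$, the relation $G^{-1}=b-\mathcal{R}(G)$ yields exactly \eqref{eq:empirical_MDE}. The geometric series converges for $\|\gamma_n\bm{M}\bm{T}_i\|<1$, i.e. for $\Im z$ large, since $\|\bm{M}_n^{\mathrm{free}}(z)\|\le(\Im z)^{-1}$. On the domain $\Im z>\eta_1$, with $\eta_1$ large, the identity therefore holds as an equality of convergent power series. Two routine checks remain there: the functional equation follows from the moment–cumulant formula (Speicher's operator-valued theory, or \cite[Chapter 9]{mingo2017}), and $\E_n$ is a conditional expectation onto $M_p(\C)$ compatible with the operator-valued distribution. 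The latter is exactly the framework used in the preceding proof.

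To extend to all of $\C_+$, note first that $\bm{M}_n^{\mathrm{free}}$ is analytic on $\C_+$, has $\Im\bm{M}_n^{\mathrm{free}}\succeq0$ (since $\E_n$ is positive), and satisfies $i\eta\bm{M}_n^{\mathrm{free}}(i\eta)\to-\bm{I}_p$. It therefore lies in the normalized Herglotz class. The whole Section~\ref{section:MDE} theory (Proposition~\ref{prop:existence_uniqueness} and Corollary~\ref{cor:MDE_recognition}) applies verbatim with the law of $\bm{y}$ replaced by the empirical measure $\frac1n\sum_i\delta_{\bm{y}_i}$ and $\alpha^{-1}$ replaced by $\gamma_n$. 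This is legitimate because only boundedness of $\bm{T}$ was used there, together with nontriviality, which is harmless: the zero map also gives a unique solution. Corollary~\ref{cor:MDE_recognition} then shows that a normalized Herglotz function solving the empirical MDE on an open set solves it on all of $\C_+$ and is the unique such solution. The Stieltjes statement is immediate, since $\tr_p\circ\E_n=\varphi_n$ gives $\frac1p\Tr\bm{M}_n^{\mathrm{free}}(z)=\varphi_n[(\bm{P}_n^{\mathrm{free}}-z)^{-1}]=\int(\lambda-z)^{-1}\mu_n^{\mathrm{free}}(\mathrm{d}\lambda)$.

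The main obstacle is justifying the $R$-transform relation rigorously on a nonempty open set, i.e. controlling convergence of the cumulant series and of the moment series simultaneously for large $\Im z$, or equivalently for small $\|b^{-1}\|$. I would sidestep it by using instead the Fock model $\widehat{\bm{P}}_n^{\mathrm{free}}$ of the same proof, which has the same $M_p(\C)$-valued distribution. Its vacuum resolvent can be computed directly by a Schur-complement argument on $\C\Omega\oplus\bigoplus_i \bm{f}_i\otimes\mathscr{F}$. Each branch $\bm{f}_i\otimes\mathscr{F}$ sees the structure $\bm{D}_{i,n}$, so the branch resolvent is a geometric series in $\gamma_n\bm{T}_i\bm{M}$, and this produces \eqref{eq:empirical_MDE} algebraically for large $|z|$.
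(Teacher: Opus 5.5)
Your proposal is correct and is essentially the paper's proof: the $M_p(\C)$-valued cumulants from the proof of Lemma~\ref{lem:finite_variational_formula} are fed into the operator-valued $R$-transform relation of \cite[Chapter~9]{mingo2017}, the Neumann series is summed for large $\Im z$, and uniqueness follows from Proposition~\ref{prop:existence_uniqueness} applied to $(\frac1n\sum_i\delta_{\bm{y}_i},\gamma_n)$. The one variation is that you extend to all of $\C_+$ via Corollary~\ref{cor:MDE_recognition}, whereas the paper checks directly that every inverse in \eqref{eq:empirical_MDE} exists on $\C_+$ and then uses the identity theorem.

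There are two small corrections. First, since $G(z\bm{I}_p)=-\bm{M}_n^{\mathrm{free}}(z)$, the relation in your first paragraph should read $\bm{M}^{-1}=-z\bm{I}_p+\mathcal{R}(-\bm{M})$; your second paragraph already fixes this. Second, in the optional Fock-space fallback, the branches that decouple for left creation operators are $\mathscr{F}(\C^n)\otimes\bm{f}_i$ (indexed by the last letter), not $\bm{f}_i\otimes\mathscr{F}(\C^n)$. On $\mathscr{F}(\C^n)\otimes\bm{f}_i$ the compression acts as $\widehat{\bm{P}}_n^{\mathrm{free}}+\bm{D}_{i,n}\otimes p_\Omega$.
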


\begin{proof}
Fix \(n\) and a realization of \((\bm{y}_1,\ldots,\bm{y}_n)\) in the full-probability event on which Assumption~\ref{hyp:preprocessing} on \(\bm{T}(\bm{y}_i)\) hold. Throughout the proof, we regard the matrices \(\bm{T}(\bm{y}_1), \ldots, \bm{T}(\bm{y}_n)\) as deterministic. 

In the proof of Lemma~\ref{lem:finite_variational_formula}, we computed the \(M_p(\C)\)-valued free cumulants of \(\bm{P}_n^{\mathrm{free}}\). They satisfy
\[
\kappa_1^{(n)} = \frac{1}{n} \sum_{i=1}^n \bm{T} (\bm{y}_i), 
\]
and for every \(m \ge 2\),
\[
\kappa_m^{(n)} (\bm{B}_1, \ldots, \bm{B}_{m-1})= \frac{\gamma_n^{m-1}}{n} \sum_{i=1}^n \bm{T} (\bm{y}_i) \bm{B}_1 \bm{T} (\bm{y}_i) \cdots \bm{B}_{m-1} \bm{T} (\bm{y}_i).
\]
Recall from~\cite[Chapter 9, Definition 10]{mingo2017} that the operator-valued \(R\)-transform is
\begin{equation*}
\mathcal{R}_n (\bm{B}) = \kappa_1^{(n)} + \sum_{m \ge 2} \kappa_m^{(n)} (\bm{B}, \ldots, \bm{B}) = \frac{1}{n} \sum_{i=1}^n  \sum_{k=0}^\infty  \gamma_n^k \bm{T}(\bm{y}_i) \left (\bm{B} \bm{T}(\bm{y}_i) \right)^k.
\end{equation*}
Define the associated \(M_p(\C)\)-valued Cauchy transform by
\[
\bm{G}_n^{\mathrm{free}} (z) \coloneqq \E_n \left [ \left (z \bm{I}_{p(d-r)} \otimes 1_{\mathcal{A}} - \bm{P}_n^{\mathrm{free}} \right)^{-1} \right ] .
\]
By the operator-valued \(R\)-transform relation~\cite[Chapter~9, Theorem~11]{mingo2017}, for \(\Im z\) sufficiently large, 
\[
\left (\bm{G}_n^{\mathrm{free}} (z) \right )^{-1} = z \bm{I}_p - \mathcal{R}_n \left ( \bm{G}_n^{\mathrm{free}} (z) \right ).
\]
Moreover, since \(\|\bm{G}_n^{\mathrm{free}} (z)\|_{\mathrm{op}} \le (\Im z)^{-1}\) and \(\|\bm{T}(\bm{y}_i)\|_{\mathrm{op}} \le C_T\) by Assumption~\ref{hyp:preprocessing}, we have
\[
\left \|  \gamma_n\bm{G}_n^{\mathrm{free}} (z) \bm{T}(\bm{y}_i) \right \|_{\mathrm{op}} \le \frac{\gamma_n C_T}{\Im z}.
\]
Thus, for \(\Im z\) such that \(\Im z > \gamma_n C_T\), the series defining \(\mathcal{R}_n \left ( \bm{G}_n^{\mathrm{free}} (z) \right)\) can be summed using the Neumann-series identity, yielding
\[
\begin{split}
\left (\bm{G}_n^{\mathrm{free}} (z) \right )^{-1} &= z \bm{I}_p - \frac{1}{n} \sum_{i=1}^n  \bm{T}(\bm{y}_i) \sum_{k=0}^\infty  \left ( \gamma_n\bm{G}_n^{\mathrm{free}} (z) \bm{T}(\bm{y}_i) \right)^k\\
& = z \bm{I}_p - \frac{1}{n} \sum_{i=1}^n  \bm{T}(\bm{y}_i) \left ( \bm{I}_p - \gamma_n\bm{G}_n^{\mathrm{free}} (z) \bm{T}(\bm{y}_i) \right)^{-1}.
\end{split}
\]
Since \(\bm{M}_n^{\mathrm{free}} (z) = - \bm{G}_n^{\mathrm{free}} (z) \), it follows that \(\bm{M}_n^{\mathrm{free}} (z)\) satisfies~\eqref{eq:empirical_MDE} for \(\Im z\) sufficiently large.

We next extend this identity to every \(z \in \C_+\). Since \(\bm{P}_n^{\mathrm{free}}\) is self-adjoint, \(\Im \bm{M}_n^{\mathrm{free}} (z) \succ \bm{0}_p\) for every \(z \in \C_+\). In particular \(\bm{M}_n^{\mathrm{free}}(z)\) is invertible. Moreover, for every \(i\),
\[
\bm{I}_p + \gamma_n \bm{M}_n^{\mathrm{free}} (z) \bm{T}(\bm{y}_i) = \bm{M}_n^{\mathrm{free}} (z) \left ( \left (\bm{M}_n^{\mathrm{free}} (z)\right)^{-1} + \gamma_n \bm{T}(\bm{y}_i) \right ). 
\]
The second factor is invertible since
\[
\Im \left ( \left (\bm{M}_n^{\mathrm{free}} (z)\right)^{-1} + \gamma_n \bm{T}(\bm{y}_i)\right ) = - \left (\bm{M}_n^{\mathrm{free}} (z)\right)^{-\ast} \left (\Im \bm{M}_n^{\mathrm{free}} (z) \right)\left (\bm{M}_n^{\mathrm{free}} (z)\right)^{-1} \prec \bm{0}_p.
\]
Thus all the inverses appearing in~\eqref{eq:empirical_MDE} are well defined throughout \(\C_+\). Hence both sides of~\eqref{eq:empirical_MDE} are analytic on \(\C_+\). Since they coincide on the nonempty open set \(\{z \in \C_+ \colon \Im z > \gamma_n C_T\}\), the identity theorem implies that~\eqref{eq:empirical_MDE} holds throughout \(\C_+\).

We now verify the Herglotz properties. The resolvent representation already shows that \(\bm{M}_n^{\mathrm{free}}\) is analytic on \(\C_+\) and that \(\Im \bm{M}_n^{\mathrm{free}} (z) \succ \bm{0}_p\). Moreover,
\[
i \eta \left( \bm{P}_n^{\mathrm{free}}-i\eta(\bm{I}_{p(d-r)} \otimes 1_{\mathcal{A}} ) \right)^{-1} + \bm{I}_{p(d-r)} \otimes 1_{\mathcal{A}}  = \bm{P}_n^{\mathrm{free}} \left( \bm{P}_n^{\mathrm{free}}-i\eta (\bm{I}_{p(d-r)} \otimes 1_{\mathcal{A}} ) \right)^{-1},
\]
and the right-hand side converges to zero in norm as \(\eta\to\infty\). Hence
\[
\lim_{\eta\to\infty} i\eta\bm{M}_n^{\mathrm{free}}(i\eta) = -\bm{I}_p.
\]
Thus \(\bm{M}_n^{\mathrm{free}}\) belongs to the normalized matrix-valued Herglotz class. For the fixed realization under consideration, \(\nu_n \coloneqq \frac{1}{n}\sum_{i=1}^n\delta_{\bm{y}_i}\) is a deterministic probability measure, and~\eqref{eq:empirical_MDE} is precisely the matrix Dyson equation associated with \((\nu_n, \gamma_n^{-1})\). Uniqueness in the normalized matrix-valued Herglotz class therefore follows from Proposition~\ref{prop:existence_uniqueness}.

Finally, recall that \(\varphi_n=\tr_{p (d-r)}\otimes\tau = \tr_p \circ \E_n \) defines the spectral distribution \(\mu_n^{\mathrm{free}}\) of \(\bm{P}_n^{\mathrm{free}}\) through 
\[
\varphi_n \left[ f(\bm{P}_n^{\mathrm{free}}) \right] = \int_\R f(x)\,\mu_n^{\mathrm{free}}(\mathrm{d}x).
\]
Taking \(f(x)=(x-z)^{-1}\) gives
\[
\begin{split}
\int_\R \frac{1}{x-z}\, \mu_n^{\mathrm{free}}(\mathrm{d}x) = \varphi_n \left[ \left( \bm{P}_n^{\mathrm{free}}-z\bm{I}_{p(d-r)} \otimes 1_{\mathcal{A}} \right)^{-1} \right] = \frac{1}{p}
 \Tr  \bm{M}_n^{\mathrm{free}}(z) = m_n^{\mathrm{free}}(z).
\end{split}
\]
Thus \(m_n^{\mathrm{free}}\) is the Stieltjes transform of the spectral distribution \(\mu_n^{\mathrm{free}}\).
\end{proof}

We next show that the empirical matrix Dyson equation converges to the limiting matrix Dyson equation defining \(\bm{M}_\alpha\). 
\begin{lem} \label{lem:free_MDE_convergence}
Almost surely,
\[
\bm{M}_n^{\mathrm{free}}(z) \to \bm{M}_\alpha(z)
\]
locally uniformly in \(z\in\C_+\). Thus 
\[
\mu_n^{\mathrm{free}} \Rightarrow \mu_\alpha
\]
weakly almost surely.
\end{lem}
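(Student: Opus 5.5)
The plan mirrors the normal-family argument used in the proof of Proposition~\ref{prop:bulk}, but it is simpler because Lemma~\ref{lem:exact_empirical_MDE_free} gives an \emph{exact} empirical Dyson equation, so there are no quadratic-form or leave-one-out errors to control.

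Fix an outcome in the probability-one event on which $\|\bm{T}(\bm{y}_i)\|_{\mathrm{op}}\le C_T$ for all $i$, and on which the strong law of large numbers holds simultaneously for the countably many bounded test functions we need. These are the entries of $\bm{y}\mapsto \bm{T}(\bm{y})(\bm{I}_p+\alpha^{-1}\bm{X}\bm{T}(\bm{y}))^{-1}$, with $\bm{X}$ ranging over a countable dense set of matrices with $\|\bm{X}\|_{\mathrm{op}}\le 1/\eta_0$. Here $\eta_0$ is large, with $2C_T/(\alpha\eta_0)<1/2$.

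\textbf{Normal family and passage to the limit.} Since $\|\bm{M}_n^{\mathrm{free}}(z)\|_{\mathrm{op}}\le(\Im z)^{-1}$, Montel's theorem applied entrywise lets every subsequence be refined to one along which $\bm{M}_{n_k}^{\mathrm{free}}\to\widetilde{\bm{M}}$ locally uniformly on $\C_+$. Now restrict to $\Im z>\eta_0$ and $\gamma_n\le 2/\alpha$. There the map
\[
\bm{X}\mapsto \bm{T}(\bm{y})\left(\bm{I}_p+\gamma\bm{X}\bm{T}(\bm{y})\right)^{-1}
\]
is uniformly Lipschitz in $(\bm{X},\gamma)$ on the relevant ball, uniformly in $\bm{y}$, by Neumann series as in Lemma~\ref{lem:stability-phi}. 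Writing
\[
\frac1n\sum_i \bm{T}(\bm{y}_i)\bigl(\bm{I}_p+\gamma_n\bm{M}_n^{\mathrm{free}}\bm{T}(\bm{y}_i)\bigr)^{-1}-\E_{\bm{y}}\Bigl[\bm{T}\bigl(\bm{I}_p+\alpha^{-1}\widetilde{\bm{M}}\bm{T}\bigr)^{-1}\Bigr],
\]
we split this difference into three pieces: a Lipschitz error in $\bm{M}_n^{\mathrm{free}}-\widetilde{\bm{M}}$, a Lipschitz error in $\gamma_n-\alpha^{-1}$, and an LLN fluctuation at a fixed matrix. The LLN fluctuation is handled by approximating $\widetilde{\bm{M}}(z)$ with elements of the countable dense set and using the uniform Lipschitz bound again. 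This is the only step requiring some care, namely a uniform LLN over a compact parameter set. An alternative is to observe that the empirical measures $\nu_n=\frac1n\sum_i\delta_{\bm{T}(\bm{y}_i)}$ on the compact set $\{\|\bm{T}\|\le C_T\}\subset\sym_p(\R)$ converge weakly almost surely (Varadarajan), and that the integrand is jointly continuous and equicontinuous in $\bm{X}$.

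Hence, for $\Im z>\eta_0$, $\widetilde{\bm{M}}(z)^{-1}=-z\bm{I}_p+\E_{\bm{y}}[\bm{T}(\bm{I}_p+\alpha^{-1}\widetilde{\bm{M}}\bm{T})^{-1}]$. Invertibility of $\widetilde{\bm{M}}(z)$ follows by multiplying the limiting identity $z\widetilde{\bm{M}}+\bm{I}_p=\E[\Phi_{\bm{T},\alpha^{-1}}(\widetilde{\bm{M}})]$ exactly as in the proof of Proposition~\ref{prop:bulk}.

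\textbf{Identification and conclusion.} Positivity $\Im\widetilde{\bm{M}}\succeq0$ passes to the limit. The normalization $i\eta\widetilde{\bm{M}}(i\eta)\to-\bm{I}_p$ follows from the bound $\|i\eta\widetilde{\bm{M}}(i\eta)+\bm{I}_p\|_{\mathrm{op}}\le 2C_T/\eta$, obtained from the limiting equation as in that proof. Corollary~\ref{cor:MDE_recognition} applied with $U=\{\Im z>\eta_0\}$ then gives $\widetilde{\bm{M}}=\bm{M}_\alpha$ on $\C_+$. Since every subsequence has a further subsequence with this same limit, $\bm{M}_n^{\mathrm{free}}\to\bm{M}_\alpha$ locally uniformly almost surely. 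Taking normalized traces and using Lemma~\ref{lem:exact_empirical_MDE_free}, $m_n^{\mathrm{free}}\to m_{\mu_\alpha}$ pointwise on $\C_+$. The continuity theorem for Stieltjes transforms, with tightness guaranteed because $\|\bm{P}_n^{\mathrm{free}}\|$ is uniformly bounded via \eqref{eq:free_X_uniform_bound}, then yields $\mu_n^{\mathrm{free}}\Rightarrow\mu_\alpha$ almost surely.
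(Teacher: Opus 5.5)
Your proof is correct, but it takes a different route from the paper.

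\textbf{The paper's route.} It never extracts subsequential limits. For $\Im z\ge\eta_0$ it shows that both $\bm{\Psi}_{n,z}$ and $\bm{\Psi}_{\alpha,z}$ are uniform contractions, with a common constant $\rho<1$, of the ball $\{\|\bm{X}\|_{\mathrm{op}}\le 2/\Im z\}$. It bounds the distance between the two maps by $\frac{4}{\eta_0^2}\Delta_n$, where $\Delta_n$ is a uniform-LLN error over a compact ball (finite net plus SLLN, plus the $\gamma_n\to\alpha^{-1}$ correction). This yields the quantitative estimate $\sup_{\Im z\ge\eta_0}\|\bm{M}_n^{\mathrm{free}}(z)-\bm{M}_\alpha(z)\|_{\mathrm{op}}\le \frac{4}{(1-\rho)\eta_0^2}\Delta_n$, and Vitali's theorem then extends the convergence to all of $\C_+$.

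\textbf{Your route.} You reuse the template of Proposition~\ref{prop:bulk}: Montel's theorem, passage to the limit in the exact empirical equation on $\{\Im z>\eta_0\}$, verification that the subsequential limit $\widetilde{\bm{M}}$ is a normalized Herglotz solution, and identification through Corollary~\ref{cor:MDE_recognition}.

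\textbf{What each buys.}
\begin{itemize}
\item The paper's argument gives an explicit rate on the half-plane. It never invokes uniqueness in the Herglotz class, since uniqueness comes for free from the contraction.
\item Your argument is purely qualitative and requires checking positivity and the normalization of $\widetilde{\bm{M}}$. In exchange, it needs no contraction constants and recycles lemmas already in place.
\item In your route the uniform LLN is genuinely necessary, because $\widetilde{\bm{M}}(z)$ depends on the outcome and on the subsequence. You correctly flag this and handle it with the countable dense set (or Varadarajan).
\item In the paper's route the comparison is made at the deterministic point $\bm{M}_\alpha(z)$. There, uniformity over the ball serves only to make the bound uniform in $z$.
\end{itemize}

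\textbf{A minor remark.} Your tightness argument via the uniform bound on $\|\bm{P}_n^{\mathrm{free}}\|$ is harmless but unnecessary. Pointwise convergence of Stieltjes transforms on $\C_+$ to the Stieltjes transform of a probability measure already yields weak convergence.
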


\begin{proof}
For \(\gamma>0\), \(\bm{X} \in\C^{p\times p}\), and \(\bm{y} \in \R^q\), define
\[
\bm{Q}_\gamma(\bm{X},\bm{y}) = \bm{T}(\bm{y}) \left( \bm{I}_p+\gamma\bm{X}\bm{T}(\bm{y}) \right)^{-1},
\]
whenever the inverse exists. The limiting and empirical matrix Dyson equations~\eqref{eq:MDE} and~\eqref{eq:empirical_MDE} can then be written as the fixed-point equations
\[
\bm{M}_\alpha(z) = \bm{\Psi}_{\alpha,z} (\bm{M}_\alpha(z)), \qquad 
\bm{M}_n^{\mathrm{free}}(z) = \bm{\Psi}_{n,z} (\bm{M}_n^{\mathrm{free}}(z)),
\]
where
\[
\bm{\Psi}_{\alpha,z}(\bm{X}) \coloneqq  \left ( -z\bm{I}_p + \E_{\bm{y}} \left[ \bm{Q}_{\alpha^{-1}}(\bm{X},\bm{y}) \right] \right)^{-1}
\]
and
\[
\bm{\Psi}_{n,z}(\bm{X}) \coloneqq \left( -z\bm{I}_p + \frac{1}{n} \sum_{i=1}^n \bm{Q}_{\gamma_n}(\bm{X},\bm{y}_i) \right)^{-1}.
\]

We first prove convergence on an upper half-plane on which the two fixed-point maps are uniform contractions. Since \(\gamma_n\to\alpha^{-1}\), there exists \(\Gamma<\infty\) such that \(\gamma_n \leq\Gamma\) for every \(n \ge 1\) and \(\alpha^{-1} \le \Gamma\). For \(z \in \C_+\), define
\[
\mathcal{B}_z \coloneqq \left\{ \bm{X}\in \C^{p\times p} \colon \|\bm{X}\|_{\mathrm{op}} \leq \frac{2}{\Im z} \right\}.
\]
Choose \(\eta_0>0\) sufficiently large that
\[
\frac{2\Gamma C_T}{\eta_0}\leq\frac{1}{2},
\qquad
\frac{2C_T}{\eta_0} \le \frac{1}{2},
\qquad
\rho \coloneqq \frac{16\Gamma C_T^2}{\eta_0^2} <1.
\]
Let \(\Im z\geq\eta_0\), \(\bm{X}\in\mathcal{B}_z\), and \(0<\gamma\leq\Gamma\). By Assumption~\ref{hyp:preprocessing}
\[
\left\| \gamma\bm{X}\bm{T}(\bm{y}) \right\|_{\mathrm{op}} \leq \frac{2\Gamma C_T}{\Im z} \leq \frac{2\Gamma C_T}{\eta_0} \leq \frac{1}{2}.
\]
Hence
\[
\left\| \left( \bm{I}_p+\gamma\bm{X}\bm{T}(\bm{y}) \right)^{-1} \right\|_{\mathrm{op}} \leq2,
\]
and therefore \(\left\| \bm{Q}_\gamma(\bm{X},\bm{y}) \right\|_{\mathrm{op}} \leq 2C_T\). It follows that
\[
\left\| \frac{1}{z} \frac{1}{n} \sum_{i=1}^n \bm{Q}_{\gamma_n}(\bm{X},\bm{y}_i) \right\|_{\mathrm{op}} \leq \frac{2 C_T}{|z|}
\leq \frac{1}{2}.
\]
The Neumann series therefore gives
\[
\left\| \bm{\Psi}_{n,z}(\bm{X}) \right\|_{\mathrm{op}} \leq \frac{2}{|z|} \leq \frac{2}{\Im z}.
\]
The same argument applies to \(\bm{\Psi}_{\alpha,z}\). Thus, for every \(\Im z \ge \eta_0\), both maps send \(\mathcal{B}_z\) into itself. 

We next show that these maps are uniform contractions. For \(\bm{X},\bm{Y}\in\mathcal{B}_z\), the resolvent identity yields
\[
\bm{Q}_\gamma(\bm{X},\bm{y}) - \bm{Q}_\gamma(\bm{Y},\bm{y}) = \gamma\bm{T}(\bm{y}) \left( \bm{I}_p+\gamma\bm{X}\bm{T}(\bm{y})
\right)^{-1} (\bm{Y}-\bm{X})\bm{T}(\bm{y}) \left( \bm{I}_p+\gamma\bm{Y}\bm{T}(\bm{y})\right)^{-1}.
\]
Therefore,
\[
\left\| \bm{Q}_\gamma(\bm{X},\bm{y}) - \bm{Q}_\gamma(\bm{Y},\bm{y}) \right\|_{\mathrm{op}} \leq 4\gamma C_T^2 \|\bm{X}-\bm{Y}\|_{\mathrm{op}}.
\]
Applying the resolvent identity once more to the outer inverse gives
\[
\left\| \bm{\Psi}_{n,z}(\bm{X}) - \bm{\Psi}_{n,z}(\bm{Y}) \right\|_{\mathrm{op}} \leq \frac{16 \Gamma C_T^2}{(\Im z)^2}  \|\bm{X}-\bm{Y}\|_{\mathrm{op}} \le \rho  \|\bm{X}-\bm{Y}\|_{\mathrm{op}}.
\]
The same estimate holds for \(\bm{\Psi}_{\alpha,z}\). Hence both maps are uniform contractions on \(\mathcal{B}_z\), uniformly for \(\Im z\geq\eta_0\).

We now compare the empirical and limiting fixed-point maps. Set
\[
\mathcal{B}_0 \coloneqq \left\{ \bm{X} \in \C^{p\times p} \colon \|\bm{X}\|_{\mathrm{op}} \leq \frac{2}{\eta_0} \right\}.
\]
The family \( \left \{ \bm{Q}_{\alpha^{-1}}(\bm{X},\bm{y}) \colon \bm{X} \in \mathcal{B}_0 \right \}\) is uniformly bounded in operator norm and uniformly Lipschitz in \(\bm{X}\), uniformly in \(\bm{y}\). Since \(p\) is fixed, \(\mathcal{B}_0\) is compact in a finite-dimensional space. A finite-net argument, combined with the strong law of large numbers applied entrywise, therefore yields 
\begin{equation} \label{eq:LLN_app}
\sup_{\bm{X}\in\mathcal{B}_0} \left\| \frac{1}{n} \sum_{i=1}^n \bm{Q}_{\alpha^{-1}}(\bm{X},\bm{y}_i) - \E_{\bm{y}} \left[ \bm{Q}_{\alpha^{-1}}(\bm{X},\bm{y}) \right] \right\|_{\mathrm{op}} \to 0
\end{equation}
almost surely. We also compare the maps corresponding to \(\gamma_n\) and \(\alpha^{-1}\). For \(\gamma,\gamma'>0\), another resolvent identity gives
\[
\begin{split}
& \bm{Q}_\gamma(\bm{X},\bm{y}) - \bm{Q}_{\gamma'} (\bm{X},\bm{y}) \\
& = (\gamma - \gamma') \bm{T}(\bm{y}) \left( \bm{I}_p+\gamma\bm{X}\bm{T}(\bm{y})
\right)^{-1} \bm{X} \bm{T}(\bm{y}) \left( \bm{I}_p+\gamma' \bm{X}\bm{T}(\bm{y})\right)^{-1}.
\end{split}
\]
For \(\gamma,\gamma' \le \Gamma\) and \(\bm{X} \in \mathcal{B}_0\), all the inverses above have operator norm at most \(2\). Hence, for some deterministic constant \(C<\infty\),
\[
\sup_{\bm{X} \in \mathcal{B}_0} \operatorname*{ess\,sup}_{\bm{y}} \left\| \bm{Q}_{\gamma_n}(\bm{X},\bm{y}) - \bm{Q}_{\alpha^{-1}}(\bm{X},\bm{y}) \right\|_{\mathrm{op}} \leq C \left| \gamma_n-\alpha^{-1} \right| \to 0.
\]
Define
\[
\Delta_n \coloneqq \sup_{\bm{X}\in\mathcal{B}_0} \left\| \frac{1}{n} \sum_{i=1}^n \bm{Q}_{\gamma_n}(\bm{X},\bm{y}_i) - \E_{\bm{y}} \left[ \bm{Q}_{\alpha^{-1}}(\bm{X},\bm{y}) \right] \right\|_{\mathrm{op}}.
\]
By~\eqref{eq:LLN_app} and \(\gamma_n \to \alpha^{-1}\), \(\Delta_n \to 0\) almost surely. Applying the resolvent identity to the outer inverses, we obtain
\[
\sup_{\substack{ \Im z\geq\eta_0\\ \bm{X}\in\mathcal{B}_z }} \left\| \bm{\Psi}_{n,z}(\bm{X}) - \bm{\Psi}_{\alpha,z}(\bm{X}) \right\|_{\mathrm{op}} \leq \frac{4}{\eta_0^2}\Delta_n.
\]
Moreover, by the matrix-valued Stieltjes representations of \(\bm{M}_n^{\mathrm{free}}\) and \(\bm{M}_\alpha\), we have \(\|\bm{M}_n^{\mathrm{free}} (z)\|_{\mathrm{op}} \le (\Im z)^{-1}\) and \(\|\bm{M}_\alpha (z)\|_{\mathrm{op}} \le (\Im z)^{-1}\) for every \(z \in \C_+\). Thus both fixed points belong to \(\mathcal{B}_z\). Using the fixed-point identities and the uniform contraction estimate, for \(\Im z \ge \eta_0\), we obtain
\[
\begin{split}
& \left\| \bm{M}_n^{\mathrm{free}}(z) - \bm{M}_\alpha(z) \right\|_{\mathrm{op}} \\
& \leq \left\| \bm{\Psi}_{n,z} (\bm{M}_n^{\mathrm{free}}(z))
- \bm{\Psi}_{n,z} (\bm{M}_\alpha(z)) \right\|_{\mathrm{op}}  + \left\| \bm{\Psi}_{n,z} (\bm{M}_\alpha(z)) - \bm{\Psi}_{\alpha,z} (\bm{M}_\alpha(z) ) \right\|_{\mathrm{op}} \\
& \leq \rho \left\| \bm{M}_n^{\mathrm{free}}(z) - \bm{M}_\alpha(z) \right\|_{\mathrm{op}} + \frac{4}{\eta_0^2}\Delta_n.
\end{split}
\]
Therefore
\[
\sup_{\Im z\geq\eta_0} \left\| \bm{M}_n^{\mathrm{free}}(z) - \bm{M}_\alpha(z)
\right\|_{\mathrm{op}} \leq \frac{4}{(1-\rho)\eta_0^2} \Delta_n \to 0
\]
almost surely. In particular, \(\bm{M}_n^{\mathrm{free}}(z) \to \bm{M}_\alpha(z)\) uniformly on \(\left\{ z \in \C_+ \colon \Im z  > \eta_0\right \}\) almost surely.

It remains to extend the convergence to all of \(\C_+\). Since \( \bm{P}_n^{\mathrm{free}} \) is self-adjoint, \(\left\| \bm{M}_n^{\mathrm{free}}(z) \right\|_{\mathrm{op}} \leq (\Im z)^{-1}\) for every \(z \in \C_+\). Thus, the sequence \(\{ \bm{M}_n^{\mathrm{free}} \}_{n\geq1}\) is locally uniformly bounded on \(\C_+\). For every \(a,b\in[p]\), the scalar functions \(z \mapsto \left( \bm{M}_n^{\mathrm{free}}(z) \right)_{ab} \) are analytic and locally uniformly bounded on \(\C_+\), and they converge to \((\bm{M}_\alpha(z))_{a,b}\) on the nonempty open set \(\left\{ z \in \C_+ \colon \Im z  > \eta_0\right \}\). Vitali's convergence theorem therefore yields locally uniform convergence on \(\C_+\) to an analytic limit. By the identity theorem, this limit coincides with \(\bm{M}_\alpha\). Thus
\[
\bm{M}_n^{\mathrm{free}}(z) \to \bm{M}_\alpha(z)
\]
locally uniformly on \(\C_+\), almost surely.

Taking normalized traces gives, almost surely, for every \(z \in \C_+\),
\[
m_n^{\mathrm{free}}(z) = \frac{1}{p} \Tr \bm{M}_n^{\mathrm{free}}(z) \to  \frac{1}{p} \Tr \bm{M}_\alpha(z) = m_\alpha(z)
\]
for every \(z\in\C_+\). By Lemma~\ref{lem:exact_empirical_MDE_free}, \(m_n^{\mathrm{free}}\) is the Stieltjes transform of \(\mu_n^{\mathrm{free}}\), whereas \(m_\alpha\) is the Stieltjes transform of \(\mu_\alpha\). The Stieltjes continuity theorem therefore implies
\[
\mu_n^{\mathrm{free}} \Rightarrow \mu_\alpha
\]
weakly almost surely.
\end{proof}

\begin{proof}[Proof of Proposition~\ref{prop:limiting_variational_edge}]
Fix \(\bm{Z} \in\mathcal{D}_\alpha\). By definition of \(\mathcal{D}_\alpha\), there exists \(c_{\bm{Z}}\) such that 
\[
\bm{Z}^{-1}-\gamma_n \bm{T}(\bm{y}) \succeq c_{\bm{Z}} \bm{I}_p \quad \mathrm{a.s.}
\]
Since \(\gamma_n\to\alpha^{-1}\) and \(\|\bm{T}(\bm{y})\|_{\mathrm{op}} \leq C_T\), it follows that, almost surely, for all sufficiently large \(n\),
\[
\bm{Z}^{-1} - \gamma_n \bm{T} (\bm{y}_i) \succeq \frac{c_{\bm{Z}}}{2}\bm{I}_p,
\qquad 1\leq i\leq n.
\]
Hence \(\bm{Z}\in\mathcal{D}_n\) eventually. Moreover, by the same argument as in the proof of Lemma~\ref{lem:free_upper_edge},
\[
\mathcal{G}_n(\bm{Z}) \to \mathcal{G}_\alpha(\bm{Z}) \quad\text{a.s.}
\]
Therefore, Lemma~\ref{lem:finite_variational_formula} gives
\[
\limsup_{n\to\infty} \lambda_{+,n}^{\mathrm{free}} \leq \lambda_1 \left (\mathcal{G}_\alpha (\bm{Z}) \right)
\quad\text{a.s.}
\]
On the other hand, Lemma~\ref{lem:free_MDE_convergence} gives \( \mu_n^{\mathrm{free}} \Rightarrow \mu_\alpha\) weakly almost surely. Since \(\supp(\mu_n^{\mathrm{free}}) =\operatorname{sp}(\bm{P}_n^{\mathrm{free}})\) by~\eqref{eq:support_equals_free_spectrum}, weak convergence implies
\[
\lambda_+(\alpha) \leq \liminf_{n\to\infty}\lambda_{+,n}^{\mathrm{free}} \quad \text{a.s.}
\]
Consequently, \(\lambda_+(\alpha) \leq \lambda_1 \left (\mathcal{G}_\alpha (\bm{Z}) \right)\). Since \(\bm{Z}\in\mathcal{D}_\alpha\) was arbitrary,
\[
\lambda_+(\alpha) \leq \inf_{\bm{Z} \in\mathcal{D}_\alpha} \lambda_1 \left ( \mathcal{G}_\alpha (\bm{Z}) \right).
\]

For the reverse inequality, fix \(E>\lambda_+(\alpha)\) and set \(\bm{Z}_E=-\bm{M}_\alpha(E)\). As shown in the proof of Lemma~\ref{lem:free_upper_edge},
\(\bm{Z}_E \in \mathcal{D}_\alpha\) and the limiting matrix Dyson equation
gives \(\mathcal{G}_\alpha (\bm{Z}_E)=E\bm{I}_p\). It follows that
\[
\inf_{\bm{Z} \in\mathcal{D}_\alpha} \lambda_1 \left (\mathcal{G}_\alpha (\bm{Z}) \right) \leq E.
\]
Letting \(E\downarrow\lambda_+(\alpha)\) yields the reverse inequality and completes the proof.
\end{proof}
\section{Auxiliary results} \label{appendix:auxiliary}

This appendix collects auxiliary matrix identities, partial-trace estimates, and Lipschitz bounds that are used repeatedly in the proofs of the main results.

\begin{lem} \label{lem:push_through}
Let \(\bm{A},\bm{B}\in\C^{p\times p}\). If \(\bm{I}_p+\bm{A}\bm{B}\) is invertible, then \(\bm{I}_p+\bm{B}\bm{A}\) is invertible and
\[
(\bm{I}_p+\bm{A}\bm{B})^{-1} \bm{A} = \bm{A} (\bm{I}_p+\bm{B}\bm{A})^{-1}.
\]
\end{lem}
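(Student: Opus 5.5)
The argument is a direct algebraic verification. Suppose \(\bm{I}_p+\bm{A}\bm{B}\) is invertible. For the invertibility of \(\bm{I}_p+\bm{B}\bm{A}\), I would write down an explicit candidate inverse,
\[
\bm{X} \coloneqq \bm{I}_p - \bm{B}(\bm{I}_p+\bm{A}\bm{B})^{-1}\bm{A},
\]
and check both products.

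For the product on the left,
\[
(\bm{I}_p+\bm{B}\bm{A})\bm{X} = \bm{I}_p+\bm{B}\bm{A} - \bm{B}(\bm{I}_p+\bm{A}\bm{B})(\bm{I}_p+\bm{A}\bm{B})^{-1}\bm{A} .
\]
This uses the grouping \(\bm{B}\bm{A}\bm{B}=\bm{B}(\bm{A}\bm{B})\), so that \(\bm{B}(\bm{I}_p+\bm{A}\bm{B}) = (\bm{I}_p+\bm{B}\bm{A})\bm{B}\). The last term equals \(\bm{B}\bm{A}\), so the product is \(\bm{I}_p\).

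For the product on the right, the symmetric identity \((\bm{I}_p+\bm{A}\bm{B})\bm{A}=\bm{A}(\bm{I}_p+\bm{B}\bm{A})\) gives \(\bm{X}(\bm{I}_p+\bm{B}\bm{A})=\bm{I}_p\). Since the matrices are square, either one-sided inverse already suffices.

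For the push-through identity itself, I would start from \((\bm{I}_p+\bm{A}\bm{B})\bm{A}=\bm{A}(\bm{I}_p+\bm{B}\bm{A})\). Multiplying on the left by \((\bm{I}_p+\bm{A}\bm{B})^{-1}\) and on the right by \((\bm{I}_p+\bm{B}\bm{A})^{-1}\) yields the claim.

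There is no real obstacle here. The only point that needs care is to establish invertibility of \(\bm{I}_p+\bm{B}\bm{A}\) before multiplying by its inverse. Alternatively, one could invoke \(\det(\bm{I}_p+\bm{A}\bm{B})=\det(\bm{I}_p+\bm{B}\bm{A})\) (Sylvester's identity) for that step.
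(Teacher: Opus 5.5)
Your proof is correct and matches the paper's: both multiply $(\bm{I}_p+\bm{A}\bm{B})\bm{A}=\bm{A}(\bm{I}_p+\bm{B}\bm{A})$ by the two inverses. The only difference is how invertibility of $\bm{I}_p+\bm{B}\bm{A}$ is certified. You exhibit the explicit inverse $\bm{I}_p-\bm{B}(\bm{I}_p+\bm{A}\bm{B})^{-1}\bm{A}$, which makes the argument self-contained, whereas the paper invokes Sylvester's determinant identity (which you also note as an alternative).
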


\begin{proof}
The identity
\[
(\bm{I}_p+\bm{A}\bm{B})\bm{A} = \bm{A}(\bm{I}_p+\bm{B}\bm{A})
\]
implies the claim after multiplication by the corresponding inverses. The invertibility of \( \bm{I}_p + \bm{B} \bm{A}\) follows, for instance, from Sylvester's determinant identity \(\det (\bm{I}_p +\bm{A}\bm{B}) = \det(\bm{I}_p+\bm{B} \bm{A})\).
\end{proof}

\begin{lem} \label{lem:partial_trace_bound}
Let \(m\ge1\) and \(\bm{A} \in \C^{pm\times pm}\). Let \(\tr_m^{(p)}(\bm{A})\) denote the normalized partial trace of \(\bm{A}\) defined in~\eqref{eq:partial_trace}. Then
\[
\bigl \|\tr_m^{(p)}(\bm{A})\bigr\|_{\mathrm{op}} \le \|\bm{A}\|_{\mathrm{op}}
\]
and
\[
\bigl \|\tr_m^{(p)}(\bm{A})\bigr\|_{\mathrm{op}} \le \frac{\operatorname{rank}(\bm{A})}{m}\|\bm{A}\|_{\mathrm{op}} \le \|\bm{A}\|_{\mathrm{op}}.
\]
\end{lem}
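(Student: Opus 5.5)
Write $\bm{A}=(\bm{A}_{\mu\nu})_{\mu,\nu\in[p]}$ with blocks $\bm{A}_{\mu\nu}\in\C^{m\times m}$. Both bounds rest on expressing the partial trace as an average of compressions of $\bm{A}$. Let $\bm{e}_1,\dots,\bm{e}_m$ be the standard basis of $\C^m$ and set $\bm{V}_k\coloneqq \bm{I}_p\otimes\bm{e}_k\in\C^{pm\times p}$. Then $(\bm{V}_k^\ast\bm{A}\bm{V}_k)_{\mu\nu}=(\bm{A}_{\mu\nu})_{kk}$, so
\[
\tr_m^{(p)}(\bm{A})=\frac1m\sum_{k=1}^m \bm{V}_k^\ast\bm{A}\bm{V}_k .
\]
Each $\bm{V}_k$ is an isometry, hence $\|\bm{V}_k^\ast\bm{A}\bm{V}_k\|_{\mathrm{op}}\le\|\bm{A}\|_{\mathrm{op}}$. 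The triangle inequality applied to the average then gives the first bound.

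For the rank bound, I would use the singular value decomposition $\bm{A}=\sum_{j=1}^{s}\sigma_j\bm{a}_j\bm{b}_j^\ast$, where $s=\operatorname{rank}(\bm{A})$, $\sigma_j\le\|\bm{A}\|_{\mathrm{op}}$, and $(\bm{a}_j)_j$, $(\bm{b}_j)_j$ are orthonormal families in $\C^{pm}$. By linearity it suffices to bound the partial trace of a single rank-one term $\bm{a}\bm{b}^\ast$ with unit vectors $\bm{a},\bm{b}$. Write $\bm{a}=(\bm{a}^{(1)},\dots,\bm{a}^{(p)})$ with blocks $\bm{a}^{(\mu)}\in\C^m$, and similarly for $\bm{b}$. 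Then
\[
\tr_m^{(p)}(\bm{a}\bm{b}^\ast)_{\mu\nu}=\frac1m\,(\bm{b}^{(\nu)})^\ast\bm{a}^{(\mu)}=\frac1m(\widehat{\bm{A}}\widehat{\bm{B}}^\ast)_{\mu\nu},
\]
where $\widehat{\bm{A}},\widehat{\bm{B}}\in\C^{p\times m}$ have rows $(\bm{a}^{(\mu)})^\ast$ and $(\bm{b}^{(\nu)})^\ast$, up to conjugation conventions. Since $\|\widehat{\bm{A}}\|_{\mathrm{F}}=\|\bm{a}\|_2=1$ and likewise $\|\widehat{\bm{B}}\|_{\mathrm{F}}=1$, we get
\[
\|\tr_m^{(p)}(\bm{a}\bm{b}^\ast)\|_{\mathrm{op}}\le \frac1m\|\widehat{\bm{A}}\|_{\mathrm{op}}\|\widehat{\bm{B}}\|_{\mathrm{op}}\le\frac1m .
\]
Summing over the $s$ terms yields $\|\tr_m^{(p)}(\bm{A})\|_{\mathrm{op}}\le \frac{s}{m}\|\bm{A}\|_{\mathrm{op}}$.

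The only real care needed is the index bookkeeping between the Kronecker block convention $\bm{X}\otimes\bm{Y}=(X_{\mu\nu}\bm{Y})$ and the reshaping in the rank-one computation; the rest is routine. For the final inequality $\frac{\operatorname{rank}(\bm{A})}{m}\|\bm{A}\|_{\mathrm{op}}\le\|\bm{A}\|_{\mathrm{op}}$ I would not rely on the rank bound, since $\operatorname{rank}(\bm{A})$ can be as large as $pm>m$. Instead the combined estimate should be read as $\|\tr_m^{(p)}(\bm{A})\|_{\mathrm{op}}\le\min\{1,\operatorname{rank}(\bm{A})/m\}\,\|\bm{A}\|_{\mathrm{op}}$, with the factor $1$ supplied by the first bound.
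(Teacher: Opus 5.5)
Your proof is correct and reaches the same intermediate estimate as the paper, $\|\tr_m^{(p)}(\bm{A})\|_{\mathrm{op}}\le\frac1m\|\bm{A}\|_\ast\le\frac{\operatorname{rank}(\bm{A})}{m}\|\bm{A}\|_{\mathrm{op}}$, but by a more elementary route.

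\textbf{First bound.} The paper simply cites that $\operatorname{id}_p\otimes\tr_m$ is unital and completely positive, hence contractive. Your identity $\tr_m^{(p)}(\bm{A})=\frac1m\sum_{k}\bm{V}_k^\ast\bm{A}\bm{V}_k$, with isometries $\bm{V}_k=\bm{I}_p\otimes\bm{e}_k$, is an explicit Kraus representation of that map. It gives contractivity in one line via the triangle inequality. You therefore avoid the general contractivity theorem for unital completely positive maps, which is the nontrivial point when $\bm{A}$ is not self-adjoint.

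\textbf{Rank bound.} The paper uses trace duality,
$\|\tr_m^{(p)}(\bm{A})\|_\ast=\sup_{\|\bm{B}\|_{\mathrm{op}}\le1}\frac1m\bigl|\Tr\bigl((\bm{B}^\ast\otimes\bm{I}_m)\bm{A}\bigr)\bigr|\le\frac1m\|\bm{A}\|_\ast$,
which gives the slightly stronger nuclear-norm contraction. You instead expand $\bm{A}$ in singular values and use the rank-one estimate $\|\widehat{\bm{A}}\widehat{\bm{B}}^\ast\|_{\mathrm{op}}\le\|\widehat{\bm{A}}\|_{\mathrm{F}}\|\widehat{\bm{B}}\|_{\mathrm{F}}$. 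This controls the operator norm directly, which is all the paper uses later. The conventions you left open resolve as follows: the rows of $\widehat{\bm{A}}$ and $\widehat{\bm{B}}$ should be $(\bm{a}^{(\mu)})^\top$ and $(\bm{b}^{(\nu)})^\top$. Then $(\widehat{\bm{A}}\widehat{\bm{B}}^\ast)_{\mu\nu}=(\bm{b}^{(\nu)})^\ast\bm{a}^{(\mu)}$, and the Frobenius norms are unchanged.

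\textbf{Final inequality.} Your caveat is correct, and it points to a flaw in the statement, not in your argument. The inequality $\frac{\operatorname{rank}(\bm{A})}{m}\|\bm{A}\|_{\mathrm{op}}\le\|\bm{A}\|_{\mathrm{op}}$ is false whenever $\operatorname{rank}(\bm{A})>m$. For instance, $\bm{A}=\bm{I}_{pm}$ with $p\ge2$ gives $\frac{\operatorname{rank}(\bm{A})}{m}=p>1$. The paper's proof only establishes the chain up to $\frac{\operatorname{rank}(\bm{A})}{m}\|\bm{A}\|_{\mathrm{op}}$. In every application in the paper the relevant matrix has rank at most $p$ while $m=d-r\to\infty$, so nothing downstream is affected. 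Your reading $\|\tr_m^{(p)}(\bm{A})\|_{\mathrm{op}}\le\min\{1,\operatorname{rank}(\bm{A})/m\}\,\|\bm{A}\|_{\mathrm{op}}$ is the right way to state the combined bound.
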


\begin{proof}
The normalized partial trace is the map \(\operatorname{id}_p \otimes \tr_m\), where \(\tr_m = m^{-1} \Tr\) is the normalized trace. It is unital and completely positive, hence contractive in operator norm. This proves the first bound.

For the second bound, we claim that
\begin{equation} \label{eq:nuclear-norm-claim}
\|\tr_m^{(p)} (\bm{A})\|_{\mathrm{op}} \le \|\tr_m^{(p)} (\bm{A})\|_\ast \le \frac{1}{m} \|\bm{A}\|_\ast \le \frac{\mathrm{rank}(\bm{A})}{m} \|\bm{A}\|_{\mathrm{op}},
\end{equation}
where the nuclear norm \(\|\cdot \|_\ast\) is defined by
\[
\|\bm{A} \|_\ast \coloneqq \sum_{j=1}^{\mathrm{rank}(\bm{A})} \mu_j (\bm{A}).
\]
Here, \(\mu_1 (\bm{A}) \ge \cdots \ge \mu_{\mathrm{rank}}(\bm{A})>0\) are the singular values of \(\bm{A}\). The first and last inequalities are immediate. For the second inequality, duality and the definition of the normalized partial trace give
\[
\begin{aligned}
\bigl\|\tr_m^{(p)}(\bm{A})\bigr\|_\ast
&=\sup_{\|\bm{B}\|_{\mathrm{op}}\le1}
\left|\Tr\!\left(\bm{B}^\ast\tr_m^{(p)}(\bm{A})\right)\right|\\
&=\frac1m\sup_{\|\bm{B}\|_{\mathrm{op}}\le1}
\left|\Tr\!\left((\bm{B}^\ast\otimes\bm{I}_m)\bm{A}\right)\right|\\
&\le \frac1m\|\bm{A}\|_\ast,
\end{aligned}
\]
since \(\|\bm{B}\otimes\bm{I}_m\|_{\mathrm{op}}=\|\bm{B}\|_{\mathrm{op}}\). This proves the second inequality in~\eqref{eq:nuclear-norm-claim}.
\end{proof}

\begin{lem}\label{lem:Lipschitz}
Let \(\bm{T} \in R^{p\times p}\) satisfy \(\|\bm{T}\|_{\mathrm{op}}\le C_T\) and let \(\gamma \ge 0\). Let \(\bm{X},\bm{Y}\in \C^{p\times p}\) and assume that \((\bm{I}_p+\gamma \bm{T} \bm{X})\) and \((\bm{I}_p+\gamma \bm{T} \bm{Y})\) are invertible and satisfy
\[
\|(\bm{I}_p + \gamma \bm{T} \bm{X})^{-1}\|_{\mathrm{op}}\le K,\quad \|(\bm{I}_p +\gamma \bm{T} \bm{Y})^{-1}\|_{\mathrm{op}}\le K,
\]
for some \(K<\infty\). Define
\[
\Phi_{\bm{T},\gamma}(\bm{X}) \coloneqq \bm{T} \bm{X}(\bm{I}_p + \gamma \bm{T} \bm{X})^{-1}.
\]
Then
\[
\|\Phi_{\bm{T},\gamma}(\bm{X})-\Phi_{\bm{T},\gamma}(\bm{Y})\|_{\mathrm{op}}
\le C_T K^2  \|\bm{X}-\bm{Y}\|_{\mathrm{op}}.
\]
\end{lem}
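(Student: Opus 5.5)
The plan is a direct algebraic computation: write the difference of the two values of \(\Phi_{\bm{T},\gamma}\) as a product of factors whose norms are controlled by the hypotheses. The first step is to rewrite \(\Phi_{\bm{T},\gamma}\) in a form suited to the resolvent identity. Since \(\bm{I}_p+\gamma\bm{T}\bm{X}\) is invertible, Lemma~\ref{lem:push_through} with \(\bm{A}=\bm{X}\) and \(\bm{B}=\gamma\bm{T}\) gives
\[
\bm{X}(\bm{I}_p+\gamma\bm{T}\bm{X})^{-1}=(\bm{I}_p+\gamma\bm{X}\bm{T})^{-1}\bm{X}.
\]
The same holds with \(\bm{Y}\) in place of \(\bm{X}\). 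Consequently
\[
\Phi_{\bm{T},\gamma}(\bm{X})=\bm{T}(\bm{I}_p+\gamma\bm{X}\bm{T})^{-1}\bm{X}.
\]
For the \(\bm{Y}\)-term I will keep the original right-sided form \(\bm{T}\bm{Y}(\bm{I}_p+\gamma\bm{T}\bm{Y})^{-1}\).

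The key step is to compute the difference with one factor on each side:
\[
\Phi_{\bm{T},\gamma}(\bm{X})-\Phi_{\bm{T},\gamma}(\bm{Y})
=\bm{T}(\bm{I}_p+\gamma\bm{X}\bm{T})^{-1}\Big[\bm{X}(\bm{I}_p+\gamma\bm{T}\bm{Y})-(\bm{I}_p+\gamma\bm{X}\bm{T})\bm{Y}\Big](\bm{I}_p+\gamma\bm{T}\bm{Y})^{-1}.
\]
The bracket simplifies because the cross terms \(\gamma\bm{X}\bm{T}\bm{Y}\) cancel, leaving \(\bm{X}-\bm{Y}\). This yields
\[
\Phi_{\bm{T},\gamma}(\bm{X})-\Phi_{\bm{T},\gamma}(\bm{Y})=\bm{T}(\bm{I}_p+\gamma\bm{X}\bm{T})^{-1}(\bm{X}-\bm{Y})(\bm{I}_p+\gamma\bm{T}\bm{Y})^{-1}.
\]

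The only genuine obstacle is that the hypothesis bounds \((\bm{I}_p+\gamma\bm{T}\bm{X})^{-1}\), not \((\bm{I}_p+\gamma\bm{X}\bm{T})^{-1}\). I will avoid estimating the latter entirely by pushing \(\bm{T}\) through it with Lemma~\ref{lem:push_through}:
\[
\bm{T}(\bm{I}_p+\gamma\bm{X}\bm{T})^{-1}=(\bm{I}_p+\gamma\bm{T}\bm{X})^{-1}\bm{T}.
\]
Therefore
\[
\Phi_{\bm{T},\gamma}(\bm{X})-\Phi_{\bm{T},\gamma}(\bm{Y})=(\bm{I}_p+\gamma\bm{T}\bm{X})^{-1}\,\bm{T}\,(\bm{X}-\bm{Y})\,(\bm{I}_p+\gamma\bm{T}\bm{Y})^{-1}.
\]
Both inverse factors now appear exactly in the form covered by the hypotheses.

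Submultiplicativity of the operator norm then gives
\[
\|\Phi_{\bm{T},\gamma}(\bm{X})-\Phi_{\bm{T},\gamma}(\bm{Y})\|_{\mathrm{op}}\le K\cdot C_T\cdot\|\bm{X}-\bm{Y}\|_{\mathrm{op}}\cdot K,
\]
which is the asserted bound \(C_TK^2\|\bm{X}-\bm{Y}\|_{\mathrm{op}}\).
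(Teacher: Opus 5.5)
Your proof is correct and is essentially the paper's argument: the paper likewise writes the difference as $\bm{T}(\bm{X}-\bm{Y})$ sandwiched between the two hypothesized inverses, in the order $(\bm{I}_p+\gamma\bm{T}\bm{Y})^{-1}\bm{T}(\bm{X}-\bm{Y})(\bm{I}_p+\gamma\bm{T}\bm{X})^{-1}$, which is equally valid, and then applies submultiplicativity. You could skip the detour through $(\bm{I}_p+\gamma\bm{X}\bm{T})^{-1}$, and with it the two push-through steps, by noting that $\bm{T}\bm{X}$ commutes with $(\bm{I}_p+\gamma\bm{T}\bm{X})^{-1}$, so $\Phi_{\bm{T},\gamma}(\bm{X})=(\bm{I}_p+\gamma\bm{T}\bm{X})^{-1}\bm{T}\bm{X}$ directly.
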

\begin{proof}
A direct resolvent calculation gives
\[
\Phi_{\bm{T},\gamma}(\bm{X})-\Phi_{\bm{T},\gamma}(\bm{Y})
=
(\bm{I}_p+\gamma\bm{T}\bm{Y})^{-1}\bm{T}(\bm{X}-\bm{Y})(\bm{I}_p+\gamma\bm{T}\bm{X})^{-1}.
\]
Therefore,
\[
\|\Phi_{\bm{T},\gamma}(\bm{X})-\Phi_{\bm{T},\gamma}(\bm{Y})\|_{\mathrm{op}}
\le C_TK^2\|\bm{X}-\bm{Y}\|_{\mathrm{op}}.
\]
\end{proof}

\begin{lem}\label{lem:Lipschitz_2}
Let \(\bm{T} \in R^{p\times p}\) satisfy \(\|\bm{T}\|_{\mathrm{op}}\le C_T\) and let \(\gamma \ge 0\). Let \(\bm{X},\bm{Y}\in \C^{p\times p}\) and assume that \((\bm{I}_p+\gamma \bm{T} \bm{X})\) and \((\bm{I}_p+\gamma \bm{T} \bm{Y})\) are invertible and satisfy
\[
\|(\bm{I}_p + \gamma \bm{X} \bm{T} )^{-1}\|_{\mathrm{op}}\le K,\quad \|(\bm{I}_p +\gamma \bm{Y} \bm{T} )^{-1}\|_{\mathrm{op}}\le K,
\]
for some \(K<\infty\). Define
\[
\bm{\mathcal{S}}_{\bm{T},\gamma}(\bm{X}) \coloneqq \bm{T} (\bm{I}_p + \gamma \bm{X} \bm{T} )^{-1}.
\]
Then
\[
\| \bm{\mathcal{S}}_{\bm{T},\gamma}(\bm{X}) - \bm{\mathcal{S}}_{\bm{T},\gamma}(\bm{Y})\|_{\mathrm{op}} \le \gamma C_T^2 K^2  \|\bm{X}-\bm{Y}\|_{\mathrm{op}}.
\]
\end{lem}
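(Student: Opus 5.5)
The plan is to mirror the proof of Lemma~\ref{lem:Lipschitz}: derive an exact resolvent-type factorization of the difference, then bound each factor in operator norm. Set $\bm{R}_X\coloneqq(\bm{I}_p+\gamma\bm{X}\bm{T})^{-1}$ and $\bm{R}_Y\coloneqq(\bm{I}_p+\gamma\bm{Y}\bm{T})^{-1}$; by hypothesis both exist with $\|\bm{R}_X\|_{\mathrm{op}},\|\bm{R}_Y\|_{\mathrm{op}}\le K$. (The invertibility hypothesis is phrased in terms of $\bm{I}_p+\gamma\bm{T}\bm{X}$, but Lemma~\ref{lem:push_through} shows this is equivalent to invertibility of $\bm{I}_p+\gamma\bm{X}\bm{T}$, so the objects above are well defined.)

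The key identity is the second resolvent identity $\bm{A}^{-1}-\bm{B}^{-1}=\bm{A}^{-1}(\bm{B}-\bm{A})\bm{B}^{-1}$, applied with $\bm{A}=\bm{I}_p+\gamma\bm{X}\bm{T}$ and $\bm{B}=\bm{I}_p+\gamma\bm{Y}\bm{T}$. It gives
\[
\bm{R}_X-\bm{R}_Y=\gamma\,\bm{R}_X(\bm{Y}-\bm{X})\bm{T}\bm{R}_Y.
\]
Left-multiplying by $\bm{T}$ then yields
\[
\bm{\mathcal{S}}_{\bm{T},\gamma}(\bm{X})-\bm{\mathcal{S}}_{\bm{T},\gamma}(\bm{Y})=\gamma\,\bm{T}\bm{R}_X(\bm{Y}-\bm{X})\bm{T}\bm{R}_Y.
\]

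Submultiplicativity of the operator norm then bounds the right-hand side by $\gamma\,\|\bm{T}\|_{\mathrm{op}}^2\,\|\bm{R}_X\|_{\mathrm{op}}\|\bm{R}_Y\|_{\mathrm{op}}\,\|\bm{X}-\bm{Y}\|_{\mathrm{op}}\le\gamma C_T^2K^2\|\bm{X}-\bm{Y}\|_{\mathrm{op}}$, which is the claim.

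There is no real obstacle. The only point needing care is the order of factors in the resolvent identity, so that each $\bm{T}$ lands adjacent to one of the bounded inverses and the bound requires exactly two factors of $\|\bm{T}\|_{\mathrm{op}}\le C_T$.
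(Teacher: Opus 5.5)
Your proposal is correct and is essentially the paper's proof: the paper applies the same second resolvent identity to write the difference as $\gamma\,\bm{T}(\bm{I}_p+\gamma\bm{X}\bm{T})^{-1}(\bm{Y}-\bm{X})\bm{T}(\bm{I}_p+\gamma\bm{Y}\bm{T})^{-1}$ and then bounds it by submultiplicativity. Your side remark that invertibility of $\bm{I}_p+\gamma\bm{T}\bm{X}$ is equivalent to that of $\bm{I}_p+\gamma\bm{X}\bm{T}$ (via Lemma~\ref{lem:push_through}) is correct, and it is a point the paper leaves implicit.
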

\begin{proof}
By the resolvent identity,
\[
\begin{split}
\bm{\mathcal{S}}_{\bm{T},\gamma} (\bm{X})  - \bm{\mathcal{S}}_{\bm{T},\gamma} (\bm{Y}) &= \bm{T} \left [(\bm{I}_p + \gamma \bm{X} \bm{T} )^{-1} - (\bm{I}_p + \gamma \bm{Y} \bm{T} )^{-1} \right ] \\
&= \gamma \bm{T} (\bm{I}_p + \gamma \bm{X} \bm{T} )^{-1}  (\bm{Y}-\bm{X})  \bm{T} (\bm{I}_p + \gamma \bm{Y} \bm{T} )^{-1}.
\end{split}
\]
Therefore
\[
\begin{split}
\lVert \bm{\mathcal{S}}_{\bm{T},\gamma} (\bm{X})  - \bm{\mathcal{S}}_{\bm{T},\gamma} (\bm{Y})  \rVert_{\mathrm{op}} 
&  \le  \gamma C_T^2 K^2 \|\bm{X}-\bm{Y}\|_{\mathrm{op}}.
\end{split}
\]
\end{proof}

\printbibliography
\end{document}

%% file: bibliography_setup.tex
\usepackage[giveninits=true,url=false,doi=false,isbn=false,eprint=true,datamodel=mrnumber,sorting=nyt,sortcites=false,maxcitenames=4,maxbibnames=99,backref=false,block=space,backend=biber,style=phys, biblabel=brackets]{biblatex} 
\AtEveryBibitem{\clearfield{month}}
\AtEveryCitekey{\clearfield{month}} 
\renewbibmacro{in:}{}
\DeclareFieldFormat[article]{title}{\emph{#1}} 
\DeclareFieldFormat{mrnumber}{\ifhyperref{\href{http://www.ams.org/mathscinet-getitem?mr=#1}{\nolinkurl{MR#1}}}{\nolinkurl{#1}}}
\DeclareFieldFormat{pmid}{\ifhyperref{\href{https://www.ncbi.nlm.nih.gov/pubmed/#1}{\nolinkurl{PMID#1}}}{\nolinkurl{#1}}}
\DeclareFieldFormat{eprint}{\ifhyperref{\href{https://arxiv.org/abs/#1}{\nolinkurl{arXiv:#1}}}{\nolinkurl{#1}}}
\renewbibmacro*{doi+eprint+url}{%
  \iftoggle{bbx:doi}{\printfield{doi}}{}
  \newunit\newblock%
  \printfield{mrnumber}%
  \newunit\newblock%
  \printfield{pmid}%
  \newunit\newblock%
  \printfield{eprint}%
  \iftoggle{bbx:url}{\usebibmacro{url+urldate}}{}}